
\documentclass[11pt]{article}

\usepackage{amssymb,latexsym,amsmath,epsf,amsthm,amsfonts}
\usepackage{epsfig,graphics,graphicx,afterpage,appendix}
\usepackage{textcomp,wasysym,dsfont}

\usepackage[colorlinks=true,          
            linkcolor=blue, 
            citecolor=blue,
            urlcolor=blue,
            ps2pdf,                   
            dvips,
            bookmarks=true,           
            bookmarksnumbered=true,   
            bookmarksopen=true,       
            bookmarksopenlevel=1,     
            breaklinks=true,          
            pdfpagelayout=TwoPageRight, 
            pdfstartview=FitV,        
            pdfstartpage=1,           
            pdfdisplaydoctitle=true  
]{hyperref}
\hypersetup{
  pdfauthor   = {Matthias Heymann},
  pdftitle    = {Existence and Properties of Minimum Action Curves for Degenerate Finsler Metrics},
  pdfcreator  = {LaTeX with hyperref package},
  pdfproducer = {dvips + ps2pdf},
  pdfsubject  = {geometry, large deviation theory},
  pdfkeywords = {action functional, action minimization, rectifiable curves, minimum action curves, large deviation theory, maximum likelihood transition curves, degenerate Finsler metric, degenerate Randers metric}
}
\let\url\nolinkurl 

\allowdisplaybreaks
\numberwithin{equation}{section}

\newtheorem{theorem}{Theorem}
\newtheorem{lemma}{Lemma}
\newtheorem{corollary}{Corollary}
\newtheorem{definition}{Definition}
\newtheorem{proposition}{Proposition}
\newtheorem{remark}{Remark}

\newcommand{\q}{\hspace{-1.2pt}}
\newcommand{\qh}{\hspace{.5pt}}
\newcommand{\qb}{\hspace{.7pt}}
\newcommand{\nf}{\nabla\hspace{-1.9pt}f}
\newcommand{\nb}{\nabla\hspace{-1.3pt}\beta}
\newcommand{\nhf}{\nabla\hspace{-1.9pt}\hat f}
\newcommand{\Tp}{T_{\q+}}
\newcommand{\Tm}{T_{\q-}}
\newcommand{\G}{\mathcal{G}}
\renewcommand{\H}{\mathcal{H}}
\newcommand{\hS}{S}

\newcommand{\vte}{\vartheta}
\newcommand{\te}{\theta}
\newcommand{\that}{\vartheta}
\newcommand{\skp}[2]{{\langle #1,#2 \rangle}}
\newcommand{\Skp}[2]{{\big\langle #1,#2 \big\rangle}}

\newcommand{\Hz}{H_x}
\newcommand{\Ht}{H_\te}
\newcommand{\Htz}{H_{\te x}}
\newcommand{\Hzt}{H_{x\te}}
\newcommand{\Htt}{H_{\te\te}}
\newcommand{\Hztt}{H_{x\te\te}}

\newcommand{\Rn}{\mathbb{R}^n}

\newcommand{\R}{\mathbb{R}}
\newcommand{\N}{\mathbb{N}}
\newcommand{\One}{\mathds{1}}
\newcommand{\eps}{\varepsilon}
\newcommand{\Cxyt}[3]{\bar C_{#1}^{#2}(0,#3)}
\newcommand{\CT}{\Cxyt{x_1}{x_2}{T}}
\newcommand{\CI}{\Cxyt{x_1}{x_2}{1}}
\newcommand{\Cxx}{\tilde C_{x_1}^{x_2}(x)}
\newcommand{\Cx}{\tilde C(x)}
\newcommand{\CIA}{\Cxyt{A_1}{A_2}{1}}
\renewcommand{\AC}[1]{\bar C({#1})}
\newcommand{\tC}{\tilde C(0,1)}
\newcommand{\tCxx}{\tilde C_{x_1}^{x_2}(0,1)}
\newcommand{\tCA}{\tilde C_{A_1}^{A_2}(0,1)}
\newcommand{\tG}{\tilde\Gamma}
\newcommand{\GA}{\Gamma_{A_1}^{A_2}}
\newcommand{\Gxx}{\Gamma_{x_1}^{x_2}}
\newcommand{\tGA}{\tilde\Gamma_{A_1}^{A_2}}
\newcommand{\tGx}{\tilde\Gamma(x)}
\newcommand{\tGxx}{\tilde\Gamma_{x_1}^{x_2}}
\newcommand{\tGxxx}{\tilde\Gamma_{x_1}^{x_2}(x)}
\newcommand{\Gx}{\Gamma_{x_1}^{x_2}}
\newcommand{\Gxy}[2]{\Gamma_{#1}^{#2}}
\newcommand{\hqq}{h_{q_1}^{q_2}}
\newcommand{\prob}{P(x_1,x_2)}
\newcommand{\probA}{P(A_1,A_2)}
\newcommand{\Msa}{M_s^a}
\newcommand{\Mua}{M_u^a}
\newcommand{\Msan}{M_s^{\tilde a }}
\newcommand{\Muan}{M_u^{\tilde a }}

\newcommand{\hMtas}{\hat M_s^{\tilde a}}
\newcommand{\hMtau}{\hat M_u^{\tilde a}}
\newcommand{\Kati}{K_i^{\tilde a}}
\newcommand{\cA}{c_2} 
\newcommand{\cB}{c_1} 
\newcommand{\cC}{c_4} 
\newcommand{\cD}{c_5} 
\newcommand{\cE}{c_8} 
\newcommand{\cF}{c_9} 
\newcommand{\cG}{c_6} 
\newcommand{\cH}{c_7} 
\newcommand{\cI}{c_{10}}
\newcommand{\cJ}{c_3}
\newcommand{\cHH}{\bar c_7} 
\newcommand{\dA}{d_4}
\newcommand{\dB}{d_2}
\newcommand{\dC}{d_1}
\newcommand{\dD}{d_3}
\newcommand{\dF}{d_6}
\newcommand{\dG}{d_5}
\newcommand{\dH}{d_7}
\newcommand{\E}{\tilde{D}}
\newcommand{\hE}{(\hyperlink{ass tD}{$\E$})}
\newcommand{\tHa}{(\hyperlink{H1}{H1})}
\newcommand{\tHaa}{(\hyperlink{H1'}{H1'})}

\newcommand{\tHbb}{(\hyperlink{H2'}{H2'})}
\newcommand{\tHc}{(\hyperlink{H3}{H3})}

\newcommand{\tramat}{\big(\begin{smallmatrix}\!-p\,&0\\r\!&q\end{smallmatrix}\big)}

\newcommand{\pb}{\pagebreak}
\newcommand{\enl}{\enlargethispage{\baselineskip}}
\newcommand{\enlh}{\enlargethispage{.3cm}}
\newcommand{\markit}{}

\DeclareMathOperator*{\dist}{dist}
\DeclareMathOperator*{\supp}{supp}

\DeclareMathOperator*{\esssup}{ess\,sup}
\DeclareMathOperator*{\sgn}{sgn}
\DeclareMathOperator*{\length}{length}

\begin{document}

\title{Existence and Properties\\ of Minimum Action Curves\\ for Degenerate Finsler Metrics}
\author{Matthias Heymann\footnote{email: \href{mailto://heymann@math.duke.edu}{heymann@math.duke.edu}, web: \href{http://www.matthiasheymann.de}{www.matthiasheymann.de}}\\[.25cm]Duke University,\\ Mathematics Department}
\maketitle

\pdfbookmark[1]{Title \& Abstract}{abstract}

\begin{abstract}
We study a class of action functionals $S(\gamma)=\int_\gamma\ell(z,dz)$ on the space of unparameterized oriented rectifiable curves $\gamma$ in $\Rn$.
The local action $\ell(x,y)$ is a degenerate type of Finsler metric that may vanish in certain directions $y\neq0$, thus allowing for curves with positive Euclidean length but zero action.
Given two sets $A_1,A_2\subset\Rn$, we develop criteria under which $\exists\gamma^\star\in\GA:=\{\gamma\,|\,\gamma \text{ starts in }A_1\text{ and ends in }A_2\}$ such that
$S(\gamma^\star)=\inf\!\big\{S(\gamma)\,|\,\gamma\in\GA\big\}$.
We then study the properties of these minimizers $\gamma^\star$, and we prove the non-existence of minimizers in some situations.
Applied to a geometric reformulation of the quasipotential of large deviation theory, our results can prove the existence and properties of maximum likelihood transition curves between two metastable states in a stochastic process with small noise.
%
\end{abstract}

             \newpage
\mbox{}\newpage

\pdfbookmark[1]{Dedication}{dedication}
\vspace*{6cm}
\begin{center}
\textit{\large
In memory of my beloved grandfather.\\[.5cm]
Julius Salzmann\\[.17cm]
${}^{\text{\textborn}}$\hspace{3pt}11/03/1908\qquad\APLnot{}\qquad\textdied\ 07/01/2009
}
\end{center}
             \newpage

\pdfbookmark[1]{Contents}{toc}
\tableofcontents                         \newpage

\part{Results} \label{part results}

\section{Introduction} \label{sec introduction}

\paragraph{Geometric Action Functionals.}
A geometric action $S$ is a mapping that assigns to every unparameterized oriented rectifiable curve $\gamma$ in $\Rn$ a number $S(\gamma)\in[0,\infty)$. It is defined via a curve integral
\begin{equation} \label{S gamma intro}
  S(\gamma):=\int_\gamma\ell(z,dz):=\int_0^1\ell(\varphi,\varphi')\,d\alpha,
\end{equation}
where $\varphi\colon[0,1]\to\Rn$ is any absolutely continuous parameterization of $\gamma$, and where the local action $\ell\in C(\Rn\times\Rn,[0,\infty))$ must have the properties
\begin{align*}
  (i)  \ \ & \forall x,y\in\Rn\ \,\forall c\geq0\colon\ \ \ell(x,cy)=c\ell(x,y), \\
  (ii) \ \ & \text{for every fixed $x\in\Rn$ the function $\ell(x,\cdot\,)$ is convex.}
\end{align*}
While (i) guarantees that the second integral in \eqref{S gamma intro} is independent of the choice of $\varphi$, (ii) is necessary to ensure that $S$ is lower semi-continuous in a certain sense. A trivial example is given by
$\ell(x,y)=|y|$, in which case $S(\gamma)$ is just the Euclidean length of $\gamma$, or more generally, by $\ell(x,y)=|y|_{g_x}$ for any Riemannian metric $g$. In fact, $\ell$ generalizes the well-studied notion of a Finsler metric \cite{Finsler} in that (a) $\ell$ only needs to be continuous (no smoothness required), and more importantly (b) $\ell^2$ need not be \emph{strictly} convex.

Now given two sets $A_1,A_2\subset\Rn$, in this work we develop criteria under which there exists a minimum action curve $\gamma^\star$ leading from $A_1$ to $A_2$, i.e.\ under which $\exists\gamma^\star\in\GA:=\{\gamma\,|\,\gamma \text{ starts in }A_1\text{ and ends in }A_2\}$ such that
\begin{equation}
 S(\gamma^\star)=\inf_{\substack{\gamma\in\GA}}S(\gamma). \label{curve min prob}
\end{equation}
We then prove properties of the minimizer $\gamma^\star$ without knowing it explicitly.

Although our existence results can certainly be applied to the exemplary local actions given above, the present work was primarily motivated by a recently emerging problem from large deviation theory that is adding a considerable layer of difficulty: In contrast to usual Finsler metrics, in this example $\ell(x,y)$ vanishes in some direction $y=b(x)\neq0$, which allows for curves~$\gamma$ (the flowlines of the vector field $b$) with positive Euclidean length but vanishing action $S(\gamma)$.

\begin{figure}[t]
\centering
\includegraphics[width=12.5cm]{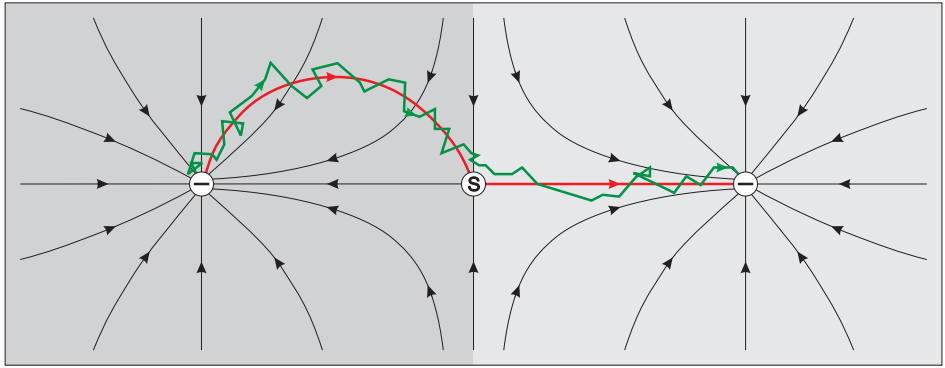}
\caption{\label{SDE transition}
\small Rare noise-induced transitions from one meta-stable state to another (green curve) stay with high probability near the minimum action curve $\gamma^\star$ (red).}
\end{figure}

\paragraph{Example: Large Deviation Theory.}
Consider for some $b\in C^1(\Rn,\Rn)$ and small $\eps>0$ the stochastic differential equation (SDE)
\begin{equation} \label{SDE0}
dX_t^\eps = b(X_t^\eps)\,dt + \sqrt{\eps} \,dW_t,\qquad X_0^\eps=x_1,
\end{equation}
where $(W_t)_{t\geq0}$ is a Brownian motion, and where the zero-noise-limit, i.e.~the ODE $\dot x=b(x)$, has two stable equilibrium points $x_1,x_2\in\Rn$. The presence of small noise allows for rare transitions from $x_1$ to $x_2$ that would be impossible without the noise (green curve in Fig.~\ref{SDE transition}), and one is interested in the frequency and the most likely path of these transitions. Both questions are answered within the framework of large deviation theory \cite{FW,SW}, the key object being the quasipotential
\begin{align}
  V(x_1,x_2) &= \inf_{\substack{T>0\\\chi\in\CT}} S_T(\chi), \label{double minimization0} \\
  \text{where}\hspace{1.4cm}
  S_T(\chi)  &= \frac12\int_0^T|b(\chi)-\dot\chi|^2\,dt,\hspace{1.4cm} \label{SDE ST}
\end{align}
and where $\CT$ denotes the space of all absolutely continuous functions $\chi\colon[0,T]\to\Rn$ fulfilling $\chi(0)=x_1$ and $\chi(T)=x_2$.

An unpleasant feature of this formulation is that the minimization problem \eqref{double minimization0} does not have a minimizer $(T^\star,\chi^\star)$, the main reason being that by \cite[Lemma 3.1]{FW} $\dot\chi^\star$ would need to vanish at $x_1$ and $x_2$, and typically also at some critical point along the way, and so $T^\star$ would have to be (doubly) infinite. This is a major problem for both analytical and numerical work, and so in \cite{CPAM,thesis} the use of the alternative representation
\begin{equation} \label{qp min prob}
  V(x_1,x_2)=\inf_{\gamma\in\Gx}S(\gamma)
\end{equation}
was suggested, where the geometric action $S(\gamma)$ is given by
\begin{equation} \label{local geo action0}\hspace{3.2cm}
  \ell(x,y) = |b(x)||y|-\skp{b(x)}{y}, \qquad\ \ \text{(SDE)}
\end{equation}
which can be seen as a degenerate version of a Randers metric \cite[Ch.\ 11]{Finsler}.
The minimizer $\gamma^\star$ of \eqref{qp min prob}, i.e.~the \emph{maximum likelihood transition curve} (the red curve in Fig.~\ref{SDE transition}), seems more feasable to exist in this formulation since the time parameterization has been eliminated from the problem.

This geometric reformulation of the quasipotential generalizes also to other types of stochastic dynamics such as SDEs with multiplicative noise or continuous-time Markov jump processes \cite{CPAM,thesis}, with modified (in the latter case not Randers-like) local action $\ell$. It was shown to effectively remove the numerical difficulties \cite{CPAM,thesis,PRL,JCP}, and our goal in this monograph is now to demonstrate also its analytical advantages.

\paragraph{Existence of Minimizers; the Drift Vector Field.}
Since minimizers $\gamma^\star$ of \eqref{curve min prob} have numerically been found to generally have cusps as they pass certain critical points (even in the basic case where $\ell$ is given by \eqref{local geo action0} with some smooth $b$, see Fig.~\ref{SDE transition} or e.g.\ \cite[Fig.\ 4.1]{CPAM}), any a~priori assumptions on the smoothness of $\gamma^\star$ in our existence proof would be counterproductive. This forbids the variational approach using the Euler-Lagrange equations associated to \eqref{curve min prob}, and so instead we will opt for a lower semi-continuity argument.


A first result which is relatively easy to obtain is the following (Proposition~\ref{min prop}): \textit{If there exists a minimizing sequence $(\gamma_n)_{n\in\N}$ of \eqref{curve min prob} that is contained in some compact set $K\subset\Rn$ and has uniformly bounded curve lengths, then there exists a minimizer $\gamma^\star\in\GA$.}
%
In practice however, this criterion alone is of little use since minimizing sequences are not at our direct disposal and so their curve lengths can be hard to control. Instead, we would rather like to have criteria that are based on some explicitly available key ingredient of the local action~$\ell$. What could this key ingredient be?

An essential property of \eqref{local geo action0} is that $\ell(x,y)$ vanishes whenever $y$ aligns with $b(x)$. In fact, such behavior is generic to large deviation geometric actions: For general stochastic dynamics, the drift vector field $b$ given by the zero-noise limit $\dot x=b(x)$ is the direction which the system can follow without the aid of the noise (as $\eps\searrow0$), and so any curve segment that follows a drift flowline has zero cost.


This observation complicates our existence proofs (which are based on Proposition \ref{min prop}) significantly, since it allows for long curves with vanishing or small action, and thus for minimizing sequences $(\gamma_n)_{n\in\N}$ with unbounded curve lengths. For this reason, the flowline diagram of the drift vector field~$b$ (or of a generalization thereof in the case of general geometric actions) will be the key object of our main criteria, Propositions \ref{blm prop 1} and \ref{blm prop 2}.

Surprisingly, the drift $b$ is in fact all that these criteria depend on, while other aspects such as the nature of the noise in the case of large deviation geometric actions are largely irrelevant (except for the brute force estimate needed in Lemma \ref{main corollary}). One may now argue that this indicates that our criteria may waste valuable information, potentially leaving us undecided where in fact a minimizer exists. However, we will give an example in which no minimizer exists and where the location that is responsible for this non-existence coincides exactly with the location where our criteria fail. This suggests that if our criteria fail, they do so for a reason.

\paragraph{Properties of Minimum Action Curves.}
Then turning our attention to the properties of minimizers, we consider a subclass of geometric actions that still contains the large deviation geometric actions mentioned above.
For our main result, suppose that the drift~$b$ has two basins of attraction (see e.g.~Figures~\ref{SDE transition}, \ref{fig two basins}~(b) or \ref{fig transition through crit points}), and let~$\gamma^\star$ be the minimum action curve leading from one attractor to the other.

Since for the class of actions in question $\gamma^\star$ can follow the flowlines of~$b$ at no cost, it is not surprising that the second (``downhill'') part of~$\gamma^\star$ will be a flowline connecting a saddle point to the second attractor. In particular, the \emph{last} hitting point of the separatrix is a point with zero drift (the saddle point).
Here we prove also the non-obvious fact that also the \emph{first} hitting point must have zero drift. In practice, such knowledge can be used either to gain confidence in the output of algorithms that compute $\gamma^\star$ numerically (such as the geometric minimum action method, gMAM, see \cite{CPAM,thesis}), or to speed up such algorithms by restricting their search to only those curves with these properties.

Finally, we will demonstrate how the same result (Corollary \ref{first hit corollary}) that is used to prove this property can also be used to prove the non-existence of minimizers is some situations.

\paragraph{The Structure of This Monograph.} This monograph is split into three parts: In Part~\ref{part results} we lay out all our results on the existence of minimum action curves, we demonstrate on several examples how to use our criteria in practice, we discuss when minimizers do \textit{not} exist, and finally we prove 
the above-mentioned properties of minimum action curves. The reader who is only interested in gaining enough working knowledge to use our existence criteria in practice will find it sufficient to read only this first part.

Part~\ref{part proofs} contains essential proofs of a local existence property to which the global statement had been reduced in Part~\ref{part results}. The reader who wants to know why the criteria in Part~\ref{part results} work should also read this second~part.

Part~\ref{part superprop} contains the proof of a very technical lemma that is needed in the second part in order to deal with curves that are passing a saddle point. The reader can decide to skip this part without losing much insight.

\paragraph{Notation.}
For a point $x\in\Rn$ and a radius $r>0$ we define the open and the closed balls
\[ B_r(x):=\big\{w\in\Rn\,\big|\,|w-x|<r\big\}\qquad\text{\!and\!}\qquad \bar B_r(x):=\big\{w\in\Rn\,\big|\,|w-x|\leq r\big\}. \]
Similarly, for a set $A\subset\Rn$ and a distance $r>0$ we define the open and the closed neighborhoods $N_r(A)$ and $\bar N_r(A)$ as
\[ N_r(A)\!:=\!\big\{w\in\Rn\,\big|\dist(w,A)<r\big\} \;\;\text{and}\,\; \bar N_r(A)\!:=\!\big\{w\in\Rn\,\big|\dist(w,A)\leq r\big\}. \]
Furthermore, we denote by $\bar A$ the closure of $A$ in $\Rn$, and by $A^c:=\Rn\setminus A$, $A^\circ:=(\overline{A^c})^c$ and $\partial A:=\bar A\setminus A^\circ$ the complement, the interior and the boundary of $A$ in $\Rn$, respectively.
For a point $x$ on a $C^1$-manifold~$M$ we denote by $T_xM$ the tangent space of $M$ at $x$.

For a function $f$ and a subset $A$ of its domain we denote by $f|_A$ the restriction of $f$ to~$A$, and we use notation such as $f\equiv c$ to emphasize that $f$ is constant. Expressions of the form $\One_{\textit{cond}}$ denote the indicator function that returns the value $1$ whenever the condition $cond$ is fulfilled and $0$ otherwise.

Finally, throughout the entire paper we let $\E\subseteq D\subseteq\Rn$ be two fixed connected sets, where $D$ is open, and where $\E$ is closed in $D$. An additional technical assumption on $\E$ will be made at the beginning of Section~\ref{first ex result subsec}.
$D$~will serve as our state space, i.e.\ as the set that the curves $\gamma$ live in, and $\E$ will be used for an additional constraint in our minimization, i.e.\ we will in fact minimize over $\GA:=\{\gamma\subset\E\,|\,\gamma \text{ starts in }A_1\text{ and ends in }A_2\}$. (For simplicity we suppress the dependence of $\GA$ on $\E$ in our notation.)
If no such constraint is desired, just choose $\E:=D$. The reader is encouraged to consider this simple unconstrained case $\E=D$ whenever on first reading he may feel overwhelmed by some definition or statement involving~$\E$.

\paragraph{Acknowledgments.} The work of M.~Heymann is partially supported by the National Science Foundation via grant DMS-0616710.
I want to thank Weinan E, Gerard Ben Arous, Eric Vanden-Eijnden, Lenny Ng, Marcus Werner and Stephanos Venakides for some useful suggestions and comments.
I also want to thank the Duke University Mathematics Department and in particular Jonathan Mattingly and Mike Reed for providing me with the inspiring environment and the freedom without which this work would not have been possible.

\section{Geometric Action Functionals} \label{main results}

\subsection{Rectifiable Curves and Absolutely Continuous Functions} \label{rec curves subsec}
An unparameterized oriented curve $\gamma$ is an equivalence class of functions\linebreak $\varphi\in C([0,T],D)$,\, $T>0$, that are identical up to continuous non-decreasing changes of their parameterizations, or more formally, whose Fr\'e\-chet distance to each other vanishes. \textit{In this paper we will tacitly assume that all our curves are unparameterized and oriented.}

A curve $\gamma$ is called \textit{rectifiable} \cite[p.115]{Stein} if for some (and thus for every) parameterization $\varphi\in C([0,T],D)$ of $\gamma$ we have
\[
  \length(\gamma):=\length(\varphi):=\sup_{\substack{N\in\N\\0=t_0<\dots<t_N=T}}\,
  \sum_{i=1}^N\big|\varphi(t_i)-\varphi(t_{i-1})\big|<\infty.
\]
It is easy to see that $\length(\varphi)$ is in fact the same for any parameterization $\varphi$ of $\gamma$, and that it is finite if and only if all the component functions of $\varphi$ are of bounded variation \cite[Thm.\ 3.1]{Stein}. We will denote the set of rectifiable curves by $\Gamma$.

A function $\varphi\colon[0,T]\to D$ is said to be \textit{absolutely continuous} \cite[p.127]{Stein} if for every $\eps>0$ there exists a $\delta>0$ such that for any finite collection of disjoint intervals $[t_{i-1},t_i)\subset[0,T]$, $i=1,\dots,N$, we have
\[ \sum_{i=1}^N(t_i-t_{i-1})<\delta\qquad\Longrightarrow\qquad\sum_{i=1}^N\big|\varphi(t_i)-\varphi(t_{i-1})\big|<\eps. \]
We will denote the space of absolutely continuous functions with values in our fixed set $D$ by $\AC{0,T}$. One can show \cite[Prop.\ 1.12(ii) and Thm.\ 3.11]{Stein} that a function $\varphi$ is in $\AC{0,T}$ if and only if there exists an $L^1$-function which we denote by $\varphi'$ such that $\varphi(t)=\varphi(0)+\int_0^t\varphi'(\tau)\,d\tau$ for $\forall t\in[0,T]$. In that case, $\varphi$ is differentiable in the classical sense at almost every $t\in[0,T]$, with derivative $\varphi'(t)$.

Clearly, every function $\varphi\in\AC{0,T}$ describes a rectifiable curve $\gamma$ since for every partition $0=t_0<\cdots<t_N=T$ we have
\[ \sum_{i=1}^N\big|\varphi(t_i)-\varphi(t_{i-1})\big|=\sum_{i=1}^N\bigg|\int_{t_i}^{t_{i-1}}\varphi'\,dt\bigg|\leq\int_0^T|\varphi'|\,dt<\infty, \]
and it is not hard to show \cite[Thm.\ 4.1]{Stein} that $\length(\gamma)=\int_0^T|\varphi'|\,dt$.
The reverse is not true: Not every function $\varphi$ that describes a rectifiable curve $\gamma\in\Gamma$ is necessarily absolutely continuous (a counterexample can be constructed using the Cantor function \cite[p.125]{Stein}). However, we have the following:

\begin{lemma}[Parameterization by arclength] \label{arclength lemma}
(i) Any curve $\gamma\in\Gamma$ can be parameterized by a unique function $\varphi_\gamma\in\AC{0,1}$ with $|\varphi_\gamma'|\equiv\length(\gamma)$ a.e..\\[.1cm]
(ii) If $\varphi\in\AC{0,T}$ is any absolutely continuous parameterization of $\gamma$ then $\varphi=\varphi_\gamma\circ\beta$ for some absolutely continuous function $\beta\colon[0,T]\to[0,1]$, and we have $\varphi'=(\varphi_\gamma'\circ\beta)\cdot\beta'$ and $\beta'\geq0$ a.e.~on $[0,1]$.
\end{lemma}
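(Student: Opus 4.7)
For (i), the plan is to build $\varphi_\gamma$ by reparameterizing any continuous parameterization $\psi\in C([0,T],D)$ of $\gamma$ by normalized arclength. Setting $L:=\length(\gamma)$ (the case $L=0$ being trivial) and $s(t):=\length(\psi|_{[0,t]})$, define $\sigma(t):=s(t)/L$, which is continuous and non-decreasing on $[0,T]$ with $\sigma(0)=0$ and $\sigma(T)=1$. Any two times $t_1<t_2$ with $\sigma(t_1)=\sigma(t_2)$ have vanishing intermediate arclength, forcing $\psi$ to be constant on $[t_1,t_2]$. Therefore $\varphi_\gamma(\alpha):=\psi(t)$ for any $t$ with $\sigma(t)=\alpha$ is well defined, and its continuity follows from uniform continuity of $\psi$ on $[0,T]$.

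The core analytic step is to promote $\varphi_\gamma$ to an AC function with $|\varphi_\gamma'|\equiv L$ a.e. For $\alpha_1<\alpha_2$, choosing $t_i$ with $\sigma(t_i)=\alpha_i$, the arclength of $\varphi_\gamma$ on $[\alpha_1,\alpha_2]$ equals that of $\psi$ on $[t_1,t_2]$, namely $L(\alpha_2-\alpha_1)$. This yields the Lipschitz bound $|\varphi_\gamma(\alpha_2)-\varphi_\gamma(\alpha_1)|\leq L(\alpha_2-\alpha_1)$, hence $\varphi_\gamma\in\AC{0,1}$ with $|\varphi_\gamma'|\leq L$ a.e. Combining with $\int_0^1|\varphi_\gamma'|\,d\alpha=\length(\gamma)=L$ from \cite[Thm.\ 4.1]{Stein} forces $|\varphi_\gamma'|\equiv L$ a.e. Uniqueness is then immediate: any other such $\tilde\varphi$ factors as $\varphi_\gamma\circ\beta$ for some continuous non-decreasing surjection $\beta\colon[0,1]\to[0,1]$, and equating arclengths on $[0,\alpha]$ gives $L\alpha=L\beta(\alpha)$, so $\beta=\mathrm{id}$.

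For (ii), I would set $s(t):=\int_0^t|\varphi'|\,d\tau=\length(\varphi|_{[0,t]})$ and $\beta(t):=s(t)/L$, so that $\beta\in\AC{0,T}$ with $\beta'=|\varphi'|/L\geq 0$ a.e. Since both $\varphi$ and $\varphi_\gamma\circ\beta$ are continuous parameterizations of $\gamma$ whose arclength from time $0$ equals $s(t)$, the identity $\varphi=\varphi_\gamma\circ\beta$ follows by the same arclength-position argument used to construct $\varphi_\gamma$ in (i) (two points of $\gamma$ at the same arclength must coincide). The derivative formula $\varphi'=(\varphi_\gamma'\circ\beta)\cdot\beta'$ a.e.\ then drops out of the chain rule for the composition of a Lipschitz function with an absolutely continuous one.

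I expect the main obstacle to lie in the a.e.\ identity $|\varphi_\gamma'|\equiv L$ in (i): the Lipschitz estimate delivers only $\leq L$, and promoting this to equality requires the nontrivial total-length identity $\int_0^1|\varphi_\gamma'|\,d\alpha=\length(\gamma)$ for AC parameterizations. A secondary delicate point is the chain rule in (ii), which can fail for general AC compositions (e.g.\ Cantor-staircase examples) and therefore relies crucially on the Lipschitz regularity of $\varphi_\gamma$ established in (i).
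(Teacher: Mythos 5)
Your proposal is correct and follows the same overall structure as the paper's proof: define $\beta(t)$ as the normalized arclength of $\varphi|_{[0,t]}$, show $\beta$ is absolutely continuous, deduce the factorization $\varphi=\varphi_\gamma\circ\beta$, and obtain the derivative formula. Where you diverge is in how absolute continuity of $\beta$ is established. The paper works directly from the $\varepsilon$--$\delta$ definition: it expands $\length(\varphi|_{[t_{i-1},t_i]})$ as a supremum over partitions and controls the resulting nested sum using the absolute continuity of $\varphi$. You instead invoke the identity $\length(\varphi|_{[0,t]})=\int_0^t|\varphi'|\,d\tau$ (valid since $\varphi\in\AC{0,T}$), so that $\beta$ is an indefinite integral of an $L^1$ function and hence automatically AC, with $\beta'=|\varphi'|/L$ coming for free. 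Your route is the more economical; the paper's keeps the argument self-contained at the level of the supremum definition of length. For the derivative formula, the paper integrates and appeals to the substitution rule for absolutely continuous composites (citing \cite[p.149, Ex.\ 21]{Stein}), whereas you invoke the chain rule for the composition of a Lipschitz function with an AC function directly; these are equivalent, and you correctly flag that the Lipschitz regularity of $\varphi_\gamma$ obtained in (i) is what makes the chain rule legitimate (it fails for general AC compositions, as the Cantor-staircase example shows). Part (i) in the paper is deferred wholesale to \cite[p.136]{Stein}; your unpacking, including the $|\varphi_\gamma'|\leq L$ a.e.\ Lipschitz bound combined with $\int_0^1|\varphi_\gamma'|\,d\alpha=L$ to force $|\varphi_\gamma'|\equiv L$ a.e., matches the standard construction.
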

\begin{proof}
(i) This is a trivial modification of \cite[p.136]{Stein}.\\[.15cm]
(ii) In the proof in \cite[p.136]{Stein} it is shown that for any parameterization $\varphi\in C([0,T],D)$ of $\gamma$ the function $\varphi_\gamma$ fulfills $\varphi(t)=\varphi_\gamma(\beta(t))$ for $\forall t\in[0,T]$, where $\beta\colon[0,T]\to[0,1]$ is defined by $\beta(t):=\length\!\big(\varphi|_{[0,t]}\big)/\length(\gamma)$.
For any collection of disjoint intervals $[t_{i-1},t_i)\subset[0,T]$, $i=1,\dots,N$, we have
\begin{align*}
   \sum_{i=1}^N\big(\beta(t_i)-\beta(t_{i-1})\big)
    &=\frac{1}{\length(\gamma)}\sum_{i=1}^N\length\!\big(\varphi|_{[t_{i-1},t_i]}\big)\\
    &\hspace{-3.1cm}=\frac{1}{\length(\gamma)}\sum_{i=1}^N\,
       \sup_{\substack{M_i\in\N\\t_{i-1}=s_0^i<\dots<s_{M_i}^i=t_i}}\,
       \sum_{k=1}^{M_i}\big|\varphi(s_k^i)-\varphi(s_{k-1}^i)\big| \\
    &\hspace{-3.1cm}=\frac{1}{\length(\gamma)}\!
\sup_{\substack{M_1\in\N\\t_0=s_0^1<\dots<s_{M_1}^1=t_1}}\hspace{-.1cm}\cdots\hspace{-.3cm}
\sup_{\substack{M_N\in\N\\t_{N-1}=s_0^N<\dots<s_{M_N}^N=t_N}}
\sum_{i=1}^N\sum_{k=1}^{M_i}\big|\varphi(s_k^i)-\varphi(s_{k-1}^i)\big|,
\end{align*}
and since for $\varphi\in\AC{0,T}$ the last double sum can be made arbitrarily small by ensuring that $\sum_{i=1}^N\sum_{k=1}^{M_i}(s_k^i-s_{k-1}^i)=\sum_{i=1}^N(t_i-t_{i-1})$ is sufficiently small, this shows that $\beta$ is absolutely continuous. Clearly, $\beta'\geq0$ a.e.~since $\beta$ is non-decreasing, and for $\forall t\in[0,T]$ we have
\begin{align*}
\int_0^t\varphi'\,d\tau
  &= \varphi(t)-\varphi(0)
   = \varphi_\gamma(\beta(t))-\varphi_\gamma(\beta(0)) \\
  &= \int_{\beta(0)}^{\beta(t)}\varphi_\gamma'\,d\alpha
   = \int_0^t\varphi_\gamma'(\beta(\tau))\beta'(\tau)\,d\tau
\end{align*}
(for the last step, see \cite[p.149, Ex.21]{Stein}),
which implies that $\varphi'=(\varphi_\gamma'\circ\beta)\cdot\beta'$ a.e.~on $[0,T]$.
\end{proof}
The following lemma is a result on the uniform convergence of absolutely continuous functions. We will use the notation $\varphi\subset G$ (for a function $\varphi\in\AC{0,1}$ and a set $G\subset\Rn$) to indicate that $\varphi(\alpha)\in G$ for $\forall\alpha\in[0,1]$.
Similarly, for a curve $\gamma\in\Gamma$ we write $\gamma\subset G$ to indicate that $\varphi_\gamma\subset G$.

\begin{lemma} \label{min prop0}
(i) If a sequence $(\varphi_n)_{n\in\N}\subset\AC{0,1}$ fulfills $\varphi_n\subset K$ for $\forall n\in\N$ and some compact set $K\subset D$, and if
\begin{equation} \label{arzela 2}
M:=\sup_{n\in\N}\ \esssup_{\alpha\in[0,1]}|\varphi_n'(\alpha)|<\infty,
\end{equation}
then there exists a uniformly converging subsequence.\\[.2cm]
(ii) If a sequence $(\varphi_n)_{n\in\N}\subset\AC{0,1}$ fulfilling the conditions of part (i) converges uniformly then its limit $\varphi$ is in $\AC{0,1}$ and fulfills $|\varphi'|\leq M$ a.e..
\end{lemma}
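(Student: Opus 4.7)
The plan is to observe that the essential-sup bound on $\varphi_n'$ forces each $\varphi_n$ to be $M$-Lipschitz, which makes the sequence equicontinuous; then (i) becomes Arzelà–Ascoli, and (ii) becomes the standard fact that a Lipschitz function on $[0,1]$ is absolutely continuous with derivative bounded by its Lipschitz constant.

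\textbf{Step 1 (Lipschitz bound).} For any $n$ and any $0\le s<t\le 1$, using the integral representation of an absolutely continuous function guaranteed in the discussion preceding the lemma,
\[
  |\varphi_n(t)-\varphi_n(s)|=\bigg|\int_s^t\varphi_n'(\tau)\,d\tau\bigg|\le\int_s^t|\varphi_n'(\tau)|\,d\tau\le M(t-s).
\]
So every $\varphi_n$ is $M$-Lipschitz on $[0,1]$.

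\textbf{Step 2 (Proof of (i)).} Because $\varphi_n([0,1])\subset K$ for all $n$ and $K$ is compact (hence bounded), the sequence $(\varphi_n)$ is uniformly bounded in $\Rn$. The uniform Lipschitz bound of Step~1 gives equicontinuity. By the Arzelà–Ascoli theorem a subsequence converges uniformly on $[0,1]$ to some continuous limit $\varphi\colon[0,1]\to\Rn$. Since $K$ is compact and thus closed, and each $\varphi_n\subset K$, the uniform limit satisfies $\varphi\subset K\subset D$, so $\varphi$ takes values in $D$.

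\textbf{Step 3 (Proof of (ii): Lipschitz limit).} Let $\varphi$ denote the uniform limit of the sequence in the hypothesis of (ii). Passing to the limit $n\to\infty$ in the Step~1 inequality yields, for all $0\le s<t\le1$,
\[
  |\varphi(t)-\varphi(s)|\le M(t-s),
\]
so $\varphi$ is $M$-Lipschitz on $[0,1]$; in particular $\varphi\in\AC{0,1}$ since every Lipschitz function on a compact interval is absolutely continuous (take $\delta=\eps/M$ in the definition of absolute continuity recalled just before the lemma). By the characterization of $\AC{0,1}$ already cited in the text, $\varphi$ has a derivative $\varphi'\in L^1([0,1])$ and is classically differentiable a.e..

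\textbf{Step 4 (The derivative bound).} At any point $t\in[0,1]$ where $\varphi'(t)$ exists in the classical sense, the Lipschitz bound gives
\[
  |\varphi'(t)|=\lim_{h\to0}\frac{|\varphi(t+h)-\varphi(t)|}{|h|}\le M,
\]
so $|\varphi'|\le M$ a.e., completing the proof.

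There is no real obstacle here; the only point requiring any attention is justifying that the uniform limit stays inside $D$ (handled by compactness/closedness of $K$) and that the pointwise bound $|\varphi_n'|\le M$ a.e.\ transfers through the uniform limit, which is achieved by first converting the essential-sup bound into a pointwise Lipschitz estimate before passing to the limit.
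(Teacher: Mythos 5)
Your proof is correct and follows essentially the same route as the paper's: derive the uniform Lipschitz bound from the essential-sup bound via the integral representation, apply Arzelà–Ascoli for (i), pass the Lipschitz estimate to the limit to get absolute continuity and the a.e.\ derivative bound for (ii). The only cosmetic difference is that you note explicitly that $M$-Lipschitz implies absolute continuity, whereas the paper verifies the $\eps$-$\delta$ definition directly from the same telescoping sum; these are the same argument.
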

\begin{proof}
(i) The sequence $(\varphi_n)_{n\in\N}$ is equicontinuous since by \eqref{arzela 2} we have
\[ |\varphi_n(\alpha_1)-\varphi_n(\alpha_0)|
   = \bigg|\int_{\alpha_0}^{\alpha_1}\varphi_n'\,d\alpha\bigg|
   \leq \int_{\alpha_0}^{\alpha_1}|\varphi_n'|\,d\alpha
   \leq M(\alpha_1-\alpha_0)  \]
for $\alpha_0<\alpha_1$ and $\forall n\in\N$, and so we can apply the Arzel\`a-Ascoli theorem.\\[.2cm]
(ii) By the same estimate, for any collection of disjoint intervals $[\alpha_{i-1},\alpha_i)$ $\subset[0,1]$, $i=1,\dots,N$, we have
\begin{align*}
\sum_{i=1}^N\big|\varphi(\alpha_i)-\varphi(\alpha_{i-1})\big|
    &= \lim_{n\to\infty} \sum_{i=1}^N\big|\varphi_n(\alpha_i)-\varphi_n(\alpha_{i-1})\big|
     \leq M\sum_{i=1}^N(\alpha_i-\alpha_{i-1}).
\end{align*}
This shows that $\varphi$ is absolutely continuous, and (taking $N=1$ and recalling that $\varphi'$ is the classical derivative a.e.) that $|\varphi'|\leq M$ a.e.. Since $K$ is compact and $\varphi_n\subset K$ for $\forall n\in\N$, we have $\varphi\subset K\subset D$ and thus $\varphi\in\AC{0,1}$.
\end{proof}

\paragraph{Curves that pass points in infinite length.}
Sometimes we will have to work with curves that do not have finite length (i.e.~that are not rectifiable). We denote by $\tC\supset\AC{0,1}$ the space of all functions in $C([0,1],D)$ that are absolutely continuous in neighborhoods of all but at most finitely many $\alpha_i\in[0,1]$, and we denote by $\tG\supset\Gamma$ the set of all curves that can be parameterized by a function $\varphi\in\tC$.

Note that for $\forall\varphi\in\tC$, $\varphi'$ is still defined a.e., but one can see that for these exceptional values $\alpha_i$ we have $\int_{[0,1]\cap[\alpha_i-\eps,\alpha_i+\eps]}|\varphi'|\,d\alpha=\infty$ for $\forall\eps>0$.\footnote{The key argument for this can be found at the end of the proof of Proposition~\ref{blm prop 2}.}
We therefore say that the curve $\gamma\in\tG$ given by $\varphi$ \textit{``passes the points $\varphi(\alpha_i)$ in infinite length.''}

Of particular use in our work is, for fixed $x\in D$, the set $\tGx$ of all curves that are either of finite length (i.e.~rectifiable) or that pass $x$ once in infinite length (note that $\Gamma\subset\tGx\subset\tG$).
More precisely, these are the curves that can be parameterized by functions in the set $\Cx$, which we define to be the set of functions $\varphi\in C([0,1],D)$ such that\\[.2cm]
\hspace*{.3cm}
\begin{tabular*}{10cm}[t]{ll}
either &$\varphi\in\AC{0,1}$, \\[.2cm]
or     &
  \begin{minipage}[t]{10cm}$\varphi(\tfrac12)=x$,\\
                          and $\varphi|_{[0,1/2-a]}$ and $\varphi|_{[1/2+a,1]}$ are abs.~cont.~for $\forall a\in(0,\tfrac12)$.\end{minipage}
\end{tabular*}\\[.2cm]
See the end of this section and Fig.~\ref{curve illustration} for an illustration of these classes of curves.\\[.2cm]
In preparation for Lemma \ref{min prop0b}, which is the equivalent of Lemma \ref{min prop0} for sequences of functions in $\Cx$, we introduce the following notation:
For a curve $\gamma$ and a point $x$ we say that \textit{$\gamma$ passes $x$ at most once} if for any parameterization $\varphi\in C([0,1])$ of $\gamma$ we have
\begin{equation} \label{pass point once def}
  \big(\exists 0\leq\alpha_1<\alpha_2\leq1\colon\ \varphi(\alpha_1)=\varphi(\alpha_2)=x\big)
\quad\Rightarrow\quad\forall\alpha\in[\alpha_1,\alpha_2]\colon\ \varphi(\alpha)=x.
\end{equation}
For a Borel set $E\subset D$ and a curve $\gamma\in\tilde\Gamma$ we define
\[ \length(\gamma|_E):=\int_\gamma\One_{z\in E}\,|dz|=
\int_0^1|\varphi'|\One_{\varphi\in E}\,d\alpha\ \in[0,\infty] \]
for any parameterization $\varphi\in\tC$ of $\gamma$.

\begin{lemma} \label{min prop0b}
Let $x\in D$, let the sequence $(\gamma_n)_{n\in\N}\subset\tGx$ fulfill $\gamma_n\subset K$ for $\forall n\in\N$ and some compact set $K\subset D$, suppose that every curve $\gamma_n$ passes~$x$ at most once, and suppose that there exists a function $\eta\colon(0,\infty)\to[0,\infty)$ such that
\begin{equation} \label{length away from x cond}
  \forall n\in\N\ \,\forall u>0:\ \ \length\!\big(\gamma_n|_{\bar B_u(x)^c}\big)\leq\eta(u).
\end{equation}
Then there exist parameterizations $\varphi_n\in\Cx$ of the curves $\gamma_n$ such that a subsequence $(\varphi_{n_k})_{k\in\N}$ converges pointwise on $[0,1]$ and uniformly on the sets $[0,\frac12-a]\cup[\frac12+a,1]$, $a\in(0,\frac12)$. The limit $\varphi$ is in $\Cx$, and the corresponding curve $\gamma\in\tGx$ fulfills\vspace{-.2cm}
\begin{equation} \label{length away from x result}
  \hspace{1.18cm}\forall u>0\colon\ \length\!\big(\gamma|_{\bar B_u(x)^c}\big)\leq\eta(u).
\end{equation}
\end{lemma}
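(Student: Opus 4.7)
The plan is to construct parameterizations $\varphi_n\in\Cx$ of the curves $\gamma_n$ that are Lipschitz with a constant uniform in $n$ on every set $[0,\tfrac12-\tfrac1m]\cup[\tfrac12+\tfrac1m,1]$, $m\in\N$.  Lemma~\ref{min prop0}(i), applied to each such set and combined with a standard diagonal argument, will then extract a subsequence $(\varphi_{n_k})$ converging uniformly on all these sets.  Pointwise convergence on all of $[0,1]$ follows because every $\alpha\neq\tfrac12$ lies in such a set for $m$ large enough, and because $\varphi_n(\tfrac12)$ will be chosen to lie in the compact set $K$ and thus have a convergent subsequence.

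For the construction, fix for each $n$ a point $y_n\in\gamma_n$ of minimum distance to $x$ (so $y_n=x$ whenever $\gamma_n$ passes $x$), and split $\gamma_n$ at $y_n$ into two halves; the parameterization $\varphi_n$ will send $0$ and $1$ to the endpoints of $\gamma_n$ and $\tfrac12$ to $y_n$.  Let $\psi_n\colon[0,\tau_n]\to D$ (with $\tau_n\in(0,\infty]$) be the arclength parameterization of the first half, and define $\varphi_n|_{[0,1/2]}:=\psi_n\circ h_n$ via an increasing, piecewise linear homeomorphism $h_n\colon[0,\tfrac12]\to[0,\tau_n]$ determined as follows.  Setting $\alpha_m:=\tfrac12-\tfrac1{m+1}$ for $m\in\N$, prescribe $h_n(\alpha_1)=h_n(0):=0$ and
\[
  h_n(\alpha_m):=\inf\{s\in[0,\tau_n)\colon\psi_n(s)\in\bar B_{1/(m+1)}(x)\}\quad(m\geq2),
\]
with the convention that the infimum equals $\tau_n$ if the set is empty, and interpolate linearly in between.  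Between consecutive first-entries into $\bar B_{1/(m+1)}(x)$ and $\bar B_{1/(m+2)}(x)$ the curve $\psi_n$ stays inside $\bar B_{1/(m+2)}(x)^c$, so by \eqref{length away from x cond} the increment $h_n(\alpha_{m+1})-h_n(\alpha_m)$ is at most $\eta(\tfrac1{m+2})$.  Since $\alpha_{m+1}-\alpha_m=\tfrac1{(m+1)(m+2)}$, the Lipschitz constant on each subinterval is at most $(m+1)(m+2)\,\eta(\tfrac1{m+2})$, uniform in $n$.  Parameterize the second half symmetrically on $[\tfrac12,1]$.

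The limit $\varphi$ is absolutely continuous on each $[0,\tfrac12-\tfrac1m]$ and $[\tfrac12+\tfrac1m,1]$ by Lemma~\ref{min prop0}(ii).  Moreover, $\varphi_n(\alpha)\in\bar B_{1/(m+1)}(x)$ for $\alpha\in[\alpha_m,\tfrac12]$ by construction, a property inherited by $\varphi$ and yielding continuity of $\varphi$ at $\tfrac12$.  Passing to a further subsequence with $y_{n_k}\to y\in K$, we have $\varphi(\tfrac12)=y$: if $y=x$, then $\varphi$ lies in the second subset of $\Cx$; if $y\neq x$, then the curves $\gamma_{n_k}$ are eventually confined to $\bar B_{|y-x|/2}(x)^c$ and hence of total length at most $\eta(|y-x|/2)$, forcing $\varphi\in\AC{0,1}\subset\Cx$.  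The length bound \eqref{length away from x result} then follows from the lower semi-continuity of arclength under uniform convergence on each $[0,\tfrac12-\tfrac1m]\cup[\tfrac12+\tfrac1m,1]$, upon letting $m\to\infty$.  The main technical hurdle is the uniform Lipschitz bound on $h_n$, which works precisely because between consecutive ``first entries'' the curve stays outside a smaller ball, so that its arclength there is controlled by \eqref{length away from x cond} independently of $n$, regardless of how many times the curve may weave in and out of small neighborhoods of $x$.
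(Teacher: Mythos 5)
Your construction of the parameterizations and the uniform Lipschitz bound on $h_n$ are essentially correct and mirror the paper's own approach (the paper works with the dyadic sequence $d_k^\pm=\frac12\pm 2^{-(k+1)}$ and uses the first-entry and last-exit times $\min I_{n,k}$, $\max I_{n,k}$ of the full curve into $\bar B_{u_k}(x)$, but splitting at the nearest point $y_n$ and using first entries on each half amounts to the same thing). The Arzel\`a--Ascoli diagonal argument and the final dichotomy on whether $y=x$ or $y\neq x$ are also sound.

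There is, however, a genuine gap: the claim that $\varphi_n(\alpha)\in\bar B_{1/(m+1)}(x)$ for all $\alpha\in[\alpha_m,\frac12]$ is false. By construction $\varphi_n(\alpha_m)$ is the first-entry point of $\psi_n$ into $\bar B_{1/(m+1)}(x)$ and hence lies in that ball; but nothing prevents $\psi_n$ (and hence $\varphi_n$) from subsequently leaving the ball and returning, possibly many times. The hypothesis \eqref{length away from x cond} bounds only the \emph{total} arclength of such excursions, not their occurrence, and your own closing remark about the curve weaving in and out of small neighborhoods of $x$ is precisely the phenomenon that breaks this containment. What you do have is only that $|\varphi(\alpha_m)-x|\leq\frac1{m+1}$ at the discrete parameters $\alpha_m\nearrow\frac12$, i.e.\ $\liminf_{\alpha\nearrow\frac12}|\varphi(\alpha)-x|=0$, which is not enough to conclude continuity of $\varphi$ at $\frac12$.

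To repair this you must establish the length bound \eqref{length away from x result} for the limit \emph{before} proving continuity at $\frac12$, and then derive continuity by contradiction. For the length bound, ``lower semi-continuity of arclength'' alone is not quite the right tool because the indicator $\One_{|w-x|>u}$ is discontinuous in $w$; instead one applies Lemma~\ref{lower semi lemma 2}~(i) to the auxiliary geometric action with local action $\ell(w,y):=h_v(|w-x|)\,|y|$, where $h_v$ is a continuous piecewise-linear minorant of $q\mapsto\One_{q>u}$, and then lets $v\searrow u$ and the cut-off parameter $a\searrow0$. Once the bound $\int_0^1|\varphi'|\,\One_{|\varphi-x|>u}\,d\alpha\leq\eta(u)$ is in hand, continuity at $\frac12$ follows because a failure of convergence would force $\varphi$ to cross between the two disjoint closed sets $\{w:|w-x|\leq u\}\cap K$ and $\{w:|w-x|\geq 2u\}$ infinitely often as $\alpha\nearrow\frac12$, making $\int_0^{1/2}|\varphi'|\,\One_{u<|\varphi-x|<2u}\,d\alpha$ infinite and contradicting the bound just proved. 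With these two adjustments (drop the containment claim, and reorder the length estimate ahead of the continuity argument), your proof closes.
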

\begin{proof}
See Appendix \ref{weak convergence appendix}. This proof uses Lemma \ref{lower semi lemma 2} (i).
\end{proof}

\noindent Introducing some final notation, for two sets $A_1,A_2\subset\E$ we write
\begin{align*}
 \GA  &:=\big\{\gamma\in\Gamma\,\big|\,\text{$\gamma\subset\E$, $\gamma$ starts in $A_1$ and ends in $A_2$}\big\}, \\*
 \CIA &:=\big\{\varphi\in\AC{0,1}\,\big|\,\varphi\subset\E,\ \varphi(0)\in A_1,\ \varphi(1)\in A_2 \big\},
\end{align*}
and for two points $x_1,x_2\in\E$ we similarly define $\Gx$ and $\CI$.
The sets $\tGA$, $\tCA$, $\tGxx$, $\tCxx$, $\tGxxx$ and $\Cxx$ are defined analogously.
\begin{figure}[t]
\centering
\includegraphics[width=6.1cm]{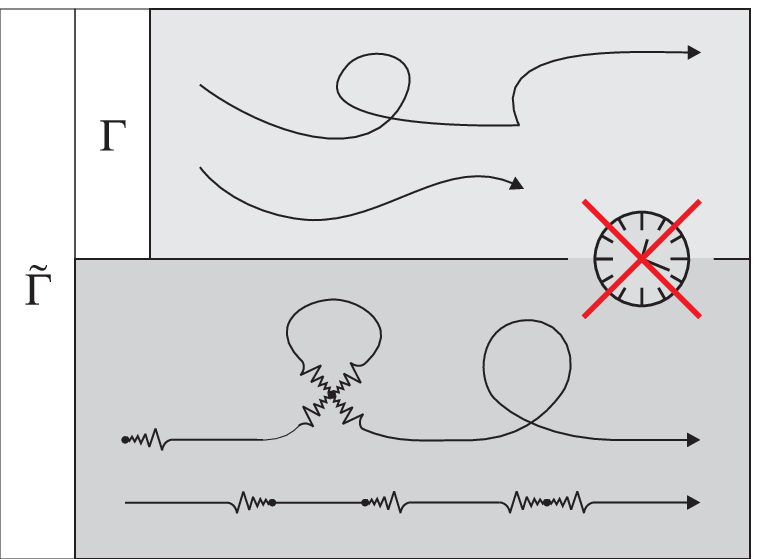}\hspace{.05cm}
\includegraphics[width=6.1cm]{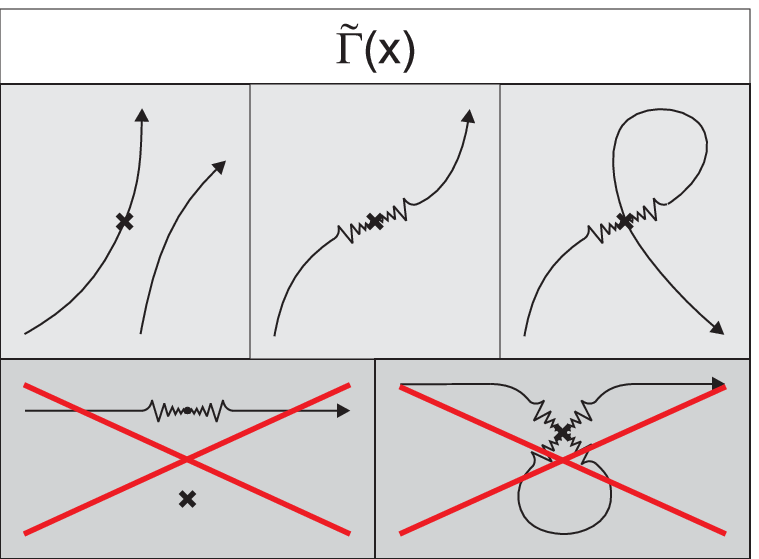}
\caption{\label{curve illustration}
\small Illustration of the various classes of curves.}
\end{figure}
\paragraph{Summary of the various classes of curves (see Fig.~\ref{curve illustration}).} All curves are unparameterized and oriented, and they may have loops and cusps. The class $\Gamma$ contains only curves with finite length, while curves in $\tG\supset\Gamma$ may reach and/or leave finitely many points in infinite length, also repeatedly. For some fixed $x\in D$ (marked by the cross), $\tG(x)$ contains all of $\Gamma$, plus all the curves that pass $x$ once in infinite length; they cannot pass any other point in infinite length, and they cannot pass $x$ twice in infinite length. The sub- and superscripts $x_1$ and $x_2$ or $A_1$ and $A_2$ add constraints to the start and end points of these functions and curves and require them to take values in~$\E$.

\subsection{The Class \texorpdfstring{$\G$}{G} of Geometric Actions, Drift Vector Fields} \label{not def sub}

In this section we will define the class $\G$ of geometric action functionals, and we will generalize the concept of a ``drift vector field'' $b(x)$ from the large deviation geometric action of the SDE \eqref{SDE0}, given by \eqref{local geo action0}, to general geometric actions $S\in\G$.

\begin{definition} \label{tech lemma 1}
We denote by $\mathcal{G}$ the set of all functionals $S\colon\tG\to[0,\infty]$ of the form
\begin{equation} \label{geo action formula}
 S(\gamma):=\int_\gamma\ell(z,dz):=\int_0^1\ell(\varphi,\varphi')\,d\alpha,
\end{equation}
where $\varphi\in\tC$ is an arbitrary parameterization of $\gamma$, and where the local action $\ell\in C(D\times\Rn,[0,\infty))$ has the following properties:\\[.2cm]
\begin{tabular}{rl}
(i)   & $\forall x\in D\ \,\forall y\in\Rn\ \,\forall c\geq0\colon\ \ \ell(x,cy)=c\ell(x,y)$,\\[.1cm]
(ii)  & for every fixed $x\in D$ the function $\ell(x,\cdot\,)$ is convex. 
\end{tabular}\\[.2cm]
For $\varphi\in\tilde C(0,1)$ we will sometimes use the notation $S(\varphi):=\int_0^1\ell(\varphi,\varphi')\,d\alpha$, and for any interval $[\alpha_1,\alpha_2]\subset[0,1]$ we will denote by $S(\varphi|_{[\alpha_1,\alpha_2]}):=$\linebreak $\int_{\alpha_1}^{\alpha_2}\ell(\varphi,\varphi')\,d\alpha$ the action of the curve segment parameterized by $\varphi|_{[\alpha_1,\alpha_2]}$.
\end{definition}
\noindent
As we will see next, (i) is needed to show that \eqref{geo action formula} is independent of the specific choice of $\varphi$, while (ii) is essential to show that $S$ is lower semi-continuous in a certain sense (Lemma \ref{lower semi lemma 2}). Observe also that (i) implies that $\ell(x,0)=0$ for $\forall x\in D$.

\begin{lemma} \label{lower semi lemma}
Functionals $S\in\G$ and their local actions $\ell(x,y)$ have the following properties:\\[.2cm]
\begin{tabular}{rl}
\!\!\!(i)  & \hspace{-.2cm}\begin{minipage}[t]{11.8cm} $S(\gamma)$ is well-defined, i.e.~\eqref{geo action formula} is independent of the specific choice of~$\varphi$.
\end{minipage}\\[.1cm]
\!\!\!(ii)  & \hspace{-.2cm}\begin{minipage}[t]{11.9cm}For\, $\forall\!$ compact $K\subset D\ \,\exists \cB\!=\!\cB(K)\!>\!0\ \,\forall x\in K\ \,\forall y\in\Rn\colon\,\ell(x,y)\!\leq\!\cB|y|$.
In particular, we have for $\forall\gamma\in\tG$ with $\gamma\subset K\colon$ $S(\gamma)\leq\cB\cdot\length(\gamma)$.\end{minipage}
\end{tabular}
\end{lemma}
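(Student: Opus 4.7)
The overall strategy is to leverage the $1$-homogeneity of $\ell$ in its second argument (property~(i)), together with its continuity and the change-of-variables machinery from Lemma \ref{arclength lemma}.

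I would dispose of part~(ii) first. Continuity of $\ell$ on the compact set $K\times S^{n-1}$ yields a finite maximum $\cB:=\max\{\ell(x,v)\mid x\in K,\,|v|=1\}$, and then (i) applied with $c=|y|$ and $v=y/|y|$ gives $\ell(x,y)=|y|\,\ell(x,y/|y|)\leq \cB|y|$ for $y\neq 0$; the case $y=0$ follows from (i) with $c=0$. For the ``in particular'' claim, I parameterize $\gamma\in\Gamma$ with $\gamma\subset K$ by its arclength parameterization $\varphi_\gamma\in\AC{0,1}$ (Lemma \ref{arclength lemma}(i)) and integrate the pointwise bound to get $S(\gamma)\leq\cB\int_0^1|\varphi_\gamma'|\,d\alpha=\cB\cdot\length(\gamma)$. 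For $\gamma\in\tG\setminus\Gamma$ the claim is vacuous since $\length(\gamma)=\infty$.

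For part~(i) I would first treat the rectifiable case. Given any $\varphi\in\AC{0,1}$ parameterizing $\gamma\in\Gamma$, Lemma \ref{arclength lemma}(ii) produces an a.c.\ non-decreasing $\beta\colon[0,1]\to[0,1]$ (with $\beta(0)=0$, $\beta(1)=1$ from the explicit formula used in that proof) such that $\varphi'=(\varphi_\gamma'\circ\beta)\cdot\beta'$ a.e. Using homogeneity with $c=\beta'\geq 0$ yields $\ell(\varphi,\varphi')=\beta'\cdot\ell(\varphi_\gamma\circ\beta,\varphi_\gamma'\circ\beta)$ a.e., and the change-of-variable formula for absolutely continuous functions (the very reference cited at the end of the proof of Lemma \ref{arclength lemma}) then gives $\int_0^1\ell(\varphi,\varphi')\,d\alpha=\int_0^1\ell(\varphi_\gamma,\varphi_\gamma')\,d\tau$, so the common value depends only on $\gamma$.

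For $\gamma\in\tG\setminus\Gamma$, a parameterization $\varphi\in\tC$ fails to be a.c.\ only at finitely many exceptional points $\alpha_1<\cdots<\alpha_m\in[0,1]$, which must correspond to the intrinsically defined points of $\gamma$ passed in infinite length. On each closed subinterval of $[0,1]\setminus\bigcup_i(\alpha_i-\eps,\alpha_i+\eps)$, $\varphi$ is absolutely continuous and parameterizes a rectifiable sub-curve of $\gamma$. Applying the rectifiable case on each piece shows that $\int_{[0,1]\setminus\bigcup_i(\alpha_i-\eps,\alpha_i+\eps)}\ell(\varphi,\varphi')\,d\alpha$ is parameterization-independent, and monotone convergence as $\eps\searrow 0$ (valid since $\ell\geq 0$) promotes this to the full integral on $[0,1]$. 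The main obstacle I anticipate is the bookkeeping in this last step: two different parameterizations $\varphi_1,\varphi_2$ of the same $\gamma$ have exceptional points at different locations in $[0,1]$, so matching up the rectifiable sub-pieces requires tracking which geometric portion of $\gamma$ each subinterval traces; once that identification is in place, the rectifiable case does the real work and the identity of the integrals follows.
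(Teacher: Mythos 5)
Your proposal follows essentially the same approach as the paper's proof for both parts: take the maximum of $\ell$ over $K\times S^{n-1}$ and use homogeneity for (ii), and write $\varphi=\varphi_\gamma\circ\beta$ and apply homogeneity with $c=\beta'\geq 0$ plus the absolutely-continuous change-of-variables formula for (i). One small correction: since the statement asserts $\cB>0$, your choice $\cB:=\max_{x\in K,\,|y|=1}\ell(x,y)$ can vanish in the degenerate case where $K$ consists entirely of critical points; the paper guards against this by taking $\cB:=1+\max$, which still gives $\ell(x,y)\leq\cB|y|$ since $\ell\geq0$. Beyond that, your treatment of the $\gamma\in\tG\setminus\Gamma$ case in part~(i) is in fact more explicit than the paper's, which only asserts that ``the proof for general curves is based on the same calculation''; the bookkeeping concern you flag is resolved by observing that the exceptional parameter values of any two parameterizations of $\gamma$ map to the same finite set of points of $\gamma$ passed in infinite length, so that cutting both parameterizations at values where they traverse the same geometric sub-curves reduces matters to the rectifiable case piece by piece, and monotone convergence as $\eps\searrow0$ then finishes the argument exactly as you describe.
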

\begin{proof}
(i) Given a curve $\gamma\in\Gamma$ and any parameterization $\varphi\in\AC{0,1}$ of $\gamma$, we use the representation $\varphi=\varphi_\gamma\circ\beta$ of Lemma \ref{arclength lemma} (ii) and Definition \ref{tech lemma 1}~(i) to find that
\begin{align*}
 \int_0^1\ell(\varphi,\varphi')\,d\alpha &= \int_0^1\ell\big(\varphi_\gamma\circ\beta,(\varphi_\gamma'\circ\beta)\beta'\big)\,d\alpha \\
  &= \int_0^1\ell(\varphi_\gamma\circ\beta,\varphi_\gamma'\circ\beta)\beta'\,d\alpha 
  = \int_0^1\ell(\varphi_\gamma,\varphi_\gamma')\,d\beta,
\end{align*}
where the last step follows again from \cite[p.149, Ex.21]{Stein}. By the uniqueness of $\varphi_\gamma$, the right-hand side only depends on $\gamma$. The proof for general curves $\gamma\in\tilde\Gamma$ is based on the same calculation.
\pb

\noindent
(ii) Given any $K$, set $\cB:=1+\max_{x\in K,|y|=1}\ell(x,y)>0$, use Definition~\ref{tech lemma 1}~(i) to show that
$\ell(x,y)=|y|\ell\big(x,\frac{y}{|y|}\big)\leq \cB|y|$ for $\forall y\neq0$, and recall that $\ell(x,0)=0$. In particular, if $\varphi\in\tC$ is a parameterization of some $\gamma\in\tG$ with $\gamma\subset K$ then $S(\gamma)=\int_0^1\ell(\varphi,\varphi')\,d\alpha\leq\cB\int_0^1|\varphi'|\,d\alpha=\cB\cdot\length(\gamma)$.
\end{proof}

\begin{lemma}[Lower semi-continuity] \label{lower semi lemma 2}
For $\forall S\in\G$ we have the following:\\[.2cm]
\begin{tabular}{rl}
\!\!\!(i) & \hspace{-.2cm}\begin{minipage}[t]{11.8cm}If a sequence $(\varphi_n)_{n\in\N}\subset\AC{0,1}$ fulfilling \eqref{arzela 2} has a uniform limit $\varphi\in\AC{0,1}$ then
$\liminf_{n\to\infty}S(\varphi_n)\geq S(\varphi)$.\end{minipage}\\[.6cm]
\!\!\!(ii)  & \hspace{-.2cm}\begin{minipage}[t]{11.8cm}The limit $\gamma$ constructed in Lemma \ref{min prop0b} fulfills $\liminf_{n\to\infty}S(\gamma_n)\geq S(\gamma)$.\end{minipage}
\end{tabular}
\end{lemma}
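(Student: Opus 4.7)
My plan is to cast this as the standard fact that convex, strongly-$L^2$-lower-semicontinuous integral functionals are weakly $L^2$-lsc. First, the uniform convergence $\varphi_n\to\varphi$ combined with the $L^\infty$-bound $|\varphi_n'|\leq M$ from \eqref{arzela 2} forces $\varphi_n'\to\varphi'$ weakly in $L^2(0,1;\Rn)$: weak-$*$ compactness in $L^\infty$ is automatic, and testing against any $\eta\in C_c^\infty(0,1)$ and integrating by parts identifies the weak-$*$ limit as $\varphi'$. By Lemma \ref{min prop0}(ii), $\varphi\in\AC{0,1}$, $\varphi\subset K$, and $|\varphi'|\leq M$ a.e.

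Fix $\varepsilon>0$. Uniform continuity of $\ell$ on the compact set $K\times\bar B_M(0)$ gives $|\ell(\varphi_n(\alpha),y)-\ell(\varphi(\alpha),y)|<\varepsilon$ uniformly in $\alpha\in[0,1]$ and $|y|\leq M$ for $n$ large, so $S(\varphi_n)\geq I_n-\varepsilon$ with $I_n:=\int_0^1\ell(\varphi(\alpha),\varphi_n'(\alpha))\,d\alpha$. The functional $F(\psi):=\int_0^1\ell(\varphi(\alpha),\psi(\alpha))\,d\alpha$ on $L^2(0,1;\Rn)$ is convex by Definition \ref{tech lemma 1}(ii), and since $0\leq\ell(\varphi,\psi)\leq\cB|\psi|$ by Lemma \ref{lower semi lemma}(ii), it is strongly $L^2$-lsc via Fatou, hence weakly lsc. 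Therefore $S(\varphi)=F(\varphi')\leq\liminf_n F(\varphi_n')=\liminf_n I_n\leq\liminf_n S(\varphi_n)+\varepsilon$, and $\varepsilon\to 0$ finishes (i).

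\textbf{Part (ii): localize away from the singular parameter, then apply (i).} The plan is to split at $\alpha=1/2$ (where the limit $\varphi$ hits $x$) and treat each side by (i). For $a\in(0,1/2)$ set $I_a:=[0,\tfrac12-a]\cup[\tfrac12+a,1]$. On each component of $I_a$ the parameterizations $\varphi_{n_k}$ from Lemma \ref{min prop0b} are absolutely continuous and converge uniformly to $\varphi$, and by the nature of the construction in that lemma (an arclength-type scaling on the pre- and post-$x$ portions) combined with the length bound \eqref{length away from x cond} and the fact that $\varphi(I_a)$ lies at positive distance from $x$, one obtains a uniform-in-$k$ essential supremum bound on $|\varphi_{n_k}'|$ over $I_a$. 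After rescaling each component affinely to $[0,1]$, part (i) yields
\begin{equation*}
\liminf_k S(\varphi_{n_k}|_{[0,1/2-a]})\geq S(\varphi|_{[0,1/2-a]}),
\end{equation*}
and analogously for $[\tfrac12+a,1]$. Using $\ell\geq 0$ to discard the middle segment and then letting $a\searrow 0$, monotone convergence delivers $\liminf_k S(\gamma_{n_k})\geq S(\gamma)$, which also covers the case $S(\gamma)=\infty$.

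\textbf{Main obstacle.} The conceptual content is entirely in (i), where convexity of $\ell(x,\cdot)$ (Definition \ref{tech lemma 1}(ii)) is exactly what powers the weak-lsc step. For (ii) the technical obstacle is securing the equi-Lipschitz bound on each component of $I_a$: one either inspects the appendix proof of Lemma \ref{min prop0b} to read off this property of the constructed parameterizations, or one reparameterizes each pre- and post-$x$ segment of $\gamma_{n_k}$ by normalized arclength (using Lemma \ref{lower semi lemma}(i) to preserve $S$ and re-extracting a convergent subsequence via Arzel\`a-Ascoli) before applying (i).
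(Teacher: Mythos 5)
Your part (i) is correct but takes a genuinely different route from the paper. The paper follows the blueprint of \cite[Lemma 5.42]{SW}: it constructs an explicit family of lower approximants $\ell^\delta(x,\cdot)$, defined as convex hulls of $v\mapsto\inf_{w\in\bar B_\delta(x)\cap D}\ell(w,v)$ on $\bar B_M(0)$, verifies pointwise monotonicity, convexity, and a $\liminf$ property for them, and then plugs them into the Shwartz--Weiss machinery. You instead extract the $L^\infty$-bounded derivatives $\varphi_n'$, identify their weak $L^2$-limit as $\varphi'$ via distributions, freeze the first argument of $\ell$ at $\varphi$ using uniform continuity on $K\times\bar B_M(0)$, and close with the classical fact that a convex, strongly-$L^2$-lower-semicontinuous integral functional (here $\psi\mapsto\int_0^1\ell(\varphi,\psi)\,d\alpha$, whose convexity is exactly Definition~\ref{tech lemma 1}(ii) and whose strong lsc is Fatou) is weakly $L^2$-lsc because its epigraph is convex and strongly closed, hence weakly closed. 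Both arguments are sound; yours is cleaner and more structural, substituting a one-line Mazur-type fact for the explicit $\ell^\delta$ construction, while the paper's is self-contained and deliberately mirrors a proof already cited elsewhere in the literature it depends on.

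For part (ii) you essentially match the paper: localize to $I_a=[0,\tfrac12-a]\cup[\tfrac12+a,1]$, apply part (i) on each piece, discard the middle by $\ell\geq0$, and send $a\searrow0$ by monotone convergence. The equi-Lipschitz bound you flag as the technical obstacle is exactly \eqref{essup finite} in the paper's proof of Lemma~\ref{min prop0b}, so the first of your two proposed fallbacks is the one the paper takes. One small point you do not make explicit: the lemma asserts $\liminf_{n\to\infty}S(\gamma_n)\geq S(\gamma)$, not merely $\liminf_{k}S(\gamma_{n_k})\geq S(\gamma)$ for the subsequence built in Lemma~\ref{min prop0b}. This gap is closed only because the proof of Lemma~\ref{min prop0b} begins by pre-passing to a subsequence along which $S(\gamma_n)$ converges to $\liminf_n S(\gamma_n)$ --- a step that exists solely to serve the present lemma. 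Your phrase ``inspect the appendix proof'' suggests you are aware, but a clean writeup must cite that pre-extraction or the final inequality only holds along $(n_k)$.
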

\begin{proof}
See Appendix~\ref{lower semi proof app}.
\end{proof}

\begin{definition} \label{drift vector field}
Let $S\in\G$. A vector field $b\in C^1(D,\Rn)$ is called \underline{a drift of $S$} if for $\forall\!$ compact $K\subset D$ $\exists \cA\!=\!\cA(K)>0\ \forall x\in K\ \forall y\in\Rn:$
\begin{equation} \label{drift lower bound}
  \ell(x,y)\geq \cA\big(|b(x)||y|-\skp{b(x)}{y}\big).
\end{equation}
\end{definition}
The right-hand side of \eqref{drift lower bound} is a constant multiple of the local large deviation geometric action \eqref{local geo action0} of the SDE \eqref{SDE0} with drift $b(x)$ and homogeneous noise, and thus we see that for the geometric action associated to \eqref{SDE0}, the vector field $b(x)$ in \eqref{SDE0} is clearly a drift also in this generalized sense (take $\cA=1$).
The inequality \eqref{drift lower bound}, which will only be used in the key estimate Lemma \ref{key estimate} and its weaker version Lemma~\ref{inf length at crit points}, effectively reduces our proofs for an arbitrary action $S\in\G$ to the case of the action given by \eqref{local geo action0}, and it is ultimately the reason why the conditions of our main criteria, Propositions \ref{blm prop 1} and \ref{blm prop 2}, solely depend on the drift and not on any other aspect of the action~$S$.

The drift vector field $b(x)$ in Definition \ref{drift vector field} is not a uniquely defined object: If $b$ is a drift of some action $S\in\G$ and if $\beta\in C^1(D,[0,\infty))$ then $\beta\cdot b$ is a drift of $S$ as well (with modified constants $\cA$), and in particular the vector field $b(x)\equiv0$ is a drift of \textit{any} action $S\in\G$.
Note however that (i) if $\beta(x)>0$ for $\forall x\in D$ then the vector fields $b$ and $\beta b$ have the same flowline diagrams, and we will find that our criteria will not distinguish between these two choices; (ii) if on the other hand $\beta(x)=0$ and $b(x)\neq0$ for some $x\in D$ then the flowline diagrams of $b$ and $\beta\cdot b$ are different, and our criteria may only apply to $b$ but not to $\beta\cdot b$.
In general, a good choice for the drift (i.e.~one that lets us get the most out of our criteria) will be one with only as many roots as necessary.

\begin{definition} \label{flow def}
For a given vector field $b\in C^1(D,\Rn)$ we define the flow $\psi\in C^1(D\times\R,D)$
as the unique solution of the ODE
\begin{equation} \label{psi ODE system}
\begin{cases}
\partial_t   \psi(x,t)\hspace{.05cm}=b(\psi(x,t)) &\text{for}\quad x\in D,\ t\in\R,\\
\hspace{.325cm}\psi(x,0)=x            &\text{for}\quad x\in D.
\end{cases}
\end{equation}
\end{definition}
By a standard result from the theory of ODEs \cite[\S 7.3, Corollary 4]{Arnold}, our regularity assumption on $b$ implies that the solution $\psi(x,t)$ is well-defined \textit{locally} (i.e.~for small $t$), unique, and $C^1$ in $(x,t)$.
However, since $b$ will always play the role of a drift, we may assume that $\psi(x,t)$ is in fact defined \textit{globally}, i.e.~for $\forall t\in\R$: Indeed, if this is not the case then we can instead consider the modified drift $\beta\cdot b$, for some function $\beta\in C^1(D,(0,\infty))$ that vanishes so fast near the boundary $\partial D$ that the associated flow $\tilde\psi$ only reaches $\partial D$ in infinite time (i.e.~$\tilde\psi(x,t)$ is defined for $\forall (x,t)\in D\times\R$), and the only aspect of the flow that will be relevant to us (the flowline diagram) remains invariant under this change.

Finally, recall that under this additional assumption we have $\psi(\psi(x,t),s)$\linebreak $=\psi(x,t+s)$ and $\partial_t\nabla\psi(x,t)=\nabla b(\psi(x,t))$ for $\forall x\in D$ and $\forall t,s\in\R$.\\[.2cm]
%
A special role in our theory will be played by so-called critical points.
\begin{definition} \label{critical point def}
For a given $S\in\G$ with local action $\ell(x,y)$, a point $x\in D$ is called a \underline{critical point} if\, $\forall y\in\Rn\colon\ \ell(x,y)=0$.
\end{definition}

\subsection{The Subclass \texorpdfstring{$\H$}{H} of Hamiltonian Geometric Actions} \label{hamilton section}
We will now consider a particular way of constructing a geometric action from a Hamiltonian $H(x,\te)$, which was introduced in \cite{CPAM} in the context of large deviation theory.\footnote{This paper also proposed an efficient algorithm (called the geometric minimum action method, or gMAM) for numerically computing minimizing curves of such geometric actions.}
\enl

\begin{lemma} \label{action construction lemma}
Let the Hamiltonian $H\in C(D\times\Rn,\R)$ fulfill the assumptions\\[.2cm]
\begin{tabular}{rl}
  \!\!\hypertarget{H1}{(H1)} &\hspace{-.35cm}  $\forall x\in D\colon\ H(x,0)\leq0$, \\[.1cm]
  \!\!\hypertarget{H2}{(H2)} &\hspace{-.35cm}  the derivatives $\Ht$ and $\Htt$ exist and are continuous in~$(x,\te)$,\\[.1cm]
  \!\!\hypertarget{H3}{(H3)} &\hspace{-.35cm}  $\forall\!$ compact $K\!\subset\! D$ $\exists m_K\!\!>\!0$
$\forall x\!\in\!\! K$ $\forall\te,\xi\!\in\!\Rn\colon \skp{\xi}{\Htt(x,\te)\xi}\geq m_K|\xi|^2$\!.
\end{tabular}\\[.23cm]
Then the function $\ell\colon D\times\Rn\to[0,\infty)$ defined by
\begin{subequations}
\begin{align}
\ell(x,y) :=&\,\max\!\big\{\skp{y}{\te}\,\big|\,\te\in\Rn,\,H(x,\te)\leq0\big\} \label{sup rep}\\*
 =&\,\max\!\big\{\skp{y}{\te}\,\big|\,\te\in\Rn,\,H(x,\te)=0\big\} \label{sup rep 2}
\end{align}
\end{subequations}
has the properties of Definition \ref{tech lemma 1}, and so it defines a geometric action $S\in\G$.
\end{lemma}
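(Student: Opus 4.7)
The plan is to treat the $\te$-sublevel set $K(x):=\{\te\in\Rn : H(x,\te)\leq0\}$ as the main object: once $K(x)$ is shown to be nonempty, compact, and convex, the function $\ell(x,y)=\max_{\te\in K(x)}\skp{y}{\te}$ is its support function, from which the algebraic properties of Definition~\ref{tech lemma 1} follow essentially for free; only joint continuity in $(x,y)$ will require real work. First I would verify these three properties of $K(x)$. Nonemptiness is immediate from \tHa, which gives $0\in K(x)$. Strict convexity of $H(x,\cdot)$, and hence convexity of $K(x)$, is a direct consequence of \tHc. Boundedness comes from the Taylor expansion
\[
  H(x,\te) = H(x,0) + \skp{\Ht(x,0)}{\te} + \int_0^1(1-s)\Skp{\te}{\Htt(x,s\te)\te}\,ds,
\]
whose last term grows at least quadratically in $|\te|$ by \tHc, forcing $H(x,\te)\to\infty$; closedness follows from continuity of $H$.

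With $K(x)$ compact and convex, the max in \eqref{sup rep} is attained, $\ell(x,y)\geq\skp{y}{0}=0$, positive homogeneity $\ell(x,cy)=c\,\ell(x,y)$ for $c\geq0$ is immediate, and convexity of $\ell(x,\cdot)$ holds because it is a pointwise supremum of linear functions of~$y$. For the equivalence of \eqref{sup rep} and \eqref{sup rep 2}: when $y\neq0$, if a maximizer $\te^*$ of \eqref{sup rep} satisfied $H(x,\te^*)<0$, then $\te^*+\eps y$ would still lie in $K(x)$ for small $\eps>0$ and would strictly increase $\skp{y}{\cdot}$, a contradiction; when $y=0$, both expressions equal $0$, provided $\{H(x,\cdot)=0\}\neq\emptyset$, which follows along any ray from the origin from $H(x,0)\leq0$, $H(x,\te)\to\infty$, and continuity of $H$.

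The main obstacle is joint continuity of $\ell$ on $D\times\Rn$. Fixing $(x_0,y_0)$ and a compact neighborhood $K\subset D$ of $x_0$, a uniform version of the Taylor estimate above (using continuity of $H(\cdot,0)$ and $\Ht(\cdot,0)$ on $K$ together with the uniform constant $m_K>0$ from \tHc) yields a radius $R>0$ with $K(x)\subset\bar B_R(0)$ for all $x\in K$. This already gives $R$-Lipschitz dependence in $y$, uniformly over $x\in K$, so it suffices to prove continuity in $x$ at $x_0$. Take $x_n\to x_0$ and argue by upper and lower bounds. For the upper bound, extract a subsequence of maximizers $\te_n^*$ of $\ell(x_n,y_0)$ converging (by the bound $R$) to some $\te^*$; continuity of $H$ yields $\te^*\in K(x_0)$, so $\limsup_n\ell(x_n,y_0)=\skp{y_0}{\te^*}\leq\ell(x_0,y_0)$. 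For the lower bound, either $K(x_0)=\{0\}$ and the inequality $\liminf_n\ell(x_n,y_0)\geq 0=\ell(x_0,y_0)$ is trivial, or $K(x_0)$ has nonempty interior; in the latter case, letting $\te_0^*$ maximize $\skp{y_0}{\cdot}$ over $K(x_0)$ and $\te_m$ denote the unique minimum of $H(x_0,\cdot)$ (which satisfies $H(x_0,\te_m)<0$), the convex combination $\te^\delta:=(1-\delta)\te_0^*+\delta\te_m$ lies in the interior of $K(x_0)$ by strict convexity, hence in $K(x_n)$ for all sufficiently large $n$ by continuity of $H$ in $x$. This gives $\liminf_n\ell(x_n,y_0)\geq(1-\delta)\ell(x_0,y_0)+\delta\skp{y_0}{\te_m}$, and letting $\delta\to 0$ closes the argument.
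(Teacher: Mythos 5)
Your proposal is correct and establishes all the properties required by Definition~\ref{tech lemma 1}. The algebraic part (nonemptiness, compactness, and convexity of the sublevel set $K(x)$; nonnegativity, homogeneity, and convexity of $\ell$; equivalence of \eqref{sup rep} and \eqref{sup rep 2}) tracks the paper's argument very closely, differing only in cosmetic details such as using the integral form of the Taylor remainder in place of the Lagrange form, and inverting the direction of the argument for the equivalence of the two representations. Where you genuinely diverge from the paper is in the continuity of $\ell$ away from $y=0$: the paper delegates this to Lemma~\ref{second action rep}~(i), which first establishes the Lagrange-multiplier representation $\ell(x,y)=\skp{y}{\that(x,y)}$ with $\that$, $\lambda$ continuous on $D\times(\Rn\setminus\{0\})$, whereas you give a direct and more elementary semicontinuity argument: uniform Lipschitzness in $y$ over compacta, upper semicontinuity in $x$ by extracting a convergent subsequence of maximizers, and lower semicontinuity by the dichotomy $K(x_0)=\{0\}$ (via strict convexity and \tHa) versus $\{H(x_0,\cdot)<0\}\neq\varnothing$, using in the latter case the interior point $\te^\delta=(1-\delta)\te_0^*+\delta\te_m$ and letting $\delta\to0$. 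Your route is self-contained and avoids implicit-function-theorem machinery, at the cost of not producing the explicit formulas for $\that$ and $\lambda$; the paper's route is heavier but yields Lemma~\ref{second action rep} as a byproduct, which it needs repeatedly elsewhere (e.g.\ in the proof of Lemma~\ref{hS in G lemma} and throughout the analysis of drifts). Both arguments are sound; which is preferable depends on whether the Lagrange representation is needed downstream.
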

\begin{proof}
The sets $L_x:=\{\te\in\Rn\,|\,H(x,\te)\leq0\}$ are bounded, in fact uniformly for all $x$ in any compact set $K\subset D$, since for $\forall x\in K\ \forall\te\in L_x\ \exists\tilde\te\in\Rn\colon$
\begin{align}
  0\geq H(x,\te) &= H(x,0)+\skp{\Ht(x,0)}{\te}+\tfrac12\Skp{\te}{\Htt(x,\tilde\te)\te} \nonumber\\
           &\geq -\max_{x\in K}|H(x,0)|-\max_{x\in K}|\Ht(x,0)|\cdot|\te|+\tfrac12m_K|\te|^2.\label{that bounded}
\end{align}
This shows that $\ell$ is finite-valued, and since $0\in L_x$ by \tHa\ we have $\ell(x,y)\geq\skp{y}{0}=0$ for $\forall y\in\Rn$.
The fact that the representations \eqref{sup rep} and \eqref{sup rep 2} are equivalent is obvious for $y=0$; for $y\neq0$ observe that for $\forall\te\in\Rn$ with $H(x,\te)<0$ the boundedness of $L_x$ implies that there $\exists c>0$ such that $H(x,\te+cy)=0$, and $\skp{y}{\te+cy}\geq\skp{y}{\te}$.
The relation $\ell(x,cy)=c\ell(x,y)$ for $\forall c\geq0$ is clear, and $\ell(x,\cdot\,)$ is convex as the supremum of linear functions. The continuity at any point $(x_0,y_0=0)$ follows from the estimate $\ell(x,y)\leq M|y|$ for $\forall y\in\Rn$ and all $x$ in some ball $\bar B_\eps(x_0)\subset D$, where $M:=\sup\!\big\{|\te|\,\big|\,\te\in\bigcup_{x\in\bar B_\eps(x_0)}L_x\big\}$. The continuity everywhere else will follow from Lemma~\ref{second action rep}~(i).\!
\end{proof}

\begin{definition} \label{Hamiltonian action def}
(i) We denote the class of all Hamiltonian geometric actions, i.e.\ of all actions $S$ constructed as in Lemma \ref{action construction lemma}, by $\H\subset\G$.\\[.1cm]
(ii) We denote by $\H_0\subset\H$ the class of all geometric actions $S\in\H$
that are constructed from a Hamiltonian $H$ which fulfills the stronger assumption\\[.15cm]
 \hspace*{.2cm}\hypertarget{H1'}{(H1')}\, $\forall x\in D\colon\ H(x,0)=0$.
\end{definition}
Note that since $\ell$ depends on $H$ only through its $0$-level sets, different Hamiltonians $H$ can induce the same geometric action $S\in\H$. In particular, for $\forall\beta\in C(D,(0,\infty))$ the Hamiltonians $H(x,\te)$ and $\beta(x) H(x,\te)$ induce the same action $S$.
The next lemma shows how Definition \ref{critical point def} can be expressed in terms of $H$, and that Assumption \tHaa\ does not depend on the choice of~$H$.
%
\begin{lemma} \label{critical point lemma}
Let $S\in\H$, and let $H$ be a Hamiltonian that induces $S$.\\[.1cm]
(i) A point $x\in D$ is critical if and only if
\begin{equation} \label{Hamilton critical criterion}
   \Ht(x,0)=0\qquad\text{and}\qquad H(x,0)=0,
\end{equation}
and in that case \eqref{Hamilton critical criterion} holds in fact for every Hamiltonian that induces $S$.\\[.1cm]
(ii) $\forall x\in D\colon\ \big(H(x,0)=0\ \Leftrightarrow\ \exists y\in\Rn\setminus\{0\}\colon\ \ell(x,y)=0\big)$. In particular, if some $H$ inducing $S$ fulfills \tHaa\ then all of them do.
\end{lemma}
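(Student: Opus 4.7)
The plan is to prove part (ii) first (as part (i) will invoke it) and to organize both proofs around the geometry of the sublevel set $L_x := \{\theta \in \Rn : H(x,\theta) \leq 0\}$. From the proof of Lemma~\ref{action construction lemma} we already know that $L_x$ is bounded, closed, and nonempty, and assumption \tHc\ together with \tHb\ makes $H(x,\cdot)$ strongly convex (the estimate $H(x,\theta) \geq H(x,\theta_0) + \langle H_\theta(x,\theta_0), \theta-\theta_0\rangle + \tfrac12 m_K |\theta-\theta_0|^2$ on any compact $K$ follows directly from Taylor's formula and \tHc). In particular $L_x$ is strictly convex, and \eqref{sup rep} exhibits $\ell(x,\cdot)$ as the support function of $L_x$. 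The whole proof reduces to reading off two features of this support function.

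For (ii), the $(\Leftarrow)$ direction is a contradiction argument: \tHa\ gives $H(x,0)\leq 0$; if the inequality were strict then $0$ would lie in the interior of $L_x$, so a small ball $B_\varepsilon(0)\subset L_x$ would give $\ell(x,y) \geq \varepsilon|y|>0$ for every $y\neq 0$, contradicting the hypothesis. The $(\Rightarrow)$ direction exploits convexity of $\theta \mapsto H(x,\theta)$: from $H(x,0)=0$ and convexity we have $H(x,\theta) \geq \langle H_\theta(x,0),\theta\rangle$ for all $\theta$, hence $\langle H_\theta(x,0),\theta\rangle \leq 0$ for all $\theta\in L_x$. If $H_\theta(x,0)\neq 0$, then $y := H_\theta(x,0)$ is a nonzero vector with $\ell(x,y)\leq 0$, and since $\ell\geq 0$ this yields $\ell(x,y)=0$; if $H_\theta(x,0)= 0$, then by strong convexity the unique global minimum of $H(x,\cdot)$ is at $0$ with value $0$, so $L_x=\{0\}$ and any $y\neq 0$ trivially satisfies $\ell(x,y)=0$.

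For (i), the $(\Leftarrow)$ direction is immediate from the same strong convexity: $H_\theta(x,0)=0$ and $H(x,0)=0$ force $L_x=\{0\}$, whence $\ell(x,y)\equiv 0$ and $x$ is critical. For the $(\Rightarrow)$ direction, assume $x$ is critical. Plugging $y=\theta$ for any $\theta\in L_x$ into $\ell(x,\theta)=0=\max_{\vartheta\in L_x}\langle\theta,\vartheta\rangle \geq |\theta|^2$ gives $L_x=\{0\}$; combined with (ii) (applied to any $y\neq 0$) we get $H(x,0)=0$, and then $0$ being the unique minimizer of $H(x,\cdot)$ forces $H_\theta(x,0)=0$ by the first-order condition. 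Finally, the ``moreover'' statement is automatic: criticality of $x$ is a property of $\ell$ alone (hence of $S$ alone), while the characterization just established shows that for every Hamiltonian $H'$ inducing $S$, \eqref{Hamilton critical criterion} for $H'$ is equivalent to criticality of $x$; so if it holds for one admissible Hamiltonian it holds for all. The only subtle point to get right is making sure that the boundedness of $L_x$ and the global strong convexity of $H(x,\cdot)$ are both invoked correctly, so that the maximum in \eqref{sup rep 2} is indeed attained and the unique minimizer interpretation of $L_x=\{0\}$ is valid.
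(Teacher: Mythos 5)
Your proposal is correct and follows essentially the same approach as the paper: you interpret $\ell(x,\cdot)$ as the support function of the sublevel set $L_x$, use strong convexity of $H(x,\cdot)$ to characterize when $L_x=\{0\}$, and observe that criticality and the condition in \eqref{Hamilton critical criterion} are each equivalent to a property of $\ell$ alone. The one stylistic difference is that in the $(\Rightarrow)$ direction of (ii) you invoke the first-order convexity inequality $H(x,\te)\geq\skp{\Ht(x,0)}{\te}$ directly rather than the Taylor expansion with \tHc\ that the paper uses; this is a mild simplification, and both arguments are sound.
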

\begin{proof}
See Appendix \ref{crit point proof app}. For part (ii) see also Fig.~\ref{that illustration}~(b).
\end{proof}
To actually compute $\ell(x,y)$ from a given Hamiltonian $H$, and for many proofs, the following alternative representation of $\ell$ is oftentimes useful. It can be derived by carrying out the constraint maximization in \eqref{sup rep 2} with the method of Lagrange multipliers.
\begin{lemma} \label{second action rep}
(i) For every fixed $x\in D$ and $y\in\Rn\setminus\{0\}$ the system
\begin{equation} \label{vte eq}
\Ht(x,\that) = \lambda y,\qquad H(x,\that)=0,\qquad \lambda\geq0
\end{equation}
has a unique solution $(\that(x,y),\lambda(x,y))$, the functions $\that\colon D\times(\Rn\setminus\{0\})\to\Rn$ and $\lambda\colon D\times(\Rn\setminus\{0\})\to[0,\infty)$ are continuous, and the function $\ell$ defined in \eqref{sup rep} can be written as
\begin{equation} \label{general geometric local action}
\ell(x,y)= \begin{cases}\skp{y}{\that(x,y)} & \text{if }y\neq0, \\ 0 & \text{if }y=0. \end{cases}
\end{equation}
(ii) If $S\in\H$ is induced by $H$ then a point $x\in D$ is critical if and only if $\exists y\neq0\colon\ \lambda(x,y)=0$. In that case, we have in fact $\lambda(x,y)=0$ for $\forall y\neq0$.
\end{lemma}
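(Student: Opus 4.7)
The plan is to recognize the system \eqref{vte eq} as the KKT conditions for the constrained maximization \eqref{sup rep 2}, and to use strict convexity of $H(x,\cdot)$ from \tHc\ to promote the standard Lagrange computation into existence, uniqueness, continuity, and the formula \eqref{general geometric local action}.

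For part (i), fix $x\in D$ and $y\in\Rn\setminus\{0\}$. First I would observe that the sub-level set $L_x:=\{\te\in\Rn\colon H(x,\te)\leq0\}$ is compact (by the coercivity already derived in \eqref{that bounded}) and strictly convex (by \tHc). Since $\te\mapsto\skp{y}{\te}$ is linear and $y\neq0$, it attains its maximum on $L_x$ at a unique boundary point $\te^\star$, and by \eqref{sup rep 2} this maximum equals $\ell(x,y)$, which yields \eqref{general geometric local action} once we identify $\that(x,y):=\te^\star$. A Lagrange multiplier computation at $\te^\star$ then gives some $\mu\in\R$ with $y=\mu\,\Ht(x,\te^\star)$; because $\Ht(x,\te^\star)$ is an outward normal to $L_x$ at a maximizer of $\skp{y}{\cdot}$, $\mu$ must be non-negative, and setting $\lambda:=1/\mu$ when $\mu>0$ solves \eqref{vte eq}. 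The degenerate subcase $\Ht(x,\te^\star)=0$ can only arise when $\te^\star$ is the unique global minimum of the strictly convex function $H(x,\cdot)$, which together with \tHa\ forces $\te^\star=0$ and hence $x$ critical; then $(0,0)$ itself solves \eqref{vte eq}.

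For uniqueness, suppose $(\te_1,\lambda_1)$ and $(\te_2,\lambda_2)$ both satisfy \eqref{vte eq}. If some $\lambda_i>0$, reversing the KKT step shows that $\te_i$ maximizes $\skp{y}{\cdot}$ over $L_x$, and strict convexity makes the maximizer unique; $\lambda_i$ is then recovered from the first equation. If both $\lambda_i=0$, each $\te_i$ is a zero of $\Ht(x,\cdot)$, which is unique by strict convexity. For the continuity of $\that$ and $\lambda$ I would use the standard compactness-plus-uniqueness recipe: along $(x_n,y_n)\to(x_0,y_0)$ with $y_0\neq0$, the bound \eqref{that bounded} controls $\that(x_n,y_n)$, the first equation of \eqref{vte eq} then controls $\lambda(x_n,y_n)=|\Ht(x_n,\that(x_n,y_n))|/|y_n|$, and any cluster point of the sequence solves \eqref{vte eq} at $(x_0,y_0)$ by continuity of $H$ and $\Ht$; uniqueness forces the cluster point to be $(\that(x_0,y_0),\lambda(x_0,y_0))$, so the full sequence converges.

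Part (ii) then falls out of part (i) combined with Lemma~\ref{critical point lemma}(i). If $x$ is critical, then $(0,0)$ solves \eqref{vte eq} for every $y\neq0$, so by uniqueness $\lambda(x,y)\equiv0$. Conversely, if $\lambda(x,y_0)=0$ for some $y_0\neq0$, then $\that(x,y_0)$ is a zero of $\Ht(x,\cdot)$ lying in $\{H(x,\cdot)=0\}$; strict convexity makes it the unique global minimum of $H(x,\cdot)$, and \tHa\ then forces $\that(x,y_0)=0$, whence $\Ht(x,0)=0$ and $H(x,0)=0$, i.e.\ $x$ is critical. The main subtlety I expect is exactly the critical-point case in part (i), where the KKT multiplier degenerates to zero and the Lagrange derivation becomes singular; strict convexity from \tHc\ is precisely what pins down $\that=0$ and keeps the analysis clean in that regime.
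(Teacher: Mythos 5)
Your proposal is correct and follows essentially the same strategy as the paper's proof: reduce \eqref{vte eq} to the KKT conditions of the constrained maximization \eqref{sup rep 2}, handle the degenerate multiplier separately via the critical-point characterization, and get continuity from compactness plus uniqueness. Two small points of style rather than substance: (a) the paper establishes uniqueness and the formula $\ell(x,y)=\skp{y}{\that}$ simultaneously via a second-order Taylor expansion of $H(x,\cdot)$ around $\that$ (yielding a quantitative gap $\skp{y}{\that}\geq\skp{y}{\te}+\tfrac12m_{\{x\}}\lambda^{-1}|\te-\that|^2$), whereas you invoke strict convexity of the sublevel set $L_x$ directly to pin down the unique maximizer, which is a clean alternative since the quantitative estimate isn't needed here; and (b) your argument that the multiplier $\mu$ is non-negative (``outward normal'') is geometrically right but stated loosely — the paper makes it airtight by showing $\mu<0$ would force $H(x,\te^\star+\eps y)<0$ and hence $\skp{y}{\te^\star+\eps y}>\skp{y}{\te^\star}$, contradicting optimality of $\te^\star$.
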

\begin{proof}
See Appendix \ref{second action rep app}.
\end{proof}
See Fig.~\ref{that illustration} (a) for a geometric interpretation of \eqref{sup rep}-\eqref{sup rep 2} and \eqref{vte eq}-\eqref{general geometric local action}: By Assumption \tHc\ the function $H(x,\cdot\,)$ and thus also its 0-sublevel set $\{\te\in\Rn\,|\,H(x,\te)\leq0\}$ is strictly convex, and by Assumption~\tHa\ it contains the origin. The maximizer in \eqref{sup rep}, $\te=\that(x,y)$, is the unique point on its boundary where the outer normal aligns with $y$, and the local action $\ell(x,y)$ is $|y|$ times the component of $\that(x,y)$ in the direction~$y$.

%
\begin{figure}[t]
\centering
\includegraphics[width=6.1cm]{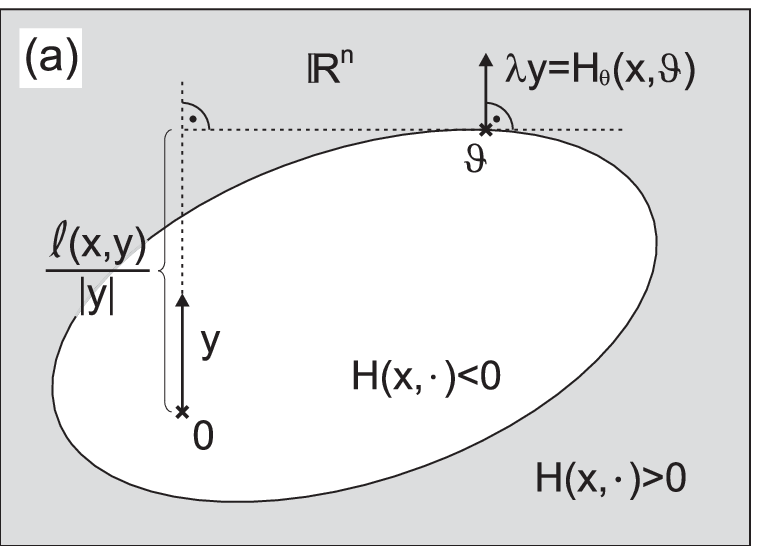}\hspace{.05cm}
\includegraphics[width=6.1cm]{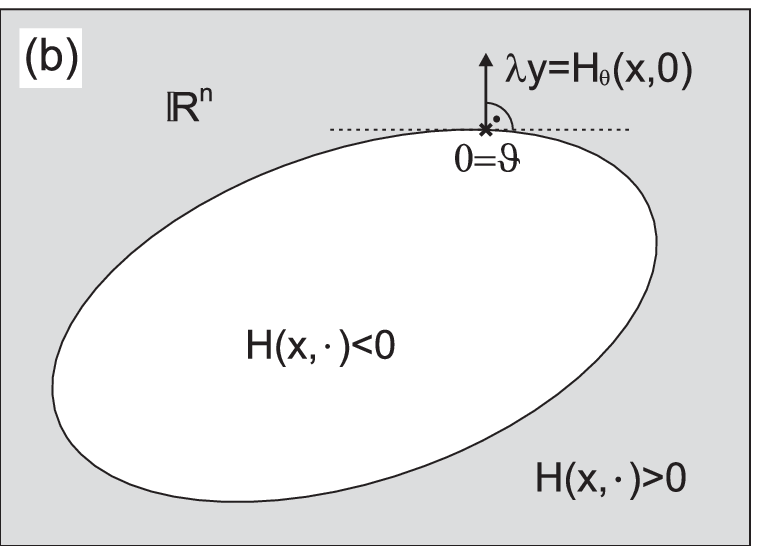}
\caption{\label{that illustration}
\small (a) Illustration of \eqref{sup rep}-\eqref{sup rep 2} and \eqref{vte eq}-\eqref{general geometric local action}, for fixed $x\in D$ and $y\in\Rn\setminus\{0\}$, in the case $H(x,0)<0$.\ \ (b) If $H(x,0)=0$ and if $y$ aligns with $\Ht(x,0)$ then we have $\that=0$.}
\end{figure}

The following lemma provides a quick way to obtain a drift for any Hamiltonian geometric action.

\begin{lemma} \label{hS in G lemma}
If $S\in\H$ is induced by $H$ then $b(x) := \Ht(x,0)$ fulfills the estimate in Definition \ref{drift vector field}, and thus if $b$ is $C^1$ then it is a drift of~$S$. We call a drift obtained in this way \underline{a natural drift of $S$}.
\end{lemma}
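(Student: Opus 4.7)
The plan is to compare the sublevel sets of $H$ with those of a suitable quadratic Hamiltonian $H_\lambda(x,\te):=\skp{b(x)}{\te}+\tfrac\lambda2|\te|^2$, whose induced geometric local action computes explicitly to $\ell_\lambda(x,y)=\tfrac1\lambda(|b(x)||y|-\skp{b(x)}{y})$. Indeed, $H_\lambda(x,\te)\leq0$ is equivalent to $|\te+b(x)/\lambda|\leq|b(x)|/\lambda$, so maximizing $\skp{y}{\te}$ over this ball (by Cauchy--Schwarz applied to $\te=-b(x)/\lambda+r u$, $|u|=1$, $r\leq|b(x)|/\lambda$) yields the claimed formula. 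Thus, setting $\cA:=1/\lambda$, the estimate \eqref{drift lower bound} will follow from the inclusion $\{H_\lambda(x,\cdot)\leq0\}\subset\{H(x,\cdot)\leq0\}$ via the representation \eqref{sup rep} of $\ell$.

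To arrange this inclusion on a compact set $K\subset D$, I would first set $B_K:=\max_{x\in K}|b(x)|$ (finite by continuity of $\Ht$) and pick any convenient radius, say $r=1$. By \tHb\ the Hessian $\Htt$ is continuous, so
\[
  C_K:=\sup_{x\in K,\,|\te|\leq 1}\|\Htt(x,\te)\|_{\mathrm{op}}<\infty.
\]
Now choose $\lambda:=\max(C_K,\,2B_K,\,1)$. Then for any $\te$ with $H_\lambda(x,\te)\leq0$ we have $|\te|\leq2|b(x)|/\lambda\leq2B_K/\lambda\leq1$, so the segment from $0$ to $\te$ stays in the unit ball; applying Taylor's theorem and \tHa, for some $\tilde\te$ on that segment,
\[
  H(x,\te)=H(x,0)+\skp{b(x)}{\te}+\tfrac12\Skp{\te}{\Htt(x,\tilde\te)\te}\leq\skp{b(x)}{\te}+\tfrac{C_K}{2}|\te|^2\leq H_\lambda(x,\te)\leq0.
\]
Hence the sublevel inclusion holds on $K$, and from \eqref{sup rep} we obtain $\ell(x,y)\geq\ell_\lambda(x,y)=\tfrac1\lambda\big(|b(x)||y|-\skp{b(x)}{y}\big)$ for every $x\in K$ and every $y\in\Rn$. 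Taking $\cA=\cA(K):=1/\lambda>0$ gives the estimate in Definition \ref{drift vector field}. Since this estimate is exactly the condition imposed in that definition, the additional assumption $b\in C^1(D,\Rn)$ promotes $b$ to a drift of $S$.

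I do not expect any serious obstacle; the only mild subtlety is avoiding circularity in the choice of $\lambda$ versus the radius on which $\Htt$ is controlled, which is handled by fixing the radius first (here $r=1$) and then forcing $\lambda\geq2B_K$ so that the sublevel set of $H_\lambda$ fits inside that ball.
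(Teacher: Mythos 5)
Your proof is correct and gives a slightly cleaner route than the paper's, although the underlying analytic estimate is the same. The paper fixes $\cA$ up front (forcing $\cA\le\tfrac12$ so that $|\te_0|\le2\cA|b(x)|\le a$), exhibits the single extremal point $\te_0 := \cA\big(\tfrac{|b(x)|}{|y|}y-b(x)\big)$, Taylor-expands $H$ around $0$ to show $\te_0$ is in the constraint set $L_x=\{H(x,\cdot)\le0\}$, and then Taylor-expands a second time around $\that(x,y)$ to get $\skp{\that}{y}\ge\skp{\te_0}{y}$; it also needs a separate case analysis for $y=0$ and for $\lambda(x,y)=0$ (via Lemmas~\ref{critical point lemma} and~\ref{second action rep}). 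You instead observe that the closed ball $\{|\te+b(x)/\lambda|\le|b(x)|/\lambda\}$ is exactly the $0$-sublevel set of the quadratic Hamiltonian $H_\lambda$, whose associated action $\ell_\lambda$ computes in closed form, and then prove the \emph{set inclusion} $\{H_\lambda(x,\cdot)\le0\}\subset\{H(x,\cdot)\le0\}$ on $K$ by a single Taylor expansion of $H$ around $0$; the inequality $\ell\ge\ell_\lambda$ then falls out of the sup representation~\eqref{sup rep} with no reference to $\that$ or $\lambda(x,y)$, so the degenerate cases $y=0$ and $b(x)=0$ need no special handling. The essential Taylor estimate and the way the constants are sized so that the relevant $\te$'s stay inside a fixed ball are the same in both arguments (your $1/\lambda$ is the paper's $\cA$, and the ball's extremal point in direction $y$ is precisely the paper's $\te_0$), but your comparison-Hamiltonian framing removes both the second Taylor expansion around $\that$ and the two preliminary case distinctions, making it the tidier of the two.
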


\begin{proof}
Let $b(x):=\Ht(x,0)$, and let $K\subset D$ be compact. Define $a:=\sup_{x\in K}|b(x)|$ and
$\cA:=\big[2+\sup\big\{|\Htt(x,\te)|\,\big|\,x\in K,\,|\te|\leq a\big\}\big]^{-1}\!\in(0,\frac12]$, and let $x\in K$ and $y\in\Rn$.

If $y=0$ then \eqref{drift lower bound} is trivial since both sides vanish. Also, if $y\neq0$ and $\lambda(x,y)=0$ then by Lemmas \ref{second action rep} (ii) and \ref{critical point lemma}~(i) we have $b(x)=0$, so \eqref{drift lower bound} is trivial again. Therefore let us now assume that $y\neq0$ and that $\lambda(x,y)>0$.

Setting $\te_0:=\cA \big(\frac{|b(x)|}{|y|}y-b(x)\big)$, a Taylor expansion of $H(x,\te_0)$ around $\te=0$ gives us a $\te'$ on the straight line between $0$ and $\te_0$ (thus fulfilling $|\te'|\leq|\te_0|\leq 2\cA |b(x)|\leq2a\cA \leq a$) such that
\begin{align*}
H(x,\te_0)
 &= H(x,0)+\Skp{\Ht(x,0)}{\te_0}+\tfrac12\Skp{\te_0}{\Htt(x,\te')\te_0} \\
 &\leq 0 + \Skp{b(x)}{\te_0} + \tfrac12\cA^{-1}|\te_0|^2 \\
 &= \Skp{b(x)+\tfrac12\cA^{-1}\te_0}{\te_0} \\
 &= \Skp{\tfrac12\big(\tfrac{|b(x)|}{|y|}y+b(x)\big)}{\,\cA\big(\tfrac{|b(x)|}{|y|}y-b(x)\big)} \\
 &= \tfrac12\cA\big(\big|\tfrac{|b(x)|}{|y|}y\big|^2-|b(x)|^2\big)=0.
\end{align*}
Another Taylor expansion, this time around $\te=\that:=\that(x,y)$, now gives us a $\te''$ such that
\begin{align*}
0 &\geq H(x,\te_0) \\
  &= H(x,\that) + \Skp{\Ht(x,\that)}{\te_0-\that} +
     \tfrac12\Skp{\te_0-\that}{\Htt(x,\te'')(\te_0-\that)} \\
  &\geq 0 + \lambda(x,y)\skp{y}{\te_0-\that}+0,
\end{align*}
where we used both equations in \eqref{vte eq}, and Assumption \tHc. Since $\lambda(x,y)>0$, this implies that
\[
\ell(x,y)
 = \skp{\that}{y} \geq \skp{\te_0}{y} = \cA \Skp{\tfrac{|b(x)|}{|y|}y-b(x)}{y}
 = \cA \big(|b(x)||y|-\skp{b(x)}{y}\big).\vspace{-.15cm}
\]
\end{proof}
Note that since there is not a unique Hamiltonian associated to $S$, there is not a unique natural drift either; in particular, the remark following Definition~\ref{Hamiltonian action def} implies that with $b$ also $\beta b$ is a natural drift for $\forall\beta\in C^1(D,(0,\infty))$, with the same flowline diagram. The next remark shows that for actions $S\in\H_0$ in fact \textit{every} natural drift has the same flowline diagram.

\begin{remark} \label{H0 remark}
For $S\in\H_0$ we have the following:\\[.1cm]
(i) All natural drifts $b$ share the same roots since by Lemma \ref{critical point lemma}~(i) and \tHaa\ we have $b(x)=0$ if and only if $x$ is a critical point. In particular, this means that natural drifts are optimal in the sense that by \eqref{drift lower bound} they only vanish where necessary. \\[.1cm]
(ii) At non-critical points $x$, the direction $y:=\frac{b(x)}{|b(x)|}$ is the same for every natural drift $b$, since Lemma \ref{flow lemma}~(i)-(ii) will characterize it as the unique unit vector $y$ such that $\ell(x,y)=0$.\\[.1cm]
Thus, for any fixed $S\in\H_0$ all natural drifts have the same flowline diagram.
\end{remark}

In contrast, for actions $S\in\H\setminus\H_0$ the natural drift is not always the optimal choice: In Examples ~\hyperlink{example Riemann}{2} and ~\hyperlink{example quantum}{3} below the natural drift will even turn out to be the trivial (and thus useless) drift $b\equiv0$. (See Example~\hyperlink{example 10}{10} in Section~\ref{quantum ex vanishing drift sec} for how to find a better one.)

Finally, the next lemma states the key property of Hamiltonian geometric actions in particular in the context of large deviation theory: It shows how a double minimization problem such as \eqref{double minimization0}-\eqref{SDE ST} can be reduced to a simple minimization problem over a Hamiltonian geometric action.
\begin{lemma}\label{double inf lemma}
Let $H$ be a Hamiltonian fulfilling \tHa-\tHc, and define for $\forall T>0$ the functional $S_T\colon\AC{0,T}\to[0,\infty]$ by
\begin{align}
  S_T(\chi) &:= \int_0^TL(\chi,\dot\chi)\,dt, && \hspace{-.5cm}\text{where} \label{ST def}\\
  L(x,y)    &:= \sup_{\te\in\Rn}\big(\skp{y}{\te}-H(x,\te)\big) && \hspace{-.5cm}\text{for\,\ $\forall x\in D\ \,\forall y\in\Rn$} \label{LH def}
\end{align}
is the Legendre transform of $H(x,\cdot\,)$. Then for $\forall A_1,A_2\subset D$ we have
\begin{equation} \label{double min}
  \inf_{\substack{T>0\\\chi\in\Cxyt{A_1}{A_2}{T}}}S_T(\chi)\  = \inf_{\gamma\in\GA}S(\gamma),
\end{equation}
where $S\in\H$ is the geometric action induced by $H$.
\end{lemma}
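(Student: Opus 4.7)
The plan is to prove the two inequalities separately, with the bridge between the Lagrangian formulation $S_T$ and the geometric action $S$ being the pointwise Maupertuis-type identity $\ell(x,y) = \inf_{c>0} L(x,cy)/c$ for $y \neq 0$. For the easy direction $\inf S_T \geq \inf S$, I first restrict the Legendre supremum defining $L(x,y)$ to the sublevel set $\{\te\in\Rn : H(x,\te) \leq 0\}$ and use that $-H(x,\te)\geq 0$ there, obtaining the pointwise bound $L(x,y) \geq \ell(x,y)$. Consequently, for any admissible $\chi \in \Cxyt{A_1}{A_2}{T}$ whose associated rectifiable curve $\gamma$ lies in $\GA$,
\[ S_T(\chi) = \int_0^T L(\chi,\dot\chi)\,dt \;\geq\; \int_0^T \ell(\chi,\dot\chi)\,dt \;=\; S(\gamma), \]
where the last equality follows from Lemma~\ref{lower semi lemma}(i) (reparameterization invariance of $S(\gamma)$, itself a consequence of the $1$-homogeneity of $\ell$ in $y$). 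Taking infima over $(T,\chi)$ completes this direction.

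For the harder direction, the Maupertuis identity is established as follows. When $\lambda(x,y) > 0$, assumption \tHc\ guarantees that the stationarity equation $\Ht(x,\te) = \lambda y$ has the unique solution $\te = \that(x,y)$, so that $\that$ is the unique Legendre maximizer in the sup defining $L(x,\lambda y)$; plugging in the constraint $H(x,\that) = 0$ from Lemma~\ref{second action rep} gives $L(x,\lambda y) = \lambda\ell(x,y)$, so the infimum is attained at $c = \lambda$. When $\lambda(x,y) = 0$ (i.e.~$x$ is critical by Lemma~\ref{second action rep}(ii)), one has $\ell(x,y) = 0$, and the identities $\Ht(x,0) = 0$, $H(x,0) = 0$ from Lemma~\ref{critical point lemma} together with \tHc\ give $L(x,cy)/c = O(c) \to 0$ as $c \searrow 0$.

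To construct the approximating $(T_\delta, \chi_\delta)$, fix an arbitrary $\gamma \in \GA$ (the length-zero case is dispatched by taking $\chi$ constant at the single point and, if needed, $T \searrow 0$). I parameterize $\gamma$ by arclength, $\varphi_\gamma \in \AC{0,1}$ with $|\varphi_\gamma'| \equiv \length(\gamma)$ (Lemma~\ref{arclength lemma}), and for $\delta > 0$ set
\[ c_\delta(\alpha) \;:=\; \lambda\big(\varphi_\gamma(\alpha),\varphi_\gamma'(\alpha)\big) + \delta, \]
which is measurable by continuity of $\lambda$ (Lemma~\ref{second action rep}(i)) and bounded in $[\delta, C_\gamma]$ by compactness of $\gamma \times \{|y| = \length(\gamma)\}$. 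The bi-Lipschitz time change $\tau_\delta(\alpha) := \int_0^\alpha c_\delta(s)^{-1}\,ds$ gives $\chi_\delta := \varphi_\gamma \circ \tau_\delta^{-1} \in \Cxyt{A_1}{A_2}{T_\delta}$ with $T_\delta \leq 1/\delta$ and $\chi_\delta \subset \gamma \subset \E$, and a change of variables yields
\[ S_{T_\delta}(\chi_\delta) \;=\; \int_0^1 \frac{L(\varphi_\gamma,\,c_\delta\,\varphi_\gamma')}{c_\delta}\,d\alpha. \]

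The main obstacle is producing a uniform bound $L(x,c_\delta y)/c_\delta \leq \ell(x,y) + O(\delta)$ that works simultaneously away from and near critical points, since the naive choice $c = \lambda$ (which would give equality pointwise) degenerates at critical points and forces $T \to \infty$. I resolve this by a second-order Taylor expansion of $c \mapsto L(x,cy)$ about $c = \lambda$: the zeroth-order term is $L(x,\lambda y) = \lambda\ell(x,y)$ by the identity; the first-order term is $\partial_c L(x,cy)|_{c=\lambda} = \skp{y}{\that(x,y)} = \ell(x,y)$ by the envelope theorem; and the second-order remainder is bounded by $\tfrac{\delta^2}{2}\skp{y}{\Htt^{-1}y}$, uniformly over $\gamma$ thanks to \tHc\ and compactness. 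Summing and dividing by $c_\delta = \lambda + \delta \geq \delta$ produces the critical cancellation
\[ \frac{L(x,c_\delta y)}{c_\delta} \;=\; \ell(x,y) + O\!\Big(\tfrac{\delta^2}{c_\delta}\Big) \;\leq\; \ell(x,y) + O(\delta), \]
uniformly in $\alpha$. Integrating yields $S_{T_\delta}(\chi_\delta) \leq S(\gamma) + C\delta$; letting $\delta \searrow 0$ shows $\inf S_T \leq S(\gamma)$, and since $\gamma \in \GA$ was arbitrary, $\inf S_T \leq \inf S$, completing the proof.
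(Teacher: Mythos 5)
Your proof is correct and takes a genuinely different route from the paper. The paper's proof is essentially a two-line reduction: it rewrites $\inf_{T,\chi}S_T(\chi)$ as the iterated infimum $\inf_\gamma\inf_{T,\beta}S_T(\varphi_\gamma\circ\beta)$ via the bijection of Lemma~\ref{arclength lemma}~(ii), and then cites \cite{CPAM} for the fact that the inner infimum equals $S(\gamma)$. What you have done, in effect, is reprove that cited representation from first principles. Your key organizing device is the Maupertuis identity $\ell(x,y)=\inf_{c>0}L(x,cy)/c$, which you correctly split into the easy lower bound (restricting the Legendre sup to $\{H\leq 0\}$ gives $L\geq\ell$ pointwise, hence $L(x,cy)/c\geq\ell(x,cy)/c=\ell(x,y)$ by homogeneity) and the exact attainment at $c=\lambda(x,y)$ via Lemma~\ref{second action rep}. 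The $\delta$-regularization $c_\delta:=\lambda+\delta$ is the right idea for handling critical points where $\lambda=0$ forces $T\to\infty$; the algebraic cancellation $L(x,(\lambda+\delta)y)/(\lambda+\delta)=\ell(x,y)+O(\delta^2)/(\lambda+\delta)\leq\ell(x,y)+O(\delta)$ is what makes the finite-$T$ approximation work uniformly across critical and non-critical points. The second-order remainder bound relies on the uniform positive-definiteness in \tHc\ to get $\langle y,\Htt^{-1}y\rangle\leq|y|^2/m_K$ with $|y|\equiv\length(\gamma)$, and on compactness of $\gamma$ to keep the relevant $\te$'s in a compact set; both of these are available and correctly invoked. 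The upshot is that the paper's proof buys brevity at the cost of opacity (it defers to a reference), whereas yours is self-contained and actually exposes the mechanism—at the expense of having to carry out the $C^2$-in-$c$ regularity of $L$ and the uniform bounds yourself, which you do correctly. One small imprecision: the remainder should be written with $\Htt$ evaluated at the intermediate maximizer $\that(x,\tilde c y)$ for some $\tilde c\in[\lambda,\lambda+\delta]$, not at a fixed point; the uniform bound you then quote via \tHc\ takes care of it, so the argument stands.
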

\begin{proof}
Using the bijection $(T,\chi)\leftrightarrow(\gamma,T,\beta)$ given in Lemma \ref{arclength lemma} (ii) that assigns to every $\chi\in\AC{0,T}$ its curve $\gamma\in\Gamma$ and its parameterization $\beta\in\AC{[0,T],[0,1]}$ via the relation $\chi=\varphi_\gamma\circ\beta$,
we have
\begin{equation} \label{infinf proof}
  \inf_{\substack{T>0\\\chi\in\Cxyt{A_1}{A_2}{T}}}S_T(\chi) = \inf_{\gamma\in\GA}\inf_{\substack{T>0\\\beta\in\AC{[0,T],[0,1]}}}S_T(\varphi_\gamma\circ\beta) = \inf_{\gamma\in\GA}S(\gamma),
\end{equation}
where the functional
\[
S(\gamma) :=\inf_{\substack{T>0\\\beta\in\AC{[0,T],[0,1]}}}S_T(\varphi_\gamma\circ\beta)
\]
was found in \cite{CPAM} to have the integral representation \eqref{geo action formula} with the local action given by \eqref{sup rep}-\eqref{sup rep 2} and \eqref{general geometric local action}.\footnote{At the beginning of \cite{CPAM}, additional smoothness assumptions on $H$ were made,
but they do not enter the proof of this representation.}
\end{proof}

\noindent We conclude this section with three examples of Hamiltonian geometric actions.

\paragraph{Example 1: Large Deviation Theory.}
\hypertarget{example LDT}{Stochastic} dynamical systems with small noise parameter $\eps>0$ often satisfy a large deviation principle whose action functional $S_T$ is of the form \eqref{ST def}-\eqref{LH def}. Examples include (i) stochastic differential equations (SDEs) in $\Rn$ \cite{FW}
\begin{equation} \label{SDE1}
dX_t^\eps = b(X_t^\eps)\,dt + \sqrt{\eps}\sigma(X_t^\eps)\,dW_t,\qquad X_0^\eps=x_1,
\end{equation}
where $b(x)$ is the drift vector field and $\sigma(x)$ is the diffusion matrix of the SDE, and (ii) continuous-time Markov jump processes in $\Rn$ \cite{SW} with jump vectors $\eps e_i\in\Rn$, $i=1,\dots,N$, and corresponding jump rates $\eps^{-1}\nu_i(\eps x)>0$.
Here we assume that $b$, $A:=\sigma\sigma^T$ and $\nu_i$ are $C^1$ functions,
and that for each fixed $x\in D$, $A(x)$ is a positive definite matrix.
The Hamiltonians used in \eqref{ST def}-\eqref{LH def} to define $S_T$ are
\begin{subequations}
\begin{align}
  \hspace{.8cm}
  H(x,\te) &= \skp{b(x)}{\te}+\tfrac12\skp{\te}{A(x)\te}, &&\text{(SDE)} \label{H SDE}\\*
  H(x,\te) &= \sum_{i=1}^N \nu_i(x)\big(e^{\skp{e_i}{\te}}-1\big).\hspace{-.5cm} &&\text{(Markov jump process)} \label{H KMC}
\end{align}
\end{subequations}
The central object of large deviation theory for answering various questions about rare events in the zero-noise-limit $\eps\to0$, such as the transition from one stable equilibrium point of $b$ to another, is the quasipotential $V(x_1,x_2)$.
Originally defined by \eqref{double minimization0} using the above choice of $S_T$, Lemma~\ref{double inf lemma} allows us to rewrite it as
\begin{equation} \label{qp curve rep}
  V(x_1,x_2)=\inf_{\gamma\in\Gamma_{x_1}^{x_2}}S(\gamma),
\end{equation}
where $S\in\H_0$ is the Hamiltonian geometric action defined via \eqref{sup rep}-\eqref{sup rep 2}, or equivalently, \eqref{vte eq}-\eqref{general geometric local action}.
The minimizing curve $\gamma^\star$ in \eqref{qp curve rep} (if it exists) can be interpreted as the maximum likelihood transition curve.

In the SDE case, \eqref{vte eq} can in fact be solved explicitly: Using for any positive definite symmetric matrix $M$ the notation $\skp{w_1}{w_2}_{\!M}:=\skp{w_1}{Mw_2}$ and $|w|_M^2:=\skp{w}{w}_{\!M}$, the solution of \eqref{vte eq} is given by $\lambda=|b(x)|_{A(x)^{-1}}/|y|_{A(x)^{-1}}$ and $\that=A(x)^{-1}(\lambda y-b(x))$, and so we obtain the local geometric action
\begin{equation}
\hspace{.5cm}
  \ell(x,y)=|b(x)|_{A^{-1}(x)}|y|_{A^{-1}(x)}-\skp{b(x)}{y}_{\!A^{-1}(x)}.\hspace{.6cm} \text{(SDE)}
\end{equation}
For Markov jump processes no explicit expression for $\ell(x,y)$ exists.

Finally, we observe that in the SDE case \eqref{H SDE} the expression $\Ht(x,0)$ for the natural drift given in Lemma \ref{hS in G lemma} indeed recovers the given vector field $b(x)$, while in the case \eqref{H KMC} of a Markov jump process we obtain the zero-noise-limit of Kurtz's Theorem~\cite{SW}, i.e.
\[
  \hspace{4.8cm}
  b(x)=\sum_{i=1}^N\nu_i(x)e_i\,.\qquad\text{(Markov jump process)}\vspace{-.6cm}
\]
\hfill\openbox

\paragraph{Example 2: Riemannian metric.}
\hypertarget{example Riemann}{Suppose} that $A\in C(D,\R^{n\times n})$ is a function whose values are positive definite symmetric matrices $A(x)$, and that the metric $g$ is defined by $\skp{y_1}{y_2}_{g_x}:=\skp{y_1}{A(x)y_2}$ for $\forall y_1,y_2\in\Rn$, where the second scalar product is just the Euclidean one. Then the action $S\in\G$ given by
\begin{align}
  \ell(x,y)&=|y|_{g_x} \label{Riemannian local action} \\
\intertext{is a Hamiltonian action, $S\in\H\setminus\H_0$, with associated Hamiltonian}
\hspace{4.5cm}
  H(x,\te)&=|\te|^2_{g^{-1}_x}-1, \qquad\quad\text{(Riemannian metric)} \nonumber
\end{align}
where the metric $g^{-1}$ is defined as above using the matrices $A(x)^{-1}$ instead of $A(x)$. Indeed, as one can easily check, for this choice of $H$ the equations \eqref{vte eq} are fulfilled by $\lambda=2/|y|_{g_x}$ and $\that:=A(x)y/|y|_{g_x}$, and thus the local geometric action defined in \eqref{general geometric local action} yields \eqref{Riemannian local action}.

Note that the natural drift for this Hamiltonian is $b(x)\equiv0$. As we shall see however, this will be made up for by the fact that $H(x,0)<0$ for $\forall x\in D$, see  Proposition \ref{blm prop 0} and Example \hyperlink{example 10}{10} in Section \ref{quantum ex vanishing drift sec}.
\hfill\openbox

\paragraph{Example 3: Quantum Tunnelling.}
\hypertarget{example quantum}{The} instanton by which quantum tunnelling arises is the minimizer $\gamma^\star$ of the Agmon distance \cite[Eq.~(1.4)]{Simon}, i.e.~of \eqref{qp curve rep}, where $S\in\G$ is given by the local action
\begin{equation} \label{agmon local action}
  \ell(x,y) = \sqrt{2U(x)}|y|.
\end{equation}
Here, $x_1$ and $x_2$ are the minima of the potential $U\in C(D,[0,\infty))$, and it is assumed that $U(x_1)=U(x_2)=0$.

If $U$ did not have any roots then this would be a special case of Example~\hyperlink{example Riemann}{2}, with $A(x):=2U(x)I$, which leads us to the Hamiltonian $H(x,\te)=|\te|^2/(2U(x))-1$. According to the remark following \eqref{vte eq}, we can multiply $H$ by the function $U(x)$ without changing the associated action, and so we find that \eqref{agmon local action} is given by
\[
  \hspace{4cm}
  H(x,\te)=\tfrac12|\te|^2-U(x). \qquad\text{(quantum tunnelling)}
\]
We can now check that this choice in fact leads to \eqref{agmon local action} even if $U$ does have roots (with $\lambda=\sqrt{2U(x)}/|y|$ and $\that=\sqrt{2U(x)}\,y/|y|$), and so we have $S\in\H\setminus\H_0$. Again, the natural drift is $b(x)\equiv0$.
%
\hfill\openbox

\section{Existence of Minimum Action Curves} \label{existence results chapter}

\subsection{A First Existence Result} \label{first ex result subsec}

\begin{definition} \label{minimizer def}
(i) For a given geometric action $S\in\G$ and two sets $A_1,A_2\subset\E$ we denote by $P(A_1,A_2)$ the minimization problem $\inf_{\gamma\in\GA}S(\gamma)$. For two points $x_1,x_2\in\E$ we write in short $\prob:=P(\{x_1\},\{x_2\})$. \\[.2cm]
(ii) We say that $P(A_1,A_2)$ has a strong (weak) minimizer if $\exists\gamma^\star\in\GA$ \emph($\gamma^\star\in\tGA$\emph) such that
\begin{align*}
 S(\gamma^\star) &= \inf_{\gamma\in\GA}S(\gamma). \\
\intertext{(iii) We say that $(\gamma_n)_{n\in\N}\subset\GA$ is a minimizing sequence of $P(A_1,A_2)$ if}
 \lim_{n\to\infty}S(\gamma_n) &= \inf_{\gamma\in\GA}S(\gamma).
\end{align*}
\end{definition}
\noindent Recall that (by our definition at the end of Section \ref{rec curves subsec}) the class of curves $\GA$ only contains curves that are contained in $\E$, and so $P(A_1,A_2)$ is the problem of finding the best curve leading from $A_1$ to $A_2$ \underline{in $\E$}. 
To avoid that this additional constraint negatively affects our construction of minimizers by forcing us to move along curves whose lengths we cannot control, we have to require some regularity of $\E$: For the rest of this paper we will make the following assumption.\\[.2cm]
\noindent \textbf{\hypertarget{ass tD}{Assumption:}} \textit{The set $\E$ has the following property:}\\[.1cm]
\hspace*{.2cm}($\E$)\hspace{.4cm}$\forall x\in\E\ \,\forall\nu>0\ \,\exists r>0\ \,\forall w\in\bar B_r(x)\cap\E\ \,\exists\gamma\in\Gamma_x^w\colon\ \length(\gamma)\leq\nu$.\\[.2cm]
This assumption says that nearby points in $\E$ can be connected by short curves in $\E$. Using a compactness argument, it also implies that any two points in $\E$ can be connected by a rectifiable curve $\gamma\subset\E$, which by Lemma \ref{lower semi lemma}~(ii) (with $K:=\gamma$) has finite action. In particular, any (weak or strong) minimizer must have finite action.

The next lemma gives some sufficient (but by no means necessary) conditions that can help to prove the Assumption \hE\ for a given set $\E$ of interest.

\begin{lemma} \label{E Lemma}
If $\E=D$, or if $\E=\bigcup_{i=1}^m\E_i$ for some sets $\E_1,\dots,\E_m\subset D$ that are convex and closed in $D$, then the Assumption \hE\ is fulfilled.
\end{lemma}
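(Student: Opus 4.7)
The plan is to handle the two cases separately, both by constructing $\gamma$ as a straight line segment in $\Rn$ and estimating its length by $|w-x|$.

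In the case $\E = D$, since $D$ is open I would first shrink $\nu$ and pick $r \in (0,\nu)$ small enough that $\bar B_r(x) \subset D$. For any $w \in \bar B_r(x) \cap \E = \bar B_r(x)$, the straight line segment from $x$ to $w$ lies in $\bar B_r(x) \subset D = \E$, is certainly rectifiable, and has length $|w-x| \leq r < \nu$. This immediately yields the desired $\gamma \in \Gx^w$.

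For $\E = \bigcup_{i=1}^m \E_i$ with each $\E_i$ convex and closed in $D$, fix $x \in \E$ and let $I := \{i \in \{1,\dots,m\} : x \in \E_i\}$, which is nonempty. For each $i \notin I$, the set $D \setminus \E_i$ is open in $D$ and contains $x$, so there is $r_i > 0$ with $\bar B_{r_i}(x) \cap D \cap \E_i = \emptyset$. By openness of $D$, there is also $r_0 > 0$ with $\bar B_{r_0}(x) \subset D$. Setting
\[
  r := \min\!\Big(\nu,\;r_0,\;\min_{i\notin I} r_i\Big) > 0
\]
(where the last minimum is taken to be $+\infty$ if $I = \{1,\dots,m\}$), any $w \in \bar B_r(x) \cap \E$ lies in $\E_j$ for some $j$; but $j \notin I$ would force $w \in \bar B_{r_j}(x) \cap \E_j = \emptyset$, so $j \in I$, meaning $x \in \E_j$ as well. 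By convexity of $\E_j$, the straight line segment from $x$ to $w$ lies in $\E_j \subset \E$, is rectifiable, and has length $|w-x| \leq r \leq \nu$.

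There is no real obstacle here; the only subtle point is that finiteness of the union is essential for taking the minimum over finitely many $r_i > 0$ and still obtaining $r > 0$. The argument would break down for countable unions of convex closed sets, which is presumably why the lemma is stated with a finite union.
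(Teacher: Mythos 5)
Your proof is correct and follows essentially the same approach as the paper's: in both cases the straight-line segment from $x$ to $w$ is used, and in the union case both you and the paper shrink $r$ so that $\bar B_r(x)$ avoids every $\E_i$ with $x\notin\E_i$, forcing $w$ into an $\E_j$ that already contains $x$. The only cosmetic difference is that you build $r$ as an explicit minimum over finitely many radii, whereas the paper phrases the same choice as a single smallness condition.
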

\begin{proof} Let $x\in\E$ and $\nu>0$. If $\E=D$ then we can choose $r\in(0,\nu]$ so small that $\bar B_r(x)\subset\E$, and for any $w\in\bar B_r(x)\cap\E=\bar B_r(x)$ we can let $\gamma$ be the straight line from~$x$ to~$w$. Then we have $\gamma\subset\bar B_r(x)\subset\E$ and thus $\gamma\in\Gamma_x^w$, and furthermore $\length(\gamma)=|w-x|\leq r\leq\nu$.

If $\E=\bigcup_{i=1}^m\E_i$ for some sets $\E_i$ that are convex and closed in $D$, let $I:=\{i\,|\,x\in\E_i\}\neq\varnothing$ and choose $r\in(0,\nu]$ so small that
$\bar B_r(x)\subset D\setminus\bigcup_{i\notin I}\E_i$. Then we have
$\bar B_r(x)\cap\E=\bigcup_{i=1}^m(\bar B_r(x)\cap\E_i)=\bigcup_{i\in I}(\bar B_r(x)\cap\E_i)$, and so for $\forall w\in\bar B_r(x)\cap\E\ \exists i\in I$ such that $w$ is in the convex set $\E_i$. Since also $x\in\E_i$, the straight connection line $\gamma$ from $x$ to $w$ fulfills $\gamma\subset\E_i\subset\E$ and thus $\gamma\in\Gamma_x^w$, and again we have $\length(\gamma)=|w-x|\leq r\leq\nu$.
\end{proof}

The following lemma explains why in Definition \ref{minimizer def} we do not distinguish between minimizing over $\GA$ and over $\tGA$.
\begin{lemma} \label{weak vs strong inf}
For any geometric action $S\in\G$ and any two sets $A_1,A_2\subset\E$ we have
\begin{equation}
  \inf_{\gamma\in\GA}S(\gamma)=\inf_{\gamma\in\tGA}S(\gamma).
\end{equation}
\end{lemma}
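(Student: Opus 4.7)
The inclusion $\GA \subset \tGA$ immediately gives ``$\geq$'', so the substance lies in showing ``$\leq$''. The plan is to take an arbitrary $\gamma \in \tGA$ with $S(\gamma) < \infty$ (the case $S(\gamma) = \infty$ is vacuous) and approximate it in action from above by rectifiable curves in $\GA$. Concretely, fix a parameterization $\varphi \in \tC$ of $\gamma$ and let $\alpha_1 < \dots < \alpha_k$ be the finitely many exceptional parameter values at which $\varphi$ fails to be absolutely continuous, with images $x_i := \varphi(\alpha_i) \in \E$.

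For small $\delta > 0$, define $\gamma_\delta \in \Gamma$ by keeping $\varphi$ unchanged outside the intervals $[\alpha_i-\delta,\alpha_i+\delta] \cap [0,1]$ and replacing it on those intervals by short rectifiable bridges in $\E$. Specifically, for an interior exceptional point, use Assumption~\hE\ at $x_i$ with tolerance $\nu = \delta$ to produce (for all small enough $\delta$) two rectifiable curves in $\E$ of length $\leq \delta$ connecting $\varphi(\alpha_i\pm\delta)$ to $x_i$, and concatenate them. If $\alpha_1 = 0$ (resp.\ $\alpha_k = 1$), keep the endpoint $\varphi(0) \in A_1$ (resp.\ $\varphi(1) \in A_2$) and bridge it to $\varphi(\delta)$ (resp.\ $\varphi(1-\delta)$) by a single short curve in $\E$; the start and end points thus remain in $A_1$ and $A_2$, so $\gamma_\delta \in \GA$. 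The continuity of $\varphi$ guarantees that for any prescribed $r>0$ and all sufficiently small $\delta$ the points $\varphi(\alpha_i\pm\delta)$ lie in the ball $\bar B_r(x_i)$, which is what makes Assumption~\hE\ applicable.

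To estimate $S(\gamma_\delta)$, choose $r>0$ small enough that each $\bar B_r(x_i)\subset D$, and put $K := \varphi([0,1]) \cup \bigcup_i \bar B_r(x_i)$, which is a compact subset of $D$. For all small $\delta$ the bridges lie in $K$, so by Lemma~\ref{lower semi lemma}~(ii) their total action is bounded by $\cB(K)\cdot\sum_i O(\delta) \to 0$ as $\delta \to 0$. The portion of $\gamma_\delta$ coming from the unchanged parts of $\varphi$ has action at most $S(\gamma)$ (by nonnegativity of $\ell$), so $S(\gamma_\delta) \leq S(\gamma) + o(1)$. Taking $\delta \to 0$ gives $\inf_{\GA} S \leq S(\gamma)$, and then taking the infimum over $\gamma\in\tGA$ finishes the proof.

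The only step that requires genuine care is verifying that the construction respects the constraint $\gamma_\delta \subset \E$ and the prescribed endpoints in $A_1, A_2$; this is exactly what Assumption~\hE\ is tailored for, and it is the reason we imposed it at the beginning of this section.
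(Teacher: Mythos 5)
Your proof is correct and follows essentially the same strategy as the paper's: replace the finitely many arcs of infinite length near the exceptional points by short rectifiable bridges in $\E$ supplied by Assumption~\hE, bound their action via Lemma~\ref{lower semi lemma}~(ii) on a compact set containing the curve, and pass to a limit. The only cosmetic difference is that you cut at the parametric values $\alpha_i\pm\delta$ and send $\delta\to0$, while the paper fixes a target action increment $\eps$ and chooses bridges of length $\leq\nu(\eps)$ directly; both work and rely on the same two ingredients.
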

\begin{proof}
The inequality ``$\geq$'' is clear since $\GA\subset\tGA$. To show also the inequality ``$\leq$'', let any $\tilde\gamma\in\tGA$ and $\eps>0$ by given. We must construct a curve $\gamma\in\GA$ with $S(\gamma)\leq S(\tilde\gamma)+\eps$.

To do so, let $\rho>0$ be so small that $K:=\bar N_\rho(\tilde\gamma)\subset D$, and let $\cB>0$ be the corresponding constant given by Lemma \ref{lower semi lemma} (ii). Suppose there are $m$ points along $\tilde\gamma$ that are passed in infinite length. We then define $\gamma\in\GA$ by replacing the at most $2m$ infinitely long curve segments preceding and/or following these $m$ points by rectifiable curves $\gamma_i\subset\E$ with $\length(\gamma_i)\leq\nu:=\min\{\frac{\eps}{2m\cB},\rho\}$, as given by Assumption \hE. Since for every $i$ we have $\gamma_i\subset\bar N_\rho(\tilde\gamma)$ and thus $S(\gamma_i)\leq\cB\length(\gamma_i)\leq\frac\eps{2m}$ by Lemma \ref{lower semi lemma} (ii), we have $S(\gamma)\leq S(\tilde\gamma)+\sum_iS(\gamma_i)\leq S(\tilde\gamma)+\eps$, completing the proof.
\end{proof}

In this chapter we will explore conditions on $S$ that guarantee the existence of a (weak or strong) minimizer $\gamma^\star$. We begin with a first result that was already stated in the introduction.

\begin{proposition} \label{min prop}
Let $S\in\G$, let the two sets $A_1,A_2\subset\E$ be closed in $D$, and suppose that there exists a compact set $K\subset\E$ such that the minimization problem $\probA$ has a minimizing sequence $(\gamma_n)_{n\in\N}$ with $\gamma_n\subset K$ for $\forall n\in\N$ and with $\sup_{n\in\N}\length(\gamma_n)<\infty$.
Then $\probA$ has a strong minimizer $\gamma^\star$ fulfilling $\length(\gamma^\star)\leq\liminf_{n\to\infty}\length(\gamma_n)$.
\end{proposition}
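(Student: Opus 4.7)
My plan is to combine arclength reparameterization with the Arzel\`a--Ascoli and lower-semi-continuity package already assembled in Lemmas~\ref{arclength lemma}, \ref{min prop0}, and~\ref{lower semi lemma 2}.

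First, I reparameterize each $\gamma_n$ by arclength (Lemma~\ref{arclength lemma}~(i)) to obtain $\varphi_n\in\AC{0,1}$ with $|\varphi_n'|\equiv\length(\gamma_n)$ a.e.\ and $\varphi_n\subset K$. Since $\sup_n\length(\gamma_n)<\infty$, condition~\eqref{arzela 2} holds, so Lemma~\ref{min prop0}~(i) produces a uniformly convergent subsequence whose limit $\varphi$ is, by Lemma~\ref{min prop0}~(ii), in $\AC{0,1}$. Compactness of $K$ forces $\varphi\subset K\subset\E$. Because $A_1,A_2$ are closed in $D$ and $\varphi(0),\varphi(1)\in K\subset D$, the uniform limits $\varphi(0)=\lim_k\varphi_{n_k}(0)$ and $\varphi(1)=\lim_k\varphi_{n_k}(1)$ lie in $A_1$ and $A_2$ respectively. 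Thus $\varphi\in\CIA$ and the underlying curve $\gamma^\star$ belongs to $\GA$.

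To achieve the sharp length bound I refine the extraction: first pass to a subsequence along which $\length(\gamma_{n_k})\to L:=\liminf_n\length(\gamma_n)$, and then extract a further subsequence (still denoted $\varphi_{n_k}$) converging uniformly to $\varphi$ via Lemma~\ref{min prop0}~(i). For every $\eps>0$ and all sufficiently large $k$ we then have $|\varphi_{n_k}'|\equiv\length(\gamma_{n_k})\leq L+\eps$ a.e., so Lemma~\ref{min prop0}~(ii) applied to that tail gives $|\varphi'|\leq L+\eps$ a.e., hence $\length(\gamma^\star)\leq\int_0^1|\varphi'|\,d\alpha\leq L+\eps$; letting $\eps\downarrow 0$ yields $\length(\gamma^\star)\leq L$. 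Finally, Lemma~\ref{lower semi lemma 2}~(i) delivers $S(\gamma^\star)\leq\liminf_{k\to\infty}S(\gamma_{n_k})=\inf_{\gamma\in\GA}S(\gamma)$, so $\gamma^\star$ is a strong minimizer.

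There is no genuine obstacle here: the analytic heavy lifting is encapsulated in the earlier lemmas, and the regularity Assumption~\hE\ is not even needed (it is the closedness of $A_1,A_2$ in $D$ that keeps the boundary conditions intact in the limit). The only delicate point is the bookkeeping between the ``minimizing subsequence'' and the ``length-realizing subsequence''; performing the two extractions in the right order, as above, keeps the argument clean and avoids having to invoke a Fatou-type lower-semi-continuity of length separately.
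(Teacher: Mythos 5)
Your proof is correct and follows essentially the same route as the paper's: arclength reparameterization via Lemma~\ref{arclength lemma}~(i), Arzel\`a--Ascoli via Lemma~\ref{min prop0}, closedness of $A_1,A_2$ to keep the endpoints in place, lower semi-continuity via Lemma~\ref{lower semi lemma 2}~(i), and the same two-stage subsequence extraction (first realizing $\liminf\length$, then converging uniformly) together with a tail application of Lemma~\ref{min prop0}~(ii) to obtain $\length(\gamma^\star)\leq\liminf_n\length(\gamma_n)$. Your remark that Assumption~\hE\ plays no role here is also accurate.
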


\begin{proof}
Let $M':=\liminf_{n\to\infty}\length(\gamma_n)$, and let us pass on to a subsequence, which we again denote by $(\gamma_n)_{n\in\N}$, such that $\lim_{n\to\infty}\length(\gamma_n)=M'$.
For $\forall n\in\N$, let $\varphi_n$ be the arclength parameterization of $\gamma_n$ given by Lemma \ref{arclength lemma} (i), i.e.\ the one fulilling $|\varphi_n'|\equiv\length(\gamma_n)$ a.e.. Our conditions on $(\gamma_n)_{n\in\N}$ now imply that the sequence $(\varphi_n)_{n\in\N}$ fulfills the conditions of Lemma \ref{min prop0} (i), and so there exists a subsequence $(\varphi_{n_k})_{k\in\N}$ that converges uniformly to some function $\varphi^\star\subseteq K\subset\E\subset D$ which by Lemma \ref{min prop0} (ii) is in $\AC{0,1}$. Since $A_1$ and $A_2$ are closed in $D$, we have $\varphi^\star\in\CIA$.
By Lemma \ref{lower semi lemma 2} (i), the curve $\gamma^\star\in\GA$ parameterized by $\varphi^\star$ fulfills
\[
S(\gamma^\star)=S(\varphi^\star)\leq\lim_{k\to\infty} S(\varphi_{n_k})=\lim_{k\to\infty} S(\gamma_{n_k})=\inf_{\gamma\in\GA}S(\gamma),
\]
i.e.~$\gamma^\star$ is a strong minimizer of $P(A_1,A_2)$.

Finally, observe that for $\forall\eps>0\ \exists k_0\in\N:\ \sup_{k\geq k_0}\length(\gamma_{n_k})\leq M'+\eps$, and applying Lemma \ref{min prop0} (ii) to the tail sequence $(\varphi_{n_k})_{k\geq k_0}$ we find that $|{\varphi^\star}'|\leq M'+\eps$ a.e.~and thus $\length(\gamma^\star)\leq M'+\eps$. Since $\eps>0$ was arbitrary, this shows that $\length(\gamma^\star)\leq M'$.
\end{proof}
\subsection{Points with Local Minimizers, Existence Theorem} \label{main results subsection 1}
As we shall see in Theorem \ref{comp thm}, by using a compactness argument the minimization problem $\probA$ can be reduced to the special case $\prob$ where $x_1$ and $x_2$ are close to each other. The following definition therefore lies at the heart of this entire work, and thus the reader is strongly advised not to proceed until this definition is fully understood. The illustrations in Fig.~\ref{local minimizers illustration} may help in this respect.
\begin{figure}[t]
\centering
\includegraphics[height=4.7cm]{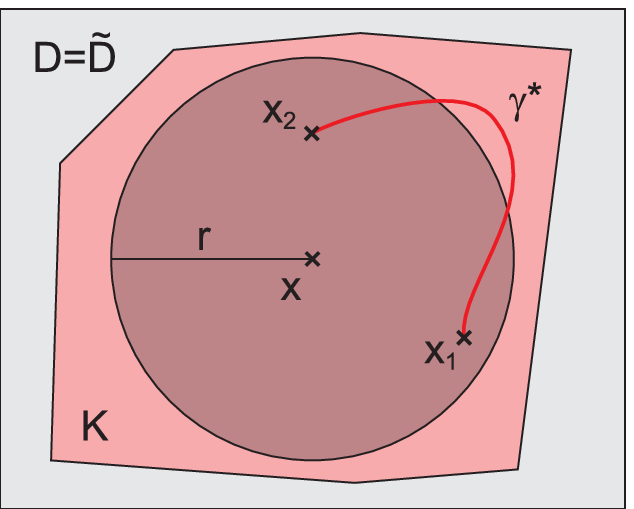}\hspace{.05cm}
\includegraphics[height=4.7cm]{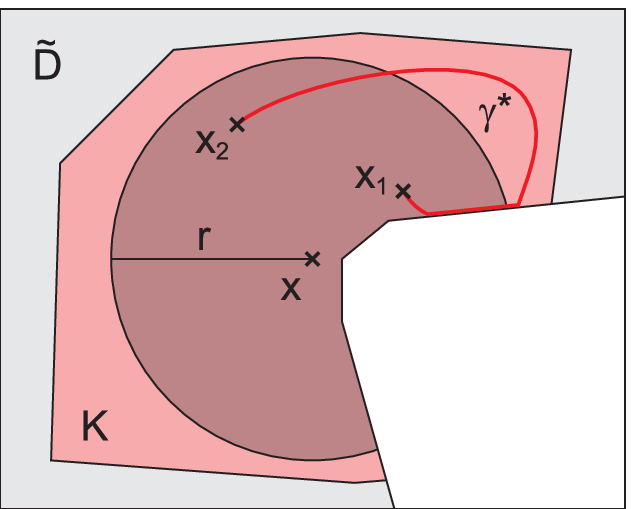}
\caption{\label{local minimizers illustration}
\small Illustration of Definition \ref{blm def}. The left graphic illustrates the case $\E=D$; the right graphic shows how for $\E\subsetneq D$ we only need to consider points $x_1,x_2\in\E$, and that the corresponding minimizing curve $\gamma^\star$ is then constrained to lie within~$\E$. In either case, independently of $x_1$ and $x_2$, $\gamma^\star$ must lie within some fixed compact set $K\subset\E$ and satisfy a length condition. }
\end{figure}
\begin{definition} \label{blm def}
(i) We say that a point $x\in\E$ \underline{has strong local minimizers} if $\exists r,\eta>0$ $\exists$ compact $K\subset\E$ $\forall x_1,x_2\in\bar B_r(x)\cap\E$ the minimization problem $P(x_1,x_2)$ has a strong minimizer $\gamma^\star\in\Gx$ with $\gamma^\star\subset K$ and $\length(\gamma^\star)\leq\eta$.\\[.1cm]
(ii) We say that a point $x\in\E$ \underline{has weak local minimizers} if there exist a constant $r>0$, a function $\eta\colon(0,\infty)\to[0,\infty)$ and a compact set $K\subset\E$ such that for $\forall x_1,x_2\in\bar B_r(x)\cap\E$ the minimization problem $\prob$ has~a~weak minimizer $\gamma^\star\in\tGxxx$ with $\gamma^\star\subset K$ and
$\forall u>0\colon\,\length\!\big(\gamma^\star|_{\bar B_u(x)^c}\big)$ $\leq\eta(u)$.
\end{definition}
%
Observe that strong implies weak: Indeed, if $x$ has strong local minimizers then we can choose the function $\eta(u)$ in part~(ii) to be the constant $\eta$ given in part~(i), and so $x$ has weak local minimizers.

It is important to understand that the only aspect of this property that justifies the use of the word ``local'' is that $x_1$ and $x_2$ are close to $x$; the corresponding minimization problem $\prob$ still considers curves that lead far away from $x$. Thus, checking that a given point $x$ has local minimizers generally requires \textit{global} knowledge of $\ell(x,y)$ (although an exception is given in Proposition \ref{blm prop 0}).

\enl
\begin{remark} \label{blm remark}
(i) The set of points with strong local minimizers is open in $\E$.\\[.1cm]
(ii) To prove that a point $x\in\E$ has strong local minimizers, it suffices to show that for
$\forall\eta>0$ $\exists r>0\ \forall x_1,x_2\in\bar B_r(x)\cap\E$ the minimization problem $P(x_1,x_2)$ has a minimizer $\gamma^\star\in\Gx$ with $\length(\gamma^\star)\leq\eta$.
Indeed, this implies that $\gamma^\star\subset\bar B_{r+\eta}(x)\cap\E=:K\subset D$, and $K$ is compact if $r$ and $\eta$ are chosen so small that $\bar B_{r+\eta}(x)\subset D$.\\[.1cm]
(iii) For the same reasons, if $D=\Rn$ then the requirement $\gamma^\star\subset K$ in Definition \ref{blm def} (i) may be dropped entirely since then $K:=\bar B_{r+\eta}(x)\cap\E$ is a compact set with $\gamma^\star\subset K$.
\end{remark}

As we will see in Sections \ref{criteria subsec} and \ref{sec examples}, showing that a given point has (weak or strong) local minimizers is rather easy once the flowlines of a good choice for the drift $b(x)$ of $S$ are understood. In fact, oftentimes one can show that \textit{every} point $x\in\E$ has local minimizers.

The following theorem which is proven at the end of this section extends the local property of Definition \ref{blm def} to a global one by using a compactness argument.

\begin{theorem}[Existence Theorem] \label{comp thm}
(i) Let $S\in\G$, let $K\subset\E$ be a compact set consisting only of points that have weak local minimizers. Let the two sets $A_1,A_2\subset\E$ be closed in $D$, and let us assume that the minimization problem $\probA$ has a minimizing sequence $(\gamma_n)_{n\in\N}$ such that $\gamma_n\subset K$ for $\forall n\in\N$.
Then $\probA$ has a weak minimizer.\\[.1cm]
(ii) If (in addition to the above conditions) all points in $K$ have strong local minimizers then $\probA$ has a strong minimizer.
\end{theorem}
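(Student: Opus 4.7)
The plan is to use compactness of $K$ together with the local minimizers hypothesis to replace any minimizing sequence $(\gamma_n)$ by one $(\tilde\gamma_n)$ whose combinatorial structure is uniformly controlled, and then to apply the compactness results of Section~\ref{rec curves subsec}.

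First I would set up a finite cover: for each $x\in K$ let $r(x)>0$, $K_x\subset\E$ and the length bound $\eta_x$ be as in Definition~\ref{blm def}; then extract a finite subcover $\{B_{r(x_i)}(x_i)\}_{i=1}^N$ of $K$, set $\tilde K:=\bigcup_{i=1}^N K_{x_i}$ (compact, contained in $\E$), and let $\delta>0$ be a Lebesgue number of this cover. Given a parameterization $\varphi_n\in\AC{0,1}$ of $\gamma_n$, I would subdivide $[0,1]$ into finitely many intervals $[t_{n,k-1},t_{n,k}]$ on each of which $\varphi_n$ has image of diameter $<\delta$, and hence lying in some ball $B_{r(x_{i(n,k)})}(x_{i(n,k)})$.

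The crucial step is to reduce the number of subdivision points uniformly in $n$. Whenever two non-adjacent subdivision points $w_{n,j}:=\varphi_n(t_{n,j})$ and $w_{n,k}:=\varphi_n(t_{n,k})$ are within distance $\delta/2$ of each other, they necessarily lie in a common ball of the cover; hence the entire subcurve of $\gamma_n$ between them may be excised and will later be replaced by a single local minimizer contained in the corresponding $K_{x_i}\subset\tilde K$. Iterating this surgery until no such pair remains leaves subdivision points that are $\delta/2$-separated in $K$; since $K$ is totally bounded, their number is uniformly bounded by some $M$ depending only on $K$ and $\delta$. I expect this surgery step (in particular termination, and ensuring that the inserted shortcut curves stay inside $\tilde K$ without creating new near-collisions among the remaining subdivision points) to be the main technical obstacle.

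With $M_n\leq M$ in hand, I would replace each remaining segment of $\gamma_n$ between consecutive $w_{n,k-1},w_{n,k}\in\bar B_{r(x_i)}(x_i)\cap\E$ by a local minimizer $\gamma^\star_{n,k}$ of $P(w_{n,k-1},w_{n,k})$ guaranteed by Definition~\ref{blm def}; concatenating yields $\tilde\gamma_n\in\tGA$ with $\tilde\gamma_n\subset\tilde K$, at most $M$ pieces, and $S(\tilde\gamma_n)\leq S(\gamma_n)$ since each substitution could only decrease the action. Passing to a further subsequence on which the combinatorial data $(M_n;i(n,1),\ldots,i(n,M_n))$ is constant, say equal to $(M';i_1,\ldots,i_{M'})$, each $\tilde\gamma_n$ decomposes into $M'$ pieces $\tilde\gamma_{n,k}\in\tilde\Gamma(x_{i_k})$ contained in $K_{x_{i_k}}$ and satisfying (in the weak case) $\length(\tilde\gamma_{n,k}|_{\bar B_u(x_{i_k})^c})\leq\eta_{i_k}(u)$ for every $u>0$. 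A diagonal argument on the $M'$ intermediate endpoints, combined with Lemma~\ref{min prop0b} applied piecewise and Lemma~\ref{lower semi lemma 2}~(ii), then yields a weak minimizer in $\tGA$, proving~(i). In the strong case of~(ii), each $\gamma^\star_{n,k}$ has length bounded by $\eta_{i_k}$, so $\length(\tilde\gamma_n)\leq M\cdot\max_i\eta_i$ is uniformly bounded, and Proposition~\ref{min prop} then delivers a strong minimizer directly.
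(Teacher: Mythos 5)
Your proposal is correct, and it reaches the same endgame as the paper's proof (replace each piece by the local minimizer from Definition~\ref{blm def}, observe that this can only decrease the action, pass to a constant-combinatorics subsequence, apply Lemma~\ref{min prop0}/Lemma~\ref{min prop0b} piecewise together with Lemma~\ref{lower semi lemma 2}, and in case~(ii) finish via Proposition~\ref{min prop}), but it takes a genuinely different route at the one delicate point: how to get a uniformly-in-$n$ bounded number of pieces. You subdivide via a Lebesgue number $\delta$ of the finite cover, which produces an $n$-dependent and a priori unbounded number of pieces, and then compress by iteratively excising the subcurve between any two non-adjacent subdivision points closer than $\delta/2$. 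This does work: each excision strictly decreases the number of surviving subdivision points, so the iteration terminates, and at termination the odd-indexed (resp.\ even-indexed) survivors are pairwise $\geq\delta/2$-separated in the compact set $K$, giving a bound depending only on $K$ and $\delta$; the ``new near-collisions'' you flag as the main obstacle are actually harmless, since the process only ever deletes points and you simply stop once no qualifying non-adjacent pair remains. The paper avoids all of this with a one-pass greedy subdivision that needs no Lebesgue number and no iteration: it sets $\alpha_{i+1}:=\sup\!\big\{\alpha\,\big|\,\varphi_n(\alpha)\in B_{r_{j_i}}(x_{j_i})\big\}$, the \emph{last exit time} from the current ball, which forces the indices $j_1,\dots,j_m$ to be pairwise distinct and hence $m\leq k$ automatically, with the length bound $\length(\tilde\gamma_n)\leq\sum_j\eta_{x_j}$ falling out for free. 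Your construction buys a more transparently geometric bound (a packing argument in $K$) at the cost of an extra surgery step; the paper's buys immediacy. One detail the paper makes explicit and you omit: before invoking Lemma~\ref{min prop0b} on a weak-minimizer piece $\gamma^\star_{n,k}$ you should trim it to visit the center $x_{i_k}$ at most once (cut out the loop between the first and last hitting times, which can only lower the action), since that hypothesis is required by Lemma~\ref{min prop0b}.
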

\begin{proof}
Postponed to the end of this section.
\end{proof}

The decisive advantage of Theorem \ref{comp thm} over Proposition \ref{min prop} is that the bounded-length-condition of the minimizing sequence is no longer required, and instead we have to show that $K$ consists of points with local minimizers.
The remaining condition, $\gamma_n\subset K$ for $\forall n\in\N$, boils down to the following estimate.

\begin{lemma} \label{main corollary}
Let $S\in\G$, let $K\subset\E$ be compact, let $A_1,A_2\subset\E$, and suppose that there exists some curve $\gamma_0\in\GA$ with $\gamma_0\subset K$ such that
\begin{equation} \label{main cor condition}
  \hS(\gamma_0)\leq\inf_{\substack{\gamma\in\GA\\\gamma\nsubseteq K}}\hS(\gamma),
\end{equation}
i.e.~no curve leading from $A_1$ to $A_2$ and leaving $K$ along its way has a smaller action than $\gamma_0$.
Then $\probA$ has a minimizing sequence $(\gamma_n)_{n\in\N}$ with $\gamma_n\subset K$ for $\forall n\in\N$.
\end{lemma}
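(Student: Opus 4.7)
\medskip

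\textbf{Proof plan for Lemma \ref{main corollary}.} The plan is to start with an arbitrary minimizing sequence of $P(A_1,A_2)$ and either keep it (after truncation) or replace the ``bad'' terms by $\gamma_0$, using the inequality \eqref{main cor condition} to control the resulting actions. Let $m:=\inf_{\gamma\in\GA}S(\gamma)$. Since $\gamma_0\in\GA$, we have $m\leq S(\gamma_0)$, and $S(\gamma_0)<\infty$ because $\gamma_0\subset K$ compact (Lemma \ref{lower semi lemma}~(ii)). I would split into the two natural cases $m=S(\gamma_0)$ and $m<S(\gamma_0)$.

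In the first case, $\gamma_0$ is itself a minimizer, so the constant sequence $\gamma_n:=\gamma_0$ is trivially a minimizing sequence satisfying $\gamma_n\subset K$ for every $n$, and we are done.

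In the second case, $m<S(\gamma_0)$, pick any minimizing sequence $(\tilde\gamma_n)_{n\in\N}\subset\GA$ with $S(\tilde\gamma_n)\to m$. For $n$ large enough we have $S(\tilde\gamma_n)<S(\gamma_0)$, and then the contrapositive of the hypothesis \eqref{main cor condition} forces $\tilde\gamma_n\subset K$, since any curve in $\GA$ leaving $K$ has action at least $S(\gamma_0)$. Discarding the finitely many exceptional initial terms produces a minimizing sequence entirely contained in~$K$. (Alternatively, a single unified construction works: set $\gamma_n:=\tilde\gamma_n$ if $\tilde\gamma_n\subset K$ and $\gamma_n:=\gamma_0$ otherwise; then $\gamma_n\subset K$ for all~$n$, and for indices where replacement occurs we have $m\leq S(\gamma_n)=S(\gamma_0)\leq S(\tilde\gamma_n)$ by \eqref{main cor condition}, so $S(\gamma_n)\to m$ by sandwiching.)

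I do not anticipate any real obstacle here; the lemma is essentially a formalization of the observation ``minimizers cannot afford to leave $K$ once a cheaper in-$K$ competitor exists,'' and the only subtlety is being careful about the boundary case $m=S(\gamma_0)$, which is precisely why the inequality in \eqref{main cor condition} is stated as ``$\leq$'' rather than ``$<$''.
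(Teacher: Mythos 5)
Your proof is correct, and the unified construction you mention at the end is exactly the paper's own argument: replace every $\gamma_n$ that leaves $K$ by $\gamma_0$, noting that \eqref{main cor condition} guarantees this can only decrease the action. The case split ($m = S(\gamma_0)$ versus $m < S(\gamma_0)$) is a slightly more verbose but equivalent route to the same conclusion.
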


\begin{proof}[Proof of Lemma \ref{main corollary}]
Let $(\gamma_n)_{n\in\N}$ be \textit{any} minimizing sequence. If we replace every curve $\gamma_n$ that is not entirely contained in $K$ by $\gamma_0$ then because of \eqref{main cor condition} we only reduce the action. Thus we obtain a new minimizing sequence that is now entirely contained in $K$.
\end{proof}

\paragraph{Example 4.}
\hypertarget{example 4}{In} the case that $A_1$ is bounded and $S$ is the SDE geometric action given by \eqref{local geo action0} with a drift of the form $b=-\nabla V$, for some potential $V\in C^1(\Rn,\R)$ with $\lim_{x\to\infty}V(x)=\infty$, it suffices in Lemma \ref{main corollary} to choose $K=\bar B_R(0)$ for some sufficiently large $R>0$.

To see this, choose the fixed curve $\gamma_0\in\GA$ arbitrarily, and let $\gamma\in\GA$ with $\gamma\subsetneq K$. Let $\gamma'$ denote the curve segment of $\gamma$ until its first exit of $B_R(0)$, and let $x_1$ and $x_2$ be the start and end points of $\gamma'$, respectively. Then we have
\begin{align*}
   S(\gamma)
     &\geq S(\gamma')
    =    \int_{\gamma'}\big(|\nabla V(z)||dz|+\skp{\nabla V(z)}{dz}\big) \\
    &\geq 2\int_{\gamma'}\skp{\nabla V(z)}{dz}
     = 2\int_{\gamma'} dV(z)
     = V(x_2)-V(x_1) \\
    &\geq \min\!\big\{V(x)\,\big|\,|x|=R\big\} - \max\!\big\{V(x)\,\big|\,x\in A_1\big\},
\end{align*}
which can be made larger than $S(\gamma_0)$ by choosing $R$ large enough.
\hfill\openbox

\begin{proof}[Proof of Theorem \ref{comp thm}]
Although the construction for part (i) directly implies the statement of part (ii), we will show part (ii) separately first (since its proof uses a much easier argument at its end) and then extend the proof to cover part (i). See Fig.~\ref{comp thm illustration} for an illustration of the proof of part~(ii).\\[.2cm]
(ii) Let $S\in\G$, and let the sets $K,A_1,A_2\subset\E$ have the properties described in Theorem \ref{comp thm}, where $K$ only consists of points with strong local minimizers.
For $\forall x\in K$ Definition \ref{blm def}~(i) provides us with values $r_x,\eta_x>0$ and compact sets $K_x\subset\E$ such that for $\forall x_1,x_2\in\bar B_{r_x}(x)\cap\E$ there exists a minimizer $\gamma^\star_{x_1,x_2}\in\Gx$ of the minimization problem $\prob$ with $\gamma^\star_{x_1,x_2}\subset K_x$ and $\length(\gamma^\star_{x_1,x_2})\leq\eta_x$.
Since $\{B_{r_x}(x)\,|\,x\in K\}$ is an open covering of $K$, there exists a finite subcovering, i.e.~there exist points $x_1,\dots,x_k\in K$ such that $K\subset\bigcup_{j=1}^{k}B_{r_j}(x_j)$, where $r_j:=r_{x_j}$. We define $M:=\sum_{j=1}^k\eta_{x_j}$.
\begin{figure}[t!]
\centering
\includegraphics[width=12.2cm]{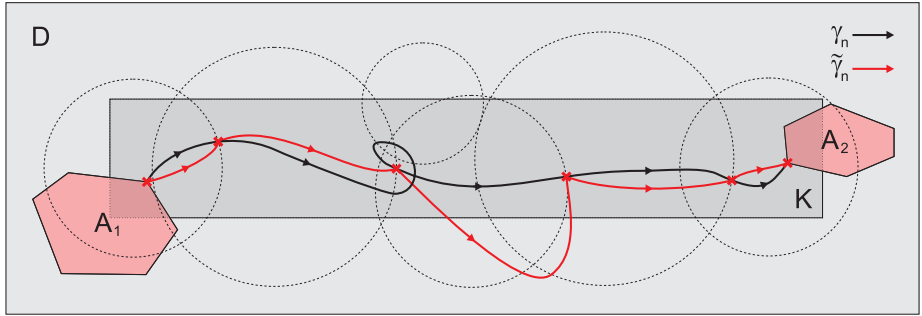}
\caption{\label{comp thm illustration}
\small Illustration of the proof of Theorem \ref{comp thm} (ii), with $\E=D$. Every curve $\gamma_n$ of the given minimizing sequence is cut into at most $k$ pieces whose start and end point is contained in the same ball $\bar B_{r_j}(x_j)$. Using Definition~\ref{blm def}, these pieces are then replaced by new curve segments with minimal action and controllable length.}
\end{figure}

Now let $(\gamma_n)_{n\in\N}\subset\GA$ be a minimizing sequence with $\gamma_n\subset K$ for $\forall n\in\N$.
For each fixed $n\in\N$ we will now define a modified curve $\tilde\gamma_n$ by cutting $\gamma_n$ into at most $k$ pieces whose start and end points lie within the same ball, and then by replacing these pieces by the corresponding optimal curves with the same start and end points.

To make this description rigorous, let the functions $\varphi_n\in\CIA$ be some parameterizations of the curves $\gamma_n$, and fix $n\in\N$. We then define (for some $m\leq k$) the numbers $0=\alpha_1<\dots<\alpha_m=1$, the distinct indices $j_1,\dots,j_m\in\{1,\dots,k\}$ and finally $j_{m+1}=j_m$ by induction, as follows:
\pb
\begin{itemize}
\item Let $\alpha_1=0$, and let $j_1$ be such that $\varphi_n(0)\in B_{r_{j_1}}(x_{j_1})$.
\item For $i\geq1$, let $\alpha_{i+1}:=\sup\!\big\{\alpha\in[0,1]\,\big|\,\varphi_n(\alpha)\in B_{r_{j_i}}(x_{j_i})\big\}$, and let
    \[ \begin{cases}j_{i+1}\text{ be such that }\varphi_n(\alpha_{i+1})\in B_{r_{j_{i+1}}}(x_{j_{i+1}}) & \text{if}\quad\alpha_{i+1}<1,\\
    j_{i+1}:=j_i,\ m:=i&\text{if}\quad\alpha_{i+1}=1.\end{cases} \]
\end{itemize}
In other words, we split the curve $\gamma_n$ into $m$ pieces whose endpoints fulfill $\varphi_n(\alpha_i),\varphi_n(\alpha_{i+1})\in\bar B_{r_{j_i}}(x_{j_i})$ for $\forall i=1,\dots,m$.
Since also $\varphi_n\subset K\subset\E$, by definition of the radii $r_j$ the $m$ minimization problems $P\big(\varphi_n(\alpha_i),\varphi_n(\alpha_{i+1})\big)$ ($i=1,\dots,m$) have strong minimizers $\gamma^\star_{n,i}\subset K_{x_{j_i}}\subset\E$ with $\length(\gamma^\star_{n,i})\leq\eta_{x_{j_i}}$, and in particular we have $\hS(\gamma^\star_{n,i})\leq\hS(\varphi_n|_{[\alpha_i,\alpha_{i+1}]})$. The concatenated curve $\tilde\gamma_n:=\gamma^\star_{n,1}+\dots+\gamma^\star_{n,m}\in\GA$ thus fulfills
\begin{align}
\hS(\tilde\gamma_n)&=\sum_{i=1}^m\hS(\gamma^\star_{n,i})
  \leq\sum_{i=1}^m\hS\big(\varphi_n|_{[\alpha_i,\alpha_{i+1}]}\big)
    =\hS(\varphi_n)=\hS(\gamma_n),\label{action piece} \\*
\length(\tilde\gamma_n&)=\sum_{i=1}^m\length(\gamma^\star_{n,i})
  \leq \sum_{i=1}^m\eta_{x_{j_i}}\leq \sum_{j=1}^k\eta_{x_j}=M. \label{length piece}
\end{align}
Because of \eqref{action piece}, the modified sequence $(\tilde\gamma_n)_{n\in\N}$ is still a minimizing sequence, and \eqref{length piece} tells us that the curves $\tilde\gamma_n$ have uniformly bounded lengths. Furthermore, we have $\tilde\gamma_n\subset\bigcup_{i=1}^mK_{x_{j_i}}\subset\bigcup_{j=1}^kK_{x_j}$, which is a compact subset of $\E$. Therefore we can apply Proposition \ref{min prop} and conclude that $\probA$ has a minimizer $\gamma^\star$, with
\[ \length(\gamma^\star)\leq\liminf_{n\to\infty}\length(\tilde\gamma_n) \leq M. \]
(i) For this part we begin as in the proof of part (ii), by choosing a finite collection of balls $B_{r_j}(x_j)$ covering $K$, now given by Definition \ref{blm def}~(ii) whenever $x_{j}$ only has weak local minimizers. Given the minimizing sequence $(\gamma_n)_{n\in\N}\subset\GA$, we cut each curve $\gamma_n$ into smaller segments as in part (ii). The number of pieces $m$ and the indices $j_1,\dots,j_m$ may depend on $n$, but since there are only finitely many combinations, we may pass on to a subsequence (which we again denote by $(\gamma_n)_{n\in\N}$), such that $m$ and $j_1,\dots,j_m$ are in fact the same for every curve $\gamma_n$.

We then construct a new sequence $(\tilde\gamma_n)_{n\in\N}\subset\tilde\Gamma_{A_1}^{A_2}$ with $S(\tilde\gamma_n)\leq S(\gamma_n)$ for $\forall n\in\N$
as in the proof of part (ii), only that now if $x_{j_i}$ only has weak local minimizers then the curve segment $\gamma^\star_{n,i}$ must be obtained from Definition~\ref{blm def}~(ii), and so we have $\gamma^\star_{n,i}\in\tilde\Gamma(x_{j_i})$ in this case.
We can assume that each segment $\gamma^\star_{n,i}$ visits the point $x_{j_i}$ at most once (otherwise we can cut out the piece between the first and the last hitting point of $x_{j_i}$, which can only decrease the action of the curve).

If $x_{j_1}$ has strong local minimizers then we can apply Lemma \ref{min prop0}, just as in the proof of Proposition \ref{min prop}, to show that some subsequence of the arc\-length parameterizations $(\varphi_{n,1})_{n\in\N}\subset\bar C(0,1)$ of $(\gamma^\star_{n,1})_{n\in\N}$ converges uniformly to the parameterization of some $\gamma^\star_{\infty,1}\in\Gamma$. If instead $x_{j_1}$ only has weak local minimizers then we apply Lemma \ref{min prop0b} to show that a subsequence of some parameteriations $(\varphi_{n,1})_{n\in\N}\subset\tilde C(x_{j_i})$ of $(\gamma^\star_{n,1})_{n\in\N}$ converges pointwise on $[0,1]$ and uniformly on each set $[0,\frac12-a]\cup[\frac12+a,1]$, $a\in(0,\frac12)$, to the parameterization of some some $\gamma^\star_{\infty,1}\in\tilde\Gamma(x_{j_1})$. In either case, since $\gamma^\star_{n,1}\subset\E$ for $\forall n\in\N$ and since $\E$ is closed in $D$, we have $\gamma^\star_{\infty,1}\subset\E$.

We repeat this procedure for $x_{j_2},\dots,x_{j_m}$, each time passing on to a further subsequence, and in this way obtain curve pieces $\gamma^\star_{\infty,1},\dots,\gamma^\star_{\infty,m}$ that by construction connect to a curve $\gamma^\star\in\tGA$. Using both parts of Lemma \ref{lower semi lemma 2}, its action fulfills
\begin{align*}
 S(\gamma^\star) &= \sum_{i=1}^mS(\gamma^\star_{\infty,i})
    \leq \sum_{i=1}^m\liminf_{n\to\infty}S(\gamma^\star_{n,i})
    \leq \liminf_{n\to\infty}\sum_{i=1}^mS(\gamma^\star_{n,i}) \\
   &=    \liminf_{n\to\infty}S(\tilde\gamma_{n})
    \leq \liminf_{n\to\infty}S(\gamma_{n})
    = \inf_{\gamma\in\GA}S(\gamma) 
    = \inf_{\gamma\in\tGA}S(\gamma),
\end{align*}
where in the last step we used Lemma \ref{weak vs strong inf}. Since $\gamma^\star\in\tGA$, equality must hold, and so $\gamma^\star$ is a weak minimizer.
\end{proof}

\begin{remark}
Denoting the minimizer by $\gamma^\star$, the proof implies that\\[.1cm]
in (i), there exists a finite set $W\subset K$ of points that only have weak but not strong local minimizers, depending only on $K$ but not on $A_1$ and $A_2$, such that every point that $\gamma^\star$ passes in infinite length is in $W$;\\[.1cm]
in (ii), we have $\length(\gamma^\star)\leq M$, where $M>0$ is a constant only depending on $K$ but not on $A_1$ and $A_2$.
\end{remark}

\begin{remark}
Theorem~\ref{comp thm} and Lemma \ref{main corollary} can easily be generalized to cover also the minimization over sets of the form
\begin{align*}
\Gamma_{A_1,\dots,A_k}&:=\big\{\gamma\subset\E\,\big|\,\text{\emph{$\gamma$ visits $A_1,\dots,A_k$ in this order}}\big\} \\*
\text{or}\qquad\Gamma_{A_1,\dots,A_k}'&:=\big\{\gamma\subset\E\,\big|\,\text{\emph{$\gamma$ visits $A_1,\dots,A_k$ in any order}}\big\}
\end{align*}
for any given $k\in\N$ and any given sets $A_1,\dots,A_k\subset\E$ that are closed in $D$. In this case, $(\gamma_n)_{n\in\N}$ must be a minimizing sequence of the corresponding associated minimization problem.
\end{remark}

\subsection{Finding Points with Local Minimizers} \label{criteria subsec}

This leaves us with the question how one can show that a given point $x\in\E$ has local minimizers. We have developed three criteria, given by Propositions \ref{blm prop 0}, \ref{blm prop 1} and \ref{blm prop 2}, which were designed to cover the three cases listed in Table~\ref{case table}. In the special case of a Hamiltonian geometric action $S\in\H$ with the choice of a natural drift $b(x)=\Ht(x,0)$, these three cases can be expressed in terms of the Hamiltonian $H$ associated to~$S$.
The proofs of most statements that are listed in this section will be carried out in Part~\ref{part proofs}.

We will from now on assume that $S\in\G$ and that $b$ is a drift of $S$, and we will denote by $\psi(x,t)$ the flow of $b$ given in Definition \ref{flow def}. Our first result is the following.

\begin{table}[t]
\begin{center}
\begin{tabular}[t]{|l||l|l|}
\hline
 & \hspace{1.8cm}$S\in\G$ & \hspace{0.4cm}$S\in\H$ with natural drift $b$\\
\hline\hline
Prop.~\ref{blm prop 0} & $\ell(x,y)>0$ for $\forall y\neq0$ & $H(x,0)<0$ \\
\hline
Prop.~\ref{blm prop 1} & 
\begin{minipage}[t]{4.25cm}$\ell(x,y)=0$ for some $y\neq0$\\
$b(x)\neq0$
\end{minipage}\hspace*{.4cm}
 & $H(x,0)=0$ and $\Ht(x,0)\neq0$\hspace*{.4cm}\\
\hline
Prop.~\ref{blm prop 2} & 
\begin{minipage}[t]{4.25cm}$\ell(x,y)=0$ for some $y\neq0$\\
$b(x)=0$
\end{minipage} &
$H(x,0)=0$ and $\Ht(x,0)=0$\\
\hline
\end{tabular}
\end{center}
\vspace{-.55cm}
\caption{\label{case table}The situations for which our criteria Propsitions \ref{blm prop 0}-\ref{blm prop 2} were designed.
}
\end{table}

\begin{proposition} \label{blm prop 0}
Let $x\in\E$ be such that $\ell(x,y)>0$ for $\forall y\in\Rn\setminus\{0\}$. Then $x$ has strong local minimizers.
\end{proposition}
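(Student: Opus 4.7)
The strategy is to verify the simplified criterion of Remark~\ref{blm remark}~(ii), which reduces Definition~\ref{blm def}~(i) to producing, for each $\eta>0$, some $r>0$ such that every $P(x_1,x_2)$ with $x_1,x_2\in\bar B_r(x)\cap\E$ admits a minimizer $\gamma^\star\in\Gamma_{x_1}^{x_2}$ of length at most~$\eta$.

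First I would extract from the hypothesis a two-sided bound on $\ell$ near~$x$. Since $\ell(x,\cdot\,)$ is continuous and strictly positive on the compact unit sphere, it attains a minimum $c_0>0$ there; by joint continuity of $\ell$, there exists $\rho>0$ with $\bar B_\rho(x)\subset D$ such that $\ell(z,y)\geq\tfrac12c_0|y|$ for $\forall z\in\bar B_\rho(x)$ and $\forall y\in\Rn$, using positive homogeneity. Set $c:=\tfrac12c_0$. Lemma~\ref{lower semi lemma}~(ii) simultaneously supplies a $\cB>0$ (call it $c_1$) with $\ell(z,y)\leq c_1|y|$ on the same ball.

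Next, Assumption~\hE\ yields cheap connecting curves: for any $\nu>0$ there exists $r\in(0,\rho/2]$ so that each $w\in\bar B_r(x)\cap\E$ is joined to $x$ by some $\gamma_w\in\Gamma_x^w$ of length at most $\nu$. Given $x_1,x_2\in\bar B_r(x)\cap\E$, concatenating the reverse of $\gamma_{x_1}$ with $\gamma_{x_2}$ produces a competitor $\gamma_0\in\Gamma_{x_1}^{x_2}$ with $S(\gamma_0)\leq 2c_1\nu$ (the upper bound $\ell(z,y)\le c_1|y|$ is symmetric in the sign of~$y$). The crux is to choose $\nu$ so small that $2c_1\nu<c(\rho-r)$: then no curve $\gamma\in\Gamma_{x_1}^{x_2}$ with $S(\gamma)\leq 2c_1\nu$ can leave $B_\rho(x)$, because any such exit would force an initial segment of $\gamma$ lying entirely in $\bar B_\rho(x)$ to span Euclidean distance at least $\rho-r$, hence to contribute action at least $c(\rho-r)>2c_1\nu$.

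Consequently every sufficiently efficient competitor lies in the compact set $K:=\bar B_\rho(x)\cap\E\subset\E$, and the lower bound $\ell\geq c|\,\cdot\,|$ converts the action bound into a length bound $\length(\gamma)\leq 2c_1\nu/c$. Any minimizing sequence of $P(x_1,x_2)$ therefore eventually satisfies $\gamma_n\subset K$ and $\length(\gamma_n)\leq 2c_1\nu/c$, so Proposition~\ref{min prop} (applied with $A_1=\{x_1\}$, $A_2=\{x_2\}$, both closed in $D$) delivers a strong minimizer $\gamma^\star$ with $\length(\gamma^\star)\leq 2c_1\nu/c$. Choosing $\nu\leq c\eta/(2c_1)$ at the outset gives $\length(\gamma^\star)\leq\eta$, as required. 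The only delicate point is coordinating the small parameters $\nu$ and $r$ against the fixed $\rho,c,c_1$; no deeper analytic obstacle arises, since all the heavy machinery (continuity and homogeneity of $\ell$, the upper bound from Lemma~\ref{lower semi lemma}~(ii), the short connections from Assumption~\hE, and the compactness conclusion of Proposition~\ref{min prop}) is already in place.
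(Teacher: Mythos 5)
Your proposal is correct and follows essentially the same route as the paper's proof: pick a ball on which $\ell$ satisfies the two-sided bound $c|y|\leq\ell(w,y)\leq c_1|y|$, manufacture a cheap competitor through $x$ via Assumption~(\hyperlink{ass tD}{$\E$}), show that any sufficiently cheap curve cannot exit the ball (hence stays in a compact set $K$ and has controlled length), and then invoke Proposition~\ref{min prop} together with Remark~\ref{blm remark}~(ii). The paper organizes the constants slightly differently (e.g.~obtaining the lower bound $\mu$ directly as a compact minimum over $\bar B_\eps(x)\times\{|y|=1\}$ rather than via continuity from $\min_{|y|=1}\ell(x,y)$, and packaging the smallness conditions into $\nu:=\min\{\eps,\frac{\mu\eps}{5\cB},\frac{\eta\mu}{2\cB}\}$), but the argument is the same.
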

\begin{proof}
See Part \ref{part proofs}, Section \ref{subsec proof prop 2}.
\end{proof}
By Lemma \ref{critical point lemma}~(ii) for actions $S\in\H$ the condition of Proposition \ref{blm prop 0} is fulfilled if and only if $H(x,0)<0$ for some (and thus every) Hamiltonian $H$ that induces $S$.
Unfortunately, this means that Proposition~\ref{blm prop 0} cannot be applied to actions $S\in\H_0$, and in particular it cannot be applied to the large deviation geometric actions for SDEs and for Markov jump processes, as given in Example~\hyperlink{example LDT}{1}. For actions $S\in\H\setminus\H_0$ such as the ones given in Examples~\hyperlink{example Riemann}{2} and~\hyperlink{example quantum}{3}, however, this criterion is essential (and the easiest one to use); see Example~\hyperlink{example 10}{10} in Section \ref{quantum ex vanishing drift sec}.

To control the potential problems that can arise if $\ell(x,y)=0$ for some $y\neq0$, we now introduce the concept of admissible manifolds. Loosely speaking, an admissible manifold $M$ is a compact $C^1$-manifold of codimension 1 with the property that the flowlines of the drift $b$ are never tangent to $M$ and always cross $M$ in the same direction (``in'' or ``out'').

\begin{definition} \label{admissible def}
Given a vector field $b\in C^1(D,\Rn)$, a set $M\subset D$ is called an \underline{admissible manifold of $b$} if there exists a function $f_M\in C(D,\R)$ such that\\[.2cm]
\begin{tabular}{rl}
(i)   & $M=f_M^{-1}(\{0\})$, \\[.2cm]
(ii)  & $M$ is compact,\\[.2cm]
(iii) & $f_M$ is $C^1$ in a neighborhood of $M$, and \\[.2cm]
(iv)  & $\forall x\in M\colon\,\skp{\nf_M(x)}{b(x)}>0$.
\end{tabular}
\end{definition}

Property (iv) says that the drift vector field $b(x)$ flows from the set $f_M^{-1}\big((-\infty,0)\big)$ into the set $f_M^{-1}\big((0,\infty)\big)$ at every point of their common boundary $M=f_M^{-1}(\{0\})$, crossing $M$ at a non-vanishing angle.
Note that $M$ is a proper $C^1$-manifold since by part (iv) we have $\nf_M\neq0$ on $M$.
Also by part (iv) we have the following:

\begin{remark} \label{on manifold b neq0}
  If $M$ is an admissible manifold of $b$ then $\forall x\in M\colon\, b(x)\neq0$.
\end{remark}

To get a better idea of how admissible manifolds look in $\R^2$, the reader may briefly skip ahead and take a look at Figures \ref{fig two basins}-\ref{fig counterexamples} on pages \pageref{fig two basins}-\pageref{fig counterexamples}. There, the black and the blue lines are the flowlines of the vector field $b(x)$, and the solid red lines are admissible manifolds. Dashed red lines are examples of curves that are \textit{not} admissible manifolds since they are crossed by the flowlines in either direction (both ``in'' and ``out'').

A simple explicit example can be given for the drift of Example \hyperlink{example 4}{4}, i.e.\ if $b=-\nabla V$ for some potential $V\in C^1(\Rn,\R)$ with $\lim_{x\to\infty}V(x)=\infty$: Here, the level sets $M_c:=V^{-1}(\{c\})$, $c\in\R$, are admissible manifolds, provided that $\nabla\nabla V\neq0$ on $M_c$. Indeed, the reader can easily check that all four properties in Definition \ref{admissible def} are fulfilled, with $f_{M_c}=-V+c$.

Lemma \ref{admissible ball example} below gives the simplest general example of an admissible manifold, as found repeatedly in Figures \ref{fig two basins}-\ref{fig counterexamples}: the surface of a small deformed ball around a stable or unstable equilibrium point.
To prepare for this lemma, we introduce two functions $f_s$ and $f_u$ that are defined on the basins of attraction/repulsion of $x$, denoted by $B_s$ and $B_u$, respectively. These functions measure the ``distance'' of a point $w$ to the equilibrium point $x$ in terms of the length of the flowline starting from $w$ until it reaches $x$ as $t\to\infty$ ($f_s$) or as $t\to-\infty$ ($f_u$), respectively.

\begin{definition} \label{fs fu def}
Let $x\in D$ be such that $b(x)=0$ and that all the eigenvalues of the matrix $\nabla b(x)$ have negative (positive) real part, and let $B_s$ \emph($B_u$\emph) be the basin of attraction (repulsion) of $x$.
Then we define the function\linebreak $f_s\colon B_s\to[0,\infty)$ \emph($f_u\colon B_u\to[0,\infty)$\emph) by
\begin{subequations}
\begin{align}
  f_s(w) &:= \int_0^\infty|b(\psi(w,t))|\,dt = \int_0^\infty|\dot\psi(w,t)|\,dt,\qquad w\in B_s, \label{fs def 1} \\*
  f_u(w) &:= \int_{-\infty}^0\!|b(\psi(w,t))|\,dt = \int_{-\infty}^0\!|\dot\psi(w,t)|\,dt,\qquad w\in B_u. \label{fu def 1}
\end{align}
\end{subequations}
\end{definition}

\begin{lemma} \label{admissible ball example}
Let $x\in D$ be such that $b(x)=0$ and that all the eigenvalues of the matrix $\nabla b(x)$ have negative (positive) real parts. Then for sufficiently small $a>0$ the level set $\Msa:=f_s^{-1}(\{a\})$ \emph($\Mua:=f_u^{-1}(\{a\})$\emph) is an admissible manifold.
\end{lemma}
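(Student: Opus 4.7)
I treat only the stable case; the unstable case follows by reversing time (replacing $b$ by $-b$ interchanges $f_s\leftrightarrow f_u$ and $B_s\leftrightarrow B_u$, and the sign of condition (iv) flips correctly).

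The core of the argument is to establish that $f_s$ is $C^1$ on a punctured neighborhood of $x$. Since all eigenvalues of $B:=\nabla b(x)$ have negative real parts, a standard Lyapunov construction (solving $B^T P+PB=-I$ for positive definite $P$) produces an open neighborhood $U\subset B_s$ of $x$ and constants $C,\lambda>0$ such that
\[ |\psi(w,t)-x|\leq C|w-x|e^{-\lambda t},\qquad \|\nabla_w\psi(w,t)\|\leq Ce^{-\lambda t} \]
for all $w\in U$ and $t\geq 0$. Together with the estimate $|b(z)|\leq C_1|z-x|$ near $x$ (which holds because $b(x)=0$ and $b\in C^1$), this dominates the integrand in \eqref{fs def 1} by $C_2|w-x|e^{-\lambda t}$, showing $f_s$ is finite and continuous on $U$. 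For $w\in U\setminus\{x\}$, uniqueness of ODE solutions together with $x$ being the only equilibrium in $U$ forces $b(\psi(w,t))\neq 0$ for every $t$, so $|b(\psi(w,\cdot))|$ is smooth in $w$, and
\[ \big|\nabla_w|b(\psi(w,t))|\big|\leq\|\nabla b(\psi(w,t))\|\cdot\|\nabla_w\psi(w,t)\|\leq C_3 e^{-\lambda t} \]
is an integrable dominator. Differentiation under the integral sign gives $f_s\in C^1(U\setminus\{x\})$, and differentiating $f_s(\psi(w,t))=\int_t^\infty|b(\psi(w,s))|\,ds$ at $t=0$ yields $\langle\nabla f_s(w),b(w)\rangle=-|b(w)|$.

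To locate $M_s^a$, note that the arclength of the flowline from $w$ to $x$ bounds the Euclidean distance, so $f_s(w)\geq|w-x|$ on $B_s$, and hence $M_s^a\subset\bar B_a(x)$. Choose $a>0$ so small that $\bar B_a(x)\subset U\subset D$; then $M_s^a$ is closed in the compact ball $\bar B_a(x)$ and therefore compact, and bounded away from $x$ because $f_s(x)=0$. On an open neighborhood $W$ of $M_s^a$ with $\bar W\subset U\setminus\{x\}$, define $f_M:=a-f_s$; this is $C^1$ on $W$, vanishes precisely on $M_s^a$, and satisfies $\langle\nabla f_M,b\rangle=|b|>0$ on $M_s^a$. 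To extend $f_M$ continuously to all of $D$ while preserving its zero set, I use a standard truncation: since $a-f_s$ has nonvanishing gradient on the compact $C^1$-hypersurface $M_s^a$, it has a definite sign on each connected component of a small $W\setminus M_s^a$, and can be clipped against fixed nonzero constants outside $W$ to produce the required $f_M\in C(D,\mathbb{R})$ satisfying (i)--(iv) of Definition \ref{admissible def}.

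The principal obstacle is obtaining the simultaneous exponential decay of both $\psi(\cdot,t)-x$ and $\nabla_w\psi(\cdot,t)$, which cannot be extracted from a naive Gronwall argument and genuinely requires exploiting the sink structure through a Lyapunov form; once these estimates are in hand, the remaining steps reduce to direct computation and a routine extension argument.
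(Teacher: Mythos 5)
Your proof takes essentially the same route as the paper's: establish that $f_s$ is $C^1$ on a punctured neighborhood of $x$ via the exponential decay of $\psi(\cdot,t)-x$ and $\nabla_w\psi(\cdot,t)$, compute $\langle\nabla f_s,b\rangle=-|b|$, use the lower bound $f_s(w)\geq|w-x|$ to localize $M_s^a$ and guarantee $b\neq0$ there, and build $f_M$ from $a-f_s$ by truncation. (The paper defers the first three facts to Lemma~\ref{distance function} and then gives the explicit formula $f_M=\max\{a-f_s,-a\}$ on $B_s$, $f_M\equiv-a$ off $B_s$, whose continuity across $\partial B_s$ follows immediately from $f_s\geq|w-x|>2a$ there; your ``clipping against fixed nonzero constants'' is the same construction, and it is worth noting that this lower bound is exactly what makes the clipping consistent and continuous globally, not just near $M_s^a$.)
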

\begin{proof}
See Part~\ref{part proofs}, Section \ref{subsec proof lemma 1}.
\end{proof}

The following Proposition \ref{blm prop 1}, which is our second criterion for showing that a given point $x\in D$ has local minimizers, is our first result that makes use of the concept of admissible manifolds. In practice this criterion covers most cases which cannot be treated with Proposition \ref{blm prop 0}.

\begin{proposition} \label{blm prop 1}
Let $M$ be an admissible manifold and $x\in\psi(M,\R)\cap\E$.
Then $x$ has strong local minimizers.
\end{proposition}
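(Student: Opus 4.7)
The plan is to shift the admissible manifold $M$ along the flow so that it contains $x$, use the resulting flow-box coordinates near $x$ to exhibit short reference curves connecting nearby points, and control the length of minimizing sequences inside a small tubular compact set $K$ via the drift lower bound \eqref{drift lower bound}, finally invoking Proposition~\ref{min prop}. First I would write $x=\psi(y_0,t_0)$ with $y_0\in M$ and check that $\tilde M:=\psi(M,t_0)$ is again admissible (take $\tilde f_M:=f_M\circ\psi(\cdot,-t_0)$; properties (i)--(iv) of Definition~\ref{admissible def} are inherited because $\psi(\cdot,-t_0)$ is a $C^1$ diffeomorphism whose Jacobian maps the drift to itself along flowlines). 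Replacing $M$ by $\tilde M$, we may assume $x\in M$. By Remark~\ref{on manifold b neq0} we have $b(x)\neq 0$, so transversality together with the implicit function theorem produces $\delta>0$, a relatively open neighborhood $V\subset M$ of $x$, and an open neighborhood $U$ of $x$ in $D$ such that the flow map $(y,t)\mapsto\psi(y,t)$ is a $C^1$ diffeomorphism from $V\times(-\delta,\delta)$ onto $U$.

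Next I would fix a relatively compact $V_0\subset V$ and $r>0$ so small that $\bar B_r(x)\subset\psi(V_0\times(-\delta/2,\delta/2))$, and take a compact $K\subset\E$ containing a small tubular neighborhood of $x$ (possible by Assumption~\hE\ and the closedness of $\E$ in $D$). For $x_1,x_2\in\bar B_r(x)\cap\E$ with flow coordinates $x_i=\psi(y_i,t_i)$, build a reference curve $\gamma_0\in\Gxx$ by concatenating (a) a drift segment from $x_1$ to $\psi(y_1,t_2)$ — traversed forward along $b$ if $t_1\leq t_2$ and backward otherwise — with (b) a short curve within the time-slice $\{\psi(y,t_2):y\in V_0\}$ from $\psi(y_1,t_2)$ to $x_2$. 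The forward drift part has zero action; a backward drift part has action at most $\cB|t_1-t_2|$ by Lemma~\ref{lower semi lemma}(ii); and the slice piece contributes action $O(|y_1-y_2|)$. Enlarging $K$ so that no curve leaving $K$ beats $\gamma_0$, Lemma~\ref{main corollary} supplies a minimizing sequence $(\gamma_n)\subset\Gxx$ with $\gamma_n\subset K$. To bound lengths uniformly I would shrink $K$ so that $|b|\geq\beta>0$ on $K$. Applying \eqref{drift lower bound}, every arc of $\gamma_n$ on which $\skp{b(\gamma_n)}{\gamma_n'}\leq 0$ (i.e. transverse or counter-drift motion) satisfies $\ell(\gamma_n,\gamma_n')\geq\cA\beta|\gamma_n'|$, so the total transverse-plus-counter-drift arclength of $\gamma_n$ is at most $S(\gamma_n)/(\cA\beta)$, a uniform constant. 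In flow-box coordinates the $t$-range of $\gamma_n$ is bounded by $\delta$, so the pro-drift arclength exceeds the counter-drift arclength by at most an additive constant and is therefore uniformly bounded as well. Proposition~\ref{min prop} then yields a strong minimizer $\gamma^\star$ with $\length(\gamma^\star)\leq\eta$ for some $\eta$ depending only on $r$, proving by Remark~\ref{blm remark}(ii) that $x$ has strong local minimizers.

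The main obstacle is precisely this length control: drift flowlines cost nothing, so an \emph{a priori} action bound does not by itself yield any length bound. The admissibility of $M$ is the hypothesis that bypasses this difficulty — its transversality to $b$ provides flow-box coordinates in which one coordinate is bounded by construction, and \eqref{drift lower bound} then forces all remaining (counter-drift and transverse) arclength to be absorbed by the action budget. The secondary technicality is keeping both the reference curve and the compact set $K$ compatible with the constraint $\gamma\subset\E$, which is exactly where Assumption~\hE\ and the closedness of $\E$ in $D$ enter.
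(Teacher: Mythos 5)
Your high-level plan is the same as the paper's: shift $M$ along the flow so it contains $x$ (this is exactly Corollary~\ref{move cor}), extract a tubular-coordinate structure from the admissibility of the manifold (the paper packages this as the flowline-tracing function $f$ of Lemma~\ref{man2func cor}), build a cheap reference curve via Assumption~\hE, use Lemma~\ref{main corollary} to confine a minimizing sequence to a compact set, bound lengths, and finish with Proposition~\ref{min prop}.

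The gap is in the length-control step, and it is a real one. You correctly get, from \eqref{drift lower bound} with $|b|\geq\beta$, that arcs on which $\skp{b(\gamma_n)}{\gamma_n'}\leq 0$ have arclength at most $S(\gamma_n)/(\cA\beta)$. For the remaining arcs you assert that \emph{``the pro-drift arclength exceeds the counter-drift arclength by at most an additive constant''} because the $t$-range in the flow box is bounded. But arclength on pro-drift arcs (those with $\skp{b(\gamma_n)}{\gamma_n'}>0$) is \emph{not} tracked by the $t$-coordinate: a pro-drift arc can carry a large transverse component, which contributes arclength but no $t$-change, and on such arcs the drift lower bound $\cA(|b||y|-\skp{b}{y})$ is only \emph{quadratic} in the angle between $\gamma_n'$ and $b$, hence cannot be summed linearly against arclength. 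Concretely, what you need to bound is $\int\skp{b(\gamma_n)}{\gamma_n'}^+\,d\alpha$ (equivalently $\int\skp{\hat b(\gamma_n)}{\gamma_n'}\,d\alpha$), which is the line integral of a generically non-exact $1$-form and is therefore not controlled by the endpoints alone. The paper's Lemma~\ref{key estimate} closes exactly this gap: it replaces $\hat b$ by the gradient $\nf$ of the flowline-tracing function (which \emph{does} give an exact differential with $\int\partial_\alpha f\,d\alpha = f(x_2)-f(x_1)$), pays the price of the constant $\cH=\sup|\nf|$ for the misalignment between $\nf$ and $\hat b$, and then combines the linear-in-angle inequality with the quadratic-in-angle action bound via Cauchy--Schwarz, yielding $\length(\gamma|_E)\leq\frac{2\cH^2}{\cA\cG}S(\gamma)+2|\hqq(f(x_1))-\hqq(f(x_2))|$. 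Without some version of this step your length bound does not go through.

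A secondary issue, also handled by Lemma~\ref{key estimate}, is that you want the minimizing sequence both inside a compact $K$ (so Lemma~\ref{main corollary} applies with a beatable $\gamma_0$) and inside a small tube where $|b|\geq\beta$ and the flow-box coordinates exist. In the paper this is not forced by enlarging or shrinking $K$ but by applying the key estimate once on a partial level-set range $(\tfrac\eps2,\eps)$ to show (statement \eqref{length bound 2}) that far-out excursions of a minimizing sequence would cost more than the reference curve $\bar\gamma$, so the tail of the sequence actually stays in $f^{-1}((-\eps,\eps))$; a second application on the full range then gives the uniform length bound used in Proposition~\ref{min prop}.
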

\begin{proof}
See Part~\ref{part proofs}, Section \ref{subsec proof prop 3}.
\end{proof}

Proposition \ref{blm prop 1} says that every admissible manifold $M$ that we find will give us a whole region $\psi(M,\R)\cap\E$ of points with strong local minimizers, consisting of all the flowlines emanating from $M$.
An immediate consequence is the following:

\begin{corollary} \label{blm 3}
Let $x\in\E$ be such that $b(x)=0$ and that all the eigenvalues of the matrix $\nabla b(x)$ have negative (positive) real parts, and denote by $B_s$ \emph($B_u$\emph) the basin of attraction (repulsion) of $x$. Then every point in $(B_s\setminus\{x\})\cap\E$ \emph($(B_u\setminus\{x\})\cap\E$\emph) has strong local minimizers.
\end{corollary}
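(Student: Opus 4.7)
The plan is to derive the corollary by combining Lemma~\ref{admissible ball example} with Proposition~\ref{blm prop 1}, concentrating on the stable case (the unstable case is symmetric by reversing time). By Lemma~\ref{admissible ball example}, fix $a>0$ small enough that $\Msa=f_s^{-1}(\{a\})$ is an admissible manifold of $b$. Proposition~\ref{blm prop 1} then tells us that every point of $\psi(\Msa,\R)\cap\E$ has strong local minimizers, so it suffices to show the inclusion
\[
  B_s\setminus\{x\}\ \subset\ \psi(\Msa,\R).
\]
Once this is proved, intersecting with~$\E$ gives the claim immediately.

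To establish the inclusion, I would fix an arbitrary $w\in B_s\setminus\{x\}$ and consider the function
\[
  g(t):=f_s(\psi(w,t))=\int_t^\infty|b(\psi(w,\tau))|\,d\tau,
\]
which is well-defined since $B_s$ is invariant under the flow (the identity $\psi(\psi(w,s),t)=\psi(w,s+t)$ implies $\psi(w,s)\in B_s$ for every $s\in\R$, as $\psi(w,s+t)\to x$ as $t\to\infty$). The point $w$ is not an equilibrium (otherwise $\psi(w,t)\equiv w\neq x$, contradicting $w\in B_s$), so by continuity $b$ is nonzero along the entire orbit, making $g$ strictly decreasing and continuous with $g(t)\to 0$ as $t\to\infty$. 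If $g(0)=f_s(w)\geq a$, the intermediate value theorem produces $t_0\geq0$ with $g(t_0)=a$, and then $w=\psi(\psi(w,t_0),-t_0)\in\psi(\Msa,\R)$.

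The remaining case, $f_s(w)<a$, is the main thing to watch out for: here we need to run the flow \emph{backwards}. I would use that $x$ is a linearly stable equilibrium, so for the time-reversed flow it is linearly unstable; in particular, by a standard local argument using the spectrum of $\nabla b(x)$, there is some neighborhood $U\ni x$ such that every backward orbit starting in $U\setminus\{x\}$ leaves $U$ in finite time. Choosing $a$ small enough that $f_s^{-1}([0,a))\subset U$ (this sublevel set being a small neighborhood of~$x$, as already used implicitly in Lemma~\ref{admissible ball example}), the backward orbit through $w$ must then exit $f_s^{-1}([0,a))$ at some time $-t_1<0$, giving $g(-t_1)\geq a>g(0)$; by continuity and monotonicity of~$g$ on $[-t_1,0]$, there is $t\in[-t_1,0]$ with $g(t)=a$, and again $w\in\psi(\Msa,\R)$.

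The main obstacle, as noted, is the subcase $f_s(w)<a$, which requires linking the hyperbolicity at $x$ to the escape of backward orbits from a sublevel set of $f_s$. Everything else is a clean application of Lemma~\ref{admissible ball example} and Proposition~\ref{blm prop 1}, followed by symmetric reasoning for $B_u$ using $\Mua$ in place of $\Msa$ and reversing the time direction in the argument above.
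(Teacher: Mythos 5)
Your argument is correct and follows the same approach as the paper's proof, which also invokes Lemma~\ref{admissible ball example} and Proposition~\ref{blm prop 1} and then asserts the identity $\psi(M_s^a,\R)=B_s\setminus\{x\}$ (resp.\ $\psi(M_u^a,\R)=B_u\setminus\{x\}$), referring the reader to the estimates in Lemma~\ref{distance function} for the details. You supply those details yourself, using the strict monotonicity of $f_s\circ\psi(w,\cdot\,)$ from Lemma~\ref{distance function}~(ii) together with the standard hyperbolic escape argument for the backward orbit; this is a perfectly valid way to fill the gap the paper leaves to the reader.
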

\begin{proof}
This follows from Lemma \ref{admissible ball example} and Proposition \ref{blm prop 1} since
for small $a>0$ we have $\psi(M_s^a,\R)=B_s\setminus\{x\}$ and $\psi(M_u^a,\R)=B_u\setminus\{x\}$. (The reader who wants to prove these intuitive equations rigorously will find the necessary tools in Lemma~\ref{distance function}.)
\end{proof}

By Remark \ref{on manifold b neq0}, admissible manifolds cannot contain any points $x$ with $b(x)=0$, and thus the flowlines emanating from $M$ cannot contain any such points either. As a consequence, to show that a given point $x\in\E$ has local minimizers, Proposition \ref{blm prop 1} can only be useful if $b(x)\neq0$. For points with $b(x)=0$ (and in particular for the missing point $x$ in Corollary \ref{blm 3}) we have the following criterion.
\begin{proposition} \label{blm prop 2}
Let $x\in\E$ be such that $b(x)=0$, and that all the eigenvalues of the matrix $\nabla b(x)$ have nonzero real part. Let us denote by $M_s$ and $M_u$ the global stable and unstable manifolds of $x$, respectively, i.e.
\begin{subequations}
\begin{align}
  M_s&:=\big\{w\in D \,\big|\, \lim_{t\to \infty}\psi(w,t)=x\big\}, \label{Ms def} \\
  M_u&:=\big\{w\in D \,\big|\, \lim_{t\to-\infty}\psi(w,t)=x\big\}. \label{Mu def}
\end{align}
\end{subequations}
(i) If $x$ is an attractor or repellor of $b$ then $x$ has weak local minimizers. If in addition
\begin{align}
 &\exists \eps,\cJ>0 &&\forall w\in\bar B_\eps(x)\cap\E\hspace{-.15cm} &&\exists\gamma\in\Gamma_x^w\colon &\hspace{-.2cm}\length(\gamma)&\leq \cJ|w-x|, \label{strong condition 2} \\
 &\exists\rho,\cC,\delta>0\hspace{-.15cm} &&\forall w\in\bar B_\rho(x) &&\forall y\in\Rn\hspace{.5pt}\colon &\ell(w,y)&\leq \cC |w-x|^\delta|y|, \label{strong condition}
\end{align}
then $x$ has strong local minimizers.\\[.2cm]
(ii) If $x$ is a saddle point, and if there exist admissible manifolds $M_1,\dots,M_m$ such that
\begin{equation} \label{main condition}
(M_s\cup M_u)\setminus\{x\}\subset\bigcup_{i=1}^m\psi(M_i,\R),
\end{equation}
then $x$ has weak local minimizers.
If in addition the state space is two-dimensional, i.e.~$D\subset\R^2$, and if \eqref{strong condition 2}-\eqref{strong condition} are fulfilled then $x$ has strong local minimizers.
\end{proposition}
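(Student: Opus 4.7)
The overall strategy is uniform across both parts: for nearby points $x_1,x_2\in\bar B_r(x)\cap\E$, take a minimizing sequence $(\gamma_n)$ for $P(x_1,x_2)$, trim it to lie in a fixed compact $K\subset\E$, prove length estimates $\length(\gamma_n|_{\bar B_u(x)^c})\leq\eta(u)$ uniform in $n$, then invoke Lemma~\ref{min prop0b} to extract a subsequence converging to some $\varphi^\star\in\Cxx$, and finally apply the lower semi-continuity result Lemma~\ref{lower semi lemma 2}(ii) to conclude that the corresponding $\gamma^\star\in\tGxxx$ is a weak minimizer. Under the extra assumptions \eqref{strong condition 2}-\eqref{strong condition}, one replaces any infinite-length passage through $x$ by a short rectifiable detour: by \eqref{strong condition 2} of length at most $\cJ|w-x|$, with action bounded via \eqref{strong condition} by $\cC\cJ|w-x|^{1+\delta}\to0$ as $w\to x$; this upgrades the weak minimizing sequence to a rectifiable one of uniformly bounded length, so Proposition~\ref{min prop} delivers a strong minimizer.

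For Part (i), I will treat the attractor; the repellor follows by reversing orientations (and flows). Choose $\epsilon>0$ so that $\bar B_\epsilon(x)\subset D$ lies entirely in the basin, pick $r<\epsilon$ small, and set $K:=\bar B_\epsilon(x)\cap\E$. A reference curve $\gamma_0\in\tGxxx$ for $P(x_1,x_2)$ can be built by flowing $x_1$ forward to $x$ under $\psi$ (zero action, since by \eqref{drift lower bound} the drift direction $y=b(w)$ gives $|b(w)||y|-\skp{b(w)}{y}=0$, and $\ell\geq0$) and then connecting $x$ to $x_2$ by a short curve supplied by Assumption \hE. A cut-and-paste comparison against $\gamma_0$ (cf.~Lemma~\ref{main corollary}) forces the minimizing sequence into $K$. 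The pivotal length estimate then comes from applying the referenced key estimate—which exploits \eqref{drift lower bound} together with the fact that $|b|$ is uniformly bounded below on $\bar B_u(x)^c\cap K$—to bound $\length(\gamma_n|_{\bar B_u(x)^c})$ in terms of $\sup_n S(\gamma_n)<\infty$ plus a constant depending on $u$.

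For Part (ii), the new ingredient is the admissible cover \eqref{main condition} of $(M_s\cup M_u)\setminus\{x\}$. By Proposition~\ref{blm prop 1}, every point of $\bigcup_i\psi(M_i,\R)\cap\E$ has strong local minimizers, so away from the saddle all is controlled. The plan is to isolate, for each $\gamma_n$ that enters a small neighborhood of $x$, the \emph{last} crossing of an admissible manifold $M_i$ whose flow points toward $x$ (i.e., covering $M_s$) and the \emph{first} crossing of one covering $M_u$; the two outer segments are handled by Proposition~\ref{blm prop 1}, while the middle segment passes the saddle and is treated by the same convergence machinery as Part~(i). For the 2D strong-minimizer case, the separatrix $M_s\cup M_u$ is one-dimensional and partitions a neighborhood of $x$ into a finite number of sectors; combined with \eqref{strong condition 2}-\eqref{strong condition}, one performs the same finite-length replacement near $x$ as in Part~(i) to upgrade weak to strong.

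The main obstacle in both parts is the length control near the critical point, where the drift-based estimate \eqref{drift lower bound} becomes vacuous. In Part~(i) this is manageable because $b$ has definite sign (contraction or expansion) in a neighborhood of $x$. In Part~(ii) the situation is genuinely harder: a curve passing near the saddle could, a priori, spend arbitrarily long inside an arbitrarily small ball $\bar B_u(x)$ before emerging, and the admissible manifolds provide the transversality needed to rule this out. Verifying the key estimate near a saddle—showing that finite action forces finite length outside $\bar B_u(x)$ even for curves looping near $x$—appears to be the most delicate step, presumably deferred to Part~\ref{part superprop} of the paper.
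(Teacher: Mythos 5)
Your high-level skeleton---trim to a compact, establish uniform length estimates of the form $\length(\gamma_n|_{\bar B_u(x)^c})\leq\eta(u)$ by a flowline-tracing variant of the key estimate, invoke Lemma~\ref{min prop0b} and then the lower semicontinuity Lemma~\ref{lower semi lemma 2}(ii)---does match the paper's proof of the weak-minimizer claims in both parts. But your mechanism for upgrading weak to strong minimizers is wrong, and this is the genuinely hard content of the proposition.

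You propose to ``replace any infinite-length passage through $x$ by a short rectifiable detour'' and conclude that the result has uniformly bounded length and is still minimizing. This fails on both counts. By the very minimizing property of $\gamma^\star$, the excised middle segment has action at most that of the detour, so the surgery can only \emph{increase} the action; there is no reason the spliced curve is a minimizer. And even ignoring that, the two remaining tails of $\gamma^\star$ approaching $x$ still have total length $\to\length(\gamma^\star)$, which is exactly the quantity you have not controlled, so the surgery does not bound the length either. What the paper actually proves is that the weak minimizer $\gamma^\star$ it constructed is \emph{already} rectifiable under \eqref{strong condition 2}--\eqref{strong condition}. It decomposes the parameter domain into dyadic pieces $Q_k^\pm$ (corresponding to annular shells of radius $\sim 2^{-k}$ around $x$), establishes a local minimizing property \eqref{minimizing property} of $\varphi^\star$ on each subinterval, uses the short detour as a \emph{comparison competitor} (never a replacement) together with \eqref{strong condition} to bound $S(\varphi^\star|_{Q_k^-})\lesssim 2^{-k(1+\delta)}$, then feeds this into the key estimate---where the constant $\cG$ degenerates like $2^{-k}$ near $x$---to get $\int_{Q_k^-}|\varphi^{\star}{}'|\lesssim 2^{-k\delta}$, and finally sums geometrically in $k$. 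The interplay between the action bound and the degenerating lower bound on $|b|$ is the whole point, and ``replace by a detour'' skips it entirely.

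A secondary gap: for Part (ii) your plan to ``isolate the last crossing of an $M_i$ covering $M_s$ and the first crossing of one covering $M_u$'' and treat the outer segments by Proposition~\ref{blm prop 1} does not correspond to what the paper does (and Proposition~\ref{blm prop 1} is about \emph{points} having local minimizers, not about estimating curve segments). The actual machinery is Lemma~\ref{superprop}: from the cover \eqref{main condition} one builds genuinely new flowline-tracing functions $f_1,\dots,f_m$ satisfying properties (i)--(v) (for the weak case) and additionally (vi)--(vii) (bounded gradients and $F\leq\cF|w-x|$) for the strong case, where two-dimensionality enters specifically to establish (vi)--(vii). This construction---including the modification of the admissible manifolds via Lemma~\ref{move M lemma} and the Hartman--Grobman analysis near the saddle---is the content of Part~\ref{part superprop} and is not captured by your sketch.
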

\begin{proof}
See Part~\ref{part proofs}, Section \ref{subsec proof prop 4}.
\end{proof}
The condition \eqref{strong condition 2} on the shape of the set $\E$ near $x$ is a stronger version of Assumption \hE, and it is violated only in degenerate cases that are rarely of interest in practice. Lemma \ref{prop cond equiv}~(i) will give some useful criteria.
The condition \eqref{strong condition} can also easily be checked, even in the case of a Hamiltonian geometric action when no explicit formula for $\ell(x,y)$ may be available; see Lemma \ref{prop cond equiv}~(ii).
\begin{lemma} \label{prop cond equiv}
(i) If $x\in\E^\circ$ (which is true in particular if $\E=D$), or if $\E=\bigcup_{i=1}^m\E_i$ for some sets $\E_1,\dots,\E_m\subset D$ that are convex and closed in~$D$, then the condition \eqref{strong condition 2} is fulfilled.\\[.1cm]
(ii) Suppose that $S\in\H$ is induced by a Hamiltonian $H$ such that $H(\,\cdot\,,0)$ and $\Ht(\,\cdot\,,0)$ are locally H\"older continuous at $x$. Then the condition \eqref{strong condition} is fulfilled if and only if $x$ is a critical point.
\end{lemma}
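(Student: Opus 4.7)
For part~(i), the plan is to verify \eqref{strong condition 2} by using straight line segments. If $x\in\E^\circ$, any $\eps>0$ with $\bar B_\eps(x)\subset\E$ works with $\cJ=1$. In the second case, I would mimic the argument of Lemma~\ref{E Lemma}: letting $I:=\{i\colon x\in\E_i\}$, one can pick $\eps>0$ so small that $\bar B_\eps(x)$ is disjoint from each $\E_i$ with $i\notin I$; then every $w\in\bar B_\eps(x)\cap\E$ lies in some $\E_i$ with $i\in I$, the segment from $x$ to $w$ stays in $\E_i\subset\E$ by convexity, and its length is $|w-x|$, so again $\cJ=1$ suffices.

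For part~(ii), the forward direction is immediate: setting $w=x$ in~\eqref{strong condition} forces $\ell(x,y)=0$ for every $y\in\Rn$, and thus $x$ is critical by Definition~\ref{critical point def}. For the converse, Lemma~\ref{critical point lemma}~(i) gives $H(x,0)=0$ and $\Ht(x,0)=0$, and the H\"older hypotheses produce constants $C_1,C_2>0$ and exponents $\mu_1,\mu_2\in(0,1]$ with $|H(w,0)|\leq C_1|w-x|^{\mu_1}$ and $|\Ht(w,0)|\leq C_2|w-x|^{\mu_2}$ for $w$ in a small neighborhood of $x$. The idea is to bound $|\that(w,y)|$ and then invoke $\ell(w,y)=\skp{y}{\that(w,y)}\leq|y|\,|\that(w,y)|$ from~\eqref{general geometric local action}.

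For that bound, I would recycle the Taylor expansion used to derive~\eqref{that bounded}: for $w$ near $x$ and any $\te$ with $H(w,\te)\leq 0$, expanding around $\te=0$ and using~\tHc\ on a compact neighborhood $K$ of $x$ with constant $m_K$ yields
\[
0\geq H(w,\te)\geq -C_1|w-x|^{\mu_1}-C_2|w-x|^{\mu_2}|\te|+\tfrac{m_K}{2}|\te|^2.
\]
Solving this scalar quadratic inequality in $|\te|$ produces $|\te|\leq C_3|w-x|^\delta$ on some $\bar B_\rho(x)$, with $\delta:=\min(\mu_1/2,\mu_2)$ and a suitable constant $C_3$. Specializing $\te=\that(w,y)$ (which automatically lies in the relevant sublevel set by~\eqref{vte eq}) then yields~\eqref{strong condition}.

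The only delicate piece is ensuring that the intermediate point $\te'$ of the Taylor expansion lies in the compact set where \tHc\ supplies $m_K$. Since the estimate~\eqref{that bounded} already shows that the sublevel set $\{\te\colon H(w,\te)\leq 0\}$ is uniformly bounded for $w$ in any compact neighborhood of $x$, and $\te'$ lies on the segment from $0$ to such a $\te$, this reduces to straightforward bookkeeping rather than a substantive obstacle.
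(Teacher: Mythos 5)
Your proposal is correct and matches the paper's strategy: part~(i) mimics Lemma~\ref{E Lemma} with straight segments, and part~(ii) expands $H(w,\that)$ around $\te=0$, invokes \tHc, and concludes via~\eqref{general geometric local action}. The paper closes the estimate slightly more cleanly in part~(ii) by first noting (from~\eqref{that bounded}) that $c:=\sup_{w\in K,\,y}|\that(w,y)|<\infty$, which turns the cross term $|\Ht(w,0)||\that|$ into a term of the same order as $|H(w,0)|$ and avoids solving the quadratic; your route of solving the quadratic in $|\te|$ works equally well and yields the same exponent after choosing a common H\"older exponent. One small remark: your worry about the intermediate point $\tilde\te$ in the Taylor expansion is unfounded, since \tHc\ already supplies the lower bound $m_K$ uniformly for all $\te\in\Rn$ (the compactness requirement is only on the $x$-variable), so no extra bookkeeping is needed there.
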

\begin{proof}
(i) As in the proof of Lemma \ref{E Lemma}. (ii) See Appendix \ref{prop cond proof app}.
\end{proof}
The condition \eqref{main condition} says that every point in the stable and unstable manifold of $x$ (except for $x$ itself) has to lie on a flowline emanating from one of a finite collection of admissible manifolds, or equivalently, that every flowline in the stable and the unstable manifold must intersect one of these finitely many admissible manifolds. See the next section for examples.

Finally, it should be pointed out that it is Proposition \ref{blm prop 2}~(ii) that is responsible for the excessive length of our proofs (and in particular for all of Part~\ref{part superprop}). In particular, a lot of effort in part (ii) went into proving the existence of \textit{strong} local minimizers at least in the two-dimensional case, which allows us to conclude that the problem $P(A_1,A_2)$ of minimizing $S(\gamma)$ over all $\gamma\in\GA$ has a solution $\gamma^\star$ that actually lies in $\GA$ and not only in the larger class $\tGA$. For remarks on the possible extension of our results to higher dimensions, see the Conclusions in Chapter \ref{sec conclusions}.

\subsection{Examples in \texorpdfstring{$\R^2$}{R2}} \label{sec examples}
Let us see in some two-dimensional examples, $D=\R^2$, how these criteria are used in practice. In Figures \ref{fig two basins}-\ref{fig counterexamples}, the black and the blue lines are the flowlines of $b$, the roots of $b$ are denoted by the symbols $\ominus$ (attractor), $\oplus$ (repellor) and $\circledS$ (saddle point).
Basins of attraction are shown in various shades of gray, basins of repulsion are drawn in gray lines at various angles. The stable and unstable manifolds of the saddle points are drawn in blue. Finally, a representative selection of admissible manifolds is drawn as red solid curves. In Fig.~\ref{fig counterexamples}, dashed red curves illustrate why it is impossible to draw admissible manifolds through certain points.

Throughout the discussion of these examples (i.e.~in the remainder of Section \ref{sec examples}) we will assume that for every root $x$ of $b$ (i.e.~for every attractor, repellor, or saddle point) all the eigenvalues of the matrix $\nabla b(x)$ have non-zero real parts. Also, for simplicity we will discuss the case $\E=D$, so that the condition \eqref{strong condition 2} is trivially fulfilled by Lemma \ref{prop cond equiv}~(i). But our arguments will not change if $\E\subsetneq D$, except that then proving that the roots of~$b$ have \textit{strong} (as opposed to weak) local minimizers requires checking the additional condition \eqref{strong condition 2}, e.g.\ by using Lemma \ref{prop cond equiv}~(i).

\subsubsection{Two basins of attraction} \label{subsub two basins}
In our first two examples we consider systems in which the drift vector field~$b$ has two stable equilibrium points whose basins of attraction partition the state space into two regions.

\paragraph{Example 5.}
Fig.~\ref{fig two basins} (a) shows the flowlines of a vector field $b$ with two attractors, and with one saddle point on the separatrix. The points in the two basins of attraction (shaded in light gray and dark gray) all have local minimizers by Corollary \ref{blm 3} and Proposition \ref{blm prop 2} (i). The three red lines are admissible manifolds (the two small ones can be obtained from Lemma \ref{admissible ball example}), and we observe that every flowline on the stable and the unstable manifold of the saddle point (blue) intersects one of them. Proposition \ref{blm prop 1} thus implies that every point on these flowlines has local minimizers, and Proposition \ref{blm prop 2} (ii) implies that the saddle point itself has local minimizers as well.
We conclude that in this system \textit{every} point in $\E$ has local minimizers.

In fact, all points (with the possible exception of the roots of $b$) have \textit{strong} local minimizers. To guarantee that the three roots have \textit{strong} local minimizers as well, one only needs to check the condition \eqref{strong condition} at these points. In the case of an action $S\in\H$ induced by some Hamiltonian $H$ such that $H(\,\cdot\,,0)$ and $\Ht(\,\cdot\,,0)$ are locally H\"older continuous, by Lemma~\ref{prop cond equiv} this is equivalent to \eqref{Hamilton critical criterion}. In particular, if $S\in\H_0$ and $b$ is a natural drift then there is nothing to check.
These remarks about the distinction between strong and weak local minimizers also apply to the Examples \hyperlink{example 6}{6}-\hyperlink{example 9}{9}.
\hfill\openbox

\paragraph{Example 6.}
\hypertarget{example 6}{Fig.}~\ref{fig two basins} (b) shows another system with two attractors, only now there are two saddle points and one repellor on the separatrix. The points in the two basins of attraction are again shaded in light gray and dark gray, the basin of repulsion is drawn in gray diagonal lines. By Corollary \ref{blm 3} and Proposition \ref{blm prop 2} (i) every point in these three regions has local minimizers, which leaves us only with the two saddle points, and with the outer halves of their respective stable manifolds. Again we observe that every flowline of the stable and unstable manifolds of the two saddle points (blue) intersects one of the four admissible manifolds drawn in the figure. As in the previous example, Proposition \ref{blm prop 1} thus implies that every point on these flowlines has local minimizers, and Proposition~\ref{blm prop 2} (ii) implies that the two saddle points have local minimizers as well.
We conclude that also in this system every point in $\E$ has local minimizers.
\hfill\openbox

\begin{figure}[p]
\centering
\includegraphics[width=12.5cm]{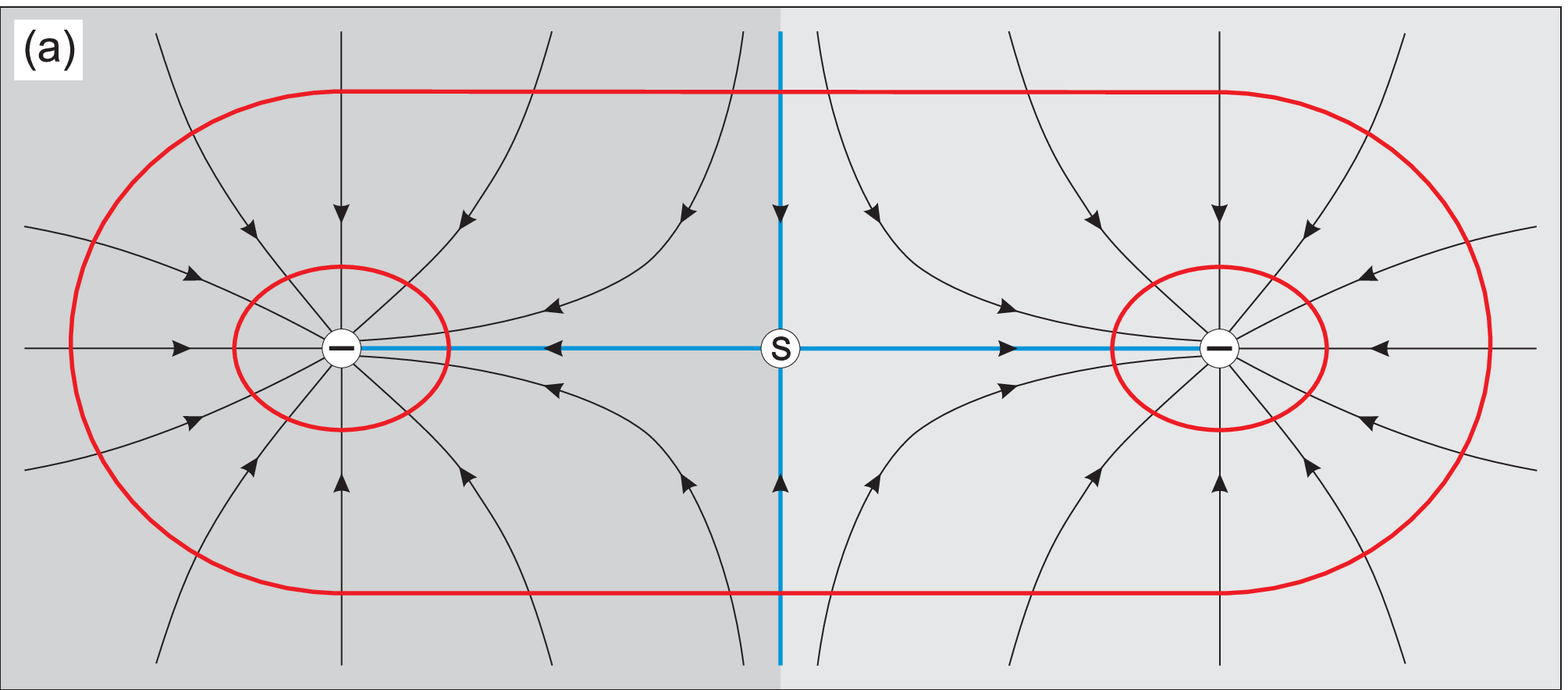} \\[.2cm]
\includegraphics[width=12.5cm]{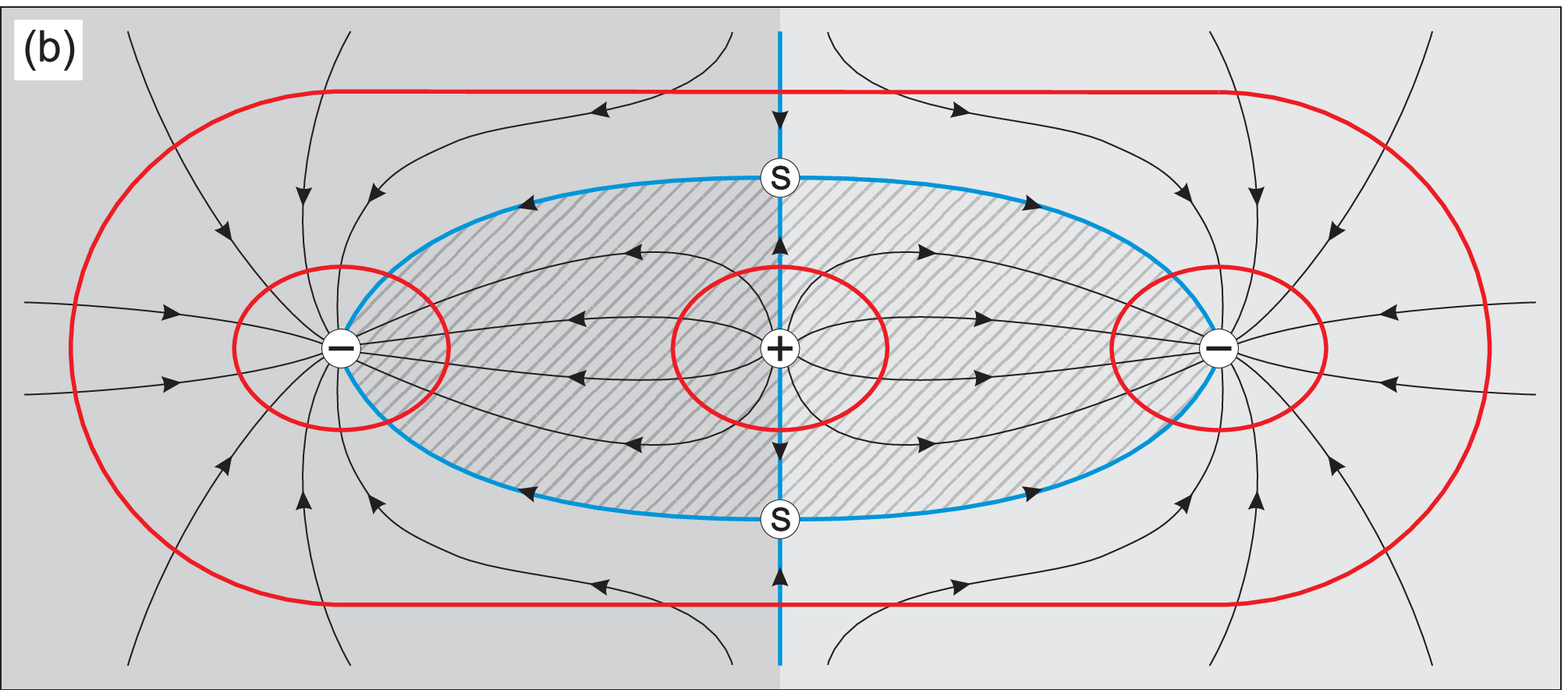} \\[.2cm]
\includegraphics[width=12.5cm]{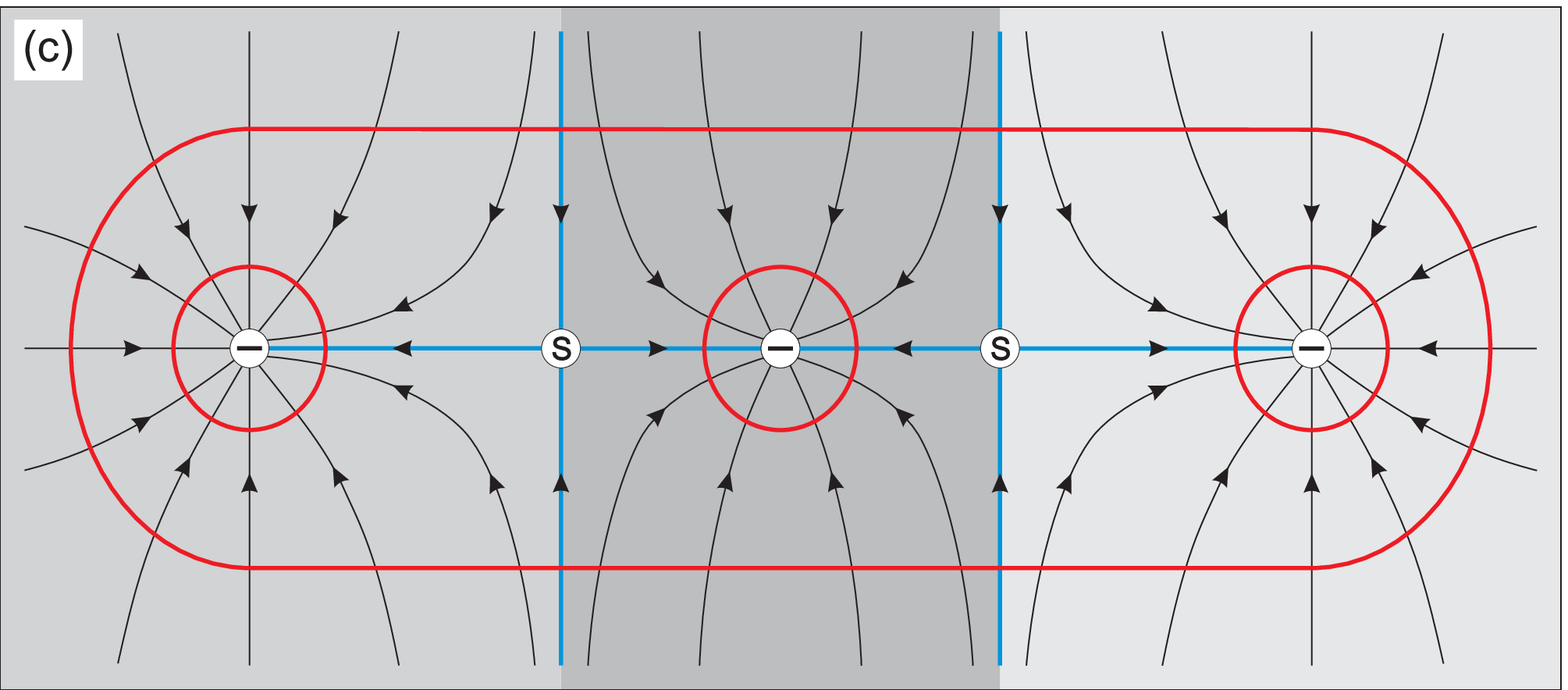}
\caption{\label{fig two basins}
\small Two systems with two attractors, and one system with three attractors.
}
\end{figure}
\begin{figure}[p]
\includegraphics[width=12.5cm]{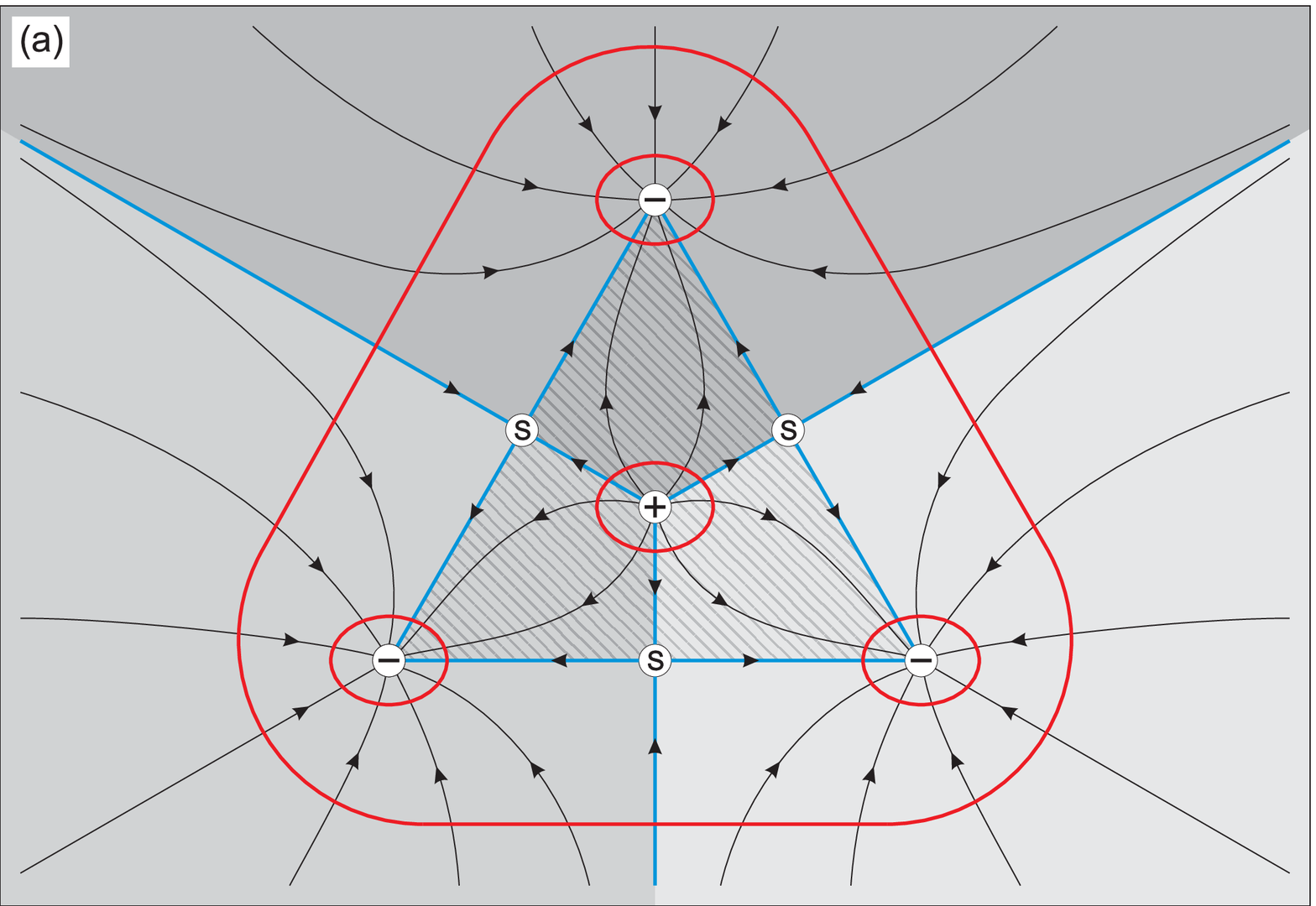} \\[.2cm]
\includegraphics[width=12.5cm]{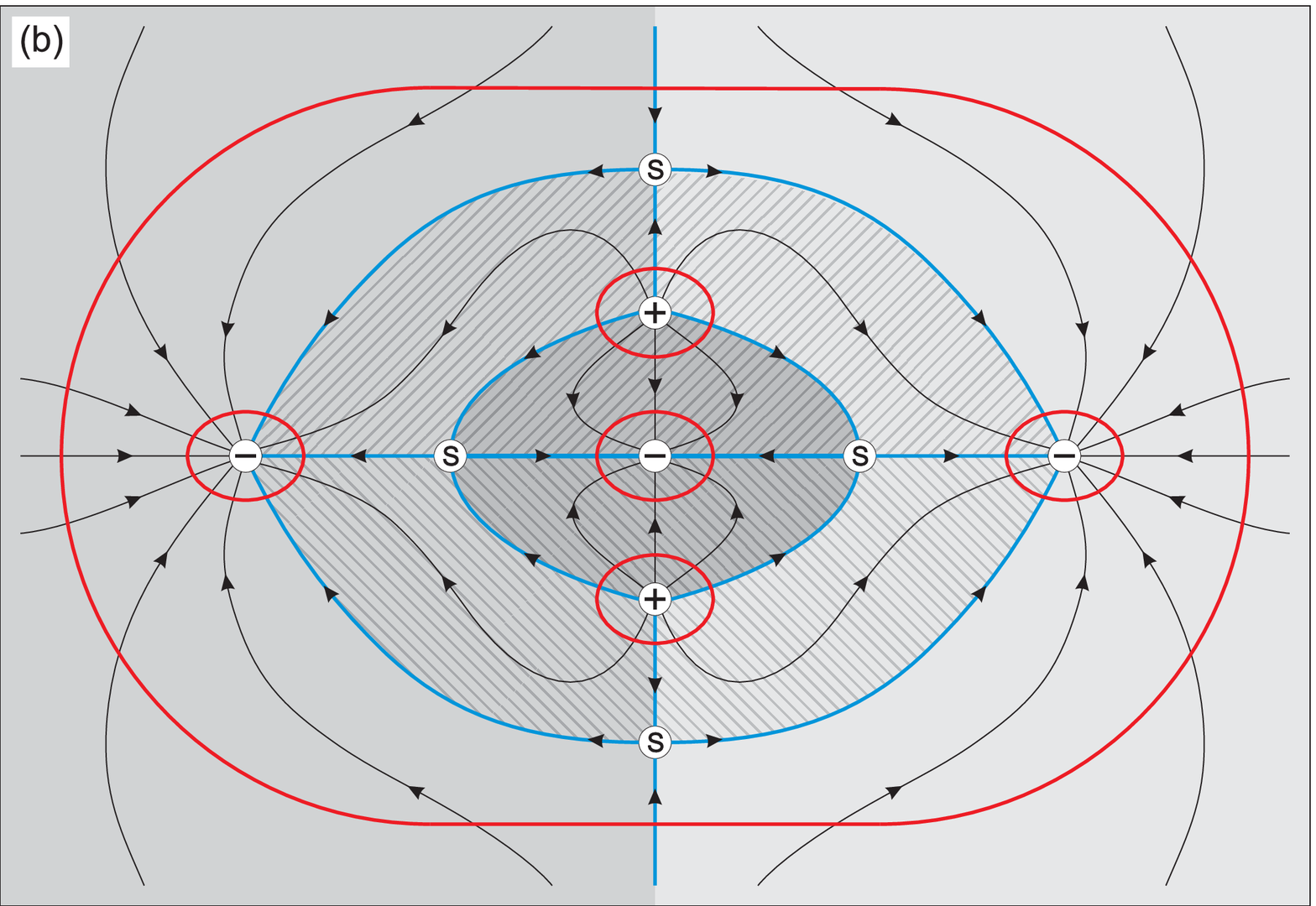}
\caption{\label{fig three basins}
\small Two more systems with three attractors.
}
\end{figure}

\subsubsection{Three basins of attraction} \label{subsub three basins}
We now discuss three examples of systems with three attractors. In each case, we will again find that every point in the state space has local minimizers.

\paragraph{Example 7.}
Fig.~\ref{fig two basins} (c) shows a system with three attractors, with all three basins of attraction aligned in a row. As usual, Corollary \ref{blm 3} and Proposition \ref{blm prop 2}~(i) cover the three basins of attraction, Proposition \ref{blm prop 1} covers the stable manifolds of the saddle points since they intersect the outer admissible manifold, and Proposition \ref{blm prop 2} (ii) covers the saddle points themselves since every flowline of their stable and unstable manifolds intersects an admissible manifold. We conclude again that every point in $\E$ has local minimizers.
\hspace*{0cm}\hfill\openbox

\paragraph{Example 8.}
Fig.~\ref{fig three basins} (a) shows a system with three attractors that form a triangle with a repellor at its center. There are a total of three saddle points, one on each of the three branches of the separatrix. All the points in the three basins of attraction and in the basin of repulsion have local minimizers by Corollary \ref{blm 3} and Proposition \ref{blm prop 2} (i). Again we are left only with the three saddle points, and with the outer halves of their stable manifolds. Both can be treated with Propositions \ref{blm prop 1} and \ref{blm prop 2} (ii) as in the previous examples, and we find again that every point in $\E$ has local minimizers.
\hfill\openbox

\paragraph{Example 9.}
\hypertarget{example 9}{Fig.}~\ref{fig three basins} (b) shows yet another system with three attractors. This time, one basin of attraction is enclosed by the two others, and we count a total of two repellors and four saddle points. After applying Corollary \ref{blm 3} and Proposition \ref{blm prop 2} (i) to the three basins of attraction and the two basins of repulsion, we are only left with the four saddle points, and with the outer halves of the stable manifolds of the two outer saddle points. We can proceed as before, and apply Propositions \ref{blm prop 1} and \ref{blm prop 2} (ii) to show that also these remaining points have local minimizers.
\hfill\openbox

\subsubsection{An example with trivial natural drift} \label{quantum ex vanishing drift sec}
\paragraph{Example 10.} \hypertarget{example 10}{For} the geometric action given by \eqref{Riemannian local action}, i.e.\ the curve length with respect to a Riemannian metric, and for the quantum tunnelling geometric action given by \eqref{agmon local action} in Section~\ref{hamilton section} we only found the natural drift $b(x)\equiv0$, and so we must argue differently.
In the first case we have $\ell(x,y)>0$ for $\forall y\neq0$ by our assumption that $A(x)$ is positive definite, and so every point $x\in\E$ has strong local minimizers by Proposition \ref{blm prop 0}.

For the quantum tunnelling geometric action this argument applies only to all points $x\in\E\setminus\{x_1,x_2\}$ (where $U(x)>0$), and we will have to deal with the points $x=x_1,x_2$ separately.
Let us now assume that $\exists c,\eps>0$ $\forall x\in B_\eps(x_i)\colon$
$|U(x)|\geq c|x-x_i|^2$, $i=1,2$.

Then the vector fields $b_i(x):=\zeta_i(x)(x-x_i)$,
for some cutoff functions $\zeta_i\in C^1(D,[0,1])$ with $\supp \zeta_i\subset B_\eps(x_i)$ and $\zeta_i(x_i)=1$, are drift vector fields of $S$ since
\begin{align*}
\ell(x,y)=\sqrt{2U(x)}\,|y|&\geq\sqrt{2c}\,\zeta_i(x)|x-x_i||y|=\sqrt{2c}\,|b_i(x)||y|\\
         &\geq\sqrt{c/2}\,\big(|b_i(x)||y|-\skp{b_i(x)}{y}\big).
\end{align*}
Since $x_i$ is a repellor of $b_i(x)$ with $\nabla b_i(x_i)=I$, we can apply Proposition~\ref{blm prop 2}~(i) to conclude that $x_1$ and $x_2$ have weak local minimizers.
If in addition $U$ is H\"older continuous at $x_1$ and $x_2$ then the condition \eqref{strong condition} is fulfilled, and $x_1$ and $x_2$ have in fact strong local minimizers. (Observe that the alternative criterion for \eqref{strong condition} given by Lemma \ref{prop cond equiv} leads to the same condition.)
\hfill\openbox

\subsubsection{Examples to which our criteria do not apply} \label{subsub counterexamples}
We will now present three examples in which for some points the conditions of our criteria are not fulfilled. As a consequence, unless we can otherwise show that there exists a minimizing sequence that stays in a compact set $K\subset\E$ away from these points, the question of whether a minimizer exists will be left undecided at present: Without further thought it may still be possible that (i) the points in question in fact \textit{do} have local minimizers, and our criteria from the previous section are only not strong enough to show it, or (ii) the points \textit{do not} have local minimizers, but Theorem \ref{comp thm} which requires this property for all points in the compact set $K\subset \E$ is asking for more than necessary. In both cases a minimizer may still exist.

Fortunately, for the first of the following examples we will discover later in Chapter \ref{crit point section} that (at least for actions $S$ in the subclass $\H_0^+\subset\H_0$ defined at the beginning of Chapter \ref{crit point section}) both Theorem \ref{comp thm} and our criteria in fact fail for a reason, and that the above possibilities (i) and (ii) are not the case: Proposition \ref{limit cycle prop} will show that for these actions the points in question do not have local minimizers and that a minimizer does not exist. For the second example we will have a partial result of that kind. These insights are an important contribution to our theory because they indicate why the conditions of our criteria are necessary, and they suggest that they are not unnecessarily strong.

These first two examples have in common that there is a loop consisting of one or more flowlines that can be traversed at no cost. Such loops are bound to lead to problems since they allow for infinitely long curves with zero action, thus making it hard to control the curve lengths of a minimizing sequence.

\begin{figure}[p]
\centering
\includegraphics[width=6.14cm]{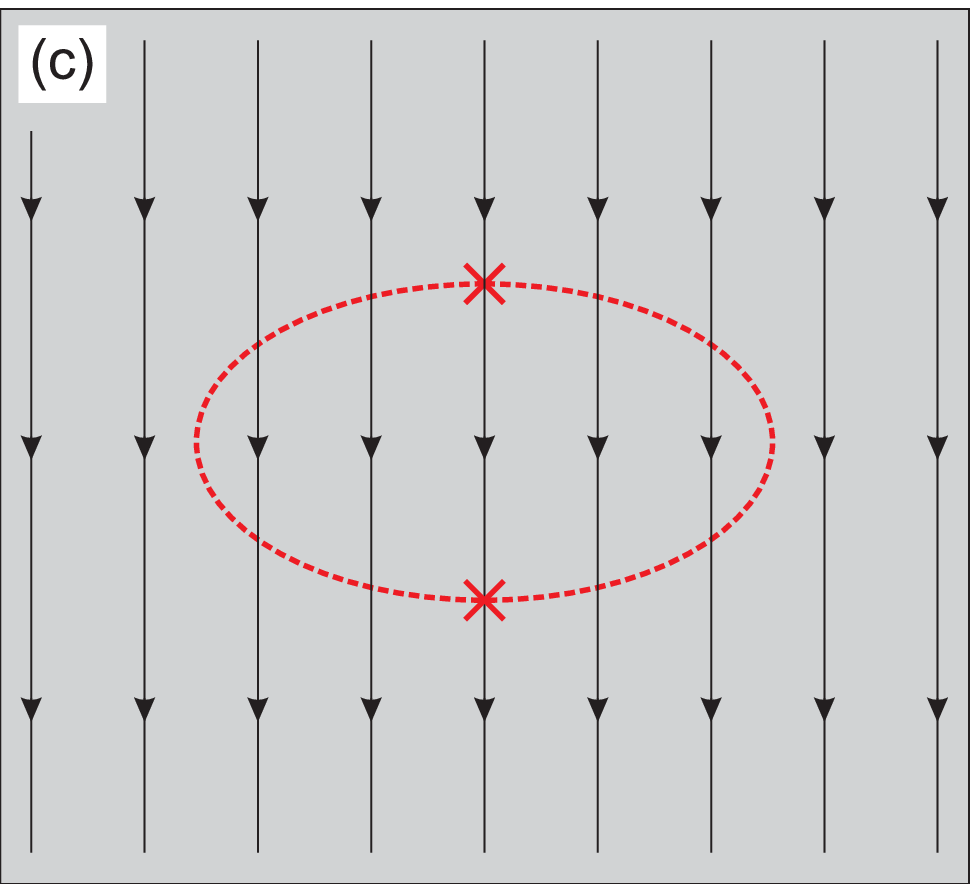} \\[.3cm]
\includegraphics[width=6.14cm]{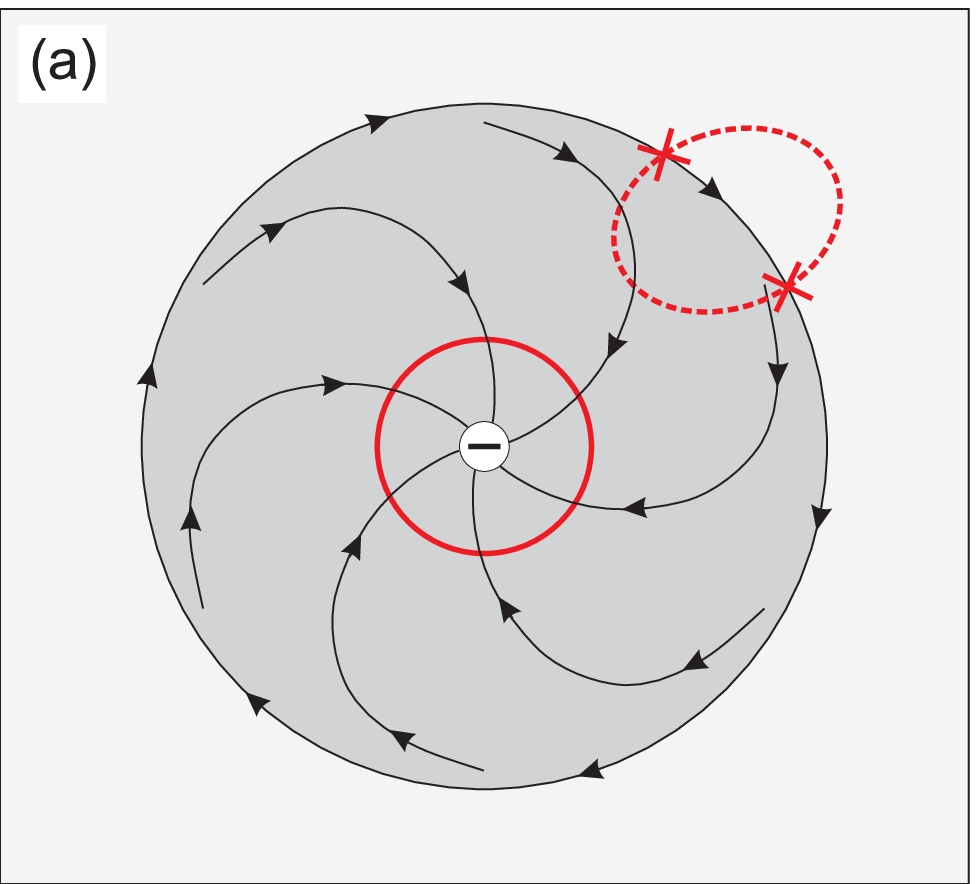} \hspace{.1cm}
\includegraphics[width=6.14cm]{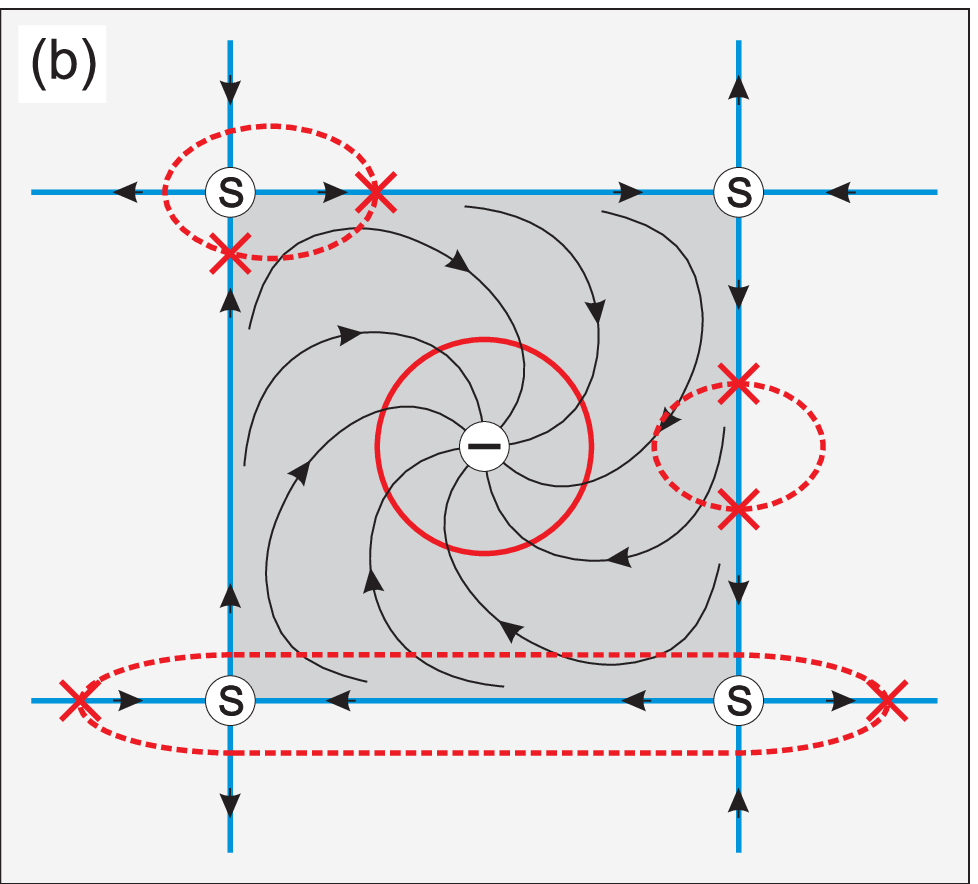} \\[-.2cm]
\caption{\label{fig counterexamples}
\small Three systems to which our criteria cannot be applied.
}
\end{figure}
\begin{figure}[p]
\centering
\vspace{.15cm}
\includegraphics[width=6.14cm]{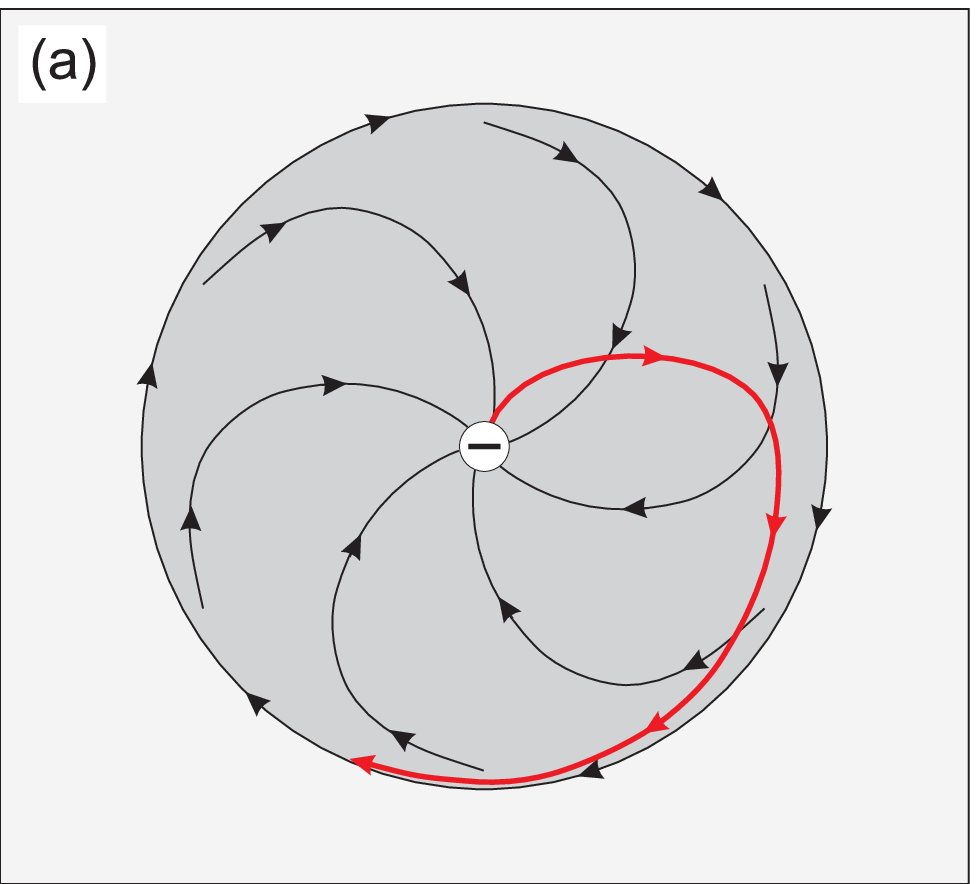} \hspace{.1cm}
\includegraphics[width=6.14cm]{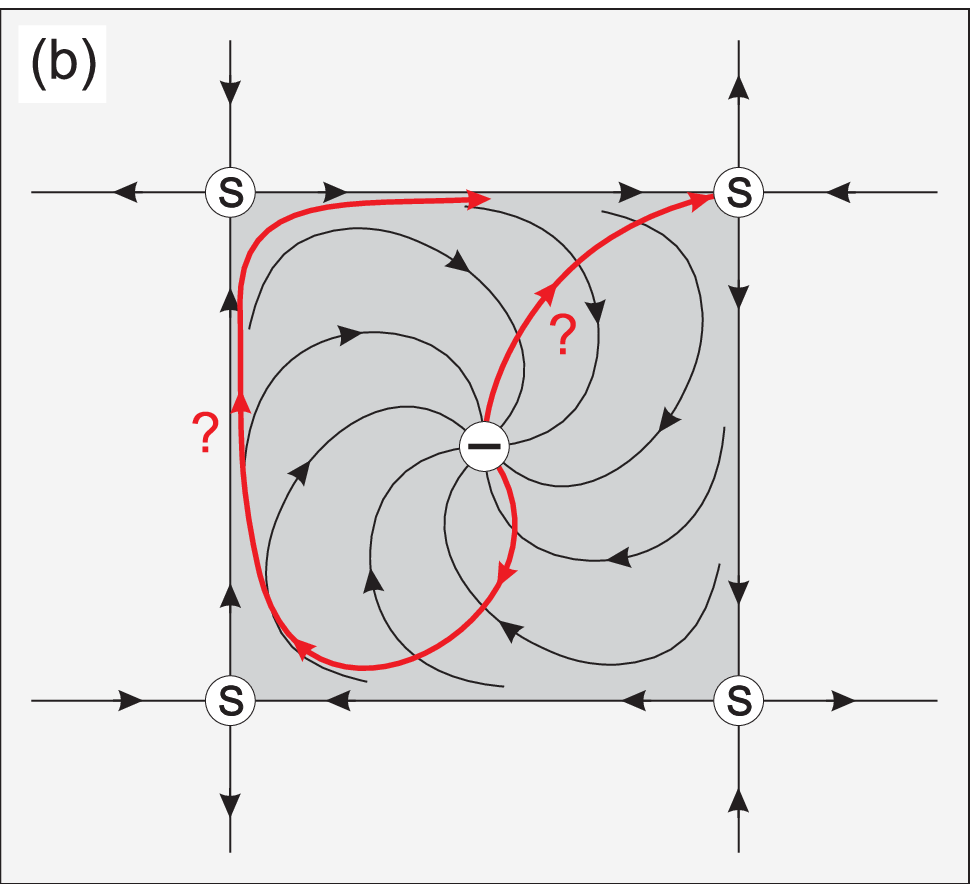} \\[-.2cm]
\caption{\label{fig counterexamples curves}
\small The (generalized) minimum action curves for two of these cases.
}
\end{figure}

\paragraph{Limit cycles.}
Fig.~\ref{fig counterexamples} (a) shows a system consisting of a limit cycle which encloses the basin of attraction of a stable equilibrium point. We are interested in a curve of minimal action that leads from the attractor to the limit cycle, and so the vector field outside of the limit cycle is irrelevant~to~us.

All the points in the basin of attraction can again be treated by Corollary~\ref{blm 3} and Proposition \ref{blm prop 2} (i), but (independently of the drift vector field outside of the limit cycle) our criteria will fail to show that the points on the limit cycle itself have local minimizers: Proposition \ref{blm prop 1} would require us to find an admissible manifold that crosses the limit cycle, but this is impossible.

Indeed, any closed loop $M$ that may be a candidate for an admissible manifold crossing the limit cycle (such as the red dashed line in Fig.~\ref{fig counterexamples} (a)) would have to intersect the limit cycle at least twice (it is not allowed to be tangent to the limit cycle by Definition \ref{admissible def}~(iv)), or put differently, the limit cycle would have to intersect $M$ at least twice. But this would mean that the flowline on the limit cycle enters the interior of $M$ at one place and exits it at another (at the two red crosses), contradicting of Definition \ref{admissible def}~(iv). This observation is proven rigorously in Corollary \ref{no limit cycle corollary} of Part~\ref{part proofs}.

In Section \ref{nonex sec} we will prove that all this happens for a reason: Proposition~\ref{limit cycle prop} says that for actions $S\in\H_0^+$, points on limit cycles never have (weak or strong) local minimizers, and that no minimizer from the attractor (in fact from any point in the basin of attraction) to the limit cycle exists. Instead, the cheapest way to approach the limit cycle is to circle around infinitely in the direction of the flow, see Fig.~\ref{fig counterexamples curves}~(a); this however is not a curve in $\tG$ and is thus not considered a valid minimizer in our, present framework.

\paragraph{Closed chains of flowlines.} The next example in Fig.~\ref{fig counterexamples} (b) is similar in character: Again we have a closed curve that can be traversed at no cost, only that this time it consists of four flowlines that lead from saddle point to saddle point, and we are looking for a curve of minimal action that leads from the attractor to this loop.
As before, our criteria fail to show that any of the points on the loop has local minimizers: Both Proposition \ref{blm prop 1} and \ref{blm prop 2} (ii) would require us to find an admissible manifold crossing the loop, but for the same reasons as in the previous example this can easily be seen to be impossible.

This time however, the issue can at present not be resolved entirely.
Corollary \ref{first hit corollary} in Section \ref{nonex sec} only allows us to conclude for actions $S\in\H_0^+$ that \textit{if} a minimizer exists then it will reach the loop at one of the saddle points. Further work would be necessary to prove that such a solution indeed exists, and to decide if it is more advantageous to rather approach the loop by circling around infinitely in the direction of the flow, see Fig.~\ref{fig counterexamples curves}~(b).

At least Lemma \ref{four saddle failed proof} explains why our criteria are insufficient for showing that those points on the loop with non-zero drift have local minimizers: The proofs of these criteria work by proving the stronger requirements of Remark \ref{blm remark} (ii), and for actions $S\in\H_0^+$ those are not fulfilled.

\paragraph{Non-contracting state space.} The examples of Sections \ref{subsub two basins} and \ref{subsub three basins} had in common that the state space was contracting in the sense that there exists a bounded region which every flowline eventually leads into as $t\to\infty$. This last example, a constant vector field $b(x):\equiv b_0\neq0$ illustrated in Fig. \ref{fig counterexamples}~(c), discusses what can happen if that is not the case.

For reasons similar to the ones in the previous two examples we fail to find even a single admissible manifold, and so we cannot apply Proposition~\ref{blm prop 1}. However, at least in the simple case of the geometric action for an SDE with non-vanishing constant drift and with additive noise it is not difficult to adjust the technique of this paper and to show that every point has strong local minimizers: At the beginning of Section \ref{sec flowline tracing functions} we will show how in this case one can effectively use the non-compact admissible manifold $M=\{b_0\}^\perp$.

It may be possible to extend the results of this paper to cover also cases like this one in more generality: One could drop the assumption that admissible manifolds need to be compact and instead list all the entities that need to be bounded on them, leading to a more technical definition of admissible manifolds. This however would go beyond the scope of our work at this point.

\section{Properties of Minimum Action Curves} \label{crit point section}

Let us begin by defining the subclass $\H_0^+\subset\H_0$ of geometric actions to which most results in this chapter apply. Observe that this class includes the large deviation geometric actions in Example~\hyperlink{example LDT}{1}.

\begin{definition}
We define $\H_0^+\subset\H_0$ as the class of all Hamiltonian geometric actions that are induced by a Hamiltonian that fulfills the Assumptions \tHaa, \tHc, and the following stronger smoothness assumption:\\[.2cm]
\begin{tabular}{rl}
\!\hypertarget{H2'}{(H2')} & \hspace{-.2cm}\begin{minipage}[t]{11.3cm}The derivatives $\Hz$, $\Ht$, $\Hzt=(\Htz)^T$, $\Htt$ and $\Hztt$ exist and are continuous in $(x,\te)$.\end{minipage}
\end{tabular}
\end{definition}
\noindent Note that for $S\in\H_0^+$ we cannot guarantee that \textit{every} Hamiltonian that induces $S$ will fulfill \tHbb. Also recall that by Lemma \ref{critical point lemma}~(i), for these actions a point $x\in\E$ is critical if and only if $b(x)=0$.\\[.2cm]
\indent The goal of this chapter is to study some properties of geometric actions and their minimizers. Our main results (for simplicity stated for the case $\E=D$) are summarized below. While the first result applies to general geo\-metric actions, the last three only hold for actions $S\in\H_0^+$ with a corresponding natural drift $b$. 
\begin{itemize}
\item The only points that a curve $\gamma\in\tG$ with $S(\gamma)<\infty$ can pass in infinite length are those at which every drift of $S$ vanishes.
\item If $L$ is a limit cycle of $b$ and if $A_1\subset D\setminus L$ then the minimization problem $P(A_1,L)$ does not have a solution. We give a quantitative explanation why curves rather like to approach $L$ by circling around infinitely in the direction of the flow.
\item Points on limit cycles of $b$ do not have local minimizers.
\item Minimum action curves leading from one attractor of $b$ to another reach and leave the separatrix between the two basins of attraction at critical points (see Fig.~\ref{fig transition through crit points}).
\end{itemize}

\subsection{Points that are Passed in Infinite Length}

To prepare for Corollary \ref{first hit corollary}, we need to understand which points can be passed in infinite length without accumulating infinite action. Here we find that such points must be roots of any drift~$b$. 
A refined statement relating the length of a curve to its action is given by Lemma \ref{key estimate} in Part~\ref{part proofs}.

\begin{lemma} \label{inf length at crit points}
Let $S\in\G$, let $\gamma\in\tG$ with $S(\gamma)<\infty$, and let $x$ be a point~on~$\gamma$ that is passed in infinite length. Then for every drift~$b$ of~$S$ we have $b(x)=0$.
\end{lemma}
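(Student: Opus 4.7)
The plan is to argue by contradiction: assuming $b(x)\neq 0$, I will show that $S(\gamma)=\infty$, contradicting the hypothesis.

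First I would exploit the drift bound locally. Set $c:=|b(x)|>0$ and $e:=b(x)/c$. By continuity of $b$, choose $r>0$ so small that $\bar B_r(x)\subset D$ and $|b(w)-b(x)|\leq c/8$ on $\bar B_r(x)$. Applying Definition~\ref{drift vector field} with $K:=\bar B_r(x)$ produces a constant $\cA>0$ with
\[
  \ell(w,y)\;\geq\;\cA\bigl(|b(w)||y|-\skp{b(w)}{y}\bigr)\;\geq\;\cA c\bigl(\tfrac{3}{4}|y|-\skp{e}{y}\bigr)
\]
for $\forall w\in\bar B_r(x)$ and $\forall y\in\Rn$, where the second step just combines $|b(w)|\geq 7c/8$ with $\skp{b(w)}{y}\leq c\skp{e}{y}+\tfrac{c}{8}|y|$. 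The key feature of this estimate is that the coefficient of $|y|$ stays strictly positive despite the fact that the drift term on the middle of the chain vanishes when $y$ aligns with $b(x)$.

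Next I would localize around the exceptional parameter value $\alpha_i$. Let $\varphi\in\tC$ parametrize $\gamma$ with $\varphi(\alpha_i)=x$. By continuity of $\varphi$ there exists $\eps>0$ with $\varphi(J)\subset\bar B_r(x)$ for $J:=[\alpha_i-\eps,\alpha_i+\eps]\cap[0,1]$. For every $\eta\in(0,\eps)$ the set $J_\eta:=J\setminus(\alpha_i-\eta,\alpha_i+\eta)$ is a union of at most two closed intervals on which $\varphi$ is absolutely continuous (by definition of $\tC$). Integrating the pointwise bound above and applying the fundamental theorem of calculus on each piece yields
\[
  S(\gamma)\;\geq\;S(\varphi|_{J_\eta})\;\geq\;\cA c\Bigl[\tfrac{3}{4}\length(\varphi|_{J_\eta})-\skp{e}{\Delta_\eta}\Bigr],
\]
where $\Delta_\eta$ is the sum of the net displacements of $\varphi$ across the (at most two) intervals of $J_\eta$. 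Since all endpoints of these intervals lie in $\bar B_r(x)$, we have $|\Delta_\eta|\leq 4r$ independently of $\eta$. To finish, the footnote to the definition of $\tC$ states that $\int_J|\varphi'|\,d\alpha=\infty$, so the monotone convergence theorem gives $\length(\varphi|_{J_\eta})=\int_{J_\eta}|\varphi'|\,d\alpha\to\infty$ as $\eta\to 0^+$. The right-hand side of the above estimate therefore diverges to $+\infty$, contradicting $S(\gamma)<\infty$.

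The main obstacle is precisely moving from the drift inequality, which vanishes along $b(x)$, to a bound whose integral over the arbitrarily long portion of $\gamma$ near $x$ nevertheless diverges. The resolution is that any curve confined to the small ball $\bar B_r(x)$ has bounded total displacement in every direction, so the ``bad'' term $-\skp{e}{\cdot\,}$ integrates to an $O(r)$ quantity while the length contribution is unbounded; the coefficient $\tfrac{3}{4}$ in the estimate is simply chosen to leave room to absorb the $O(r)$-error introduced by replacing $b(w)$ with $b(x)$, and any constant strictly less than $1$ would serve equally well.
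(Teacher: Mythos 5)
Your proof is correct, and it takes a noticeably more direct route than the paper's own argument. The paper starts from the algebraic identity $|b||y|-\skp{b}{y}=\tfrac12|b||y||\hat b-\hat y|^2$, lower-bounds $|b|\geq c$ on a small ball, applies the Cauchy--Schwarz inequality at the integral level to relate $\int|\varphi'||\widehat{b(\varphi)}-\widehat{\varphi'}|^2$ to $(\int|\varphi'||\widehat{b(\varphi)}-\widehat{\varphi'}|)^2/\int|\varphi'|$, and finally estimates the numerator by projecting $\widehat{b(\varphi)}-\widehat{\varphi'}$ onto the fixed direction $\hat b_0$. You instead freeze $b$ at $x$ immediately: replacing $b(w)$ by $b(x)$ plus an $O(c/8)$ error yields the purely linear lower bound $\ell(w,y)\geq\cA c(\tfrac34|y|-\skp{e}{y})$, and integrating this over $J_\eta$ directly gives a total length term that diverges against a drift-direction displacement term that stays $O(r)$. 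The two proofs share the same key insight --- that $\int\skp{e}{\varphi'}$ is controlled by a net displacement confined to a small ball, while $\int|\varphi'|$ is not --- but you avoid the quadratic form and the Cauchy--Schwarz step entirely, which makes the argument cleaner. One small imprecision in your exposition: you write that ``the coefficient of $|y|$ stays strictly positive,'' which could be read as a claim that $\tfrac34|y|-\skp{e}{y}$ is bounded below by a positive multiple of $|y|$; it is not (the expression is negative when $y$ aligns with $e$), but this does not affect the argument since only the \emph{integrated} quantity matters, and your closing paragraph makes clear you understand this.
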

\begin{proof}
Suppose that $b_0:=b(x)\neq0$. Let $\eps>0$ be so small that $\bar B_\eps(x)\subset D$,
\[ c:=\min_{w\in\bar B_\eps(x)}|b(w)|>0 \qquad\text{and}\qquad\min_{w\in\bar B_\eps(x)}\Skp{\hat b_0}{\widehat{b(w)}}\geq\tfrac12, \]
where we use the notation $\hat v:=\frac{v}{|v|}$ for $\forall v\in\Rn\setminus\{0\}$, and let $\cA:=\cA(\bar B_\eps(x))$. By passing on to a small segment of $\gamma$ around $x$, it is enough to consider the case $\gamma\in\tGx$, and we may assume that $\gamma\subset\bar B_\eps(x)$. We will obtain a contradiction by showing that $S(\gamma)=\infty$.

To do so, let $\varphi\in\Cx$ be a parameterization of $\gamma$, and define for\linebreak $\forall a\in(0,\frac12)$ the sets $I_a:=[0,\frac12-a]\cup[\frac12+a,1]$ and $I_a^-:=\{\alpha\in I_a\,|\,\varphi'(\alpha)\neq0\}$ and the number $L_a:=\int_{I_a}|\varphi'|\,d\alpha$. Then
\begin{align*}
  \int_{I_a^-}|\varphi'|\big|\widehat{b(\varphi)}-\widehat{\varphi'}\big|\,d\alpha
   &\geq \int_{I_a^-} |\varphi'|\Skp{\hat b_0}{\widehat{b(\varphi)}-\widehat{\varphi'}}\,d\alpha 
   \geq \int_{I_a}\big(\tfrac12|\varphi'| - \Skp{\hat b_0}{\varphi'}\big)\,d\alpha \\
   &=    \tfrac12 L_a - \Skp{\hat b_0}{\big[\varphi(\tfrac12-a)-\varphi(0)\big]+\big[\varphi(1)-\varphi(\tfrac12+a)\big]} \\
   &\geq \tfrac12 L_a - 4\eps,
\end{align*}
which is positive for small $a$ since $\lim_{a\searrow0}L_a=\length(\gamma)=\infty$. By \eqref{drift lower bound} and the Cauchy-Schwarz inequality this implies that
\begin{align*}
  S(\gamma)
    &\geq \int_{I_a^-} \ell(\varphi,\varphi')\,d\alpha
    \geq \cA \int_{I_a^-} \big(|b(\varphi)||\varphi'|-\Skp{b(\varphi)}{\varphi'}\big) \,d\alpha\nonumber \\
    &= \frac{\cA }{2}\int_{I_a^-}|b(\varphi)||\varphi'|\big|\widehat{b(\varphi)}-\widehat{\varphi'}\big|^2\,d\alpha
    \geq \frac{\cA c}{2} \int_{I_a^-}|\varphi'|\big|\widehat{b(\varphi)}-\widehat{\varphi'}\big|^2\,d\alpha \nonumber \\
    &\geq \frac{\cA c}{2} \cdot\frac{\big(\int_{I_a^-}|\varphi'|\big|\widehat{b(\varphi)}-\widehat{\varphi'}\big|\,d\alpha\big)^2}
{\int_{I_a^-}|\varphi'|\,d\alpha}
    \geq \frac{\cA c\big(\tfrac12 L_a-4\eps\big)^2}{2L_a},
\end{align*}
and letting $a\searrow0$ shows that $S(\gamma)=\infty$.
\end{proof}

\subsection{The Advantage of Going With the Flow} \label{going with the flow sec}
The next lemma says that the drift $b$ is the only candidate for a direction into which one can move at no cost, and that for actions $S\in\H_0$ one can indeed follow the the natural drift flowlines at no cost. Note that the latter is obvious for the geometric action given by~\eqref{local geo action0}.

\begin{lemma} \label{flow lemma}
(i) Let $S\in\G$, let $b$ be a drift of $S$, and let $x\in D$ and $y\in\Rn\setminus\{0\}$. If $\ell(x,y)=0$ then either $b(x)=0$ or $y=cb(x)$ for some $c>0$.\\[.1cm]
(ii) Let $S\in\H_0$, let $b$ be a natural drift, and let $x\in D$ and $y\in\Rn$. If $b(x)=0$ or $y=cb(x)$ for some $c\geq0$ then $\ell(x,y)=0$. \\[.1cm]
(iii) If $S\in\H_0$ and $\gamma\in\tG$ is a flowline of a natural drift then $S(\gamma)=0$.
\end{lemma}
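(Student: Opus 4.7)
The plan is as follows. For part~(i), I will apply the drift inequality \eqref{drift lower bound} directly at $x$. Choosing any compact $K\ni x$ and the corresponding $\cA>0$, the assumption $\ell(x,y)=0$ forces $|b(x)||y|\leq\skp{b(x)}{y}$. Cauchy--Schwarz then forces equality, so $b(x)$ and $y$ must be positively aligned; since $y\neq0$, either $b(x)=0$ or $y=cb(x)$ with $c>0$.

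For part~(ii), I will fix a Hamiltonian $H$ inducing $S$ with natural drift $b(x)=\Ht(x,0)$ as in Lemma~\ref{hS in G lemma}. If $b(x)=0$, then $\Ht(x,0)=0$ combined with \tHaa\ (i.e.\ $H(x,0)=0$) and Lemma~\ref{critical point lemma}~(i) identifies $x$ as a critical point, so $\ell(x,y)=0$ for every $y$. If instead $y=cb(x)$ with $c\geq0$, the case $c=0$ is trivial by homogeneity, and for $c>0$ the homogeneity property in Definition~\ref{tech lemma 1}~(i) reduces the claim to $\ell(x,b(x))=0$. Using the representation \eqref{sup rep}, I will show that $\skp{b(x)}{\te}\leq0$ for every $\te$ in the sublevel set $L_x:=\{\te\in\Rn\,|\,H(x,\te)\leq0\}$: by \tHc\ the scalar function $g(s):=H(x,s\te)$ is convex in $s$, with $g(0)=0$ (by \tHaa) and $g(1)\leq0$, so the standard convexity bound gives
\[
  \skp{b(x)}{\te}=g'(0)\leq g(1)-g(0)\leq0.
\]
Since $0\in L_x$ already realizes the value $\skp{b(x)}{0}=0$, this proves $\ell(x,b(x))=0$.

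For part~(iii), I will take any parameterization $\varphi\in\tC$ of the flowline $\gamma$. Because $\gamma$ traces an integral curve of $b$ oriented with the flow, at a.e.\ $\alpha\in[0,1]$ where the classical derivative exists we have $\varphi'(\alpha)=\lambda(\alpha)\,b(\varphi(\alpha))$ for some $\lambda(\alpha)\geq0$. Part~(ii) immediately gives $\ell(\varphi(\alpha),\varphi'(\alpha))=0$ a.e., and then integration via \eqref{geo action formula} yields $S(\gamma)=0$.

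The only subtle step is the half-space inclusion $L_x\subset\{\te\,|\,\skp{b(x)}{\te}\leq0\}$ at the heart of part~(ii). The assumption $S\in\H_0$, equivalently \tHaa, is essential here because it places $0$ on the boundary of the convex set $L_x$, so that $b(x)=\Ht(x,0)$ is an outward normal at $0$; without \tHaa\ the origin can lie in the interior of $L_x$ and $\ell(x,b(x))>0$ is possible, consistent with the natural-drift obstruction in the Riemannian and quantum-tunnelling examples of $\H\setminus\H_0$ discussed earlier.
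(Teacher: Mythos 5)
Parts~(i) and~(iii) of your proposal follow essentially the same route as the paper: in~(i) the paper also concludes directly from the drift inequality~\eqref{drift lower bound} (the Cauchy--Schwarz step is the implicit content of ``$|b(x)||y|-\skp{b(x)}{y}\leq0$ implies alignment''), and in~(iii) the reduction to part~(ii) via a pointwise parameterization $\varphi'=c(\alpha)b(\varphi)$ a.e.\ is identical. Where you diverge is the $c>0$ branch of part~(ii). The paper invokes the Lagrange-multiplier representation of Lemma~\ref{second action rep}: it observes that $(\that,\lambda)=(0,\frac1c)$ solves the system~\eqref{vte eq} (using $\Ht(x,0)=b(x)=\frac1cy$ and $H(x,0)=0$), hence $\that(x,y)=0$ and $\ell(x,y)=\skp{\that(x,y)}{y}=0$ by~\eqref{general geometric local action}. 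You instead argue directly from the sup-representation~\eqref{sup rep}: for every $\te$ in $L_x:=\{\te\,|\,H(x,\te)\leq0\}$, convexity of $s\mapsto H(x,s\te)$ together with \tHaa\ forces $\skp{b(x)}{\te}=\partial_sH(x,s\te)|_{s=0}\leq H(x,\te)-H(x,0)\leq0$, so the supremum over $L_x$ is attained at $\te=0$ and $\ell(x,b(x))=0$. Both arguments are correct. Your version is more elementary in that it does not rely on the existence and uniqueness result of Lemma~\ref{second action rep}; it only uses \tHaa\ and the convexity \tHc, and it makes geometrically transparent that $b(x)$ is the outward normal to the $0$-sublevel set of $H(x,\cdot\,)$ at the boundary point $\te=0$. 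The paper's version, on the other hand, sits more naturally alongside the explicit formulas for $\that$ and $\lambda$ derived elsewhere (e.g.\ for the SDE and Riemannian examples) and immediately localizes the maximizer $\that(x,y)=0$, which is useful information in its own right. Your closing remark about \tHaa\ placing $0$ on the boundary of $L_x$, and its failure for $\H\setminus\H_0$, is an accurate reading of why the hypothesis is sharp.
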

\begin{proof}
(i) If $\ell(x,y)=0$ then \eqref{drift lower bound} implies that either $b(x)=0$ or $y=cb(x)$ for some $c\geq0$. Since $y\neq0$, we must have $c>0$. \\[.2cm]
(ii) If $0=b(x)=\Ht(x,0)$ then $x$ is a critical point by Lemma \ref{critical point lemma}~(i), so that $\ell(x,y)=0$ for $\forall y\in\Rn$. If $b(x)\neq0$ and $y=cb(x)=c\Ht(x,0)$ for some $c>0$ then $(\that,\lambda)=(0,\frac1c)$ solves \eqref{vte eq},
so that $\that(x,y)=0$ and thus $\ell(x,y)=\skp{\that(x,y)}{y}=0$ by \eqref{general geometric local action}. If $c=0$ then $y=0$, and so we have $\ell(x,y)=0$ again. \\[.2cm]
(iii) Given any parameterization $\varphi\in\tC$ of $\gamma$, we have $\varphi'=cb(\varphi)$ a.e.~on $[0,1]$ for some function $c(\alpha)\geq0$, and so part (ii) implies that $\ell(\varphi,\varphi')=0$ a.e.~on $[0,1]$, i.e.~$S(\gamma)=0$.
\end{proof}

Now suppose that $S\in\H_0$. The next lemma says that if the end of a given curve does not follow the natural drift flowlines (so that its action is positive) then we may reduce its action by bending it slightly into the direction of the drift.
This is less obvious than it seems at first since the sheared curves given by \eqref{bent function} may also be longer, and so a precise calculation is necessary to show that the benefits from the change in direction outweigh the potential increase in length.

\begin{lemma} \label{going with the flow lemma}
Let $S\in\H_0^+$, and let $b$ be a natural drift of $S$ obtained from a Hamiltonian that fulfills the Assumption \tHbb. Let $\gamma\in\Gamma$, let $x$ be its end point, and let $\varphi\in\AC{0,1}$ be its arclength parameterization. Suppose that $b(x)\neq0$, and that
\begin{equation} \label{dont follow flow}
\exists\tau>0\,\ \text{$\exists\!$ arbitrarily large $\alpha\in[0,1)\colon$}
\ \,\varphi(\alpha)\notin\psi\big(x,(-\tau,0]\big).
\end{equation}
Then for sufficiently large $\alpha_0\in[0,1)$ the family of curves $\gamma_\eps\in\Gamma$ given by
\begin{equation} \label{bent function}
\varphi_\eps(\alpha):=
\begin{cases}
\varphi(\alpha) & \text{if }\alpha\in[0,\alpha_0],\\
\varphi(\alpha)+\eps(\alpha-\alpha_0)b(\varphi(\alpha)) &\text{if }\alpha\in[\alpha_0,1],
\end{cases}
\end{equation}
defined for small $\eps\geq0$, fulfills $\partial_\eps S(\gamma_\eps)|_{\eps=0}<0$.
\end{lemma}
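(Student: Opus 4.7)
The plan is to differentiate $S(\gamma_\eps)$ under the integral sign, identify the leading contribution as a non-positive integral arising from the convex geometry of the Hamiltonian's $0$-sublevel set, and use \eqref{dont follow flow} to force this leading term to dominate the $O((1-\alpha_0)^2)$ corrections.

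For $\alpha_0$ close enough to $1$, the image $\varphi([\alpha_0,1])$ lies in a small compact neighborhood $U$ of $x$ on which $b\neq 0$, and since $|\varphi'|\equiv L:=\length(\gamma)>0$ a.e., the pair $(\varphi,\varphi')$ avoids the singular locus $\{y=0\}$ of $\ell$. By Lemma~\ref{second action rep} together with the implicit function theorem applied to \eqref{vte eq} (justified by \tHbb\ and \tHc), $\vartheta$ is $C^1$ off $\{y=0\}$, whence so is $\ell$ with $\ell_y(x,y)=\vartheta(x,y)$ (envelope identity). Setting $\xi(\alpha):=(\alpha-\alpha_0)b(\varphi(\alpha))$, so that $\partial_\eps\varphi_\eps|_{\eps=0}=\xi$ and $\partial_\eps\varphi_\eps'|_{\eps=0}=b(\varphi)+(\alpha-\alpha_0)\nabla b(\varphi)\varphi'$, dominated convergence yields
\begin{equation*}
\partial_\eps S(\gamma_\eps)|_{\eps=0}=I_1+R,\qquad I_1:=\int_{\alpha_0}^1\!\langle\vartheta(\varphi,\varphi'),b(\varphi)\rangle\,d\alpha,\qquad |R|\leq C(1-\alpha_0)^2,
\end{equation*}
where $R$ collects the $(\alpha-\alpha_0)$-terms and the constant $C$ depends only on uniform bounds of $\ell_x$, $\vartheta$, $b$, $\nabla b$ on $U$ and on $L$.

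The sign of the integrand of $I_1$ is controlled by convexity. Since $S\in\H_0^+$, \tHaa\ places $\theta=0$ on $\partial\{H(x,\cdot)\leq 0\}$ with outer normal $b(x)=H_\theta(x,0)$, so the convex $0$-sublevel set lies in the half-space $\{\theta:\langle\theta,b(x)\rangle\leq 0\}$; as $\vartheta(\varphi,\varphi')$ also lies on this boundary (Lemma~\ref{second action rep}), the support inequality gives $\langle\vartheta,b\rangle\leq 0$ pointwise. Equality would force $\vartheta=0$, i.e.\ (Lemma~\ref{second action rep}~(ii) together with $b(x)\neq 0$) $\varphi'\parallel b(\varphi)$; if this held a.e.\ on $[\alpha_0,1]$ then ODE uniqueness for $\varphi'=L\widehat{b(\varphi)}$ with terminal condition $\varphi(1)=x$ would confine $\varphi([\alpha_0,1])$ to $\psi(x,(-T,0])$ with $T\leq L(1-\alpha_0)/\inf_U|b|$, contradicting \eqref{dont follow flow} once $T<\tau$. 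Hence $I_1<0$ for $\alpha_0$ sufficiently close to $1$.

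What remains, and is the substantive hurdle, is to upgrade $I_1<0$ to $|I_1|>|R|$. Using \tHc\ (strong convexity of $H(x,\cdot)$) one can promote the qualitative bound to a quantitative one, $-\langle\vartheta(w,v),b(w)\rangle\geq c(1-\cos\theta(v,b(w)))$, uniformly on $U$, so that $|I_1|\geq c\int_{\alpha_0}^1(1-\cos\theta(\alpha))\,d\alpha$ with $\theta(\alpha)$ the angle between $\varphi'(\alpha)$ and $b(\varphi(\alpha))$. Introducing a transverse coordinate $N(\alpha):=\langle\varphi(\alpha)-x,e\rangle$ for a unit vector $e\perp b(x)$, one has $N(1)=0$ and $|N'|\leq L|\sin\theta|$ to leading order, so by Cauchy--Schwarz $\int_{\alpha_0}^1\sin^2\theta\,d\alpha\gtrsim N(\alpha_0)^2/(1-\alpha_0)$. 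Choosing $\alpha_0$ as a point supplied by \eqref{dont follow flow} with $\varphi(\alpha_0)\notin\psi(x,(-\tau,0])$ guarantees a strictly positive transverse deviation $N(\alpha_0)$; comparing $|I_1|\gtrsim N(\alpha_0)^2/(1-\alpha_0)$ against $|R|\leq C(1-\alpha_0)^2$ then closes the argument provided $N(\alpha_0)^2\gg(1-\alpha_0)^3$. Extracting this last lower bound on $N(\alpha_0)$ from \eqref{dont follow flow}, most likely by iterating along a subsequence $\alpha_n\uparrow 1$ and exploiting the $O((1-\alpha_0)^2)$ curvature of the backward orbit near $x$, is where the bulk of the technical work resides.
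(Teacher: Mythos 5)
Your decomposition $\partial_\eps S(\gamma_\eps)|_{\eps=0}=I_1+R$ and your identification of the leading term $I_1=\int_{\alpha_0}^1\skp{\vartheta(\varphi,\varphi')}{b(\varphi)}\,d\alpha$ with its pointwise sign $\skp{\vartheta}{b}\leq 0$ (and equality iff $\vartheta=0$) are correct and match the paper. Your argument that $I_1<0$ when $\alpha_0$ is large, via \eqref{dont follow flow} and uniqueness of the flowline, is also fine. But the bound $|R|\leq C(1-\alpha_0)^2$ is too crude, and this is exactly where your proof stalls: as you yourself acknowledge, you then need a \emph{quantitative} lower bound on $|I_1|$, which you try to extract from \eqref{dont follow flow} via a transverse coordinate $N(\alpha_0)$. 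This cannot work. The hypothesis \eqref{dont follow flow} asserts only that $\varphi(\alpha)$ leaves the backward flowline segment $\psi(x,(-\tau,0])$ for arbitrarily large $\alpha$; it gives no control whatsoever on the \emph{size} of the transverse displacement $N(\alpha_0)$, which could decay to zero arbitrarily fast as $\alpha_0\to 1$. There is no route from the qualitative statement \eqref{dont follow flow} to the bound $N(\alpha_0)^2\gg(1-\alpha_0)^3$ that your strategy requires, so the ``bulk of the technical work'' you defer is not merely technical — it is unprovable along these lines.

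The paper avoids this dead end by sharpening the bound on the correction term \emph{pointwise}, not just globally: the $(\alpha-\alpha_0)$-part of $\partial_\eps\ell(\varphi_\eps,\varphi_\eps')|_{\eps=0}$ is not merely $O(1)$ on $[\alpha_0,1]$ but is itself $O(|\vartheta|^2)$. Concretely, writing the integrand as $\skp{\vartheta}{b(\varphi)}+(\alpha-\alpha_0)[\cdots]$, one Taylor-expands $H_x(\varphi,\vartheta)=\nabla b(\varphi)^T\vartheta+O(|\vartheta|^2)$ (using $H_x(\cdot,0)=0$ from \tHaa\ and the continuity of $H_{x\theta\theta}$ from \tHbb) and $\lambda\varphi'=H_\theta(\varphi,\vartheta)=b(\varphi)+O(|\vartheta|)$; substituting these into the bracket produces a cancellation leaving $[\cdots]=O(|\vartheta|^2)$. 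Combined with the strong-convexity estimate $\skp{\vartheta}{b(\varphi)}\leq-\tfrac12 m_B|\vartheta|^2$ (from \tHaa\ and \tHc), this yields a pointwise bound $\partial_\eps\ell|_{\eps=0}\leq\big(-\tfrac12 m_B+(1-\alpha_0)\tilde c\big)|\vartheta|^2\leq -c|\vartheta|^2$ for $\alpha_0$ close to $1$, so both the negative leading term and the correction are controlled by the \emph{same} nonnegative quantity $|\vartheta|^2$. The conclusion $\partial_\eps S(\gamma_\eps)|_{\eps=0}\leq -c\int_{\alpha_0}^1|\vartheta|^2\,d\alpha<0$ then requires only that $\int_{\alpha_0}^1|\vartheta|^2\,d\alpha>0$, which is exactly what \eqref{dont follow flow} delivers — no quantitative lower bound is needed at all. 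That is the step your proposal is missing.
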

\begin{proof}
See Appendix \ref{going with the flow proof}.
\end{proof}

\subsection{Some Results on the Non-Existence of Minimizers} \label{nonex sec}

Lemma \ref{going with the flow lemma} has many useful consequences. The first one is that under certain conditions on $A_2$, 
any solution of $\probA$ must first reach $A_2$ at a critical point, since otherwise we could use Lemma~\ref{going with the flow lemma} to construct a curve with a lower action.
In particular, (under these conditions) this means that if $A_2$ does not contain any critical points then no minimizer can exist.

\begin{corollary} \label{first hit corollary}
Let $S\in\H_0^+$. Let $A_2\subset\E$ be closed in $D$, let $A_1\subset\E\setminus A_2$, 
and suppose that the minimization problem $\probA$ has a weak solution $\gamma^\star\in\tGA$.
Denoting by $\hat x$ its first hitting point of $A_2$, let us also assume that $\hat x\in\E^\circ$ and that the flow $\psi$ of some natural drift $b$ of $S$ fulfills
\begin{equation} \label{xA flow}
 \psi\big(\hat x,(-\tau,\tau)\big)\subset A_2 \text{\qquad for some $\tau>0$.}
\end{equation}
(In particular, these conditions on $\hat x$ are fulfilled if $A_2\subset\E^\circ$ and if $A_2$ is flow-invariant under $b$.)
Then $\hat x$ is a critical point.
\end{corollary}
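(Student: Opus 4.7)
The plan is to argue by contradiction: assuming $b(\hat x)\neq 0$, I construct a competitor in $\tGA$ with action strictly below $S(\gamma^\star)$. Since $S\in\H_0^+$, pick a Hamiltonian inducing $S$ that satisfies \tHbb\ and let $b$ be its natural drift; by Remark \ref{H0 remark} the flowline diagram of $b$ coincides with that of any other natural drift of $S$, so the hypothesis \eqref{xA flow} continues to hold for this $b$ (with a possibly modified $\tau>0$).

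Let $\gamma_0\in\tGA$ be the initial portion of $\gamma^\star$ up to its first hit $\hat x$ of $A_2$; since $\gamma_0$ is itself a competitor, weak minimality of $\gamma^\star$ forces $S(\gamma_0)=S(\gamma^\star)$. The assumption $b(\hat x)\neq 0$ together with Lemma \ref{inf length at crit points} guarantees that $\hat x$ is not passed in infinite length, so a sufficiently short final sub-arc $\gamma_1$ of $\gamma_0$ ending at $\hat x$ is rectifiable and admits an arclength parameterization $\varphi\in\AC{0,1}$ with $\varphi(1)=\hat x$. Using the same $\tau$ as in \eqref{xA flow}, the non-alignment hypothesis \eqref{dont follow flow} of Lemma \ref{going with the flow lemma} is satisfied: if it failed, there would be $\bar\alpha<1$ with $\varphi(\alpha)\in\psi(\hat x,(-\tau,0])\subset A_2$ for all $\alpha\in[\bar\alpha,1)$, contradicting that $\hat x$ is the \emph{first} point of $\gamma^\star$ lying in $A_2$.

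Fix $\alpha_0\in(0,1)$ and apply Lemma \ref{going with the flow lemma} to $\gamma_1$: this yields a family $\gamma_{1,\eps}$ with $S(\gamma_{1,\eps})=S(\gamma_1)-c\eps+o(\eps)$ for some $c>0$, ending at $y_\eps:=\hat x+\eps(1-\alpha_0)b(\hat x)$. A Taylor expansion of the flow gives $\psi(\hat x,s)=\hat x+s\,b(\hat x)+O(s^2)$, so with $s_\eps:=\eps(1-\alpha_0)$ the point $q_\eps:=\psi(\hat x,s_\eps)$ lies in $A_2$ for all sufficiently small $\eps>0$ by \eqref{xA flow}, and $|y_\eps-q_\eps|=O(\eps^2)$. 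Because $\hat x\in\E^\circ$, for all small enough $\eps$ the straight segment $\ell_\eps$ from $y_\eps$ to $q_\eps$ lies in a fixed Euclidean ball contained in $\E$, so Lemma \ref{lower semi lemma}(ii) yields $S(\ell_\eps)=O(\eps^2)$.

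Modifying $\gamma_0$ by replacing its tail $\gamma_1$ with $\gamma_{1,\eps}$ and then attaching $\ell_\eps$ produces a curve in $\tGA$ whose total action equals $S(\gamma^\star)-c\eps+O(\eps^2)$, which is strictly below $S(\gamma^\star)$ for sufficiently small $\eps>0$; via Lemma \ref{weak vs strong inf} this contradicts the weak minimality of $\gamma^\star$ and forces $b(\hat x)=0$. The only real technical hurdle is verifying that the first-order gain extracted from Lemma \ref{going with the flow lemma} cleanly dominates the second-order discrepancy between the Euler-step endpoint $y_\eps$ and the exact flow point $q_\eps\in A_2$; everything else reduces to bookkeeping built on the first-hit property of $\hat x$ and the openness condition $\hat x\in\E^\circ$.
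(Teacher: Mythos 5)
Your proposal is correct and follows essentially the same route as the paper's proof: reduce to the case where $\hat x$ is the curve's endpoint, invoke Remark \ref{H0 remark} to pass to a Hamiltonian satisfying \tHbb, use Lemma \ref{inf length at crit points} to extract a rectifiable final arc, verify \eqref{dont follow flow} from the first-hit property, apply Lemma \ref{going with the flow lemma} to gain $c\eps$ in action, and close the gap to $\psi(\hat x,\eps(1-\alpha_0))\in A_2$ with a short segment of action $o(\eps)$. The only cosmetic differences are that you state the connector's action as $O(\eps^2)$ rather than $o(\eps)$ and tack on an unnecessary appeal to Lemma \ref{weak vs strong inf} at the end; neither affects correctness.
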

\begin{proof}
We may assume that $\hat x$ is the end point of $\gamma^\star$ (otherwise we may instead consider the minimizer obtained by cutting off the segment after~$\hat x$). Also, because of Remark \ref{H0 remark}, \eqref{xA flow} is in fact fulfilled for the flow of \textit{any} natural drift of $S$, and thus we may assume that $b$ is constructed from a Hamiltonian that fulfills Assumption \tHbb.

Suppose that $b(\hat x)\neq0$. Then since $S(\gamma^\star)<\infty$ by the remark following Assumption \hE, Lemma \ref{inf length at crit points} says that $\gamma^\star$ cannot pass $\hat x$ in infinite length, and thus we can write $\gamma^\star=\gamma^1+\gamma^2$, where $\gamma^2$ is a rectifiable curve ending in $\hat x$ such that $\gamma^2\subset\E^\circ$ and $\length(\gamma^2)>0$. Now consider the family of curves $\gamma_\eps$ constructed from $\gamma=\gamma^2$ as in Lemma \ref{going with the flow lemma}.
The condition \eqref{dont follow flow} is fulfilled since $\gamma^2$ does not visit  $\psi(\hat x,(-\tau,0])\subset A_2$ prior to $\hat x$, and so we have $\partial_\eps S(\gamma_\eps)|_{\eps=0}<0$, which implies that $S(\gamma_\eps)\leq S(\gamma^2)-c\eps$ for some $c>0$ and all sufficiently small $\eps\geq0$. Now defining $x_\eps:=\psi(\hat x,\eps(1-\alpha_0))$, which by \eqref{xA flow} is in $A_2$ for $\eps\in[0,\tau)$, we have
\begin{align*}
x_\eps &= \psi(\hat x,0)+\eps(1-\alpha_0)\dot\psi(\hat x,0)+o(\eps) \\
&=\hat x+\eps(1-\alpha_0)b(\hat x)+o(\eps)\\
&= \varphi_\eps(1) + o(\eps),
\end{align*}
i.e.~the straight line $\bar\gamma_\eps$ from $\varphi_\eps(1)$ (that is the end point of~$\gamma_\eps$) to $x_\eps\in A_2$ has a length and thus by Lemma~\ref{lower semi lemma}~(ii) also an action of order $o(\eps)$. Finally, for sufficiently small $\eps>0$ we have $\gamma_\eps,\bar\gamma_\eps\subset\E^\circ$ and thus $\tilde\gamma^\star:=\gamma^1+\gamma_\eps+\bar\gamma_\eps\in\tGA$, and the above estimates show that
\pb
\begin{align*}
 S(\tilde\gamma^\star)
   &= S(\gamma^1)+S(\gamma_\eps)+S(\bar\gamma_\eps)
   \leq S(\gamma^1)+S(\gamma^2)-c\eps+o(\eps) \\*
   &= S(\gamma^\star)-c\eps+o(\eps)
    < S(\gamma^\star)
\end{align*}
for small $\eps>0$, contradicting the minimizing property of $\gamma^\star$.
\end{proof}

Two examples of flow-invariant sets $A_2$ to which we can apply Corollary~\ref{first hit corollary} are limit cycles and closed chains of flowlines, as shown in Fig.~\ref{fig counterexamples}~(a) and~(b), which leads us to the results that were discussed in Section \ref{subsub counterexamples}.

\begin{proposition} \label{limit cycle prop}
Let $S\in\H_0^+$, let $b$ be a natural drift, and let $L\subset\E^\circ$ be a limit cycle of $b$, i.e.
\[ \exists x\in L\,\ \exists T>0\colon\quad b(x)\neq0,\ \ L=\psi(x,[0,T)) \text{\ \ and\ \ } \psi(x,T)=x. \]
(i) If $A_1\subset\E\setminus L$ and $A_2\subset L$ then the minimization problem $\probA$ does not have any solutions.\\[.1cm]
(ii) Points $x\in L$ do not have local minimizers.
\end{proposition}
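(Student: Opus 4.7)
For part (i), my plan is to argue by contradiction and reduce to Corollary~\ref{first hit corollary} applied to the relaxed problem $P(A_1,L)$, whose target $L$ is flow-invariant. Suppose a weak solution $\gamma^\star\in\tGA$ exists. Since $\gamma^\star$ starts in $A_1\subset\E\setminus L$ and ends in $A_2\subset L$, and since $L$ is compact (hence closed in $D$), any continuous parameterization of $\gamma^\star$ has a well-defined first hitting point $\hat x\in L$ of $L$, occurring strictly after the starting point. Truncating $\gamma^\star$ at $\hat x$ produces a curve $\tilde\gamma^\star$ from $A_1$ to $\hat x$ with $S(\tilde\gamma^\star)\le S(\gamma^\star)<\infty$.

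The crucial observation is that $\tilde\gamma^\star$ is in fact a weak minimizer of the relaxed problem $P(A_1,L)$. The inequality $\inf_{\tilde\Gamma_{A_1}^{L}}S\le\inf_{\tGA}S$ follows from $\tGA\subset\tilde\Gamma_{A_1}^{L}$. For the reverse, given any $\gamma'\in\tilde\Gamma_{A_1}^{L}$ ending at some $\hat w\in L$, I would append the arc of $L$ from $\hat w$ (following the flow of $b$, going at most one full period) to any fixed $w_2\in A_2$; by Lemma~\ref{flow lemma}~(iii) this arc has zero action, so the concatenation lies in $\tGA$ with action $S(\gamma')$. Combining the two inequalities with $S(\tilde\gamma^\star)\le S(\gamma^\star)$ shows that $\tilde\gamma^\star$ attains $\inf_{\tilde\Gamma_{A_1}^{L}}S$.

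I would then verify the hypotheses of Corollary~\ref{first hit corollary} for $P(A_1,L)$ with solution $\tilde\gamma^\star$: $L$ is closed in $D$, $A_1\subset\E\setminus L$ by assumption, the endpoint $\hat x$ of $\tilde\gamma^\star$ (its first hitting point of $L$) lies in $L\subset\E^\circ$, and flow-invariance of the periodic orbit $L$ gives $\psi(\hat x,(-\tau,\tau))\subset L$ for every $\tau>0$. The corollary then forces $\hat x$ to be critical. But $b$, as a natural drift, vanishes precisely on the critical points (Lemma~\ref{critical point lemma}~(i) together with \tHaa), whereas $b\ne 0$ everywhere on $L$: if $b$ vanished at $\psi(x,s)$ for some $s\in(0,T)$, the flow would halt there, contradicting $\psi(x,T)=x$ and the bijective parameterization $L=\psi(x,[0,T))$. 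This contradiction establishes (i).

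For part (ii), I would simply deduce it from (i). Suppose $x\in L$ had weak local minimizers, and let $r>0$ be the radius from Definition~\ref{blm def}~(ii). Since $x\in L\subset\E^\circ$, $\bar B_r(x)\cap\E$ contains a full Euclidean neighborhood of $x$; since $L$ is a smooth embedded one-manifold (limit cycles require $n\ge 2$), I may choose $x_1\in\bar B_r(x)\cap\E$ with $x_1\notin L$. The local-minimizer hypothesis then supplies a weak minimizer of $P(\{x_1\},\{x\})$ in $\tilde\Gamma_{x_1}^{x}(x)\subset\tilde\Gamma_{x_1}^{x}$, directly contradicting part~(i) applied with $A_1=\{x_1\}\subset\E\setminus L$ and $A_2=\{x\}\subset L$. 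The delicate step is the reduction in the second paragraph, which is where the hypothesis $S\in\H_0^+$ becomes essential: without the zero-cost flow extension provided by Lemma~\ref{flow lemma}~(iii), the infimum over $\tilde\Gamma_{A_1}^{L}$ could strictly exceed that over $\tGA$, and the invocation of Corollary~\ref{first hit corollary} would collapse.
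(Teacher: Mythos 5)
Your proof is correct and follows essentially the same route as the paper: both reduce to Corollary~\ref{first hit corollary} applied to the flow-invariant target $L$, use Lemma~\ref{flow lemma}~(iii) to equate the infima of $P(A_1,A_2)$ and $P(A_1,L)$, observe that $L$ contains no critical points, and derive~(ii) from~(i) by picking $x_1\in\bar B_r(x)\setminus L$. The only organizational difference is that you truncate $\gamma^\star$ at its first hit of $L$ before invoking the corollary, whereas the paper shows the full curve is already a minimizer of $P(A_1,L)$ and lets the corollary's internal truncation step do the rest.
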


\begin{proof}
(i) First suppose that $A_2=L$. If $P(A_1,L)$ had a solution $\gamma^\star$ then according to Corollary~\ref{first hit corollary} its first hitting point of $L$ would be a critical point. But there are no critical points on $L$, so $P(A_1,L)$ cannot have a solution.

Now let $A_2\subset L$, and suppose that $\probA$ had a solution $\gamma^\star$. Then we obtain a contradiction by showing that $\gamma^\star$ is also a solution of $P(A_1,L)$, which was just proven not to exist. 
Indeed, if there were a curve $\gamma_1\in\tilde\Gamma_{A_1}^L$ with $S(\gamma_1)<S(\gamma^\star)$ then the curve $\gamma_2\in\tGA$, constructed by attaching to $\gamma_1$ a piece of $L$ leading from the end point of $\gamma_1$ to some point on $A_2$ in the direction of the flow, would by Lemma \ref{flow lemma} (iii) have the same action, $S(\gamma_2)=S(\gamma_1)<S(\gamma^\star)$, contradicting the minimizing property of~$\gamma^\star$.\\[.2cm]
(ii) Suppose that some point $x\in L$ had weak local minimizers. Then there would be an $r>0$ such that $\bar B_r(x)\subset\E$ and that for $\forall x_1,x_2\in\bar B_r(x)$ the minimization problem $\prob$ has a weak solution $\gamma^\star$. In particular, we could choose $x_1\in\bar B_r(x)\setminus L$ and $x_2:=x\in L$. But part (i) says that for this choice $\prob$ does not have a solution.
\end{proof}

\begin{remark}
The proof of Proposition \ref{limit cycle prop} (i) via Lemma \ref{going with the flow lemma}, which argues that every curve leading to $L$ can be improved by bending its end in the natural drift direction, indicates why curves like to approach $L$ by circling around infinitely in the direction of the flow (see Fig.~\ref{fig counterexamples curves}~(a)). Using the tools of this paper, proving the existence of a ``minimizing spiral'' is not difficult and will be subject to a future publication.
\end{remark}

The next result explains why our techniques are insufficient to prove that the points on the chain of flowlines in Fig.~\ref{fig counterexamples}~(b) have local minimizers: They were designed to show the stronger property of Remark (ii), which in this example does not hold for actions $S\in\H_0^+$.

\begin{lemma} \label{four saddle failed proof}
Let $S\in\H_0^+$, and suppose that the natural drift flowlines are as in Fig.~\ref{fig counterexamples} (b). Let $A_2$ be the set consisting of the four flowlines connecting the critical points (including their end points), and suppose that $A_2\subset\E^\circ$. Then any non-critical point $x\in A_2$ does not fulfill the property of Remark~\ref{blm remark}~(ii).
\end{lemma}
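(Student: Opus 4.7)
The plan is to exhibit, for any non-critical $x\in A_2$, a single constant $\eta>0$ and, for every $r>0$, a pair $x_1,x_2\in\bar B_r(x)\cap\E$ such that every minimizer of $\prob$ has length strictly greater than $\eta$. This directly negates the property stated in Remark~\ref{blm remark}~(ii).

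First I would set up the geometry. Since $x$ is non-critical, $x$ lies on exactly one of the four saddle-to-saddle flowlines of $A_2$, which I will call $L$; let its two endpoint saddles be $s_1$ (the $t\to-\infty$ limit) and $s_2$ (the $t\to+\infty$ limit), and let $s_3,s_4$ be the remaining two saddles of the loop, so the other three flowlines of $A_2$ form a chain $s_2\leadsto s_3\leadsto s_4\leadsto s_1$. Each of these three flowlines has positive, finite Euclidean length: positive because its endpoints are distinct, finite because on the stable/unstable manifold of a hyperbolic saddle the velocity $|\dot\psi|$ decays exponentially, so the length integral converges. Let $\ell_0>0$ denote their total length and set $\eta:=\ell_0/2$.

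Given $r>0$, I would choose $\delta>0$ so small that $x_1:=\psi(x,\delta)$ and $x_2:=\psi(x,-\delta)$ both lie in $\bar B_r(x)$; since $L\subset A_2\subset\E^\circ$, both points lie in $\bar B_r(x)\cap\E$. The curve $\gamma_0$ formed by concatenating the $L$-segment from $x_1$ forward to $s_2$, then the three remaining loop flowlines from $s_2$ back to $s_1$, then the $L$-segment from $s_1$ forward to $x_2$, is rectifiable and, by Lemma~\ref{flow lemma}~(iii), has zero action. Hence $\inf_{\gamma\in\Gx}S(\gamma)=0$ and $\gamma_0\in\Gx$ is a minimizer, so the minimization problem does have a solution, and the question reduces to bounding the length of every solution.

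The heart of the argument is to show that every minimizer has length at least $\ell_0$. For any minimizer $\gamma^\star\in\Gx$ one has $S(\gamma^\star)=0$; parameterizing by arclength and applying Lemma~\ref{flow lemma}~(i) shows that at a.e.~$\alpha$ either $\varphi'(\alpha)=0$, or $\varphi(\alpha)$ is one of the four saddle points (the only critical points here, by Lemma~\ref{critical point lemma}~(i) applied to the natural drift), or $\varphi'(\alpha)$ is a strictly positive multiple of $b(\varphi(\alpha))$. Thus, up to reparameterization, $\gamma^\star$ is a concatenation of forward drift-flowline segments joined only at saddle points. The only forward-flowline direction leaving $x_1\in L\setminus\{s_1,s_2\}$ is along $L$ toward $s_2$; symmetrically, the only forward-flowline direction entering $x_2\in L\setminus\{s_1,s_2\}$ is along $L$ coming out of $s_1$. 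Since $x_2$ lies strictly upstream of $x_1$ on $L$, $\gamma^\star$ must leave $L$ at $s_2$ and re-enter $L$ at $s_1$; the only such chain in the flow diagram of Fig.~\ref{fig counterexamples}~(b) is $s_2\to s_3\to s_4\to s_1$, giving $\length(\gamma^\star)\ge\ell_0>\eta$.

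The main obstacle is this last step — rigorously excluding every ``shortcut'' by which a zero-action curve might travel from $s_2$ back to $s_1$ more cheaply than around the loop. One must invoke the specific global structure of Fig.~\ref{fig counterexamples}~(b): no other heteroclinic orbit joins the four saddles, and any excursion into the enclosed attractor or the outer basin cannot be undone in zero action because reversing the flow direction incurs strictly positive action by Lemma~\ref{flow lemma}~(i). Care is also needed because $\gamma^\star$ may pause arbitrarily long at saddles (intervals on which $\varphi'=0$), and because identifying $L$ as the unique forward flowline through $x_2$ requires the local uniqueness furnished by the hyperbolicity assumption on the saddles via the stable-manifold theorem.
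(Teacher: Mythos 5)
Your route is genuinely different from the paper's. The paper picks $x_1$ slightly \emph{off} $A_2$ and $x_2:=x$, so that a minimizer $\gamma^\star$ of $\prob$ with $\length(\gamma^\star)\leq\eta$ would be confined to $\bar B_{2\eta}(x)$, a ball chosen to contain no critical point. It then argues (exactly as in the proof of Proposition~\ref{limit cycle prop}~(i), using Lemma~\ref{flow lemma}~(iii) to extend any competitor along $A_2$ at no cost) that this $\gamma^\star$ is also a weak minimizer of $P(x_1,A_2)$, and invokes Corollary~\ref{first hit corollary} to conclude that $\gamma^\star$ must first hit the flow-invariant set $A_2$ at a critical point --- contradicting its confinement to a critical-point-free ball. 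No global geometric estimate ever enters; in particular the finiteness of the heteroclinic lengths and the ``no shortcut'' property of the loop are never needed.

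Your setup is valid in principle: choosing $x_1$ downstream and $x_2$ upstream of $x$ on $L$, showing $\inf S=0$ via $\gamma_0$, and using Lemma~\ref{flow lemma}~(i) to reduce any zero-action minimizer to a concatenation of forward drift-flowline segments joined at the four saddles are all sound. But the pivotal claim --- that every such concatenation from $x_1$ to $x_2$ must traverse the entire chain $s_2\to s_3\to s_4\to s_1$ and therefore has length at least $\ell_0$ --- is a global statement about the flow diagram that you leave open (and explicitly flag as ``the main obstacle''). To close it one would have to inventory all branches of the stable and unstable manifolds of the four saddles in Fig.~\ref{fig counterexamples}~(b) and rule out that the ``other'' branch of the stable manifold of $s_1$ is reachable from $s_2$ at zero cost, and that excursions into the inner or outer basins could rejoin $L$ upstream of $x$. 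This is plausible from the picture but is not automatic and is not carried out, so the proof as written has a genuine gap at precisely the step it calls hard. The paper's argument is structured to bypass this flow-diagram analysis entirely, which is what makes it both shorter and robust to the details of the phase portrait.
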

\begin{proof}
Let $x\in A_2$ with $b(x)\neq0$, and let $\eta>0$ be so small that $\bar B_{2\eta}(x)$ does not contain any critical point. If the property in Remark \ref{blm remark} (ii) were true then there would be an $r\in(0,\eta]$ such that $\bar B_r(x)\subset\E$ and that for $\forall x_1,x_2\in\bar B_r(x)$, $\prob$ has a solution $\gamma^\star$ with $\length(\gamma^\star)\leq\eta$ and thus $\gamma^\star\subset\bar B_{r+\eta}(x)$. In particular, we can pick $x_1\in\bar B_r(x)\setminus A_2$ and $x_2:=x\in A_2$.
As in part (i) we could then show that the corresponding solution $\gamma^\star$ of $\prob$ is also a solution of $P(x_1,A_2)$, and by Corollary~\ref{first hit corollary} $\gamma^\star$ would first hit $A_2$ at a critical point. But this is not possible since $\gamma^\star\subset\bar B_{r+\eta}(x)\subset\bar B_{2\eta}(x)$.
\end{proof}

\subsection{How to Move From One Attractor to Another}
\label{sec move from attr to attr}

Still assuming that $S\in\H_0^+$ and that $b$ is a corresponding natural drift, as another consequence of Corollary~\ref{first hit corollary} we will learn how minimum action curves cross the separatrix as they move from one attractor of $b$ to another, as illustrated in Fig.~\ref{fig transition through crit points}.
Clearly, the point at which the curve \textit{leaves} the separatrix and enters the second basin of attraction must have zero drift. Indeed, after leaving the separatrix, the curve can at no cost follow a flowline of $b$ into the second attractor, and that flowline can only touch the separatrix at a point where~$b$ vanishes.

It is however not that obvious that also the \textit{first} hitting point of the separatrix must have zero drift. Consider for example the geometric action given by \eqref{local geo action0}, where the flowline diagram of $b$ is as in Fig.~\ref{SDE transition} or Fig.~\ref{fig transition through crit points}, and where $|b|$ is very small along a channel that leads from the first attractor to a point on the separatrix far away from any critical point. Curves can then follow that channel at very little cost, and it seems unclear whether it is then advantageous to go the long way towards a critical point in order to cross the separatrix.

\begin{figure}[t]
\centering
\vspace{.15cm}
\includegraphics[width=12.5cm]{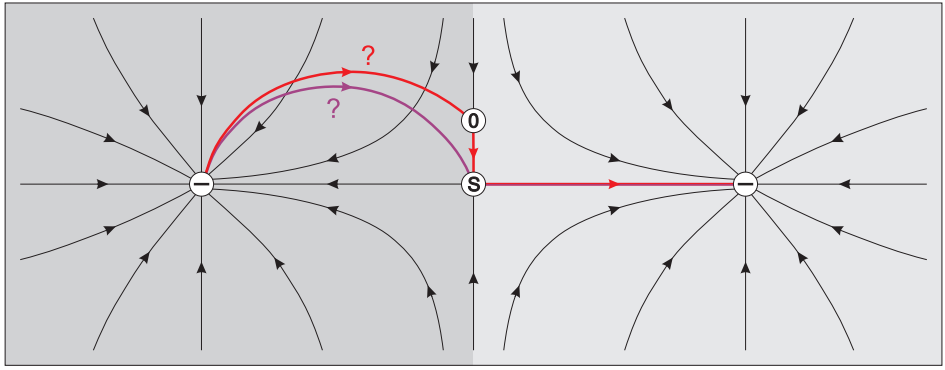} \\[-.2cm]
\caption{\label{fig transition through crit points}
\small Minimum action curves reach and leave the separatrix between two basins of attraction at critical points. However, the first and last hitting points do not need to coincide, as illustrated in this example with an additional degenerate equilibrium point on the separatrix.
}
\end{figure}

The answer to this question is given in Theorem \ref{main theorem}. Note that in contrast to the previous chapter, here we do not make any assumptions on the eigenvalues of $\nabla b$ at the attractors or at the saddle point.
\pb

\begin{theorem} \label{main theorem}
Let $S\in\H_0^+$, let $b$ be a natural drift, let $x_1,x_2\in D$ be two distinct attractors of $b$, let the open sets $B_1,B_2\subset D$ denote their basins of attraction, let $X:=\partial B_1\cap\partial B_2\cap D$ denote their separatrix, and assume that $X\subset\E^\circ$. Let $A_1,A_2\subset\E$ be such that $A_1\subset B_1$ and $x_2\in A_2\subset B_2$.

If the minimization problem $\probA$ has a weak solution $\gamma^\star\subset B_1\cup B_2\cup X$ then its first and last hitting point of $X$ are critical points.
\end{theorem}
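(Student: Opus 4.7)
The plan is to handle the first and last hitting points by different methods: Corollary \ref{first hit corollary} applied to an auxiliary minimization problem for the first point, and a direct zero-action analysis based on Lemma \ref{flow lemma} and Lemma \ref{inf length at crit points} for the last. Two preparatory facts underlie everything. First, $X$ is closed in $D$ and flow-invariant in both time directions: a flowline starting in $X$ cannot enter the open, flow-invariant sets $B_1$ or $B_2$, so it must remain in $X$ for all $t\in\R$. Second, the ``cost-to-$A_2$'' function $v_2(y):=\inf\{S(\tau):\tau\in\tilde\Gamma_{\{y\}}^{A_2}\}$ vanishes identically on $\bar B_2\cap\E$: for $y\in B_2$ the forward $b$-flowline reaches $x_2\in A_2$ with zero action by Lemma \ref{flow lemma}~(iii); for $y\in\partial B_2$ one approximates $y$ by a sequence $y_n\in B_2\cap\E$, bridges $y$ to $y_n$ with short curves in $\E$ of vanishing action via Assumption \hE\ and Lemma \ref{lower semi lemma}~(ii), and then follows the flow.

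For the first hitting point $\hat x_1$, let $\gamma_1^\star$ denote the prefix of $\gamma^\star$ up to $\hat x_1$. Since $A_2\subset\bar B_2$ and $v_2\equiv0$ on $\bar B_2\cap\E$, the infima $\inf_{\gamma\in\tilde\Gamma_{A_1}^{A_2}}S(\gamma)$ and $\inf_{\gamma\in\tilde\Gamma_{A_1}^{\bar B_2}}S(\gamma)$ coincide, because any curve into $\bar B_2$ extends to a curve into $A_2$ at arbitrarily small extra cost. Combined with $S(\gamma_1^\star)\leq S(\gamma^\star)$, this forces $\gamma_1^\star$ to be a weak minimizer of $P(A_1,\bar B_2)$. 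I then invoke Corollary \ref{first hit corollary} with target set $\bar B_2$: the set is closed in $D$; $A_1\subset B_1$ is disjoint from $\bar B_2$; the prefix $\gamma_1^\star$ lies in $B_1\cup\{\hat x_1\}$, because passing from $B_1$ into $B_2$ within $B_1\cup B_2\cup X$ requires an $X$-hitting, so its first hitting of $\bar B_2$ is the endpoint $\hat x_1$; we have $\hat x_1\in X\subset\E^\circ$ by hypothesis; and $\psi(\hat x_1,\R)\subset X\subset\bar B_2$ supplies the local flow-invariance requirement. The corollary then yields $b(\hat x_1)=0$.

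For the last hitting point $\hat x_2$, let $\gamma_2^\star$ denote the suffix of $\gamma^\star$ starting at $\hat x_2$. A standard splitting argument for minimizers gives $S(\gamma_2^\star)=v_2(\hat x_2)=0$, using $\hat x_2\in X\subset\bar B_2$. Assume for contradiction that $b(\hat x_2)\neq0$. Lemma \ref{inf length at crit points} prevents $\gamma_2^\star$ from passing $\hat x_2$ in infinite length, so any parametrization $\varphi$ of $\gamma_2^\star$ is absolutely continuous in a neighborhood of $\alpha=0$ and satisfies $\ell(\varphi,\varphi')=0$ a.e.\ there. Lemma \ref{flow lemma}~(i) then forces $\varphi'(\alpha)=c(\alpha)\,b(\varphi(\alpha))$ with $c(\alpha)\geq0$ wherever $\varphi'(\alpha)\neq0$, since $b\neq0$ near $\hat x_2$ by continuity. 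A reparametrization argument identifies $\varphi$, past the last parameter $\alpha^\star$ at which $\varphi=\hat x_2$, with the forward $b$-orbit of $\hat x_2$, which lies entirely in $X$. But the last-hitting property together with $\gamma^\star\subset B_1\cup B_2\cup X$ confines $\gamma_2^\star$ to $B_2\cup\{\hat x_2\}$, and $X\cap(B_2\cup\{\hat x_2\})=\{\hat x_2\}$ forces $\varphi\equiv\hat x_2$, contradicting $\varphi(1)\in A_2\subset B_2$. Hence $b(\hat x_2)=0$.

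The main obstacle I expect is a fully rigorous verification that $v_2\equiv0$ on $\bar B_2\cap\E$, especially at boundary points $y\in\partial B_2$: the short bridge from $y$ to $y_n$ must be constructed inside $\E$ (so Assumption \hE\ is essential), and the subsequent $b$-flowline from $y_n$ to $x_2$ can have infinite Euclidean length, forcing the argument into the class $\tilde\Gamma(x_2)$ rather than $\Gamma$. A secondary subtlety is the measure-theoretic reparametrization step in the last-hitting argument, which must convert the a.e.\ alignment $\varphi'=c\,b(\varphi)$ into the assertion that $\varphi$ traces (up to stopping and restarting) the forward $b$-orbit of $\hat x_2$.
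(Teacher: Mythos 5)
Your proposal is correct and takes essentially the same route as the paper: Corollary \ref{first hit corollary} for the first hitting point and a ``zero-action $\Rightarrow$ must follow the drift'' argument based on Lemma \ref{flow lemma}~(i) for the last. The one genuine structural variation is in the first-hitting argument: you apply Corollary~\ref{first hit corollary} to the auxiliary problem $P(A_1,\bar B_2\cap D)$ and identify $\hat x_1$ as the first hit of $\bar B_2$, whereas the paper applies it directly with target $X$, showing the prefix $\varphi^\star|_{[0,\alpha_1]}$ is a weak minimizer of $P(A_1,X)$ by the ``extend a cheaper competitor curve via a short bridge and then a drift flowline'' construction. Your route is a bit more roundabout --- it needs the full claim $v_2\equiv 0$ on $\bar B_2\cap\E$, while the paper's construction is only carried out at the endpoint of a competitor, which already lies in $X\subset\E^\circ$ --- but the underlying idea (the quasipotential to $A_2$ is free to extend within the second basin) is identical. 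Your explicit invocation of Lemma~\ref{inf length at crit points} to rule out an infinite-length passage at $\hat x_2$ is actually a welcome sharpening; the paper passes over this detail more tersely.

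Two smaller points. First, your justification of flow-invariance of $X$ (``a flowline starting in $X$ cannot enter $B_1$ or $B_2$, so it remains in $X$'') has a gap: $D\setminus(B_1\cup B_2)$ may strictly contain $X$ when there are further basins, so exclusion from $B_1\cup B_2$ alone does not place the orbit in $X$. The correct argument, used by the paper, is that $X=\bar B_1\cap\bar B_2\cap D$ and $\bar B_1\cap D,\ \bar B_2\cap D$ are each flow-invariant; the conclusion stands. Second, Corollary~\ref{first hit corollary} requires the target set to be a subset of $\E$ and closed in $D$; $\bar B_2\cap D$ need not satisfy $\bar B_2\cap D\subset\E$ under the stated hypotheses. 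Replacing the target by $\bar B_2\cap\E$ (which is closed in $D$, disjoint from $A_1$, contains $\psi(\hat x_1,\R)\subset X\subset\E$, and has the same first hitting point since $\gamma^\star\subset\E$) repairs this. You also flag the right subtlety about the flowline from $y_n\in B_2\cap\E$ to $x_2$ needing to lie in $\E$; the paper's own construction carries the same implicit requirement, so this is a shared caveat rather than a defect of your approach.
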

\begin{proof}
Let us denote the first and the last hitting points of $X$ by $z_1:=\varphi^\star(\alpha_1)$ and $z_2:=\varphi^\star(\alpha_2)$, where $\varphi^\star\in\tC$ is a parameterization of $\gamma^\star\in\tGA$ and
\begin{align*}
\alpha_1 := \min\hspace{2.2pt}\!\big\{\alpha\in[0,1]\,\big|\,\varphi^\star(\alpha)\in X\big\}\ \in(0,1),&\\*
\alpha_2 := \max\!\big\{\alpha\in[0,1]\,\big|\,\varphi^\star(\alpha)\in X\big\}\ \in(0,1).&
\end{align*}
\textit{First hitting point:}
$X$ is closed in $D$ by definition, we have $X\subset\E^\circ$ by assumption, and $X=\bar B_1\cap\bar B_2\cap D$ is flow-invariant since $\bar B_1\cap D$ and $\bar B_2\cap D$ are. Therefore, to conclude that $z_1$ is a critical point it is by Corollary~\ref{first hit corollary} enough to show that the curve given by $\varphi^\star|_{[0,\alpha_1]}$ is a weak solution of the minimization problem $P(A_1,X)$.

To do so, assume that there were a curve $\gamma_1\in\tilde\Gamma_{A_1}^X$ with $S(\gamma_1)<S(\varphi^\star|_{[0,\alpha_1]})\leq S(\gamma^\star)$. One could then obtain  a contradiction by constructing a curve in $\tGA$ with an action less than $S(\gamma^\star)$, as follows: First follow $\gamma_1$ from $A_1$ to $X$, then move from the endpoint of $\gamma_1$ into $B_2$ along a line segment $\gamma_2$ so short that $S(\gamma_1)+S(\gamma_2)<S(\gamma^\star)$ (using Assumption \hE\ and Lemma \ref{lower semi lemma} (ii)), and finally follow the drift~$b$ into $x_2\in A_2$ at no additional cost (using Lemma \ref{flow lemma}~(iii)).\vspace{.2cm}

\noindent\textit{Last hitting point:}
To make the arguments at the beginning of this section rigorous, first we argue that $s:=S(\varphi^\star|_{[\alpha_2,1]})=0$. Indeed, if $s$ were positive then in contradiction to the minimizing property of $\gamma^\star$ we could construct a curve in $\tGA$ with an action less than $S(\gamma^\star)$, as follows: First move along the curve segment given by $\varphi^\star|_{[0,\alpha_2+\delta]}$, where $\delta>0$ is chosen so small that $S(\varphi^\star|_{[\alpha_2,\alpha_2+\delta]})<s$ and thus $S(\varphi^\star|_{[0,\alpha_2+\delta]})<S(\gamma^\star)$;
since $\varphi^\star(\alpha_2+\delta)\in B_2$ by definition of $\alpha_2$, we can then follow the drift from $\varphi^\star(\alpha_2+\delta)$ into $x_2\in A_2$ at no additional cost.

This shows that $s=0$, and we can conclude that $\ell(\varphi^\star,{\varphi^\star}')=0$ a.e.~on $[\alpha_2,1]$. Now if we had $b(z_2)\neq0$ and thus $b(\varphi^\star)\neq0$ on some interval $[\alpha_2,\tilde\alpha]$, $\tilde\alpha>\alpha_2$, then Lemma \ref{flow lemma} (i) would imply that ${\varphi^\star}'=cb(\varphi^\star)$ a.e.~on $[\alpha_2,\tilde\alpha]$ for some function $c(\alpha)\geq0$, i.e.\ $\varphi^\star$ follows a flowline of $b$ on this interval. Since $\varphi^\star(\tilde\alpha)\in B_2$ and $b(\varphi^\star)\neq0$ on $[\alpha_2,\tilde\alpha]$, we would thus obtain the contradiction $z_2=\varphi^\star(\alpha_2)\in B_2\subset D\setminus X$.
\end{proof}

\section{Conclusions} \label{sec conclusions}
We have defined the class $\G$ of geometric action functionals on the space~$\Gamma$ of rectifiable curves (in fact on a larger space~$\tG$ that contains also infinitely long curves), and we have shown that the Hamiltonian geometric actions that arose in \cite{CPAM,thesis} in the context of large deviation theory belong to~$\G$. We have extended the notion of a drift vector field $b$ from the large deviation geometric action of an SDE \eqref{SDE0} to general actions $S\in\G$, such that any curve with vanishing action must be a flowline of $b$.

We developed conditions under which there exists a curve $\gamma^\star$ with
\[ S(\gamma^\star)=\inf_{\gamma\in\GA}S(\gamma), \]
i.e.~a solution to the problem of minimizing some given action $S\in\G$ over all curves $\gamma$ leading from the set $A_1$ to the set $A_2$. The curve $\gamma^\star$ is called a strong solution if it has finite length, and it is called a weak solution if it passes certain critical points in infinite length.
Using a compactness argument, we reduced this existence problem to a local property (``a point $x$ has local minimizers''), and we listed several criteria (whose proofs are the content of Parts~\ref{part proofs}-\ref{part superprop}) with which one can check this property for a given point~$x$, provided that the flowline diagram of an underlying drift is well-understood.

We then demonstrated in various examples how these criteria are oftentimes sufficient to show that every point in the state space has local minimizers. We also included some examples in which our criteria are insufficient, and we obtained some results that explain why. In particular, in one example we proved that no minimizer $\gamma^\star$ exists.

Finally, we showed various properties of geometric actions and their minimizers. Our main result here was that for certain actions, minimum action curves leading from one attractor of the drift to another reach and leave the separatrix between the two basins of attraction at a point with zero drift. In particular, this result applies to maximum likelihood transition curves in large deviation theory.

\paragraph{Future Work, Open Problems.} 
In a short follow-up paper the author will further investigate the drift $b$ in Fig.~\ref{fig counterexamples curves}~(a) and prove the existence of a ``minimizing spiral'' leading from the attractor to the limit cycle.
In the case of the drift in Fig.~\ref{fig counterexamples curves}~(b) a minimizer will exist, too, but it is not clear whether it will be in the form of a curve $\gamma\in\tG$ that ends in one of the saddle points, or again in the form of a minimizing spiral. To answer this question, one will need new ideas to decide whether the points on the chain of flowlines have local minimizers.

Another interesting open question is whether it is possible to extend the criterion for \emph{strong} local minimizers in Proposition~\ref{blm prop 2}~(ii) also to dimensions $n>2$. 
While it would certainly suffice to extend Lemma \ref{superprop}~(vi)-(vii) correspondingly, after several failed attempts the author now believes that Lemma \ref{superprop}~(vi) is false in higher dimensions, and so a change in strategy may be necessary. One possible alternative approach may be to omit the line \eqref{omittable line} in the proof of Proposition~\ref{blm prop 2} and instead use a generalized version of Lemma \ref{key estimate} that directly applies to our function~$F$; in this way one would need to control the gradients $\nf_i$ only where $F=f_i$.

\pagebreak
\begin{appendices}
  \section{Proofs of some Lemmas}
\label{Appendix Part I}

\subsection{Proof of Lemma \ref{min prop0b}} \label{weak convergence appendix}
\begin{proof}
Let $(\gamma_n)_{n\in\N}\subset\tGx$ be given with the properties stated, and let $s_0:=\liminf_{n\to\infty}S(\gamma_n)$. In a first step, let us pass on to a subsequence (which we again denote by $(\gamma_n)_{n\in\N}$), such that $\lim_{n\to\infty}S(\gamma_n)=s_0$ (we will only need this property for the proof of Lemma \ref{lower semi lemma 2} (ii)).
Let $(\tilde\varphi_n)_{n\in\N}\subset\Cx$ be a corresponding sequence of parameterizations.

To facilitate the proof of Proposition \ref{blm prop 2} in Section \ref{subsec proof prop 4}, which will build on the construction of the present proof, let us rewrite our assumption \eqref{length away from x cond} more generally as
\begin{equation} \label{length estimate alternative}
\forall n\in\N\ \forall u>0\colon\ \int_{\gamma_n}\One_{F(z)>u}\,|dz|\leq\eta(u),
\end{equation}
where $F(w):=|w-x|$ for $\forall w\in D$. We point out that the only properties of $F$ that we will use are that (i) $F$ is continuous on $D$, and (ii) $\exists c>0$ $\forall w\in K\colon\ F(w)\geq c|w-x|$.

To begin, we first pick for $\forall n\in\N$ a value $\alpha_{\min}^n\in[0,1]$ such that $F(\tilde\varphi_n(\alpha_{\min}^n))=\min_{\alpha\in[0,1]}F(\tilde\varphi_n(\alpha))$.
Since $\tilde\varphi_n\subset K$ for $\forall n\in\N$, we may (by passing on to a subsequence if necessary) assume that $\lim_{n\to\infty}\tilde\varphi_n(\alpha_{\min}^n)$ exists.
Next we define for $\forall k\in\N_0$
\begin{align*}
 d_k^-  &:=\tfrac12-2^{-(k+1)},  \hspace{-.4cm}  &d_k^+&:=\tfrac12+2^{-(k+1)}, \\*
 Q_k^-  &:=[d_k^-,d_{k+1}^-],    \hspace{-.4cm}  &Q_k^+&:=[d_{k+1}^+,d_k^+],  \\
 Q_k^\pm&:=Q_k^-\cup Q_k^+,      \hspace{-.4cm}  &J_k  &:=\textstyle\bigcup_{i=0}^kQ_i^\pm
                                                         =[0,d_{k+1}^-]\cup[d_{k+1}^+,1],
\end{align*}
we choose a strictly decreasing sequence $(u_k)_{k\in\N_0}\subset(0,\infty)$ such that
\begin{equation} \label{u0 prop}
  u_0 \geq \max\!\Big\{\sup_{n\in\N} F(\tilde\varphi_n(0)),\ \sup_{n\in\N} F(\tilde\varphi_n(1))\Big\}
\end{equation}
(this is possible since the right-hand side is bounded by $\max_{w\in K}F(w)$) and that $u_k\searrow0$ as $k\to\infty$, 
and we define for $\forall n\in\N$ and $\forall k\in\N_0$ the compact set
\[ I_{n,k}:=\big\{\alpha\in[0,1]\,\big|\,F\big(\tilde\varphi_n(\alpha)\big)\leq u_k\big\}. \]
Then we define for $\forall n\in\N$ the surjective, weakly increasing function\linebreak $\alpha_n\colon[0,1]\to[0,1]$ as follows: At the points $d_k^-$ and $d_k^+$ we set
\begin{equation} \label{alpha n def}
  \alpha_n(d_k^-) := \begin{cases}\min I_{n,k}    & \text{if $I_{n,k}\neq\varnothing$,}\\
                                    \alpha_{\min}^n & \text{else},\end{cases}  \qquad
  \alpha_n(d_k^+) := \begin{cases}\max I_{n,k}    & \text{if $I_{n,k}\neq\varnothing$,}\\
                                    \alpha_{\min}^n & \text{else},\end{cases}
\end{equation}
for $\forall k\in\N_0$, and we set $\alpha_n(\frac12):=\alpha_{\min}^n$.

Before we define $\alpha_n(s)$ at the remaining points $s\in[0,1]$, observe that $\alpha_n(0)=0$ and $\alpha_n(1)=1$, since \eqref{u0 prop} implies that $0,1\in I_{n,0}$.
Also note that every function $\alpha_n$ as defined so far is non-decreasing since for each fixed $n\in\N$ the sequence of sets $(I_{n,k})_{k\in\N_0}$ is decreasing, and since $\alpha_{\min}^n\in I_{n,k}$ whenever $I_{n,k}\neq\varnothing$ (which implies that $\alpha_n(d_k^-)\leq\alpha_{\min}^n\leq\alpha_n(d_k^+)$\linebreak for $\forall k\in\N_0$).

Finally, observe that for $\forall k\in\N$ and $\forall n\in\N$ we have
\begin{subequations}
\begin{align}
    &\text{either} &&\hspace{-1.8cm}\forall \alpha\in [0,\alpha_n(d_k^-)]\colon\,\ F(\tilde\varphi_n(\alpha))\geq u_k \label{Fphik geq 1-} \\*
    &\text{or}     &&\hspace{-1.8cm}\alpha_n(d_k^-)=0 \label{Fphik geq 2-}
\end{align}
\end{subequations}
(or both), and the same is true with $[0,\alpha_n(d_k^-)]$ replaced by $[\alpha_n(d_k^+),1]$ in \eqref{Fphik geq 1-}, and with \eqref{Fphik geq 2-} replaced by $\alpha_n(d_k^+)=1$.
Indeed, if $\alpha_n(d_k^-)>0$ then for $\forall\alpha\in[0,\alpha_n(d_k^-))$ we have $\alpha\notin I_{n,k}$, i.e.~$F(\tilde\varphi_n(\alpha))>u_k$, which implies \eqref{Fphik geq 1-}. The modified statement is shown analogously.

In either case, the curve segments given by $\tilde\varphi_n|_{[0,\alpha_n(d_k^-)]}$ are rectifiable for $\forall k\in\N$: If \eqref{Fphik geq 1-} holds then this follows from \eqref{length estimate alternative} with $u=\frac{u_k}2$, and if \eqref{Fphik geq 2-} holds then this segment degenerates to a single point. Similarly, the segments given by $\tilde\varphi_n|_{[\alpha_n(d_k^+),1]}$ are rectifiable for $\forall k\in\N$ by the corresponding modified versions of \eqref{Fphik geq 1-}-\eqref{Fphik geq 2-}.

We can thus define $\alpha_n(s)$ at the remaining points $s\in[0,1]$ by requiring that the function $\varphi_n(s):=\tilde\varphi_n(\alpha_n(s))$, restricted to the sets $Q_k^-$ and~$Q_k^+$, $k\in\N_0$, is the arclength parameterization of the curves given by $\tilde\varphi_n|_{[\alpha_n(d_k^-),\alpha_n(d_{k+1}^-)]}$ and $\tilde\varphi_n|_{[\alpha_n(d_{k+1}^+),\alpha_n(d_k^+)]}$, respectively. In particular, on each set $Q_k^-$ and $Q_k^+$, $\varphi_n$ is absolutely continuous and $|\varphi_n'|$ is constant a.e..

By construction, $\varphi_n|_{[0,\frac12]}$ and $\varphi_n|_{[\frac12,1]}$ traverse the curves given by $\tilde\varphi_n|_{[0,\hat\alpha_n]}$ and $\tilde\varphi_n|_{[\check\alpha_n,1]}$, where $\hat\alpha_n:=\lim_{k\to\infty}\alpha_n(d_k^-)$ and $\check\alpha_n=\lim_{k\to\infty}\alpha_n(d_k^-)$ (these limits exist since $(\alpha_n(d_k^-))_{k\in\N_0}$ and $(\alpha_n(d_k^+))_{k\in\N_0}$ are monotone bounded sequences).
Therefore, to see that $\varphi_n$ is in fact a parameterization of the entire curve $\gamma_n$, we need to show that $\tilde\varphi_n$ is constant on $[\hat\alpha_n,\check\alpha_n]$.

Now if (for fixed $n\in\N$) there $\exists k_0\in\N_0\ \forall k\geq k_0\colon\ I_{n,k}=\varnothing$ then we have $\forall k\geq k_0\colon\ \alpha_n(d_k^-)=\alpha_{\min}^n=\alpha_n(d_k^+)$ and thus $\hat\alpha_n=\check\alpha_n$, and we are done. Otherwise we have $\alpha_n(d_k^-)\in I_{n,k}$ for $\forall k\in\N_0$, and thus $F\big(\tilde\varphi_n(\alpha_n(d_k^-))\big) \leq u_k\to0$ as $k\to\infty$. This shows that $F(\tilde\varphi_n(\hat\alpha_n))=0$ and thus $\tilde\varphi_n(\hat\alpha_n)=x$, and similarly one can show that $\tilde\varphi_n(\check\alpha_n)=x$. Because of our assumption that $\gamma_n$ passes the point $x$ at most once we can now use \eqref{pass point once def} to conclude that $\tilde\varphi_n$ is constant on $[\hat\alpha_n,\check\alpha_n]$ also in this case.

This shows that $\varphi_n$ is a parameterization of $\gamma_n$ (and in particular continuous). Furthermore, we have $\varphi_n\in\Cx$. To see this, first note that by construction $\varphi_n$ is absolutely continuous on $[0,\frac12-a]\cup[\frac12+a,1]$ for $\forall a\in(0,\frac12)$. If $\varphi_n(\frac12)\neq x$ then 
$F(\tilde\varphi_n(\alpha_{\min}))=F(\varphi_n(\frac12))>0$, so that for large $k\in\N$ we have $I_{n,k}=\varnothing$ and thus $\alpha_n(d_k^-)=\alpha_n(d_k^+)$ by \eqref{alpha n def}; this in turn implies that $\alpha_n$ and thus $\varphi_n$ is constant on $[d_k^-,d_k^+]$, and thus that $\varphi_n\in\AC{0,1}$.\\[.2cm]
\indent Now let us construct a converging subsequence of $(\varphi_n)_{n\in\N}$. First observe that our definition $\varphi_n=\tilde\varphi_n\circ\alpha_n$ and the monotonicity of $\alpha_n$ translate \eqref{Fphik geq 1-}-\eqref{Fphik geq 2-} into the following: For $\forall k\in\N$ and $\forall n\in\N$ we have
\begin{subequations}
\begin{align}
    &\text{either} &&\hspace{-1.8cm}\forall s\in [0,d_k^-]\colon\,\ F(\varphi_n(s))\geq u_k \label{Fphik geq 1} \\
    &\text{or}     &&\hspace{-1.8cm}\text{$\varphi_n$ is constant on $[0,d_k^-]$} \label{Fphik geq 2}
\end{align}
\end{subequations}
(or both), and the same is true with $[0,d_k^-]$ replaced by $[d_k^+,1]$.

We can now find a subsequence of functions $\varphi_n$ that for $k=1$ either all fulfill \eqref{Fphik geq 1} or that all fulfill \eqref{Fphik geq 2}; we can then find a further subsubsequence such that the same is true for $k=2$, etc., and by a diagonalization argument we can pass on to a subsequence which we again denote by $(\varphi_n)_{n\in\N}$ such that for $\forall k\in\N$ $\exists n_k\in\N$ such that
\begin{equation}\label{Fphik geq B1}
\begin{split}
    \text{either} \hspace{.7cm}&\forall n\geq n_k\,\ \forall s\in [0,d_k^-]\colon\,\ F(\varphi_n(s))\geq u_k  \\
    \text{or}     \hspace{1.3cm}&\forall n\geq n_k\colon\,\ \text{$\varphi_n$ is constant on $[0,d_k^-]$}
\end{split}
\end{equation}
(or both). Finally, by following the same strategy one more time we may also assume that the same is true also with $[0,d_k^-]$ replaced by $[d_k^+,1]$. This property \eqref{Fphik geq B1} is not important to us now, but we will need it in the proof of Proposition~\ref{blm prop 2}.

Now using that for $\forall n\in\N$, $|\varphi_n'|$ is constant a.e.~on the intervals $Q_k^-$ and $Q_k^+$, and using \eqref{Fphik geq 1} and \eqref{Fphik geq 2}, which say that either $|\varphi_n'|$ vanishes a.e.~on $[0,d_{k+1}^-]\supset Q_k^-$ or the indicator function in \eqref{deriv indicator} below takes the value $1$ on $[0,d_{k+1}^-]\supset Q_k^-$, we find for $\forall k\in\N_0$ and almost every $s\in Q_k^-$ that
\begin{align}
|\varphi_n'(s)|
  &=    |Q_k^-|^{-1}\int_{Q_k^-}|\varphi_n'|\,d\alpha 
   =    (2^{-(k+2)})^{-1}\int_{Q_k^-}|\varphi_n'|\One_{F(\varphi_n)\geq u_{k+1}}\,d\alpha \label{deriv indicator}\\
  &\leq 2^{k+2}\int_0^1|\varphi_n'|\One_{F(\varphi_n)>u_{k+2}}\,d\alpha
  \leq 2^{k+2}\eta(u_{k+2}), \nonumber
\end{align}
and analogously one can derive this $n$-independent upper bound also for almost every $s\in Q_k^+$. This shows that for every fixed $k\in\N_0$ we have
\begin{equation} \label{essup finite}
  \sup_{n\in\N}\,\esssup_{s\in J_k}|\varphi_n'(s)|
   = \sup_{0\leq j\leq k}\,\sup_{n\in\N}\,\esssup_{s\in Q_j^\pm}|\varphi_n'(s)|
   \leq\sup_{0\leq j\leq k}2^{j+2}\eta(u_{j+2})
<\infty.
\end{equation}
By Lemma \ref{min prop0} (i) we can therefore extract a subsequence of $(\varphi_n)_{n\in\N}$ that converges uniformly on $J_1$, then extract a further subsubsequence converging uniformly on $J_2$, etc., and using a diagonalization argument we can find a subsequence which for simplicity we will again denote by $(\varphi_n)_{n\in\N}$ that converges uniformly on every $J_k$, and in particular pointwise on $\bigcup_{k=0}^\infty J_k=[0,\frac12)\cup(\frac12,1]$. Since also $\varphi_n(\frac12)=\tilde\varphi_n\big(\alpha_{\min}^n\big)$ converges as $n\to\infty$, $(\varphi_n)_{n\in\N}$ converges in fact pointwise on all of $[0,1]$. Let us denote the limit by $\varphi:[0,1]\to K$.\\[.2cm]
By Lemma \ref{min prop0} (ii) the function $\varphi$ is absolutely continuous on each set $J_k$, which provides us with an almost everywhere defined function ${\varphi}'\colon[0,1]\to\Rn$ which is integrable on each set $J_k$. To see that
\begin{equation} \label{general outer length} \int_0^1|{\varphi}'|\One_{F(\varphi)>u}\,d\alpha\leq\eta(u)\qquad\text{ for $\forall u>0$,}
\end{equation}
we fix $u>0$, and we define for $\forall v>u$ and $\forall q\in\R$ the continuous function $h_v(q):=\min(\max(\frac{q-u}{v-u},0),1)\leq\One_{q>u}$. Applying Lemma \ref{lower semi lemma 2} (i) to the functional $S\in\G$ given by $\ell(x,y):=h_v(F(x))|y|$, we find that for $\forall k\in\N$ we have 
\begin{align*}
 \int_{J_k}h_v(F(\varphi))|{\varphi}'|\,d\alpha
 &\leq\liminf_{n\to\infty} \int_{J_k}h_v(F(\varphi_n))|\varphi_n'|\,d\alpha \\
 &\leq\liminf_{n\to\infty} \int_0^1\One_{F(\varphi_n)>u}|\varphi_n'|\,d\alpha \\
 &=   \liminf_{n\to\infty} \int_{\gamma_n}\One_{F(z)>u}\,|dz| \leq\eta(u)
\end{align*}
by \eqref{length estimate alternative}. Taking the limits $k\to\infty$ and $v\searrow u$ and using monotone convergence now imply \eqref{general outer length}.\\[.2cm]
\indent It remains to show that $\varphi\in\Cx$. To prepare, let us first show that for $\forall k\in\N_0$ we have
\begin{subequations}
\begin{align}
    \text{either}\hspace{.7cm}  &F(\varphi(d_k^-))\leq u_k  \label{Fphi leq 1}\\*
    \text{or}\hspace{1.3cm}     &\text{$\varphi$ is constant on $[d_k^-,\tfrac12]$} \label{Fphi leq 2}
\end{align}
\end{subequations}
(or both), and the same holds with $d_k^-$ replaced by $d_k^+$ in \eqref{Fphi leq 1}, and with $[d_k^-,\tfrac12]$ replaced by $[\tfrac12,d_k^+]$ in \eqref{Fphi leq 2}.

Indeed, if for some fixed $k\in\N_0$ we have $F(\varphi(d_k^-))>u_k$ then for large $n\in\N$ we have $F\big(\tilde\varphi_n(\alpha_n(d_k^-))\big)=F(\varphi_n(d_k^-))>u_k$, i.e.~$\alpha_n(d_k^-)\notin I_{n,k}$ and thus $\alpha_n(d_k^-)=\alpha_{\min}^n=\alpha_n(\tfrac12)$ by \eqref{alpha n def}. The monotonicity of $\alpha_n$ then implies for large $n\in\N$ that $\alpha_n$ and thus $\varphi_n$ are constant on $[d_k^-,\tfrac12]$, and taking the limit $n\to\infty$ implies \eqref{Fphi leq 2}. The modified statements can be shown analogously.

Next, let us show that $\varphi$ is continuous. Since $\varphi$ is even absolutely continuous on every set $J_k$, we only have to show continuity at $s=\tfrac12$, and by symmetry of our construction we only have to show that $\varphi(\tfrac12-)=\varphi(\tfrac12)$.
Now if for some $k\in\N$ \eqref{Fphi leq 2} holds then this is clear, therefore let us assume that \eqref{Fphi leq 1} holds for $\forall k\in\N$. Taking the limit $k\to\infty$ in \eqref{Fphi leq 1} implies that $\liminf_{s\nearrow1/2}F(\varphi(s))=0$.
Thus, if the limit $\lim_{s\nearrow1/2}F(\varphi(s))$ would not exist then there would be a sequence $(s_m)_{m\in\N}\in(0,\frac12)$ with $s_m\nearrow\frac12$ such that for some $u>0$ and $\forall m\in\N$ we have $F(\varphi(s_m))\geq2u$.
Now $F^{-1}([0,u])\cap K$ is compact, so that
\[ \dist\!\Big(F^{-1}([0,u])\cap K,\ F^{-1}([2u,\infty))\Big)>0, \]
and thus the fact that $\varphi(s)$ moves back and forth between these two sets infinitely many times as $s\nearrow\frac12$ would imply that $\int_0^{1/2}|{\varphi}'|\One_{u<F(\varphi)<2u}\,d\alpha$ $=\infty$,
contradicting \eqref{general outer length}. This proves that $\lim_{s\nearrow1/2}F(\varphi(s))=0$, and since by construction $F\circ\varphi$ takes its minimum at $s=\frac12$, we have $F(\varphi(\frac12))=0$. Property (ii) of $F$ now implies that $\lim_{s\nearrow1/2}\varphi(s)=x=\varphi(\frac12)$, concluding the proof of the continuity of $\varphi$.

Finally, to show that $\varphi\in\Cx$, assume that $\varphi(\frac12)\neq x$. Then neither \eqref{Fphi leq 1} nor its modified version can hold for $\forall k\in\N$ (since taking the limit $k\to\infty$ in \eqref{Fphi leq 1} would imply that $F(\varphi(\frac12))=0$ and thus $\varphi(\frac12)=x$), and so $\varphi$ must be constant on some interval $[d_{k_1}^-,d_{k_2}^+]$. Since $\varphi$ is absolutely continuous on every set $J_k$, this implies that $\varphi\in\AC{0,1}$, terminating the proof.
\end{proof}

\subsection{Proof of Lemma \ref{lower semi lemma 2}} \label{lower semi proof app}
\begin{proof}
(i) Denoting by $M>0$ the bound given in \eqref{arzela 2}, it suffices to define a family of functions $\ell^\delta\colon D\times\bar B_M(0)\to[0,\infty)$, $\delta>0$, such that\\[.2cm]
\begin{tabular}[t]{rl}
(a)   & $\forall \delta>0$ $\forall x\in D$ $\forall y\in\bar B_M(0)\colon$ $0\leq\ell^\delta(x,y)\leq\inf_{w\in\bar B_\delta(x)\cap D}\ell(w,y)$, \\[.1cm]
(b)  & $\forall\delta>0$ $\forall x\in D\colon$ $\ell^\delta(x,\cdot\,)$ is convex,\\[.1cm]
(c) & $\forall x_0\in D$ $\forall y_0\in\bar B_M(0)\colon$ $\liminf_{(x,y,\delta)\to(x_0,y_0,0+)}\ell^\delta(x,y)\geq\ell(x_0,y_0)$.
\end{tabular}\\[.2cm]
The proof then follows the lines of \cite[Lemma 5.42]{SW} (where the distance function induced by the norm $||\cdot||_\infty$ is denoted by $d_c$). The only necessary modification of that proof is that because of the property in Definition \ref{tech lemma 1}~(i) we do not have an equivalent to \cite[Lemmas 5.17 and 5.18]{SW}, and so we had to guarantee the uniform absolute continuity of the sequence $(\varphi_n)_{n\in\N}$ by requiring the uniform bound on $|\varphi_n'|$ in \eqref{arzela 2}. That same bound is also the reason why (other than in \cite{SW}) here it suffices to define the functions $\ell^\delta(x,y)$ only for $|y|\leq M$.

To do so, we define for $\forall\delta>0$, $\forall x\in D$ and $\forall y\in\bar B_M(0)$
\begin{align*}
\ell^\delta(x,y) &:= \sup_{(\te,a)\in\Theta_{x,\delta}}\big[\skp{\te}{y}+a\big],\quad\text{where} \\
\Theta_{x,\delta} &:= \Big\{(\te,a)\in\Rn\times\R\ \Big|\ \forall v\in\bar B_M(0)\colon\,\skp{\te}{v}+a\leq\inf_{w\in\bar B_\delta(x)\cap D}\ell(w,v)\Big\}, \nonumber
\end{align*}
i.e.\ $\ell^\delta(x,\cdot\,)$ is the convex hull of the function $v\mapsto\inf_{w\in\bar B_\delta(x)\cap D}\ell(w,v)$ restricted to $v\in\bar B_M(0)$.

(a,b) First observe that $(\te,a)=(0,0)$ fulfills $\skp{\te}{v}+a=0\leq\ell(w,v)$ for every $w$ and $v$, and so we have $(0,0)\in\Theta_{x,\delta}$ and thus $\ell^\delta(x,y)\geq\skp{0}{y}+0=0$. The upper bound in (a) follows right from the definition of $\ell^\delta(x,y)$ and $\Theta_{x,\delta}$. Finally, $\ell^\delta(x,\cdot\,)$ is convex as the supremum over affine functions.

(c) Let $x_0\in D$ and $y_0\in\bar B_M(0)$. If $\ell(x_0,y_0)=0$ then by the lower bound in part (a) there is nothing to prove, therefore let us assume that $\ell(x_0,y_0)>0$. Since $\ell(x_0,\cdot\,)$ is convex, $\exists\te\in\Rn\ \exists a\in\R$ such that
\[
  \ell(x_0,y_0)=\skp{\te}{y_0}+a
  \quad\text{and}\quad
  \forall y\in\Rn\colon\ \ell(x_0,y)\geq\skp{\te}{y}+a.
\]
In particular, for $\forall c\geq0$ we can apply the latter to $y=cy_0$ to find that $c\skp{\te}{y_0}+a\leq c\ell(x_0,y_0)=c(\skp{\te}{y_0}+a)$ and thus $(1-c)a\leq0$. This shows that $a=0$, and so we have
\begin{align} \label{lower plane}
  \ell(x_0,y_0)=\skp{\te}{y_0}
  \quad\text{and}\quad
  \forall y\in\Rn\colon\ \ell(x_0,y)-\skp{\te}{y}\geq0.
\end{align}
Given any $\eps>0$, there thus $\exists\eta>0$ such that
\begin{equation} \label{tilde te in Te}
   \forall w\in\bar B_\eta(x_0)\ \forall v\in\bar B_M(0)\colon\
   \ell(w,v)-\skp{\te}{v}\geq-\eps.
\end{equation}
Now let $(x,y,\delta)\in\bar B_{\eta/2}(x_0)\times\bar B_M(0)\times(0,\tfrac{\eta}{2})$. Since for $\forall w\in\bar B_\delta(x)\cap D$ we have $w\in\bar B_\eta(x_0)$, \eqref{tilde te in Te} implies that $(\te,-\eps)\in\Theta_{x,\delta}$, so that
\begin{align*}
  \ell^\delta(x,y)
    &\geq \skp{\te}{y}-\eps \\
    &=    \skp{\te}{y_0} + \skp{\te}{y-y_0}-\eps \\
    &=    \ell(x_0,y_0)  + \skp{\te}{y-y_0}-\eps \\
    &\geq \ell(x_0,y_0)  - |\te||y-y_0|-\eps.
\end{align*}
by the first statement of \eqref{lower plane}. This shows that
\[
   \liminf_{(x,y,\delta)\to(x_0,y_0,0+)}\ell^\delta(x,y)
   \geq\ell(x_0,y_0)-\eps,
\]
and since $\eps>0$ was arbitrary, the proof of property (c) and thus of Lemma \ref{lower semi lemma 2}~(i) is complete.\\[.2cm]
(ii) Since the convergence is uniform on each set $I_a:=[0,\frac12-a]\cup[\frac12+a,1]$, $a\in(0,\frac12)$, and since \eqref{arzela 2} is fulfilled for the sequences $\big(\varphi_{n_k}|_{I_a}\big)_{k\in\N}$ by \eqref{essup finite}, Lemma \ref{lower semi lemma 2} (i) allows us to estimate the combined action of the two pieces of the function $\varphi|_{I_a}$ by
\[
  \int_{I_a}\ell(\varphi,{\varphi}')\,d\alpha
    =    \hS(\varphi|_{I_a}) 
    \leq \liminf_{k\to\infty}\hS(\varphi_{n_k}|_{I_a})
    \leq  \liminf_{k\to\infty}\hS(\gamma_{n_k})
    =     \liminf_{n\to\infty}\hS(\gamma_n).
\]
In the last step we used that at the beginning of the proof of Lemma \ref{min prop0b} we had made sure that $\lim_{k\to\infty}S(\gamma_{n_k})=\liminf_{n\to\infty}S(\gamma_n)$.
Letting $a\searrow0$ and then using the monotone convergence theorem now imply that
\[
S(\gamma) =    \int_0^1\ell(\varphi,{\varphi}')\,d\alpha
          \leq \liminf_{n\to\infty}\hS(\gamma_n). \qedhere
\]
\end{proof}

%
%

\subsection{Proof of Lemma \ref{critical point lemma}} \label{crit point proof app}
\begin{proof}
(i) If \eqref{Hamilton critical criterion} holds for some $H$ then the function $H(x,\cdot\,)$, which is strictly convex by Assumption \tHc, achieves its minimum value $0$ at the point $\te=0$, implying that $\{\te\in\Rn\,|\,H(x,\te)\leq0\}=\{0\}$ and thus $\ell(x,y)=0$ for $\forall y\in\Rn$.
Conversely, if $\forall y\in\Rn\colon\ \ell(x,y)=0$ and $H$ is any Hamiltonian inducing $S$ then we have $\forall\te\neq0\colon\ H(x,\te)>0$ (for if there were a $\te\neq0$ with $H(x,\te)\leq0$ then we had $\ell(x,y=\te)\geq\skp{\te}{\te}>0$), and so by Assump\-tion~\tHa\ $H(x,\cdot\,)$ achieves its minimum value $0$ at the point $\te=0$, which implies \eqref{Hamilton critical criterion}. \\[.1cm]
(ii) Let $x\in D$. By Assumption \tHa\ we have $H(x,0)\leq0$. If $H(x,0)<0$ then given any $y\neq0$ we have $H(x,\te=\eps y)<0$ for some small $\eps>0$, and thus $\ell(x,y)\geq\skp{y}{\eps y}>0$.
Now assume that $H(x,0)=0$. If $x$ is a critical point then we have $\ell(x,y)$ $=0$ even for $\forall y\in\Rn$. Otherwise by part~(i) we have $y:=\Ht(x,0)\neq0$, and since for $\forall\te\in\Rn$ with $H(x,\te)\leq0$ there $\exists\tilde\te\in\Rn$ such that
\[
  0\geq H(x,\te)=H(x,0)+\skp{\Ht(x,0)}{\te}+\tfrac12\Skp{\te}{\Htt(x,\tilde\te)\te}
   \geq0+\skp{y}{\te}+0
\]
by Assumption \tHc, we find that $\ell(x,y)\leq0$ and thus $\ell(x,y)=0$.
\end{proof}

\subsection{Proof of Lemma \ref{second action rep}} \label{second action rep app}
\begin{proof}
First let us show the existence of a solution of \eqref{vte eq}. If $x$ is a critical point then $(\that,\lambda)=(0,0)$ solves \eqref{vte eq} for $\forall y\in\Rn$ by Lemma \ref{critical point lemma}~(i) (this also shows the first direction of part (ii)). If $x$ is not critical then we have $\Ht(x,\te)\neq0$ whenever $H(x,\te)=0$ (for otherwise $H(x,\cdot\,)$ would take its minimum value $0$ at $\te$, and since the minimizer is unique by Assumption \tHc, Assumption \tHa\ would imply that $\te=0$, i.e.\ $x$ is a critical point by Lemma \ref{critical point lemma}~(i)). Thus, for fixed $y\neq0$, any $\te^\star\in\Rn$ that is a solution the constraint maximization problem \eqref{sup rep 2} (and thus also of \eqref{sup rep}) must solve $\nabla_{\!\te,\mu}\big[\skp{y}{\te}-\mu H(x,\te)\big]=0$  for some $\mu\in\R$, i.e.
\[ y=\mu\Ht(x,\te^\star) \qquad\text{and}\qquad H(x,\te^\star)=0. \]
Clearly, $\mu\neq0$ since $y\neq0$. In fact, $\mu>0$ since otherwise we would have $\skp{\Ht(x,\te^\star)}{y}=|y|^2/\mu<0$ and thus $H(x,\te^\star+\eps y)<0$ for some $\eps>0$,
but then $\skp{y}{\te^\star+\eps y}>\skp{y}{\te^\star}$ would contradict the fact that $\te^\star$ is a maximizer of \eqref{sup rep}.
Therefore $(\that,\lambda):=(\te^\star,\mu^{-1})$ solves \eqref{vte eq}.

Next we will show the uniqueness, and that the representation \eqref{general geometric local action}, which is trivial for $y=0$, holds also for $y\neq0$. Let $x\in D$ and $y\in\Rn\setminus\{0\}$, and let $(\that,\lambda)$ be a solution of \eqref{vte eq}. Since $\lambda=|\Ht(x,\that)|/|y|$, the uniqueness of $(\that,\lambda)$ will follow from the uniqueness of $\that$.

If $\lambda=0$ then \eqref{vte eq} says that $H(x,\cdot\,)$ takes its minimum value $0$ at $\that$, and thus again by Assumptions \tHa\ and \tHc\ we must have $\that=0$ (proving uniqueness). By Lemma \ref{critical point lemma}~(i), \eqref{vte eq} now says that $x$ is a critical point, so \eqref{general geometric local action} returns the correct value $\ell(x,y)=0$. This also shows the reverse direction of part~(ii).

If $\lambda>0$ then for $\forall\te\in L_x:=\{\te\in\Rn\,|\,H(x,\te)\leq0\}$ there $\exists\tilde\te\in\Rn$ such that by \eqref{vte eq} and Assumption \tHc\ we have
\begin{align}
  0\geq H(x,\te)
   &=H(x,\that)+\skp{\Ht(x,\that)}{\te-\that}
     +\tfrac12\Skp{\te-\that}{\Htt(x,\tilde\te)(\te-\that)} \nonumber \\
   &\geq 0+\lambda\skp{y}{\te-\that}+\tfrac12m_{\{x\}}|\te-\that|^2 \nonumber \\[.1cm]
\Rightarrow\hspace{.7cm}
\skp{y}{\that}
   &\geq\skp{y}{\te}+\tfrac12m_{\{x\}}\lambda^{-1}|\te-\that|^2
    \geq\skp{y}{\te}. \label{vte uniqueness est}
\end{align}
Since also $\that\in L_x$, this implies that $\ell(x,y)=\skp{y}{\that}$, i.e.\ \eqref{general geometric local action}.
If $(\that',\lambda')$ is another solution of \eqref{vte eq} then we have $\skp{y}{\that}=\ell(x,y)=\skp{y}{\that'}$, and so setting $\te:=\that'$ in the left inequality in \eqref{vte uniqueness est} implies that $\that=\that'$.

Finally, to show the continuity, suppose that for some $(x,y)\in D\times(\Rn\setminus\{0\})$ there exists a sequence $(x_n,y_n)\to(x,y)$ such that $(\that_n,\lambda_n):=(\that(x_n,y_n),\lambda(x_n,y_n))$ stays bounded away from $(\that(x,y),\lambda(x,y))$. Since $\that_n\in L_{x_n}$ and the sets $L_{x_n}$ are uniformly bounded by what was shown at the beginning of the proof of Lemma \ref{action construction lemma}, the sequence $(\that_n)$ is bounded. Thus, since $\lambda_n=|\Ht(x_n,\that_n)|/|y_n|$, also the sequence $(\lambda_n)$ is bounded, and so there is a converging subsequence $(\that_{n_k},\lambda_{n_k})$. Now letting $k\to\infty$ in the system \eqref{vte eq} for $(x_{n_k},y_{n_k})$ and using the uniqueness shown above, we see that its limit must be $(\that(x,y),\lambda(x,y))$, and we obtain a contradiction.
\end{proof}

\subsection{Proof of Lemma \ref{prop cond equiv} (ii)} \label{prop cond proof app}
\begin{proof}
``$\Leftarrow$'': If \eqref{strong condition} holds then choosing $w=x$ implies that $x$ is a critical point by Definition \ref{critical point def}. \\[.2cm]
%
``$\Rightarrow$'':
If $x$ is a critical point then it fulfills \eqref{Hamilton critical criterion}, and so by our assumption there $\exists a,\delta,\rho>0$ such that for $\forall w\in K:=\bar B_\rho(x)\subset D$ we have $|H(w,0)|\leq a|w-x|^{2\delta}$ and $|\Ht(w,0)|\leq a|w-x|^{2\delta}$. Because of \eqref{that bounded} the second equation in \eqref{vte eq} implies that $c:=\sup_{w\in K,\,y\in\Rn}|\that(w,y)|<\infty$. Finally, let $m_K>0$ be the constant given by Assumption \tHc, and let $\cC:=(2a(1+c)m_K^{-1})^{1/2}$.

Now let $w\in\bar B_\rho(x)$ and $y\in\Rn$. If $y=0$ then $\ell(x,y)=0$ and there is nothing to prove. Otherwise we abbreviate $\that:=\that(w,y)$, and a Taylor expansion gives us a $\tilde\te\in\Rn$ such that
\begin{align*}
 0 = H(w,\that)
  &= H(w,0) + \Skp{\Ht(w,0)}{\that} + \tfrac12\Skp{\that}{\Htt(w,\tilde\te)\that} \\
  &\geq -a|w-x|^{2\delta}-a|w-x|^{2\delta}\,|\that| + \tfrac12m_K|\that|^2 \\
  &\geq -a(1+c)|w-x|^{2\delta} + \tfrac12m_K|\that|^2 \\[.1cm]
\hspace{-1cm}\Longrightarrow\hspace{1.7cm}|\that|
  &\leq \big(2a(1+c)m_K^{-1}\big)^{\!1/2}\,|w-x|^\delta=\cC |w-x|^\delta.
\hspace{.8cm}
\end{align*}
The estimate \eqref{strong condition} thus follows from \eqref{general geometric local action}.
\end{proof}

\subsection{Proof of Lemma \ref{going with the flow lemma}}
\label{going with the flow proof}

\begin{proof}
For greater transparency, we will first lead the proof for the special case of the local action \eqref{local geo action0}.\\[.2cm]
\textit{SDE case.}
Let $B\subset D$ be a closed ball around $x$ that is so small that $d_1:=\min_{w\in B}|b(w)|$ $>0$, and further define $d_2:=\max_{w\in B}|b(w)|$ and $d_3:=\max_{w\in B}|\nabla b(w)|$.
Let $\tilde\alpha\in[0,1)$ be so large that $\varphi|_{[\tilde\alpha,1]}\subset B$, and define for $\alpha\in[\tilde\alpha,1]$
\[ \eta(\alpha):=\big|\widehat{\varphi'}-\widehat {b (\varphi)}\big|^2 = 2\big(1-\Skp{\widehat{\varphi'}}{\widehat {b (\varphi)}}\big), \]
where we use the notation $\hat w=\frac{w}{|w|}$ for $\forall w\in\Rn\setminus\{0\}$.
Note that $\eta(\alpha)$ is well-defined a.e.~on $[\tilde\alpha,1]$ because $b(\varphi)\neq0$ on $[\tilde\alpha,1]$ (by our choice of $B$ and~$\tilde\alpha$), and because $|{\varphi}'|\equiv\length(\gamma)>0$ a.e.\ on $[0,1]$.

First we claim that there are arbitrarily large values $\alpha_0\in[\tilde\alpha,1)$ such that $\int_{\alpha_0}^1\eta(\alpha)\,d\alpha>0$. Indeed, if this were not true then there would exist an $\alpha_0\in[\tilde\alpha,1)$ such that $\eta=0$ and thus $\widehat{\varphi'}=\widehat{b(\varphi)}$ a.e.~on $[\alpha_0,1]$. But this would mean that on $[\alpha_0,1]$, $\varphi$ traverses a flowline of $b$ that ends in $x$, and so we have $\varphi(\alpha)\in\psi(x,(-\tau,0])$ for $\forall$ sufficiently large $\alpha\in[0,1)$, contradicting \eqref{dont follow flow}.

We pick $\alpha_0<1$ so large that $d_2d_3\length(\gamma)(1-\alpha_0)\leq\frac14d_1^2$ and formally compute
\begin{align}
\partial_\eps S(\gamma_\eps)|_{\eps=0}
 &= \lim_{\eps\to0}\tfrac{1}{\eps}\int_0^1 \big[\ell(\varphi_\eps,\varphi_\eps')-\ell(\varphi,\varphi')\big]\,d\alpha \nonumber \\
 &= \lim_{\eps\to0}\int_{\alpha_0}^1\tfrac{1}{\eps}\big[\ell(\varphi_\eps,\varphi_\eps')-\ell(\varphi,\varphi')\big]\,d\alpha \nonumber\\
 &= \int_{\alpha_0}^1\partial_\eps\ell(\varphi_\eps,\varphi_\eps')|_{\eps=0}\,d\alpha. \label{int limit}
\end{align}
The last step of exchanging limit and integral will be justified rigorously when we treat the general case. Since
\begin{equation*}
\varphi_\eps' = \varphi' + \eps\big(b(\varphi)+(\alpha-\alpha_0)\nabla b(\varphi)\varphi' \big),
\end{equation*}
a.e.~on $[\alpha_0,1]$, the integrand of \eqref{int limit} is
\begin{align*}
\partial_\eps\ell(\varphi_\eps,\varphi_\eps')|_{\eps=0}
 =\,& \partial_\eps\big(|b(\varphi_\eps)|\,|\varphi_\eps'|-\skp{b(\varphi_\eps)}{\varphi_\eps'}\big)\big|_{\eps=0} \\
 =\,& |\varphi'|\Skp{\widehat{b(\varphi)}}{\nabla b(\varphi)(\alpha-\alpha_0)b(\varphi)} \\*
 &  +  |b(\varphi)|\Skp{\widehat{\varphi'}}{b(\varphi)+(\alpha-\alpha_0)\nabla b(\varphi)\varphi'} \\*
 & - \Skp{{\varphi}'}{\nabla b(\varphi)(\alpha-\alpha_0)b(\varphi)}\\*
 & -\Skp{b(\varphi)}{b(\varphi)+(\alpha-\alpha_0)\nabla b(\varphi)\varphi'} \\
 =\,& -|b(\varphi)|^2\big(1-\Skp{\widehat{\varphi'}}{\widehat {b(\varphi)}}\big) \\*
 & + (\alpha-\alpha_0)|b(\varphi)|\,|\varphi'|\Skp{\widehat {b (\varphi)} -\widehat{\varphi'}}{\nabla b(\varphi)\big(\widehat {b(\varphi)} -\widehat{\varphi'}\big)} \\
 \leq\,& -\tfrac12d_1^2\eta(\alpha) + d_2d_3(1-\alpha_0)|\varphi'|\big|\widehat {b(\varphi)} -\widehat{\varphi'}\big|^2 \\
=\,&  \eta(\alpha)\big[{-\tfrac12}d_1^2+d_2d_3(1-\alpha_0)\length(\gamma)\big]\\
\leq\,& -\tfrac14d_1^2\eta(\alpha).
\end{align*}
Plugging this into \eqref{int limit}, we obtain
\[ \partial_\eps S(\gamma_\eps)|_{\eps=0} \leq -\tfrac14d_1^2\int_{\alpha_0}^1\eta(\alpha)\,d\alpha <0. \]
\textit{General case.}
We choose $B$ and $\tilde\alpha$ as before, but now we define $\eta(\alpha):=|\that(\varphi,\varphi')|^2$.
Again, there are arbitrarily large $\alpha_0\in[\tilde\alpha,1)$ with $\int_{\alpha_0}^1\eta(\alpha)\,d\alpha>0$ since $\,\eta(\alpha)=0\ \,\Rightarrow\ \,\that(\varphi,\varphi')=0\ \,\Rightarrow\ \,\Ht(\varphi,0)
=\lambda(\varphi,\varphi')\varphi'\ \,\Rightarrow\ \,\widehat{\varphi'}
=\widehat{\Ht(\varphi,0)}=\widehat{b(\varphi)}$
(the second step followed from the definition \eqref{vte eq} of $\that(x,y)$, in the third step we used that $\Ht(\varphi,0)\neq0$ by our choice of $B$ and $\tilde\alpha$).
By implicit differentiation in \eqref{vte eq}, \cite[Appendix E]{CPAM} shows that for $\forall x\in D$ and $\forall y\neq0$ we have\footnote{In this calculation we consider the gradients $H_x$, $H_\te$, $\nabla_{\!x}\ell$ and $\nabla_{\!y}\ell$ as column vectors.}
\begin{align*}
  \that_x(x,y)^Ty &= -\lambda^{-1}(x,y)\Hz(x,\that(x,y)) \qquad\text{wherever\, $\lambda(x,y)\neq0$,} \\
  \that_y(x,y)^Ty &= 0.
\end{align*}
From \eqref{general geometric local action} we therefore obtain
\begin{eqnarray*}
\nabla_{\!x}\ell(x,y) &=& \that_x^T(x,y)y=-\lambda^{-1}(x,y)\Hz(x,\that(x,y)), \\
\nabla_{\!y}\ell(x,y) &=& \that_y^T(x,y)y+\that(x,y)=\that(x,y)
\end{eqnarray*}
wherever $y\neq0$ and $\lambda(x,y)\neq0$,
and thus, abbreviating $\vte_\eps=\that(\varphi_\eps,\varphi_\eps')$ and $\lambda_\eps=\lambda(\varphi_\eps,\varphi_\eps')$, we have a.e.~on $[\alpha_0,1]$
\begin{align}
\partial_\eps\ell(\varphi_\eps,\varphi_\eps')
 &= -\lambda_\eps^{-1}\Skp{\Hz(\varphi_\eps,\vte_\eps)}{\partial_\eps\varphi_\eps} + \Skp{\vte_\eps}{\partial_\eps\varphi_\eps'} \nonumber\\
 &= -\lambda_\eps^{-1}\Skp{\Hz(\varphi_\eps,\vte_\eps)}{(\alpha-\alpha_0)b(\varphi)} \nonumber\\*
 & \hspace*{.43cm}+ \Skp{\vte_\eps}{b(\varphi)+(\alpha-\alpha_0)\nabla b(\varphi)\varphi'} \label{d eps ell}
\end{align}
Setting $\eps=0$ and abbreviating $\that=\that(\varphi,\varphi')$ and $\lambda=\lambda(\varphi,\varphi')$, we find
\begin{equation} \label{deps ell}
\partial_\eps\ell(\varphi_\eps,\varphi_\eps')\big|_{\eps=0}
 = \Skp{\that}{b(\varphi)}+ (\alpha-\alpha_0)\Big[\Skp{\that}{\nabla b(\varphi)\varphi'} -\lambda^{-1}\Skp{\Hz(\varphi,\that)}{b(\varphi)}\Big].
\end{equation}
To show that the first term is negative, we make a Taylor expansion and find that for some $\tilde\vte$ we have
\begin{align}
0 = H(\varphi,\that) &= H(\varphi,0) + \Skp{\Ht(\varphi,0)}{\that}+\tfrac12\Skp{\that}{\Htt(\varphi,\tilde\vte)\that}\nonumber \\
 &\geq 0+\skp{b(\varphi)}{\that} + \tfrac12m_B|\that|^2 \nonumber\\
\Rightarrow\quad \skp{\that}{b(\varphi)} &\leq-\tfrac12m_B|\that|^2,\label{vte estimate}
\end{align}
where we used Assumptions \tHaa\ and \tHc. To control the second term in \eqref{deps ell}, we make two more Taylor expansions and use the equations $\Hz(x,0)$ $=0$ (a~consequence of Assumption \tHaa) and \eqref{vte eq} to show that
\begin{align*}
\Hz(\varphi,\that) &= \Hz(\varphi,0) + \Hzt(\varphi,0)\that + O(|\that|^2) = 0+\nabla b(\varphi)^T\that + O(|\that|^2),\\*
b(\varphi) &= \Ht(\varphi,0) = \Ht(\varphi,\that) + O(|\that|) = \lambda\varphi' + O(|\that|).
\end{align*}
Note that to bound the first remainder term we had to require the existence of a continuous derivative $\Hztt$, and we also needed a uniform bound on $\varphi$ (which is in $B$) and on $\that$ (which then follows from what was shown at the beginning of the proof of Lemma \ref{action construction lemma}).
The square bracket term in \eqref{deps ell}~is~thus
\begin{align}
[\dots]
 &= \Skp{\that}{\nabla b(\varphi)\varphi'}-\lambda^{-1}\Skp{\nabla b(\varphi)^T\that+O(|\that|^2)}{\lambda\varphi'+O(|\that|)} \nonumber\\
 &= O(|\that|^2), \label{bracket estimate}
\end{align}
where we used that $\lambda^{-1}$ is bounded. (The latter follows from Lemma \ref{second action rep}~(ii) and the continuity of $\lambda$, since
$\varphi$ is in the compact set $B$ which does not contain any critical points, and since $|\varphi'|\equiv\length(\gamma)>0$ a.e..)
Now combining \eqref{deps ell}, \eqref{vte estimate} and \eqref{bracket estimate}, and choosing $\alpha_0$ sufficiently close to $1$, we find that
\[
\partial_\eps\ell(\varphi_\eps,\varphi_\eps')\big|_{\eps=0}
 \leq -\tfrac12m_B|\that|^2 + (1-\alpha_0)\cdot \tilde c|\that|^2 \leq -c|\that|^2 = -c\eta(\alpha)
\]
for some constants $\tilde c,c>0$, and thus $\partial_\eps S(\gamma_\eps)|_{\eps=0}\leq -c\int_{\alpha_0}^1\eta(\alpha)\,d\alpha<0$.

It remains to justify the exchange of limit and integral in \eqref{int limit}. Using the mean value theorem and Lebesgue, this boils down to finding a bound on \eqref{d eps ell} that is uniform in both $\eps>0$ and $\alpha\in[\alpha_0,1]$. But this is a straight forward estimate since $\vte_\eps$ and $\lambda_\eps^{-1}$ are uniformly bounded in $\alpha$ and $\eps$ (for reasons similar to the ones used for $\that$ and $\lambda^{-1}$ above).
\end{proof}

\end{appendices}

\addtocontents{toc}{\newpage}
\newpage
\part{Proofs} \label{part proofs}

\section{Finding Points with Local Minimizers} \label{sec proofs}

\subsection{Proof of Proposition \ref{blm prop 0}} \label{subsec proof prop 2}
The key to the proof of Proposition \ref{blm prop 0} is that the condition $\forall y\neq0\colon\!\!$\linebreak $\ell(x,y)>0$ implies that we can locally estimate $|y|\leq\frac1\mu\ell(x,y)$ for some $\mu>0$, which in turn will provide us with a quick way to locally bound the length of a curve by its action. Since minimizing sequences have bounded actions, their lengths must therefore be bounded as well, and we can apply Proposition \ref{min prop}.

\begin{proof}[Proof of Proposition \ref{blm prop 0}]
We will prove the stronger condition of Remark \ref{blm remark}~(ii).
Let $\eta>0$ be given. Since $\min_{|y|=1}\ell(x,y)>0$, there exists an $\eps>0$ such that $\bar B_\eps(x)\subset D$ and
\[ \mu:=\min_{\substack{w\in \bar B_\eps(x)\\|y|=1}}\ell(w,y)>0. \]
Using Definition \ref{tech lemma 1} (i), this implies that
\begin{equation} \label{easy lower action bound}
 \forall w\in\bar B_\eps(x)\ \forall y\neq0\colon\ \ 
\ell(w,y)=|y|\ell\big(w,\tfrac{y}{|y|}\big)\geq\mu|y|,
\end{equation}
and for $y=0$ this relation is trivial.
Let $\cB =\cB(\bar B_\eps(x))>0$ be the constant given by Lemma \ref{lower semi lemma} (ii), let $\nu:=\min\!\big\{\eps,\frac{\mu\eps}{5\cB},\frac{\eta\mu}{2\cB}\big\}$, and finally use Assumption \hE\ to choose $r\in(0,\tfrac12\eps]$ so small that for $\forall w\in\bar B_r(x)\cap\E$ $\exists\gamma\in\Gamma_x^w$: $\length(\gamma)\leq\nu$.

Now let $x_1,x_2\in\bar B_r(x)\cap\E$. For $i=1,2$ let $\bar\gamma^i\in\Gamma^{x_i}_x$ with $\length(\bar\gamma^i)\leq\nu$ and thus in particular $\bar\gamma^i\subset\bar B_\nu(x)\subset\bar B_\eps(x)$, and let $\bar\gamma:=-\bar\gamma^1+\bar\gamma^2\in\Gx$. Since $\bar\gamma\subset\bar B_\eps(x)$, we can use Lemma \ref{lower semi lemma} (ii) to find that
\begin{equation} \label{rough inf est}
   \inf_{\gamma\in\Gx}\hS(\gamma) \leq\hS(\bar\gamma)\leq\cB\length(\bar\gamma)
   \leq 2\cB\nu.
\end{equation}
Next, let $(\varphi_n)_{n\in\N}\subset\CI$ be a parameterization of a minimizing sequence $(\gamma_n)_{n\in\N}$ of $P(x_1,x_2)$. We claim that
\begin{equation}\label{length bound 1}
\exists n_0\in\N\ \forall n\geq n_0\colon\ \gamma_n\subset\bar B_\eps(x).
\end{equation}
Indeed, if this were not the case then we could find a subsequence $(\varphi_{n_k})_{n\in\N}$ such that $\forall k\in\N\ \exists\alpha\in[0,1]\colon\ |\varphi_{n_k}(\alpha)-x|=\eps$. Letting $\alpha_k:=$\linebreak $\min\!\big\{\alpha\in[0,1]\,\big|\,|\varphi_{n_k}(\alpha)-x|\geq \eps\big\}\in(0,1)$ and applying \eqref{easy lower action bound}, we would then have
\begin{align}
\hS(\gamma_{n_k})
  &\geq \int_0^{\alpha_k}\ell(\varphi_{n_k},\varphi_{n_k}')\,d\alpha \nonumber \\
  &\geq \mu\int_0^{\alpha_k}|\varphi_{n_k}'|\,d\alpha \nonumber \\
  &\geq \mu\bigg|\int_0^{\alpha_k}\varphi_{n_k}'\,d\alpha\bigg| \nonumber \\
  &=    \mu|\varphi_{n_k}(\alpha_k)-\varphi_{n_k}(0)| \nonumber \\
  &=    \mu\big|\big(\varphi_{n_k}(\alpha_k)-x\big)+(x-x_1)\big| \nonumber \\
  &\geq \mu\big(|\varphi_{n_k}(\alpha_k)-x|-|x-x_1|\big) \nonumber \\
  &\geq \mu(\eps-r)\geq\tfrac12\mu\eps. \label{prop lower bound 1}
\end{align}
Taking the limit $k\to\infty$ and using the minimizing property of $(\gamma_n)_{n\in\N}$ and  \eqref{rough inf est}, we would thus find that
$\frac12\mu\eps\leq2\cB\nu$, which contradicts our definition of $\nu$. This proves \eqref{length bound 1}, which allows us for $\forall n\geq n_0$ to apply \eqref{easy lower action bound} on the \textit{entire} curve $\gamma_n$, and so we find that we have
\begin{equation} \label{action length estimate 1}
\hS(\gamma_n) = \int_0^1\ell(\varphi_n,\varphi_n')\,d\alpha \geq \mu\int_0^1|\varphi_n'|\,d\alpha = \mu\cdot\length(\gamma_n)
\end{equation}
for $\forall n\geq n_0$, and thus
\[
  \sup_{n\geq n_0}\length(\gamma_n) \leq \frac{1}{\mu}\sup_{n\geq n_0}\hS(\gamma_n)<\infty. \]
We can now apply Proposition \ref{min prop} and conclude that the problem $\prob$ has a strong minimizer $\gamma^\star\in\Gx$ fulfilling
\begin{align*}
\length(\gamma^\star)
  &\leq \liminf_{n\to\infty}\length(\gamma_n)
   \leq  \frac{1}{\mu}\liminf_{n\to\infty}\hS(\gamma_n) =   \frac{1}{\mu}\inf_{\gamma\in\Gx}\hS(\gamma)
   \q\leq\q \frac{2\cB\nu}{\mu}\q\leq\q\eta,
\end{align*}
where we used \eqref{action length estimate 1}, the minimizing property of $(\gamma_n)_{n\in\N}$, \eqref{rough inf est}, and the definition of $\nu$.
\end{proof}

\subsection{Proof of Lemma \ref{admissible ball example}} \label{subsec proof lemma 1}

To prepare for the proof of Lemma \ref{admissible ball example} we first need to collect some properties of the functions $f_s$ and $f_u$ of Definition \ref{fs fu def}.

\begin{lemma} \label{distance function}
The functions $f_s$ and $f_u$ of Definition \ref{fs fu def} are finite-valued and continuous. Furthermore,\\[.2cm]
\begin{tabular}{rl}
(i)  & $f_s\in C^1(B_s\setminus\{x\})$ and $f_u\in C^1(B_u\setminus\{x\})$; \\[.2cm]
(ii) &
\begin{minipage}[t]{11.38cm}
\vspace{-.87cm}
\begin{subequations}
\begin{eqnarray}
 \hspace{-4.0cm}\forall w\in\qb B_s\setminus\{x\}\colon\ \hspace{1pt}
   \skp{\nf_s(w)}{b(w)}=&\!\!\!-\!&\!\!\!\!|b(w)|, \label{fs decreasing}\\
 \hspace{-4.0cm}\forall w\in B_u\setminus\{x\}\colon\ 
   \skp{\nf_u(w)}{b(w)}=& &\!\!\!\!|b(w)|; \label{fu increasing}
\end{eqnarray}
\end{subequations}
\end{minipage} \\[.8cm]
(iii) & \begin{minipage}[t]{11.38cm}
\vspace{-.87cm}
\begin{subequations}
\begin{eqnarray}
\hspace{-6.12cm}
\forall w\in\qb B_s\colon &f_s(w)\geq|w-x|, \label{fs norm est 1 lower} \\
\hspace{-6.12cm}
\forall w\in B_u\colon &f_u(w)\geq|w-x|; \label{fs norm est 2 lower}
\end{eqnarray}
\end{subequations}
\end{minipage} \\[.8cm]
(iv) & \begin{minipage}[t]{11.38cm}
\vspace{-.87cm}
\begin{subequations}
\begin{eqnarray}
\hspace{-1.53cm}
\forall\!\text{ compact }K\subset B_s\ \hspace{1pt}\exists\cD\geq1\ \forall w\in K\colon
 &f_s(w)& \!\!\!\leq \cD |w-x|, \label{fs norm est 1 upper} \\
\hspace{-1.53cm}
\forall\!\text{ compact }K\subset B_u\ \exists\cD\geq1\ \forall w\in K\colon
 &f_u(w)& \!\!\!\leq \cD |w-x|. \label{fs norm est 2 upper}
\end{eqnarray}
\end{subequations}
\end{minipage}
\end{tabular}
\end{lemma}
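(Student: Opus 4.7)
The plan centers on the exponential contraction near $x$ afforded by the spectrum assumption on $\nabla b(x)$. By a Lyapunov argument there exist a neighborhood $U$ of $x$, a norm $\|\cdot\|_*$ equivalent to the Euclidean one, and constants $C,\mu>0$ such that for all $w\in U$ and $t\geq0$, $\psi(w,t)\in U$ and $\|\psi(w,t)-x\|_*\leq C e^{-\mu t}\|w-x\|_*$. Since $b(x)=0$ and $b\in C^1$, this immediately gives $|b(\psi(w,t))|\leq C' e^{-\mu t}|w-x|$, so $f_s$ is finite on $U$ with $f_s(w)\leq C''|w-x|$. For an arbitrary $w\in B_s$ the definition of $B_s$ yields some $\tau_0=\tau_0(w)\geq0$ with $\psi(w,\tau_0)\in U$, and the cocycle identity
\begin{equation} \label{plan:cocycle}
   f_s(w) \;=\; \int_0^{\tau_0}|b(\psi(w,t))|\,dt \;+\; f_s(\psi(w,\tau_0))
\end{equation}
reduces finiteness on all of $B_s$ to the local case on $U$.

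The lower bound (iii) is then immediate from $w-x = -\int_0^\infty \dot\psi(w,t)\,dt$ by taking norms and using $\dot\psi=b\circ\psi$. For the upper bound (iv) on a compact $K\subset B_s$, the exponential estimate above handles points of $K$ lying in a small inner neighborhood $U_0$ of $x$; on the compact set $K\setminus U_0^\circ$ both $f_s$ and $w\mapsto|w-x|$ are continuous, and the latter is bounded below by $\dist(x,K\setminus U_0^\circ)>0$, so their ratio is bounded by compactness. Combining the two regions yields a single constant $\cD$. Continuity of $f_s$ then follows from continuous dependence of $\psi$ on initial data together with dominated convergence, where the tail estimate can be made uniform in a neighborhood of any $w_0\in B_s$ by keeping $\tau_0$ uniformly bounded there.

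For the $C^1$-claim (i), at any $w_0\in B_s\setminus\{x\}$ I fix a sufficiently large $\tau_0$ so that $\psi(w_0,\tau_0)$ lies well inside $U$, and work with \eqref{plan:cocycle}. The finite-time integral depends $C^1$ on $w$ by smooth dependence of $\psi$ on initial data together with $C^1$-smoothness of $|b(\cdot)|$ away from its root $x$ (which $\psi(w,\cdot)|_{[0,\tau_0]}$ avoids, for $w$ near $w_0$, possibly after slightly enlarging $\tau_0$). For the second term it suffices to show $f_s\in C^1(U)$, and this is accomplished by differentiating under the integral in the definition of $f_s$: the variational equation $\partial_t(\nabla_w\psi)=\nabla b(\psi)\nabla_w\psi$, together with a Lyapunov estimate analogous to the one above, yields a uniform exponential bound $|\nabla_w\psi(w,t)|\leq \tilde C e^{-\tilde\mu t}$ on $U$, which dominates the differentiated integrand. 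Once (i) is in place, identity (ii) falls out of \eqref{plan:cocycle}: differentiating $f_s(\psi(w,\tau))=f_s(w)-\int_0^\tau|b(\psi(w,t))|\,dt$ in $\tau$ at $\tau=0$ gives $\skp{\nf_s(w)}{b(w)}=-|b(w)|$. The statements about $f_u$ are obtained by reversing time.

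The main obstacle will be the $C^1$-claim, specifically the interchange of differentiation and the infinite integral in the definition of $f_s$ on $U$. This hinges on showing that the linearized flow itself contracts exponentially on $U$, not merely that $\psi(w,t)-x$ does. The required refinement of the Lyapunov argument is standard but needs care: one must choose $\|\cdot\|_*$ so that the spectral contraction of $\nabla b(x)$ transfers, with a uniformly strictly negative rate, to the linearization $\nabla b(\psi(w,t))$ on a small enough neighborhood of $x$, which is possible by continuity of $\nabla b$ and by shrinking $U$ if necessary.
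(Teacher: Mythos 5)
Your plan is correct and follows the same route as the paper: exponential contraction near $x$ gives finiteness and the upper bound, the lower bound is the straight-line estimate $f_s(w)\geq\big|\int_0^\infty\dot\psi\,dt\big|$, and the $C^1$-claim reduces to a uniform exponential bound on $\nabla_w\psi$ coming from the variational equation. The only difference is cosmetic — the paper obtains the decay of $\nabla_w\psi$ by treating the variational equation as a decaying perturbation of $\dot X=AX$ (citing Verhulst), while you invoke a Lyapunov norm; both are standard and deliver the same estimate.
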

\begin{proof}
See Appendix \ref{fs fu appendix}.
\end{proof}

\begin{proof}[Proof of Lemma \ref{admissible ball example}]
Let us assume first that $x$ is an unstable equilibrium point. Let $a>0$ be so small that $\bar B_{2a}(x)\subset B_u$, abbreviate $M:=M_u^a=f_u^{-1}(\{a\})$, and define
\begin{equation}
  f_M(w):=\begin{cases}
    \min\{f_u(w)-a,a\} & \text{if $w\in B_u$,} \\
    a                  & \text{else.}
  \end{cases}
\end{equation}
Then $f_M$ is continuous on $D$. Indeed, $f_u$ is continuous on $B_u$, and for $\forall w\in B_u\setminus\bar B_{2a}(x)$ we have $f_u(w)\geq|w-x|>2a$ by \eqref{fs norm est 2 lower} and thus $f_M(w)=a$. It now remains to show the properties (i)-(iv) of Definition~\ref{admissible def}.\\[.2cm]
(i) $f_M(w)=0\ \Leftrightarrow\ \big(w\in B_u$ and $f_u(w)=a\big)\ \Leftrightarrow\ w\in f_u^{-1}(\{a\})=M$.\\[.1cm]
(ii) $M$ is closed as a level set of the continuous function $f_M$. $M$ is bounded since $M\subset\bar B_a(x)$: Indeed, if $w\in M$ then $|w-x|\leq f_u(w)=a$ by \eqref{fs norm est 2 lower}.\\[.1cm]
(iii) Let $w_0\in M$, i.e.~$f_u(w_0)=a$. In particular, we must have $w_0\neq x$, since $f_u(x)=0$ by definition of $f_u$. Since $B_u$ is open, there exists an $\eps>0$ such that $B_\eps(w_0)\subset B_u\setminus\{x\}$, and thus $f_u$ is $C^1$ on $B_\eps(w_0)$ by Lemma \ref{distance function} (i). Since $f_u$ is continuous, we can also choose $\eps>0$ so small  that $\forall w\in B_\eps(w_0)\colon f_u(w)\in(\frac{a}{2},2a)$, which in particular implies that $f_M=f_u-a$ on $B_\eps(w_0)$, and thus that $f_M$ is $C^1$ on $B_\eps(w_0)$ as well.
Since $w_0\in M$ was arbitrary, this shows that there exists a neighborhood of $M$ on which $f_M$ is $C_1$, with $\nf_M=\nf_u$.\\[.1cm]
(iv) Consequently, we have for $\forall w\in M$ that $\skp{\nf_M(w)}{b(w)}=$\linebreak $\skp{\nf_u(w)}{b(w)}=|b(w)|$ by \eqref{fu increasing}. Since $M\subset B_u\setminus\{x\}$ as seen in part~(iii), we have for $\forall w\in M$ that $b(w)\neq0$ and thus $\skp{\nf_M(w)}{b(w)}>0$.\\[.2cm]
If $x$ is a \textit{stable} equilibrium point then the proof is carried out analogously, except that we replace $f_u$ by $f_s$ and then multiply the definition of $f_M$ by $-1$. In this way, in the proof of (iii) we will find that $\nf_M=-\nf_s$ on $M$, but since in the proof of part (iv) we will now have to use \eqref{fs decreasing} instead of \eqref{fu increasing}, we will still find that $\skp{\nf_M(w)}{b(w)}=-\skp{\nf_s(w)}{b(w)}=+|b(w)|>0$.
\end{proof}

\subsection{Admissible Manifolds}
In preparation for the proofs of Propositions \ref{blm prop 1} and \ref{blm prop 2}, we will now collect some properties of admissible manifolds. Before proceeding, the reader is~advised to review Definition \ref{admissible def} which we will soon use without further reference.

\begin{lemma} \label{limit cycle lemma}
If $M$ is an admissible manifold then
\begin{equation} \label{fM sgn 1}
  \forall x\in M\ \,\forall t\in\R\colon\ \,\sgn\!\big(f_M(\psi(x,t))\big)=\sgn(t).
\end{equation}
In particular, we have $\psi(x,t)\in M$ if and only if $t=0$, which shows that admissible manifolds cannot be crossed by the same flowline more than once.
\end{lemma}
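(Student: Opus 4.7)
The plan is to show the one-sided statement for $t > 0$; the case $t < 0$ is completely analogous. So fix $x \in M$ and consider the continuous function
\[ g\colon \mathbb{R} \to \mathbb{R},\qquad g(t) := f_M(\psi(x,t)). \]
Since $f_M$ is $C^1$ in a neighborhood $U$ of $M$ by Definition \ref{admissible def}~(iii), and since $\psi(x,\cdot)$ is continuous with $\psi(x,0) = x \in M \subset U$, the composition $g$ is $C^1$ in a neighborhood of $t = 0$, with
\[ g'(0) = \langle \nabla f_M(x), b(\psi(x,0))\rangle = \langle \nabla f_M(x), b(x)\rangle > 0 \]
by Definition \ref{admissible def}~(iv). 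Combined with $g(0) = 0$, this immediately yields $g(t) > 0$ for all sufficiently small $t > 0$.

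The key step is to upgrade this local statement to $t > 0$ arbitrary. Define
\[ T_+ := \sup\!\big\{t > 0\,\big|\,g(s) > 0 \text{ for all } s \in (0,t]\big\}, \]
which by the above is strictly positive. Assume for contradiction that $T_+ < \infty$. By continuity of $g$, we have $g(T_+) \geq 0$; if $g(T_+) > 0$, then $g$ remains positive in a neighborhood of $T_+$ and $T_+$ is not the supremum, a contradiction. Hence $g(T_+) = 0$, i.e.~$x' := \psi(x, T_+) \in M$. Now I would apply the same local argument at the point $x'$: using once more that $f_M$ is $C^1$ near $M$ and that $\langle \nabla f_M(x'), b(x')\rangle > 0$, the function $t \mapsto f_M(\psi(x', t))$ is strictly increasing through $0$ at $t=0$, and in particular strictly negative for small $t < 0$. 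Since $\psi(x', t) = \psi(x, T_+ + t)$, this contradicts $g(s) > 0$ for all $s \in (0, T_+)$. Therefore $T_+ = \infty$, proving $g(t) > 0$ for $\forall t > 0$.

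The analogous argument with the sign of $t$ reversed gives $g(t) < 0$ for $\forall t < 0$, completing the proof of \eqref{fM sgn 1}. The ``in particular'' statement is immediate: $\psi(x,t) \in M$ iff $f_M(\psi(x,t)) = 0$ iff $\sgn(t) = 0$ iff $t = 0$. Finally, if a single flowline $\{\psi(w,s)\,|\,s\in\mathbb{R}\}$ met $M$ at two distinct points $\psi(w,t_1)$ and $\psi(w,t_2)$ with $t_1 < t_2$, then setting $x := \psi(w,t_1) \in M$ and $t := t_2 - t_1 > 0$ would give $\psi(x,t) = \psi(w,t_2) \in M$ in contradiction to what we just proved. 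The main (mild) subtlety is the bootstrap from the local sign information at $t=0$ to global $t$; aside from that, everything reduces to the defining derivative inequality \ref{admissible def}~(iv) applied at two points.
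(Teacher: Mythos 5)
Your proof is correct and uses essentially the same argument as the paper: establish the local sign from Definition \ref{admissible def}~(iv) at $t=0$, then extend by contradiction via a first-return-to-$M$ time (your $T_+$ is the paper's $T$), applying property~(iv) once more at $\psi(x,T_+)$ to get the sign conflict. The only cosmetic difference is that the paper estimates the one-sided difference quotient at the return point directly, whereas you invoke differentiability to deduce $g<0$ just to the left of $T_+$; both routes are fine.
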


\begin{proof}
Let $x\in M$. Clearly, \eqref{fM sgn 1} holds for $t=0$ by Definition \ref{admissible def}~(i). Suppose now that there were a $t>0$ such that $f_M(\psi(x,t))\leq0$. Then
\[ T:=\inf\!\big\{t>0\,\big|\,f_M(\psi(x,t))\leq0\big\} \]
would be well-defined, and since
\[
 \partial_t f_M(\psi(x,t))\big|_{t=0}
   = \Skp{\nf_M(\psi(x,0))}{\dot\psi(x,0)}
   = \skp{\nf_M(x)}{b(x)}
   > 0
\]
by Definition \ref{admissible def}~(iv), we would have $T>0$,
\begin{align}
  f_M(\psi(x,t))>0&\qquad\text{for $\forall t\in(0,T)$} \label{eq 1} \\
\intertext{and $w:=\psi(x,T)\in f_M^{-1}(\{0\})=M$. Since $\psi(x,t)=\psi(w,t-T)$, \eqref{eq 1} can be rewritten as}
  f_M(\psi(w,t))>0&\qquad\text{for $\forall t\in(-T,0)$.} \nonumber
\end{align}
But this would mean that
\begin{align*}
\skp{\nf_M(w)}{b(w)}
    &= \partial_tf_M(\psi(w,t))\big|_{t=0} \\
    &= \lim_{t\searrow0}\tfrac1t\big[\underbrace{f_M(\psi(w,0))}_{=f_M(w)=0}
           -\underbrace{f_M(\psi(w,-t))}_{>0\ \text{for}\ t\in(0,T)}\big]
    \leq0,
\end{align*}
which contradicts property (iv) of Definition \ref{admissible def}. Consequently, we must have $f_M(\psi(x,t))>0$ for $\forall t>0$, and with an analogous argument one can show that $f_M(\psi(x,t))<0$ for $\forall t<0$, concluding the proof of \eqref{fM sgn 1}.

In particular, if a flowline crosses $M$ at some point $x$ then \eqref{fM sgn 1} implies that for $\forall t\neq0$ we have $f_M(\psi(x,t))\neq0$ and thus $\psi(x,t)\notin M$.
\end{proof}

\begin{corollary} \label{no limit cycle corollary}
If $x\in D$ lies on a limit cycle of $b$ then there is no admissible manifold $M$ with $x\in\psi(M,\R)$.
\end{corollary}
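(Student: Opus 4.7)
The plan is to reduce the statement directly to Lemma \ref{limit cycle lemma} by exploiting the periodicity of the flow on the limit cycle together with the group property $\psi(\psi(w,s),t)=\psi(w,s+t)$.

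First I would argue by contradiction and assume that there is an admissible manifold $M$ and a point $w\in M$ together with an $s\in\R$ such that $\psi(w,s)=x$. Since $x$ lies on a limit cycle of $b$, there exists some $T>0$ with $\psi(x,T)=x$. Using the group property of the flow I then compute
\[
  \psi(w,s+T) = \psi(\psi(w,s),T) = \psi(x,T) = x = \psi(w,s),
\]
so applying $\psi(\cdot,-s)$ on both sides (or equivalently, using that the map $\psi(\cdot,-s)$ is a bijection $D\to D$) yields $\psi(w,T)=w$.

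At this point the conclusion is immediate: $w\in M$ and $\psi(w,T)\in M$ with $T>0$, which directly contradicts the second part of Lemma \ref{limit cycle lemma} stating that for $w\in M$ we have $\psi(w,t)\in M$ only for $t=0$. No step in this sketch looks like a real obstacle; the only thing worth being careful about is that $T>0$ and not merely $T\neq0$, but this is guaranteed by the very definition of a limit cycle. Hence no such admissible manifold $M$ can exist.
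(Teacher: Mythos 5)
Your proof is correct and follows essentially the same route as the paper's: both use the group property of the flow to derive $\psi(w,T)=w\in M$ with $T>0$ and then invoke Lemma~\ref{limit cycle lemma} to get a contradiction. The only cosmetic difference is that you first observe $\psi(w,s+T)=\psi(w,s)$ and then cancel by $-s$, while the paper computes $\psi(w,T)$ directly via $\psi(x,T-t)=\psi(x,-t)=w$.
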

\begin{proof}
If $x\in D$ lies on a limit cycle then we have $\psi(x,T)=x$ for some $T>0$. If there existed an admissible manifold $M$, a $w\in M$ and a $t\in\R$ such that $\psi(w,t)=x$ then we would have $\psi(w,T)=\psi(x,T-t)=\psi(x,-t)=w\in M$, which contradicts Lemma \ref{limit cycle lemma}.
\end{proof}

In particular, this shows that we cannot use Proposition \ref{blm prop 1} to prove that a given point on a limit cycle has local minimizers. Proposition \ref{limit cycle prop} (ii) of Section \ref{nonex sec} explains why this had to be the case: For actions $S\in\H_0^+$ points on limit cycles do not have local minimizers.

The next lemma (which is used in the proofs of Corollary \ref{move cor} and Lemma~\ref{superprop}) allows us to deform a given admissible manifold and turn it into a new one. With a smart choice of the function $\beta(x)$ this new manifold can have additional useful properties.

\begin{definition}
For any $\beta\in C^1(D,\R)$ we denote by $\psi_\beta\in C^1(D\times\R,D)$ the flow corresponding to the vector field $\beta\cdot b$.
\end{definition}

\begin{lemma} \label{move M lemma}
Let $\beta\in C^1(D,\R)$. If $M$ is an admissible manifold and $T\in\R$ then also the set $M':=\psi_\beta(M,T)$ is an admissible manifold.
\end{lemma}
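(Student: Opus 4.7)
The plan is to define an admissibility function $f_{M'}$ for $M'$ by pulling back $f_M$ along the flow $\psi_\beta$, namely
\[
f_{M'}(w) := f_M(\psi_\beta(w, -T)).
\]
Writing $\Phi := \psi_\beta(\cdot, T)$ for the $C^1$ diffeomorphism of $D$ with inverse $\psi_\beta(\cdot, -T)$, properties (i)--(iii) of Definition \ref{admissible def} are almost immediate: (i) $w \in M' \Leftrightarrow \Phi^{-1}(w) \in M \Leftrightarrow f_{M'}(w) = 0$; (ii) $M' = \Phi(M)$ is compact as the continuous image of the compact set $M$; (iii) if $U$ is a neighborhood of $M$ on which $f_M$ is $C^1$, then $\Phi(U)$ is a neighborhood of $M'$ on which $f_{M'} = f_M \circ \Phi^{-1}$ is $C^1$.

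The real work lies in property (iv). Fix $w \in M'$ and set $y := \Phi^{-1}(w) \in M$. By the chain rule,
\[
\skp{\nf_{M'}(w)}{b(w)} = \Skp{\nf_M(y)}{D\Phi^{-1}(w) \cdot b(w)},
\]
so it suffices to show that $D\Phi^{-1}(w) \cdot b(w)$ is a strictly positive multiple of $b(y)$; the right-hand side then becomes a positive multiple of $\skp{\nf_M(y)}{b(y)} > 0$ by admissibility of $M$.

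For this I would analyze the variational equation along the $\beta b$-trajectory from $y$ to $w$. Writing $V := \beta b$ and $h(t) := D\psi_\beta(\cdot, t)(y) \cdot b(y)$, the function $h$ solves $\dot h = DV(\psi_\beta(y, t)) \cdot h$ with $h(0) = b(y)$. Using $DV = b\, (\nb)^T + \beta\, Db$, the ansatz $h(t) = c(t)\, b(\psi_\beta(y, t))$ reduces the variational equation to the scalar ODE
\[
\dot c(t) = c(t) \skp{\nb(\psi_\beta(y, t))}{b(\psi_\beta(y, t))}, \qquad c(0) = 1,
\]
whose unique solution $c(t) = \exp\big(\int_0^t \skp{\nb}{b}\big|_{\psi_\beta(y, s)}\, ds\big)$ is strictly positive. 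By uniqueness of solutions to the variational equation this yields $D\Phi(y) \cdot b(y) = c(T)\, b(w)$, and inverting gives $D\Phi^{-1}(w) \cdot b(w) = c(T)^{-1} b(y)$ with $c(T)^{-1} > 0$, as required.

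The main obstacle is verifying that the ansatz is well-defined, i.e.~that $b$ does not vanish along the trajectory $\{\psi_\beta(y, t) : t \in [0, T]\}$ (or $[T, 0]$ if $T < 0$). Here $y \in M$ gives $b(y) \neq 0$ by Remark \ref{on manifold b neq0}; if $b$ were to vanish at some $\psi_\beta(y, t_0)$, then $V = \beta b$ would vanish there too, so $\psi_\beta(y, t_0)$ would be a fixed point of the $\beta b$-flow, and uniqueness of solutions to $\dot x = V(x)$ would force $\psi_\beta(y, t) \equiv \psi_\beta(y, t_0)$ for all $t$; in particular $y = \psi_\beta(y, 0) = \psi_\beta(y, t_0)$, contradicting $b(y) \neq 0$. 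This observation makes the proof uniform and avoids any case split on whether $\beta(y)$ itself vanishes, which would otherwise have appeared when trying to relate $D\Phi(y)\cdot b(y)$ to $D\Phi(y)\cdot V(y) = V(w)$ via division by $\beta$.
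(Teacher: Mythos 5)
Your proof is correct, and your treatment of property (iv) takes a genuinely different route from the paper's. The paper argues by contradiction: assuming some $x_0\in M'$ with $\skp{\nf_{M'}(x_0)}{b(x_0)}\leq0$, it interpolates between $M$ and $M'$ via the family $f_t:=f_M(\psi_\beta(\,\cdot\,,-t))$, uses the intermediate value theorem to produce a time $t_0$ at which the analogous inner product vanishes, shows this forces $\beta(w)=0$ at the starting point $w:=\psi_\beta(x_0,-T)\in M$, and then computes the full Jacobian $\nabla\psi_\beta(w,t)=\exp\!\big(b(w)\otimes t\nb(w)\big)$ in this degenerate constant-trajectory case to reach a contradiction. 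You instead prove the positivity directly by solving the variational equation along the (possibly non-constant) $\beta b$-trajectory, exploiting that $b$ is an eigendirection of the flow's linearization: the ansatz $h(t)=c(t)\,b(\psi_\beta(y,t))$ collapses the matrix ODE $\dot h=\nabla(\beta b)\,h$ to the scalar equation $\dot c=c\skp{\nb}{b}$, yielding the quantitative identity $\skp{\nf_{M'}(w)}{b(w)}=c(T)^{-1}\skp{\nf_M(y)}{b(y)}$ with $c(T)>0$ explicit. Your observation that $b$ cannot vanish along the trajectory — else $V=\beta b$ would have a fixed point there, and uniqueness would pull that back to $y\in M$, contradicting Remark~\ref{on manifold b neq0} — is precisely what makes the ansatz uniformly valid with no case-split on whether $\beta(y)=0$. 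The paper's route avoids setting up the variational equation except in the trivially integrable fixed-point case; yours buys a direct (non-contradiction) argument and a closed-form conversion factor relating the two inner products, which subsumes the paper's degenerate case as the special instance $\psi_\beta(y,\cdot)\equiv y$.
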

\begin{proof}
We will show that the continuous function $f_{M'}(x):=f_M(\psi_\beta(x,-T))$, $x\in D$, has the four properties of Definition \ref{admissible def}.\\[.2cm]
(i) $f_{M'}(x)=0$ $\ \,\Leftrightarrow\ \,$ $f_M(\psi_\beta(x,-T))=0$ $\ \,\Leftrightarrow\ \,$ $\psi_\beta(x,-T)\in M$ $\ \,\Leftrightarrow\ \,$ $x\in$\linebreak$\psi_\beta(M,T)=M'$.\\[.2cm]
(ii) $M'=\psi_\beta(M,T)$ is compact as the continuous image of a compact set.\\[.2cm]
(iii) Denote by $N$ an open neighborhood of $M$ on which $f_M$ is $C^1$. Then $f_{M'}(x)$ is $C^1$ wherever $\psi_\beta(x,-T)\in N$, i.e.~where $x\in\psi_\beta(N,T)=:N'\supset M'$. Since $\psi_\beta(\,\cdot\,,T)$ has a continuous inverse (namely $\psi_\beta(\,\cdot\,,-T)$), $N'$ is an open neighborhood of $M'$.\\[.2cm]
(iv) Suppose that there exists an $x_0\in M'$ such that $\skp{\nf_{M'}(x_0)}{b(x_0)}\leq0$, and let $w:=\psi_\beta(x_0,-T)\in M$.
The functions
\[ f_t(x):=f_M(\psi_\beta(x,-t)), \quad t\in\R,\ x\in D,\  \]
are $C^1$ in $(t,x)$ wherever $\psi_\beta(x,-t)\in N$, and thus in particular where
$x=\psi_\beta(w,t)$. Therefore the function
\[
   g(t):=\Skp{\nf_t(\psi_\beta(w,t))}{\,b(\psi_\beta(w,t))},
   \quad t\in\R,
\]
is well-defined and continuous, and since $f_{0}=f_M$ and $f_{T}=f_{M'}$, it fulfills
\[
   g(0)=\skp{\nf_M(w)}{b(w)}>0
   \qquad\text{and}\qquad
   g(T)=\skp{\nf_{M'}(x_0)}{b(x_0)}\leq0
\]
(the first estimate is property (iv) of the admissible manifold $M$). This shows that $\exists t_0\in(0,T]\colon\ g(t_0)=0$, and abbreviating $v:=\psi_\beta(w,t_0)$, we find that
\begin{align*}
  0 &= \beta(v)g(t_0)
     =  \Skp{\nf_{t_0}(v)}{\beta(v)b(v)}
     =  \partial_\tau f_{t_0}(\psi_\beta(v,\tau))\big|_{\tau=0}  \\
    &= \partial_\tau f_M\big(\psi_\beta(v,\tau-t_0)\big)\big|_{\tau=0}
     =  \partial_\tau f_M(\psi_\beta(w,\tau))\big|_{\tau=0}  \\
    &=  \Skp{\nf_M(w)}{\beta(w)b(w)}
     =  \beta(w)g(0)
\end{align*}
and thus $\beta(w)=0$. In particular, this implies that
\begin{equation} \label{psi w const}
  \psi_\beta(w,t)=w \quad \text{for $\forall t\in\R$,}
\end{equation}
which enables us to compute an explicit formula for the function $h(t):=\nabla\psi_\beta(w,t)$: We have $h(0)=I$ (since $\psi_\beta(x,0)=x$ for $\forall x\in D$) and
\begin{align*}
  \dot h(t)
    &= \nabla_{\!\!x}\dot\psi_\beta(x,t)\big|_{x=w}
     =  \nabla_{\!\!x}\big[(\beta b)(\psi_\beta(x,t))\big]\big|_{x=w} \\
    &= \big(\nabla(\beta b)\big)\big(\underbrace{\psi_\beta(w,t)}_{=w}\big)\underbrace{\nabla\psi_\beta(w,t)}_{=h(t)}
     =  \big(\underbrace{\beta(w)}_{=0}\nabla b(w)+b(w)\otimes\nb(w)\big)h(t),
\end{align*}
and so $\nabla\psi_\beta(w,t)=h(t)=\exp\!\big(b(w)\otimes t\nb(w)\big)$ for $\forall t\in\R$. Again using \eqref{psi w const}, we thus obtain the contradiction
\begin{align*}
  g(t_0)
    &= \skp{\nf_{t_0}(w)}{b(w)} \\
    &= \nf_M\big(\psi_\beta(w,-t_0)\big)\nabla\psi_\beta(w,-t_0)b(w) \\
    &= \nf_M(w)e^{b(w)\otimes (-t_0)\nb(w)}b(w) \\
    &= \skp{\nf_M(w)}{b(w)}e^{-t_0\skp{\nb(w)}{b(w)}} \\
    &> 0.
\qedhere
\end{align*}
\end{proof}
In other words, if one lets the points on $M$ follow the flow $\beta b$ for a fixed amount of time then one obtains a new admissible manifold. As a direct consequence we obtain Corollary \ref{move cor}, which in turn will reduce the proof of Proposition \ref{blm prop 1} to points $x\in M$ only.

\begin{corollary} \label{move cor}
If $x\in\psi(M,\R)$ for some admissible manifold $M$ then there exists another admissible manifold $M'$ such that $x\in M'$.
\end{corollary}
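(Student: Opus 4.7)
The plan is to apply Lemma \ref{move M lemma} directly with the trivial weight $\beta\equiv1$. By hypothesis there exist $w\in M$ and $T\in\R$ such that $x=\psi(w,T)$. Since $\beta\equiv1$ is clearly in $C^1(D,\R)$, we have $\psi_\beta=\psi$, and Lemma \ref{move M lemma} tells us that $M':=\psi_\beta(M,T)=\psi(M,T)$ is an admissible manifold. Finally $x=\psi(w,T)\in\psi(M,T)=M'$, which is all we needed.

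Since everything is handed to us by Lemma \ref{move M lemma} once $\beta$ is chosen, there is no real obstacle to overcome; the only thing I would double-check is that the statement of Lemma \ref{move M lemma} does not secretly require $\beta$ to vanish somewhere or to be strictly positive. Rereading it, the hypothesis is simply $\beta\in C^1(D,\R)$, with no positivity or support condition, so $\beta\equiv1$ is admissible. (The global existence of the flow $\psi=\psi_1$ has already been arranged in Section \ref{not def sub}, right after Definition \ref{flow def}, by the remark that we may always assume the drift flow is defined for all $t\in\R$.) Thus the corollary reduces to a one-line application of Lemma \ref{move M lemma}, and the value of the corollary lies not in its proof but in what it lets us do: it reduces the task in Proposition \ref{blm prop 1} of proving that every point of $\psi(M,\R)\cap\E$ has strong local minimizers to the task of proving it only for points $x\in M$ itself, by relabelling the ambient admissible manifold.
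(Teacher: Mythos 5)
Your proof is correct and coincides exactly with the paper's own proof: both apply Lemma~\ref{move M lemma} with $\beta\equiv1$ (so $\psi_\beta=\psi$) to conclude that $M':=\psi(M,T)$ is admissible and contains $x=\psi(w,T)$. Your closing remark about the role of the corollary in Proposition~\ref{blm prop 1} is also the intended reading.
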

\begin{proof}
Let $x=\psi(w,T)$ for some $w\in M$ and some $T\in\R$\,. Then $x\in M':=\psi(M,T)$, and by Lemma \ref{move M lemma} (applied to $\beta:\equiv1$) $M'$ is an admissible manifold.
\end{proof}

The following lemma defines two functions $z(x)$ and $t(x)$ on the set $\psi(M,\R)$ (that is the union of all the flowlines of $b$ emanating from $M$). These functions are used extensively throughout the rest of this paper, in particular in the proof of Lemma \ref{superprop} to define a function $\beta$ for use in Lemma~\ref{move M lemma}, and in the proofs of Lemmas~\ref{man2func cor} and~\ref{superprop} to define certain ``flowline tracing functions'' from admissible manifolds.

\begin{lemma} \label{man2func lemma}
Let $M$ be an admissible manifold. Then $\psi(M,\R)$ is open, and there exist two functions $z\in C^1(\psi(M,\R),M)$ and $t\in C^1(\psi(M,\R),\R)$ whose values are the unique ones fulfilling
\begin{equation}\label{t z eq}
  \forall x\in\psi(M,\R)\colon\quad z(x)\in M \quad\text{and}\quad \psi(z(x),t(x))=x.
\end{equation}
Furthermore, we have for $\forall x\in\psi(M,\R)$
\begin{align}
  \nabla\! z(x)\,b(x)&=0,  \label{grad z grad t eq 1}\\
  \skp{\nabla t(x)}{b(x)}&=1,    \label{grad z grad t eq 2}
\end{align}\vspace{-.6cm}
\begin{equation} \label{man move remark eq}
  x\in M\quad\Leftrightarrow\quad t(x)=0 \quad\Leftrightarrow\quad z(x)=x.
\end{equation}
\end{lemma}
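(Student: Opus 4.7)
My plan is to build the functions $z$ and $t$ locally via the implicit function theorem, applied to $f_M$ composed with the flow; the transversality condition in Definition \ref{admissible def}~(iv) provides exactly the non-vanishing derivative needed, and Lemma \ref{limit cycle lemma} provides uniqueness (each flowline crosses $M$ at most once), which lets the local pieces be patched into global $C^1$ functions on $\psi(M,\mathbb R)$. Openness will fall out of the same IFT argument for free.

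\textbf{Local construction.} Fix $x_0\in\psi(M,\R)$, write $x_0=\psi(z_0,t_0)$ with $z_0\in M$, and let $N\subset D$ be an open neighborhood of $M$ on which $f_M$ is $C^1$ (as granted by Definition \ref{admissible def}~(iii)). By continuity of the flow $\psi\in C^1$ there is an open neighborhood $V$ of $x_0$ and a $\delta>0$ such that $\psi(x,-t)\in N$ for all $(x,t)\in V\times(t_0-\delta,t_0+\delta)$. Define $g(x,t):=f_M(\psi(x,-t))$ on this set; then $g$ is $C^1$, and
\[
  \partial_t g(x_0,t_0)=-\skp{\nf_M(z_0)}{b(z_0)}<0
\]
by Definition \ref{admissible def}~(iv). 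The implicit function theorem produces an open neighborhood $V'\subset V$ of $x_0$ and a unique $C^1$ function $t\colon V'\to(t_0-\delta,t_0+\delta)$ with $t(x_0)=t_0$ and $g(x,t(x))\equiv0$. Setting $z(x):=\psi(x,-t(x))\in M$, we get $\psi(z(x),t(x))=x$ on $V'$, with both $z$ and $t$ of class $C^1$. In particular $V'\subset\psi(M,\R)$, which proves openness.

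\textbf{Uniqueness and global patching.} Suppose $z_1,z_2\in M$ and $t_1,t_2\in\R$ satisfy $\psi(z_i,t_i)=x$. Then $z_2=\psi(z_1,t_1-t_2)\in M$, so the flowline through $z_1$ returns to $M$ at time $t_1-t_2$; Lemma \ref{limit cycle lemma} forces $t_1-t_2=0$, hence $z_1=z_2$. This uniqueness guarantees that the local pairs $(z,t)$ produced on overlapping neighborhoods agree, so they assemble into globally defined $C^1$ functions on $\psi(M,\R)$ satisfying \eqref{t z eq}.

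\textbf{The derivative identities and the equivalences.} For any $x\in\psi(M,\R)$ and $s$ small enough that $\psi(x,s)\in\psi(M,\R)$, the semigroup property gives
\[
  \psi(z(x),t(x)+s)=\psi(\psi(z(x),t(x)),s)=\psi(x,s),
\]
so by uniqueness $z(\psi(x,s))=z(x)$ and $t(\psi(x,s))=t(x)+s$. Differentiating both identities in $s$ at $s=0$ yields \eqref{grad z grad t eq 1} and \eqref{grad z grad t eq 2}. For \eqref{man move remark eq}: if $x\in M$ then $(z,t)=(x,0)$ satisfies the defining relation, so by uniqueness $z(x)=x$ and $t(x)=0$; conversely $t(x)=0$ gives $x=\psi(z(x),0)=z(x)\in M$, and $z(x)=x$ gives $x\in M$ directly, which then forces $t(x)=0$.

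\textbf{Main obstacle.} The only delicate point is the patching step: one must verify that the locally defined $(z,t)$ really glue together, which hinges on the global uniqueness statement. Without Lemma \ref{limit cycle lemma} this would fail (a flowline could re-enter $M$ and produce a second legitimate choice of $(z,t)$); with it, the argument is essentially bookkeeping.
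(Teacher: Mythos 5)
Your proof follows the paper's argument essentially verbatim: uniqueness via Lemma~\ref{limit cycle lemma}, $C^1$ regularity and openness via the Implicit Function Theorem applied to $f_M(\psi(\,\cdot\,,-\tau))$ using the transversality condition of Definition~\ref{admissible def}~(iv), and the derivative identities by differentiating the flow-invariance relations $z(\psi(x,s))=z(x)$ and $t(\psi(x,s))=t(x)+s$. The only (immaterial) difference is ordering: you run the IFT first and then patch via uniqueness, whereas the paper establishes uniqueness first and then identifies the local IFT solution with the global one.
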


\begin{proof}
Let us abbreviate $A:=\psi(M,\R)$.
The existence (but not the smoothness) of two functions $z(x)$ and $t(x)$ fulfilling $\psi(z(x),t(x))=x$ is clear by our choice of their domain $\psi(M,\R)$. To show uniqueness, let $x\in A$, $z_1,z_2\in M$ and $t_1,t_2\in\R$ fulfill $\psi(z_1,t_1)=x=\psi(z_2,t_2)$. Then we have $\psi(z_1,t_1-t_2)=z_2\in M$, and Lemma~\ref{limit cycle lemma} tells us that $t_1-t_2=0$, i.e.~$t_1=t_2$. This in turn implies that $z_2=\psi(z_1,t_1-t_2)=\psi(z_1,0)=z_1$.

To see that the functions $z$ and $t$ are $C^1$ on $A$, let $x\in A$. Let $\eps>0$ be so small that $f_M$ is $C^1$ on $B_\eps(z(x))$. Since $\psi(x,-t(x))=z(x)$, there exists a neighborhood $U$ of $(x,t(x))$ such that
$\forall(w,\tau)\in U\colon\ \psi(w,-\tau)\in B_\eps(z(x))$. In particular, the function $F(w,\tau):=f_M(\psi(w,-\tau))$ is $C^1$ on $U$. Since
\begin{align*}
  F(x,t(x)) &=f_M\big(\psi(x,-t(x))\big)=f_M(z(x))=0 \\[.15cm]
\text{and}\quad\ 
  \partial_\tau F(x,t(x))
     &= -\Skp{\nf_M\big(\psi(x,-t(x))\big)}{\,b\big(\psi(x,-t(x))\big)}\hspace{1cm} \\
     &= -\Skp{\nf_M(z(x))}{b(z(x))} \neq 0
\end{align*}
by Definition \ref{admissible def}~(i) and~(iv), we can apply the Implicit Function Theorem to obtain a $C^1$-function $\tilde t(w)$, defined in a neighborhood $V$ of $x$, such that for $\forall w\in V$ we have $0=F(w,\tilde t(w))=f_M\big(\psi(w,-\tilde t(w))\big)$, i.e.~$\tilde z(w):=\psi(w,-\tilde t(w))\in M$. By definition of $\tilde z$ we have $\psi(\tilde z(w),\tilde t(w))=w$ for $\forall w\in V$, which tells us that (i) $V\subset A$, proving that $A$ is open, and (ii) $\tilde t=t|_V$ and $\tilde z=z|_V$ (because of the uniqueness of the functions $z$ and $t$). Since $\tilde t$ and $\tilde z$ are $C^1$, the latter shows that $t$ and $z$ are $C^1$ on $V$, and thus on all of $A$.

To show \eqref{grad z grad t eq 1} and \eqref{grad z grad t eq 2}, we evolve both sides of \eqref{t z eq} by some small time $\tau$ and find that $\psi\big(z(x),t(x)+\tau\big)=\psi(x,\tau)$, i.e.
\[ z(\psi(x,\tau))=z(x)\qquad\text{and}\qquad t(\psi(x,\tau))=t(x)+\tau. \]
Differentiating with respect to $\tau$ and setting $\tau=0$, we obtain
\begin{align*}
  0 &= \nabla\! z(\psi(x,0))\,\dot\psi(x,0)=\nabla\! z(x)\, b(x) \hspace{1.3cm}\text{and}\hspace{-1.3cm} \\
  1 &= \Skp{\nabla t(\psi(x,0))}{\dot\psi(x,0)}=\skp{\nabla t(x)}{b(x)}.
\end{align*}
It remains to show \eqref{man move remark eq}. If $x\in M$ then the equation $\psi(x,0)=x$ and the uniqueness of the representation \eqref{t z eq} imply that $t(x)=0$. If $t(x)=0$ then by \eqref{t z eq} we have $x=\psi(z(x),0)=z(x)$. Finally, if $z(x)=x$ then $x\in M$ since $z$ takes values in $M$. 
\end{proof}

\noindent With this new notation we can now rephrase Lemma \ref{limit cycle lemma} as follows.

\begin{corollary} \label{improved prime lemma}
Let $M$ be an admissible manifold, and let $t(x)$ be the corresponding function given by Lemma \ref{man2func lemma}. Then we have
\begin{align}
   \forall x\in\psi(M,\R)\ \,\forall t\in\R\colon\ \,
   &\sgn\!\big(f_M(\psi(x,t))\big)=\sgn(t(x)+t), \label{prime lemma eq 1}\\
   \forall x\in\psi(M,\R)\colon\ \,
   &\sgn(f_M(x))=\sgn(t(x)). \label{prime lemma eq 2}
\end{align}
\end{corollary}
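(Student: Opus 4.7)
The plan is to reduce the corollary directly to Lemma \ref{limit cycle lemma} by using the flow representation provided by Lemma \ref{man2func lemma}. The key observation is that for any $x \in \psi(M,\R)$, the definition $\psi(z(x),t(x)) = x$ together with the semigroup property of the flow yields
\[
   \psi(x,t) \;=\; \psi\!\big(\psi(z(x),t(x)),\,t\big) \;=\; \psi\!\big(z(x),\,t(x)+t\big)
\]
for every $t\in\R$. Since $z(x) \in M$ by construction, Lemma \ref{limit cycle lemma} applied to the point $z(x)$ and time parameter $s := t(x)+t$ gives
\[
   \sgn\!\big(f_M(\psi(x,t))\big) \;=\; \sgn\!\big(f_M(\psi(z(x),\,t(x)+t))\big) \;=\; \sgn(t(x)+t),
\]
which is exactly \eqref{prime lemma eq 1}.

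For \eqref{prime lemma eq 2}, I would simply specialize the first identity to $t=0$, using $\psi(x,0)=x$, to obtain $\sgn(f_M(x)) = \sgn(t(x))$. No further work is needed.

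There is no real obstacle here; the content of the corollary is essentially a restatement of Lemma \ref{limit cycle lemma} in coordinates adapted to the foliation of $\psi(M,\R)$ by flowlines, with $t(x)$ playing the role of a signed ``time to reach $M$.'' The only ingredients used are the defining equation of $(z(x),t(x))$ from \eqref{t z eq}, the semigroup property $\psi(\psi(w,s),t) = \psi(w,s+t)$ recalled at the end of Section~\ref{not def sub}, and Lemma \ref{limit cycle lemma} applied at the base point $z(x) \in M$.
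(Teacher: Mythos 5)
Your proof is correct and is essentially identical to the paper's own argument: both rewrite $\psi(x,t)$ as $\psi(z(x),t(x)+t)$ via \eqref{t z eq} and the flow semigroup property, apply Lemma~\ref{limit cycle lemma} at the base point $z(x)\in M$, and then set $t=0$ for \eqref{prime lemma eq 2}.
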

\begin{proof}
Using \eqref{t z eq} we can write
\[
   \sgn\!\big(f_M(\psi(x,t))\big)=
   \sgn\!\big(f_M\big(\psi(z(x),t(x)+t)\big)\big),
\]
and since $z(x)\in M$, we can apply Lemma \ref{limit cycle lemma} to obtain \eqref{prime lemma eq 1}. To prove \eqref{prime lemma eq 2}, set $t=0$.
\end{proof}

\subsection{Flowline Tracing Functions} \label{sec flowline tracing functions}

The purpose of this section is to find a replacement for the local bound $\ell(x,y)\geq \mu|y|$ that was used in \eqref{prop lower bound 1} and \eqref{action length estimate 1} to bound the length of a curve in terms of its action.
Without the condition of Proposition \ref{blm prop 0}, our only lower bound on $\ell(x,y)$ is \eqref{drift lower bound}, which vanishes if $y=cb(x)$ for some $c\geq0$. As a result, curves that follow the flowlines of $b$ could be arbitrarily long and have zero action.
We thus need to exclude the possibility that the curve follows the flowlines of $b$ for arbitrarily long distances, for example because these flowlines lead far away from the desired endpoint.

To quantify this idea, consider for example the constant vector field $b(x)\equiv b_0\in\Rn\setminus\{0\}$. In this case, if $\gamma\subset K$ and if the start and end point of $\gamma$ are confined to a ball $\bar B_r(x)$ then we have
\begin{align}
\hS(\gamma)&=\int_0^1\ell(\varphi,\varphi')\,d\alpha
\geq\cA\int_0^1\big(|b_0||\varphi'|-\skp{b_0}{\varphi'}\big)\,d\alpha \nonumber\\
&=\cA\big(|b_0|\length(\gamma)-\skp{b_0}{\varphi(1)-\varphi(0)}\big) \nonumber \\
\Rightarrow\quad\length(\gamma)&=\tfrac{1}{\cA|b_0|}\hS(\gamma)+\Skp{\tfrac{b_0
}{|b_0|}}{\varphi(1)-\varphi(0)}
\leq \tfrac{1}{\cA|b_0|}S(\gamma)+2r, \label{simple key est}
\end{align}
where $\cA=\cA(K)$, and again we have found a bound for the length of $\gamma$ in terms of its action.

For non-constant vector fields $b$ however, things are not that easy. We will have to lay out a non-cartesian coordinate grid that is compatible with this idea, i.e.~one whose ``$b$-coordinate'' increases at unit speed along the flowlines of $b$. The manifold consisting of all the points with vanishing $b$-coordinate can be crossed by the flowlines of $b$ only in one direction, which leads us to the definition of admissible manifolds. The notion of such a coordinate grid is made precise by the following definition.

\begin{definition} \label{tracing def}
A function $f\colon D\to\R$ is said to \underline{trace the flowlines} of the vector field $b\colon D\to\Rn$ between the values $q_1$ and $q_2$ (for two real numbers $q_1<q_2$) if\\[.2cm]
\begin{tabular}[t]{rl}
(i)    & $f$ is continuous on $D$, \\[.1cm]
(ii)   & $f$ is continuously differentiable on $E:=f^{-1}\big((q_1,q_2)\big)$,\\[.1cm]
(iii)  & \begin{minipage}[t]{11.3cm} we have
          \begin{minipage}[t]{10cm}
             either (iii.1)\ $\forall x\in E\colon\ \skp{\nf(x)}{b(x)}=|b(x)|$, \\
             or \hspace{.6cm}(iii.2)\ $\forall x\in E\colon\ \skp{\nf(x)}{b(x)}=-|b(x)|.$
          \end{minipage}
\end{minipage}
\end{tabular}\\
\end{definition}

Property (iii) says that on the region $E$, $f$ increases or decreases at unit speed in the direction of the flow $b$, and thus for $x\in E$, $f(x)$ can be interpreted as the value of the $b$-coordinate of $x$.
Note that if a function $f$ traces the flowlines of $b$ between $q_1$ and $q_2$ and if
$(\tilde q_1,\tilde q_2)\subset(q_1,q_2)$, then $f$ also traces the flowlines of $b$ between $\tilde q_1$ and $\tilde q_2$.

The following lemma, which is used in the proof of Proposition \ref{blm prop 1}, shows how to construct a flowline tracing function from an admissible manifold. A corresponding statement for Proposition \ref{blm prop 2} is given by Lemma \ref{superprop}.
\pb

\begin{lemma} \label{man2func cor}
Let $M$ be an admissible manifold. Then there exists an $\eps>0$ and a function $f\in C(D,\R)$ such that\\[.2cm]
\begin{tabular}{rl}
(i)   & $f^{-1}(\{0\})=M$, \\[.1cm]
(ii)   & \begin{minipage}[t]{11.5cm} $f$ traces the flowlines of $b$ between the values $-\eps$ and $\eps$, \end{minipage} \\[.1cm]
(iii)  & $\bar E$ is a compact subset of $D$, where $E:=f^{-1}\big((-\eps,\eps)\big)$,  \\[.1cm]
(iv)  & $\forall\,x\in\bar E\colon\ \,b(x)\neq0$, and\\[.1cm]
(v) & $\sup_{x\in E}|\nf(x)|<\infty$.
\end{tabular}\\[-.05cm]
\end{lemma}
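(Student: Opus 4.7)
The natural candidate for $f$ arises from integrating $|b|$ along the flow time inside the tube around $M$. Using the $C^1$ coordinate functions $z\in C^1(\psi(M,\R),M)$ and $t\in C^1(\psi(M,\R),\R)$ from Lemma~\ref{man2func lemma}, I would set
\[
  \tilde f(x) := \int_0^{t(x)} \bigl|b(\psi(z(x),s))\bigr|\,ds, \qquad x\in\psi(M,\R),
\]
which is $C^1$ as a composition of $C^1$ maps and satisfies $\tilde f^{-1}(\{0\})=M$ by \eqref{man move remark eq}. Restricting to a flowline $s\mapsto\psi(w,s)$ with $w\in M$ (on which $z\equiv w$ and $t\equiv s$) yields $\partial_s\tilde f=|b(\psi(w,s))|$, so $\skp{\nabla\tilde f(x)}{b(x)}=|b(x)|$ throughout $\psi(M,\R)$. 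Note that $b$ cannot vanish on $\psi(M,\R)$: if $b(x)=0$ for $x=\psi(w,t_0)$ then $\psi(x,\cdot)\equiv x$, and the identity $\psi(w,t_0+s)=\psi(x,s)=x$ at $s=-t_0$ forces $w=x$, giving $b(w)=0$ on $M$ and contradicting Remark~\ref{on manifold b neq0}.

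To pin down $\eps$ and secure compactness, I would localize to a small flow tube. For any sufficiently small $T_0>0$, the set $K:=\psi(M,[-T_0,T_0])$ is a compact subset of $\psi(M,\R)$ containing the open neighborhood $\psi(M,(-T_0,T_0))\supset M$. Let $c:=\min_K|b|>0$. For $x\in\psi(M,\R)$ with $|t(x)|\ge T_0$, the segment $s\mapsto\psi(z(x),s)$ for $|s|\le T_0$ stays in $K$ where $|b|\ge c$, so $|\tilde f(x)|\ge cT_0$. Setting $\eps:=cT_0/2$ therefore forces $\bar E_0:=\tilde f^{-1}([-\eps,\eps])\subset K^\circ$, a compact subset of $D$ lying in $\psi(M,\R)$---automatically delivering~(iv) on $\bar E\subset\bar E_0$.

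The main obstacle is promoting $\tilde f$, defined only on $\psi(M,\R)$, to a continuous $f\in C(D,\R)$ that still vanishes only on $M$ and satisfies $|f|\ge\eps$ outside a compact set. My plan is first to saturate the range via $\tilde f_1:=h\circ\tilde f\in C(\psi(M,\R),[-\eps,\eps])$, where $h(t):=t$ for $|t|\le\eps$ and $h(t):=\eps\,\sgn(t)$ for $|t|\ge\eps$; this leaves $\tilde f_1=\tilde f$ on $\bar E_0$ and $\tilde f_1=\pm\eps$ on $\partial\bar E_0$. Then, with a cutoff $\chi\in C(D,[0,1])$ that is $1$ on $\bar E_0$ and supported in a compact $K'\subset\psi(M,\R)$, I would define $f:=\chi\,\tilde f_1+(1-\chi)\,g$, where $g\in C(D,\R)$ is a ``sign template'' chosen so that $\sgn(g)=\sgn(\tilde f_1)$ on $\mathrm{supp}(1-\chi)\cap\psi(M,\R)$ and $|g|\ge\eps$ off $K'$. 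The delicate point is producing $g$: by Lemma~\ref{limit cycle lemma} flowlines cross $M$ at most once, so the level sets $\tilde f_1^{-1}(\{-\eps\})$ and $\tilde f_1^{-1}(\{+\eps\})$ on $\partial K'$ are disjoint compact pieces corresponding to the two sides of $M$ along the flow, and one can continuously propagate the constant values $\pm\eps$ from them into disjoint regions of $D$ (e.g.\ via Tietze extension applied separately on each side).

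With $f$ so constructed, verification of (i)--(v) is immediate: $E:=f^{-1}((-\eps,\eps))\subset\bar E_0$ on which $f=\tilde f$, so the identity $\skp{\nabla\tilde f}{b}=|b|$ yields~(ii); $\bar E\subset\bar E_0$ compact in $D$ gives~(iii); $\bar E_0\subset\psi(M,\R)$ guarantees~(iv); $\nabla f=\nabla\tilde f$ is uniformly bounded on the compact set $\bar E_0$, giving~(v); and for~(i), the convex combination $\chi\tilde f_1+(1-\chi)g$ vanishes only where $\tilde f_1$ and $g$ do simultaneously, which by the sign-matching means exactly on $M$.
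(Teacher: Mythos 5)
Your core function $\tilde f(x)=\int_0^{t(x)}|b(\psi(z(x),s))|\,ds$ is exactly the function the paper builds (there it is called $g$), and your bound $|\tilde f(x)|\geq cT_0$ for $|t(x)|\geq T_0$ plays the same localizing role as the paper's estimate $|x-z(x)|\leq|g(x)|$. So the heart of the argument is the same. Where you diverge is in the extension of $\tilde f$ from $\psi(M,\R)$ to all of $D$, and there your proposal has a genuine gap.

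The paper extends $\tilde f$ by exploiting the function $f_M$ that comes with the admissible manifold: it sets $D^-:=f_M^{-1}((-\infty,0))$, $D^+:=f_M^{-1}((0,\infty))$, and defines $f$ to equal $\tilde f$ on $G:=\tilde f^{-1}((-2\eps,2\eps))$ and $\mp2\eps$ on $D^\mp\setminus G$. The point is that $f_M$ already furnishes the global two-side decomposition of $D\setminus M$, and the sign identity $\sgn(\tilde f)=\sgn(f_M)$ on $\psi(M,\R)$ (a consequence of \eqref{prime lemma eq 2}) makes the three pieces glue continuously; the paper's part~(ii.1) is precisely that gluing check. Your approach attempts to manufacture a sign template $g$ from scratch by Tietze extension from the two boundary pieces $\partial K'\cap\tilde f_1^{-1}(\{\pm\eps\})$. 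But Tietze only gives you a continuous extension of the prescribed boundary values; it gives no control whatsoever over the sign or the magnitude of the extension on $D\setminus K'$, and there is no reason $D\setminus K'$ should split into two components carrying the right sign. You need a globally defined object that knows which side of $M$ any point of $D$ lies on, and that object is $f_M$ — which your proposal never invokes. (Note also that you only require $|g|\geq\eps$ off $K'$, but to guarantee $E\subset\bar E_0$ you would also need $|g|\geq\eps$ on $K'\setminus\bar E_0$; without it a convex combination $\chi\tilde f_1+(1-\chi)g$ could dip below $\eps$ there.) In short: the integral part is right, but the extension step needs $f_M$, exactly as the paper uses it, and the Tietze device cannot replace it.
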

\begin{proof}
Abbreviate $A:=\psi(M,\R)$, let $z\in C^1(A,M)$ and $t\in C^1(A,\R)$ be the functions given by Lemma \ref{man2func lemma}, and define the function $g\in C^1(A,\R)$ by
\begin{equation} \label{g def}
  g(x):=\int_0^{t(x)}\big|b\big(\psi(z(x),\tau)\big)\big|\,d\tau \qquad\text{for $\forall x\in A$,}
\end{equation}
i.e.\ $|g(x)|$ is the length of the flowline segment between $x$ and $z(x)$.
First note that by Remark~\ref{on manifold b neq0} we have $b(z(x))\neq0$ and thus $b\big(\psi(z(x),\tau)\big)\neq0$ for $\forall\tau\in\R$. This shows that $g$ is $C^1$ and (using \eqref{g def} and \eqref{prime lemma eq 2}) that
\begin{equation} \label{g t M}
  \sgn(g(x))=\sgn(t(x))=\sgn(f_M(x)) \qquad\text{for $\forall x\in A$.}
\end{equation}
Since $A$ is open by Lemma \ref{man2func lemma} and contains the compact set $M$, there $\exists\eps>0$ such that $\bar N_{2\eps}(M)\subset A$.
Since for $\forall x\in G:=g^{-1}\big((-2\eps,2\eps)\big)$ we have
\begin{align*}
  |x-z(x)|
    &= \big|\psi\big(z(x),t(x)\big)-\psi\big(z(x),0\big)\big| \\
    &= \bigg|\int_0^{t(x)}\dot\psi\big(z(x),\tau\big)\,d\tau\bigg|
     = \bigg|\int_0^{t(x)}b\big(\psi(z(x),\tau)\big)\,d\tau\bigg| \\
    &\leq \bigg|\int_0^{t(x)}\big|b\big(\psi(z(x),\tau)\big)\big|\,d\tau\bigg|
     = |g(x)|<2\eps,
\end{align*}
we have $G\subset \bar N_{2\eps}(M)\subset A$. Finally, we set $D^-:=f_M^{-1}\big((-\infty,0)\big)$ and $D^+:=f_M^{-1}\big((0,\infty)\big)$, and we define the function $f\colon D\to\R$ as
\begin{equation}\label{f g def}
  f(x):=\begin{cases}
            g(x)  & \text{if $x\in G$,} \\
            -2\eps & \text{if $x\in D^-\setminus G$,} \\
            2\eps  & \text{if $x\in D^+\setminus G$.}
         \end{cases}
\end{equation}
Note that $f$ is well-defined since the three cases are defining $f$ on disjoint sets whose union is all of $D$. Indeed, since $f_M^{-1}(\{0\})=M\subset A$, \eqref{g t M} implies
\begin{samepage}
\begin{equation} \label{Dpm}
    f_M^{-1}(\{0\})=g^{-1}(\{0\})
\end{equation}
and thus $D\setminus(D^-\cup D^+)=f_M^{-1}(\{0\})=g^{-1}(\{0\})\subset G$.
It remains to show that $f$ has the desired properties (i)-(v) of Lemma \ref{man2func cor}.
\end{samepage}

\noindent
(i) Using \eqref{f g def}-\eqref{Dpm} we find that $f^{-1}(\{0\})=g^{-1}(\{0\})=f_M^{-1}(\{0\})=M$.\\[.2cm]
(ii) To check that $f$ traces the flowlines of $b$ between the values $-\eps$ and $\eps$, we have to check the three properties of Definition \ref{tracing def}:\\[.2cm]
(ii.1) For any set $B\subset D$ let us temporarily (i.e.\ for this part (ii.1) only) use the notation $\bar B$ to denote its \emph{closure in $D$}. Clearly, $f$ is continuous on each of the three parts of the domain. To see that $f$ is also continuous on the boundaries of these regions, we use that $G$ is open, \eqref{Dpm}, \eqref{g t M}, and that $\bar G\subset g^{-1}\big([-2\eps,2\eps]\big)$ (since $\bar G\subset \bar N_{2\eps}(M)\subset A$), to obtain
\begin{align*}
  \overline{(D^-\setminus G)}\cap\overline{(D^+\setminus G)}\hspace{.23cm}
    &=\hspace{.23cm} \big(\overline{D^-}\cap\overline{D^+}\big)\cap G^c \\
    &\subset\hspace{.23cm} f_M^{-1}\big((-\infty,0]\big)\cap f_M^{-1}\big([0,\infty)\big)\cap G^c \\
    &=\hspace{.23cm} f_M^{-1}(\{0\})\cap g^{-1}\big((-2\eps,2\eps)\big)^c \\[-.15cm]
    &\hspace{-.23cm}\stackrel{\eqref{Dpm}}{=} g^{-1}(\{0\})\cap g^{-1}\big((-2\eps,2\eps)\big)^c
    =\varnothing, \\[.2cm]
  \overline{(D^-\setminus G)}\cap\bar G\hspace{.23cm}
    &=\hspace{.23cm}  \overline{D^-} \cap G^c \cap\bar G \\
    &\subset\hspace{.23cm} f_M^{-1}\big((-\infty,0]\big)\cap g^{-1}\big((-2\eps,2\eps)\big)^c \cap g^{-1}\big([-2\eps,2\eps]\big)  \\
    &=\hspace{.23cm} f_M^{-1}\big((-\infty,0]\big) \cap g^{-1}\big(\{-2\eps,2\eps\}\big) \\[-.15cm]
    &\hspace{-.23cm}\stackrel{\eqref{g t M}}{=} g^{-1}(\{-2\eps\}),\hspace{3.7cm}\text{and similarly} \\[.2cm]
  \overline{(D^+\setminus G)} \cap \bar G\hspace{.23cm}
    &\subset\hspace{.23cm} g^{-1}(\{2\eps\}).
\end{align*}
(ii.2) $f$ is $C^1$ on $G$ since $f|_G=g|_G$ and $g$ is $C^1$. Since $G=g^{-1}\big((-2\eps,2\eps)\big)=f^{-1}\big((-2\eps,2\eps)\big)\supset E$, this shows that $f$ is $C^1$ on $E$.\\[.2cm]
(ii.3) This also shows that for $\forall x\in G\supset E$ we have
\begin{align*}
\nf(x) &= \nabla g(x) \\
    &= \big|b\big(\psi(z(x),t(x))\big)\big|\nabla t(x) \\*
    & \hspace{1.3cm}{}+ \int_0^{t(x)}\Big(\frac{b^T\nabla b}{|b|}\Big)\big(\psi(z(x),\tau)\big)\nabla\psi\big(z(x),\tau\big)\,d\tau\,\cdot\nabla\! z(x),
\end{align*}
so \eqref{t z eq}-\eqref{grad z grad t eq 2} imply that $\skp{\nf(x)}{b(x)}=|b(x)|$. \\[.2cm]
(iii) The continuity of $f$ implies that $\bar E\subset f^{-1}\big([-\eps,\eps]\big) =g^{-1}\big([-\eps,\eps]\big)\subset G\subset\bar N_{2\eps}(M)$. Since $\bar N_{2\eps}(M)$ is compact, this shows that $\bar E$ is a compact subset of $G\subset D$.\\[.2cm]
(iv) This is a consequence of Remark~\ref{on manifold b neq0} since $\bar E\subset G\subset A=\psi(M,\R)$.\\[.2cm]
(v) This follows directly from our proofs of parts (ii.2) and (iii) where we showed that $f$ is $C^1$ on the set $G$ which contains the compact set $\bar E$.
\end{proof}

As we see, we cannot expect to cover all of $D$ with our grid, but only some set $E=f^{-1}((q_1,q_2))$, and so our generalized version of the estimate \eqref{simple key est}, given in Lemma \ref{key estimate}, must be restricted to $E$ as well.
To do so, we need to introduce the continuous function $\hqq$, which is equal to the identity on $[q_1,q_2]$ and constant outside of $[q_1,q_2]$. Two properties are given in Lemma~\ref{h lemma}.

\begin{definition}
For any two real numbers $q_1<q_2$ we define the function $\hqq\colon\R\to[q_1,q_2]$ by
\[ \hqq(a):=\min\!\big(\!\max(a,q_1),q_2\big). \]
\end{definition}

\begin{lemma} \label{h lemma}
For $\forall a_1,a_2\in\R$ we have the estimates
\begin{subequations}
\begin{align}
  \big|\hqq(a_1)-\hqq(a_2)\big|&\leq q_2-q_1,  \label{hqq est 1} \\
  \big|\hqq(a_1)-\hqq(a_2)\big|&\leq|a_1-a_2|. \label{hqq est 2}
\end{align}
\end{subequations}
\end{lemma}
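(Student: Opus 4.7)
The plan is to handle the two estimates separately, since each reduces to a one-line observation about the piecewise-affine function $h_{q_1}^{q_2}$.

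For \eqref{hqq est 1}, I would simply note that by definition the function $h_{q_1}^{q_2}$ takes values in the interval $[q_1,q_2]$, so both $h_{q_1}^{q_2}(a_1)$ and $h_{q_1}^{q_2}(a_2)$ lie in this interval. Hence their absolute difference is bounded above by the length $q_2-q_1$ of the interval.

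For \eqref{hqq est 2}, the point is that $h_{q_1}^{q_2}$ is $1$-Lipschitz. I would prove this by writing $h_{q_1}^{q_2}=\min(\,\cdot\,,q_2)\circ\max(\,\cdot\,,q_1)$ and observing that both $a\mapsto\max(a,q_1)$ and $a\mapsto\min(a,q_2)$ are $1$-Lipschitz on $\R$ (each can be checked by a short case distinction, e.g.\ for $\max$: if $a_1,a_2\geq q_1$ then $|\max(a_1,q_1)-\max(a_2,q_1)|=|a_1-a_2|$; if $a_1\geq q_1\geq a_2$ then $|\max(a_1,q_1)-\max(a_2,q_1)|=a_1-q_1\leq a_1-a_2=|a_1-a_2|$; the symmetric and remaining cases are analogous). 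The composition of two $1$-Lipschitz maps is $1$-Lipschitz, which gives \eqref{hqq est 2}.

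There is no real obstacle here; the only minor care is to cover all cases in the Lipschitz check, which amounts to a routine bookkeeping argument. Alternatively one could invoke the general fact that a monotone function with derivative bounded by $1$ almost everywhere (as is trivially the case here, since $(h_{q_1}^{q_2})'\in\{0,1\}$ outside the two breakpoints $q_1,q_2$) is $1$-Lipschitz, bypassing the case analysis entirely.
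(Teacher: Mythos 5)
Your proof is correct. The first estimate is handled exactly as in the paper (both values lie in $[q_1,q_2]$). For the second, the paper checks Lipschitz continuity by splitting $\R$ into $(-\infty,q_1]$, $[q_1,q_2]$, $[q_2,\infty)$ and comparing cases directly, whereas you write $h_{q_1}^{q_2}=\min(\,\cdot\,,q_2)\circ\max(\,\cdot\,,q_1)$ and use that a composition of $1$-Lipschitz maps is $1$-Lipschitz; this is a cosmetically different but equally elementary route to the same fact, and your alternative remark about a monotone function with a.e.\ derivative in $\{0,1\}$ being $1$-Lipschitz is also fine. No gap.
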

\begin{proof}
The estimate \eqref{hqq est 1} holds because $h_{q_1}^{q_2}$ maps into $[q_1,q_2]$, \eqref{hqq est 2} just says that $h_{q_1}^{q_2}$ is Lipschitz continuous with Lipschitz constant~$1$, which can easily be checked by splitting $\R$ into $(-\infty,q_1]$, $[q_1,q_2]$ and $[q_2,\infty)$.
\end{proof}

\begin{lemma} \label{key estimate}
Let $x_1,x_2\in\E$, $\gamma\in\Gxx$, $q_1<q_2$, let $f\colon D\to\R$ be a function that traces $b$ between the values $q_1$ and $q_2$, let $E:=f^{-1}\big((q_1,q_2)\big)$, and assume that $\bar E$ is a compact subset of $D$. Let $\cA :=\cA (\bar E)$ be the constant given by Definition~\ref{drift vector field}, and assume that $\cG:=\cG(\bar E):=\min_{x\in\bar E}|b(x)|>0$ and $\cH :=\cH (f,q_1,q_2):=\sup_{x\in E}|\nf(x)|<\infty$. Then we have
\begin{equation} \label{key est short}
 \length\!\big(\gamma|_{f^{-1}((q_1,q_2))}\big)
 \leq
 \frac{2\cH ^2}{\cA \cG} S(\gamma)
  + 2\big|\hqq(f(x_1))-\hqq(f(x_2))\big|.
\end{equation}
\end{lemma}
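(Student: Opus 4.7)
My plan is to mimic the elementary calculation \eqref{simple key est} from the constant-drift case, with the flowline-tracing function $f$ replacing the fixed vector $\hat b_0$. Fix any parameterization $\varphi\in\CI$ of $\gamma$, let $E_0:=\{\alpha\in[0,1]:\varphi(\alpha)\in E\}$, and let $\sigma\in\{+1,-1\}$ be the sign distinguishing cases (iii.1) and (iii.2) of Definition~\ref{tracing def}. Set $r(\alpha):=\sigma\skp{\nf(\varphi(\alpha))}{\varphi'(\alpha)}$, which plays the role of $\skp{\hat b_0}{\varphi'}$ in the constant-drift case. The goal is the pointwise a.e.\ bound
\[
  |\varphi'|\;\leq\;2r+\tfrac{2\cH^2}{\cA\cG}\,\ell(\varphi,\varphi') \qquad\text{on }E_0,
\]
since integrating over $E_0$ and identifying $\int_{E_0}r\,d\alpha$ with $\sigma\bigl[\hqq(f(x_2))-\hqq(f(x_1))\bigr]$ will deliver \eqref{key est short}.

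To derive the pointwise bound I will chain three standard manipulations on the set where $\varphi\in E$ and $\varphi'\neq 0$ (on the complement within $E_0$ both sides trivially vanish). First apply the drift estimate \eqref{drift lower bound} and rewrite $|b(\varphi)||\varphi'|-\skp{b(\varphi)}{\varphi'}=\tfrac12|b(\varphi)||\varphi'|\,|\widehat{b(\varphi)}-\widehat{\varphi'}|^2$. Next use Cauchy--Schwarz against the unit vector $\widehat{\nf(\varphi)}$ together with the tracing identity $\skp{\nf(\varphi)}{\widehat{b(\varphi)}}=\sigma$ to obtain
\[
  |\widehat{b(\varphi)}-\widehat{\varphi'}|^2\;\geq\;\frac{\bigl(\sigma|\varphi'|-\skp{\nf(\varphi)}{\varphi'}\bigr)^2}{|\nf(\varphi)|^2\,|\varphi'|^2}.
\]
Inserting $|b(\varphi)|\geq\cG$ (valid on $\bar E$) and $|\nf(\varphi)|\leq\cH$ (valid on $E$), the combined inequality collapses to $(|\varphi'|-r)^2\leq\tfrac{2\cH^2}{\cA\cG}\,|\varphi'|\,\ell(\varphi,\varphi')$. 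A single application of AM--GM in the form $\sqrt{ab}\leq\tfrac12(a+b)$, with $a=|\varphi'|$ and $b=\tfrac{2\cH^2}{\cA\cG}\ell(\varphi,\varphi')$, then yields $|\varphi'|-r\leq\tfrac12|\varphi'|+\tfrac{\cH^2}{\cA\cG}\ell$, which rearranges to the claimed pointwise bound.

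The remaining ingredient is the chain-rule identity
\[
  \int_{E_0}\skp{\nf(\varphi)}{\varphi'}\,d\alpha \;=\; \hqq(f(x_2))-\hqq(f(x_1)),
\]
and this is the main technical obstacle, because $f$ is only continuous (not Lipschitz) off~$E$, so $\hqq\circ f\circ\varphi$ is not obviously absolutely continuous in a form to which a single chain rule applies. I plan to handle it by decomposing the open set $E_0\subset[0,1]$ into its connected components $(a_i,b_i)$: on each such interval $\varphi$ maps into the open set $E$ where $f\in C^1$, so $f\circ\varphi$ is absolutely continuous on $[a_i,b_i]$ with derivative $\skp{\nf(\varphi)}{\varphi'}$, integrable since $|\skp{\nf(\varphi)}{\varphi'}|\leq\cH|\varphi'|$. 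At any interior endpoint of $(a_i,b_i)$ the point $\varphi(a_i)$ (resp.\ $\varphi(b_i)$) lies in $\partial E$, so $f(\varphi(a_i))\in\{q_1,q_2\}$; if $a_i=0$ or $b_i=1$ the endpoint lies in $E$ and $f$ of it lies in $(q_1,q_2)$. In every case the endpoint value is in $[q_1,q_2]$, so $\hqq$ acts as the identity there, giving $\hqq(f(\varphi(b_i)))-\hqq(f(\varphi(a_i)))=\int_{a_i}^{b_i}\skp{\nf(\varphi)}{\varphi'}\,d\alpha$. On each connected component of $[0,1]\setminus E_0$ the continuous function $\hqq\circ f\circ\varphi$ takes values only in the two-point set $\{q_1,q_2\}$ and is therefore constant by the intermediate value theorem, contributing zero when one telescopes. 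Summing over all components then produces the identity and, combined with the pointwise bound, completes the proof.
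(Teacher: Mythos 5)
Your reduction to a pointwise inequality is a valid and genuinely different route from the paper's. The paper parametrizes $\gamma$, applies the integral Cauchy--Schwarz inequality to bound $S(\gamma)$ from below by $\frac{\cA\cG}{2L}\bigl(\int|\varphi'|\,|\widehat{b(\varphi)}-\widehat{\varphi'}|\One_{\alpha\in Q}\,d\alpha\bigr)^{\!2}$ (where $L=\length(\gamma|_E)$), estimates the inner integral by $L-|\Delta|$, and then has to handle the trivial case $L\leq|\Delta|$ separately and divide by $L$ at the end. Your version replaces all of this by the \emph{pointwise} Cauchy--Schwarz inequality against $\widehat{\nf(\varphi)}$ combined with a single AM--GM step, which gives the clean a.e.\ bound $|\varphi'|\leq 2r+\tfrac{2\cH^2}{\cA\cG}\ell(\varphi,\varphi')$ on $E_0$; integrating this avoids the division by $L$ and the corresponding special case entirely, and it produces exactly the constants of \eqref{key est short}. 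This part of your argument is correct and, I would say, cleaner than the paper's.

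The gap is in the justification of the chain-rule identity $\int_{E_0}\skp{\nf(\varphi)}{\varphi'}\,d\alpha=\hqq(f(x_2))-\hqq(f(x_1))$. Your decomposition of $E_0$ into connected components $(a_i,b_i)$, the evaluation $\hqq(f(\varphi(b_i)))-\hqq(f(\varphi(a_i)))=\int_{a_i}^{b_i}\skp{\nf(\varphi)}{\varphi'}\,d\alpha$, and the observation that $\hqq\circ f\circ\varphi$ is constant on each connected component of $[0,1]\setminus E_0$ are all fine. But the final ``summing over all components then produces the identity'' is not a telescoping that is automatically valid: the closed set $[0,1]\setminus E_0$ can have a Cantor-like structure, in which case it has uncountably many components (most of them singletons), and there is no finite or order-compatible chain of abutting intervals along which to telescope. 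A continuous function can be locally constant on each component of such a decomposition without being globally constant, so constancy-on-components is not enough to conclude $\sum_i[g(b_i)-g(a_i)]=g(1)-g(0)$ where $g:=\hqq\circ f\circ\varphi$. What saves the statement --- and what the paper exploits --- is that $g$ takes only the two \emph{extremal} values $q_1,q_2$ on $E_0^c$. The paper's route is to observe that $\hqq\circ f$ is Lipschitz with constant $\cH$, so that $g$ is absolutely continuous; then $g'=\skp{\nf(\varphi)}{\varphi'}$ a.e.\ on $E_0$, $g'=0$ a.e.\ on $E_0^c$ (since a.e.\ point of $E_0^c$ in $(0,1)$ where $g$ is differentiable is a local extremum), and the fundamental theorem of calculus gives the identity. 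Alternatively, one can verify absolute continuity of $g$ directly from the definition: for any $s<t$ with $g(s)\neq g(t)$, shrinking $[s,t]$ to the maximal subinterval $[s',t']$ with $g(s')=g(s)$, $g(t')=g(t)$ forces $(s',t')\subset E_0$, so $|g(t)-g(s)|\leq\int_{(s',t')}\cH|\varphi'|\,d\alpha$, and absolute integrability of $\cH|\varphi'|$ then yields the defining $\eps$--$\delta$ estimate. Either way, what is needed is a proof that $g$ is absolutely continuous on all of $[0,1]$, not just on the closures of the components of $E_0$.
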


\begin{proof}
Let us abbreviate $L:=\length(\gamma|_E)$ and $\Delta:=\hqq(f(x_2))-\hqq(f(x_1))$.
If $L-|\Delta|\leq0$ then
\[ L-2|\Delta|\leq 2(L-|\Delta|)\leq0\leq\frac{2\cH ^2}{\cA\cG}S(\gamma), \]
so \eqref{key est short} is clear. Therefore let us now assume that $L-|\Delta|>0$ and thus in particular $L>0$. Let $\varphi\in\Cxyt{x_1}{x_2}{1}$ be a parameterization of $\gamma$, and let
\[ Q := \big\{\alpha\in[0,1]\,\big|\,\varphi(\alpha)\in E\text{ and }\varphi'(\alpha)\neq0\big\}. \]
Using \eqref{drift lower bound} and the Cauchy-Schwarz inequality, and using the notation\linebreak $\hat w:=\frac{w}{|w|}$ for $\forall w\in\Rn\setminus\{0\}$, we find that
\begin{align}
  S(\gamma)
    &\geq \int_0^1 \ell(\varphi,\varphi')\One_{\alpha\in Q}\,d\alpha \nonumber \\
    &\geq \cA \int_0^1 \big(|b(\varphi)||\varphi'|-\Skp{b(\varphi)}{\varphi'}\big) \One_{\alpha\in Q}\,d\alpha\nonumber \\
    &= \frac{\cA }{2}\int_0^1|b(\varphi)||\varphi'|\big|\widehat{b(\varphi)}-\widehat{\varphi'}\big|^2\One_{\alpha\in Q}\,d\alpha \nonumber \\
    &\geq \frac{\cA \cG}{2}\int_0^1|\varphi'|\big|\widehat{b(\varphi)}-\widehat{\varphi'}\big|^2\One_{\alpha\in Q}\,d\alpha \nonumber \\
    &\geq \frac{\cA \cG}{2}\cdot\frac{\big(\int_0^1|\varphi'|\big|\widehat{b(\varphi)}-\widehat{\varphi'}\big|\One_{\alpha\in Q}\,d\alpha\big)^2}{\int_0^1|\varphi'|\One_{\alpha\in Q}\,d\alpha}\nonumber \\
    &= \frac{\cA \cG}{2L}\Big(\int_0^1|\varphi'|\big|\widehat{b(\varphi)}-\widehat{\varphi'}\big|\One_{\alpha\in Q}\,d\alpha\Big)^{\!2}.\qquad
    \label{key est part 1}
\end{align}
Now letting $\sigma:=+1$ or $\sigma:=-1$ depending on whether the function $f$ fulfills the property $(iii.1)$ or $(iii.2)$ of Definition \ref{tracing def}, we have a.e.\ on $Q$ that
\begin{align}
  \cH|\varphi'|\big|\widehat{b(\varphi)}-\widehat{\varphi'}\big|
    &\geq \sigma|\varphi'|\Skp{\nf(\varphi)}{\widehat{b(\varphi)}-\widehat{\varphi'}} \nonumber \\
    &= |\varphi'|\cdot\sigma\Skp{\nf(\varphi)}{\widehat{b(\varphi)}} - \sigma\skp{\nf(\varphi)}{\varphi'} \nonumber \\
    &=|\varphi'|-\sigma\,\partial_\alpha f(\varphi). \label{key est step}
\end{align}

Since $\hqq\circ f$ is Lipschitz continuous (with Lipschitz constant $\cH$),\linebreak $\hqq\circ f\circ\varphi$ is absolutely continuous, and so its classical derivative exists a.e.~on $[0,1]$. 
We have $\partial_\alpha\hqq(f(\varphi))=\partial_\alpha f(\varphi)$ wherever $f(\varphi)\in(q_1,q_2)$,~and $\partial_\alpha\hqq(f(\varphi))=0$ wherever $f(\varphi)\notin(q_1,q_2)$ (except possibly at $\alpha=0,1$) because $\hqq$ does not take values outside of $[q_1,q_2]$. This shows that $\partial_\alpha\hqq(f(\varphi))$ $=[\partial_\alpha f(\varphi)]\One_{f(\varphi)\in(q_1,q_2)}$, and so \eqref{key est step} implies that
\begin{align}
\cH\int_0^1|\varphi'|\big|\widehat{b(\varphi)}-\widehat{\varphi'}\big|\One_{\alpha\in Q}\,d\alpha
    &\geq \int_0^1\Big(|\varphi'|-\sigma\,\partial_\alpha f(\varphi)\Big)\One_{\alpha\in Q}\,d\alpha \nonumber\\
    &= L-\sigma\int_0^1[\partial_\alpha f(\varphi)]\One_{f(\varphi)\in(q_1,q_2)}\,d\alpha \nonumber\\
    &= L-\sigma\int_0^1\partial_\alpha\hqq(f(\varphi))\,d\alpha \nonumber\\
    &= L-\sigma\Delta \nonumber\\*
    &\geq L-|\Delta|. \label{key est part 2}
\end{align}
Multiplying \eqref{key est part 1} by $\cH^2$ and plugging in \eqref{key est part 2}, we thus obtain
\[ \cH^2S(\gamma)
   \geq \frac{\cA \cG}{2L}(L-|\Delta|)^2 = \frac{\cA \cG}{2}\Big(L-2|\Delta|+\frac{|\Delta|^2}{L}\Big)
   \geq \cA\cG\big(\tfrac12L-|\Delta|\big), \]
i.e.\ $L\leq\frac{2\cH ^2}{\cA\cG}S(\gamma)+2|\Delta|$, and \eqref{key est short} is proven.
\end{proof}

\begin{remark} \label{constant comparison}
If $K_1\subset K_2$, $(\tilde q_1,\tilde q_2)\subset(q_1,q_2)$, and if~$f$ traces the flowlines of~$b$ between $q_1$ and $q_2$, then
\[
  \cA(K_1)                      \geq \cA(K_2),      \hspace{.7cm}
  \cG(K_1)                      \geq \cG(K_2),      \hspace{.7cm}
  \cH (f,\tilde q_1,\tilde q_2) \leq \cH(f,q_1,q_2).
\]
\end{remark}

\subsection{Proof of Proposition \ref{blm prop 1}} \label{subsec proof prop 3}

\begin{proof}
We will again prove the stronger condition of Remark \ref{blm remark}~(ii).
Let $x\in\psi(M,\R)\cap\E$ and $\eta>0$ be given. By Corollary \ref{move cor} there exists another admissible manifold $M'$ such that $x\in M'$. For this manifold $M'$, Lemma~\ref{man2func cor} now provides us with an
$\eps>0$ and a function $f\colon D\to\R$ such that the properties (i)-(v) of Lemma \ref{man2func cor} are fulfilled. By decreasing $\eps>0$ if necessary, we may assume that $\bar B_\eps(x)\subset D$. As in Lemma \ref{man2func cor} we set $E:=f^{-1}\big((-\eps,\eps)\big)$.

The set $f^{-1}\big(\{-\tfrac{\eps}{2},\tfrac{\eps}{2}\}\big)$ is compact since it is closed in $D$ and a subset of the compact set $\bar E\subset D$ (see Lemma~\ref{man2func cor}~(iii)). Since it is disjoint from the closed set $E^c$ we thus have
\[ \Delta := \dist\!\big(f^{-1}\big(\{-\tfrac{\eps}{2},\tfrac\eps2\}\big),\,E^c\big)>0. \]
Lemma~\ref{lower semi lemma} (ii) and Definition \ref{drift vector field} provide us with constants $\cB :=\cB(\bar B_\eps(x))>0$ and $\cA :=\cA(\bar E)>0$, and Lemma~\ref{man2func cor} (iv) and (v) imply that the constants $\cG:=\cG(\bar E)$ and $\cH :=\cH (f,-\eps,\eps)$ defined in Lemma~\ref{key estimate} fulfill $\cG>0$ and $\cH<\infty$, so that all the requirements are met to apply Lemma \ref{key estimate} to any interval $(q_1,q_2)\subset(-\eps,\eps)$.
Finally, we define
\begin{equation} \label{r bound 1}
\nu:=\min\!\bigg\{\eps,\ \frac{\cA \cG\Delta}{5\cB\cH^2}\,,\ \frac{\eta}{4}\Big(\cH+\frac{\cB\cH ^2}{\cA\cG}\Big)^{\!\!-1}\bigg\},
\end{equation}
and we let $r\in(0,\nu]$ be so small that $\bar B_r(x)\subset f^{-1}\big((-\tfrac\eps2,\tfrac\eps2)\big)\subset E$ (which is possible because $f(x)=0$ by Lemma \ref{man2func cor}~(i)), and that for $\forall w\in\bar B_r(x)\cap\E$ $\exists\gamma\in\Gamma_x^w\colon\,\length(\gamma)\leq\nu$ (which is possible by Assumption \hE).

Now let $x_1,x_2\in\bar B_r(x)\cap\E$. For $i=1,2$ let $\bar\gamma^i\in\Gamma^{x_i}_x$ with $\length(\bar\gamma^i)\leq\nu$ and thus in particular $\bar\gamma^i\subset\bar B_\nu(x)\subset\bar B_\eps(x)$, and let $\bar\gamma:=-\bar\gamma^1+\bar\gamma^2\in\Gx$. Since $\bar\gamma\subset\bar B_\eps(x)$, Lemma \ref{lower semi lemma} (ii) shows that
\begin{equation} \label{rough inf est 2}
   \inf_{\gamma\in\Gx}\hS(\gamma) \leq\hS(\bar\gamma)\leq\cB\length(\bar\gamma)
   \leq 2\cB\nu.
\end{equation}
Next, let $(\varphi_n)_{n\in\N}\subset\CI$ be some parameterizations of a minimizing sequence $(\gamma_n)_{n\in\N}$ of $\prob$. We claim that
\begin{equation} \label{length bound 2}
\exists n_0\in\N\ \forall n\geq n_0:\ \max_{\alpha\in[0,1]}f(\varphi_n(\alpha))<\eps.
\end{equation}
Indeed, if this were not the case then we could extract a subsequence $(\varphi_{n_k})_{n\in\N}$ such that $\max_{\alpha\in[0,1]}f(\varphi_{n_k}(\alpha))\geq\eps$ for $\forall k\in\N$.
Since $x_1,x_2\in\bar B_r(x)\subset f^{-1}\big((-\tfrac\eps2,\tfrac\eps2)\big)$, we have $f(\varphi_{n_k}(0))=f(x_1)<\frac\eps2$ and $f(\varphi_{n_k}(1))=f(x_2)<\frac\eps2$, and thus for $\forall k\in\N$ there would then be two numbers $0<\check\alpha_k<\hat\alpha_k<1$ such that $f(\varphi_{n_k}(\check\alpha_k))=\frac\eps2$, $f(\varphi_{n_k}(\hat\alpha_k))=\eps$, and $f(\varphi_{n_k}(\alpha))\in(\frac\eps2,\eps)$ for $\forall\alpha\in(\check\alpha_k,\hat\alpha_k)$. Applying Lemma \ref{key estimate} with $(q_1,q_2)=(\frac\eps2,\eps)$, we would then have
\begin{align*}
  \frac{2\cH^2}{\cA\cG}\hS(\gamma_{n_k})
    &\geq \length\!\big(\gamma_{n_k}|_{f^{-1}((\eps/2,\eps))}\big)
-2\big|h_{\eps/2}^\eps(\underbrace{f(x_1)}_{\leq\frac\eps2})
-h_{\eps/2}^\eps(\underbrace{f(x_2)}_{\leq\frac\eps2})\big| \\
    &= \int_0^1|\varphi_{n_k}'|\One_{f(\varphi_{n_k})\in(\eps/2,\eps)}\,d\alpha
                       - 2\Big|\frac\eps2-\frac\eps2\Big| \\
     &\geq \int_{\check\alpha_k}^{\hat\alpha_k}|\varphi_{n_k}'|\,d\alpha \\
     &\geq \bigg|\int_{\check\alpha_k}^{\hat\alpha_k}\varphi_{n_k}'\,d\alpha\bigg|
     =     \bigg|\!\!\underbrace{\varphi_{n_k}(\hat\alpha_k)}_{\in f^{-1}(\{\eps\})\subset E^c} \!\!-\ \underbrace{\varphi_{n_k}(\check\alpha_k)}_{\in f^{-1}(\{\frac\eps2\})}\,\bigg|
    \geq \Delta\,.
\end{align*}
(Note that Lemma \ref{key estimate} gives us this estimate for constants $\cA $, $\cG$ and $\cH $ that are defined using $q_1=\frac\eps2$ and $q_2=\eps$, but the above estimate still holds as is since by Remark \ref{constant comparison} the term $\frac{2\cH^2}{\cA\cG}$ becomes larger by switching to our constants.)
Taking the limit $k\to\infty$ and using \eqref{rough inf est 2}, we thus find that
\[
  \Delta \leq \frac{2\cH^2}{\cA\cG}\cdot2\cB\nu,
\]
which contradicts \eqref{r bound 1}. This proves \eqref{length bound 2}, and with analogous arguments one can show that $\min_{\alpha\in[0,1]}f(\varphi_n(\alpha))>-\eps$ for large enough $n\in\N$.

After passing on to a tailsequence we may thus assume that $\gamma_n\subset f^{-1}\big((-\eps,\eps)\big)$ for $\forall n\in\N$. Using this additional knowledge, we can now apply Lemma \ref{key estimate} one more time (this time with $(q_1,q_2)=(-\eps,\eps)$) 
to obtain
\begin{align}
  \length(\gamma_n)
    &= \length\!\big(\gamma_n|_{f^{-1}((-\eps,\eps))}\big) \nonumber \\
    &\leq \frac{2\cH ^2}{\cA \cG}\hS(\gamma_n) + 2\big|h_{-\eps}^\eps(f(x_1))-h_{-\eps}^\eps(f(x_2))\big| \nonumber \\
    &= \frac{2\cH ^2}{\cA \cG}\hS(\gamma_n) + 2\big|f(x_1)-f(x_2)\big| \nonumber \\
    &\leq \frac{2\cH ^2}{\cA \cG}\hS(\gamma_n) + 2|x_1-x_2|\max_{w\in\bar B_r(x)}|\nabla f(w)| \nonumber \\
    &\leq \frac{2\cH ^2}{\cA \cG}\hS(\gamma_n) + 4\cH r \label{length bound blm1}
\end{align}
for $\forall n\in\N$, and thus $\sup_{n\in\N}\length(\gamma_n)<\infty$. We can now apply Proposition~\ref{min prop} and then use \eqref{length bound blm1}, the minimizing property of $(\gamma_n)_{n\in\N}$, \eqref{rough inf est 2} and \eqref{r bound 1} to conclude that the problem $\prob$ has a strong minimizer $\gamma^\star\in\Gx$ that fulfills
\begin{align*}
  \length(\gamma^\star)
    &\leq \liminf_{n\to\infty}\length(\gamma_n) \\
    &\leq 4\cH r + \frac{2\cH ^2}{\cA \cG}\liminf_{n\to\infty}\hS(\gamma_n) \\
    &=    4\cH r + \frac{2\cH ^2}{\cA \cG}\inf_{\gamma\in\Gx}\hS(\gamma) \\
    &\leq 4\cH\nu + \frac{2\cH ^2}{\cA \cG}\cdot2\cB\nu \\
    &=    4\nu\Big(\cH+\frac{\cB \cH ^2}{\cA\cG}\Big) \\
    &\leq \eta.
\qedhere
\end{align*}
\end{proof}

\subsection{Proof of Proposition \ref{blm prop 2}} \label{subsec proof prop 4}

If $b(x)=0$ then the strategy in the proof of Proposition \ref{blm prop 1} (laying out a ``$b$-coordinate grid'' around $x$) breaks down because $x$ cannot lie on an admissible manifold. Using the following lemma, we can however lay out multiple $b$-coordinate grids, each with $x$ on its boundary, that together cover a punctuated neighborhood of $x$.
We then have to refine our estimates for the curve lengths carefully, by slicing that neighborhood into appropriate regions and adding up the bounds that we obtain for each of them. The following lemma provides us with the necessary tools for this technique.

\begin{lemma} \label{superprop}
a) Let $x\in D$, and let the assumptions of Proposition \ref{blm prop 2} (i) or (ii) for $x$ to have weak local minimizers be fulfilled.
Then there exist an $\eps>0$ and functions $f_1,\dots,f_m\in C(D,[0,\infty))$ such that for $\forall i=1,\dots,m$\\[.15cm]
\begin{tabular}{rl}
(i) & $f_i(x)=0$, \\[.1cm]
(ii) & \begin{minipage}[t]{11.5cm} $f_i$ traces the flowlines of $b$ between the values $0$ and $\eps$, \end{minipage} \\[.1cm]
(iii) & $\bar E_i$ is a compact subset of $D$, where $E_i:=f_i^{-1}\big((0,\eps)\big)$, and \\[.1cm]
(iv) &  $\forall w\in\bar E_i\setminus\{x\}\colon\ \,b(w)\neq0$.
\end{tabular}\\[.15cm]
\enlh
Furthermore,\\[.05cm]
\begin{tabular}{rl}
\hspace{.04cm}
(v) &  \begin{minipage}[t]{11.5cm} $\exists\cE>0\ \,\forall w\in \bar B_\eps(x)\colon\ \,\max\{f_1(w),\dots,f_m(w)\}\geq\cE|w-x|$. \end{minipage}
\end{tabular}\\[.2cm]
b) In addition, if the assumptions of Proposition \ref{blm prop 2} (i) or (ii) for $x$ to have strong local minimizers are fulfilled, then\\[.2cm]
\begin{tabular}{rl}
(vi) &  $\forall i=1,\dots,m\colon$ $\sup_{w\in E_i}|\nf_i(w)|<\infty$, and \\[.1cm]
\hspace{-.19cm}
(vii) &  \begin{minipage}[t]{11.5cm} $\exists\cF\geq1\ \,\forall w\in \bar B_\eps(x)\colon$ $\max\{f_1(w),\dots,f_m(w)\}\leq \cF |w-x|$. \end{minipage}
\end{tabular}
\end{lemma}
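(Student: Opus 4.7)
The construction splits along the two alternatives of Proposition \ref{blm prop 2}, and the two cases require quite different arguments. The attractor/repellor case (i) is essentially a direct application of the machinery already built around the functions $f_s, f_u$ from Definition \ref{fs fu def}: a single function $f_1$ suffices. The saddle case (ii) is where the bulk of the work lies: we will construct one function $f_i$ per admissible manifold $M_i$, each built from the signed arc-length along flowlines emanating from $M_i$ (as in Lemma \ref{man2func cor}), but modified so that the resulting functions extend continuously to $0$ at $x$ and together satisfy the covering lower bound (v).

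\textbf{Case (i): $x$ is an attractor or repellor.} I take $m=1$ and work (without loss of generality) in the attractor case, where $\nabla b(x)$ has eigenvalues with negative real parts. By Lemma \ref{distance function}, the function $f_s\colon B_s\to[0,\infty)$ is continuous with $f_s(x)=0$, is $C^1$ on $B_s\setminus\{x\}$, satisfies $\skp{\nf_s(w)}{b(w)}=-|b(w)|$ and is sandwiched by $|w-x|\leq f_s(w)\leq\cD|w-x|$ on compact subsets of $B_s$. I first choose $\eps>0$ small enough that $\bar B_{2\eps}(x)\subset B_s$ and that $\bar B_{2\eps}(x)$ contains no root of $b$ besides $x$; then I set
\[
f_1(w):=\begin{cases}\min\!\big(f_s(w),\eps\big) & w\in B_s,\\ \eps & w\in D\setminus B_s.\end{cases}
\]
The lower bound $f_s(w)\geq|w-x|$ together with $\bar B_\eps(x)\subset B_s$ makes $f_1$ continuous on all of $D$, and the desired properties are then quick checks: (i)--(iv) and (v) with $\cE=1$ follow directly from Lemma \ref{distance function}; (vii) follows from $f_1\leq f_s\leq\cD|w-x|$; and for (vi) one uses the hyperbolicity at $x$ to propagate the bound on $\nabla b$ through the variational equation for $\nabla\psi$ and thereby deduce a uniform bound for $\nf_s$ on $E_1$ (which is the only place the strong-case hypothesis really enters in this case). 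The repellor case is symmetric, using $f_u$ instead.

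\textbf{Case (ii): $x$ is a saddle.} For each admissible manifold $M_i$ I apply Lemmas \ref{man2func lemma} and \ref{man2func cor} to obtain the $C^1$ functions $z_i(w),t_i(w)$ and a signed arc-length $g_i$ that traces $b$ in a two-sided neighborhood of $M_i$. Because we need $f_i\geq0$ with $f_i(x)=0$, the idea is to shift and truncate $g_i$ so that it measures, along each flowline in $\psi(M_i,\R)$, the remaining arc length towards $x$ (rather than the signed distance from $M_i$), and to cap the result at a value $\eps>0$ small enough that the truncated function can be extended by the constant $\eps$ outside a compact neighborhood without destroying continuity. This is precisely the step where Lemma \ref{move M lemma} is useful: by flowing $M_i$ backward (or forward) in time via a carefully chosen $\beta\in C^1(D,(0,\infty))$ (constructed using the $t_i$ of Lemma \ref{man2func lemma}), I can arrange that the deformed admissible manifold $\psi_\beta(M_i,T)$ is positioned so that $g_i$ vanishes exactly where the flowlines reach the boundary of the relevant region, which makes the truncation continuous. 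Properties (i)--(iv) then follow from the construction and from the facts that admissible manifolds avoid the roots of $b$ (Remark \ref{on manifold b neq0}) and that flowlines cross them only once (Lemma \ref{limit cycle lemma}); property (v), the critical lower-bound property, uses condition \eqref{main condition}: every direction $y$ at $x$ either lies near the stable manifold or near the unstable manifold of $x$, hence near a flowline in some $\psi(M_i,\R)$, so that for each $w\in \bar B_\eps(x)\setminus\{x\}$ at least one $f_i(w)$ is bounded below linearly in $|w-x|$.

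\textbf{Main obstacle.} The hard step is case (ii). Three difficulties stack on top of each other: (a) the functions $g_i$ coming from Lemma \ref{man2func cor} are only defined on $\psi(M_i,\R)$, which excludes $x$ itself and, more seriously, excludes the flowlines that ``miss'' all of the $M_i$ after finite time; the cap/extension has to be engineered so that $f_i$ is genuinely continuous across that boundary; (b) checking the covering estimate (v) amounts to a quantitative version of \eqref{main condition} — showing that $\bigcup_i\psi(M_i,\R)$ not only contains $(M_s\cup M_u)\setminus\{x\}$ but fills a whole punctured neighborhood of $x$ with uniform linear control, which in turn relies on the hyperbolicity of $\nabla b(x)$ through a local-coordinate splitting along stable and unstable directions; and (c) upgrading to the strong conditions (vi)--(vii) requires controlling $\nf_i$ all the way into $x$, which is where the restriction to $n=2$ in Proposition \ref{blm prop 2}~(ii) and the auxiliary bounds \eqref{strong condition 2}--\eqref{strong condition} are needed. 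I expect the details of (b) and (c) to occupy essentially all of Part \ref{part superprop}.
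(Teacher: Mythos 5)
Your construction for the attractor/repellor case matches the paper's almost verbatim (the formulas $\min(f_s,\eps)$ capped and extended by $\eps$ outside $B_s$ vs.\ the paper's piecewise definition via $f_s^{-1}([0,\eps))$ are the same function), and the high-level strategy for the saddle case — deform each $M_i$ via Lemma \ref{move M lemma} and then define $f_i$ as a truncated remaining arc-length toward $x$ along the flowlines of $\psi(M_i,\R)$, with a Hartman-Grobman/hyperbolic-splitting argument for the covering bound — is also the route the paper takes.

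There is, however, a genuine misreading in your point (c). You claim that upgrading to (vi)–(vii) ``requires \dots the auxiliary bounds \eqref{strong condition 2}--\eqref{strong condition}.'' This is explicitly false: the paper remarks, immediately after stating Lemma \ref{superprop}, that since the lemma concerns only the vector field $b$, the conditions \eqref{strong condition 2}--\eqref{strong condition} (which constrain $\E$ and $\ell$, not $b$) play no role in its proof; the \emph{only} extra assumption used for (vi)--(vii) in the saddle case is $D\subset\R^2$. The conditions \eqref{strong condition 2}--\eqref{strong condition} enter only later, in the proof of Proposition \ref{blm prop 2} itself (where one needs to bound the action $S(\varphi^\star|_{Q_k^\pm})$ in terms of $|w-x|^{1+\delta}$). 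If you tried to use those conditions inside the construction of $f_i$ you would be stuck, since they say nothing about the drift. Similarly, your stipulation $\beta\in C^1(D,(0,\infty))$ in the Lemma \ref{move M lemma} step is too restrictive: the deformation must be compactly supported (so that $M_i'$ coincides with $M_i$ outside a tube around the relevant flowline segments), and the paper's $\beta_i$ is built with a cutoff $\nu_i$ and therefore vanishes outside $W_i$. A strictly positive $\beta$ would merely reparametrize the global flow and could not pin $M_i'$ to the level set $\hMtas\cap\bar N_\rho(K_i^{\tilde a})$ the way the covering property \eqref{Mi' cover} demands.
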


Observe that since this lemma takes a vector field $b$ and provides us with corresponding functions $f_i$, the properties \eqref{strong condition 2}-\eqref{strong condition} (which do not concern~$b$) are not needed for its proof (they will only be used in the main part of the proof of Proposition \ref{blm prop 2}). The only additional condition that we will use for proving (vi)-(vii) is that in the saddle point case we have~$D\subset\R^2$.

\begin{proof}
Here we will only prove the statement for the case that $x$ is an attractor or a repellor of $b$, where -- as we will see -- only one flowline tracing function $f_1$ is enough, i.e.~we can take $m=1$. The much harder proof for the case of a saddle point is the content of Part \ref{part superprop}.

Let us first deal with the case in which $x$ is an attractor of $b$. Let $\eps>0$ be so small that $\bar B_\eps(x)\subset B_s$, where $B_s$ is the basin of attraction of~$x$, let $f_s\colon B_s\to[0,\infty)$ be the function given by Definition \ref{fs fu def}, and finally define
\begin{equation} \label{flowline tracing def attractor}
  f_1(w):=\begin{cases}
    f_s(w) & \text{if $w\in f_s^{-1}\big([0,\eps)\big)$,} \\
    \eps   & \text{else.}
  \end{cases}
\end{equation}
We will now show that $f_1$ fulfills the properties (i)-(vii) of Lemma~\ref{superprop}. \\[.2cm]
%
(i) $f_1(x)=f_s(x)=0$.\\[.2cm]
(ii) To show that $f_1$ traces the flowlines of $b$ between the values $0$ and $\eps$, we have to check the three properties in Definition \ref{tracing def}.\\[.2cm]
(ii.1) Clearly, $f_1$ is continuous on $D_1:=f_s^{-1}\big([0,\eps)\big)$ and on $D_2:=D\setminus D_1$.\linebreak
$D_1$ is open since it can be written as $f_s^{-1}\big((-\infty,\eps)\big)$, and thus $D_2$ is closed in~$D$. To show that $f_1$ is continuous on all of $D$ it thus suffices to show that for any converging sequence $(w_n)_{n\in\N}\subset D_1$ with $w:=\lim_{n\to\infty}w_n\in D_2$ we have $\lim_{n\to\infty}f_1(w_n)=f_1(w)$. To do so, first note that by \eqref{fs norm est 1 lower} we have $D_1\subset\bar B_\eps(x)$, which implies that $w\in\bar B_\eps(x)\subset B_s$ and thus $\lim_{n\to\infty}f_s(w_n)=f_s(w)$. Now since $f_s(w_n)\in[0,\eps)$ for $\forall n\in\N$, we have $f_s(w)\in[0,\eps]$, and thus $w\in D_2$ implies $f_s(w)=\eps$. We can now conclude that $\lim_{n\to\infty}f_1(w_n)=\lim_{n\to\infty}f_s(w_n)=f_s(w)=\eps=f_1(w)$.\\[.2cm]
(ii.2) We have $E_1:=f_1^{-1}\big((0,\eps)\big)=f_s^{-1}\big((0,\eps)\big)$ and thus $f_1|_{E_1}=f_s|_{E_1}$. Also, we have $E_1\subset B_\eps(x)\setminus\{x\}\subset B_s\setminus\{x\}$ by \eqref{fs norm est 1 lower} and since $f_s(x)=0$. Therefore by Lemma \ref{distance function}~(i), $f_s$ and thus also $f_1$ is $C^1$ on $E_1$.\\[.2cm]
(ii.3) Since $f_1=f_s$ on the open set $E_1\subset B_s\setminus\{x\}$, we have $\nf_1|_{E_1}=\nf_s|_{E_1}$ and thus $\forall w\in E_1\colon\ \skp{\nf_1(w)}{b(w)}=\skp{\nf_s(w)}{b(w)}=-|b(w)|$ by \eqref{fs decreasing}.\\[.2cm]
(iii) We have $\bar E_1\subset\bar B_\eps(x)\subset B_s\subset D$, and so $\bar E_1$ is a compact subset of $D$.\\[.2cm]
(iv) The relation shown in part (iii) implies $\bar E_1\setminus\{x\}\subset B_s\setminus\{x\}$, and since $x$ is the only point in $B_s$ with zero drift, this shows that $\forall w\in\bar E_1\setminus\{x\}\colon\ b(w)\neq0$. \\[.2cm]
(v) Let $w\in\bar B_\eps(x)$. If $w\in f_s^{-1}([0,\eps))$ then $f_1(w)=f_s(w)\geq|w-x|$ by \eqref{fs norm est 1 lower}. Otherwise we have $f_1(w)=\eps\geq|w-x|$. Thus we can choose $\cE:=1$.\\[.2cm]
(vi) In the proof of Lemma \ref{distance function}~(i), an integrable bound on the integrand of \eqref{diff integrand b} was found that is uniform on a neighborhood of some fixed $w\in B_s\setminus\{x\}$. We can use even easier arguments to find an integrable bound that is uniform on some punctuated ball $\bar B_{\eta}(x)\setminus\{x\}$ (at $x$ the argument breaks down since $\frac{b}{|b|}$ is undefined). This proves that $|\nf_s|$ is bounded on $\bar B_\eta(x)\setminus\{x\}$, and since $\nf_s$ is continuous on $B_s\setminus\{x\}$, $|\nf_s|$ is thus bounded also on the set $\bar B_{\eps}(x)\setminus\{x\}$ which includes $E_1$. Since we saw in (ii.3) that $\nf_1|_{E_1}=\nf_s|_{E_1}$, this shows that $|\nf_1|$ is bounded on $E_1$.\\[.2cm]
(vii) Let $\cD>0$ be the constant given by \eqref{fs norm est 1 upper} that corresponds to $K:=\bar B_\eps(x)$. Then for $\forall w\in\bar B_\eps(x)$ we have $f_1(w)\leq f_s(w)\leq\cD|w-x|$, i.e.~we can take $\cF:=\cD$.\\[.2cm]
This completes the proof for the case of an attractor. If $x$ is a repellor then we replace $f_s$ by $f_u$ everywhere in our proof, and the only difference will be that in part (ii.3) we have $\forall w\in E_1\colon\,\skp{\nf_1(w)}{b(w)}=+|b(w)|$ by~\eqref{fu increasing}.
\end{proof}

We are now ready to prove Proposition \ref{blm prop 2}. In the part proving that $x$ has \textit{strong} local minimizers we must assume that the reader has read the proof of Lemma \ref{min prop0b} in Appendix \ref{weak convergence appendix}, since we will re-use its terminology without further notice.

\begin{proof}[Proof of Proposition \ref{blm prop 2}]
\textit{Preparations.} Let $x\in\E$, and let the conditions of Proposition \ref{blm prop 2} (i) or (ii) for $x$ to have weak local minimizers be fulfilled. Let $\eps,\cE>0$ and the functions $f_1,\dots,f_m\colon D\to[0,\infty)$ be given as in Lemma \ref{superprop}~a), let $E_i:=f_i^{-1}\big((0,\eps)\big)$ for $\forall i=1,\dots,m$, and define $F:=\max\{f_1,\dots,f_m\}$. By decreasing $\eps$ and $\cE$ if necessary, we may assume that $\bar B_{2\eps}(x)\subset D$ and $\cE\in(0,1)$.
Since $b(x)=0$ and since our assumptions imply that $\nabla b(x)$ is an invertible matrix, $b$ is locally invertible at $x$ and we can further decrease~$\eps$ until
\begin{equation} \label{b bound 1}
|b(w)|\geq A|w-x| \qquad\text{for}\ \forall w\in\bar B_\eps(x)\ \text{and some}\ A>0.
\end{equation}

If the additional conditions for $x$ to have \textit{strong} local minimizers are fulfilled then we will at this point first choose $\rho,\cC,\delta>0$ such that \eqref{strong condition} is fulfilled (where we may assume that $\rho\in(0,1]$ and thus also that $\delta\in(0,1]$), and then further decrease~$\eps$ until \eqref{strong condition 2} holds for some $\cJ>0$ (where we may assume that $\eps\in(0,\rho/\cJ]$). Observe that we will not use these properties \eqref{strong condition 2}-\eqref{strong condition} during the first part of our proof (where we show that $x$ has weak local minimizers). This ends our definition of~$\eps$.

In either case, for every $i=1,\dots,m$, the set $f_i^{-1}(\{\tfrac{\cE\eps}{2}\})$ is compact since it is closed in $D$ and a subset of the compact set $\bar E_i\subset D$ (see Lemma \ref{superprop}~(iii)). Since it is disjoint from the closed set $f_i^{-1}\big((0,\cE\eps)\big)^c$ we thus have
\[ \Delta := \min_{1\leq i\leq m}\dist\!\Big(f_i^{-1}(\{\tfrac{\cE\eps}{2}\}),\ f_i^{-1}\big((0,\cE\eps)\big)^c\Big)>0. \]
Next we let $\cB :=\cB(\bar B_{2\eps}(x))>0$ as given by Lemma \ref{lower semi lemma} (ii). Also, defining $E:=\bigcup_{i=1}^mE_i\supset F^{-1}\big((0,\eps)\big)$, the set $\bar E=\bigcup_{i=1}^m\bar E_i$ is a compact subset of $D$ by Lemma \ref{superprop} (iii), and so Definition \ref{drift vector field} provides us with a constant $\cA :=\cA (\bar E)>0$. Defining $E_i':=f_i^{-1}\big((\tfrac{\cE \eps}{2},\cE \eps)\big)\subset E_i$ for $\forall i=1,\dots,m$, the constant $\cG:=\min_{1\leq i\leq m}\cG\big(\overline{E_i'}\big)$ defined in Lemma \ref{key estimate} fulfills $\cG>0$ by Lemma \ref{superprop} (i), (iii) and (iv), and the constant $\cH :=\max_{1\leq i\leq m}\cH \big(f_i,\tfrac{\cE \eps}{2},\cE \eps\big)$ defined in Lemma \ref{key estimate} is finite since $\nabla f_i$ is continuous on $E_i\supset\overline{E'_i}$ by Lemma \ref{superprop}~(ii), and since $\overline{E'_i}$ is compact by Lemma \ref{superprop} (iii). Finally, we define
\begin{equation} \label{r bound 2}
\nu:=\min\!\bigg\{\eps,\,\frac{\cA \cG\Delta}{5\cB\cH^2}\bigg\},
\end{equation}
and we let $r\in(0,\nu]$ be so small that
\begin{equation} \label{Br in F}
\bar B_r(x)\subset F^{-1}\big([0,\tfrac{\cE\eps}{2})\big)
\end{equation}
(this is possible since $F\geq0$, $F$ is continuous, and $F(x)=0$ by Lemma~\ref{superprop}~(i)), that
\begin{equation} \label{b bound 2}
 \min_{w\in\bar B_\eps(x)\setminus B_{r}(x)}|b(w)|\leq \min_{w\in \bar E\setminus B_\eps(x)}|b(w)|
\end{equation}
(this is possible since $b(x)=0$, and since $\bar E\setminus B_\eps(x)$ is a compact set on which $b\neq0$ by Lemma \ref{superprop} (iii)-(iv)), and that for $\forall w\in\bar B_r(x)\cap\E$ $\exists\gamma\in\Gamma_x^w$: $\length(\gamma)\leq\nu$ (this is possible by Assumption~\hE).

If the additional conditions for $x$ to have \textit{strong} local minimizers are fulfilled 
then we will in fact show the stronger property in Remark \ref{blm remark}~(ii), so let $\eta>0$ be given. Under these conditions, Lemma \ref{superprop} (vi) says that the constant $\cHH :=\max_{1\leq i\leq m}\cH (f_i,0,\eps)$ defined in Lemma \ref{key estimate} is finite, and Lemma \ref{superprop}~(vii) gives us a constant $\cF>0$. We then decrease $r$ further so that
\begin{equation} \label{r bound 2b}
\frac{2ar^\delta}{1-2^{-\delta}} \leq\eta,
\text{\quad where \quad}
a:=\frac{2^{4+\delta}m\cJ^{1+\delta}\cC\cHH^2\cF}{\cA\cE^{2+\delta} A} + 4m\cF\eps^{1-\delta}.
\end{equation}
Again observe that we will not use the constants $\cHH$ and $\cF$ and the estimate \eqref{r bound 2b} during the first part of our proof. This ends our definition of $r$.\\[.2cm]
\textit{Weak local minimizers.}
Now let $x_1,x_2\in \bar B_r(x)\cap\E$, let $(\gamma_n)_{n\in\N}\subset\Gx$ be a minimizing sequence of $\prob$, and let us assume that each curve~$\gamma_n$ visits the point $x$ at most once (otherwise we may cut out the piece between the first and the last hitting point of $x$, which can only decrease the action of the curve). Denoting by $(\tilde\varphi_n)_{n\in\N}\subset\AC{0,1}$ their arclength parameterizations given by Lemma \ref{arclength lemma}~(i),
we first claim that for sufficiently large $n\in\N$ we have
\begin{equation} \label{length bound 3}
\max_{\alpha\in[0,1]}F(\tilde\varphi_n(\alpha))<\cE \eps.
\end{equation}
Indeed, if this were not the case then we could extract a subsequence $(\tilde\varphi_{n_k})_{k\in\N}$ such that for some $i_0$ and $\forall k\in\N$ we had $\max_{\alpha\in[0,1]}f_{i_0}(\tilde\varphi_{n_k}(\alpha))$ $\geq\cE\eps$. Since by \eqref{Br in F} we have $f_{i_0}(x_1)\leq F(x_1)<\tfrac12\cE\eps$ and similarly $f_{i_0}(x_2)<\tfrac12\cE\eps$, we could then use the same arguments as in the proof of Proposition~\ref{blm prop 1} (only here with Lemma \ref{key estimate} applied to $f_{i_0}$ and $(q_1,q_2)=(\tfrac12\cE\eps,\cE \eps)$) and Remark~\ref{constant comparison} to conclude that
\[
  \Delta
    \leq \frac{\,2\cH\big(f_{i_0},\tfrac{\cE \eps}{2},\cE \eps\big)^2}
              {\cA\big(\overline{E_{i_0}'}\big)\cG\big(\overline{E_{i_0}'}\big)}
         \cdot2\cB\nu
    \leq \frac{2\cH^2}{\cA\cG}\cdot2\cB\nu,
\]
contradicting \eqref{r bound 2}. This proves \eqref{length bound 3} for large enough $n\in\N$, and so after passing on to a tailsequence we may assume that \eqref{length bound 3} holds for $\forall n\in\N$.

In particular, this implies that $\gamma_n\subset\bar B_\eps(x)$ for $\forall n\in\N$. Indeed, otherwise there would be a point $w$ on $\gamma_n$ such that $|w-x|=\eps$, and Lemma \ref{superprop} (v) and \eqref{length bound 3} would then imply that $\cE\eps=\cE |w-x|\leq F(w)<\cE\eps$. As a result, we are allowed to apply the estimate in Lemma \ref{superprop} (v) (and later also the one in Lemma \ref{superprop} (vii)) to all points on the curves $\gamma_n$.\\[.2cm]
We will now use Lemma \ref{min prop0b} to construct a converging subsequence. In order to control the lengths of $\gamma_n$ away from $x$, we use \eqref{length bound 3}, the definition of $F$, Lemma~\ref{key estimate} (whose conditions can be checked as above) and \eqref{hqq est 1} to obtain for $\forall i=1,\dots,m$ and $\forall u\in(0,\cE\eps)$ constants $cst(i,u)>0$ (independent of~$x_1$ and~$x_2$) such that
\begin{align}
\int_{\gamma_n}\One_{F(z)>u}\,|dz| 
    &=    \int_{\gamma_n}\One_{F(z)\in(u,\cE\eps)}\,|dz| \nonumber \\
    &\leq \sum_{i=1}^m \int_{\gamma_n}\One_{f_i(z)\in(u,\cE\eps)}\,|dz|\nonumber \\
    &\leq \sum_{i=1}^m \Big[ cst(i,u)\hS(\gamma_n)+2\big|h_{u}^{\cE\eps}(f_i(x_1))-h_{u}^{\cE\eps}(f_i(x_2))\big| \Big] \nonumber \\
    &\leq \Big(\sum_{i=1}^m cst(i,u)\Big)\hS(\gamma_n) + 2m(\cE\eps-u) \nonumber \\
    &\leq \Big(\sum_{i=1}^m cst(i,u)\Big)\sup_{j\in\N}\hS(\gamma_j) + 2m\cE\eps=:\eta(u). \label{length bound 4}
\end{align}
For $u\geq\cE\eps$ this estimate holds with $\eta(u):=0$ by \eqref{length bound 3}.
We could now use that $\bar B_u(x)^c\subset F^{-1}\big((\cE u,\infty)\big)$
by Lemma \ref{superprop}~(v) to check the condition \eqref{length away from x cond}, but in preparation for the second part of this proof we will instead make use of the remark at the beginning of the proof of Lemma \ref{min prop0b}, which says that the estimate \eqref{length bound 4} is enough as is, and we will consider the construction and terminology of that proof, using \textit{our} function $F$ (instead of the function $F(w)=|w-x|$), $c:=\cE$, $K:=\bar B_\eps(x)$, and $u_k:=\tilde r2^{-k}$, where
\begin{equation} \label{tilde r def}
  \tilde r:=\max_{w\in\bar B_r(x)} F(w).
\end{equation}
Thus, by Lemma \ref{min prop0b} there exist parameterizations $\varphi_n\in\Cxx$ of $\gamma_n$ such that a subsequence of $(\varphi_n)_{n\in\N}$ converges to a parameterization $\varphi^\star\in\Cxx$ of a curve $\gamma^\star\in\tGxxx$. We have $\gamma^\star\subset\bar B_\eps(x)=K$ since $\gamma_n\subset\bar B_\eps(x)$ for $\forall n\in\N$, and in particular we can apply the estimate in Lemma \ref{superprop} (v) (and later also the one in Lemma \ref{superprop} (vii)) to every point on $\gamma^\star$. By \eqref{general outer length}, i.e.~the generalized version of \eqref{length away from x result}, we therefore have
\[
\length\!\big(\gamma^\star|_{\bar B_{u}(x)^c}\big)
= \int_{\gamma^\star}\One_{|z-x|>u}\,|dz|
\leq \int_{\gamma^\star}\One_{F(z)>\cE u}\,|dz|
\leq \eta(\cE u)
=: \tilde\eta(u)
\]
for $\forall u>0$. Finally, by Lemmas \ref{lower semi lemma} (iv) and \ref{weak vs strong inf} we have
\begin{equation} \label{pre min prop}
S(\gamma^\star) \leq \liminf_{n\to\infty}S(\gamma_n)=\inf_{\gamma\in\Gx}\hS(\gamma)=\inf_{\gamma\in\tGxx}\hS(\gamma),
\end{equation}
and since $\gamma^\star\in\tGxx$, we must have equality, i.e.~$\gamma^\star$ is a weak minimizer of $\prob$. This concludes the proof that $x$ has weak local minimizers. \\[.3cm]
\textit{Strong local minimizers.} Now let the additional conditions of part (i) or (ii) be fulfilled. To show that $x$ has in fact \textit{strong} local minimizers, it remains to show that $\varphi^\star\in\AC{0,1}$ (so that $\gamma^\star\in\Gx$) and that $\length(\gamma^\star)\leq\eta$.

\indent To show that ${\varphi^\star}'\in L^1(0,1)$ and to estimate $\length(\gamma^\star)$, we now begin by proving some properties of the function $F\circ\varphi^\star$. First, note that replacing $\tilde\varphi_n$ by its reparametrized version $\varphi_n$ in \eqref{length bound 3} and then taking the limit $n\to\infty$ implies that
\begin{equation} \label{length bound 3B}
\max_{\alpha\in[0,1]}F(\varphi^\star(\alpha))\leq\cE\eps<\eps.
\end{equation}
Second, taking the limit $n\to\infty$ in \eqref{Fphik geq B1} implies that for $\forall k\in\N$ we have
\begin{equation} \label{Fphik geq C}
\begin{split}
    \text{either} \hspace{.7cm}&\forall s\in [0,d_k^-]\colon\,\ F(\varphi^\star(s))\geq u_k\\
    \text{or}     \hspace{1.3cm}&\text{$\varphi^\star$ is constant on $[0,d_k^-]$}
\end{split}
\end{equation}
(or both), and the same is true with $[0,d_k^-]$ replaced by $[d_k^+,1]$.
Third, we have
\begin{align}
  \forall n\in\N\,\ \forall k\in\N_0\colon &\quad F(\varphi_n(\tfrac12))\leq F(\varphi_n(d_{k+1}^-))\leq F(\varphi_n(d_k^-)), \label{F increasing 1} \\
  \forall k\in\N_0\colon &\quad F(\varphi^\star(\tfrac12))\leq F(\varphi^\star(d_{k+1}^-))\leq F(\varphi^\star(d_k^-)), \label{F increasing 2}
\end{align}
and the same relations hold with $d_k^-$ and $d_{k+1}^-$ replaced by $d_k^+$ and $d_{k+1}^+$.

Indeed, the left inequality in \eqref{F increasing 1} is clear: $F(\varphi_n(\tfrac12))=F\big(\tilde\varphi_n(\alpha_n(\tfrac12))\big)$ $=F(\tilde\varphi_n(\alpha_{\min}^n))\leq F\big(\tilde\varphi_n(\alpha_n(d_{k+1}^-))\big)
=F(\varphi_n(d_{k+1}^-))$. The second inequality in \eqref{F increasing 1} can be seen as follows: If $\alpha_n(d_k^-)=\alpha_n(d_{k+1}^-)$ then we have $F(\varphi_n(d_{k+1}^-))= F(\varphi_n(d_k^-))$, so \eqref{F increasing 1} holds. Also, if $I_{n,k+1}=\varnothing$ then \linebreak $F(\varphi_n(d_{k+1}^-))=F\big(\tilde\varphi_n(\alpha_n(d_{k+1}^-))\big)
=F(\tilde\varphi_n(\alpha_{\min}^n))\leq F\big(\tilde\varphi_n(\alpha_n(d_k^-))\big)=$ \linebreak $F(\varphi_n(d_k^-))$, and \eqref{F increasing 1} holds as well. Otherwise we have $\alpha_n(d_k^-)<\alpha_n(d_{k+1}^-)$ $=\min I_{n,k+1}$, so that $\alpha_n(d_k^-)\notin I_{n,k+1}$ and thus $F(\varphi_n(d_k^-))=F\big(\tilde\varphi_n(\alpha_n(d_k^-))\big)$ $>u_{k+1}\geq F\big(\tilde\varphi_n(\alpha_n(d_{k+1}^-))\big)=F(\varphi_n(d_{k+1}^-))$. This ends the proof of \eqref{F increasing 1}, and \eqref{F increasing 2} now follows by taking the limit $n\to\infty$. The modified statements with $d_k^-$ and $d_{k+1}^-$ replaced by $d_k^+$ and $d_{k+1}^+$ can be shown analogously.\\[.2cm]
\indent Next, we will prove a minimizing property of $\varphi^\star$, namely that for each pair of numbers $0\leq s_1<s_2<\tfrac12$ or $\tfrac12<s_1<s_2\leq1$ we have
\begin{equation} \label{minimizing property}
  \hS(\varphi^\star|_{[s_1,s_2]}) = \inf_{\gamma\in\Gxy{\varphi^\star(s_1)}{\varphi^\star(s_2)}}\hS(\gamma).
\end{equation}
We will prove this for the case $0\leq s_1<s_2<\frac12$, the other case can be shown analogously.
To do so, we denote the left-hand side of \eqref{minimizing property} by $S^\star$.
If the statement were wrong then we could find a curve $\gamma_0\in\Gxy{\varphi^\star(s_1)}{\varphi^\star(s_2)}$ whose action fulfills $\sigma:=S^\star-\hS(\gamma_0)>0$.
By the minimizing property of $(\gamma_n)_{n\in\N}$ and the relation $S^\star=\hS(\varphi^\star|_{[s_1,s_2]})\leq\liminf_{n\to\infty}\hS(\varphi_n|_{[s_1,s_2]})$ (which follows from Lemma \ref{lower semi lemma 2}~(i)), respectively, we could now choose an $n\in\N$ so large that
\[
   \hS(\gamma_n)<\inf_{\gamma\in\Gx}\hS(\gamma)+\tfrac14\sigma
    \qquad\text{and}\qquad
   \hS(\varphi_n|_{[s_1,s_2]})\geq S^\star-\tfrac14\sigma,
\]
and since $\lim_{n\to\infty}\varphi_n(s_i)=\varphi^\star(s_i)$ for $i=1,2$, Assumption \hE\ would allow us to choose $n\in\N$ so large that there exist curves
\[
    \bar\gamma^1\in\Gamma^{\varphi_n(s_1)}_{\varphi^\star(s_1)}
    \qquad\text{and}\qquad
    \bar\gamma^2\in\Gamma^{\varphi_n(s_2)}_{\varphi^\star(s_2)}
\]
with $\length(\bar\gamma^{1,2})\leq\min\{\frac{\sigma}{4\cB},\eps\}$.

Now $\gamma^\star\subset\bar B_\eps(x)$ and $\length(\bar\gamma^{1,2})\leq\eps$ imply that $\bar\gamma^{1,2}\subset\bar B_{2\eps}(x)$, and so by Lemma \ref{lower semi lemma}~(ii) we have the estimates $S(-\bar\gamma^1)\leq\cB\length(\bar\gamma^1)\leq\tfrac14\sigma$ and similarly $S(\bar\gamma^2)\leq\tfrac14\sigma$.
Therefore the curve $\hat\gamma\in\Gx$, constructed by removing from $\gamma_n$ the piece given by $\varphi_n|_{[s_1,s_2]}$ and replacing it by the curve $-\bar\gamma^1+\gamma_0+\bar\gamma^2$, would have the action
\begin{align*}
 \hS(\hat\gamma)
   &= \hS(\gamma_n)-\hS(\varphi_n|_{[s_1,s_2]}) + \hS(-\bar\gamma^1)+\hS(\gamma_0)+\hS(\bar\gamma^2) \hspace{-7cm}&&\\
   &< \Big(\inf_{\gamma\in\Gx}\hS(\gamma)+\tfrac14\sigma\Big)-\big(S^\star-\tfrac14\sigma\big)
      +\tfrac14\sigma+(S^\star-\sigma)+\tfrac14\sigma \\
   &= \inf_{\gamma\in\Gx}\hS(\gamma),
\end{align*}
which is a contradiction, and \eqref{minimizing property} is proven.\\[.3cm]
\indent We are now ready to show that ${\varphi^\star}'\in L^1(0,1)$ and to estimate $\length(\gamma^\star)$. Fix $\forall k\in\N_0$, and let $E_i^k:=f_i^{-1}\big((u_{k+2},\eps)\big)\subset E_i\subset E$ for $\forall i=1,\dots,m$.
Using \eqref{length bound 3B}, \eqref{Fphik geq C}, Lemma \ref{key estimate} applied to the curve given by $\varphi^\star|_{Q_k^-}\in\bar C(d_k^-,d_{k+1}^-)$, Remark \ref{constant comparison}, and \eqref{hqq est 2}, we find that
\begin{align}
  \int_{Q_k^-}|{\varphi^\star}'|\,d\alpha
    &= \int_{d_k^-}^{d_{k+1}^-}|{\varphi^\star}'|\One_{F(\varphi^\star)\in[u_{k+1},\eps)}\,d\alpha \nonumber \\
    &\leq \sum_{i=1}^m \int_{d_k^-}^{d_{k+1}^-}|{\varphi^\star}'|\One_{f_i(\varphi^\star)\in(u_{k+2},\eps)}\,d\alpha \label{omittable line} \\
    &\leq \sum_{i=1}^m \bigg[ \frac{2\cH (f_i,u_{k+2},\eps)^2}{\cA (E_i^k)\cG(E_i^k)}\hS\big(\varphi^\star|_{Q_k^-}\big) \nonumber \\*
    &     \hspace{1.3cm} + 2\,\Big|h_{u_{k+2}}^\eps\big(f_i(\varphi^\star(d_k^-))\big)-h_{u_{k+2}}^\eps\big(f_i(\varphi^\star(d_{k+1}^-))\big)\Big|\bigg] \nonumber \\
    &\leq \sum_{i=1}^m \bigg[ \frac{2\cHH ^2}{\cA \cG(E_i^k)}\hS\big(\varphi^\star|_{Q_k^-}\big)  + 2\,\Big|f_i\big(\varphi^\star(d_k^-)\big)-f_i\big(\varphi^\star(d_{k+1}^-)\big)\Big|\bigg]. \label{final est part one}
\end{align}
To estimate $\cG(E_i^k)$, first we argue that
\begin{equation*}
  E_i^k \subset [\bar B_\eps(x)\cap E_i^k]\cup[\bar B_\eps(x)^c\cap E_i^k]
        \subset [\bar B_\eps(x)\cap B_{u_{k+2}/\cF}(x)^c]\cup[B_\eps(x)^c\cap \bar E],
\end{equation*}
where we used that $E_i^k\subset E\subset \bar E$, and that for $\forall w\in\bar B_\eps(x)\cap E_i^k$ we have $|w-x|\geq\frac{1}{\cF }F(w)\geq\frac{1}{\cF }f_i(w)>\frac{1}{\cF}u_{k+2}$, i.e.~$w\in\bar B_\eps(x)\cap B_{u_{k+2}/\cF}(x)^c$. Furthermore, by \eqref{tilde r def} and Lemma \ref{superprop}~(v) and (vii) we have $\cE r\leq\tilde r\leq\cF r$ and thus in particular $\frac{u_{k+2}}{\cF}\leq\frac{\tilde r}{\cF}\leq r$. Thus, together with \eqref{b bound 2} and \eqref{b bound 1} we find that
\begin{align}
  \cG(E_i^k)
     &= \min\big\{|b(w)|\,;\,w\in E_i^k\} \nonumber\\
     &\geq \min\big\{|b(w)|\,;\,w\in [\bar B_\eps(x)\cap B_{u_{k+2}/\cF}(x)^c]\cup[B_\eps(x)^c\cap \bar E]\big\} \nonumber\\
     &\geq \min\big\{|b(w)|\,;\,w\in [\bar B_\eps(x)\cap B_{u_{k+2}/\cF}(x)^c]\cup[\bar B_\eps(x)\cap B_{r}(x)^c]\big\} \nonumber\\
     &= \min\big\{|b(w)|\,;\,w\in \bar B_\eps(x)\cap B_{u_{k+2}/\cF}(x)^c\big\}\nonumber\\
     &\geq \frac{Au_{k+2}}{\cF} = \frac{A\tilde r}{\cF}2^{-(k+2)}
      \geq \frac{A\cE r}{\cF}2^{-(k+2)}. \label{mu estimate}
\end{align}
Assume now that for the given $k\in\N_0$ \eqref{Fphi leq 1} holds (recall that we denote our limit by $\varphi^\star$ instead of $\varphi$).
Using \eqref{mu estimate}, $f_i\geq0$, the definition of $F$, and \eqref{F increasing 2} and \eqref{Fphi leq 1}, we can then continue the estimate \eqref{final est part one} and find that
\begin{align}
\int_{Q_k^-}|{\varphi^\star}'|\,d\alpha
    &\leq  \frac{2m\cHH^2\cF 2^{k+2}}{\cA\cE Ar}\hS\big(\varphi^\star|_{Q_k^-}\big)
           + 2\sum_{i=1}^m\Big[ f_i\big(\varphi^\star(d_k^-)\big)+f_i\big(\varphi^\star(d_{k+1}^-)\big) \Big] \nonumber\\
    &\leq  \frac{2^{k+3}m\cHH^2\cF}{\cA\cE Ar}\hS\big(\varphi^\star|_{Q_k^-}\big)
           + 2m\Big[ F\big(\varphi^\star(d_k^-)\big)+F\big(\varphi^\star(d_{k+1}^-)\big) \Big] \nonumber\\
    &\leq  \frac{2^{k+3}m\cHH^2\cF}{\cA\cE Ar}\hS\big(\varphi^\star|_{Q_k^-}\big)
           + 2m\cdot 2u_k. \label{Q int 1}
\end{align}
By \eqref{strong condition 2} there exist curves $\bar\gamma^1_k\in\Gamma^{\varphi^\star(d_k^-)}_x$ and $\bar\gamma^2_k\in\Gamma^{\varphi^\star(d_{k+1}^-)}_x$ with
\begin{subequations}
\begin{align}
\length(\bar\gamma^1_k)&\leq\cJ\big|\varphi^\star(d_k^-)-x\big|\hspace{-2.95cm}&&\leq\cJ\eps\leq\rho, \label{length gamma k 1}\\
\length(\bar\gamma^2_k)&\leq\cJ\big|\varphi^\star(d_{k+1}^-)-x\big|\hspace{-2.95cm}&&\leq\cJ\eps\leq\rho, \label{length gamma k 2}
\end{align}
\end{subequations}
and thus in particular $\bar\gamma^{1,2}_k\subset\bar B_\rho(x)$.
Let
\[
  \bar\gamma_k:=-\bar\gamma^1_k+\bar\gamma^2_k
            \in\Gamma_{\varphi^\star(d_k^-)}^{\varphi^\star(d_{k+1}^-)},
\]
which fulfills $\bar\gamma_k\subset\bar B_\rho(x)$, and let $\bar\varphi_k\in\AC{0,1}$ be a parameterization of $\bar\gamma_k$ with $\bar\varphi_k(\frac12)=x$.
The minimizing property \eqref{minimizing property}, \eqref{strong condition}, \eqref{length gamma k 1}-\eqref{length gamma k 2}, Lemma \ref{superprop}~(v), and again \eqref{F increasing 2} and \eqref{Fphi leq 1} now tell us that
\begin{align}
  \hS\big(\varphi^\star|_{Q_k^-}\big)
    &\leq \hS(\bar\gamma_k) \nonumber\\
    &=    \int_0^1\ell(\bar\varphi_k,\bar\varphi_k')\,d\alpha \nonumber\\
    &\leq \cC \int_0^1|\bar\varphi_k-x|^\delta|\bar\varphi_k'|\,d\alpha \nonumber\\
    &\leq \cC \max_{\alpha\in[0,1]}\big|\bar\varphi_k(\alpha)-x\big|^\delta
               \cdot\int_0^1|\bar\varphi_k'|\,d\alpha \nonumber\\
    &= \cC \max_{\alpha\in[0,1]}\bigg|\int_{1/2}^\alpha\bar\varphi_k'\,d\tilde\alpha\,\bigg|^\delta
               \cdot\int_0^1|\bar\varphi_k'|\,d\alpha \nonumber\\
    &\leq \cC\bigg[\int_0^1|\bar\varphi_k'|\,d\alpha\bigg]^{1+\delta}
     = \cC\length(\bar\gamma_k)^{1+\delta} \nonumber\\
    &= \cC\big(\length(\bar\gamma^1_k)+\length(\bar\gamma^2_k)\big)^{1+\delta} \nonumber\\
    &\leq \cC\Big[ \cJ\big|\varphi^\star(d_k^-)-x\big|
                  +\cJ\big|\varphi^\star(d_{k+1}^-)-x\big|\Big]^{1+\delta} \nonumber\\
    &\leq \cC\Big[ \frac\cJ\cE F(\varphi^\star(d_k^-))
                  +\frac\cJ\cE F(\varphi^\star(d_{k+1}^-))\Big]^{1+\delta} \nonumber\\
    &\leq \cC\Big(\frac{2\cJ u_k}{\cE}\Big)^{\!1+\delta}
     =    \cC\Big(\frac{2\cJ r2^{-k}}{\cE}\Big)^{\!1+\delta}. \label{Q int 2}
\end{align}
Therefore, if \eqref{Fphi leq 1} holds then by \eqref{Q int 1}, \eqref{Q int 2} and \eqref{r bound 2b} we have the estimate
\begin{align}
  \int_{Q_k^-}|{\varphi^\star}'|\,d\alpha
    &\leq \frac{2^{k+3}m\cHH^2\cF}{\cA\cE Ar}\cdot \cC\Big(\frac{2\cJ r2^{-k}}{\cE}\Big)^{\!1+\delta} + 4m\cF r2^{-k} \nonumber\\
    &\leq \bigg(\frac{2^{4+\delta}m\cJ^{1+\delta}\cC\cHH^2\cF}{\cA\cE^{2+\delta} A} + 4m\cF\eps^{1-\delta} \bigg)r^{\delta}2^{-\delta k} \nonumber\\
    &= ar^\delta2^{-\delta k}. \label{varphistar length part}
\end{align}
But if instead \eqref{Fphi leq 2} holds then ${\varphi^\star}'$ vanishes a.e.~on $[d_k^-,\tfrac12]\supset Q_k^-$ and thus \eqref{varphistar length part} is trivial. Therefore \eqref{varphistar length part} always holds, and analogously the same estimate can be established for $Q_k^+$. We thus obtain
\begin{align}
  \int_0^1|{\varphi^\star}'|\,d\alpha
     &= \sum_{k=0}^\infty\Big(\int_{Q_k^-}|{\varphi^\star}'|\,d\alpha + \int_{Q_k^+}|{\varphi^\star}'|\,d\alpha \Big) \nonumber \\
    &\leq 2ar^\delta\sum_{k=0}^\infty 2^{-\delta k}
     =    \frac{2ar^\delta}{1-2^{-\delta}} \leq\eta \label{min length eta}
\end{align}
by \eqref{r bound 2b}, i.e.~${\varphi^\star}'\in L^1(0,1)$ and $\length(\gamma^\star)\leq\eta$.
To prove the absolute continuity of ${\varphi^\star}$, it remains to show that
\begin{equation}\label{abs cont}
  \varphi^\star(s)-\varphi^\star(0) = \int_0^s{\varphi^\star}'\,d\alpha \qquad\text{for }\forall s\in[0,1].
\end{equation}
This is true for $\forall s\in[0,\frac12)$ since $\varphi^\star$ is absolutely continuous on each $J_k$, and for $s=\frac12$ by taking the limit $s\nearrow\frac12$ in \eqref{abs cont} and using dominated convergence. Analogously, one can show that $\varphi^\star(1)-\varphi^\star(s)=\int_s^1{\varphi^\star}'\,d\alpha$ for all $s\in[\frac12,1]$, and therefore for $s\in(\frac12,1]$ we have
\begin{align*}
 \varphi^\star(s)-\varphi^\star(0)
 &= \big(\varphi^\star(1)-\varphi^\star(\tfrac12)\big)+\big(\varphi^\star(\tfrac12)-\varphi^\star(0)\big)-\big(\varphi^\star(1)-\varphi^\star(s)\big) \\
 &= \int_{1/2}^1{\varphi^\star}'\,d\alpha + \int_0^{1/2}{\varphi^\star}'\,d\alpha - \int_s^1{\varphi^\star}'\,d\alpha \\
 &= \int_0^s{\varphi^\star}'\,d\alpha
 \end{align*}
as well. This concludes the proof of the absolute continuity of $\varphi^\star$, so that $\gamma^\star\in\Gx$, i.e.\ $x$ has \emph{strong} local minimizers. This terminates the proof of Proposition \ref{blm prop 2}.
\end{proof}

\pagebreak
\begin{appendices}
  \section{Proof of Lemma \ref{distance function}}
\label{fs fu appendix}

\begin{proof}
It is enough to show these properties for $f_s$; the analogous properties for $f_u$ then follow by replacing $b$ by $-b$.
To show that $f_s$ is finite-valued, first recall~\cite[Thm.~7.1]{Verhulst} that
\begin{equation} \label{stable exp decay 1}
 \exists c,\eps,\alpha>0\ \forall v\in\bar B_\eps(x)\ \forall t\geq0:\ |\psi(v,t)-x|\leq c|v-x|e^{-\alpha t}\leq c\eps,
\end{equation}
where we will assume that $\eps$ is so small that that $\bar B_{c\eps}(x)\subset D$.
Thus, since for any given $w\in B_s$ there exists a $T\geq0$ such that $\psi(w,T)\in B_\eps(x)$, $|\psi(w,t)-x|$ decays exponentially as $t\to\infty$, and since $\exists a>0\ $ $\forall v\in\bar B_\eps(x):$ $|b(v)|\leq a|v-x|$, also $|b(\psi(w,t))|$ decays exponentially, proving that the integral in \eqref{fs def 1} converges.
The continuity of $f_s$ will follow from~(i) and~(iv). \\[.2cm]
(i) Let $w\in B_s\setminus\{x\}$. Then formally we can differentiate
\begin{align}
 \nf_s(w)
   &= \nabla_{\!\!w\,} \int_0^\infty|b(\psi(w,t))|\,dt 
    = \int_0^\infty\nabla_{\!\!w\,}|b(\psi(w,t))|\,dt \nonumber\\
   &= \int_0^\infty\frac{b(\psi(w,t))^T \nabla b(\psi(w,t)) \nabla\psi(w,t)}{|b(\psi(w,t))|}\,dt. \label{diff integrand b}
\end{align}
To make the exchange of integration and differentiation rigorous and to show that $\nf_s(w)$ is continuous, it suffices to show that there exists a function $p\in L^1([0,\infty),\R)$ such that the integrand of \eqref{diff integrand b}, let us call it $q(w,t)$, fulfills $|q(v,t)|\leq p(t)$ for $\forall t\geq0$ and all $v$ in some ball $\bar B_\eta(w)$. To find such a bound for $q$, first we use that $\big|\frac{b}{|b|}\big|\leq1$. Second, if we choose $T$ as before and $\eta>0$ so small that
\begin{equation} \label{T step into Beps}
 \forall v\in\bar B_\eta(w)\colon\ \,\psi(v,T)\in B_\eps(x)
\end{equation}
then by \eqref{stable exp decay 1} and \eqref{T step into Beps} we have
\[ \forall v\in\bar B_\eta(w)\ \forall t\geq0\colon\quad\psi(v,t)\in K':=\psi\big(\bar B_\eta(w),[0,T]\big)\cup\bar B_{c\eps}(x)\subset D, \]
and since $K'$ is compact, $|\nabla b(\psi(v,t))|$ can be bounded by a constant as well. Therefore it suffices to show that we can decrease $\eta>0$ so much that
\begin{equation} \label{grad decay b}
  \exists \tilde c,\tilde\alpha>0\ \forall t\geq0\ \forall v\in\bar B_\eta(w)\colon\ \,|\nabla\psi(v,t)| \leq \tilde ce^{-\tilde\alpha t}.
\end{equation}
To do so, first recall 
that $X_v(t):=\nabla\psi(v,t)$ is the solution of the ODE
\begin{align*}
 \partial_t X_v(t) &= \nabla b(\psi(v,t))X_v(t) \\
                   &= AX_v(t)+C_v(t)X_v(t)\qquad\forall t\geq0, \\
 X_v(0)            &= I,
\end{align*}
where $A:=\nabla b(x)$ and $C_v(t):=\nabla b(\psi(v,t))-A$. Since $\lim_{t\to\infty}\psi(v,t)=x$ uniformly for $v\in\bar B_\eta(w)$ by \eqref{stable exp decay 1} and \eqref{T step into Beps}, we have $\lim_{t\to\infty}C_v(t)=0$ uniformly for $v\in\bar B_\eta(w)$, and so \eqref{grad decay b} is a straight forward generalization of the proof of \cite[Thm.~6.3]{Verhulst} (where now one has to keep track of the uniformity of all estimates in $v$).\\[.2cm]
(ii) \vspace{-.87cm}
\begin{align}
\quad\ \Skp{\nf_s(w)}{b(w)}
   &=\partial_t f_s(\psi(w,t))\big|_{t=0}
    =\lim_{h\to0}\tfrac 1h\big[f_s(\psi(w,h))-f_s(w)\big] \nonumber\\
   &=\lim_{h\to0}\frac1h\bigg[\int_0^\infty\big|b\big(\psi(w,t+h)\big)\big|\,dt
                            -\int_0^\infty|b(\psi(w,t))|\,dt\bigg] \nonumber\\
   &=-\lim_{h\to0}\frac{1}{h}\int_0^h|b(\psi(w,t))|\,dt 
    =-|b(w)| \label{fs diff calculation}
\end{align}\\[.1cm]
%
\noindent(iii) \vspace{-.67cm}
\begin{equation} \label{proof lower bound}
  f_s(w) \geq \bigg|\int_0^\infty\dot\psi(w,t)\,dt\bigg| = \Big|\psi(w,t)\big|_{t=0}^{\infty}\Big|=|x-w|.
\end{equation}
(iv) We set $\tilde a:=\max_{v\in\bar B_{c\eps}(x)}\frac{|b(v)|}{|v-x|}$ and use \eqref{stable exp decay 1} to find that for $\forall w\in\bar B_\eps(x)$ we have
\[ f_s(w) \leq \tilde a\int_0^\infty|\psi(w,t)-x|\,dt \leq c\tilde a|w-x|\int_0^\infty e^{-\alpha t}\,dt = \frac{c\tilde a}{\alpha}|w-x|. \]
Since $\frac{f_s(w)}{|w-x|}$ is continuous on $K\setminus B_\eps(x)$ by part (i), \eqref{fs norm est 1 upper} holds with
\[ \cD:=\max\!\bigg\{\frac{c\tilde a}{\alpha},\ \max_{w\in K\setminus B_\eps(x)}\frac{f_s(w)}{|w-x|}\bigg\}. \qedhere \]
\end{proof}

\end{appendices}

\newpage
\part{Proof of a Technical Lemma} \label{part superprop}

\section{Proof of Lemma \ref{superprop} \texorpdfstring{--}{-} Main Arguments} \label{superprop main section}

Since the case in which $x$ is an attractor or a repellor of $b$ was already proven in Section \ref{subsec proof prop 4}, let us now consider the case in which $x$ is a saddle point of~$b$. We assume that all the conditions of Proposition \ref{blm prop 2}~(ii) for $x$ to have weak minimizers are fulfilled, i.e.~that $\nabla b(x)$
has only eigenvalues with nonzero real parts, and that there exist admissible manifolds $M_i$, $i\in I:=\{1,\dots,m\}$, such that \eqref{main condition} is fulfilled.

Our proof is structured as follows. In Section \ref{superprop proof step 1} we review some details of the Stable Manifold Theorem, make several definitions and choose some constants to prepare for the estimates to come. In Section~\ref{superprop proof step 2} we use Lemma \ref{move M lemma} to modify the given admissible manifolds $M_i$ in such a way that they obtain certain additional properties. Finally, in Section~\ref{superprop proof step 3} we define the functions $f_i$ explicitly and prove that they have the desired properties. The proofs of various technical statements in this chapter are deferred to Appendix \ref{proof appendix} in order to not interrupt the flow of the main arguments, and it is recommended to skip those proofs on first reading.

\subsection{Setting Things Up} \label{superprop proof step 1}

By our assumption on $\nabla b(x)$ we can write
\begin{equation} \label{A def}
   A:=\nabla b(x)=R\begin{pmatrix}P&0\\0&Q\end{pmatrix}R^{-1}
\end{equation}
for some matrices $R\in\R^{n\times n}$, $P\in\R^{n_s\times n_s}$ and $Q\in\R^{n_u\times n_u}$, where $n_s,n_u\in\N$ fulfill $n_s+n_u=n$, and where all the eigenvalues of $P$ have negative real parts and all those of $Q$ have positive real parts.

Let $M_s^{loc}$ and $M_u^{loc}$ be the \textit{local} stable and unstable manifolds of $b$ at the point $x$, respectively, as given by the Stable Manifold Theorem (see e.g.~\cite[Sec.~2.7]{Perko} or \cite[Sec.\,13.4]{CL}). These are $C^1$-manifolds of dimension $n_s$ and $n_u$, respectively, which for some constant $a_0>0$ with $\bar B_{a_0}(x)\subset D$ have the following properties \eqref{Ms Mloc relation}-\eqref{G def} which are explained in more detail in Appendix~\ref{Msloc ps comments sec}. Note that each of the properties involving $a_0$ remains valid if $a_0$ is decreased.

First, $M_s^{loc}$ and $M_u^{loc}$ are related to the \textit{global} stable and unstable manifolds $M_s$ and $M_u$ defined in \eqref{Ms def}-\eqref{Mu def} via the equations
\begin{equation}
  M_s=\psi\big(M_s^{loc},(-\infty,0]\big)
  \qquad\text{and}\qquad
  M_u=\psi\big(M_u^{loc},[0,\infty)\big), \label{Ms Mloc relation}
\end{equation}
so that in particular $M_s^{loc}\q\subset\q M_s$ and $M_u^{loc}\q\subset\q M_u$. On the other hand, we~have
\begin{subequations}
\begin{align}
  \forall w\in\bar B_{a_0}(x)\setminus M_s^{loc}\ \,\exists t>0\colon\ \ &\psi(w,t)\notin\bar B_{a_0}(x),   \label{stable man prop} \\
  \forall w\in\bar B_{a_0}(x)\setminus M_u^{loc}\ \,\exists t<0\colon\ \ &\psi(w,t)\notin\bar B_{a_0}(x).   \label{unstable man prop} 
\end{align}
\end{subequations}
Furthermore,
\begin{equation}  \label{compact Msloc parts}
  M_s^{loc}\cap\bar B_{a_0}(x) \quad \text{ and }\quad
  M_u^{loc}\cap\bar B_{a_0}(x) \quad \text{ are compact,}
\end{equation}
and by choosing $M_s^{loc}$ and $M_u^{loc}$ sufficiently small we may assume that
\begin{equation} \label{Msloc Muloc intersection}
  M_s^{loc}\cap M_u^{loc}=\{x\}
\end{equation}
and that
\begin{align} 
  &\te_0:=\sup\Big\{\,\skp{y_s}{y_u}\ \Big|\ 
   |y_s|=|y_u|=1;\ 
    y_s\in T_{w_s}M_s^{loc},\ y_u\in T_{w_u}M_u^{loc}\ \nonumber\\*
  &\hspace*{5.65cm}
    \text{for some}\ w_s\in M_s^{loc},\ w_u\in M_u^{loc}\,\Big\}\in[0,1). \nonumber\\
  & \label{sup skp}
\end{align}
During the proof of the Stable Manifold Theorem we
learn
how to construct a function $p_s\in C^1\big(\bar B_{a_0}(x),M_s^{loc}\big)$\footnote{By this we mean that $p_s$ is the restriction to $\bar B_{a_0}(x)$ of a $C^1$-function that is defined on a larger open ball.} that projects $\bar B_{a_0}(x)$ along $T_xM_u^{loc}$ onto $M_s^{loc}$, i.e.~one~has
\begin{align}
  \forall v\in \bar B_{a_0}(x)\colon\ \ &p_s(v)-v\in T_xM_u^{loc}, \label{as prop 0}\\
  \forall v\in M_s^{loc}\cap\bar B_{a_0}(x)\colon\ \ &p_s(v)=v. \label{as prop 1}
\end{align}
For $\forall v\in\bar B_{a_0}(x)$ and $\forall t\in\R$ the function
\begin{equation} \label{chiv def}
  \chi_s^v(t):=\psi(p_s(v),t)
\end{equation}
fulfills\footnote{See \cite[Appendix~4]{Verhulst} for a quick derivation~of~\eqref{projection equation}.}
\begin{equation} \label{projection equation}
  \chi_s^v(t) = x + U_t(v-x) + \int_0^tU_{t-\tau}g(\chi_s^v(\tau))\,d\tau - \int_t^\infty V_{t-\tau}g(\chi_s^v(\tau))\,d\tau,
\end{equation}
where we define
\pb
\begin{align}
   \qquad
   U_t:=R\begin{pmatrix}e^{tP}&0\\0&0\end{pmatrix}\!R^{-1},
   \qquad
   V_t:=R\begin{pmatrix}0&0\\0&e^{tQ}\end{pmatrix}\!R^{-1}\quad\ \ 
                 &\forall t\in\R, \label{Ut Vt semigroup def} \\*
   g(w):=b(w)-A(w-x) \hspace{2.8cm}&\forall w\in D. \label{G def}
\end{align}
Similarly, there exists a function $p_u\in C^1\big(\bar B_{a_0}(x),M_u^{loc}\big)$ that projects $\bar B_{a_0}(x)$ along $T_xM_s^{loc}$ onto $M_u^{loc}$, and the function $\chi_u^v(t):=\psi(p_u(v),t)$ fulfills a relation analogous to \eqref{projection equation}.\\[.2cm]
Let us now adjust Definition~\ref{fs fu def} and Lemma~\ref{distance function} to the present situation where $x$ is a saddle point.

\begin{definition} \label{fs fu def saddle}
Let $x\in D$ be such that $b(x)=0$ and that all the eigenvalues of the matrix $\nabla b(x)$ have nonzero real part.
Then we define the functions $f_s\colon M_s\to[0,\infty)$ and $f_u\colon M_u\to[0,\infty)$ by
\begin{subequations}
\begin{align}
  f_s(w) &:= \int_0^\infty|b(\psi(w,t))|\,dt = \int_0^\infty|\dot\psi(w,t)|\,dt,\qquad w\in M_s, \\*
  f_u(w) &:= \int_{-\infty}^0\!|b(\psi(w,t))|\,dt = \int_{-\infty}^0\!|\dot\psi(w,t)|\,dt,\qquad w\in M_u.
\end{align}
\end{subequations}
\end{definition}

\begin{lemma} \label{distance function saddle}
The functions $f_s$ and $f_u$ of Definition \ref{fs fu def saddle} are finite-valued and have the following properties:\\[.2cm]
\begin{tabular}{rl}
(i)  & \begin{minipage}[t]{11.30cm}
For $\forall w\in M_s$, the function $t\mapsto f_s(\psi(w,t))$ is non-increasing\linebreak (decreasing if $w\neq x$) and $C^1$, with $\partial_t f_s(\psi(w,t))=-|b(\psi(w,t))|$;\\[.15cm]
for $\forall w\in M_u$, the function $t\mapsto f_u(\psi(w,t))$ is non-decreasing\linebreak (increasing if $w\neq x$) and $C^1$, with $\partial_t f_u(\psi(w,t))=+|b(\psi(w,t))|$.
\end{minipage} \\[1.8cm]
\,\,(ii) & \begin{minipage}[t]{11.38cm}
\vspace{-.87cm}
\begin{subequations}
\begin{eqnarray}
\hspace{-6.05cm}
\forall w\in M_s\hspace{1pt}\colon &\!\!\!\hspace{2.5pt}f_s(w)\geq|w-x|, \label{fs est 1 saddle lower} \\
\hspace{-6.05cm}
\forall w\in M_u\colon &\!\!\!f_u(w)\geq|w-x|. \label{fs est 2 saddle lower}
\end{eqnarray}
\end{subequations}
\end{minipage} \\[.8cm]
\end{tabular}
Furthermore, after decreasing $a_0>0$ sufficiently, we have the following:\\[.15cm]
\begin{tabular}{rl}
(iii) & \begin{minipage}[t]{11.38cm}
There exist functions $\tilde f_s,\tilde f_u\in C\big(\bar B_{a_0}(x),[0,\infty)\big)$ that are $C^1$ on $\bar B_{a_0}(x)\setminus\{x\}$ such that
\vspace{-.15cm}
\begin{subequations}
\begin{eqnarray}
\hspace*{-4.5cm}
\forall w\in M_s^{loc}\cap\bar B_{a_0}(x)\colon\ \,\hspace{1pt}
 f_s(w)&\!\!\!=\tilde f_s(w), \label{fs C1 extension} \\
\hspace*{-4.5cm}
\forall w\in M_u^{loc}\cap\bar B_{a_0}(x)\colon\ \,
 f_u(w)&\!\!\!\!=\tilde f_u(w). \label{fu C1 extension}
\end{eqnarray}
\end{subequations}
\end{minipage} \\[2cm]
(iv) & \begin{minipage}[t]{11.38cm}
There $\exists\cI\geq1$ such that
\vspace{-.15cm}
\begin{subequations}
\begin{eqnarray}
\hspace*{-3.75cm}
\forall w\in M_s^{loc}\cap\bar B_{a_0}(x)\colon\ \,\hspace{1pt}
 f_s(w)&\!\!\!\leq \cI |w-x|, \label{fs est 1 saddle upper} \\
\hspace*{-3.75cm}
\forall w\in M_u^{loc}\cap\bar B_{a_0}(x)\colon\ \,
 f_u(w)&\!\!\!\!\leq \cI |w-x|. \label{fs est 2 saddle upper}
\end{eqnarray}
\end{subequations}
\end{minipage}
\end{tabular}\\[-.1cm]
\nopagebreak
\end{lemma}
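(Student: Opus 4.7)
Parts (i), (ii) and (iv) are essentially the adaptations to the saddle case of the corresponding statements in Lemma \ref{distance function}, and I plan to dispatch them first. The crucial enabling observation is that for any $w \in M_s$, the convergence $\psi(w, t) \to x$ as $t \to \infty$ together with the characterization \eqref{stable man prop} of $M_s^{loc}$ forces $\psi(w, T) \in M_s^{loc} \cap \bar B_{a_0}(x)$ for all sufficiently large $T$. I will then invoke the Stable Manifold Theorem to obtain uniform exponential decay on $M_s^{loc}$: constants $C, \alpha > 0$ (after shrinking $a_0$ if necessary) with $|\psi(v, t) - x| \le C |v - x| e^{-\alpha t}$ for every $v \in M_s^{loc} \cap \bar B_{a_0}(x)$ and $t \ge 0$. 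Combined with a local Lipschitz bound $|b(y)| \le L|y - x|$ near $x$, this yields convergence of the integrals defining $f_s$ and (symmetrically) $f_u$.

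For part (i), the semigroup identity $\psi(\psi(w, t), \tau) = \psi(w, t + \tau)$ rewrites
$$f_s(\psi(w, t)) \;=\; \int_t^\infty |b(\psi(w, s))|\,ds,$$
which is $C^1$ in $t$ with derivative $-|b(\psi(w, t))|$ by the fundamental theorem of calculus. Strict decrease for $w \ne x$ follows because uniqueness of ODE solutions rules out the orbit $\psi(w, t)$ ever hitting an equilibrium of $b$: eventually the orbit enters a small neighborhood of $x$ in which $x$ is the only equilibrium, and $\psi(w, t) = x$ is forbidden by uniqueness. Part (ii) is the standard triangle-inequality estimate $f_s(w) = \int_0^\infty |\dot\psi(w, t)|\,dt \ge |\int_0^\infty \dot\psi(w, t)\,dt| = |x - w|$, using $\psi(w, t) \to x$. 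Part (iv) follows by combining the exponential decay above with the Lipschitz bound on $b$, giving $f_s(w) \le (LC/\alpha)|w - x|$ on $M_s^{loc} \cap \bar B_{a_0}(x)$; the analogous arguments (with time reversed) cover $f_u$.

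Part (iii) is the main new technical content, and here my plan is to exploit the $C^1$-projection $p_s \colon \bar B_{a_0}(x) \to M_s^{loc}$ and the associated trajectories $\chi_s^v(t) = \psi(p_s(v), t)$ from \eqref{chiv def}-\eqref{projection equation}, setting
$$\tilde f_s(v) \;:=\; \int_0^\infty |b(\chi_s^v(t))|\,dt, \qquad v \in \bar B_{a_0}(x),$$
with a symmetric definition of $\tilde f_u$ via $p_u$ and reversed time. Agreement with $f_s$ on $M_s^{loc} \cap \bar B_{a_0}(x)$ is immediate from \eqref{as prop 1}, which makes $p_s$ the identity there and hence $\chi_s^v = \psi(v, \cdot)$. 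Finiteness, continuity on all of $\bar B_{a_0}(x)$, and $C^1$-regularity on the open set $\{v : p_s(v) \ne x\}$ will follow from differentiation under the integral sign as in the proof of Lemma \ref{distance function}(i), using \eqref{projection equation} and a Gronwall-type argument to secure uniform exponential decay of $\chi_s^v(t)$ and of $\nabla_{\!v} \chi_s^v(t)$.

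The hard part will be extending the $C^1$-regularity across the set $N := \{v \ne x : p_s(v) = x\}$, which by \eqref{as prop 0} is locally an $n_u$-dimensional submanifold through $x$ tangent to $T_x M_u^{loc}$. On $N$ the integrand vanishes identically, and the upper bound from (iv) gives $\tilde f_s(v) \le \cI |p_s(v) - x|$, so $\tilde f_s$ at least vanishes at a controlled rate near $N$; however, genuine $C^1$-smoothness across $N$ is not automatic because $y \mapsto |b(y)|$ is not smooth at $y = x$. To resolve this I plan to modify the extension by multiplying the integrand by a smooth time cutoff adapted to $|\chi_s^v(t) - x|$ and rescaling so that the identity on $M_s^{loc}$ is preserved, thereby excising the non-smoothness at $x$ without destroying the uniform exponential bounds that power all the other estimates.
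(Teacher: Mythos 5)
Parts (i), (ii) and (iv) of your proposal are fine and follow essentially the same route as the paper: (i) by the group property of the flow and the fundamental theorem of calculus, (ii) by the triangle inequality, (iv) by the uniform exponential decay on $M_s^{loc}$ combined with the Lipschitz bound $|b(y)|\leq L|y-x|$ near $x$. (For the strict decrease in (i), you do need to observe that $b(\psi(w,t))\neq0$ for \emph{all} $t$ when $w\in M_s\setminus\{x\}$; this holds because $\psi(w,t)\to x$ but $\psi(w,t)\neq x$, so the orbit can never sit at an equilibrium.)

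The gap is in (iii). Your candidate extension is $\tilde f_s(v)=f_s(p_s(v))$, and — as you yourself note — this fails to be $C^1$ on the set $N:=\{v\neq x:p_s(v)=x\}$, which by \eqref{as prop 0} is a slice through $x$ tangent to $T_xM_u^{loc}$. This is not a removable technicality: by (ii) and (iv), $f_s(p)\sim|p-x|$ as $p\to x$ in $M_s^{loc}$, so $f_s\circ p_s$ inherits a genuine conical kink across $N$ (in linearizing coordinates, think of $v\mapsto|v_1|$). Your proposed fix — a $v$-dependent time cutoff adapted to $|\chi_s^v(t)-x|$ followed by a rescaling — is not worked out, and I do not see how it could simultaneously (a) preserve the equality $\tilde f_s=f_s$ on $M_s^{loc}\cap\bar B_{a_0}(x)$, since the cutoff alters the value of the integral for every $v\in M_s^{loc}$ close enough to $x$, and (b) actually remove the kink, whose source is the composition with $p_s$ rather than the tail of the integral. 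The paper takes a fundamentally different route: using a local $C^1$-chart $\zeta_s$ that straightens $M_s^{loc}$, it \emph{modifies the vector field} by setting $\tilde b(w):=b(w)-2R\big(\begin{smallmatrix}0&0\\0&Q\end{smallmatrix}\big)R^{-1}\nabla\zeta_s(x)^{-1}\zeta_s(w)$ on a neighborhood of $x$. The correction flips the sign of the unstable spectral block, so $\nabla\tilde b(x)=R\big(\begin{smallmatrix}P&0\\0&-Q\end{smallmatrix}\big)R^{-1}$ has only stable eigenvalues and $x$ becomes an \emph{attractor} of $\tilde b$; yet the correction vanishes on $M_s^{loc}\cap U$ because $\zeta_s$ maps it into $E_s$, so $\tilde b=b$ there. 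One then applies Definition \ref{fs fu def} and Lemma \ref{distance function} wholesale to $\tilde b$ to obtain a function $\tilde f_s$ that is continuous, $C^1$ away from $x$, and agrees with $f_s$ on $M_s^{loc}$ since the two flows coincide there — sidestepping the projection problem entirely. I would recommend adopting that strategy, or else making the cutoff construction precise enough to see whether both requirements can actually be met.
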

\begin{proof}
See Appendix \ref{fs fu appendix b}.
\end{proof}
\noindent Now consider for $\forall a>0$ the level sets
\[ \Msa:=f_s^{-1}(\{a\}) \quad\text{and}\quad \Mua:=f_u^{-1}(\{a\}), \]
which by \eqref{fs est 1 saddle lower}-\eqref{fs est 2 saddle lower} and because of $f_s(x)=f_u(x)=0$ fulfill
\begin{equation} \label{Ms Mu in ball}
  \forall a>0\colon\ \  \Msa\cup\Mua\subset\bar B_a(x)\setminus\{x\}.
\end{equation}
We will now continue to decrease $a_0>0$ to make our construction in Sections~\ref{superprop proof step 2} and \ref{superprop proof step 3} work. First, we have the following.
\begin{lemma} \label{Msa compact lemma}
We can decrease $a_0>0$ so much that for $\forall a\in(0,a_0]$
\begin{subequations}
\begin{align}
  \Msa &\text{ and $f_s^{-1}\big([0,a_0]\big)$ are compact subsets of $M_s^{loc}$,}
        \label{Ms Mu in Mloc}\\
  \Mua &\text{ and $f_u^{-1}\big([0,a_0]\big)$ are compact subsets of $M_u^{loc}$,}
        \label{Ms Mu in Mloc 2}
\end{align}
\end{subequations}\\[-1.05cm]
\begin{equation} \label{Msa flow = Ms}
  \psi(\Msa,\R)=M_s\setminus\{x\}, \quad\ 
  \psi(\Mua,\R)=M_u\setminus\{x\},
\end{equation}
and that in the two-dimensional case \emph($n=2$\emph) the sets $M_s^a$ and $M_u^a$ each consist of exactly two points.
\end{lemma}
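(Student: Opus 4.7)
The plan is to exploit the monotonicity of $f_s$ (and $f_u$) along flowlines from Lemma \ref{distance function saddle}(i), combined with the pointwise lower bound $f_s(w)\ge|w-x|$ from \eqref{fs est 1 saddle lower}, which together trap the forward orbits of low-level sets of $f_s$ inside $\bar B_{a_0}(x)$ and thus allow us to invoke the local stable-manifold characterization \eqref{stable man prop}. The unstable case is handled symmetrically by time-reversal using \eqref{unstable man prop} and the corresponding statements of Lemma \ref{distance function saddle} for $f_u$, so I will only write down the stable case.

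First I would establish the crucial inclusion $f_s^{-1}([0,a_0])\subseteq M_s^{loc}\cap\bar B_{a_0}(x)$. For any $w\in M_s$ with $f_s(w)\le a_0$, Lemma \ref{distance function saddle}(i) yields $f_s(\psi(w,t))\le f_s(w)\le a_0$ for all $t\ge0$, whence $|\psi(w,t)-x|\le a_0$ by \eqref{fs est 1 saddle lower}; by the contrapositive of \eqref{stable man prop} this forces $w\in M_s^{loc}$. After a preliminary shrinking of $a_0$ so that $M_s^{loc}\cap\bar B_{a_0}(x)$ is compact (\eqref{compact Msloc parts}) and a continuous extension $\tilde f_s$ of $f_s\big|_{M_s^{loc}\cap\bar B_{a_0}(x)}$ exists (Lemma \ref{distance function saddle}(iii)), I can then rewrite $f_s^{-1}([0,a_0]) = \tilde f_s^{-1}([0,a_0])\cap M_s^{loc}\cap\bar B_{a_0}(x)$, which is closed in a compact set and hence compact; $M_s^a=f_s^{-1}(\{a\})$ is then compact as a closed subset for every $a\in(0,a_0]$.

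Next I would verify $\psi(M_s^a,\R)=M_s\setminus\{x\}$. The inclusion ``$\subseteq$'' is immediate: $M_s$ is flow-invariant, and any orbit through a point with $f_s=a>0$ avoids $x$ by uniqueness of ODE solutions. For ``$\supseteq$'', pick $v\in M_s\setminus\{x\}$ and set $F(t):=f_s(\psi(v,t))$; by Lemma \ref{distance function saddle}(i) it is $C^1$, strictly decreasing, and $F(t)\to0$ as $t\to\infty$. If $F(0)\ge a$, the intermediate value theorem produces $t^\star\ge0$ with $F(t^\star)=a$, so $v=\psi(\psi(v,t^\star),-t^\star)\in\psi(M_s^a,\R)$. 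If instead $F(0)<a$, I would argue by contradiction that $F$ must exceed $a$ at some $t_0<0$: otherwise $F\le a$ on all of $\R$, so $|\psi(v,t)-x|\le a\le a_0$ for every $t\in\R$; then \eqref{stable man prop} and \eqref{unstable man prop} applied forward and backward in time would give $v\in M_s^{loc}\cap M_u^{loc}=\{x\}$ by \eqref{Msloc Muloc intersection}, contradicting $v\ne x$.

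The two-dimensional case is where the main geometric work lies and where I expect the only real obstacle. When $n=2$ a saddle has $n_s=n_u=1$, so $M_s^{loc}$ is a $C^1$-curve through $x$ and $M_s^{loc}\setminus\{x\}$ has exactly two connected components; by flow-invariance each component is a single orbit, and combining this with \eqref{Ms Mloc relation} shows that $M_s\setminus\{x\}$ itself consists of precisely those two orbits. On each such orbit $F$ strictly decreases from a supremum value $\beta\in(0,\infty]$ down to $0$, so by monotonicity and the intermediate value theorem the orbit meets $M_s^a$ in exactly one point whenever $a<\beta$. The obstacle is ensuring $\beta\ge a_0$ on both branches simultaneously for a single value of $a_0$; I would overcome this by further shrinking $a_0$ so that each branch of $M_s^{loc}$ (being a proper one-dimensional $C^1$-submanifold with $x$ as an interior point) actually exits $\bar B_{a_0}(x)$ somewhere. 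At any such exit point $w$ one has $|w-x|=a_0$ and therefore $f_s(w)\ge a_0$ by \eqref{fs est 1 saddle lower}, giving $\beta\ge a_0$ on both branches and hence $|M_s^a|=2$ for every $a\in(0,a_0]$. The analogous statement for $M_u^a$ follows by the symmetric time-reversed argument.
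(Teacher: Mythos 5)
Your proposal is correct and follows the paper's overall architecture — establish the inclusion $f_s^{-1}([0,a_0])\subset M_s^{loc}\cap\bar B_{a_0}(x)$ via the contrapositive of \eqref{stable man prop}, deduce compactness using $\tilde f_s$ from Lemma \ref{distance function saddle}\,(iii) and \eqref{compact Msloc parts}, then prove \eqref{Msa flow = Ms} and the two-point count — but the middle step is argued by a genuinely different route.

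For the ``$\supseteq$'' direction of \eqref{Msa flow = Ms}, the paper first flows $v$ forward using \eqref{Ms Mloc relation} until it lands in $M_s^{loc}\cap\bar B_{a/\cI}(x)$, invokes the upper bound \eqref{fs est 1 saddle upper} to conclude $f_s$ there is $\le a$, then flows \emph{backward} via \eqref{unstable man prop} to get $f_s>a_0\ge a$, and finally applies the intermediate value theorem. Your case split on $F(0)\ge a$ versus $F(0)<a$ — handling the latter with a trapped-orbit contradiction via \eqref{stable man prop}, \eqref{unstable man prop} and \eqref{Msloc Muloc intersection} — reaches the same conclusion while bypassing the Lipschitz-type constant $\cI$ and \eqref{fs est 1 saddle upper} entirely. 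That is a cleaner and more elementary derivation of the same fact. One small expository gap: in the $F(0)<a$ branch you establish $F(t_0)>a$ for some $t_0<0$ but do not write out the final IVT application on $(t_0,0)$ that produces the point of $M_s^a$.

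Your two-dimensional argument is correct but longer than necessary. Once \eqref{Msa flow = Ms} is available, each of the two branches of $M_s\setminus\{x\}$ (each is a single orbit) automatically contains at least one point of $M_s^a$, and strict monotonicity of $f_s$ along flowlines gives at most one; that is the paper's one-line argument. You instead re-derive the ``at least one'' part from scratch by shrinking $a_0$ until $M_s^{loc}$ exits $\bar B_{a_0}(x)$. This works (since $\phi_s$ has full-rank derivative at $0$, $M_s^{loc}$ is a fixed curve of positive diameter through $x$, so each branch does reach radius $a_0$ for $a_0$ small), but it adds an extra shrinkage of $a_0$ and reproves a special case of the equality you have just established.

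Finally, a point worth being careful about in the first step: in the contrapositive application of \eqref{stable man prop} you need $w\in\bar B_{a_0}(x)$ in the first place, which you do obtain from \eqref{fs est 1 saddle lower}, so the argument is sound as stated.
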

\begin{proof}
See Appendix \ref{Ms compact appendix}.
\end{proof}
Second, since $b(x)=0$, by Remark~\ref{on manifold b neq0} we have $x\notin M_i$ for $\forall i\in I$, i.e.~$f_{M_i}(x)\neq 0$, and so we can make $a_0>0$ so small that
\begin{equation} \label{fMi neq0}
  \forall i\in I\ \  \forall w\in\bar B_{a_0}(x)\colon\ \  f_{M_i}(w)\neq 0.
\end{equation}
In fact, using the notation
\[
  I^+:=\big\{i\in I\,\big|\,f_{M_i}(x)>0\big\} \qquad\text{and}\qquad I^-:=\big\{i\in I\,\big|\,f_{M_i}(x)<0\big\},
\]
we have $I^+\cup I^-=I$, and \eqref{fMi neq0} and the continuity of the functions $f_{M_i}$ imply
\begin{subequations}
\begin{align}
  \forall i\in I^+\ \, \forall w\in\bar B_{a_0}(x)\colon&\ \ f_{M_i}(w)>0, \label{fMi>0} \\
  \forall i\in I^-\ \, \forall w\in\bar B_{a_0}(x)\colon&\ \ f_{M_i}(w)<0. \label{fMi<0}
\end{align}
\end{subequations}
Third, since $\nabla b(x)$ is an invertible matrix, the function $b$ is locally invertible at $x$ by the Inverse Function Theorem, and its local inverse is $C^1$ as well. Since $b(x)=0$, we can thus decrease $a_0>0$ so much that
\begin{equation} \label{b bounds}
  \exists\dC,\dB>0\ \,\forall w\in\bar B_{a_0}(x)\colon\quad \dC|b(w)| \leq |w-x| \leq \dB|b(w)|.
\end{equation}
In particular, we have
\begin{equation} \label{b neq 0}
  \forall w\in\bar B_{a_0}(x)\setminus\{x\}\colon\ \ b(w)\neq0.
\end{equation}
Fourth, observe the following refined version of the triangle inequality.
\begin{lemma} \label{triangle inequality lemma}
$\forall\te\in[0,1)\;\exists d\in(0,1)\;\forall v,w\in\Rn\colon$
\begin{equation} \label{triangle ineq eq}
  \skp{v}{w}\leq\te|v||w|\quad\Rightarrow\quad|v+w|
    \leq\max\!\big\{|v|,|w|\big\}+d\min\!\big\{|v|,|w|\big\}
\end{equation}
\end{lemma}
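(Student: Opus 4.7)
The plan is to reduce the statement to a single quadratic inequality in $d$. By the symmetry of both hypothesis and conclusion under swapping $v$ and $w$, I may assume without loss of generality that $|w|\le|v|$; it then suffices to show $|v+w|\le|v|+d|w|$. If $|w|=0$ this is immediate, so assume $|w|>0$.

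First I would square: using the hypothesis $\skp{v}{w}\le\te|v||w|$ we get
\[
  |v+w|^2 = |v|^2 + 2\skp{v}{w} + |w|^2 \le |v|^2 + 2\te|v||w| + |w|^2,
\]
while $(|v|+d|w|)^2 = |v|^2 + 2d|v||w| + d^2|w|^2$. Hence it is enough to verify
\[
  2\te|v||w| + |w|^2 \le 2d|v||w| + d^2|w|^2,
\]
i.e., dividing by $|w|>0$,
\[
  |w|(1-d^2)\le 2(d-\te)|v|.
\]
Since $|w|\le|v|$, this in turn follows from the $v,w$-free inequality
\[
  d^2 + 2d \ge 1 + 2\te.
\]

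The main (and only) step now is to choose $d\in(0,1)$ satisfying this bound. Solving the corresponding quadratic equation, I would take
\[
  d := \sqrt{2+2\te}\;-\;1.
\]
Since $\te\in[0,1)$, we have $\sqrt{2+2\te}\in[\sqrt 2,2)$, so $d\in[\sqrt 2-1,\,1)\subset(0,1)$, and by construction $d^2+2d=1+2\te$. This gives equality in the reduced inequality and hence the desired bound, completing the proof. There is no real obstacle here beyond organising the reduction and picking the right $d$; the argument is entirely algebraic.
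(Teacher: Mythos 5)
Your proof is correct and takes essentially the same approach as the paper: you reduce to the case $|w|\leq|v|$, square both sides, and pick $d=\sqrt{2+2\te}-1$ so that $d^2+2d=1+2\te$, which is exactly the paper's choice. The only difference is presentational (you work backward from the desired inequality; the paper presents the chain forward).
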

\begin{proof}
See Appendix \ref{triangle inequality sec}.
\end{proof}
Let $\dD\in(0,1)$ be the constant $d$ given by Lemma~\ref{triangle inequality lemma} that corresponds to the value $\te=\te_0\in[0,1)$ defined in \eqref{sup skp}, let $\dA,\alpha>0$ such that
\begin{equation} \label{semigroup estimate}
  \forall t\geq0\colon\ |U_t|\leq \dA e^{-\alpha t}
  \qquad\text{and}\qquad
  \forall t\leq0\colon\ |V_t|\leq \dA e^{\alpha t},
\end{equation}
and choose $\kappa>0$ so small that
\begin{equation} \label{eps estimates}
  \frac{2\dA\kappa}{\alpha}\leq\frac12,
  \quad
  \bigg[(|A|+\kappa)\frac{8\dB\dA}{\alpha}+2\dB\bigg]\kappa \leq \tfrac14(1-\dD)
  \quad\text{and}\quad
  8\dB\dA\kappa\leq1.
\end{equation}
Then since the function $g$ defined in \eqref{G def} is $C^1$ and fulfills $\nabla g(x)=0$, we can further decrease $a_0>0$ so much that $\forall w\in\bar B_{a_0}(x)\colon\ |\nabla g(w)|\leq\kappa$. As a consequence, we have
\begin{align}
  \forall w_1,w_2\in\bar B_{a_0}(x)\colon\quad&|g(w_1)-g(w_2)|\leq\kappa|w_1-w_2|, \label{G est 1}\\
\intertext{and (taking $w_2=x$ and using $g(x)=0$) thus in particular}
  \forall w\in\bar B_{a_0}(x)\colon\quad&|g(w)|\leq\kappa|w-x|. \label{G est 2}
\end{align}
This completes our definition of $a_0$. Now since $x\in M_s^{loc}\cap M_u^{loc}$, by \eqref{as prop 1} we have $p_s(x)=p_u(x)=x$, and so we can choose $a_1\in(0,a_0]$ so small that
\begin{equation}
  p_s(\bar B_{a_1}(x))\cup p_u(\bar B_{a_1}(x))\subset\bar B_{a_0}(x). \label{ps to Ba0}
\end{equation}
%
\begin{lemma}  \label{Hartman-Grobman Lemma}
We can decrease $a_1>0$ so much that
$\forall\eta>0\ \,\exists\mu>0\colon$ \\[.2cm]
(i) all the flowlines starting from a point $w\in\bar B_\mu(x)\setminus M_s^{loc}$ will leave $B_{a_1}(x)$ at some time $T_1(w)>0$ as $t\to\infty$, and we have
\begin{equation} \label{HG inclusion 1}
  \psi\big(w,[0,T_1(w)]\big)\subset N_\eta\big(M_u^{loc}\cap\bar B_{a_1}(x)\big)\cap\bar B_{a_1}(x);
\end{equation}
(ii) all the flowlines starting from a point $w\in\bar B_\mu(x)\setminus M_u^{loc}$ will leave $B_{a_1}(x)$ at some time $T_2(w)<0$ as $t\to-\infty$, and we have
\begin{equation} \label{HG inclusion 2}
  \psi\big(w,[T_2(w),0]\big)\subset N_\eta\big(M_s^{loc}\cap\bar B_{a_1}(x)\big)\cap\bar B_{a_1}(x).
\end{equation}
\end{lemma}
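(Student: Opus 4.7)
The plan is to prove part (i) directly and to deduce part (ii) by applying part (i) to the time-reversed vector field $-b$, whose flow is $\psi(\,\cdot\,,-t)$ and whose local stable and unstable manifolds at $x$ are $M_u^{loc}$ and $M_s^{loc}$, respectively. For part (i), set
\[
  T_1(w) \,:=\, \inf\big\{t > 0 \,:\, \psi(w,t) \notin B_{a_1}(x)\big\}.
\]
If $T_1(w) = \infty$ then $\psi(w,\cdot) \subset \bar B_{a_1}(x) \subset \bar B_{a_0}(x)$ for all $t \geq 0$, contradicting \eqref{stable man prop} since $w \notin M_s^{loc}$; hence $T_1(w) < \infty$, and $|\psi(w,T_1(w)) - x| = a_1$ by continuity.

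The content of part (i) lies in the inclusion \eqref{HG inclusion 1}. The plan is to shadow $\psi(w,t)$ by the $M_u$-trajectory $\chi_u^w(t) := \psi(p_u(w), t)$, where $p_u$ is the unstable analogue of $p_s$ (projecting $\bar B_{a_0}(x)$ along $T_x M_s^{loc}$ onto $M_u^{loc}$, as described in the paragraph following \eqref{projection equation}). Setting $r(t) := \psi(w,t) - \chi_u^w(t)$ and $h(t) := g(\psi(w,t)) - g(\chi_u^w(t))$, one has $\dot r = A r + h$ with $|h(t)| \leq \kappa |r(t)|$ by \eqref{G est 1}, so long as both points remain in $\bar B_{a_0}(x)$. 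The analogue of \eqref{as prop 0} for $p_u$ gives $r(0) \in T_x M_s^{loc}$, and hence $V_t r(0) = 0$, where $\pi_u := R\,\mathrm{diag}(0,I_{n_u})R^{-1}$ projects onto $T_x M_u^{loc}$ along $T_x M_s^{loc}$. Decomposing $r = \pi_s r + \pi_u r$ and propagating the stable part forward from $t = 0$ and the unstable part backward from $t = T_1(w)$ yields the Perron-type identity
\[
  r(t) \,=\, U_t r(0) + \int_0^t U_{t-\tau}\, h(\tau)\,d\tau + V_{t-T_1(w)}\,\pi_u r(T_1(w)) - \int_t^{T_1(w)} V_{t-\tau}\, h(\tau)\,d\tau,
\]
in the same spirit as \eqref{projection equation}. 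Taking the supremum over $[0, T_1(w)]$, using \eqref{semigroup estimate} and the smallness $2\dA\kappa/\alpha \leq \tfrac12$ from \eqref{eps estimates}, and absorbing the self-dependent integrals into the left-hand side, one obtains
\[
  \sup_{t \in [0, T_1(w)]} |r(t)| \,\leq\, 2\dA\big(|r(0)| + |\pi_u r(T_1(w))|\big).
\]
Here $|r(0)| \leq (1+L)|w-x| \leq (1+L)\mu$ with $L := \sup_{\bar B_{a_0}(x)} |\nabla p_u|$, and $|\pi_u r(T_1(w))|$ is bounded by a fixed multiple of $a_1$ via a mirror application of the same estimate to $\chi_u^w$ alone (the $M_u$-trajectory starting within order $\mu$ of $x$ reaches order $a_1$ at roughly the same exit time). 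Hence $\sup_{[0,T_1(w)]} |r(t)| \leq C(\mu + a_1)$ for a constant $C$ depending only on $\dA, \alpha, \kappa, L$. First shrinking $a_1$ so that $Ca_1 \leq \eta/4$, and then shrinking $\mu$ so that $C(1+L)\mu \leq \eta/4$, gives $|r(t)| \leq \eta/2$ on $[0, T_1(w)]$. Combined with $\chi_u^w(t) \in M_u^{loc} \cap \bar B_{a_1 + \eta/2}(x)$ throughout (so that $\chi_u^w(t)$ stays in $\bar B_{a_0}(x)$ once $\eta$ is small), a final adjustment flowing $\chi_u^w(t)$ along $M_u^{loc}$ to a nearby point in $M_u^{loc} \cap \bar B_{a_1}(x)$ costs at most another $\eta/2$, yielding \eqref{HG inclusion 1}.

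The main obstacle is the Perron/Lyapunov splitting encoded in the identity above: a naive Gronwall estimate on $r(t)$ via $e^{tA}$ diverges, because the unstable propagator $V_{t-\tau}$ grows for $\tau < t$. The remedy of integrating the stable component forwards from $t = 0$ and the unstable component backwards from $t = T_1(w)$ (with boundary term $\pi_u r(T_1(w))$) is the same hyperbolic splitting used in the Stable Manifold Theorem to derive \eqref{projection equation}, and it closes into a contraction precisely under the smallness hypotheses \eqref{eps estimates} that were built into the choice of $a_0$. A secondary subtlety is the bootstrap: the a priori use of \eqref{G est 1} and of $p_u$ requires $\chi_u^w(t) \in \bar B_{a_0}(x)$, which in turn uses the bound on $|r(t)|$ that one is trying to prove; this is handled by a standard continuity/continuation argument, legitimate once $\mu$ and $a_1$ have been chosen small enough that the self-consistent bound can never reach the boundary $a_0$.
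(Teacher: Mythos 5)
Your approach is genuinely different from the paper's: you try a Perron--Lyapunov ``shadowing'' estimate comparing $\psi(w,t)$ with the $M_u^{loc}$-trajectory $\chi_u^w(t)=\psi(p_u(w),t)$, whereas the paper applies the Hartman--Grobman Theorem to conjugate the flow near $x$ to the exact linear flow $e^{tA'}$, and then the argument is topological (for the linearized flow the stable component of $F(\psi(w,t))$ stays below $c|F(w)|$ on all of $[0,T_1(w)]$, so the orbit remains in a thin tube around $E_u$ which pulls back to a thin neighborhood of $M_u^{loc}$). Your identity
\[
  r(t) \,=\, U_t r(0) + \int_0^t U_{t-\tau} h\,d\tau + V_{t-T_1(w)}\,\pi_u r(T_1(w)) - \int_t^{T_1(w)} V_{t-\tau} h\,d\tau
\]
is itself correct (one checks that $\pi_u r(T_1(w)) = \int_0^{T_1(w)} V_{T_1(w)-\tau}h(\tau)\,d\tau$ once $V_t r(0)=0$), and the absorption using $\frac{2\dA\kappa}{\alpha}\le\frac12$ does yield $\sup_{[0,T_1(w)]}|r|\le 2\dA\big(|r(0)|+|\pi_u r(T_1(w))|\big)$.

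However, there is a genuine gap at the boundary term. You claim $|\pi_u r(T_1(w))|$ is $O(a_1)$ ``via a mirror application of the same estimate to $\chi_u^w$ alone,'' but this is circular: $\chi_u^w(T_1(w))$ is defined by flowing $p_u(w)$ for the same time $T_1(w)$, and where it ends up depends on the very difference $r(t)$ you are trying to bound. The only a priori control is $|\chi_u^w(T_1(w))-x|\le a_1 + |r(T_1(w))|$, and if you feed $|\pi_u r(T_1(w))|\le|\pi_u|\,R_*$ (with $R_*$ the supremum) back into the main estimate you get $R_*\le 2\dA|r(0)| + 2\dA|\pi_u| R_*$, which only closes when $2\dA|\pi_u|<1$. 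Since $\dA\ge 1$ and $|\pi_u|=|V_0|\ge 1$ for any non-orthogonal hyperbolic splitting, this condition fails in general and cannot be forced by shrinking $\kappa$, $a_1$ or $\mu$. In short, the Perron estimate as set up controls the \emph{interior} of the interval given control at both endpoints, but here control at $t=T_1(w)$ is exactly what is missing. The standard fixes are either a cone-condition argument that tracks the stable component $U_0(\psi(w,t)-x)$ alone (rather than the full difference $r$), or the paper's route of conjugating to the linear flow where the unstable component can grow without consequence. As written, your bootstrap does not close, and the ``continuity/continuation argument'' invoked at the end does not rescue it because the quantity needing a priori smallness, $|\pi_u r(T_1(w))|$, lives at the endpoint that the continuation is trying to reach.
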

\begin{proof}
See Appendix \ref{Hartman-Grobman sec}.
The lemma is obtained from the linear case $b(w)=A(w-x)$ by applying the Hartman-Grobman-Theorem \cite[p.119]{Perko}.
\end{proof}

\begin{definition}
For $\forall i\in I$ we denote by~$z_i$ and~$t_i$ the functions that Lemma~\ref{man2func lemma} associates to the admissible manifolds $M_i$.
\end{definition}

It remains to choose one last sufficiently small constant, $\tilde a>0$.
To prepare, the next lemma groups the points $w\in\Msa\cup\Mua\subset (M_s\cup M_u)\setminus\{x\}\subset\bigcup_{i=1}^m\psi(M_i,\R)$ (here we used the condition \eqref{main condition}) according to the index $i$ such that $w\in\psi(M_i,\R)$, and it gives us a bound on $|t_i(w)|$.

\begin{lemma} \label{compact hitting partition}
$\forall a\in(0,a_0]$ $\exists\!$ compact $K_1^a,\dots,K_m^a\subset D$ $\exists\eta_a,T_a>0$ such that
\begin{gather}
\bigcup_{i\in I^+}K_i^a=\Msa \qquad\text{and}\qquad \bigcup_{i\in I^-}K_i^a=\Mua, \label{Ki +- union} \\[.2cm]
\forall i\in I\colon\ \ \bar N_{\eta_a}(K_i^a)\subset \psi(M_i,[-T_a,T_a]).\label{Ki nbhd}
\end{gather}
In the two-dimensional case we can use the sets
\begin{subequations}
\begin{align}
  K_i^a &= \psi(M_i,\R)\cap\Msa
  \quad\text{for $i\in I^+$}, \label{2D Kia} \\
  K_i^a &= \psi(M_i,\R)\cap\Mua
  \quad\text{for $i\in I^-$}.
\end{align}
\end{subequations}
\end{lemma}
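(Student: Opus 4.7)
Fix $a\in(0,a_0]$. The plan is to split the argument into three steps: a sign-compatibility step, a compactness/covering step, and a continuity step producing the uniform constants.

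Step 1 (sign compatibility). For any $w\in\Msa$, Lemma~\ref{Msa compact lemma} gives $w\in M_s\setminus\{x\}$, and the hypothesis \eqref{main condition} yields some $i\in I$ with $w\in\psi(M_i,\R)$. I claim such an $i$ must lie in $I^+$. Since $w\in M_s$, we have $\psi(w,s)\to x$ as $s\to\infty$, and Corollary~\ref{improved prime lemma} gives $\sgn\bigl(f_{M_i}(\psi(w,s))\bigr)=\sgn(t_i(w)+s)=+1$ for all sufficiently large $s$. Letting $s\to\infty$ and using continuity of $f_{M_i}$ would give $f_{M_i}(x)\geq 0$, and \eqref{fMi neq0} would then upgrade this to $f_{M_i}(x)>0$. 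A mirror argument using $s\to-\infty$ shows that if $w\in\Mua$ and $w\in\psi(M_i,\R)$ then $i\in I^-$.

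Step 2 (compact pieces). In dimension two, Lemma~\ref{Msa compact lemma} says $\Msa$ and $\Mua$ each consist of two points, so the sets in \eqref{2D Kia} are finite (hence compact) and Step~1 immediately gives \eqref{Ki +- union}. In general dimension I would use a finite-subcover construction: for each $w\in\Msa$ pick, by Step~1, an index $i(w)\in I^+$ with $w\in\psi(M_{i(w)},\R)$, and pick $r_w>0$ with $\bar B_{r_w}(w)\subset\psi(M_{i(w)},\R)$ (possible since $\psi(M_{i(w)},\R)$ is open by Lemma~\ref{man2func lemma}). Compactness of $\Msa$ extracts a finite subcover $\{B_{r_{w_k^s}}(w_k^s)\}_{k=1}^{N_s}$; repeat for $\Mua$ with points $w_k^u$. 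Then for $i\in I^+$ define
\[
  K_i^a:=\bigcup_{k\,:\,i(w_k^s)=i}\bigl(\bar B_{r_{w_k^s}}(w_k^s)\cap\Msa\bigr),
\]
and analogously for $i\in I^-$ using $\Mua$. Each $K_i^a$ is a finite union of closed subsets of the compact sets $\Msa,\Mua$, hence compact, and lies in $\psi(M_i,\R)$ by construction; property \eqref{Ki +- union} is then immediate from Step~1.

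Step 3 (uniform $\eta_a,T_a$). Each $K_i^a$ is a compact subset of the open set $\psi(M_i,\R)$, so I would pick $\eta_i>0$ with $\bar N_{\eta_i}(K_i^a)\subset\psi(M_i,\R)$; then continuity of $t_i$ on $\psi(M_i,\R)$ bounds $|t_i|$ by some finite $T_i$ on the compact neighborhood $\bar N_{\eta_i}(K_i^a)$, and the defining relation \eqref{t z eq} gives $\bar N_{\eta_i}(K_i^a)\subset\psi(M_i,[-T_i,T_i])$. Setting $\eta_a:=\min_i\eta_i$ and $T_a:=\max_i T_i$ (ignoring indices with $K_i^a=\varnothing$) produces the desired uniform constants.

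The hard part is Step~1: one must convert the \emph{global} dynamical fact that a flowline in $M_s$ (resp.\ $M_u$) converges to $x$ into the \emph{local} sign statement about $f_{M_i}(x)$. Corollary~\ref{improved prime lemma}, together with the sign-constancy of $f_{M_i}$ on $\bar B_{a_0}(x)$ encoded in \eqref{fMi neq0}, is exactly the bridge needed; the remaining two steps are then routine compactness arguments.
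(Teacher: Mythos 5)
Your proposal is correct and follows essentially the same route as the paper: the sign argument via Corollary~\ref{improved prime lemma} together with \eqref{fMi neq0} to show every $i$ with $w\in\psi(M_i,\R)$ must lie in $I^+$ (resp.\ $I^-$), a finite-subcover construction of the compact pieces $K_i^a$ for $n\geq3$ with the explicit two-point sets in dimension two, and finally the compactness of $K_i^a$ inside the open set $\psi(M_i,\R)$ to extract uniform $\eta_a$ and $T_a$. The only cosmetic difference is that you take a minimum over per-index constants $\eta_i,T_i$ at the end, whereas the paper chooses a single $\eta_a,T_a$ directly; these are equivalent.
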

\begin{proof}
See Appendix \ref{compact hitting partition sec}.
\end{proof}
\noindent Now let us define the compact set
\begin{equation} \label{K def}
  K:=\bar B_{a_0}(x)\cup\bigcup_{i=1}^m\psi\big(M_i,[-T_{a_0},T_{a_0}]\big).
\end{equation}
By Remark~\ref{on manifold b neq0} no point in $M_i$ and thus also in $\psi(M_i,\R)$ has zero drift, and using \eqref{b neq 0} we thus find that the set $b^{-1}\big(\Rn\setminus\{0\}\big)\cup\{x\}$ is open and contains~$K$. Therefore we can choose $\tilde a>0$ so small that
\begin{gather}
0<\tilde a<a_1\leq a_0, \label{a relations} \\
\bar N_{2\tilde a}(K)\subset  b^{-1}\big(\Rn\setminus\{0\}\big)\cup\{x\} \subset D. \label{a tilde def}
\end{gather}
Finally, in the two-dimensional case ($n=2$) we decrease $\tilde a>0$ at this point as described on pages \pageref{prop (vi) Step 2}-\pageref{prop (vi) Step 3 end} (\textit{Steps 2-3} of our proof of Lemma \ref{superprop}~(vi)). We emphasize that our construction on those pages will not make use of anything we do beyond this point, and that the sole reason for postponing this step is to not unnecessarily distract the reader now with further details.
This completes our preparation process.

\subsection{Modification of the Admissible Manifolds} \label{superprop proof step 2}

We begin the second part of our proof with the definition of the sets $\hMtas$ and $\hMtau$.

\begin{lemma} \label{Msc cap Msc-hat lemma}
There exists a $\rho_0>0$ such that the compact sets
\begin{align}
   \hMtas := p_s^{-1}(\Msan)\cap\bar N_{\rho_0}(\Msan)
   &\quad\ \ \text{and}\quad\ \ 
   \hMtau := p_u^{-1}(\Muan)\cap\bar N_{\rho_0}(\Muan) \label{Msa Msu def} \\
\intertext{fulfill}
   & \nonumber \\[-.85cm]
   \hMtas\cap M_s=\Msan
   &\quad\ \ \text{and}\quad\ \ 
   \hat M_u^{\tilde a}\cap M_u=\Muan. \label{Msc cap Msc-hat}
\end{align}
\end{lemma}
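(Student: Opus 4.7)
My plan is to first establish compactness of $\hMtas$ and $\hMtau$, which comes essentially for free, and then to reduce each of the two set equalities in~\eqref{Msc cap Msc-hat} to the statement that a point of~$M_s$ (respectively $M_u$) sufficiently close to $\Msan$ (respectively $\Muan$) must lie on~$M_s^{loc}$ (respectively $M_u^{loc}$). The two equations are proved by the same argument modulo the symmetry $s\leftrightarrow u$ combined with a reversal of time, so I will only discuss the stable case.

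Compactness of $\hMtas$ is obtained by choosing $\rho_0\le a_0-\tilde a$: then $\bar N_{\rho_0}(\Msan)\subseteq\bar B_{a_0}(x)$ sits in the domain of~$p_s$, so $\hMtas$ is the intersection of the closed set $p_s^{-1}(\Msan)$ (a continuous preimage of the compact set~$\Msan$) with the bounded closed set $\bar N_{\rho_0}(\Msan)$. For the easy direction ``$\supseteq$'' in the first equation of~\eqref{Msc cap Msc-hat}: any $w\in\Msan$ lies in $M_s^{loc}\cap\bar B_{a_0}(x)$ by Lemma~\ref{Msa compact lemma}, so $p_s(w)=w\in\Msan$ by~\eqref{as prop 1}, which places $w$ in $\hMtas$; and $w\in M_s$ follows from $M_s^{loc}\subseteq M_s$.

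For the reverse inclusion, let $w\in\hMtas\cap M_s$. Since $p_s(w)\in\Msan\subseteq M_s^{loc}$ by definition of $\hMtas$, and $p_s$ is the identity on~$M_s^{loc}$ by~\eqref{as prop 1}, it suffices to show $w\in M_s^{loc}$; then $w=p_s(w)\in\Msan$. By the contrapositive of~\eqref{stable man prop}, this further reduces to verifying that the entire forward orbit $\psi(w,[0,\infty))$ remains in~$\bar B_{a_0}(x)$. The plan is to split this orbit at a suitably large time~$T$: the forward invariance of~$M_s^{loc}$ and the uniform convergence $\psi(m,t)\to x$ over the compact set $m\in\Msan$ (together with a harmless further shrink of~$a_0$ so that $M_s^{loc}\subseteq\bar B_{a_0/2}(x)$) let us choose $T$ and a constant $\mu>0$---the constant supplied by Lemma~\ref{Hartman-Grobman Lemma} for some small $\eta>0$---such that $\psi(m,[0,T])\subseteq\bar B_{a_0/2}(x)$ and $\psi(m,T)\in\bar B_{\mu/2}(x)$ for every $m\in\Msan$. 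Continuous dependence of the flow on initial conditions over the compact interval $[0,T]$ then lets us shrink~$\rho_0$ so that $|w-m|\le\rho_0$ forces $\psi(w,[0,T])\subseteq\bar B_{a_0}(x)$ and $v:=\psi(w,T)\in\bar B_\mu(x)$.

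The hard part will be to upgrade ``$v\in\bar B_\mu(x)\cap M_s$'' to ``$v\in M_s^{loc}$''; once this is in hand, forward invariance of $M_s^{loc}\subseteq\bar B_{a_0}(x)$ yields $\psi(w,[T,\infty))\subseteq\bar B_{a_0}(x)$, closing the argument. If $v$ were not in~$M_s^{loc}$, Lemma~\ref{Hartman-Grobman Lemma}\,(i) would force the forward orbit of~$v$ to exit $B_{a_1}(x)$ at some time $T_1(v)>0$ through $N_\eta(M_u^{loc})\cap\partial B_{a_1}(x)$, while the convergence $\psi(v,\cdot)\to x$ would eventually confine the orbit permanently to $\bar B_\mu(x)$ from some time $T^{\ast}\ge T_1(v)$ on; applying the contrapositive of~\eqref{stable man prop} to the confined tail then places $\psi(v,T^{\ast})$ inside $M_s^{loc}$. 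The delicate point is transporting this information backwards to~$v$ itself---$M_s^{loc}$ being only forward-invariant---and the plan is to apply Lemma~\ref{Hartman-Grobman Lemma}\,(ii) along the backward orbit of $\psi(v,T^{\ast})$ to place the last re-entry of the orbit into $\bar B_{a_1}(x)$ inside $N_\eta(M_s^{loc})\cap\partial B_{a_1}(x)$, and to invoke the geometric disjointness, for $\eta$ small, of the $\eta$-neighborhoods of $M_s^{loc}\cap\partial B_{a_1}(x)$ and $M_u^{loc}\cap\partial B_{a_1}(x)$ on~$\partial B_{a_1}(x)$ (which follows from $M_s^{loc}\cap M_u^{loc}=\{x\}$ by~\eqref{Msloc Muloc intersection} together with $x\notin\partial B_{a_1}(x)$) to rule out any such excursion, forcing $v\in M_s^{loc}$.
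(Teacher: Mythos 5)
Your first half is fine: the compactness claim, the easy inclusion $\Msan\subseteq\hMtas\cap M_s$, and the reduction of the hard inclusion to showing that a point $w\in\hMtas\cap M_s$ must lie on $M_s^{loc}$ (via~\eqref{as prop 1}) all track the paper. But the core step --- upgrading $v=\psi(w,T)\in\bar B_\mu(x)\cap M_s$ to $v\in M_s^{loc}$ using only Lemma~\ref{Hartman-Grobman Lemma} --- has a genuine gap, and it is exactly the step for which the paper brings in a completely different, global tool.

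The problem is that your closing argument proves no contradiction. You have three facts about the forward orbit of $v$: by Lemma~\ref{Hartman-Grobman Lemma}\,(i) it exits $B_{a_1}(x)$ at some time $T_1(v)>0$ with exit point in $N_\eta(M_u^{loc}\cap\bar B_{a_1}(x))\cap\partial B_{a_1}(x)$; since $v\in M_s$ the orbit eventually re-enters $\bar B_\mu(x)$ and stays there after some $T^\ast$, with $\psi(v,T^\ast)\in M_s^{loc}$ by the contrapositive of~\eqref{stable man prop}; and by Lemma~\ref{Hartman-Grobman Lemma}\,(ii) the last re-entry of the orbit into $\bar B_{a_1}(x)$ before $T^\ast$, at time $T^\ast+T_2$, lands in $N_\eta(M_s^{loc}\cap\bar B_{a_1}(x))\cap\partial B_{a_1}(x)$. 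But $T_1(v)$ and $T^\ast+T_2$ are \emph{different} times and give \emph{different} boundary points --- the exit point near $M_u^{loc}$ and the re-entry point near $M_s^{loc}$ --- so the disjointness of $N_\eta(M_u^{loc}\cap\partial B_{a_1}(x))$ and $N_\eta(M_s^{loc}\cap\partial B_{a_1}(x))$ simply says they are distinct points, which is already clear. Moreover, since $T^\ast+T_2>T_1(v)>0$, the backward orbit segment from $\psi(v,T^\ast)$ controlled by Lemma~\ref{Hartman-Grobman Lemma}\,(ii) never reaches $v$, so ``transporting backward to $v$'' does not actually occur. Geometrically, nothing in the local picture near the saddle forbids a branch of $M_s$ from leaving through the unstable cone, making a long excursion outside $B_{a_1}(x)$, and returning along the stable cone --- and this is precisely the configuration the lemma needs to rule out from its hypotheses, not just from the local normal form.

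The paper's proof is shorter because it injects the right piece of global information from the start: the admissible manifolds $M_i$ and the hypothesis \eqref{main condition}. Assuming $f_s(w)>a_0$, one follows the orbit of $w$ forward until it hits $M_s^{a_0}$ at some $v=\psi(w,t)\in K_i^{a_0}$ (an $i\in I^+$), uses the disjointness of $\Msan$ and $\psi(M_s^{a_0},[-T_{a_0},0])$ to force $t>T_{a_0}$, and deduces $t_i(w)=t_i(v)-t<0$, hence $f_{M_i}(w)<0$; this contradicts \eqref{fMi>0}, which holds on all of $\bar B_{a_0}(x)\supseteq\hMtas$. It is this sign argument via $f_{M_i}$ --- not the local Hartman--Grobman geometry --- that excludes the winding-back branches of $M_s$ from $\bar N_{\rho_0}(\Msan)$. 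If you want to repair your proof, the missing ingredient is precisely some use of the hypothesis \eqref{main condition} and the $f_{M_i}$-coordinates; Lemma~\ref{Hartman-Grobman Lemma} alone cannot see far enough from $x$ to do the job. (One smaller remark: the ``harmless further shrink of $a_0$'' you invoke is not available at this stage, since $\tilde a$ and $\Msan$ were defined relative to the already-fixed $a_0$; but this step is unnecessary anyway, as orbits of points in $\Msan$ stay in $\bar B_{\tilde a}(x)$ by Lemma~\ref{distance function saddle}\,(i)--(ii).)
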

\begin{proof}
See Appendix \ref{Msc cap Msc-hat sec}.
\end{proof}
\noindent Note that since $p_s$ and $p_u$ are only defined on $\bar B_{a_0}(x)$, we have
\begin{equation} \label{Msc in ball}
  \hMtas\subset\bar B_{a_0}(x)  \qquad\text{and}\qquad \hMtau \subset\bar B_{a_0}(x). 
\end{equation}
Our goal in this section is to use Lemma \ref{move M lemma} to turn the admissible manifolds $M_i$ into new ones, $M_i'$, whose union covers $\hMtas\cap\bar N_\rho(M_s^{\tilde a})$ and $\hMtau\cap\bar N_\rho(M_u^{\tilde a})$ for some sufficiently small $\rho>0$, see \eqref{Mi' def} and \eqref{Mi' cover}.
The essential ingredients for defining the functions $\beta_i$ needed for Lemma \ref{move M lemma} are the functions given by the following Lemma. Observe the resemblance with Lemma~\ref{man2func lemma}.

\begin{lemma} \label{zs ts lemma}
There exist open sets $D_s\supset M_s\setminus\{x\}$ and $D_u\supset M_u\setminus\{x\}$ and functions $z_s\in C^1\big(D_s,\hat M_s^{\tilde a}\big)$, $t_s\in C^1(D_s,\R)$, $z_u\in C^1\big(D_u,\hat M_u^{\tilde a}\big)$ and\linebreak $t_u\in C^1(D_u,\R)$ such that
\begin{subequations}
\begin{align}
\hspace{1.36cm}
  \forall w\in D_s\colon\ \ &\psi\big(z_s(w),t_s(w)\big)=w,
 \label{ts zs property 1} \\*
  \forall w\in D_u\colon\ \ &\psi\big(z_u(w),t_u(w)\big)=w,
\end{align}
\end{subequations}\\[-1.4cm]
\begin{subequations}
\begin{align}
\hspace{-1.4cm}
   \forall w\in D_s\cap\hat M_s^{\tilde a}\colon\ \ &z_s(w)=w, \label{zsx=x}\\*[-.05cm]
\hspace{-1.4cm}
   \forall w\in D_u\cap\hat M_u^{\tilde a}\colon\ \ &z_u(w)=w.
\end{align}
\end{subequations}
Furthermore, $z_s$ and $z_u$ are constant on the flowlines of $b$, i.e.~we have
\begin{subequations}
\begin{align}
&\forall w\in D_s\hspace{1pt}\ \,\forall t\in\R\colon\quad\ \psi(w,t)\in D_s\hspace{1pt}
   \ \ \Rightarrow\ \ \hspace{1pt} z_s(\psi(w,t))=z_s(w), \label{zs cst on flowlines} \\
&\forall w\in D_u\ \,\forall t\in\R\colon\quad\ \psi(w,t)\in D_u
   \ \ \Rightarrow\ \  z_u(\psi(w,t))=z_u(w).
\end{align}
\end{subequations}
\end{lemma}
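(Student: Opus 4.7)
The plan is to prove the statement for $z_s,t_s,D_s$; the construction for $z_u,t_u,D_u$ is entirely analogous (replace $b$ by $-b$ and $f_s,p_s,M_s^{\text{loc}}$ by $f_u,p_u,M_u^{\text{loc}}$). The idea is to mimic the proof of Lemma \ref{man2func lemma}, treating $\hMtas$ as a (local) $C^1$-hypersurface to which the flow of $b$ is transverse, and applying the Implicit Function Theorem pointwise near each $w\in M_s\setminus\{x\}$. The global open set $D_s$ will then be assembled as the union of the resulting local neighborhoods, and properties \eqref{zsx=x} and \eqref{zs cst on flowlines} will drop out from the uniqueness of the implicit solution.

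First I would realize $\hMtas$ locally as a level set. Define $F\in C^1\big(\bar B_{a_0}(x)\big)$ by $F(w):=\tilde f_s(p_s(w))$, using the $C^1$-extension $\tilde f_s$ of $f_s$ from Lemma \ref{distance function saddle}(iii) and the projector $p_s$; then \eqref{as prop 1}, \eqref{fs C1 extension} and the definition of $\Msan$ give $p_s^{-1}(\Msan)\cap\bar B_{a_0}(x)=F^{-1}(\{\tilde a\})$, so that after possibly decreasing $\rho_0$, $\hMtas$ agrees with $F^{-1}(\{\tilde a\})$ in a neighborhood of $\Msan$. The key transversality computation is that at any $w\in\Msan$, since $b(w)\in T_wM_s^{\text{loc}}$ and $p_s$ is the identity on $M_s^{\text{loc}}$, we have $Dp_s(w)b(w)=b(w)$, and thus
\[
\skp{\nabla F(w)}{b(w)}=\skp{\nabla\tilde f_s(w)}{b(w)}=\skp{\nabla f_s(w)}{b(w)}=-|b(w)|<0
\]
by Lemma \ref{distance function saddle}(i) and \eqref{b neq 0}. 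By compactness of $\Msan$ (Lemma \ref{Msa compact lemma}) and continuity, this transversality persists on all of $\hMtas$ provided $\rho_0$ was chosen sufficiently small.

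Next I would construct $z_s$ and $t_s$ locally. For each $w_0\in M_s\setminus\{x\}$, Lemma \ref{distance function saddle}(i) shows that $t\mapsto f_s(\psi(w_0,t))$ is strictly decreasing and continuous on $\R$ with $\lim_{t\to\infty}f_s(\psi(w_0,t))=0$ (since $\psi(w_0,t)\to x$) and $\lim_{t\to-\infty}f_s(\psi(w_0,t))>\tilde a$ (monotonicity plus the bound $f_s\geq|{\cdot}-x|$ in \eqref{fs est 1 saddle lower}), so there is a unique $t_0\in\R$ with $\psi(w_0,t_0)\in\Msan\subset\hMtas$. Considering $G(w,t):=F(\psi(w,t))-\tilde a$, we have $G(w_0,t_0)=0$ and $\partial_tG(w_0,t_0)=\skp{\nabla F(\psi(w_0,t_0))}{b(\psi(w_0,t_0))}\neq 0$ by the transversality established above, so the Implicit Function Theorem yields an open neighborhood $U_{w_0}$ of $w_0$ and a $C^1$-function $t_s\colon U_{w_0}\to\R$ with $t_s(w_0)=t_0$ and $G(w,t_s(w))=0$; setting $z_s(w):=\psi(w,t_s(w))$ and shrinking $U_{w_0}$ so that $z_s(U_{w_0})\subset\bar N_{\rho_0}(\Msan)$ gives $z_s(w)\in\hMtas$. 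Take $D_s:=\bigcup_{w_0\in M_s\setminus\{x\}}U_{w_0}$; this is open and contains $M_s\setminus\{x\}$.

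To patch these local solutions into globally well-defined $C^1$-functions on $D_s$, I would use uniqueness: after possibly further shrinking each $U_{w_0}$, the function $t\mapsto F(\psi(w,t))$ is strictly decreasing on a time interval $I_w$ containing $t_s(w)$ and large enough to include all candidate hitting times of $\hMtas$, so the $t_s(w)$ produced by two overlapping neighborhoods must coincide. Property \eqref{zsx=x} is then immediate: for $w\in D_s\cap\hMtas$, $t=0$ solves $G(w,0)=F(w)-\tilde a=0$, so uniqueness gives $t_s(w)=0$ and $z_s(w)=w$. Property \eqref{zs cst on flowlines} follows from the identity
\[
\psi\big(z_s(w),t_s(w)+s\big)=\psi(w,s)=\psi\big(z_s(\psi(w,s)),t_s(\psi(w,s))\big),
\]
with both arguments $z_s(w),z_s(\psi(w,s))\in\hMtas$, by the same uniqueness. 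The main obstacle is precisely this uniqueness issue: a priori a flowline starting at a generic $w\in D_s$ could cross $\hMtas$ multiple times, so one must control the size of $D_s$, $\rho_0$ and each $U_{w_0}$ carefully, using transversality and the compactness from Lemma \ref{Msa compact lemma} to obtain a uniform lower bound on how long $\partial_tG$ stays bounded away from zero along flowlines passing through $\hMtas$.
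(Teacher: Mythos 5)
Your local construction is essentially the paper's: realize $\hMtas$ as a level set of the $C^1$-function $w\mapsto\tilde f_s(p_s(w))$, verify transversality of the flow via $\skp{\nabla f_s}{b}=-|b|$, and invoke the Implicit Function Theorem to get a local $C^1$ hitting-time map near each $w_0\in M_s\setminus\{x\}$. You also correctly identify the main obstacle: a flowline could cross $\hMtas$ at several, possibly widely separated, times, so the locally defined hitting time must be shown to equal a globally well-defined quantity (the first hitting time) before gluing is possible.

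However, the mechanism you propose for resolving this is not sufficient. A uniform lower bound on the length of the time window around a hit of $\hMtas$ on which $\partial_t G$ stays away from zero only rules out a \emph{second nearby} crossing; it says nothing about whether the flowline, after having left $\bar B_{a_0}(x)$ (where $G$ is not even defined), re-enters much later (or earlier) and crosses $\hMtas$ again. Monotonicity of $F\circ\psi$ is a purely local statement near $\hMtas$, and compactness of $\Msan$ does not bound the set of candidate crossing times for flowlines through a neighborhood of $M_s\setminus\{x\}$. Without a global mechanism you cannot conclude that the IFT solution is the unique one in all of $(-\infty,\nu)$, and hence cannot deduce \eqref{zsx=x} or the coherence needed to glue the local patches.

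The paper closes this gap by bringing in the admissible manifolds: since $v\in\Msan\subset\bigcup_{i\in I^+}\psi(M_i,\R)$, there is an $i\in I^+$ such that $f_{M_i}(\psi(v,t'))<0$ for some $t'<0$, and because $t\mapsto\sgn\!\big(f_{M_i}(\psi(w,t))\big)$ is non-decreasing (a consequence of \eqref{prime lemma eq 1}) while $f_{M_i}>0$ on $\bar B_{a_0}(x)\supset\hMtas$ for $i\in I^+$, the flowline cannot visit $\hMtas$ at any time $t\le t'$. Combined with a compactness argument on the finite interval $[t',-\nu]$, this shows the IFT solution is the unique hit on $(-\infty,\nu)$. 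None of this appears in your sketch, and it cannot be replaced by a transversality estimate alone; you would need to add a global obstruction (the paper's choice being the $f_{M_i}$'s) to make the uniqueness and gluing steps go through.
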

\begin{proof}
See Appendix \ref{zs ts lemma sec}. The proof resembles the one of Lemma \ref{man2func lemma},
with the additional difficulty that now our target manifolds $\hat M_s^{\tilde a}$ and $\hat M_u^{\tilde a}$ are not admissible, and so a single flowline might intersect them more than once.
\end{proof}
\begin{remark} \label{zs K remark} We may assume that
\begin{subequations}
\begin{align}
  &\forall i\in I^+\colon\ \ K_i^{\tilde a }= z_s(K_i^{a_0}), \label{Ki tilde a def 1}\\
  &\forall i\in I^-\colon\ \ K_i^{\tilde a }= z_u(K_i^{a_0}). \label{Ki tilde a def 2}
\end{align}
\end{subequations}
\end{remark}
\begin{proof}
See Appendix \ref{zs K remark app}.
\end{proof}

\noindent The next lemma provides us with sets $G_i$ that we will need momentarily.

\begin{lemma} \label{Ui supset lemma}
For $\forall i\in I$ there exists an open set $G_i\subset D$ such that
\begin{equation} \label{Ui def}
  \hspace{-1.05cm}\forall i\in I\ \ \hspace{-1pt}\colon \ \ G_i\supset\psi(K_i^{\tilde a},[-T_{\tilde a},T_{\tilde a}]);
\end{equation}
\vspace{-.90cm}
\begin{subequations}
\begin{align}
  \forall i\in I^+\colon \ \ G_i\cap f^{-1}_{M_i}\big([0,\infty)\big)\hspace{.3cm}&\subset N_{\tilde a}(K), \label{Ui prop 1} \\
  \forall i\in I^-\colon \ \ G_i\cap f^{-1}_{M_i}\big((-\infty,0]\big)&\subset N_{\tilde a}(K).
\end{align}
\end{subequations}
\end{lemma}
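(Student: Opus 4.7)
The plan is to define the compact set $L_i := \psi(K_i^{\tilde a},[-T_{\tilde a},T_{\tilde a}])$ and, for $i\in I^+$, its ``same-side-as-$x$'' piece $L_i^+ := L_i \cap f_{M_i}^{-1}([0,\infty))$ (symmetrically $L_i^- := L_i \cap f_{M_i}^{-1}((-\infty,0])$ for $i\in I^-$), prove the key inclusion $L_i^\pm \subset K$, and then take
\[
  G_i := N_{\tilde a}(K) \cup f_{M_i}^{-1}\big((-\infty,0)\big) \quad (i\in I^+),
\]
with $G_i := N_{\tilde a}(K)\cup f_{M_i}^{-1}\big((0,\infty)\big)$ for $i\in I^-$. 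Each $G_i$ is open as a union of open sets. The inclusion $L_i\subset G_i$ (hence \eqref{Ui def}) follows by splitting $L_i = L_i^\pm \cup (L_i\setminus L_i^\pm)$: the first piece lies in $K\subset N_{\tilde a}(K)$ by the key inclusion, and the second lies in the complementary half-space by construction. For \eqref{Ui prop 1} and its $I^-$-analogue, if $w\in G_i$ satisfies $f_{M_i}(w)\geq 0$ (resp.\ $\leq 0$) then $w$ cannot lie in the half-space complement appearing in $G_i$, so $w\in N_{\tilde a}(K)$.

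The main work is the inclusion $L_i^+ \subset K$ for $i \in I^+$ (the $i \in I^-$ case is symmetric, using $M_u$, $f_u$, and reversed inequalities). Fix $w = \psi(v,t)$ with $v\in K_i^{\tilde a}\subset M_s^{\tilde a}$ (via \eqref{Ki +- union}), $|t|\leq T_{\tilde a}$, and $f_{M_i}(w)\geq 0$. If $t\geq 0$, the monotonicity of $f_s$ along flowlines (Lemma~\ref{distance function saddle}~(i)) gives $f_s(w)\leq f_s(v)=\tilde a$, and \eqref{fs est 1 saddle lower} yields $w\in\bar B_{\tilde a}(x)\subset K$. If $t<0$, use \eqref{Ki nbhd} to write $v=\psi(w_0,s)$ with $w_0\in M_i$ and $|s|\leq T_{\tilde a}$, so that $w=\psi(w_0,s+t)$; Corollary~\ref{improved prime lemma} together with $f_{M_i}(w)\geq 0$ forces $s+t\geq 0$, and combined with $s\leq T_{\tilde a}$ and $t<0$ this gives $s+t\in[0,T_{\tilde a})$, hence $w\in\psi(M_i,[0,T_{\tilde a}])\subset K$. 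This last step uses the convention $T_{\tilde a}\leq T_{a_0}$, which we may assume without loss of generality since Lemma~\ref{compact hitting partition} allows us to enlarge $T_a$ freely without invalidating \eqref{Ki nbhd}.

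The only nontrivial step is this key inclusion; everything else is topological bookkeeping. Conceptually, the proof splits the flow segment through each $v\in K_i^{\tilde a}$ into two parts: one drifting \emph{towards} $x$ along $M_s$, which the monotonicity of $f_s$ confines to $\bar B_{\tilde a}(x)$, and one drifting \emph{away} from $x$, whose sign constraint $f_{M_i}\geq 0$ forces it to stay within a flow tube around $M_i$ already included in~$K$ by the definition \eqref{K def}.
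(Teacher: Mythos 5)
The overall structure of your proof (topological bookkeeping after the key inclusion, with a slightly larger $G_i := N_{\tilde a}(K)\cup f_{M_i}^{-1}((-\infty,0))$ that still works) is fine, and your $t\ge 0$ case is correct: monotonicity of $f_s$ along the flow plus \eqref{fs est 1 saddle lower} confines $w$ to $\bar B_{\tilde a}(x)\subset K$.

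The gap is in the $t<0$ case. You conclude $t_i(w)\in[0,T_{\tilde a})$ and then need $\psi(M_i,[0,T_{\tilde a}))\subset K$, which by \eqref{K def} requires $T_{\tilde a}\le T_{a_0}$. You assert this ``without loss of generality,'' but this cannot be arranged by enlarging $T_{a_0}$: enlarging $T_{a_0}$ enlarges $K$ via \eqref{K def}, which forces $\tilde a$ to shrink (to keep \eqref{a tilde def} valid), which pushes $M_s^{\tilde a}$ closer to $x$ and thus increases $T_{\tilde a}$ again. There is no obvious fixed point breaking this regress, and indeed, since $\tilde a < a_0$, points of $K_i^{\tilde a}$ lie \emph{further along} the flow from $M_i$ than points of $K_i^{a_0}$, so one should expect $T_{\tilde a} > T_{a_0}$ rather than the reverse.

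The paper avoids this by a different case split: not on the sign of $t$, but on whether $w\in\bar B_{a_0}(x)$. If $w\in\bar B_{a_0}(x)$, done. Otherwise, write $v=z_s(u)$ with $u\in K_i^{a_0}$ via Remark~\ref{zs K remark}, so $w=\psi(u,t-t_s(u))$. Then $w\notin\bar B_{a_0}(x)$ together with \eqref{fs est 1 saddle lower} forces $f_s(w)>a_0=f_s(u)$, so by Lemma~\ref{distance function saddle}~(i) the flow time $t-t_s(u)$ is negative; hence $t_i(w)<t_i(u)\le T_{a_0}$, with $T_{a_0}$ appearing directly because the estimate goes through $K_i^{a_0}$ rather than $K_i^{\tilde a}$. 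If you want to stay close to your structure, you should replace the $t<0$ branch by a split on $w\in\bar B_{a_0}(x)$ and route through $K_i^{a_0}$ as the paper does; the $T_{\tilde a}\le T_{a_0}$ shortcut is not available.
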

\begin{proof}
See Appendix \ref{Ui supset lemma sec}.
\end{proof}
Now let some $i\in I$ be given. Assuming for the moment that $i\in I^+$, we have $K_i^{\tilde a}\subset M_s^{\tilde a}$ by \eqref{Ki +- union} and thus $\psi(K_i^{\tilde a},[-T_{\tilde a},T_{\tilde a}])\subset \psi(M_s^{\tilde a},\R)=M_s\setminus\{x\}\subset D_s$ by \eqref{Msa flow = Ms} and the choice of $D_s$ in Lemma \ref{zs ts lemma}, and combining this with \eqref{Ui def} we find that
\begin{equation} \label{Kai VU 1}
  \psi(\Kati,[-T_{\tilde a},T_{\tilde a}])\subset D_s\cap G_i.
\end{equation}
Since $\Kati\subset\Msan\subset\hMtas$ by \eqref{Ki +- union} and \eqref{Msc cap Msc-hat}, \eqref{zsx=x} and \eqref{zs cst on flowlines} imply that $\forall w\in\psi(\Kati,[-T_{\tilde a},T_{\tilde a}])\colon\ z_s(w)\in\Kati$, and since $z_s$ is continuous
there is an open set $W_i$ with
\begin{equation} \label{comp open pre}
  \psi(\Kati,[-T_{\tilde a},T_{\tilde a}])\subset W_i\subset D_s\cap G_i
\end{equation}
that is so small that
\[ \forall w\in W_i\colon\quad z_s(w)\in N_{\eta_{\tilde a}}(\Kati)\subset\psi(M_i,[-T_{\tilde a},T_{\tilde a}]), \]
where in the last step we used \eqref{Ki nbhd}. In particular,
\begin{equation} \label{ti<T}
  \forall w\in W_i\colon\quad z_s(w)\in\psi(M_i,\R) \text{\ \ and\ \ }  t_i(z_s(w))\in [-T_{\tilde a},T_{\tilde a}].
\end{equation}
Furthermore, since $\Kati$ and $[-T_{\tilde a},T_{\tilde a}]$ are compact and $W_i$ is open,
because of \eqref{comp open pre} we can choose a $\rho\in(0,\rho_0]$ small enough that
\begin{equation} \label{comp open}
  \psi\big(\bar N_\rho(\Kati),[-T_{\tilde a},T_{\tilde a}]\big)\subset W_i\subset D_s\cap G_i.
\end{equation}
Finally, we let $\nu_i\in C^1(D,[0,1])$ be a function with $\supp(\nu_i)\subset W_i$ such that 
\begin{equation} \label{gamma=1}
\forall w\in\psi\big(\bar N_\rho(\Kati),[-T_{\tilde a},T_{\tilde a}]\big)\colon\ \ \nu_i(w)=1
\end{equation}
and define
\begin{equation}\label{beta i def}
  \beta_i(w):=
    \begin{cases}
      \nu_i(w)t_i(z_s(w)) &\text{if $w\in W_i$,} \\
      0                              &\text{if $w\in D\setminus W_i$},
    \end{cases}
\end{equation}
which is well-defined by \eqref{ti<T}. Then $\beta_i\in C^1(D,\R)$, and by Lemma \ref{move M lemma} the set $M_i'$, defined by
\begin{equation} \label{Mi' def}
  M_i':=\psi_{\beta_i}(M_i,1),
\end{equation}
is an admissible manifold again.

If $i\in I^-$ then an analogous strategy for defining $M_i'$ can be applied (with $M_s^{\tilde a}$, $D_s$ and $z_s$ replaced by $M_u^{\tilde a}$, $D_u$ and $z_u$, respectively), and the relations \eqref{Kai VU 1}-\eqref{Mi' def} hold in their correspondingly modified form. In this way we can define $M_i'$ successively for $\forall i\in I$, at each step potentially decreasing the previously obtained $\rho$ (this is possible since \eqref{comp open}-\eqref{gamma=1} remain true if $\rho$ is decreased).

\begin{definition}
For $\forall i\in I$ we denote by~$z_i'$ and~$t_i'$ the functions that Lemma~\ref{man2func lemma} associates to the admissible manifolds $M_i'$.
\end{definition}

\noindent The new admissible manifolds $M_i'$ have the following properties.

\begin{lemma}[Properties of $M_i'$] \label{Mi' properties}
\hspace*{0cm}\\[.35cm]
\hspace*{.14cm}
\begin{tabular*}{13cm}[t]{rl}
  (i) &  \hspace{-.12cm}$\forall i\in I\hspace{7pt}\colon\ \,\psi(M_i',\R)=\psi(M_i,\R)$.\\[.25cm]
  (ii) &
\end{tabular*}\\[-1.03cm]
\begin{subequations}
\begin{align}
  & \hspace{-4.84cm} \forall i\in I^+\colon\ \,\hMtas\cap\bar N_\rho(\Kati)\subset M_i', \label{Msan NKati in Mi} \\
  & \hspace{-4.84cm}\forall i\in I^-\colon\ \,\hMtau\cap\bar N_\rho(\Kati)\subset M_i',
\end{align}
\end{subequations}\\[-1.05cm]
\begin{equation} \label{Mi' cover}
  \hspace{.05cm}
  \hMtas\cap\bar N_\rho(\Msan)\subset \bigcup_{i\in I^+}M_i',  \quad\ \,
  \hMtau\cap\bar N_\rho(\Muan)\subset \bigcup_{i\in I^-}M_i'.
\end{equation}\\[-.15cm]
\begin{tabular*}{13cm}[t]{rl}
 (iii) & $\forall i\in I\ \forall w\in\bar N_{\tilde a}(M_i')\setminus\{x\}\colon\ \,b(w)\neq0$. \\[.5cm]
(iv) & 
\end{tabular*}\\[-1.13cm]
\begin{subequations}
\begin{align}
  &\hspace{-2.53cm}\forall i\in I^+\ \forall z\in M_i'\colon\quad\int_0^\infty   |b(\psi(z,\tau))|\,d\tau\geq\tilde a, \label{mod man dist} \\
  &\hspace{-2.53cm}\forall i\in I^-\ \forall z\in M_i'\colon\quad\int_{-\infty}^0   |b(\psi(z,\tau))|\,d\tau\geq\tilde a.
\end{align}
\end{subequations}\\[2cm]\markit 
\hspace*{-.04cm}
\begin{tabular*}{13cm}[t]{rl}
(v) & \begin{minipage}[t]{11.1cm}
\hspace{-.05cm}
For $\forall\tilde\rho\in(0,\rho]$ $\exists\mu>0$ such that
\end{minipage}
\end{tabular*}
\vspace{-.6cm}
\begin{align}
   \hspace{.35cm}\forall w\in\bar B_\mu(x)\setminus M_u^{loc}\ \ \exists t<0\colon\,\ & \psi(w,t)\in\hMtas, \label{flow into hMtas} \\*
&\big|p_s(\psi(w,t))-\psi(w,t)\big|\leq\tilde\rho; \label{projection not far} \\[.2cm]
   \hspace{.35cm}\forall w\in\bar B_\mu(x)\setminus M_s^{loc}\ \ \exists t>0\colon\,\ & \psi(w,t)\in\hMtau, \\*
&\big|p_u(\psi(w,t))-\psi(w,t)\big|\leq\tilde\rho.
\end{align}
\hspace*{-.04cm}
\begin{tabular*}{13cm}[t]{rl}
(vi) & \begin{minipage}[t]{11.1cm}
\hspace{-.2cm}
There $\exists\eps>0$ such that
\end{minipage}
\end{tabular*}
\vspace{-.6cm}
\begin{align}
   \hspace{.7cm}\forall w\in\bar B_\eps(x)\setminus M_u^{loc}\ \,\exists i\hspace{1pt}\in I^+\colon\,\ &w\in\psi(M_i',(0,\infty)), \label{tiw pos}\\*
&z_i'(w)\in\hMtas, \label{zi in hMtas} \\*
&\psi\big(w,[-t_i'(w),0]\big)\subset\bar B_{a_0}(x); \label{flow segment in B 1}\\[.2cm]
   \hspace{.7cm}\forall w\in\bar B_\eps(x)\setminus M_s^{loc}\ \,\exists j\in I^-\colon\,\ &w\in\psi(M_j',(-\infty,0)), \label{tiw neg}\\*
&z_j'(w)\in\hMtau,\\*
&\psi\big(w,[0,-t_j'(w)]\big)\subset\bar B_{a_0}(x). \label{flow segment in B 2}
\end{align}
\end{lemma}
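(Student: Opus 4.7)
The plan is to verify properties (i)--(vi) in order, using the construction of $\beta_i$ and $M_i'=\psi_{\beta_i}(M_i,1)$ from Section \ref{superprop proof step 2} together with the general facts supplied by Lemma \ref{move M lemma}. Properties (i) and (iii) should be quick. For (i), the key point is that $\beta_i\cdot b$ is parallel to $b$, so every $\psi_{\beta_i}$-trajectory is a reparametrization of a $b$-trajectory; writing $\psi_{\beta_i}(v,1)=\psi(v,T_v)$ for a suitable $T_v\in\R$ then yields $\psi(M_i',\R)=\psi(M_i,\R)$. For (iii), the bound $|\beta_i|\leq T_{\tilde a}$ from \eqref{ti<T} gives $M_i'\subset\psi(M_i,[-T_{\tilde a},T_{\tilde a}])\subset K$, hence $\bar N_{\tilde a}(M_i')\subset\bar N_{2\tilde a}(K)$, and the claim follows from \eqref{a tilde def}.

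Property (ii) is the reason why $\beta_i$ was designed the way it was. I would fix $i\in I^+$ and $w\in\hMtas\cap\bar N_\rho(\Kati)$ and argue as follows: by \eqref{comp open}, \eqref{ti<T}, and \eqref{gamma=1}, the cutoff $\nu_i$ equals $1$ everywhere along the backward $\psi_{\beta_i}$-trajectory from $w$; since $z_s$ is constant along $b$-flowlines by \eqref{zs cst on flowlines} and $z_s(w)=w$ by \eqref{zsx=x} (as $w\in\hMtas$), the function $\beta_i$ is constant at value $t_i(z_s(w))=t_i(w)$ along this trajectory. Hence $\psi_{\beta_i}(w,-1)=\psi(w,-t_i(w))=z_i(w)\in M_i$ by \eqref{t z eq}, so $w\in M_i'$; the cover \eqref{Mi' cover} then follows from the partition $\Msan=\bigcup_{i\in I^+}\Kati$ in \eqref{Ki +- union}. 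For (v), Lemma \ref{Hartman-Grobman Lemma}(ii) provides, for any desired $\eta>0$, a $\mu>0$ such that the backward $b$-trajectory of every $w\in\bar B_\mu(x)\setminus M_u^{loc}$ stays $\eta$-close to $M_s^{loc}\cap\bar B_{a_1}(x)$ until it exits $B_{a_1}(x)$; during this time $p_s(\psi(w,t))$ sweeps along $M_s^{loc}$ away from $x$ and must cross $\Msan$ at some $t<0$, yielding $\psi(w,t)\in p_s^{-1}(\Msan)\cap\bar N_\eta(M_s^{loc})\subset\hMtas$ once $\eta$ is chosen so small that $\bar N_\eta(M_s^{loc}\cap\bar B_{a_1}(x))\subset\bar N_{\rho_0}(\Msan)$ and \eqref{projection not far} holds. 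Property (vi) then combines (v) with (ii) to furnish the index $i\in I^+$ with $z_i'(w)\in M_i'$, while \eqref{flow segment in B 1} follows from \eqref{HG inclusion 2}.

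The main obstacle will be property (iv). For $z=\psi_{\beta_i}(v,1)\in M_i'$ with $i\in I^+$ and $v\in M_i\cap M_s$, the argument should be clean: the forward orbit of $z$ stays in $M_s$, so $\int_0^\infty|b(\psi(z,\tau))|\,d\tau=f_s(z)$; since $\nu_i\leq 1$ and $z_s$ is constant along $b$-flowlines, the $b$-time shift from $v$ to $z$ is at most $t_i(z_s(v))$, so $z$ lies on the same flowline before the hitting point $z_s(v)\in\Msan$, and monotonicity of $f_s$ (Lemma \ref{distance function saddle}(i)) gives $f_s(z)\geq f_s(z_s(v))=\tilde a$. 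The harder case is $v\notin M_s$: here the orbit of $z$ does not converge to $x$, so the identification with $f_s$ fails, and I would have to argue directly, either via $p_s(z)\in\Msan$ and a continuity/approximation argument using nearby points of $M_i\cap M_s$, or by bounding $|b|$ from below via \eqref{b bounds} on a portion of the orbit that necessarily travels a macroscopic distance. This second case is where the principal technical difficulty lies.
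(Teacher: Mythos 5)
Your treatment of (i), (ii), (v), and (vi) follows the paper's route and is essentially correct. The trouble is in (iii) and, as you yourself suspect, in (iv).

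\paragraph{(iii).} The inclusion $\psi(M_i,[-T_{\tilde a},T_{\tilde a}])\subset K$ that you invoke is not available. The set $K$ in \eqref{K def} only contains $\psi(M_j,[-T_{a_0},T_{a_0}])$, and there is no monotonicity statement $T_{\tilde a}\leq T_{a_0}$ in the construction of Lemma~\ref{compact hitting partition}; on the contrary, since $K_i^{\tilde a}=z_s(K_i^{a_0})$ lies deeper along the stable flowlines (further from $M_i$), one should expect $T_{\tilde a}>T_{a_0}$ in general, so $\psi(M_i,[-T_{\tilde a},T_{\tilde a}])$ can stick out of $K$. This is exactly the gap the paper's set $G_i$ (Lemma~\ref{Ui supset lemma}) is designed to plug: for $z=\psi_{\beta_i}(v,1)$ with $\beta_i(v)\neq0$, one only knows $z\in W_i\subset G_i$ and, crucially, $t_i(z)\geq0$ (hence $f_{M_i}(z)\geq0$) because $\beta_i\geq0$ for $i\in I^+$; then \eqref{Ui prop 1} gives $z\in G_i\cap f_{M_i}^{-1}([0,\infty))\subset N_{\tilde a}(K)$. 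The reason that one-sided statement holds is the non-trivial computation inside the proof of Lemma~\ref{Ui supset lemma} (showing $t_i(w)<T_{a_0}$ via the monotonicity of $f_s$ along the flow), not a crude bound $|\beta_i|\leq T_{\tilde a}$. Your argument, as written, would need the same bookkeeping.

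\paragraph{(iv).} You correctly isolate the difficulty ($z\notin M_s$), but there is a slick way around it that you are missing. Argue by contradiction: if $\int_0^\infty|b(\psi(z,\tau))|\,d\tau<\tilde a$, then the forward orbit has finite length, hence converges to some $\tilde x$ with $|\tilde x-z|<\tilde a$, so $\tilde x\in N_{\tilde a}(M_i')\subset N_{2\tilde a}(K)\subset D$. Since $\dot\psi(z,t)\to b(\tilde x)$ and $\psi(z,t)$ converges, the limit must vanish, so $b(\tilde x)=0$. By part (iii), the only zero of $b$ in $\bar N_{\tilde a}(M_i')$ is $x$, so $\tilde x=x$, i.e.\ $z\in M_s$ after all. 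Then the contradiction is obtained by the two-case analysis on $\beta_i(v)$ you sketch. The key insight — that finiteness of the integral forces $z$ into $M_s$, via (iii) — is what makes the "hard" case disappear; without it you cannot close the argument.

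\end{document}
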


\begin{proof}
In part (ii) we will only show \eqref{Msan NKati in Mi} and the first relation in \eqref{Mi' cover}, in parts (iii)-(iv) we will only treat the case $i\in I^+$, and in parts (v)-(vi) we will only show the properties \eqref{flow into hMtas}-\eqref{projection not far} and \eqref{tiw pos}-\eqref{flow segment in B 1}, respectively. The remaining properties can then be shown analogously. Throughout the proofs of parts (i)-(iv) we will repeatedly make use of the following three properties:\\[.2cm]
\noindent First, for any given $\beta\in C^1(D,\R)$ we have
\begin{align}
  \psi_{\beta}(w,t)&\hspace{3pt}=\psi(w,s_w(t))\quad\forall w\in D\ \,\forall t\in\R,\quad \text{where} \label{psi beta b rep} \\
 s_w(t)&:=\int_0^t\beta(\psi_{\beta}(w,\tau))\,d\tau. \label{sw def}
\end{align}
Indeed, if $\beta(w)=0$ then $\psi_\beta(w,t)=w$ for $\forall t\in\R$, and \eqref{psi beta b rep}-\eqref{sw def} are trivial. Otherwise we have for $\forall\tau\in s_w(\R)$
\begin{align*}
  \tfrac{d}{d\tau}\psi_{\beta}\big(w,s_w^{-1}(\tau)\big)
    &= \dot\psi_{\beta}\big(w,s_w^{-1}(\tau)\big)\cdot(s_w^{-1})'(\tau) \\
    &= (\beta b)\big(\psi_{\beta}(w,s_w^{-1}(\tau))\big)\cdot\big[\beta\big(\psi_{\beta}(w,s_w^{-1}(\tau))\big)\big]^{-1} \\
    &= b\big(\psi_{\beta}(w,s_w^{-1}(\tau))\big)
\end{align*}
and $\psi_{\beta}\big(w,s_w^{-1}(0)\big)=\psi_{\beta}(w,0)=w$, showing that $\psi_{\beta}\big(w,s_w^{-1}(\tau)\big)=\psi(w,\tau)$. We will for $\forall i\in I$ denote by $s_w^i$ the functions defined in \eqref{sw def}, with $\beta=\beta_i$.\\[.2cm]
\noindent Second, since by \eqref{beta i def} the functions $\beta_i$ vanish outside of $W_i$, we have
\begin{align}
  \forall w\in W_i\ \,\forall \tau\in\R\colon\ \ &\psi_{\beta_i}(w,\tau)\in W_i. \label{Vi inv} \\
\intertext{Since by \eqref{comp open pre} we have $W_i\subset D_s$ for $\forall i\in I^+$, and since by \eqref{zs cst on flowlines} $z_s$ is constant on the flowlines of $b$ and thus on those of $\beta_i b$, this implies that}
  \forall i\in I^+\ \,\forall w\in W_i\ \,\forall \tau\in\R\colon\ \ &z_s(\psi_{\beta_i}(w,\tau))=z_s(w). \label{zs inv on Vi}\\
\intertext{Third, let $i\in I^+$ and $u\in W_i\subset D_s$. Since $z_s$ takes values in $\hMtas\subset\bar B_{a_0}(x)$ by \eqref{Msc in ball}, we have $f_{M_i}(z_s(u))\!>\!0$ by \eqref{fMi>0}, and by \eqref{ti<T}, \eqref{prime lemma eq 2} and \eqref{beta i def} this implies that}
   \forall i\in I^+\ \,\forall u\in W_i\colon\ \ &t_i(z_s(u))\in(0,T_{\tilde a}], \label{ti>0 on Vi} \\
   \forall i\in I^+\ \,\forall u\in D\colon\ \ &\beta(u)\in[0,T_{\tilde a}]. \label{beta u in int}
\end{align}
Now let us begin with the proofs of the properties (i)-(vi).\\[.2cm]
(i) Since \eqref{psi beta b rep} implies $\psi_{\beta_i}(w,1)\in\psi(w,\R)$ for $\forall w\in D$, we have by \eqref{Mi' def}
\[
  \psi(M_i',\R)
   =\psi\big(\psi_{\beta_i}(M_i,1),\R\big)
   \subset\psi\big(\psi(M_i,\R),\R\big)
   =\psi(M_i,\R).
\]
for $\forall i\in I$. The reverse inclusion follows analogously from the equation $M_i=\psi_{\beta_i}(M_i',-1)$.\\[.2cm]
(ii) Let $i\in I^+$ and $w\in\hMtas\cap\bar N_\rho(\Kati)$. Then for $\forall t\in[-1,0]$ we have $|s_w^i(t)|\leq T_{\tilde a}$ by \eqref{sw def} and \eqref{beta u in int}, and thus $\psi_{\beta_i}(w,t)=\psi(w,s_w^i(t))\in\psi\big(\bar N_\rho(\Kati),[-T_{\tilde a},T_{\tilde a}]\big)\subset W_i\subset D_s$ by \eqref{psi beta b rep} and \eqref{comp open}.
By \eqref{Vi inv}, \eqref{gamma=1}, \eqref{beta i def}, \eqref{zs inv on Vi} and \eqref{zsx=x} we therefore have
\begin{equation*}
  \forall t\in[-1,0]\colon\ \ 
  \beta_i(\psi_{\beta_i}(w,t))
    = t_i\big(z_s(\psi_{\beta_i}(w,t))\big)
    = t_i(z_s(w))
    = t_i(w),
\end{equation*}
which implies $s_w^i(-1)=-t_i(w)$ by \eqref{sw def}.
We can now conclude that $\psi_{\beta_i}(w,-1)=\psi(w,s_w^i(-1))=\psi(w,-t_i(w))=z_i(w)$, i.e.~$w=\psi_{\beta_i}(z_i(w),1)\in\psi_{\beta_i}(M_i,1)=M_i'$.
This shows \eqref{Msan NKati in Mi}, and taking the union over all $i\in I^+$ on both sides and using \eqref{Ki +- union} implies the first relation in \eqref{Mi' cover}.\\[.2cm]
(iii) Let $i\in I^+$. It is enough to show
\begin{equation} \label{small claim 2}
  M_i'\subset N_{\tilde a}(K)
\end{equation}
since then by \eqref{a tilde def} we can conclude that
\begin{equation*}
  \bar N_{\tilde a}(M_i')\subset \bar N_{2\tilde a}(K)\subset
b^{-1}(\Rn\setminus\{0\})\cup\{x\},
\end{equation*}
which is (iii). To show \eqref{small claim 2}, let $w\in M_i'$. By definition of $M_i'$ in \eqref{Mi' def} and by \eqref{psi beta b rep} there is a $v\in M_i$ such that $w=\psi_{\beta_i}(v,1)=\psi(v,s_v^i(1))$, which implies that $w\in\psi(M_i,\R)$ and $t_i(w)=s_v^i(1)$. \\[.15cm]
\textit{Case 1:} $\beta_i(v)=0$. Then $\psi_{\beta_i}(v,t)=v$ for $\forall t\in\R$, so $w=v\in M_i\subset K\subset N_{\tilde a}(K)$ by \eqref{K def}. \\[.15cm]
\textit{Case 2:} $\beta_i(v)\neq0$. Then $\beta_i(\psi_{\beta_i}(v,t))\neq0$ for $\forall t\in\R$, and in particular $\beta_i(w)\neq0$. Therefore we have $w\in W_i\subset G_i$ by \eqref{beta i def} and \eqref{comp open pre}. Furthermore, we have $t_i(w)=s_v^i(1)\geq0$ by \eqref{sw def} and \eqref{beta u in int}, and thus $f_{M_i}(w)\geq0$ by \eqref{prime lemma eq 2}. By \eqref{Ui prop 1} we can now conclude that
\begin{equation}
  w\in G_i\cap f_{M_i}^{-1}\big([0,\infty)\big)\subset N_{\tilde a}(K)
\end{equation}
also in this case, completing the proof of \eqref{small claim 2} and thus of (iii). \\[.2cm]
(iv) Again let $i\in I^+$, and suppose that \eqref{mod man dist} is not true, i.e.~that $\exists z\in M_i'$ such that
\begin{equation} \label{wrong ass tilde a}
  \int_0^\infty|\dot\psi(z,\tau)|\,d\tau<\tilde a.
\end{equation}
Then for $s,t\geq T>0$ we have
\begin{equation*}
\big|\psi(z,t)-\psi(z,s)\big|=\bigg|\int_s^t\dot\psi(z,\tau)\,d\tau\bigg|
  \leq\int_T^\infty|\dot\psi(z,\tau)|\,d\tau\to0
\end{equation*}
as $T\to\infty$, and thus $\exists \tilde x\in\bar D\colon\,\lim_{t\to\infty}\psi(z,t)=\tilde x$. Furthermore, since 
\begin{equation*}
 \tilde a > \int_0^\infty|\dot\psi(z,\tau)|\,d\tau\geq\bigg|\int_0^\infty\dot\psi(z,\tau)\,d\tau\bigg|
 =\Big|\lim_{t\to\infty}\psi(z,t)-\psi(z,0)\Big|=|\tilde x-z|
\end{equation*}
and $z\in M_i'$, \eqref{small claim 2} and \eqref{a tilde def} tell us that $\tilde x\in N_{\tilde a}(M_i')\subset N_{2\tilde a}(K)\subset D$. Therefore the limit
\begin{equation} \label{psi dot lim}
\lim_{t\to\infty}\dot\psi(z,t)=\lim_{t\to\infty}b(\psi(z,t))=b(\tilde x)
\end{equation}
exists, and since also the limit $\lim_{t\to\infty}\psi(z,t)$ exists, the limit \eqref{psi dot lim} must be zero, i.e.~$b(\tilde x)=0$. Since $\tilde x\in N_{\tilde a}(M_i')$, part (iii) of this lemma thus says that $\tilde x=x$, i.e.~$\lim_{t\to\infty}\psi(z,t)=x$. In other words, we have $z\in M_s$, and our assumption \eqref{wrong ass tilde a} can be rephrased as $f_s(z)<\tilde a$. \\[.15cm]
Now since $z\in M_i'=\psi_{\beta_i}(M_i,1)$, there $\exists v\in M_i$ such that $z=\psi_{\beta_i}(v,1)$. \\[.15cm]
\textit{Case 1:} $\beta_i(v)=0$. Then $\psi_{\beta_i}(v,t)=v$ for $\forall t\in\R$ and thus $z=v\in M_i$. But on the other hand by \eqref{fs est 1 saddle lower} we have $|z-x|\leq f_s(z)<\tilde a< a_0$, which by \eqref{fMi neq0} implies that $f_{M_i}(z)\neq0$, contradicting $z\in M_i$. \\[.15cm]
\textit{Case 2:} $\beta_i(v)\neq0$. Then by \eqref{beta i def} we have $v\in W_i$, and \eqref{Vi inv} and \eqref{ti>0 on Vi} imply that $t_i\big(z_s(\psi_{\beta_i}(v,\tau))\big)>0$ for $\forall \tau\in\R$. Therefore by \eqref{sw def}, \eqref{beta i def} and \eqref{zs inv on Vi} we have
\begin{equation*}
  s_v^i(1) =    \int_0^1\beta_i(\psi_{\beta_i}(v,\tau))\,d\tau
         \leq \int_0^1t_i\big(z_s(\psi_{\beta_i}(v,\tau))\big)\,d\tau
         = t_i(z_s(v)).
\end{equation*}
Since $\psi(v,-t_s(v))=z_s(v)$ and $v\in M_i$ implies that $t_i(z_s(v))=-t_s(v)$, this means that $s_v^i(1)\leq-t_s(v)$, and so using Lemma~\ref{distance function saddle}~(i) we find that
\begin{eqnarray}
  \tilde a&>&f_s(z) = f_s(\psi_{\beta_i}(v,1))=f_s\big(\psi(v,s_v^i(1))\big) \nonumber \\
   &\geq& f_s\big(\psi(v,-t_s(v))\big)=f_s(z_s(v)). \label{tilde a contradiction}
\end{eqnarray}
Finally, since $z=\psi_{\beta_i}(v,1)=\psi(v,s_v^i(1))$ and $z\in M_s$, we have
\[ z_s(v)=\psi\big(v,-t_s(v)\big)=\psi\big(z,-s_v^i(1)-t_s(v)\big)\in M_s, \]
and since $z_s(v)\in\hMtas$ by definition of $z_s$, \eqref{Msc cap Msc-hat} thus implies that $z_s(v)\in M_s^{\tilde a}$. But this means that $f_s(z_s(v))=\tilde a$, contradicting \eqref{tilde a contradiction}.\\[.2cm]
%
%
%
(v) Let $\tilde\rho\in(0,\rho]$ be given. Since by \eqref{as prop 1} and \eqref{compact Msloc parts} we have $p_s(w)-w=0$ on the compact set $M_s^{loc}\cap\bar B_{a_0}(x)$, there is an $\eta>0$ such that
\begin{equation} \label{leq tilde rho}
  \forall w\in\bar N_\eta\big(M_s^{loc}\cap\bar B_{a_0}(x)\big)\cap\bar B_{a_0}(x)\colon\ \ 
  |p_s(w)-w|\leq\tilde\rho.
\end{equation}
Now define the function $g(w):=f_s(p_s(w))\geq0$ for $\forall w\in\bar B_{a_1}(x)$, which is continuous by \eqref{ps to Ba0} and Lemma \ref{distance function saddle}~(iii).
The compact set $g^{-1}\big([0,\tilde a]\big)\cap\partial B_{a_1}(x)$ is disjoint from the compact set $M_s^{loc}\cap\bar B_{a_0}(x)$, since any point $w$ that is contained in both sets would have to fulfill $\tilde a\geq g(w)=f_s(p_s(w))=f_s(w)\geq|w-x|=a_1$ (where we used \eqref{as prop 1} and \eqref{fs est 1 saddle lower}), contradicting \eqref{a relations}. Thus we can decrease $\eta>0$ so much that
\begin{equation} \label{ball surface disjoint}
  \big[g^{-1}\big([0,\tilde a]\big)\cap\partial B_{a_1}(x)\big]\cap\bar N_\eta\big(M_s^{loc}\cap\bar B_{a_0}(x)\big)=\varnothing.
\end{equation}
Applying Lemma \ref{Hartman-Grobman Lemma} to this choice of $\eta$, we obtain a $\mu>0$ such that all the flowlines starting from some point $w\in\bar B_\mu(x)\setminus M_u^{loc}$ will leave $B_{a_1}(x)$ at some time $T_2(w)<0$ as $t\to-\infty$, and \eqref{HG inclusion 2} holds. Since $g(x)=f_s(p_s(x))=f_s(x)=0$ by \eqref{as prop 1}, we can decrease $\mu>0$ so much that
\begin{equation} \label{F at x}
  \forall w\in\bar B_\mu(x)\colon\ \,g(w)<\tilde a.
\end{equation}
Now let $w\in\bar B_\mu(x)\setminus M_u^{loc}$. By \eqref{HG inclusion 2} and \eqref{a relations} we have $\psi(w,T_2(w))\in\bar N_\eta\big(M_s^{loc}\cap\bar B_{a_0}(x)\big)$, and thus $\psi(w,T_2(w))\notin g^{-1}\big([0,\tilde a]\big)\cap\partial B_{a_1}(x)$ by \eqref{ball surface disjoint}. Since $\psi(w,T_2(w))\in\partial B_{a_1}(x)$ by definition of $T_2(w)$, this means that\linebreak $\psi(w,T_2(w))\notin g^{-1}\big([0,\tilde a]\big)$, i.e.~$g\big(\psi(w,T_2(w))\big)>\tilde a$. Since $g(\psi(w,0))<\tilde a$ by \eqref{F at x}, there $\exists t\in(T_2(w),0)$ such that $\tilde a=g(\psi(w,t))$ $=f_s\big(p_s(\psi(w,t))\big)$, i.e.
\begin{equation} \label{ps psi in Msan}
 p_s(\psi(w,t))\in\Msan
\end{equation}
and thus $\psi(w,t)\in p_s^{-1}(\Msan)$. Furthermore, by \eqref{leq tilde rho}, \eqref{HG inclusion 2} and \eqref{a relations} we have $\big|p_s(\psi(w,t))-\psi(w,t)\big|\leq\tilde\rho$, i.e.\ \eqref{projection not far}, and thus $\psi(w,t)\in\bar N_{\tilde\rho}(\Msan)\subset\bar N_\rho(\Msan)\subset\bar N_{\rho_0}(\Msan)$ by \eqref{ps psi in Msan}. Combining the last two statements and using \eqref{Msa Msu def}, we find that $\psi(w,t)\in p_s^{-1}(\Msan)\cap\bar N_{\rho_0}(\Msan)=\hMtas$, which is \eqref{flow into hMtas}.\\[.2cm]
(vi) Continuing the construction of part (v) (e.g.\ for the choice $\tilde\rho:=\rho$), we have found that $\psi(w,t)\in\hMtas\cap\bar N_\rho(\Msan)$. Therefore by \eqref{Mi' cover} there $\exists i\in I^+$ such that $z:=\psi(w,t)\in M_i'$ and thus $w=\psi(z,-t)\in\psi\big(M_i',(0,\infty)\big)$, with $z_i'(w)=z=\psi(w,t)\in\hMtas$ and $t_i'(w)=-t$. Finally, since $[-t_i'(w),0]=[t,0]\subset[T_2(w),0]$, \eqref{HG inclusion 2} implies that $\psi\big(w,[-t_i'(w),0]\big)\subset\bar B_{a_1}(x)\subset\bar B_{a_0}(x)$.
This shows that \eqref{tiw pos}-\eqref{flow segment in B 1} hold for $\eps:=\mu$.
\end{proof}

\subsection{Definition of the Functions \texorpdfstring{$f_i$}{fi}; Proof of Their Properties}
\label{superprop proof step 3}
We are now ready to define the functions $f_i$ that we are looking for.
\begin{definition}
We define the functions $f_1,\dots,f_m\colon D\to[0,\infty)$ as follows: If $i\in I^+$ then we define
\begin{subequations}
\begin{equation} \label{fi def 1}
  f_i(w):=
    \begin{cases}
      \tilde a & \text{if\ \ $f_{M_i'}(w)<0$,} \\
      \max\!\Big\{0,\;\tilde a-\int_0^{t_i'(w)}\big|b\big(\psi(z_i'(w),\tau)\big)\big|\,d\tau\Big\}
               & \text{if\ \ $w\in\psi(M_i',[0,\infty))$,} \\
      0        & \text{else;}
    \end{cases}
\end{equation}
and if $i\in I^-$ then we define
\begin{equation} \label{fi def 2}
  f_i(w):=
    \begin{cases}
      \tilde a & \text{if\ \ $f_{M_i'}(w)>0$,} \\
      \max\!\Big\{0,\;\tilde a-\int_{t_i'(w)}^0\big|b\big(\psi(z_i'(w),\tau)\big)\big|\,d\tau\Big\}
               & \text{if\ \ $w\in\psi(M_i',(-\infty,0])$,} \\
      0        & \text{else.}
    \end{cases}
\end{equation}
\end{subequations}
\end{definition}
These functions are well-defined: If $w\in\psi(M_i',[0,\infty))$ then $t_i'(w)\geq0$ and thus $f_{M_i'}(w)\geq0$ by \eqref{prime lemma eq 2}; and similarly, if $w\in\psi(M_i',(-\infty,0])$ then $f_{M_i'}(w)\leq0$. Note that the two integrals in \eqref{fi def 1}-\eqref{fi def 2} are the lengths of the flowline segments between $w$ and $z_i'(w)$.

Now let $\eps>0$ be the value given to us in Lemma \ref{Mi' properties}~(vi), and let us reduce it if necessary so that $\eps\leq\tilde a$.

We will now show that the functions $f_i$ fulfill the properties (i)-(vii) of Lemma \ref{superprop}. The properties (ii)-(iv) and (vi) will in fact be proven for~$\tilde a$ instead of $\eps$, i.e.~we will show stronger statements than required (since $\tilde a\geq\eps$), and for that purpose we denote
\[ E_i':=f_i^{-1}\big((0,\tilde a)\big) \qquad\text{for $i\in I$.} \]
In parts (i)-(iv) and (vi) we will restrict ourselves to the case $i\in I^+$ (the proofs for the case $i\in I^-$ can be done analogously).
\paragraph{Proof of properties (i)-(iv).}
(i) Recalling \eqref{Mi' def} and the construction of $f_{M_i'}$ in the proof of Lemma \ref{move M lemma}, and using that $b(x)=0$, we find that
\begin{equation} \label{fMi'x>0}
  \forall i\in I^+\colon\quad f_{M_i'}(x)=f_{M_i}(\psi_{\beta_i}(x,-1))=f_{M_i}(x)>0. 
\end{equation}
Also, since by Remark~\ref{on manifold b neq0} $M_i'$ and thus also $\psi(M_i',\R)$ does not contain any points with zero drift, we have $x\notin\psi(M_i',[0,\infty))$.
Therefore $f_i(x)$ is defined by the third line in \eqref{fi def 1}, and so we have $f_i(x)=0$.\\[.2cm]
(ii) To show that the function $f_i$ traces the flowlines of~$b$ between the values~$0$ and $\tilde a$, we have to check the three properties in Definition \ref{tracing def}.\\[.2cm]
(ii.1) The definition of $f_i$ in \eqref{fi def 1} divides $D$ into three parts, let us call them $D_1,D_2$ and~$D_3$. To show that $f_i$ is continuous on $D$, we will show that $f_i$ is continuous on the closures in $D$ of each of the three parts, i.e.~on $\overline D_1^D$, $\overline D_2^D$ and $\overline D_3^D$.

First consider $D_1=f_{M_i'}^{-1}\big((-\infty,0)\big)$. For $\forall w\in\overline D_1^D\setminus D_1\subset f_{M_i'}^{-1}(\{0\})=M_i'$ we have $t_i'(w)=0$ by \eqref{man move remark eq}, and $f_i(w)$ is defined by the second line in \eqref{fi def 1}, so
\[ f_i(w) = \max\!\Big\{0,\;\tilde a-\int_0^0\big|b\big(\psi(z_i'(w),\tau)\big)\big|\,d\tau\Big\}=\max\{0,\tilde a\}=\tilde a. \]
This shows that $f_i$ is constant and thus continuous on $\overline D_1^D$.

Regarding $D_3$, observe that by \eqref{prime lemma eq 2} we have
$\psi(M_i',(-\infty,0))\subset$\linebreak $f_{M_i}^{-1}\big((-\infty,0)\big)$, and so we can write
\begin{align*}
D_3&:=D\setminus\big[f_{M_i'}^{-1}\big((-\infty,0)\big)\cup\psi\big(M_i',[0,\infty)\big)\big]\\
 &\hspace{3pt}=D\setminus\big[\underbrace{f_{M_i'}^{-1}\big((-\infty,0)\big)}_{\text{open}}
     \,\cup\,\underbrace{\psi\big(M_i',\R\big)}_{\text{open by L.\ref{man2func lemma}}}\big].
\end{align*}
This shows that $D_3$ is closed in $D$, i.e.~that $\overline D_3^D=D_3$, and so $f_i$ is constant and thus continuous also on $\overline D_3^D$.

It remains to show that $f_i$ is continuous on $\overline D_2^D$. Suppose that this were not the case. Then there would be a sequence $(w_n)_{n\in\N}\subset D_2=\psi(M_i',[0,\infty))$ that converges to some $w\in D$ and for which we have
\begin{equation} \label{fi ass}
  \limsup_{n\to\infty}\big|f_i(w_n)-f_i(w)\big|>0.
\end{equation}
Since $f_i|_{D_2}$ is continuous, we must have $w\notin D_2$.
By passing on to a subsequence we may assume that $z_i'(w_n)$ converges to some $z\in M_i'$ as $n\to\infty$ (since $M_i'$ is compact), and that $t_i'(w_n)$ converges to some $t\in[0,\infty]$ (since $t_i'(w_n)\geq0$ for $\forall n\in\N$).

Now if we had $t<\infty$ then letting $n\to\infty$ in the equation $w_n=\psi\big(z_i'(w_n),t_i'(w_n)\big)$ would tell us that $w=\psi(z,t)\in\psi(M_i',[0,\infty))=D_2$. Thus we have $t=\infty$, and with Fatou's Lemma and \eqref{mod man dist} we find
\begin{align*}
  \liminf_{n\to\infty}\int_0^{t_i'(w_n)}\!\big|b\big(\psi(z_i'(w_n),\tau)\big)\big|\,d\tau  
    &\geq \int_0^\infty\!\!\lim_{n\to\infty}\One_{\tau\in[0,t_i'(w_n)]}
          \big|b\big(\psi(z_i'(w_n),\tau)\big)\big|\,d\tau \\
    &= \int_0^\infty|b(\psi(z,\tau))|\,d\tau
     \geq \tilde a
\end{align*}\vspace{-.3cm}
\[ \Longrightarrow\quad \lim_{n\to\infty} f_i(w_n) = \lim_{n\to\infty}\max\!\Big\{0,\;\tilde a-\int_0^{t_i'(w_n)}\big|b\big(\psi(z_i'(w_n),\tau)\big)\big|\,d\tau\Big\}=0. \]

To find the value of $f_i(w)$, first note that for $\forall n\in\N$ we have $t_i'(w_n)\geq0$ and thus $f_{M_i'}(w_n)\geq0$ by \eqref{prime lemma eq 2}, and taking the limit $n\to\infty$ shows that $f_{M_i'}(w)\geq0$, i.e.\ $w\notin D_1$. Since also $w\notin D_2$, this shows that $f_i(w)$ is defined by the third line in \eqref{fi def 1}, so that $f_i(w)=0=\lim_{n\to\infty}f_i(w_n)$, in contradiction to \eqref{fi ass}.
This shows that $f_i$ is continuous on $\overline D_2^D$, and thus on all of~$D$.\\[.2cm]
(ii.2) To show that $f_i$ is $C^1$ on $E_i'=f_i^{-1}\big((0,\tilde a)\big)$, note that $E_i'\subset\psi(M_i',[0,\infty))$ by \eqref{fi def 1}, so that
\begin{equation} \label{fi on Ei'}
  \forall w\in E_i'\colon\quad
  f_i(w)
    = \tilde a - \int_0^{t_i'(w)}\big|b\big(\psi(z_i'(w),\tau)\big)\big|\,d\tau 
  \ \,\in(0,\tilde a)
\end{equation}
and thus
\begin{align}
  \nf_i(w)
    &= -\big|b\big(\psi(z_i'(w),t_i'(w))\big)\big|\nabla t_i'(w) \nonumber \\*
    & \hspace{.9cm}{}- \int_0^{t_i'(w)}\Big(\frac{b^T\nabla b}{|b|}\Big)\big(\psi(z_i'(w),\tau)\big)\nabla\psi(z_i'(w),\tau)\,d\tau\,\cdot\nabla\! z_i'(w) \nonumber \\
    &= -\big|b(w)\big|\nabla t_i'(w) \nonumber \\*
    & \hspace{.9cm}{}- \int_0^{t_i'(w)}\Big(\frac{b^T\nabla b}{|b|}\Big)\big(\psi(z_i'(w),\tau)\big)\nabla\psi(z_i'(w),\tau)\,d\tau\,\cdot\nabla\! z_i'(w) \label{grad fi formula}
\end{align}
for $\forall w\in E_i'$. The last term is well-defined and continuous in $w$ since $z_i'(w)\in M_i'$ implies that $b(z_i'(w))\neq0$ by Remark~\ref{on manifold b neq0} and thus $b\big(\psi(z_i'(w),\tau)\big)\neq0$ for $\forall\tau\in\R$.\\[.2cm]
(ii.3) Now using \eqref{grad fi formula}, \eqref{grad z grad t eq 1} and \eqref{grad z grad t eq 2}, we find for $\forall w\in E_i'$ that
\begin{align*}
  \skp{\nf_i(w)}{b(w)}
    &= -|b(w)|\underbrace{\skp{\nabla t_i'(w)}{b(w)}}_{=1} \\*
    & \hspace{.43cm}{}- \int_0^{t_i'(w)}\!\!\Big(\frac{b^T\nabla b}{|b|}\Big)\!\big(\psi(z_i'(w),\tau)\big)\nabla\psi(z_i'(w),\tau)\,d\tau\, \underbrace{\nabla\! z_i'(w)b(w)}_{=0} \\
    &= -|b(w)|.
\end{align*}
\textit{Remark:} For $i\in I^-$ we would obtain $\forall w \in E_i'\colon\,\skp{\nf_i(w)}{b(w)}=+|b(w)|$.
\\[.2cm]
(iii) By \eqref{fi on Ei'} we have for $\forall w\in E_i'$
\begin{align*}
  \tilde a
    &> \int_0^{t_i'(w)}\big|b\big(\psi(z_i'(w),\tau)\big)\big|\,d\tau 
     = \int_0^{t_i'(w)}|\dot\psi(z_i'(w),\tau)|\,d\tau \\
    &\geq \bigg|\int_0^{t_i'(w)}\dot\psi(z_i'(w),\tau)\,d\tau\bigg| 
     = \big|\psi\big(z_i'(w),t_i'(w)\big)-\psi(z_i'(w),0)\big| 
     = |w-z_i'(w)|
\end{align*}
and thus $w\in N_{\tilde a}(M_i')$, so that
\begin{equation} \label{Ei subset}
  E_i'\subset N_{\tilde a}(M_i')\subset N_{2\tilde a}(K)
\end{equation}
by \eqref{small claim 2}. Since $K$ is compact, this shows that $\bar E_i'$ is compact as well, with $\bar E_i'\subset\bar N_{2\tilde a}(K)\subset D$ by \eqref{a tilde def}. \\[.2cm]
(iv) By \eqref{Ei subset} we have $\bar E_i'\subset\bar N_{\tilde a}(M_i')$ and thus $\bar E_i'\setminus\{x\}\subset\bar N_{\tilde a}(M_i')\setminus\{x\}$, and so by Lemma \ref{Mi' properties} (iii) we have $\forall w\in\bar E_i'\setminus\{x\}\colon\ b(w)\neq0$.

\paragraph{Proof of property (v).}
Now let $F:=\max\{f_1,\dots,f_m\}$.
It suffices to show the estimate $F(w)\geq\cE|w-x|$ for $\forall w\in\bar B_\eps(x)\setminus (M_s^{loc}\cup M_u^{loc})$ since this set is dense in $\bar B_\eps(x)$ and since both $F$ and $|\cdot{}-x|$ are continuous by part (ii.1).

Let $w\in\bar B_\eps(x)\setminus(M_s^{loc}\cup M_u^{loc})$ be fixed.
%
%
%
%
Then by Lemma~\ref{Mi' properties}~(vi) there exist $i\in I^+$ and $j\in I^-$ such that \eqref{tiw pos}-\eqref{flow segment in B 2} hold. We abbreviate $\Tm:=-t_i'(w)<0$, $\Tp:=-t_j'(w)>0$, and
\[
   \phi(t):=\psi(w,t) \qquad\text{for $\forall t\in\R$.}
\]
Because of \eqref{tiw pos} and \eqref{tiw neg}, $f_i(w)$ and $f_j(w)$ are defined by the second lines in \eqref{fi def 1} and \eqref{fi def 2}, respectively, and we can begin our estimate as follows
\begin{align}
F(w)
  &\geq \max\!\big\{f_i(w),f_j(w)\big\} \nonumber\\*
  &\geq\max\!\bigg\{
  \tilde a-\int^{t_i'(w)}_0\big|b\big(\psi(z_i'(w),t)\big)\big|\,dt,\ \,\tilde a-\int_{t_j'(w)}^0\big|b\big(\psi(z_j'(w),t)\big)\big|\,dt\bigg\}\nonumber\\
  &=\max\!\bigg\{\tilde a-\int_{-t_i'(w)}^0\big|b\big(\psi(z_i'(w),t_i'(w)+t)\big)\big|\,dt,\nonumber\\*
  &\hspace{4.95cm}\tilde a-\int_0^{-t_j'(w)}\big|b\big(\psi(z_j'(w),t_j'(w)+t)\big)\big|\,dt\bigg\}\nonumber\\
  &=\max\!\bigg\{\tilde a-\int_{-t_i'(w)}^0|b(\psi(w,t))|\,dt,\ \,
  \tilde a-\int^{-t_j'(w)}_0|b(\psi(w,t))|\,dt\bigg\}\nonumber\\
  &=\max\!\bigg\{\tilde a-\int_{\Tm}^0|\dot\psi(w,t)|\,dt,\ \,
  \tilde a-\int^{\Tp}_0|\dot\psi(w,t)|\,dt\bigg\}\nonumber\\
  &=\max\!\bigg\{\tilde a-\int_{\Tm}^0|\dot \phi|\,dt,\ \,
  \tilde a-\int^{\Tp}_0|\dot \phi|\,dt\bigg\}. \label{big est beginning}
\end{align}
We must now show that the last line in \eqref{big est beginning} is bounded below by $\cE|w-x|$ for some constant $\cE>0$. The trick will be to write
\begin{equation} \label{phi rep}
   \phi-x=(\phi_s-x)+(\phi_u-x)+r
\end{equation}
for some small remainder $r$ (which vanishes if $b$ is linear), where $\phi_s$ is a flowline in $M_s$ and $\phi_u$ is a flowline in $M_u$. The flowlines $\phi_s$ and $\phi_u$ are in several ways easier to deal with, mostly since we can apply $f_s$ and $f_u$ to them, respectively. \\[.2cm]
To define $\phi_s$ and $\phi_u$, first note that since $\phi(\Tm)=\psi(w,-t_i'(w))=z_i'(w)\in\hMtas$ by \eqref{zi in hMtas} and similarly $\phi(T_2)\in\hMtau$, by \eqref{Msa Msu def} we have
\begin{equation} \label{ws wu def}
  w_s:=p_s(\phi(\Tm))\in\Msan
  \qquad\text{and}\qquad
  w_u:=p_u(\phi(T_2))\in\Muan.
\end{equation}
We now define the functions $\phi_s\in C^1(\R,M_s)$, $\phi_u\in C^1(\R,M_u)$ and finally $r\in C^1(\R,\Rn)$ by
\begin{subequations}
\begin{align}
  \phi_s(t)&:=\psi(w_s,t-\Tm), \label{phis def} \\
  \phi_u(t)&:=\psi(w_u,t-\Tp) \label{phiu def}
\end{align}
\vspace{-.6cm}
\end{subequations}
\begin{equation} \label{r def}
\hspace{.51cm}\text{and}\qquad
  r(t)     :=\phi(t)-\phi_s(t)-\phi_u(t)+x
\end{equation}
for $\forall t\in\R$, i.e.\ \eqref{phi rep}, which fulfill
\begin{align}
  \phi_s(\Tm)=w_s
  &\qquad\text{and}\qquad
  \phi_u(\Tp)=w_u.
  \label{ws wu eq}
\end{align}
Note that for $\forall\tau\in\R$ we have
\begin{subequations}
\begin{align}
 \int_\tau^\infty|\dot\phi_s(t)|\,dt
   &=\int_0^\infty|\dot\phi_s(t+\tau)|\,dt
    =\int_0^\infty\big|b\big(\psi(w_s,t+\tau-\Tm)\big)\big|\,dt \nonumber\\
   &=\int_0^\infty\big|b\big(\psi(\phi_s(\tau),t)\big)\big|\,dt
    =f_s(\phi_s(\tau)), \label{a tilde int 1}\\
 \int_{-\infty}^\tau|\dot\phi_u(t)|\,dt
   &=\dots=f_u(\phi_u(\tau)), \label{a tilde int 2}
\end{align}
\end{subequations}
and thus by \eqref{ws wu def} and \eqref{ws wu eq} in particular
\begin{equation} \label{fs-phis fu-phiu = tilde a}
  \int_{\Tm}^\infty|\dot\phi_s(t)|\,dt=f_s(\phi_s(\Tm))=\tilde a
  \qquad\text{and}\qquad
  \int_{-\infty}^{\Tp}|\dot\phi_u|\,dt=f_u(\phi_u(\Tp))=\tilde a.
\end{equation}
Furthermore, by Lemma~\ref{distance function saddle}~(i)
\begin{subequations}
\begin{align}
  &\text{$f_s\circ\phi_s$\, is $C^1$ and non-increasing,} \label{fs phis dec} \\
  &\text{$f_u\circ\phi_u$ is $C^1$ and non-decreasing.} \label{fu phiu inc}
\end{align}
\end{subequations}
Thus, by \eqref{fs est 1 saddle lower}-\eqref{fs est 2 saddle lower}, \eqref{fs-phis fu-phiu = tilde a} and \eqref{fs phis dec}-\eqref{fu  phiu inc} we have
\begin{align}
  \forall t\geq \Tm\colon\quad &|\phi_s(t)\hspace{1pt}-x|\leq f_s(\phi_s(t))\hspace{2pt}\leq f_s(\phi_s(\Tm))\hspace{2pt}=\tilde a, \label{phis leq tilde a}\\
  \forall t\leq \Tp\colon\quad &|\phi_u(t)-x|\leq f_u(\phi_u(t))\leq f_u(\phi_u(\Tp))=\tilde a, \label{phiu leq tilde a}
\end{align}
which together with \eqref{Ms Mu in Mloc}-\eqref{Ms Mu in Mloc 2}, \eqref{flow segment in B 1} and \eqref{flow segment in B 2} implies
\begin{align}
  \phi_s([\Tm,\infty)) \subset M_s^{loc},\qquad\ \,
  \phi_u((-\infty,\Tp]) &\subset M_u^{loc}, \label{phis in Msloc}\\
  \phi([\Tm,\Tp])      \cup
  \phi_s([\Tm,\infty)) \cup
  \phi_u((-\infty,\Tp])
  &\subset \bar B_{a_0}(x). \label{can apply estimates}
\end{align}
The relation \eqref{can apply estimates} will be necessary to justify the use of various estimates that are only valid on $\bar B_{a_0}(x)$.

As another consequence, choosing $t=\Tm$ in \eqref{phiu leq tilde a} and using \eqref{fs-phis fu-phiu = tilde a} shows that $f_u(\phi_u(\Tm))\leq\tilde a=f_s(\phi_s(\Tm))$, and similarly we find that $f_s(\phi_s(\Tp))\leq f_u(\phi_u(\Tp))$. Therefore we have $f_u(\phi_u(\Tm))-f_s(\phi_s(\Tm))\leq0\leq f_u(\phi_u(\Tp))-f_s(\phi_s(\Tp))$, and thus there $\exists \bar t\in[\Tm,\Tp]$ such that
\begin{equation} \label{t1 def}
  f_u(\phi_u(\bar t\qb))=f_s(\phi_s(\bar t\qb)).
\end{equation}
Our next goal is to find small bounds on $\int_{\Tm}^{\Tp}|r|\,dt$ and $\int_{\Tm}^{\Tp}|\dot r|\,dt$. We begin by recalling Duhamel's formula, which says that
\begin{align}
  \!\!\phi(t) &= x+e^{tA}(w-x) + \int_0^te^{(t-\tau)A}g(\phi(\tau))\,d\tau \nonumber \\
          &= x+(U_t+V_t)(w-x) + \int_0^t(U_{t-\tau}+V_{t-\tau})g(\phi(\tau))\,d\tau
  \quad\forall t\in\R, \label{dAlembert}
\end{align}
where the matrix groups $(U_t)_{t\in\R}$ and $(V_t)_{t\in\R}$ are the ones defined in \eqref{Ut Vt semigroup def}.
Since $\phi(\Tm)\in\bar B_{a_0}(x)$ by \eqref{can apply estimates}, we can choose $v:=\phi(\Tm)$ in \eqref{chiv def}-\eqref{projection equation}, and since by \eqref{chiv def}, \eqref{ws wu def} and \eqref{phis def} we then have
$\chi_s^v(t)=\psi(p_s(v),t)=\psi\big(p_s(\phi(\Tm)),t\big)=\psi(w_s,t)=\phi_s(t+\Tm)$
for $\forall t\in\R$, \eqref{projection equation} tells us that
\begin{align*}
  \phi_s(t+\Tm) &= x+U_t(\phi(\Tm)-x) +\int_0^tU_{t-\tau}g(\phi_s(\tau+\Tm))\,d\tau \\*
                &\hspace{3.47cm} -\int_t^\infty V_{t-\tau}g(\phi_s(\tau+\Tm))\,d\tau
\end{align*}
for $\forall t\in\R$. We now replace $t$ by $t-\Tm$, use \eqref{dAlembert} to obtain an expression for $\phi(\Tm)$, and use that $U_{\tau_1}U_{\tau_2}=U_{\tau_1+\tau_2}$ and $U_{\tau_1}V_{\tau_2}=0$ for $\forall \tau_1,\tau_2\in\R$, to obtain
\begin{align}
\phi_s(t)
 &=x+U_{t-\Tm}\bigg[(U_{\Tm}+V_{\Tm})(w-x)
                  +\int_0^{\Tm}(U_{\Tm-\tau}+V_{\Tm-\tau})g(\phi(\tau))\,d\tau\bigg] \nonumber\\*
 &\hspace{.4cm} + \int_0^{t-\Tm}      U_{t-\Tm-\tau}g(\phi_s(\tau+\Tm))\,d\tau
                - \int_{t-\Tm}^\infty V_{t-\Tm-\tau}g(\phi_s(\tau+\Tm))\,d\tau \nonumber\\
 &= x+U_t(w-x) - \int_{\Tm}^0U_{t-\tau}g(\phi(\tau))\,d\tau \nonumber\\*
 &\hspace{.4cm} + \int_{\Tm}^t U_{t-\tau}g(\phi_s(\tau))\,d\tau
                 - \int_t^\infty V_{t-\tau}g(\phi_s(\tau))\,d\tau. \label{phi_s rep}\\
\intertext{Similarly, one can obtain the formula}
 \phi_u(t)
 &= x+V_t(w-x) + \int_0^{\Tp}V_{t-\tau}g(\phi(\tau))\,d\tau \nonumber \\*
 &\hspace{.4cm} - \int_t^{\Tp}     V_{t-\tau}g(\phi_u(\tau))\,d\tau
                  + \int_{-\infty}^t U_{t-\tau}g(\phi_u(\tau))\,d\tau. \label{phi_u rep}
\end{align}
Subtracting \eqref{phi_s rep} and \eqref{phi_u rep} from \eqref{dAlembert}, we thus obtain for $\forall t\in[\Tm,\Tp]$
\begin{align}
 r(t) &= \phi(t)-\phi_s(t)-\phi_u(t)+x \nonumber \\
      &= (U_t+V_t)(w-x)+\int_0^t(U_{t-\tau}+V_{t-\tau})g(\phi(\tau))\,d\tau \nonumber \\
      &\hspace{.4cm}-U_t(w-x) +\int_{\Tm}^0U_{t-\tau}g(\phi(\tau))\,d\tau
                    -\int_{\Tm}^tU_{t-\tau}g(\phi_s(\tau))\,d\tau\nonumber \\*
      &\hspace{7.33cm} +\int_t^\infty V_{t-\tau}g(\phi_s(\tau))\,d\tau\nonumber \\
      &\hspace{.4cm}-V_t(w-x) -\int^{\Tp}_0V_{t-\tau}g(\phi(\tau))\,d\tau
                     +\int^{\Tp}_tV_{t-\tau}g(\phi_u(\tau))\,d\tau\nonumber \\*
      &\hspace{7.2cm} -\int^t_{-\infty} U_{t-\tau}g(\phi_u(\tau))\,d\tau \nonumber \\
      &=\int_{\Tm}^{\Tp}\big(\One_{\tau<t}U_{t-\tau}-\One_{\tau\geq t}V_{t-\tau}\big)
         \big(g(\phi(\tau))-g(\phi_s(\tau))-g(\phi_u(\tau))\big)\,d\tau\nonumber \\*
      &\hspace{3.23cm}-\int_{-\infty}^{\Tm}U_{t-\tau}g(\phi_u(\tau))\,d\tau+\int_{\Tp}^\infty
         V_{t-\tau}g(\phi_s(\tau))\,d\tau \nonumber \\
      &=\int_{-\infty}^\infty\big(\One_{\tau<t}U_{t-\tau}-\One_{\tau\geq t}V_{t-\tau}\big)\Delta(\tau)\,d\tau, \label{r rep} \\
\intertext{where for $\forall \tau\in\R$ we define}
\Delta(\tau)\,\,
      &\!\!:= \One_{\Tm\leq \tau\leq \Tp}\cdot\big(g(\phi(\tau))-g(\phi_s(\tau))-g(\phi_u(\tau))\big)\nonumber \\*
  &\hspace{5.3cm}   -\One_{\tau<\Tm}g(\phi_u(\tau))-\One_{\tau>\Tp}g(\phi_s(\tau)).\nonumber 
\end{align}
Combining \eqref{r rep} with \eqref{semigroup estimate}, we obtain the estimate
\begin{equation} \label{first r est}
  |r(t)| \leq \dA\int_{-\infty}^\infty e^{-\alpha|t-\tau|}|\Delta(\tau)|\,d\tau \qquad\text{for $\forall t\in[\Tm,\Tp]$.}
\end{equation}
Now let $C_1\subset[\Tm,\Tp]$ and $C_2:=[\Tm,\Tp]\setminus C_1$ be two measurable sets to be chosen later, and let $C_1^-:=C_1\cup(-\infty,\Tm)$ and $C_2^+:=C_2\cup(\Tp,\infty)$.
Then we have for $\forall\tau\in\R$
\begin{align}
\Delta(\tau)
  &= \One_{\tau\in C_1}\cdot\big(g(\phi(\tau))-g(\phi_s(\tau))\big)
     +\One_{\tau\in C_2}\cdot\big(g(\phi(\tau))-g(\phi_u(\tau))\big) \nonumber\\*
  &\hspace{5.37cm}   -\One_{\tau\in C_1^-}g(\phi_u(\tau))-\One_{\tau\in C_2^+}g(\phi_s(\tau)),\nonumber\\
\intertext{and thus by \eqref{can apply estimates}, \eqref{G est 1}-\eqref{G est 2}, \eqref{r def} and \eqref{b bounds}}
|\Delta(\tau)|
  &\leq\One_{\tau\in C_1}\cdot\kappa\big|\underbrace{\phi(\tau)-\phi_s(\tau)}_{=r(\tau)+\phi_u(\tau)-x}\big|
      +\One_{\tau\in C_2}\cdot\kappa\big|\underbrace{\phi(\tau)-\phi_u(\tau)}_{=r(\tau)+\phi_s(\tau)-x}\big| \nonumber\\*
  &\hspace{3.9cm}    +\One_{\tau\in C_1^-}\cdot\kappa|\phi_u(\tau)-x|+\One_{\tau\in C_2^+}\cdot\kappa|\phi_s(\tau)-x| \nonumber\\
  &\leq\One_{\tau\in C_1}\cdot\kappa\big(|r(\tau)|+|\phi_u(\tau)-x|\big)
      +\One_{\tau\in C_2}\cdot\kappa\big(|r(\tau)|+|\phi_s(\tau)-x|\big) \nonumber\\*
  &\hspace{3.9cm}    +\One_{\tau\in C_1^-}\cdot\kappa|\phi_u(\tau)-x|+\One_{\tau\in C_2^+}\cdot\kappa|\phi_s(\tau)-x| \nonumber\\
  &\leq\kappa\Big(\One_{\tau\in[\Tm,\Tp]}|r(\tau)|+2\cdot\One_{\tau\in C_1^-}|\phi_u(\tau)-x|+2\cdot\One_{\tau\in C_2^+}|\phi_s(\tau)-x|\Big) \nonumber\\
  &\leq\kappa\Big(\One_{\tau\in[\Tm,\Tp]}|r(\tau)|+2\dB\cdot\One_{\tau\in C_1^-}\big|b(\phi_u(\tau))\big|+2\dB\cdot\One_{\tau\in C_2^+}\big|b(\phi_s(\tau))\big|\Big), \nonumber\\
  &=\kappa\Big(\One_{\tau\in[\Tm,\Tp]}|r(\tau)|+2\dB\cdot\One_{\tau\in C_1^-}|\dot \phi_u(\tau)|+2\dB\cdot\One_{\tau\in C_2^+}|\dot\phi_s(\tau)|\Big). \label {Delta estimate}
\end{align}
We can now use \eqref{first r est}, \eqref{Delta estimate} and the first estimate in \eqref{eps estimates} to obtain
\begin{align}
\int_{\Tm}^{\Tp}|r(t)|\,dt
  &\leq \dA\int_{\Tm}^{\Tp}dt\int_{-\infty}^\infty d\tau\, e^{-\alpha|t-\tau|}|\Delta(\tau)| \nonumber\\
  &\leq \dA\int_{-\infty}^\infty d\tau\,|\Delta(\tau)|\int_{-\infty}^\infty dt\,e^{-\alpha|t-\tau|} \nonumber\\
  &= \frac{2\dA}{\alpha}\int_{-\infty}^\infty |\Delta(\tau)|\,d\tau \nonumber\\
  &\leq\frac{2\dA\kappa}{\alpha}\bigg[\int_{\Tm}^{\Tp}|r|\,dt + 2\dB\int_{C_1^-}|\dot \phi_u|\,dt + 2\dB\int_{C_2^+}|\dot \phi_s|\,dt\bigg] \nonumber\\
  &\leq\frac12\int_{\Tm}^{\Tp}|r|\,dt + \frac{4\dB\dA\kappa}{\alpha}\bigg[\int_{C_1^-}|\dot \phi_u|\,dt + \int_{C_2^+}|\dot \phi_s|\,dt\bigg] \nonumber\\[.2cm]
\Longrightarrow\quad\int_{\Tm}^{\Tp}|r|\,dt
  &\leq\frac{8\dB\dA\kappa}{\alpha}
       \bigg[\int_{C_1^-}|\dot \phi_u|\,dt + \int_{C_2^+}|\dot \phi_s|\,dt\bigg]. \label{r int estimate}
\end{align}
To turn this into an estimate for $\int_{\Tm}^{\Tp}|\dot r|\,dt$, we start from the relation
\begin{align}
  \dot r
    &= \dot\phi - \dot\phi_s - \dot\phi_u \nonumber\\*
    &= b(\phi)-b(\phi_s)-b(\phi_u) \nonumber\\
    &= \big(A(\phi-x)+g(\phi)\big) - \big(A(\phi_s-x)+g(\phi_s)\big) - \big(A(\phi_u-x)+g(\phi_u)\big) \nonumber\\
    &= A(\phi-\phi_s-\phi_u+x)+\big(g(\phi)-g(\phi_s)-g(\phi_u)\big) \nonumber\\
    &= Ar+\Delta, \label{rdot eq}
\end{align}
where the last step is valid only on $[\Tm,\Tp]$. Using \eqref{rdot eq}, \eqref{Delta estimate}, \eqref{r int estimate} and the second estimate in \eqref{eps estimates}, we thus obtain
\begin{align}
\int_{\Tm}^{\Tp}|\dot r|\,dt
  &\leq |A|\int_{\Tm}^{\Tp}|r|\,dt + \int_{\Tm}^{\Tp}|\Delta|\,dt \nonumber \\
  &\leq(|A|+\kappa)\int_{\Tm}^{\Tp}|r|\,dt + 2\dB\kappa\int_{C_1}|\dot \phi_u|\,dt
    + 2\dB\kappa\int_{C_2}|\dot \phi_s|\,dt \nonumber \\
  &\leq\Big[(|A|+\kappa)\frac{8\dB\dA}{\alpha}+2\dB\Big]\kappa
       \bigg[\int_{C_1^-}|\dot \phi_u|\,dt+\int_{C_2^+}|\dot \phi_s|\,dt\bigg] \nonumber \\
  &\leq\tfrac14(1-\dD)\bigg[\int_{C_1^-}|\dot \phi_u|\,dt+\int_{C_2^+}|\dot \phi_s|\,dt\bigg]. \label{int r dot est}
\end{align}
Since by \eqref{phis in Msloc} we have $\phi_s([\Tm,\Tp])\subset M_s^{loc}$ and $\phi_u([\Tm,\Tp])\subset M_u^{loc}$ and thus also
\[
  \forall t\in[\Tm,\Tp]\colon\quad
    \dot\phi_s(t)\in T_{\phi_s(t)}M_s^{loc}
    \quad\text{and}\quad
    \dot\phi_u(t)\in T_{\phi_u(t)}M_u^{loc},
\]
\eqref{sup skp} tells us that
\[
   \forall t\in[\Tm,\Tp]\colon\quad
   \Skp{\dot\phi_s(t)}{\dot\phi_u(t)}\leq\te_0|\dot\phi_s(t)||\dot\phi_u(t)|.
\]
Therefore, if we choose
\begin{subequations}
\begin{align}
  C_1&:=\big\{t\in[\Tm,\Tp]\,\big|\,|\dot \phi_u(t)|\leq|\dot \phi_s(t)|\big\}, \label{C1 def}\\
  C_2&:=\big\{t\in[\Tm,\Tp]\,\big|\,|\dot \phi_u(t)|>|\dot \phi_s(t)|\big\}, \label{C2 def}
\end{align}
\end{subequations}
then by our choice of $\dD$ using Lemma \ref{triangle inequality lemma} we have on $[\Tm,\Tp]$ that
\begin{equation} \label{phis phiu est}
  |\dot\phi_s+\dot\phi_u|\leq\One_{t\in C_1}\big(|\dot \phi_s|+\dD|\dot \phi_u|\big)+\One_{t\in C_2}\big(\dD|\dot \phi_s|+|\dot\phi_u|\big),
\end{equation}
and using \eqref{phis phiu est}, \eqref{int r dot est}, \eqref{fs-phis fu-phiu = tilde a} and \eqref{C1 def}-\eqref{C2 def}, we obtain the estimate
\begin{align}
\int_{\Tm}^{\Tp}|\dot \phi|\,dt
  &= \int_{\Tm}^{\Tp}|\dot \phi_s+\dot \phi_u+\dot r|\,dt \nonumber\\
  &\leq\int_{\Tm}^{\Tp}|\dot \phi_s+\dot \phi_u|\,dt
       +2\int_{\Tm}^{\Tp}|\dot r|\,dt -\int_{\Tm}^{\Tp}|\dot r|\,dt \nonumber\\
  &\leq \int_{C_1}(|\dot \phi_s|+\dD|\dot \phi_u|)\,dt
       +\int_{C_2}(\dD|\dot \phi_s|+|\dot \phi_u|)\,dt \nonumber\\*
  & \hspace{.8cm}+\tfrac12(1-\dD)\bigg[\int_{C_1^-}|\dot \phi_u|\,dt+\int_{C_2^+}|\dot \phi_s|\,dt\bigg]-\int_{\Tm}^{\Tp}|\dot r|\,dt \nonumber\\
  &= \int_{C_1\cup C_2^+}|\dot \phi_s|\,dt + \int_{C_1^-\cup C_2}|\dot \phi_u|\,dt \nonumber\\*
  &\hspace{.8cm} -\tfrac12(1+\dD)\bigg[\int_{\Tp}^\infty|\dot \phi_s|\,dt
     +\int_{-\infty}^{\Tm}|\dot \phi_u|\,dt\bigg] \nonumber\\*
  &\hspace{.8cm} -\tfrac12(1-\dD)\bigg[\int_{C_1}|\dot \phi_u|\,dt + \int_{C_2}|\dot \phi_s|\,dt\bigg]
     -\int_{\Tm}^{\Tp}|\dot r|\,dt \nonumber\\
  &= \tilde a+\tilde a-\tfrac12(1+\dD)\bigg[\int_{\Tp}^\infty|\dot \phi_s|\,dt
     +\int_{-\infty}^{\Tm}|\dot \phi_u|\,dt\bigg] \nonumber\\*
  &\hspace{.8cm} -\tfrac12(1-\dD)\int_{\Tm}^{\Tp}\min\!\big\{|\dot \phi_u|,|\dot \phi_s|\big\}\,dt
    -\int_{\Tm}^{\Tp}|\dot r|\,dt. \label{int phi dot est}
\end{align}
To control the next-to-last integral, note that by \eqref{fs phis dec}-\eqref{fu phiu inc} and \eqref{t1 def} we have
\[
   \min\!\big\{f_u(\phi_u),f_s(\phi_s)\big\}
     =f_u(\phi_u)\One_{(-\infty,\bar t\,]}+f_s(\phi_s)\One_{(\bar t,\infty)},
\]
and thus using \eqref{phis in Msloc}-\eqref{can apply estimates}, \eqref{b bounds} and Lemma \ref{distance function saddle}~(ii) and (iv) we find that
\begin{align}
\int_{\Tm}^{\Tp}\min\!\big\{|\dot \phi_u|,|\dot \phi_s|\big\}\,dt
 &\geq \frac{1}{\dB}\int_{\Tm}^{\Tp}\min\!\big\{|\phi_u-x|,|\phi_s-x|\big\}\,dt \nonumber\\*
 &\geq \frac{1}{\dB\cI}\int_{\Tm}^{\Tp}\min\!\big\{f_u(\phi_u),f_s(\phi_s)\big\}\,dt \nonumber\\
&=\frac{1}{\dB\cI}\bigg[\int_{\Tm}^{\bar t}f_u(\phi_u)\,dt+\int^{\Tp}_{\bar t}f_s(\phi_s)\,dt\bigg]\nonumber\\
&\geq\frac{1}{\dB\cI}\bigg[\int_{\Tm}^{\bar t}|\phi_u-x|\,dt+\int^{\Tp}_{\bar t}|\phi_s-x|\,dt\bigg]\nonumber\\
 &\geq\frac{\dC}{\dB\cI}\bigg[\int_{\Tm}^{\bar t}|\dot \phi_u|\,dt+\int^{\Tp}_{\bar t}|\dot \phi_s|\,dt\bigg]. \label{min est}
\end{align}
We can now re-order the terms in \eqref{int phi dot est}, use \eqref{min est}, define $\dG:=$\linebreak $\min\!\big\{\tfrac12(1+\dD),\tfrac12(1-\dD)\frac{\dC}{\dB\cI},
\tfrac12\big\}>0$, and use \eqref{a tilde int 1}-\eqref{a tilde int 2} and \eqref{t1 def} to obtain
\begin{align}
2\tilde a-\int_{\Tm}^{\Tp}|\dot \phi|\,dt
  &\geq \tfrac12(1+\dD)\bigg[\int_{-\infty}^{\Tm}|\dot \phi_u|\,dt+\int^\infty_{\Tp}|\dot \phi_s|\,dt\bigg] \nonumber\\*
  & \hspace{.4cm}+\tfrac12(1-\dD)\int_{\Tm}^{\Tp}\min\!\big\{|\dot \phi_u|,|\dot \phi_s|\big\}\,dt+\int_{\Tm}^{\Tp}|\dot r|\,dt \nonumber\\
 &\geq \tfrac12(1+\dD)\bigg[\int_{-\infty}^{\Tm}|\dot \phi_u|\,dt+\int^\infty_{\Tp}|\dot \phi_s|\,dt\bigg] \nonumber\\*
  & \hspace{.4cm}+\tfrac12(1-\dD)\frac{\dC}{\dB\cI}\bigg[\int_{\Tm}^{\bar t}|\dot \phi_u|\,dt
+\int_{\bar t}^{\Tp}|\dot \phi_s|\,dt\bigg]+\int_{\Tm}^{\Tp}|\dot r|\,dt \nonumber\\
 &\geq \dG\bigg[ \int_{-\infty}^{\bar t}|\dot\phi_u|\,dt +\int_{\bar t}^\infty|\dot\phi_s|\,dt
           +2\int_{\Tm}^{\Tp}|\dot r|\,dt\bigg]\nonumber\\
&= \dG\bigg[f_u(\phi_u(\bar t\qb))+f_s(\phi_s(\bar t\qb))+2\int_{\Tm}^{\Tp}|\dot r|\,dt\bigg]\nonumber\\
&= 2\dG\bigg[f_s(\phi_s(\bar t\qb))+\int_{\Tm}^{\Tp}|\dot r|\,dt\bigg]. \label{2a-int est}
\end{align}
Observe that the left-hand side of \eqref{2a-int est} is the sum of the two expressions in the last line of \eqref{big est beginning} that we have to estimate. Instead of splitting the integral on the left of \eqref{2a-int est} into the two integrals in \eqref{big est beginning} however, we will have to take an extra step first and split it into two equal parts instead.
In other words, we define $\hat t\in[\Tm,\Tp]$ as the unique value that fulfills
\begin{equation} \label{t2 def}
   \int_{\Tm}^{\hat t}|\dot \phi|\,dt=\int_{\hat t}^{\Tp}|\dot \phi|\,dt
\end{equation}
and thus in particular
\begin{equation} \label{t2 def conseq}
  \int_{\Tm}^{\hat t}|\dot \phi|\,dt
     = \frac12\int_{\Tm}^{\Tp}|\dot \phi|\,dt
     = \frac12\bigg[\int_{\bar t}^{\Tp}|\dot\phi|\,dt+\int_{\Tm}^{\bar t}|\dot\phi|\,dt\bigg].
\end{equation}
We must now further estimate the right-hand side of \eqref{2a-int est} by a multiple of $|\phi(\hat t\qh)-x|$. We begin by using \eqref{a tilde int 1} and \eqref{fs-phis fu-phiu = tilde a} 
to find
\begin{align*}
  \int_{\bar t}^{\Tp}\!|\dot \phi|\,dt - \!\int_{\Tm}^{\bar t}\!|\dot \phi|\,dt
    &=\int_{\bar t}^{\Tp}|\dot \phi_s+\dot \phi_u+\dot r|\,dt
      -\int_{\Tm}^{\bar t}|\dot \phi_s+\dot \phi_u+\dot r|\,dt \\
    &\leq \int_{\bar t}^{\Tp} \!\!\big(|\dot \phi_s|+|\dot \phi_u|+|\dot r|\big)\,dt
         -\int^{\bar t}_{\Tm}\!\!\big(|\dot \phi_s|-|\dot \phi_u|-|\dot r|\big)\,dt\\
    &\leq2\int_{\bar t}^\infty\!|\dot \phi_s|\,dt-\int_{\Tm}^\infty\!|\dot \phi_s|\,dt
         +\int_{-\infty}^{\Tp}\!|\dot \phi_u|\,dt+\int_{\Tm}^{\Tp}\!|\dot r|\,dt \\
    &=2f_s(\phi_s(\bar t\qb))-\tilde a+\tilde a+\int_{\Tm}^{\Tp}|\dot r|\,dt \\
    &=2f_s(\phi_s(\bar t\qb))+\int_{\Tm}^{\Tp}|\dot r|\,dt.
\end{align*}
Analogously one can obtain the estimate
\begin{align*}
 \int_{\Tm}^{\bar t}|\dot \phi|\,dt - \int_{\bar t}^{\Tp}|\dot \phi|\,dt
   &\leq 2f_u(\phi_u(\bar t\qb))+\int_{\Tm}^{\Tp}|\dot r|\,dt \\
   &=2f_s(\phi_s(\bar t\qb))+\int_{\Tm}^{\Tp}|\dot r|\,dt,
\end{align*}
where we used \eqref{t1 def}, and putting both together we find that
\[ \bigg|\int_{\bar t}^{\Tp}|\dot \phi|\,dt - \int_{\Tm}^{\bar t}|\dot \phi|\,dt\bigg|
 \leq 2f_s(\phi_s(\bar t\qb))+\int_{\Tm}^{\Tp}|\dot r|\,dt. \]
This and \eqref{t2 def conseq} then lead us to the estimate
\begin{align*}
|\phi(\hat t\qh)-\phi(\bar t\qb)|
  &= \bigg|\int_{\bar t}^{\hat t}\dot \phi\,dt\bigg|
   \leq\bigg|\int_{\bar t}^{\hat t}|\dot \phi|\,dt\bigg|
   =\bigg|\int_{\Tm}^{\hat t}|\dot \phi|\,dt - \int_{\Tm}^{\bar t}|\dot \phi|\,dt\bigg| \\
  &= \frac12\bigg|\int_{\bar t}^{\Tp}|\dot \phi|\,dt-\int_{\Tm}^{\bar t}|\dot \phi|\,dt\bigg|
   \leq f_s(\phi_s(\bar t\qb))+\frac12\int_{\Tm}^{\Tp}|\dot r|\,dt,
\end{align*}
which in turn allows us to bound $|\phi(\hat t\qh)-x|$ by terms only involving $\bar t$,
\begin{align}
|\phi(\hat t\qh)-x|
  &\leq |\phi(\bar t\qb)-x|+|\phi(\hat t\qh)-\phi(\bar t\qb)| \nonumber\\
  &\leq |\phi(\bar t\qb)-x|+f_s(\phi_s(\bar t\qb))+\frac12\int_{\Tm}^{\Tp}|\dot r|\,dt \nonumber\\
  &= \big|\phi_s(\bar t\qb)+\phi_u(\bar t\qb)+r(\bar t\qb)-2x\big|+f_s(\phi_s(\bar t\qb))+\frac12\int_{\Tm}^{\Tp}|\dot r|\,dt \nonumber\\
  &\leq|\phi_s(\bar t\qb)-x|+|\phi_u(\bar t\qb)-x|+|r(\bar t\qb)|+f_s(\phi_s(\bar t\qb))
        +\frac12\int_{\Tm}^{\Tp}|\dot r|\,dt \nonumber\\
  &\leq f_s(\phi_s(\bar t\qb))+f_u(\phi_u(\bar t\qb))+|r(\bar t\qb)|+f_s(\phi_s(\bar t\qb))
        +\frac12\int_{\Tm}^{\Tp}|\dot r|\,dt \nonumber\\
  &= 3f_s(\phi_s(\bar t\qb))+\frac12\int_{\Tm}^{\Tp}|\dot r|\,dt +|r(\bar t\qb)|, \label{phi t2 est}
\end{align}
where we used \eqref{fs est 1 saddle lower}-\eqref{fs est 2 saddle lower} and again \eqref{t1 def}. To estimate $|r(\bar t\qb)|$ further, we start from \eqref{first r est} and \eqref{Delta estimate}, where this time we choose $C_1:=[\Tm,\bar t]$ and $C_2:=(\bar t,\Tp]$, and then use \eqref{a tilde int 1}-\eqref{a tilde int 2}, the first estimate in \eqref{eps estimates}, and again \eqref{t1 def}:
\begin{align*}
\sup_{\Tm\leq t\leq \Tp}|r(t)|
  &\leq \sup_{\Tm\leq t\leq \Tp} \dA\int_{-\infty}^\infty e^{-\alpha|t-\tau|}|\Delta(\tau)|\,d\tau \\
  &\leq \kappa \dA\sup_{\Tm\leq t\leq \Tp}\bigg[\int_{\Tm}^{\Tp}e^{-\alpha|t-\tau|}|r(\tau)|\,d\tau \\
  &\hspace{.4cm} +2\dB\int_{-\infty}^{\bar t}e^{-\alpha|t-\tau|}|\dot \phi_u(\tau)|\,d\tau 
   +2\dB\int_{\bar t}^\infty e^{-\alpha|t-\tau|}|\dot \phi_s(\tau)|\,d\tau\bigg] \\
  &\leq \kappa \dA\bigg[\sup_{\Tm\leq \tau\leq \Tp}|r(\tau)|
         \cdot\int_{-\infty}^\infty e^{-\alpha|\tau|}\,d\tau
        +2\dB\int_{-\infty}^{\bar t}|\dot \phi_u(\tau)|\,d\tau \\*
  &\hspace{6.4cm}      +2\dB\int^\infty_{\bar t}|\dot \phi_s(\tau)|\,d\tau\bigg] \\
  &= \kappa \dA\bigg[\frac2\alpha\sup_{\Tm\leq t\leq \Tp}|r(t)|
                  +2\dB f_u(\phi_u(\bar t\qb))+2\dB f_s(\phi_s(\bar t\qb))\bigg] \\
  &\leq \frac12\sup_{\Tm\leq t\leq \Tp}|r(t)| + 4\dB\dA\kappa\cdot f_s(\phi_s(\bar t\qb)).
\end{align*}
Solving and using also the third estimate in \eqref{eps estimates}, we thus find that
\[
   \sup_{\Tm\leq t\leq \Tp}|r(t)|
   \leq 8\dB\dA\kappa\cdot f_s(\phi_s(\bar t\qb))
   \leq f_s(\phi_s(\bar t\qb)),
\]
and so \eqref{phi t2 est} can be estimated further by
\begin{align} \label{phi t2 est 2}
  |\phi(\hat t\qh)-x|
    &\leq 3f_s(\phi_s(\bar t\qb))+\frac12\int_{\Tm}^{\Tp}|\dot r|\,dt +|r(\bar t\qb)| \nonumber \\
    &\leq 4f_s(\phi_s(\bar t\qb))+\frac12\int_{\Tm}^{\Tp}|\dot r|\,dt.
\end{align}
Combining \eqref{t2 def conseq}, \eqref{2a-int est} and \eqref{phi t2 est 2}, we obtain
\begin{align}
  \tilde a-\int_{\Tm}^{\hat t}|\dot \phi|\,dt
    &=\frac12\bigg[2\tilde a-\int_{\Tm}^{\Tp}|\dot \phi|\,dt\bigg]
     \geq \dG\bigg[f_s(\phi_s(\bar t\qb))+\int_{\Tm}^{\Tp}|\dot r|\,dt\bigg] \nonumber\\
    &\geq \frac{\dG}{4}\bigg[4f_s(\phi_s(\bar t\qb))
          +\frac12\int_{\Tm}^{\Tp}|\dot r|\,dt\bigg]
     \geq \tfrac14\dG|\phi(\hat t\qh)-x|, \label{a - t2int 1}
\end{align}
and by \eqref{t2 def} thus also
\begin{equation} \label{a - t2int 2}
   \tilde a-\int^{\Tp}_{\hat t}|\dot \phi|\,dt
     \geq \tfrac14\dG|\phi(\hat t\qh)-x|.
\end{equation}
To replace $\hat t$ by $0$ in \eqref{a - t2int 1}-\eqref{a - t2int 2} and finally prove the desired lower bound for the last line in \eqref{big est beginning}, let $\cE:=\min\{\tfrac14\dG,1\}>0$. If $\hat t\geq0$ then \eqref{a - t2int 1} implies
\begin{align}
\tilde a-\int_{\Tm}^0|\dot \phi|\,dt
  &= \bigg[\tilde a-\int_{\Tm}^{\hat t}|\dot \phi|\,dt\bigg] + \int_0^{\hat t}|\dot \phi|\,dt \nonumber\\
  &\geq \tfrac14\dG|\phi(\hat t\qh)-x|+\bigg|\int_0^{\hat t}\dot \phi\,dt\bigg| \nonumber\\
  &= \tfrac14\dG|\phi(\hat t\qh)-x|+|\phi(\hat t\qh)-\phi(0)| \nonumber\\
  &\geq \cE\big(|\phi(\hat t\qh)-x|+|\phi(\hat t\qh)-\phi(0)|\big) \nonumber\\
  &\geq\cE|\phi(0)-x|=\cE|w-x|, \label{a - 0-int 1}
\end{align}
and similarly, if $\hat t\leq0$ then \eqref{a - t2int 2} implies
\begin{equation} \label{a - 0-int 2}
\tilde a-\int_0^{\Tp}|\dot \phi|\,dt \geq \cE|w-x|.
\end{equation}
In any case, at least one of the estimates \eqref{a - 0-int 1} and \eqref{a - 0-int 2} has to hold, and so we can conclude that
\[ \max\!\bigg\{\tilde a-\int^0_{\Tm}|\dot \phi|\,dt,\ \,\tilde a-\int^{\Tp}_0|\dot \phi|\,dt\bigg\}
   \geq \cE|w-x|. \]
With this we can now finally complete the estimate \eqref{big est beginning} and prove that
$F(w)\geq\cE|w-x|$ for $\forall w\in\bar B_\eps(x)\setminus(M_s^{loc}\cup M_u^{loc})$ and thus for $\forall w\in\bar B_\eps(x)$, which is what we had to show.\\[.3cm]
\noindent From now on let us assume that the state space is two-dimensional,\linebreak i.e.~$D\subset\R^2$.
\vspace{-.2cm}

\paragraph{Proof of property (vi).} Again we will assume that $i\in I^+$.
The proof is divided into two parts: First we show in \textit{Step 1} that
\begin{equation} \label{prop (vi) first claim}
  \bar E_i'\setminus\{x\}\subset\psi(M_i',\R),
\end{equation}
so that for any choice of $\mu>0$, $\bar E_i'\setminus B_\mu(x)$ is a compact subset of $\psi(M_i',\R)$ by what we showed in part~(iii).
Since the expression for $\nf_i|_{E_i'}$ given in \eqref{grad fi formula} extends to a continuous function on all of $\psi(M_i',\R)$ and is thus bounded on $\bar E_i'\setminus B_\mu(x)$, this implies that $\nf_i$ is bounded on $E_i'\setminus B_\mu(x)$. It then remains to show in~\textit{Steps 2-12} that for some $\mu>0$ we have
\begin{equation} \label{prop (vi) claim B}
  \sup_{w\in E_i'\cap B_\mu(x)} |\nf_i(w)|<\infty.
\end{equation}
\textit{Step 1:}
To show \eqref{prop (vi) first claim}, let $w\in\bar E_i'\setminus\{x\}$, and let $(w_n)_{n\in\N}\subset E_i'$ with\linebreak $w_n\to w$. By passing on to a subsequence we may assume that $\forall n\in\N\colon$\!\!\!\linebreak $|w_n-x|\geq\frac12|w-x|$ and that $\lim_{n\to\infty}z_i'(w_n)=z$ for some $z\in M_i'$ (since $M_i'$~is compact). We begin by showing that there exist $\delta>0$ and $n_0\in\N$~such~that
\begin{equation} \label{prop (vi) second claim}
  \forall n\geq n_0\colon\quad \psi\big(z_i'(w_n),\,[0,t_i'(w_n)]\big)\cap B_\delta(x)=\varnothing.
\end{equation}
To see this, first recall that by \eqref{mod man dist} there $\exists t'>0$ such that
\[
  \int_0^{t'}|b(\psi(z,\tau))|\,d\tau
    \geq \tilde a - \tfrac15|w-x|.
\]
Since the expression on the left is a continuous function of $z$ and since $b(z)\neq0$ by Remark~\ref{on manifold b neq0}, there $\exists\nu>0$ such that
\begin{equation} \label{segment less a tilde}
  \forall z'\in\bar B_\nu(z)\colon\ \,
  b(z')\neq0
  \quad\text{and}\quad
  \int_0^{t'}|b(\psi(z',\tau))|\,d\tau
  \geq\tilde a-\tfrac14|w-x|.
\end{equation}
Since the compact set $\psi\big(\bar B_\nu(z),[0,t']\big)$ does not contain any roots of $b$, it does not contain~$x$, and thus we can choose a $\delta\in\big(0,\tfrac14|w-x|\big]$ such that
\begin{equation} \label{flow away from x}
  \psi\big(\bar B_\nu(z),[0,t']\big)\cap B_\delta(x) = \varnothing.
\end{equation}
Finally, let $n_0\in\N$ be so large that
\begin{equation} \label{zi close to z}
  \forall n\geq n_0\colon\quad z_i'(w_n)\in\bar B_\nu(z).
\end{equation}
Now suppose that \eqref{prop (vi) second claim} were wrong, i.e.\ that for some $n\geq n_0$ there were a $t''\in[0,t_i'(w_n)]$ such that $\psi(z_i'(w_n),t'')\in B_\delta(x)$.
Then by \eqref{flow away from x}-\eqref{zi close to z} it would have to fulfill $t''>t'$, i.e.\ $0<t'<t''\leq t_i'(w)$. Furthermore, we would have
\begin{align*}
  \int_{t''}^{t_i'(w_n)}\big|b\big(\psi(z_i'(w_n),\tau)\big)\big|\,d\tau
    &\geq \bigg|\int_{t''}^{t_i'(w_n)}b\big(\psi(z_i'(w_n),\tau)\big)\,d\tau\bigg| \\
    &=    \bigg|\int_{t''}^{t_i'(w_n)}\dot\psi(z_i'(w_n),\tau)\,d\tau\bigg| \\
    &=    \big| \psi(z_i'(w_n),t_i'(w_n)) - \psi(z_i'(w_n),t'') \big| \\
    &=    \big| w_n - \psi(z_i'(w_n),t'') \big| \\
    &\geq |w_n-x| - \big| \psi(z_i'(w_n),t'') - x \big| \\
    &>    \tfrac12|w-x|-\delta \\
    &\geq \tfrac14|w-x|.
\end{align*}
Together with \eqref{fi on Ei'}, \eqref{segment less a tilde} and \eqref{zi close to z} this would then lead to the contradiction
\begin{align*}
  \tilde a
    &>    \int_0^{t_i'(w_n)}\big|b\big(\psi(z_i'(w_n),\tau)\big)\big|\,d\tau \\
    &\geq \int_0^{t'}\big|b\big(\psi(z_i'(w_n),\tau)\big)\big|\,d\tau
     +    \int_{t''}^{t_i'(w_n)}\big|b\big(\psi(z_i'(w_n),\tau)\big)\big|\,d\tau  \\
    &>    \big(\tilde a-\tfrac14|w-x|\big) + \tfrac14|w-x|
     =    \tilde a,
\end{align*}
concluding the proof of \eqref{prop (vi) second claim}.

Now let $n\geq n_0$ and $t\in(0,t_i'(w_n)]$. The vector $v:=\psi(z_i'(w_n),t)\in\psi(M_i',[0,\infty))$ fulfills $z_i'(v)=z_i'(w_n)$ and $t_i'(v)=t\in(0,t_i'(w_n)]$, and so by \eqref{fi on Ei'} we have
\[
   0 < \int_0^{t_i'(v)}\big|b\big(\psi(z_i'(v),\tau)\big)\big|\,d\tau
   \leq \int_0^{t_i'(w_n)}\big|b\big(\psi(z_i'(w_n),\tau)\big)\big|\,d\tau
   < \tilde a,
\]
i.e.\ $v\in E_i'$.
This shows that $\psi\big(z_i'(w_n),\,(0,t_i'(w_n)]\big)\subset E_i'$, which together with \eqref{prop (vi) second claim} implies that $\psi\big(z_i'(w_n),\,[0,t_i'(w_n)]\big)\subset \bar E_i'\setminus B_\delta(x)$. Since $\dF:=\min\!\big\{|b(v)|\,\big|\,v\in\bar E_i'\setminus B_\delta(x)\big\}>0$ by what we showed in parts~(iii) and~(iv), by \eqref{fi on Ei'} we therefore have
\[
   \tilde a>\int_0^{t_i'(w_n)}\big|b\big(\psi(z_i'(w_n),\tau)\big)\big|\,d\tau
   \geq t_i'(w_n)\cdot\dF,
\]
i.e.\ $t_i'(w_n)\in\big[0,\frac{\tilde a}{\dF}\big)$. We can thus extract a subsequence $(w_{n_k})_{k\in\N}$ such that $\lim_{k\to\infty}t_i'(w_{n_k})=t'''$ for some $t'''\in\big[0,\frac{\tilde a}{\dF}\big]$. Taking the limit $k\to\infty$ in the relation $w_{n_k}=\psi\big(z_i'(w_{n_k}),t_i'(w_{n_k})\big)$ now tells us that $w=\psi(z,t''')\in\psi(M_i',\R)$, terminating the proof of \eqref{prop (vi) first claim}.\\[.2cm]
\textit{Step 2:} \label{prop (vi) Step 2}
To prepare for the proof of \eqref{prop (vi) claim B}, we begin by defining an invertible affine transformation $L\colon\Rn\to\Rn$ that shifts $x$ to the origin and then turns space so that $T_xM_u^{loc}$ coincides with the $y$-axis.
To do so, let $\tilde R$ be an orthogonal matrix such that $A=\tilde R\tramat\tilde R^T$ for some $p,q>0$ and $r\in\R$, define $L$ by
\begin{equation} \label{L def}
   L(w)=\tilde R^T(w-x), \qquad L^{-1}(v):=\tilde Rv+x,
\end{equation}
and define the transformed drift $\tilde b\in C^1(L(D),\Rn)$ by
\[ \tilde b(v):=\tilde R^Tb(L^{-1}(v)). \]
Since $\tilde b(0)=\tilde R^Tb(x)=0$ and $\nabla\tilde b(0)=\tilde R^T\nabla b(x)\tilde R=\tilde R^TA\tilde R=\tramat$, we can write $\tilde b(v)=\tramat v+\tilde g(v)$ for some $C^1$-function $\tilde g$ with
\begin{equation} \label{g=dg=0}
   \tilde g(0)=0
   \quad\text{and}\quad
   \nabla\tilde  g(0)=0,
\end{equation}
and so the flow $\chi(v,t):=L\big(\psi(L^{-1}(v),t)\big)$ for $\forall v\in L(D)$ $\forall t\in\R$, which fulfills
\begin{equation} \label{chi L prop}
  \chi(L(w),t) = L(\psi(w,t))
  \qquad \forall w\in D\ \forall t\in\R,
\end{equation}
is the solution of the system
\begin{subequations}
\begin{align}
   \dot\chi(v,t)
    &= \tilde R^Tb\big(\psi(L^{-1}(v),t)\big)
     = \tilde R^Tb\big(L^{-1}(\chi(v,t))\big)
     = \tilde b(\chi(v,t)) \label{chi dot} \\*
    &= \tramat\chi(v,t)+\tilde g(\chi(v,t)),\label{chi dot 2} \\*[.1cm]
  \chi(v,0)
    &= L\big(\psi(L^{-1}(v),0)\big)=v. \label{chi init}
\end{align}
\end{subequations}
Writing this system componentwise with $\tilde g=(g_1,g_2)$, $\chi=\chi(v,t)=(\chi_1,\chi_2)$ and $v=(v_1,v_2)$, we have
\begin{subequations}
\begin{align}
  \dot \chi_1&= -p\chi_1    \hspace{1.074cm}+g_1(\chi_1,\chi_2), \label{dot chi1}\\
  \dot \chi_2&= \hspace{9pt}r\chi_1+q\chi_2 +g_2(\chi_1,\chi_2), \label{dot chi2}\\[.1cm]
  \chi_1(v,0)&=v_1, \qquad
  \chi_2(v,0)=v_2.\label{chi1 init}
\end{align}
\end{subequations}
\noindent
\textit{Step 3:}
Next, we will have to choose some constants. Let
\begin{align}
  \tilde \te&:=\tfrac{|r|}{p+q}+1, \nonumber \\
  \dH&:=\tfrac2p(|r| + q\tilde \te)+2, \label{d7 def} \\
  \te&:=\max\!\Big\{ \tfrac{p+|r|}{q}+1,\ \tilde \te + 1 + (4+\dH+2\tilde \te)^{1+2p/q} \Big\} > \tilde \te + 2,
     \label{K const def}
\end{align}
and for some small $\sigma>0$ to be chosen momentarily we define the open double wedge
\[
   W_{\sigma,\te} := \big\{(s,y)\in\R^2\,\big|\,0<|s|<\sigma,\ |\tfrac ys|< \te \big\}
                \subset B_{\sigma(1+\te)}(0).
\]
To choose $\sigma$, note that since $g_1$ and $g_2$ are $C^1$-functions that by \eqref{g=dg=0} fulfill $g_{1,2}(0,0)=0$ and
\begin{equation} \label{grad gi}
   \nabla g_{1,2}(0,0)=0,
\end{equation}
we have $g_{1,2}(s,y)=o(|s|+|y|)$, and since on $W_{\sigma,\te}$ we have $|s|+|y|<(1+\te)|s|$, this implies that
\begin{equation} \label{gi order}
   g_{1,2}(s,y)=o(|s|)
\qquad\text{as $(s,y)\to0$ in $W_{\sigma,\te}$.}
\end{equation}
Therefore we can pick $\sigma>0$ so small that
\begin{equation} \label{xdot pos}
  \big|\tfrac1{ps}g_{1,2}(s,y)\big|\leq\tfrac12 \qquad\text{for $\forall(s,y)\in W_{\sigma,\te}$,}
\end{equation}
and then the function $h\colon W_{\sigma,\te}\to\R$ given by
\begin{equation} \label{h def}
    h(s,y)
     := \bigg[\frac rp + \frac{qy}{ps}\bigg]
        - \frac{\frac rp + \frac{qy}{ps}+\frac1{ps}g_2(s,y)}{1-\frac1{ps}g_1(s,y)}
\end{equation}
is well-defined and $C^1$. Furthermore, we have
\begin{align*}
    s\partial_yh(s,y)
      &=s\Bigg[\frac{q}{ps}-\frac{\frac{q}{ps} +\frac1{ps}\partial_yg_2(s,y)}{1-\frac1{ps}g_1(s,y)}
-\frac{\frac rp + \frac{qy}{ps}+\frac1{ps}g_2(s,y)}
             {\big(1-\frac1{ps}g_1(s,y)\big)^2ps}
        \partial_yg_1(s,y)\Bigg] \\
      &=\frac q p 
       -\frac{\frac qp +\frac1p \partial_yg_2(s,y)}{1-\frac1{ps}g_1(s,y)}
       -\frac{\frac rp + \frac{qy}{ps}+\frac1{ps}g_2(s,y)}
             {\big(1-\frac1{ps}g_1(s,y)\big)^2p}
        \partial_yg_1(s,y),
\end{align*}
and since by \eqref{grad gi}-\eqref{gi order} the last expression converges to $0$ as $(s,y)\to0$ in $W_{\sigma,\te}$, we can choose $\sigma>0$ so small that
\begin{equation} \label{hy est}
  |\partial_yh(s,y)|\leq \tfrac{q}{2p}|s|^{-1}
   \qquad \text{for $\forall (s,y)\in W_{\sigma,\te}$.}
\end{equation}
Finally, writing $M_s\setminus\{x\}=\psi(w'_1,\R)\cup\psi(w'_2,\R)$ for some points $w'_1,w'_2\in D$, by \eqref{chi L prop} the points $L(w'_1)$ and $L(w'_2)$ lie on the global stable manifold of the saddle point $\chi=0$ of the system \eqref{chi dot}-\eqref{chi init}. Since by \eqref{chi dot 2} the local stable manifold of that system at the origin is tangent to the eigenvector $(p+q,-r)$ of the matrix $\tramat$ and is thus contained in $W_{\tilde\te,\sigma}$ near the origin, there therefore $\exists T>0$ such that
\begin{equation} \label{chi flow in small wedge}
   \chi\big(L(w'_k),[T,\infty)\big)\subset W_{\sigma,\tilde\te}
   \qquad \text{for $k=1,2$.}
\end{equation}
Since our construction in \textit{Steps 2-3} was solely based on the given vector field~$b$, we can use it to decrease $\tilde a$ one final time, as explained at the end of Section \ref{superprop proof step 1}, so that $\tilde a<\min\!\big\{f_s(\psi(w'_1,T)),\,f_s(\psi(w'_2,T))\big\}$. (To prepare also for the case $i\in I^-$, we must at this point also further decrease $\tilde a$ according to an analogous construction with the stable and unstable direction exchanged.)

Since $f_s^{-1}((0,\tilde a])\subset M_s\setminus\{x\}=\psi(w'_1,\R)\cup\psi(w'_2,\R)$ and since by Lemma \ref{distance function saddle}~(i) our choice of $\tilde a$ implies that for $k=1,2$ and $\forall t<T$ we have $f_s(\psi(w'_k,t))\geq f_s(\psi(w'_k,T))>\tilde a$, \eqref{chi L prop} and \eqref{chi flow in small wedge} then imply that
\begin{align}
   L\big(f_s^{-1}((0,\tilde a])\big)
   &\subset L\big(\psi(w'_1,[T,\infty))\cup\psi(w'_2,[T,\infty))\big) \nonumber \\
   &=       \chi\big(L(w'_1),[T,\infty)\big)\cup\chi\big(L(w'_2),[T,\infty)\big) \nonumber \\
   &\subset W_{\sigma,\tilde\te}. \label{Msloc in WdK}
\end{align}
We now denote by $w_1,w_2\in D$ the two points given by Lemma~\ref{Msa compact lemma} such that
\begin{equation} \label{Ma tilde 2 point}
   M_s^{\tilde a} = \{w_1,w_2\},
\end{equation}
and we denote for $k=1,2$
\label{prop (vi) Step 3 end}
\begin{align} \label{Lwk in Wst}
  (\tilde s_k,\tilde y_k) &:= L(w_k) \in L(M_s^{\tilde a}) = L\big(f_s^{-1}(\{\tilde a\})\big) \subset W_{\sigma,\tilde \te}.
\end{align}
\textit{Step 4:}
For initial values $(s_0,y_0)\in W_{\sigma,\te}$ now consider the solution $y(s):=y(s_0,y_0;s)$ of the ODE
\begin{subequations}
\begin{align}  y'(s)&=\frac{rs+qy+g_2(s,y)}{-ps+g_1(s,y)}
        =-\frac{\frac rp + \frac{qy}{ps}+\frac1{ps}g_2(s,y)}{1-\frac1{ps}g_1(s,y)}
         \label{y(x) ode pre} \\*
       &=-\bigg[\frac rp+\frac{qy}{ps}\bigg]+h(s,y), \label{y(x) ode}\\*
  y(s_0)&=y_0. \label{y(x) init}
\end{align}
\end{subequations}
The right-hand sides in \eqref{y(x) ode pre}-\eqref{y(x) ode} are well-defined, equal and $C^1$ on $W_{\sigma,\te}$ by \eqref{xdot pos}-\eqref{h def}, and so $y(s)$ is well-defined until its graph reaches the boundary of $W_{\sigma,\te}$.

The meaning of the system \eqref{y(x) ode pre}-\eqref{y(x) init} is the following: Consider a solution $\chi(v,t)$ of \eqref{chi dot}-\eqref{chi init} starting from some point $v=(s_0,y_0)\in W_{\sigma,\te}$ such that for some $\hat t>0$ we have
\begin{equation} \label{flow in WdK}
   \chi(v,[0,\hat t\qh])\subset W_{\sigma,\te}.
\end{equation}
If $s_0>0$ then this implies that
\begin{equation} \label{chi1 one sign}
  \forall t\in[0,\hat t\qh]\colon\ \,\chi_1(v,t)>0
  \quad\text{and thus}\quad
  \dot\chi_1(v,t)<0
\end{equation}
by \eqref{dot chi1} and \eqref{xdot pos}. This shows that
\begin{equation} \label{integration bounds}
   0<\chi_1(v,\hat t\qh)<\chi_1(v,0)=s_0,
\end{equation}
and that on $[0,\hat t\qh]$ the function $\chi(v,\cdot\,)$ takes values on the graph of some function $y(s)=y(s_0,y_0;s)$, i.e.\ we have
\begin{align}
   \chi_2(v,t)&=y(\chi_1(v,t)), \label{chi2=y(chi1)} \\
   \dot\chi_2(v,t)&=y'(\chi_1(v,t))\dot\chi_1(v,t) \label{y' chain}
\end{align}
for $\forall t\in[0,\hat t\qh]$. Since $\dot\chi_1(v,t)\neq0$ by \eqref{chi1 one sign}, together with \eqref{dot chi1}-\eqref{dot chi2} this shows that
\[
  y'(\chi_1)=\frac{\,\dot\chi_2\,}{\dot\chi_1}
    = \frac{r\chi_1+q\chi_2+g_2(\chi_1,\chi_2)}{-p\chi_1+g_1(\chi_1,\chi_2)}
    = \frac{r\chi_1+qy(\chi_1)+g_2(\chi_1,y(\chi_1))}{-p\chi_1+g_1(\chi_1,y(\chi_1))},
\]
i.e.\ $y(s)$, $s\in[\chi_1(v,\hat t\qh),s_0]$, is the solution of the ODE \eqref{y(x) ode pre}-\eqref{y(x) init}, where the initial condition \eqref{y(x) init} follows from setting $t=0$ in \eqref{chi2=y(chi1)} and using that $v=(s_0,y_0)$.

If $s_0<0$ then all inequalities in \eqref{chi1 one sign}-\eqref{integration bounds} are reversed, and so \eqref{chi2=y(chi1)}-\eqref{y' chain} hold as well, only that then $y(s)$ is defined on the interval $[s_0,\chi_1(v,\hat t\qh)]$.\\[.2cm]
\textit{Step 5:}
Now let us choose a $\mu>0$ for which we will be able to show \eqref{prop (vi) claim B}. Denoting
\begin{equation} \label{J def}
   J:=\big\{k\in\{1,2\}\,\big|\,w_k\notin\psi(M_i',\R)\big\},
\end{equation}
we have for $\forall k\in J$ that $\psi(w_k,\R)\cap M_i'=\varnothing$,
i.e.\ $\forall\tau\in\R\colon f_{M_i'}(\psi(w_k,\tau))\neq0$. Thus, if we had $f_{M_i'}(\psi(w_k,-1))<0$ then we would have $f_{M_i'}(\psi(w_k,\tau))<0$ for $\forall\tau\in\R$, and letting $\tau\to\infty$ and using that $w_k\in\Msan\subset M_s$ would imply that $f_{M_i'}(x)\leq0$, contradicting \eqref{fMi'x>0}.
This shows that
\begin{equation} \label{fMi'>0 pre}
  \forall k\in J\colon\quad f_{M_i'}(\psi(w_k,-1))>0.
\end{equation}
Furthermore, since for $\forall k\in\{1,2\}$ we have
\[ 
  \int_{-1}^\infty|b(\psi(w_k,\tau))|\,d\tau
  > \int_0^\infty|b(\psi(w_k,\tau))|\,d\tau
  = f_s(w_k)=\tilde a,
\]
there $\exists T'>0$ so large that
\begin{equation} \label{long flow pre}
  \forall k\in\{1,2\}\colon\quad \int_{-1}^{T'}|b(\psi(w_k,\tau))|\,d\tau > \tilde a.
\end{equation}
Using also the value $\rho>0$ constructed in the steps leading to Lemma~\ref{Mi' properties}, by \eqref{fMi'>0 pre}-\eqref{long flow pre} there thus exists a $\tilde\rho>0$ such that
\begin{equation} \label{nu def est}
  \tilde\rho < \min\!\big\{\rho,\,\tfrac13|w_1-w_2|,\, |\tilde s_1|,\, |\tilde s_2|\big\},
\end{equation}
\vspace{-.8cm}
\begin{align}
  \forall k\in J\ \,\forall v\in\bar B_{\tilde\rho}(w_k)\colon&\ f_{M_i'}(\psi(v,-1))>0, \label{fMi'>0} \\
  \forall k\in\{1,2\}\ \,\forall v\in\bar B_{\tilde\rho}(w_k)\colon&\ \int_{-1}^{T'}|b(\psi(v,\tau))|\,d\tau >\tilde a. \label{long flow}
\end{align}
Finally, we have $x\notin M_i'$ by Remark~\ref{on manifold b neq0}, and because of \eqref{long flow} the sets $\bar B_{\tilde\rho}(w_k)$ and thus $\psi\big(\bar B_{\tilde\rho}(w_k),[-1,T']\big)$ cannot contain $x$. Therefore we can choose $\mu\in(0,a_0]$ so small that \eqref{flow into hMtas}-\eqref{projection not far} hold, and that
\begin{gather}
\bar B_{\cI\mu}(x)\cap M_i'=\varnothing, \label{away from Mi'} \\
B_\mu(x)\cap\psi\big(\bar B_{\tilde\rho}(w_k),[-1,T']\big)=\varnothing. \label{ball away from flow}
\end{gather}
\textit{Step 6:} 
To show \eqref{prop (vi) claim B}, let now $w\in E_i'\cap B_\mu(x)$. We must find a bound on $|\nf_i(w)|$ that is independent of our choice of $w$. We begin by showing that there exist $\eta>0$ and $k\in\{1,2\}$ such that
\begin{align}
   &\hspace{-1.55cm} B_\eta(w)\subset E_i'\cap B_\mu(x), \label{claim step 2b1} \\
     \forall u\in B_\eta(w)\colon\,\  & v_u:=L(z_i'(u)) \in W_{\sigma,\te-1},  \label{claim step 2b2}\\*
     & v_u\hspace{3pt}=(\tilde s_k,y_u) \label{claim step 2b3}
\intertext{for some $y_u\in\R$ with}
     &|y_u-\tilde y_k| \leq \tilde\rho. \label{yu ytildek dist}
\end{align}
To do so, let $\eta>0$ be so small that \eqref{claim step 2b1} holds and that
\begin{equation} \label{zi' cont}
  \forall u\in B_\eta(w)\colon\quad
    |z_i'(u)-z_i'(w)|\leq\tilde\rho,
\end{equation}
and let $u\in B_\eta(w)\subset E_i'\cap B_\mu(x)\subset\bar B_{a_0}(x)$. First observe that this implies that $u\notin M_u^{loc}$. Indeed, otherwise we would by Lemma \ref{distance function saddle}~(i), (ii) and~(iv) have for $\forall\tau\leq0$
\[
  |\psi(u,\tau)-x| \leq f_u(\psi(u,\tau)) \leq f_u(u)\leq\cI|u-x|\leq \cI\mu
\]
and thus $\psi(u,\tau)\!\notin\!M_i'$ by \eqref{away from Mi'}. But this would show that $u\!\notin\!\psi(M_i',[0,\infty))$, which by \eqref{fi def 1} contradicts $u\in E_i'=f_i^{-1}\big((0,\tilde a)\big)$.

Since $u\notin M_u^{loc}$, by \eqref{flow into hMtas}-\eqref{projection not far} there $\exists t<0$ such that $v:=\psi(u,t)$ fulfills
\begin{gather}
  v\in\hMtas, \label{flow into hMtas v} \\
  |p_s(v)-v|\leq\tilde\rho. \label{projection not far v}
\end{gather}
In particular, because of \eqref{flow into hMtas v} and \eqref{Msa Msu def} we have $p_s(v)\in\Msan$, and so by \eqref{Ma tilde 2 point} there $\exists k\in\{1,2\}$ such that $p_s(v)=w_k$ and thus by \eqref{projection not far v}
\begin{equation} \label{c near wk}
  v\in\bar B_{\tilde\rho}(w_k).
\end{equation}
Suppose we had $k\in J$.
Then by \eqref{fMi'>0} we would have
\[
   0<f_{M_i'}(\psi(v,-1))
    =f_{M_i'}(\psi(u,t-1))
    =f_{M_i'}\big(\psi\big(z_i'(u),\,t_i'(u)+t-1\big)\big)
\]
and thus $t_i'(u)+t-1>0$ by \eqref{fM sgn 1}. Furthermore, by \eqref{c near wk} and \eqref{ball away from flow} we would have $u\notin\psi(v,[-1,T'])=\psi\big(u,[t-1,t+T']\big)$ and thus $0\notin[t-1,t+T']$, and since $t-1<t<0$, this would show that $t+T'<0$. To summarize, we would have
\[ -t_i'(u)<t-1<t+T'<0, \]
and so by \eqref{fi on Ei'}, \eqref{c near wk} and \eqref{long flow} we would arrive at the contradiction
\begin{align}
\tilde a
    &> \int^{t_i'(u)}_0\big|b\big(\psi(z_i'(u),\tau)\big)\big|\,d\tau \nonumber\\
    &= \int_{-t_i'(u)}^0\big|b\big(\psi(z_i'(u),t_i'(u)+\tau)\big)\big|\,d\tau \nonumber \\
    &= \int_{-t_i'(u)}^0|b(\psi(u,\tau))|\,d\tau \label{fi change direction} \\
    &\geq \int_{t-1}^{t+T'}|b(\psi(u,\tau))|\,d\tau \\
    &= \int_{-1}^{T'}|b(\psi(v,\tau))|\,d\tau
    >\tilde a.
\end{align}
Therefore we have $k\notin J$ and thus $w_k\in\psi(M_i',\R)\cap\Msan=\psi(M_i,\R)\cap\Msan$ $=K_i^{\tilde a}$ by \eqref{J def}, \eqref{Ma tilde 2 point}, Lemma \ref{Mi' properties}~(i) and \eqref{2D Kia}. By \eqref{flow into hMtas v}, \eqref{c near wk}, \eqref{nu def est} and \eqref{Msan NKati in Mi} we thus have $v\in\hMtas\cap N_{\tilde\rho}(K_i^{\tilde a})\subset\hMtas\cap N_\rho(K_i^{\tilde a})\subset M_i'$, and so the relation $u=\psi(v,-t)$ shows that $z_i'(u)=v$. Therefore we have $p_s(v)-v=w_k-z_i'(u)$, and so \eqref{projection not far v} and \eqref{as prop 0} say that
\begin{gather}
  |w_k-z_i'(u)|\leq\tilde\rho, \label{translated prop 1} \\*
  w_k-z_i'(u)\in T_xM_u^{loc}. \label{translated prop 2}
\end{gather}
To see that $k$ is independent of our choice of $u\in B_\eta(w)$, we apply the above arguments to $w$ instead of $u$ and find that for some~$k'$ \eqref{translated prop 1}-\eqref{translated prop 2} hold with $w_{k}-z_i'(u)$ replaced by $w_{k'}-z_i'(w)$. Since
\[
   |w_k-w_{k'}|\leq|w_k-z_i'(u)|+|z_i'(u)-z_i'(w)|+|z_i'(w)-w_{k'}|\leq3\tilde\rho<|w_1-w_2|
\]
by \eqref{translated prop 1}, \eqref{zi' cont} and \eqref{nu def est}, we must have $k'=k$.

Now \eqref{L def} and \eqref{translated prop 2} imply that for $\forall u\in B_\eta(w)$ we have
\[
   L(z_i'(u))-L(w_k)
    = \tilde R^T(z_i'(u)-w_k)
    \in \tilde R^T T_xM_u^{loc}=T_0L(M_u^{loc}).
\]
Since $L(M_u^{loc})$ is just the local unstable manifold at $\chi=0$ of the transformed system \eqref{chi dot}-\eqref{chi init} and is thus tangent to the $y$-axis at the origin, this means that the first components of $L(z_i'(u))$ and $L(w_k)=(\tilde s_k,\tilde y_k)$ coincide, which is \eqref{claim step 2b3}. Furthermore, since
\[
   |L(z_i'(u))-L(w_k)|
    = |\tilde R^T(z_i'(u)-w_k)|
    = |z_i'(u)-w_k|
    \leq \tilde\rho
\]
by \eqref{translated prop 1}, their $y$-components differ by at most $\tilde\rho$, i.e.\ \eqref{yu ytildek dist}, and together with \eqref{Lwk in Wst}, \eqref{nu def est} and \eqref{K const def} this implies $\big|\frac{y_u}{\tilde s_k}\big|\leq\big|\frac{\tilde y_k}{\tilde s_k}\big|+\big|\frac{\tilde\rho}{\tilde s_k}\big|<\tilde \te+1< \te-1$, which is \eqref{claim step 2b2}.\\[.2cm]
\textit{Step 7:}
W.l.o.g.\ let us from now on assume that $\tilde s_k>0$.
In this step we will show that then for $\forall(s_0,y_0)\in W_{\sigma,\te}$ with $0<s_0<\tilde s_k$ the function $y(s_0,y_0;s)$ is well-defined (and has its graph in $W_{\sigma,\te}$) at least for $s\in[s_0,\tilde s_k]$, thus allowing us to define the function
\begin{equation} \label{f hat def}
   \hat f(s_0,y_0)
     :=\int_{s_0}^{\tilde s_k}\sqrt{1+[\partial_sy(s_0,y_0;s)]^2}\,ds
\end{equation}
which we may in short write as $\hat f(v)$ for $v=(s_0,y_0)\in W_{\sigma,\te}$.

To see this, we will show that as $s$ increases from $s_0$, the graph of $y(s):=y(s_0,y_0;s)$ is repelled from the upper and lower boundaries of $W_{\sigma,\te}$ and must thus reach the right boundary of $W_{\sigma,\te}$ at $s=\sigma>\tilde s_k$. Indeed, suppose that at some $s>0$ the graph of $y(s)$ has reached a point $(s,y)$ with $\frac ys\geq \te-1$. Then by \eqref{K const def} we have
$\frac ys\geq \te-1\geq\frac{p+|r|}{q}\geq\frac{p-r}{q}$ and thus $\frac rp+\frac{qy}{ps}\geq1$, so that
\[
  \partial_s\bigg[\frac{y(s)}{s}\bigg]
     = \frac 1s\Big[y'-\frac ys\Big]
     = \frac1s\bigg[{-\frac{\frac rp + \frac{qy}{ps}
                     +\frac 1{ps}g_2(s,y)}{1-\frac 1{ps}g_1(s,y)}}-\frac ys\bigg]
     \leq \tfrac1s\big[{-\tfrac{1-1/2}{1+1/2}} - 0 \big]
     <0
\]
by \eqref{y(x) ode pre} and \eqref{xdot pos}. Similarly, one can show that if $\frac{y(s)}s\leq -(\te-1)$ then $\partial_s\big[\frac{y(s)}{s}\big]>0$.

Furthermore, observe that for any point $(s_0,y_0)\in W_{\sigma,\te}$ such that $y(s):=y(s_0,y_0;s)$ is defined for all $s$ in some interval $[s_1,\tilde s_k]\ni s_0$, the uniqueness of the solutions of \eqref{y(x) ode pre}-\eqref{y(x) init} implies that $y(s)=y(s_1,y(s_1);s)$, so that
\begin{align}
  \int_{s_1}^{\tilde s_k}\sqrt{1+[y'(s)]^2}\,ds
    &= \int_{s_1}^{\tilde s_k}\sqrt{1+\big[\partial_sy(s_1,y(s_1);s)\big]^2}\,ds \nonumber\\
    &= \hat f(s_1,y(s_1)). \label{hat f shift}
\end{align}
\textit{Step 8:}
We will now show that $\hat f$ is $C^1$ on $W_{\sigma,\te}$, and that for $\forall (s_0,y_0)\in W_{\sigma,\te}$ we have the bounds
\begin{subequations}
\begin{align}
  |\partial_{s_0}\hat f(s_0,y_0)| &\leq 5 + \tfrac8p(|r| + q\te), \label{fhat bound 1}\\
  |\partial_{y_0}\hat f(s_0,y_0)| &\leq 3, \label{fhat bound 2}
\end{align}
\end{subequations}
which are the core of this proof.

To do so, first note that since the right-hand side of \eqref{y(x) ode} is $C^1$ on $W_{\sigma,\te}$, $y(s):=y(s_0,y_0;s)$ is $C^1$ with respect to the initial data~$y_0$, with
\[
   \partial_s\big[\partial_{y_0}y(s)\big]
    =\partial_{y_0}y'(s)
    =\big[{-\tfrac q p}s^{-1}+\partial_yh(s,y(s))\big]\partial_{y_0}y(s)
\]
for $\forall s\in[s_0,\tilde s_k]$ by \eqref{y(x) ode}, and since $\partial_{y_0}y(s_0)=1$ by \eqref{y(x) init}, we find that
\begin{align*}
  \partial_{y_0}y(s)
   &= \exp\!\Big(\int_{s_0}^{s}\big[{-\tfrac q p} {s'}^{-1}+\partial_yh(s',y(s'))\big]\,ds'\Big),\\
\partial_{y_0}y'(s)
   &= \big[{-\tfrac q p}s^{-1}+\partial_yh(s,y(s))\big]\exp\!\Big(\int_{s_0}^s\big[{-\tfrac q p} {s'}^{-1}+\partial_yh(s',y(s'))\big]\,ds'\Big)
\end{align*}
for $\forall s\in[s_0,\tilde s_k]$. We can now invoke \eqref{hy est} to obtain
\begin{align}
  |\partial_{y_0}y'(s)|
    &\leq \frac{3q}{2p}s^{-1}\exp\!\Big({-\frac q{2p}}\int_{s_0}^s{s'}^{-1}\,ds'\Big)
    = \frac{3q}{2p}s^{-1}\Big(\frac{s_0}s\Big)^{\!\frac q{2p}} \nonumber\\
\Rightarrow\
\int_{s_0}^{\tilde s_k}|\partial_{y_0}y'(s)|\,ds
   &\leq \frac{3q}{2p}\int_{s_0}^{\tilde s_k}s^{-1}\Big(\frac{s_0}{s}\Big)^{\!\frac{q}{2p}}\,ds
    = \frac{3q}{2p}\int_1^{\tilde s_k/s_0}s^{-(1+\frac{q}{2p})}\,ds \nonumber\\
   &\leq \frac{3q}{2p}\int_1^\infty s^{-(1+\frac{q}{2p})}\,ds =3, \label{dy0 int est}
\end{align}
which by \eqref{f hat def} leads us to our first bound
\[
  \big|\partial_{y_0}\hat f(s_0,y_0)\big|
    = \bigg|\int_{s_0}^{\tilde s_k}\frac{y'(s)}{\sqrt{1+[y'(s)]^2}}\cdot\partial_{y_0}y'(s)\,ds\bigg|
    \leq \int_{s_0}^{\tilde s_k}|\partial_{y_0}y'(s)|\,ds
    \leq 3,
\]
i.e.\ \eqref{fhat bound 2}. For the other bound \eqref{fhat bound 1}, note that for small $\Delta$ we have
\[
   y\big(s_0+\Delta,\,y(s_0,y_0;s_0+\Delta);\,s\big)=y(s_0,y_0;s),
\]
and differentiating with respect to $s$ and then computing the $\Delta$-derivative at $\Delta=0$ leads us to
\begin{align}
  &\hspace{1.2cm}
\partial_sy\big(s_0+\Delta,\,y(s_0,y_0;s_0+\Delta);\,s\big)=\partial_sy(s_0,y_0;s) \nonumber \\
  &\hspace{-.7cm}\Rightarrow\hspace{.5cm}
  (\partial_{s_0}\partial_sy)(s_0,y_0;\,s)
 +(\partial_{y_0}\partial_sy)(s_0,y_0;\,s)\cdot(\partial_sy)(s_0,y_0;\,s_0) =0 \nonumber \\
  &\hspace{-.7cm}\Rightarrow\hspace{3.2cm}
 \partial_{s_0}y'(s) =-\partial_{y_0}y'(s)\cdot y'(s_0). \label{ds0y dy0y rel}
\end{align}
By \eqref{y(x) ode pre} and \eqref{xdot pos}, $|y'(s_0)|$ can be bounded by
\begin{align} \label{y' upper bound}
  \sup_{s_0\leq s\leq\tilde s_k}|y'(s)|
   &= \sup_{(s,y)\in W_{\sigma,\te}}
    \bigg| \frac{\frac rp + \frac{qy}{ps}+\frac 1{ps}g_2(s,y)}
                {1-\frac 1{ps}g_1(s,y)} \bigg| \nonumber \\[.15cm]
   &\leq \frac{\frac{|r|}p + \frac{q}{p}\te+\frac12}{1-\frac12}
    =    \tfrac2p(|r| + q\te)+1,
\end{align}
and so \eqref{f hat def}, \eqref{ds0y dy0y rel}, \eqref{dy0 int est} and \eqref{y' upper bound} lead to the estimate
\begin{align*}
  \big|\partial_{s_0}\hat f(s_0,y_0)\big|
    &= \bigg|{-\sqrt{1+[y'(s_0)]^2}}
+\int_{s_0}^{\tilde s_k}\frac{y'(s)}{\sqrt{1+[y'(s)]^2}}\cdot\partial_{s_0}y'(s)\,ds\bigg| \\
    &\leq 1+|y'(s_0)| + \int_{s_0}^{\tilde s_k}|\partial_{s_0}y'(s)|\,ds \\
    &\leq 1+|y'(s_0)|+|y'(s_0)|\int_{s_0}^{\tilde s_k}|\partial_{y_0}y'(s)|\,ds \\
    &\leq 1+4|y'(s_0)| \\
    &\leq 1+4\big[\tfrac2p(|r| + q\te)+1\big] \\
    &= 5 + \tfrac8p(|r| + q\te).
\end{align*}
\textit{Step 9:}
Now let us consider the function
\[ \tilde y(s):=y(\tilde s_k,\tilde y_k;s) \]
that passes through the point $(\tilde s_k,\tilde y_k)=L(w_k)$. Since $w_k\in M_s^{\tilde a}=f_s^{-1}(\{\tilde a\})$ by \eqref{Ma tilde 2 point}, Lemma \ref{distance function saddle}~(i) implies that $\psi(w_k,[0,\infty)) \subset f_s^{-1}\big((0,\tilde a]\big)$ and thus
\[
   \chi\big(L(w_k),[0,\infty)\big)
     = L\big(\psi(w_k,[0,\infty))\big)
     \subset L\big(f_s^{-1}\big((0,\tilde a]\big)\big)
     \subset W_{\sigma,\tilde \te}
\]
by \eqref{chi L prop} and \eqref{Msloc in WdK}. Since by \eqref{chi L prop} and \eqref{L def} we have
\begin{equation} \label{chi limit 0}
   \lim_{t\to\infty}\chi(L(w_k),t)
     = \lim_{t\to\infty} L(\psi(w_k,t))
     = L(x)
     = 0,
\end{equation}
by our remarks at the end of \textit{Step 4} this shows that $\tilde y(s)$ is defined for $\tilde s_k\geq s>\lim_{t\to\infty}\chi_1(L(w_k),t)=0$, i.e.\ for $\forall s\in(0,\tilde s_k]$, with graph in $W_{\sigma,\tilde \te}$, i.e.
\begin{equation} \label{tilde y in wedge}
   \big\{(s,\tilde y(s)) \,\big|\, s\in(0,\tilde s_k]\big\}\subset W_{\sigma,\tilde \te}.
\end{equation}
Using that $\dot\chi(L(w_k),t)=\tilde R^Tb(\psi(w_k,t))$ by \eqref{chi L prop} and \eqref{L def}, abbreviating $\chi=\chi(L(w_k),\tau)$ etc., using \eqref{y' chain} for $\tilde y(s)$, and finally making the substitution $s=\chi_1(L(w_k),\tau)$ and recalling that $\dot\chi_1(L(w_k),\cdot\,)<0$ by \eqref{chi1 one sign} and our assumption $\tilde s_k>0$, we thus obtain for $\forall t>0$
\begin{align}
  \int^t_0|b(\psi(w_k,\tau))|\,d\tau
    &= \int^t_0|\dot\chi(L(w_k),\tau)|\,d\tau \nonumber\\
    &=  \int_0^t\!\sqrt{\dot\chi_1^2+\dot\chi_2^2}\ d\tau \nonumber\\
    &=  \int_0^t\sqrt{1+[\tilde y'(\chi_1)]^2}\ |\dot\chi_1|\,d\tau \nonumber\\
    &=  \int_{\chi_1(L(w_k),t)}^{\chi_1(L(w_k),0)}\sqrt{1+[\tilde y'(s)]^2}\,ds \label{chi change of var}.
\end{align}
Now using that $\chi(L(w_k),0)=L(w_k)=(\tilde s_k,\tilde y_k)$ and \eqref{chi limit 0}, taking the limit $t\to\infty$ implies
\begin{equation} \label{tilde y int is tilde a}
   \int_0^{\tilde s_k}\sqrt{1+[\tilde y'(s)]^2}\,ds
     = \int^\infty_0|b(\psi(w_k,\tau))|\,d\tau
     = f_s(w_k)
     = \tilde a.
\end{equation}
\textit{Step 10:}
Next, let $u\in B_\eta(w)$ be fixed, and denote
\begin{align}
  s_t  &:= \chi_1(v_u,t)
  \qquad\text{for }\forall t\in(0,t_i'(u)], \label{st def} \\
  y(s) &:= y(\tilde s_k,y_u;s),
\end{align}
i.e.\ $y(s)$  is the curve passing through the point $(\tilde s_k,y_u)=v_u=L(z_i'(u))$ (recall \eqref{claim step 2b2}-\eqref{claim step 2b3}).
We claim for $\forall t\in(0,t_i'(u)]$ that
\begin{equation} \label{2h statement}
  \text{if}\quad
  \chi(v_u,[0,t])\subset W_{\sigma,\te}
  \quad\text{then}\quad
\begin{cases}
  \tilde a-f_i(u) \geq \hat f(s_t,y(s_t))  & \text{for}\ t<t_i'(u), \\
  \tilde a-f_i(u) = \hat f(L(u))        & \text{for}\ t=t_i'(u).
\end{cases} \vspace{.2cm}
\end{equation}
Indeed, if $\chi(v_u,[0,t])\subset W_{\sigma,\te}$ then \eqref{fi on Ei'}, a calculation analogous to \eqref{chi change of var}, and \eqref{hat f shift} show that
\begin{align}
  \tilde a - f_i(u)
    &=    \int^{t_i'(u)}_0\big|b\big(\psi(z_i'(u),\tau)\big)\big|\,d\tau \nonumber\\
    &\geq    \int^t_0\big|b\big(\psi(z_i'(u),\tau)\big)\big|\,d\tau \nonumber\\
    &=  \int_{\chi_1(L(z_i'(u)),t)}^{\chi_1(L(z_i'(u)),0)}\sqrt{1+[y'(s)]^2}\,ds \nonumber\\
    &=  \int_{s_t}^{\tilde s_k}\sqrt{1+[y'(s)]^2}\,ds \label{chi change of var 2}\\
    &=  \hat f(s_t,y(s_t)), \nonumber
\end{align}
where the integration bounds in \eqref{chi change of var 2} followed from \eqref{st def} and the relation $\chi(v_u,0)=v_u=(\tilde s_k,y_u)$. If $t=t_i'(u)$ then we have equality, and thus the second statement in \eqref{2h statement} follows if we can show that $\big(s_{t_i'(u)},y(s_{t_i'(u)})\big)=L(u)$.

To do so, note that by \eqref{st def} and \eqref{chi2=y(chi1)} we have $y(s_t)=y(\chi_1(v_u,t))=\chi_2(v_u,t)$ and thus
\begin{equation} \label{st yst}
   (s_t,y(s_t)) = \chi(v_u,t)
   \qquad\text{for }
   \forall t\in(0,t_i'(u)],
\end{equation}
and therefore by \eqref{chi L prop} in particular
\begin{equation}
  \big(s_{t_i'(u)},y(s_{t_i'(u)})\big)
    = \chi\big(L(z_i'(u)),t_i'(u)\big)
    = L\big(\psi(z_i'(u),t_i'(u))\big)
    = L(u) \label{chi L z}.
\end{equation}
\textit{Step 11:}
Next we claim that
\begin{equation} \label{flowline in WdK}
  \chi\big(v_u,[0,t_i'(u)]\big)\subset W_{\sigma,\te}.
\end{equation}
Suppose that this were false. Since $v_u\in W_{\sigma,\te-1}$ by \eqref{claim step 2b2}, the exit time
\[ \hat t:=\min\!\big\{ t\in[0,t_i'(u)]\,\big|\,\chi(v_u,t)\notin W_{\sigma,\te-1}\big\} >0 \]
would then be well-defined and fulfill
\begin{align*}
  \chi(v_u,[0,\hat t\qh)) &\subset W_{\sigma,\te-1}, \\
  \chi(v_u,\hat t\qh)     &\notin W_{\sigma,\te-1}.
\end{align*}
Since $\tilde s_k>0$, we would then have \eqref{chi1 one sign} at least for $t\in[0,\hat t\qh)$, and since $\chi(v_u,\hat t\qh)$ is not the origin (which would imply that also $0=v_u=L(z_i'(u))$ and thus $z_i'(u)=x$ in contradiction to Remark \ref{on manifold b neq0}), it would have to lie on the top or bottom border of $W_{\sigma,\te-1}$.
As a result, we would have
\begin{equation} \label{curve in Wst}
   \chi(v_u,[0,\hat t\qh])\subset W_{\sigma,\te},
\end{equation}
and so $y(s)$ is defined (and has graph in $W_{\sigma,\te}$) for $s\in[\chi_1(v_u,\hat t\qh),\tilde s_k]=[s_{\hat t},\tilde s_k]$.
Furthermore, since $\chi(v_u,\hat t\qh)=(s_{\hat t},y(s_{\hat t}))$ by \eqref{st yst}, we would have $|y(s_{\hat t})|=(\te-1)|s_{\hat t}|$ and thus
\begin{equation} \label{ysK tysK bound}
  |y(s_{\hat t})-\tilde y(s_{\hat t})| \geq |y(s_{\hat t})|-|\tilde y(s_{\hat t})|\geq(\te-1-\tilde \te)|s_{\hat t}|
\end{equation}
by \eqref{tilde y in wedge}. Since by \eqref{yu ytildek dist} and \eqref{nu def est} we also have
\[ |y(\tilde s_k)-\tilde y(\tilde s_k)| = |y_u-\tilde y_k| \leq \tilde\rho < \tilde s_k, \]
and since $\te-1-\tilde \te\geq1$ by \eqref{K const def}, the continuity of the function $s\mapsto s^{-1}|y(s)-\tilde y(s)|$ on $[s_{\hat t},\tilde s_k]$ would imply that there $\exists\bar s\in[s_{\hat t},\tilde s_k]$ such that
\begin{equation} \label{s bar def}
   |y(\bar s)-\tilde y(\bar s)|=\bar s.
\end{equation}
Now by \eqref{y(x) ode} we have for $\forall s\in[s_{\hat t},\tilde s_k]$
\begin{align*}
  \partial_s\big[s^{q/p}(y(s)-\tilde y(s))\big] \hspace{-2.3cm}& \\*
    &= \tfrac qp s^{q/p\,-1}(y-\tilde y) \\*
    &\quad  + s^{q/p}\bigg[\bigg({-\bigg[\frac rp+\frac{qy}{ps}\bigg]}+h(s,y)\bigg)
          -\bigg({-\bigg[\frac rp+\frac{q\tilde y}{ps}\bigg]+h(s,\tilde y)\bigg)\bigg]} \\
    &= s^{q/p}(h(s,y)-h(s,\tilde y)) \\
    &= s^{q/p}(y-\tilde y)\,\partial_yh(s,y^*)
\end{align*}
for some $y^*(s)$ between $y(s)$ and $\tilde y(s)$, and thus
\[
  s^{q/p}(y(s)-\tilde y(s))
    = s_{\hat t}^{q/p}(y(s_{\hat t})-\tilde y(s_{\hat t}))\exp\!\Big(\int_{s_{\hat t}}^{s}\partial_yh(s',y^*(s'))\,ds'\Big).
\]
Since with $(s',y(s'))$ and $(s',\tilde y(s'))$ also $(s',y^*(s'))$ is in $W_{\sigma,\te}$, we can use the estimate \eqref{hy est} to find
\begin{align*}
  s^{q/p}|y(s)-\tilde y(s)|
    &\leq s_{\hat t}^{q/p}|y(s_{\hat t})-\tilde y(s_{\hat t})|
          e^{\tfrac q{2p}\int_{s_{\hat t}}^{s} {s'}^{-1}\,ds'} \\*
    &= s_{\hat t}^{q/p}|y(s_{\hat t})-\tilde y(s_{\hat t})| \big(\tfrac{s}{s_{\hat t}}\big)^{q/2p} \\[.1cm]
  \hspace{-1cm}\Rightarrow\qquad
  s^{q/2p}|y(s)-\tilde y(s)|
     &\leq s_{\hat t}^{q/2p}|y(s_{\hat t})-\tilde y(s_{\hat t})| \\*
     &\leq (\te-1-\tilde \te)^{-q/2p}|y(s_{\hat t})-\tilde y(s_{\hat t})|^{1+q/2p}
\end{align*}
by \eqref{ysK tysK bound}. Setting $s:=\bar s$ and using \eqref{s bar def} and \eqref{K const def} would now imply
\begin{align}
  \bar s^{1+q/2p}
     &\leq (\te-1-\tilde \te)^{-q/2p}|y(s_{\hat t})-\tilde y(s_{\hat t})|^{1+q/2p} \nonumber \\
   \Rightarrow\qquad
   |y(s_{\hat t})-\tilde y(s_{\hat t})|
      &\geq (\te-1-\tilde \te)^{\frac{q/2p}{1+q/2p}}\bar s
      \geq (4+\dH+2\tilde \te)\bar s. \label{yshatt est}
\end{align}
Since by \eqref{tilde y in wedge}, by the equivalent of \eqref{y' upper bound} for $\tilde y$ and $\tilde\te$ instead of $y$ and $\te$, and by \eqref{d7 def} we have
\[
  \frac1{\bar s}\int_0^{\bar s} \sqrt{1+[\tilde y'(s)]^2}\,ds
    \leq 1+\sup_{0<s\leq\bar s}|\tilde y'(s)| 
    \leq 1+\tfrac2p(|r| + q\tilde \te)+1
     = \dH
\]
and by \eqref{hat f shift} and \eqref{tilde y int is tilde a} thus
\begin{align}
  \hat f(\bar s,\tilde y(\bar s))
    &=\int_{\bar s}^{\tilde s_k} \sqrt{1+[\tilde y'(s)]^2}\,ds \nonumber \\
    &= \int_0^{\tilde s_k} \sqrt{1+[\tilde y'(s)]^2}\,ds
       - \int_0^{\bar s} \sqrt{1+[\tilde y'(s)]^2}\,ds \nonumber \\
    &\geq \tilde a - \dH\bar s, \label{fhat sbar est}
\end{align}
we could finally use \eqref{curve in Wst} and \eqref{2h statement}, twice \eqref{hat f shift}, \eqref{fhat bound 2}, \eqref{fhat sbar est}, \eqref{yshatt est}, twice \eqref{tilde y in wedge} and \eqref{s bar def} to obtain the contradiction
\begin{align*}
  \tilde a
    &> \tilde a-f_i(u) 
     \geq  \hat f(s_{\hat t},y(s_{\hat t}))  \\
    &= \int_{s_{\hat t}}^{\tilde s_k} \sqrt{1+[y'(s)]^2}\,ds \\
    &= \int_{s_{\hat t}}^{\bar s} \sqrt{1+[y'(s)]^2}\,ds + \int_{\bar s}^{\tilde s_k} \sqrt{1+[y'(s)]^2}\,ds \\
    &= \int_{s_{\hat t}}^{\bar s} \sqrt{1+[y'(s)]^2}\,ds + \hat f(\bar s,y(\bar s)) \\
    &\geq \int_{s_{\hat t}}^{\bar s} |y'(s)|\,ds
          + \big[\hat f(\bar s,y(\bar s)) - \hat f(\bar s,\tilde y(\bar s))\big]
          + \hat f(\bar s,\tilde y(\bar s)) \\
    &\geq |y(\bar s)-y(s_{\hat t})| - 3|y(\bar s)-\tilde y(\bar s)| + (\tilde a - \dH\bar s) \\
    &\geq \big[|y(s_{\hat t})-\tilde y(s_{\hat t})|-|\tilde y(s_{\hat t})-\tilde y(\bar s)| - |\tilde y(\bar s)-y(\bar s)|\big] \\
    & \hspace{5.42cm}{}- 3|y(\bar s)-\tilde y(\bar s)| + \tilde a - \dH\bar s \\
    &\geq (4+\dH+2\tilde \te)\bar s - |\tilde y(s_{\hat t})|-|\tilde y(\bar s)| -4 |y(\bar s)-\tilde y(\bar s)| + \tilde a - \dH\bar s \\
    &\geq (4+\dH+2\tilde \te)\bar s - \tilde \te s_{\hat t}-\tilde \te\bar s -4 \bar s + \tilde a - \dH\bar s\\
    &\geq \tilde a,
\end{align*}
concluding the proof of \eqref{flowline in WdK}.\\[.2cm]
\textit{Step 12:}
We can now put everything together: By \eqref{flowline in WdK} the condition in \eqref{2h statement} is fulfilled for $t=t_i'(u)$, and so we have $\tilde a-f_i(u)=\hat f(L(u))$. This relation was shown for $\forall u\in B_\eta(w)$, and differentiating it at $u=w$ shows that
\[ |\nf_i(w)| = |\nhf(L(w))\tilde R^T|=|\nhf(L(w))|. \]
Since $L(w)=\chi\big(L(z_i'(w)),t_i'(w)\big)\in W_{\sigma,\te}$ by \eqref{chi L z} and \eqref{flowline in WdK}, \eqref{fhat bound 1}-\eqref{fhat bound 2} thus give us the upper bound
\[ |\nf_i(w)| \leq \big[5+\tfrac8p(|r|+q\te)\big]+3 \]
which is independent of our choice of $w\in E_i'\cap B_\mu(x)$. This terminates our proof of property (vi).

\paragraph{Proof of property (vii).}
\hspace{-1.5pt}Let $\cF:=\sup\!\big\{|\nf_i(v)|\,\big|\,v\in E_i',\,i\in I\big\}$,
which is finite by what we showed in~part~(vi) and which fulfills $\cF\geq1$ by our calculation for part~(ii.3) and by part (iv).
Let $w\in\bar B_\eps(x)$ and $i\in I$; we must show that $f_i(w)\leq\cF|w-x|$.

If $f_i(w)=0$ then the estimate is trivial.
Otherwise the function $h\in C([0,1],[0,\tilde a])$, defined by $h(\te):=f_i(x+\te(w-x))$, fulfills
\[  h(1)=f_i(w)>0=f_i(x)=h(0)  \]
by property~(i), and thus the values
\begin{align*}
  \te_1&:=\max\!\big\{\te\in[0,1]&&\hspace{-2.9cm}|\,h(\te)=0\big\}, \\*
  \te_2&:=\min\big\{\te\in[\te_1,1]&&\hspace{-2.9cm}|\,h(\te)=f_i(w)\big\}
\end{align*}
fulfill $\te_1<\te_2$. For $\forall\te\in(\te_1,\te_2)$ we then have
\[ 0=h(\te_1)<h(\te)<h(\te_2)=f_i(w)\leq\tilde a, \]
i.e.~$x+\te(w-x)\in f_i^{-1}\big((0,\tilde a)\big)=E_i'$, so $h$ is $C^1$ on $(\te_1,\te_2)$ by what was shown in part (ii.2). Thus by the mean value theorem $\exists\hat\te\in(\te_1,\te_2)$ such that
\begin{align*}
\hspace{2.4cm}
  f_i(w)&=h(\te_2)-h(\te_1)
         =h'(\hat\te)\cdot(\te_2-\te_1) \\
        &\leq\big|\nf_i(x+\hat\te(w-x))\big||w-x|\cdot|\te_2-\te_1| \\
        &\leq\cF|w-x|\cdot1.
\hspace{6.45cm}\openbox
\end{align*}

\begin{appendices}
  
\newpage

\section{Proof of Lemma \ref{superprop} \texorpdfstring{--}{-} Some Technical Details} \label{proof appendix}

In this appendix, let us denote
\begin{subequations}
\begin{align}
E_s&:=\big\{(v_1,\dots,v_n)\in\Rn\,\big|\,v_{n_s+1}=\dots=v_n=0\big\}, \label{Es def} \\
E_u&:= \big\{(v_1,\dots,v_n)\in\Rn\,\big|\,v_1=\dots=v_{n_s}=0\big\}. \label{Eu def}
\end{align}
\end{subequations}

\subsection{Remarks on the Construction of \texorpdfstring{$M_s^{loc}$, $M_u^{loc}$, $p_s$ and $p_u$}{Msloc, Muloc, ps and pu}}
\label{Msloc ps comments sec}

First let us quickly review the proof of the Stable Manifold Theorem found in \cite[Sec.~2.7]{Perko} and \cite[Sec.\,13.4]{CL}. Both sources begin the construction of $M_s^{loc}$ by using the transformation $w=x+R\tilde w$, $\tilde b(\tilde w):=R^{-1}b(x+R\tilde w)$ to reduce it to the case where $x=0$ and $R=I$. Our formulas for general $x$ and $R$ can thus be obtained either by reversing this transformation, or directly by generalizing the construction in \cite{Perko,CL}. Their analogues for $M_u^{loc}$ are then obtained by reversing time and replacing $b$ by $-b$.

In a first step, the method of successive approximations is used \cite[p.\,109-110]{Perko} to construct for every $v$ in some ball $B_\delta(x)\subset D$ a function $\chi_s^v$ with
\begin{equation} \label{exp conv}
  \lim_{t\to\infty}\chi_s^v(t)=x
\end{equation}
that solves \eqref{projection equation} and thus $\dot\chi_s^v=b(\chi_s^v)$, i.e.~$\chi_s^v(t)=\psi(\chi_s^v(0),t)$. One then defines the function $p_s(v):=\chi_s^v(0)$ for $\forall v\in B_\delta(x)$ (implying \eqref{chiv def}), and finally one defines the manifold $M_s^{loc}$ as the image of the function $\phi_s\colon B_\eta^{n_s}(0)\to D$, $\phi_s(u):=p_s(x+R(u,0,\dots,0)^T)$, where $\eta:=\delta/|R|$, and where $B_\eta^{n_s}(0)$ denotes the ball in $\R^{n_s}$ with radius $\eta$ and center $0$. Analogously one can define the functions $\chi_u^v$, $p_u$ and $\phi_u$ and the manifold $M_u^{loc}$.

The functions $p_s$ and $p_u$ are shown to be $C^1$ with derivatives such that
\begin{equation}\label{phi deriv}
  \big(\nabla\phi_s(0),\nabla\phi_u(0)\big)=R
\end{equation}
(see \cite[last line on p.\,331, and Thm.\,4.2]{CL}), and since $\phi_s(0)=p_s(x)=x$ and $\phi_u(0)=p_u(x)=x$, this shows that $M_s^{loc}$ and $M_u^{loc}$ are proper $C^1$-manifolds with
%
\begin{equation} \label{tangent space reps}
  T_xM_s^{loc}=RE_s
  \qquad\text{and}\qquad
  T_xM_u^{loc}=RE_u.
\end{equation}
More details on the remaining properties of the functions $p_s$ and $p_u$ can be found at the end of this section.\\[.2cm]
\eqref{Msloc Muloc intersection}: Next we claim that we can decrease $\eta>0$ so that \eqref{Msloc Muloc intersection} holds. Indeed, otherwise we could find sequences $(u_s^k)_{k\in\N}\subset B_\eta^{n_s}(0)\setminus\{0\}$ and $(u_u^k)_{k\in\N}\subset B_\eta^{n_u}(0)\setminus\{0\}$ converging to zero such that for $\forall k\in\N$  and $u^k:=(u_s^k,-u_u^k)$ we have
\begin{align*}
  0&=\phi_s(u_s^k)-\phi_u(u_u^k)\\
   &=\big(x+\nabla\phi_s(0)u_s^k\big)-\big(x+\nabla\phi_u(0)u_u^k\big)+o(|u_s^k|+|u_u^k|)\\
   &=\big(\nabla\phi_s(0),\nabla\phi_u(0)\big)u^k+o(|u^k|)\\
   &=Ru^k+o(|u^k|),
\end{align*}
and dividing by $|u^k|$ and multiplying by $R^{-1}$ would imply that $u^k/|u^k|\to0$.\\[.2cm]
\eqref{sup skp}: To ensure that also \eqref{sup skp} is fulfilled, note that the vectors $y_s$ and $y_u$ in \eqref{sup skp} are of the form
\[
   y_s(c_s,u_s):=\frac{\nabla\phi_s(u_s)c_s}{|\nabla\phi_s(u_s)c_s|}\,, \qquad
   y_u(c_u,u_u):=\frac{\nabla\phi_u(u_u)c_u}{|\nabla\phi_u(u_u)c_u|}
\]
for some $(c_s,u_s)\in\partial B_1^{n_s}(0)\times B^{n_s}_\eta(0)$ and $(c_u,u_u)\in \partial B_1^{n_u}(0)\times B^{n_u}_\eta(0)$. Since $y_s(c_s,0)\in T_xM_s^{loc}$ and $y_u(c_u,0)\in T_xM_u^{loc}$ and since $T_xM_s^{loc}\cap T_xM_u^{loc}=R(E_s\cap E_u)=\{0\}$ by \eqref{tangent space reps}, we have
$y_s(c_s,0)\neq y_u(c_u,0)$ and thus
\[
  \Skp{y_s(c_s,0)}{y_u(c_u,0)}<1
\qquad\text{for $\forall c_s\in\partial B_1^{n_s}(0)$ and $\forall c_u\in\partial B_1^{n_u}(0)$.}
\]
Thus the continuity of the function $f(c_s,u_s,c_u,u_u):=\Skp{y_s(c_s,u_s)}{y_u(c_u,u_u)}$ and the compactness of $\partial B_1^{n_s}(0)$ and $\partial B_1^{n_u}(0)$ imply that
\[
  \sup\!\big\{f(c_s,0,c_u,0)\,\big|\,c_s\in\partial B_1^{n_s}(0),\,c_u\in\partial B_1^{n_u}(0)\big\}<1,
\]
and so we can decrease $\eta>0$ so much that
\begin{align*}
  \te_0=\sup\!\big\{f(c_s,u_s,c_u,u_u)\,\big|\,&(c_s,u_s)\,\in\partial B_1^{n_s}(0)\qb\times B^{n_s}_\eta(0),\\
&(c_u,u_u)\in\partial B_1^{n_u}(0)\times B^{n_u}_\eta(0)\,\big\}<1,
\end{align*}
which is \eqref{sup skp}.\\[.2cm]
\eqref{Ms Mloc relation}-\eqref{unstable man prop}:
In \cite[Ch.\,13, Thm.\,4.1]{CL} it is shown that $\exists a_0\in\big(0,\frac{\eta}{|R^{-1}|}\big)$ such that the property \eqref{stable man prop} (and analogously \eqref{unstable man prop}) holds. The relation ``$\subset$'' in \eqref{Ms Mloc relation} is now a direct consequence of \eqref{stable man prop}-\eqref{unstable man prop}, while the relation ``$\supset$'' in \eqref{Ms Mloc relation} was already clear from \eqref{exp conv} and its counterpart $\lim_{t\to-\infty}\chi_u^v(t)=x$.\\[.27cm]
%
%
To see that
$p_s(\bar B_{a_0}(x))\subset M_s^{loc}$ (observe that $a_0<\frac{\eta}{|R^{-1}|}=\frac{\delta}{|R||R^{-1}|}\leq\delta$), first note that the construction of $\chi_s^v$ in~\cite{Perko} implies for $\forall v,w\in B_\delta(x)$ that
\begin{equation} \label{ps prop 2}
  \text{if\quad $v-w\in RE_u$\quad then\quad $\chi_s^v=\chi_s^w$\quad and thus\quad $p_s(v)=p_s(w)$.}
\end{equation}
Therefore, if we denote by $P_s$ the orthogonal projection onto $E_s$ and if for $\forall v\in\bar B_{a_0}(x)$ we let $u_v\in\R^{n_s}$ be the vector such that $(u_v,0)=P_sR^{-1}(v-x)$ then $|u_v|=|(u_v,0)|\leq|R^{-1}(v-x)|\leq|R^{-1}|a_0<\eta$, i.e.\ $u_v\in B^{n_s}_\eta(0)$, and since $v-(x+R(u_v,0))=R(I-P_s)R^{-1}(v-x)\in RE_u$, \eqref{ps prop 2} implies that $p_s(v)=p_s(x+R(u_v,0))=\phi_s(u_v)\in M_s^{loc}$. Similarly, one can show that $p_u(\bar B_{a_0}(x))\subset M_u^{loc}$.\\[.2cm]
\eqref{as prop 0}-\eqref{as prop 1}: From \eqref{projection equation} and \eqref{tangent space reps} one can see that for $\forall v\in B_\delta(x)$ we have $p_s(v)-v\in RE_u=T_xM_u^{loc}$, i.e.~\eqref{as prop 0}.
Therefore, if $v\in$ $M_s^{loc}\cap B_\delta(x)$ and thus $v=p_s(w)$ for some $w\in B_\delta(x)$, then $v-w=p_s(w)-w$ $\in RE_u$, and thus by \eqref{ps prop 2} we have $p_s(v)=p_s(w)=v$, which is \eqref{as prop 1}.\\[.2cm]
\eqref{compact Msloc parts}: Note that $M_s^{loc}\cap\bar B_{a_0}(x)=p_s(\bar B_{a_0}(x))\cap\bar B_{a_0}(x)$ (indeed, ``$\supset$'' is clear since $p_s$ maps into $M_s^{loc}$, ``$\subset$'' follows from \eqref{as prop 1}). The continuity of $p_s$ thus implies that $M_s^{loc}\cap\bar B_{a_0}(x)$ is compact, and an analogous representation shows that also $M_u^{loc}\cap\bar B_{a_0}(x)$ is compact.

\subsection{Proof of Lemma \ref{distance function saddle}} \label{fs fu appendix b}

\begin{proof}
We will only show these properties for $f_s$. Since $M_s^{loc}$ is an $n_s$-dimensional $C^1$-manifold, it can locally be described by a diffeomorphism $\zeta_s\colon U\to\zeta_s(U)=B_\mu(0)$, for some neighborhood $U\subset\bar B_{a_0}(x)$ of $x$ and some $\mu>0$, that fulfills $\zeta_s(x)=0$ and
\begin{subequations}
\begin{align}
  M_s^{loc}\cap U &= \zeta_s^{-1}(E_s), \label{zeta Es inv}\\
  \hspace{-.85cm}\text{i.e.}\quad
  \zeta_s(M_s^{loc}\cap U) &= E_s\cap\zeta_s(U),  \label{zeta Es}
\end{align}
\end{subequations}
where $E_s$ is given by \eqref{Es def}.

Indeed, in the notation of Appendix \ref{Msloc ps comments sec}, we can define $\zeta_s$ via its inverse
\begin{equation} \label{zeta inv def}
  \zeta_s^{-1	}(u_1,\dots,u_n):=
    \phi_s(u_1,\dots,u_{n_s})+R(0,\dots,0,u_{n_s+1},\dots,u_n)^T
\end{equation}
for $\forall u\in B_\mu(0)$, which is a diffeomorphism for sufficiently small $\mu\in(0,\eta]$ since $\nabla\q\zeta_s^{-1}(0)=R$ by \eqref{phi deriv}, and where we also choose $\mu$ so small that for $\forall u\in B_\mu(0)$ we have $\zeta_s^{-1}(u),\phi_s(u_1,\dots,u_{n_s})\in\bar B_{a_0}(x)$. The relation ``$\supset$'' in \eqref{zeta Es inv} is clear. To show the reverse relation ``$\subset$'', let $w\in M_s^{loc}\cap U$, and let $u\in B_\mu(0)$ be such that $w=\zeta_s^{-1}(u)$. Then $w-\phi_s(u_1,\dots,u_{n_s})\in RE_u$ by \eqref{zeta inv def}, and so \eqref{as prop 1}, \eqref{ps prop 2} and again \eqref{as prop 1} imply that
\[ \zeta_s^{-1}(u)=w=p_s(w)=p_s(\phi_s(u_1,\dots,u_{n_s}))=\phi_s(u_1,\dots,u_{n_s}). \]
By \eqref{zeta inv def} this shows that $u\in E_s$, i.e.\ $w\in\zeta_s^{-1}(E_s)$, terminating the proof of \eqref{zeta Es inv}.\\[.2cm]
Now consider the vector field $\tilde b\in C^1(U,\Rn)$ defined by
\[
  \tilde b(w) := b(w) - 2R\big(\begin{smallmatrix}0&0\\0&Q\end{smallmatrix}\big)R^{-1}\nabla\q\zeta_s(x)^{-1}\zeta_s(w),
 \qquad w\in U. 
\]
In this new vector field, $x$ is an attractor since by \eqref{A def}
\[
  \nabla\tilde b(x)= \nabla b(x)-2R\big(\begin{smallmatrix}0&0\\0&Q\end{smallmatrix}\big)R^{-1}\nabla\q\zeta_s(x)^{-1}\nabla\q\zeta_s(x)
  = R\big(\begin{smallmatrix}P&0\\0&-Q\end{smallmatrix}\big)R^{-1}
\]
has only eigenvalues with negative real parts. Also, we have $\tilde b(w)=b(w)$ for $\forall w\in M_s^{loc}\cap U$. Indeed, for $\forall w\in M_s^{loc}\cap U$ we have by \eqref{zeta Es} and \eqref{tangent space reps}
\begin{align*}
  \zeta_s(w)\in \zeta_s(M_s^{loc}\cap U)
     \subset E_s
     &
     =T_0(E_s\cap\zeta_s(U))
      =T_0\zeta_s(M_s^{loc}\cap U) \\
     &=\nabla\q\zeta_s(x)\,T_x(M_s^{loc}\cap U)
     =\nabla\q\zeta_s(x)\,RE_s,
\end{align*}
i.e.~$R^{-1}\nabla\q\zeta_s(x)^{-1}\zeta_s(w)\in\! E_s$, which implies that $\!\big(\begin{smallmatrix}0&0\\0&Q\end{smallmatrix}\big)\!R^{-1}\nabla\q\zeta_s(x)^{-1}\zeta_s(w)=0$.\linebreak
\indent Since $x$ is an attractor of $\tilde b$, there $\exists\nu>0$ such that $B_\nu(x)$ is contained in its basin of attraction, which in particular implies that $B_\nu(x)\subset U$ and that the flow $\tilde\psi(w,t)$ corresponding to $\tilde b$ is defined and in $U$ for $\forall w\in B_\nu(x)$ and $\forall t\in[0,\infty)$. Thus we can define a function $\tilde f_s\colon B_\nu(x)\to[0,\infty)$ based on this flow $\tilde\psi$ as in Definition~\ref{fs fu def}, which has all the properties of Lemma~\ref{distance function}. In particular, $\tilde f_s$ is continuous on $B_\nu(x)$ and $C^1$ on $B_\nu(x)\setminus\{x\}$.

Furthermore, by \cite[Corollary on p.\,115]{Perko} we can reduce $\nu>0$ so much that for $\forall w\in M_s^{loc}\cap B_\nu(x)$ we have $\psi(w,[0,\infty))\subset U\subset\bar B_{a_0}(x)$, and thus in fact $\psi(w,[0,\infty))\subset M_s^{loc}\cap U$ because of \eqref{stable man prop}.
Therefore, since $b=\tilde b$ on $M_s^{loc}\cap U$, any flowline $\psi(w,[0,\infty))$ starting from a point $w\in M_s^{loc}\cap B_\nu(x)$ coincides with the flowline $\tilde\psi(w,[0,\infty))$, which implies that $f_s(w)=\tilde f_s(w)$ for $\forall w\in M_s^{loc}\cap B_\nu(x)$.

In particular, $f_s$ is finite-valued on $M_s^{loc}\cap B_\nu(x)$, and if we decrease $a_0$ so much that $a_0\in(0,\nu)$ then (iii) and (iv) hold, where for $\cI$ we choose the constant $\cD\geq1$ given by Lemma \ref{distance function}~(iv) corresponding to the function $\tilde f_s$ and the compact set $K:=\bar B_{a_0}(x)$.
Furthermore, given any $\forall w\in M_s$, by \eqref{Ms def} and \eqref{Ms Mloc relation} there is a $T>0$ such that $\psi(w,T)\in M_s^{loc}\cap B_\nu(x)$ and thus
\[
  f_s(w) = \int_0^T|b(\psi(w,t))|\,dt+f_s(\psi(w,T))<\infty,
\]
so $f_s$ is finite-valued on all of $M_s$. The statements in (i) now follow from
\begin{align*}
  \partial_t f_s(\psi(w,t))
    &=\lim_{h\to0}\tfrac1h\big[f_s(\psi(w,t+h))-f_s(\psi(w,t))\big] \\
    &=\lim_{h\to0}\frac1h\bigg[\int_0^\infty\!\big|b\big(\psi(w,\tau+t+h)\big)\big|\,d\tau -\! \int_0^\infty\!\big|b\big(\psi(w,\tau+t)\big)\big|\,d\tau\bigg] \\
    &=-\lim_{h\to0}\frac1h\int^{t+h}_t|b(\psi(w,\tau))|\,d\tau =-|b(\psi(w,t))|.
\end{align*}
The proof of (ii) is identical to the one of Lemma \ref{distance function}~(iii), see \eqref{proof lower bound}.
\end{proof}

\subsection{Proof of Lemma \ref{Msa compact lemma}} \label{Ms compact appendix}

\begin{proof}
First we will show that
\begin{equation} \label{fs subset Msloc}
  f_s^{-1}\big([0,a_0]\big)\subset M_s^{loc}\cap\bar B_{a_0}(x),
\end{equation}
which in particular says that $f_s^{-1}\big([0,a_0]\big)$ is a subset of $M_s^{loc}$.
By \eqref{fs est 1 saddle lower} we have $f_s^{-1}\big([0,a_0]\big)\subset\bar B_{a_0}(x)$. Thus, if \eqref{fs subset Msloc} were wrong then there would be a $w\in f_s^{-1}\big([0,a_0]\big)\setminus M_s^{loc}\subset\bar B_{a_0}(x)\setminus M_s^{loc}$, and by \eqref{stable man prop} we could find a $t>0$ such that $\psi(w,t)\notin\bar B_{a_0}(x)$.
But then by \eqref{fs est 1 saddle lower} and Lemma~\ref{distance function saddle}~(i) we would have $a_0<|\psi(w,t)-x|\leq f_s(\psi(w,t))\leq f_s(\psi(w,0))=f_s(w)$, contradicting $w\in f_s^{-1}\big([0,a_0]\big)$, and \eqref{fs subset Msloc} is proven.

Now let $\tilde f_s\in C(\bar B_{a_0}(x),[0,\infty))$ be the function given by Lemma \ref{distance function saddle}~(iii) that fulfills $f_s=\tilde f_s$ on $M_s^{loc}\cap\bar B_{a_0}(x)$. Then by \eqref{fs subset Msloc} we have
\begin{align*}
 f_s^{-1}\big([0,a_0]\big)&=       f_s^{-1}\big([0,a_0]\big)\cap \big(M_s^{loc}\cap\bar B_{a_0}(x)\big) \\*
                  &=\tilde f_s^{-1}\big([0,a_0]\big)\cap \big(M_s^{loc}\cap\bar B_{a_0}(x)\big).
\end{align*}
Since $\tilde f_s^{-1}\big([0,a_0]\big)$ and by \eqref{compact Msloc parts} also $M_s^{loc}\cap\bar B_{a_0}(x)$ are compact, this shows that $f_s^{-1}\big([0,a_0]\big)$ is compact. The statements for $f_u^{-1}\big([0,a_0]\big)$, $\Msa=f_s^{-1}(\{a\})$ and $\Mua=f_u^{-1}(\{a\})$ follow from similar arguments.\\[.2cm]
Next let us show the first relation in \eqref{Msa flow = Ms}. The inclusion ``$\subset$'' is clear since $\Msa\subset M_s\setminus\{x\}$. To show the inclusion ``$\supset$'', let $a\in(0,a_0]$ and $w\in M_s\setminus\{x\}$. By \eqref{Ms def} and \eqref{Ms Mloc relation} there $\exists t\geq0$ so large that $\psi(w,t)\in M_s^{loc}\cap\bar B_{a/\cI}(x)$, which by \eqref{fs est 1 saddle upper} implies that
\begin{equation} \label{fs a est 1}
 f_s(\psi(w,t))\leq\cI|w-x|\leq a
\end{equation}
since $\frac{a}{\cI}\leq a\leq a_0$. Since by \eqref{Msloc Muloc intersection} we have $w_0:=\psi(w,t)\in M_s^{loc}\cap\bar B_{a/\cI}(x)\setminus\{x\}\subset \bar B_{a_0}(x)\setminus M_u^{loc}$, by \eqref{unstable man prop} there $\exists t'<0$ such that $\psi(w_0,t')\notin\bar B_{a_0}(x)$ and by \eqref{fs est 1 saddle lower} thus
\begin{equation} \label{fs a est 2}
  f_s(\psi(w,t+t'))=f_s(\psi(w_0,t'))\geq|\psi(w_0,t')-x|>a_0\geq a.
\end{equation}
Now by \eqref{fs a est 1}, \eqref{fs a est 2} and the continuity of $f_s(\psi(w,\cdot\,))$ shown in Lemma~\ref{distance function saddle}~(i), there $\exists t''\in[t+t',t]$ such that $f_s(\psi(w,t''))=a$, i.e.~$v:=\psi(w,t'')\in\Msa$, which implies $w=\psi(v,-t'')\in\psi(\Msa,\R)$. This proves that $M_s\setminus\{x\}\subset\psi(\Msa,\R)$.\\[.2cm]
Finally, observe that in the two-dimensional case $M_s\setminus\{x\}$ consists of only two distinct flowlines, each of which contain by Lemma \ref{distance function saddle}~(i) at most and by \eqref{Msa flow = Ms} at least one point in $M_s^a$. Thus $M_s\setminus\{x\}$ contains exactly two points in $M_s^a$, and since $M_s^a\subset M_s\setminus\{x\}$ by \eqref{Msa flow = Ms}, this shows that $M_s^a$ consists of exactly two points. Analogous arguments show this statement also for~$M_u^a$.
\end{proof}

\subsection{Proof of Lemma \ref{triangle inequality lemma}}
\label{triangle inequality sec}

\begin{proof}
Let $d:=-1+\sqrt{2+2\te}\in(0,1)$, which fulfills $d^2+2d-(1+2\te)=0$. Let $v,w\in\Rn$ fulfill $\skp{v}{w}\leq\te|v||w|$, and w.l.o.g.~let us assume that $|w|\leq|v|$. Now if $v=0$ then $w=0$, and the estimate is trivial. Otherwise
\begin{align*}
             && \frac{|w|}{|v|} & \leq 1=\frac{2(d-\te)}{1-d^2} \\
 &\Rightarrow & 2\te|v|+|w|  &\leq 2d|v|+d^2|w| \\
 &\Rightarrow & |v+w|^2 &=|v|^2+2\skp{v}{w}+|w|^2 \leq |v|^2+2\te|v||w|+|w|^2 \\*
              &&& \leq|v|^2+2d|v||w|+d^2|w|^2=\big(|v|+d|w|\big)^2.
\qedhere
\end{align*}
\end{proof}

\subsection{Proof of Lemma \ref{Hartman-Grobman Lemma}}
\label{Hartman-Grobman sec}

\begin{proof}
We will only show part (i); part (ii) can be proven analogously. According to the Hartman-Grobman-Theorem \cite[p.119]{Perko} there exists an open set $U\subset D$ containing $x$, and a homeomorphism $F\colon U\to F(U)\subset\Rn$ such that $F(x)=0$, and that for $\forall w\in U$ and every interval $J\subset\R$ with $0\in J$ and $\psi(w,J)\subset U$ we have $\forall t\in J\colon\ F(\psi(w,t))=e^{tA'}F(w)$, where $A':=\big(\begin{smallmatrix}P&0\\0&Q\end{smallmatrix}\big)$. In addition, we may assume that
\begin{equation}
  F^{-1}(E_u)\subset M_u^{loc},  \label{F inv Eu} \\
\end{equation}
where $E_u$ is given by \eqref{Eu def}.

Indeed, by picking $\delta>0$ sufficiently small we can make sure that for $\forall w\in B_\delta(x)\cap F^{-1}(E_u)$ and $\forall t\leq0$, $|e^{tA'}F(w)|\leq\big(\sup_{\tau\leq0}|e^{\tau Q}|\big)|F(w)|$ is so small that $F^{-1}(e^{tA'}F(w))\in U\cap\bar B_{a_0}(x)$ and thus $\psi(w,t)=F^{-1}(e^{tA'}F(w))\in \bar B_{a_0}(x)$, which by \eqref{unstable man prop} implies that $w\in M_u^{loc}$. Therefore we have $B_\delta(x)\cap F^{-1}(E_u)\subset M_u^{loc}$, and so \eqref{F inv Eu} holds if we replace $F$ by $F|_{B_\delta(x)\cap U}$\hspace{1pt}.\\[.2cm]
\noindent Now let us decrease $a_1>0$ so much that $\bar B_{a_1}(x)\subset U$, let $\eta>0$, and define
\begin{equation} \label{K1 K2 def}
   K_1:=\bar B_{a_1}(x)\cap M_u^{loc}
    \qquad\text{and}\qquad
   K_2:=\bar B_{a_1}(x)\setminus N_\eta(K_1).
\end{equation}
Since $K_2$ is a compact subset of $U$, $F(K_2)$ is compact as well, and since by \eqref{F inv Eu} and \eqref{K1 K2 def} we have
\begin{align*}
  F(K_2)\cap E_u
   &= F(K_2\cap F^{-1}(E_u)) \\
   &\subset F\big((\bar B_{a_1}(x)\setminus K_1)\cap M_u^{loc}\big) \\
   &= F\big((\bar B_{a_1}(x)\setminus M_u^{loc})\cap M_u^{loc}\big)
    = \varnothing,
\end{align*}
there $\exists\nu>0$ such that
\begin{equation} \label{FK2 disjoint}
  F(K_2)\cap\bar N_\nu(E_u)=\varnothing.
\end{equation}
Finally, let $c:=\sup_{t\geq0}|e^{tP}|\in[1,\infty)$, and choose $\mu\in(0,a_1)$ so small that $\forall w\in\bar B_\mu(x)\colon\ |F(w)|<\frac{\nu}{c}$.

Now let $w\in\bar B_\mu(x)\setminus M_s^{loc}$. Since $\mu<a_1<a_0$, by \eqref{stable man prop} the flowline starting at $w$ will eventually leave $B_{a_1}(x)$ as $t\to\infty$. Denote the exit time by $T_1(w)>0$ and let $t\in[0,T_1(w)]$.
Then since $\psi(w,[0,t])\subset\bar B_{a_1}(x)\subset U$, we have $F(\psi(w,t))=e^{tA'}F(w)=u(t)+v(t)$, where
$u(t):=\bigl(\begin{smallmatrix}0&0\\0&e^{tQ}\end{smallmatrix}\bigr)F(w)\in E_u$ and $v(t):=\bigl(\begin{smallmatrix}e^{tP}&0\\0&0\end{smallmatrix}\bigr)F(w)$. Since $|v(t)|\leq|e^{tP}||F(w)|\leq c\, \cdot\frac{\nu}{c}=\nu$, this representation shows that
$F(\psi(w,t))\in\bar N_\nu(E_u)\subset\Rn\setminus F(K_2)$ by \eqref{FK2 disjoint}, and thus
$\psi(w,t)\in\bar B_{a_1}(x)\setminus K_2=\bar B_{a_1}(x)\cap N_\eta(K_1)$ by \eqref{K1 K2 def}. Since $t\in[0,T_1(w)]$ was arbitrary, we can conclude that $\psi\big(w,[0,T_1(w)]\big)\subset\bar B_{a_1}(x)\cap N_\eta(K_1)$, which is \eqref{HG inclusion 1}.
\end{proof}

\subsection{Proof of Lemma \ref{compact hitting partition}}
\label{compact hitting partition sec}

\begin{proof}
Let $a\in (0,a_0]$. By \eqref{Msa flow = Ms} and \eqref{main condition} we have $M_s^a\subset M_s\setminus\{x\}\subset\bigcup_{i\in I}\psi(M_i,\R)$, and in fact we have 
\begin{equation} \label{Msa plus union}
  M_s^a\subset\bigcup_{i\in I^+}\psi(M_i,\R).
\end{equation}
Indeed, if $w\in M_s^a$ and thus $w\in\psi(M_i,\R)$ for some $i\in I$ then by \eqref{prime lemma eq 1} we have $f_{M_i}(\psi(w,t))>0$ for $\forall t>-t_i(w)$,
and by \eqref{Ms def} and \eqref{fMi neq0} taking the limit $t\to\infty$ implies that $f_{M_i}(x)>0$, i.e.~$i\in I^+$.

In the two-dimensional case ($n=2$) this immediately shows that the sets $K_i^a$ defined in \eqref{2D Kia}, which by the last statement of Lemma~\ref{Msa compact lemma} contain at most two points and are thus compact, fulfill the first relation in \eqref{Ki +- union}. For $n\geq3$ we construct the sets $K_i^a$ for $i\in I^+$ as follows: Since the sets $\psi(M_i,\R)$ are open by Lemma~\ref{man2func lemma}, by \eqref{Msa plus union} we have that for $\forall w\in M_s^a$ $\exists i_w\in I^+$ $\exists r_w>0\colon\bar B_{r_w}(w)\subset \psi(M_{i_w},\R)$. Since $\{B_{r_w}(w)\,|\,w\in M_s^a\}$ is an open covering of the compact set $M_s^a$, there is a finite subcovering, i.e.~there is a finite set $F\subset M_s^a$ such that $\bigcup_{w\in F}B_{r_w}(w)\supset M_s^a$. Now defining the compact sets $K_i^a:=M_s^a\cap\big(\bigcup_{w\in F,\,i_w=i}\bar B_{r_w}(w)\big)$ for $\forall i\in I^+$, we have
\begin{equation} 
  \bigcup_{i\in I^+} K_i^a=M_s^a\cap\bigcup_{w\in F}\bar B_{r_w}(w)=M_s^a,
\end{equation}
which is the first relation in \eqref{Ki +- union}.
Analogously we can construct the sets $K_i^a$ for $\forall i\in I^-$ and show they fulfill the second relation in \eqref{Ki +- union}.

Since
\begin{equation} \label{Kia in Mi-flow}
   \forall i\in I\colon\ K_i^a\subset\psi(M_i,\R)
\end{equation}
(for $n=2$ this follows from \eqref{2D Kia}, for $n\geq3$ from the definition of the balls $\bar B_{r_w}(w)$) and since $\psi(M_i,\R)$ is open and $K_i^a$ compact, there $\exists\eta_a>0$ such that $\forall i\in I\colon\bar N_{\eta_a}(K_i^a)\subset\psi(M_i,\R)$. Since the sets $\bar N_{\eta_a}(K_i^a)$ are compact, $|t_i|$ is bounded on $\bar N_{\eta_a}(K_i^a)$ for $\forall i\in I$, say by some $T_a>0$, which implies \eqref{Ki nbhd}.
\end{proof}

\subsection{Proof of Lemma \ref{Msc cap Msc-hat lemma}}
\label{Msc cap Msc-hat sec}
\begin{proof}
We will only show how to construct a $\rho_0>0$ that fulfills the first statement in \eqref{Msc cap Msc-hat}. To begin, observe that $\Msan$ and $\psi\big(M_s^{a_0},[-T_{a_0},0]\big)$ are compact by \eqref{Ms Mu in Mloc} and disjoint: Indeed, every $w\in\psi\big(M_s^{a_0},[-T_{a_0},0]\big)$ can be written as $w=\psi(v,t)$ for some $v\in M_s^{a_0}$ and some $t\in[-T_{a_0},0]$, and so by Lemma \ref{distance function saddle}~(i) and \eqref{a relations} we have
\[ f_s(w)=f_s(\psi(v,t))\geq f_s(\psi(v,0))=f_s(v)=a_0>\tilde a \quad\Rightarrow\quad w\notin\Msan. \]
Since also $\Msan\subset\bar B_{\tilde a}(x)\subset B_{a_0}(x)$ by \eqref{Ms Mu in ball}, we can thus choose $\rho_0>0$ so small that
\begin{gather}
  \bar N_{\rho_0}(\Msan)\cap\psi\big(M_s^{a_0},[-T_{a_0},0]\big)=\varnothing,
  \label{Ds disjoint} \\
  N_{\rho_0}(\Msan)\subset\bar B_{a_0}(x). \label{Ds Du region}
\end{gather}\\[-.5cm]
Now define $\hMtas$ by \eqref{Msa Msu def}. This set is compact since both $\Msan$ (by \eqref{Ms Mu in Mloc}) and the domain $\bar B_{a_0}(x)$ of the continuous function $p_s$ are compact. We must show the first statement in \eqref{Msc cap Msc-hat}.

The relation $\Msan\subset\hMtas\cap M_s$ is easy: By \eqref{Ms Mu in Mloc} and \eqref{Ms Mu in ball} we have $\Msan\subset M_s^{loc}\cap\bar B_{a_0}$, and thus $\forall w\in \Msan\colon\ w=p_s(w)$ by \eqref{as prop 1}. This means that $\Msan\subset p^{-1}(\Msan)$, and thus $\Msan\subset\hMtas$ by \eqref{Msa Msu def}. The relation $\Msan\subset M_s$ is clear from \eqref{Msa flow = Ms}.

To show the reverse relation, i.e.~$\hMtas\cap M_s\subset\Msan$, let $w\in\hMtas\cap M_s$.
By \eqref{Msa Msu def} we have $w\in p_s^{-1}(\Msan)$, i.e.
\begin{equation} \label{fs ps = tilde a}
  f_s(p_s(w))=\tilde a.
\end{equation}
Suppose we had $f_s(w)>a_0$. Since $f_s(\psi(w,t))=\int_t^\infty|b(\psi(w,\tau))|\,d\tau\to0$ 
as\linebreak $t\to\infty$, there would then be a $t>0$ such that $f_s(\psi(w,t))=a_0$, i.e.~$v:=\psi(w,t)\in M_s^{a_0}$. Since $w\in\bar N_{\rho_0}(\Msan)$ by \eqref{Msa Msu def}, \eqref{Ds disjoint} then implies that $w\notin\psi\big(M_s^{a_0},[-T_{a_0},0]\big)$, and so the representation $w=\psi(v,-t)$ shows that $-t\notin[-T_{a_0},0]$ and thus $t>T_{a_0}$. Now since $v\in M_s^{a_0}$, by \eqref{Ki +- union} and \eqref{Ki nbhd} there $\exists i\in I^+$ such that $v\in K_i^{a_0}\subset\psi(M_i,[-T_{a_0},T_{a_0}])$. Therefore we can write $w=\psi(v,-t)=\psi\big(z_i(v),t_i(v)-t\big)$, which implies that $t_i(w)=t_i(v)-t$ $<T_{a_0}-T_{a_0}=0$ and thus $f_{M_i}(w)<0$ by \eqref{prime lemma eq 2}. Since $w\in\bar N_{\rho_0}(\Msan)\subset\bar B_{a_0}(x)$ by \eqref{Msa Msu def} and \eqref{Ds Du region}, \eqref{fMi>0} thus implies that $i\notin I^+$, a contradiction.

Therefore we must have $f_s(w)\leq a_0$ and thus $w\in M_s^{loc}\cap\bar B_{a_0}(x)$ by \eqref{Ms Mu in Mloc} and \eqref{fs est 1 saddle lower}. We can now use \eqref{as prop 1} to rewrite \eqref{fs ps = tilde a} as $f_s(w)=\tilde a$, i.e.~$w\in\Msan$.
\end{proof}

\subsection{Proof of Lemma \ref{zs ts lemma}}
\label{zs ts lemma sec}

\begin{proof}
We will only construct the functions $z_s$ and $t_s$ and the set $D_s$; the functions $z_u$ and $t_u$ and the set $D_u$ are defined analogously. We begin by defining
\begin{equation} \label{t tilde def}
  \tilde t(w):=\inf\!\big\{ t\in\R\,\big|\,\psi(w,t)\in\hMtas\big\} \qquad \text{for }\forall w\in D,
\end{equation}
which we interpret as $+\infty$ if $\psi(w,t)\notin\hMtas$ for $\forall t\in\R$. We claim that for $\forall v\in M_s\setminus\{x\}\ \exists\delta_v>0$ such that
\begin{align*}
 (i)   &\hspace{0.35cm}\text{the infimum in \eqref{t tilde def} is achieved for $\forall w\in B_{\delta_v}(v)$,}\\
 (ii)  &\hspace{0.35cm}\text{$\tilde t$ is $C^1$ on $B_{\delta_v}(v)$,} \\
 (iii) &\hspace{0.35cm}\forall w\in B_{\delta_v}(v)\cap\hMtas\colon\ \tilde t(w)=0.
\end{align*}
 Once this is established we can define the $C^1$-functions
\begin{align*}
   t_s(w)&:=-\tilde t(w), \\
   z_s(w)&:=\psi(w,\tilde t(w)) \\[-1.07cm]
   &\hspace{3.7cm}\text{for $\forall w\in D_s:=\bigcup_{v\in M_s\setminus\{x\}}B_{\delta_v}(v)$.}
\end{align*}
This definition then immediately implies \eqref{ts zs property 1}, and by property (i) we have $z_s(w)\in\hMtas$ for $\forall w\in D_s$. Property (iii) implies that for $\forall w\in D_s\cap\hMtas$ we have $\tilde t(w)=0$ and thus $z_s(w)=\psi(w,0)=w$, which is \eqref{zsx=x}. Finally, the relation
\begin{equation} \label{tilde t shift}
  \tilde t(\psi(w,\sigma))=\tilde t(w)-\sigma \qquad \text{for $\forall \sigma\in\R$}
\end{equation}
implies that
\begin{align*}
  z_s(\psi(w,\sigma))&=\psi\big(\psi(w,\sigma),\tilde t(\psi(w,\sigma))\big) \\*
                &=\psi\big(\psi(w,\sigma),\tilde t(w)-\sigma\big)
                 =\psi(w,\tilde t(w))
                 =z_s(w)
\end{align*}
wherever both sides are defined, which is \eqref{zs cst on flowlines}.\\[.2cm]
To prove the claims (i)-(iii) stated above, let $v\in M_s\setminus\{x\}$.\\[.2cm]
\textit{Case 1: $v\in\Msan$.}
Then since $\Msan\subset\bar B_{\tilde a}(x)\subset B_{a_1}(x)$ by \eqref{Ms Mu in ball} and \eqref{a relations}, there $\exists\mu,\nu>0$ such that
\begin{equation} \label{IFT setup 1}
  \forall (w,\tau)\in B_\mu(v)\times(-\nu,\nu)\colon\quad \psi(w,\tau)\in\bar N_{\rho_0}(\Msan)\cap\bar B_{a_1}(x)
\end{equation}
and thus in particular $p_s(\psi(w,\tau))\!\in\!\bar B_{a_0}(x)\cap M_s^{loc}$ by \eqref{ps to Ba0} and the definition of $p_s$.
Therefore by Lemma \ref{distance function saddle}~(iii)
the function $F(w,\tau):=f_s\big(p_s(\psi(w,\tau))\big)$ is well-defined and continuous on $B_\mu(v)\times(-\nu,\nu)$.
Observe that on this set we have
\begin{equation} \label{F equiv}
  F(w,\tau)=\tilde a
   \quad\Leftrightarrow\quad
  \psi(w,\tau)\in p_s^{-1}(\Msan)
   \quad\Leftrightarrow\quad
  \psi(w,\tau)\in\hMtas,
\end{equation}
where the last step follows from \eqref{Msa Msu def} and \eqref{IFT setup 1}.

Since $f_s(\psi(v,\cdot\,))$ is continuous by Lemma \ref{distance function saddle}~(i) and since $f_s(v)=\tilde a$, by decreasing $\nu>0$ we can also make sure that for $\forall\tau\in(-\nu,\nu)$ we have $\psi(v,\tau)\in f_s^{-1}\big([0,a_0]\big)\subset M_s^{loc}\cap \bar B_{a_0}(x)$ by \eqref{Ms Mu in Mloc} and \eqref{fs est 1 saddle lower}, and thus $F(v,\tau)=f_s(\psi(v,\tau))$ by \eqref{as prop 1}. Therefore by Lemma \ref{distance function saddle}~(i) we have
\begin{align}
                F(v,0) &= f_s(v)=\tilde a, \label{IFT setup 2}\\*
  \partial_\tau F(v,0) &= -|b(v)| < 0.  \label{IFT setup 3}
\end{align}
Because of \eqref{IFT setup 2} we can further decrease $\mu$ and $\nu$ so much that for $\forall(w,\tau)\in B_\mu(v)\times(-\nu,\nu)$ we have $f_s\big(p_s(\psi(w,\tau))\big)=F(w,\tau)\in(0,a_0)$ and thus $p_s(\psi(w,\tau))\in B_{a_0}(x)\setminus\{x\}$ by \eqref{fs est 1 saddle lower}, so that $F$ is $C^1$ on $B_\mu(v)\times(-\nu,\nu)$ by Lemma \ref{distance function saddle}~(iii).

Finally, by \eqref{IFT setup 3} we can further decrease $\mu$ and $\nu$ so much that for $\forall (w,\tau)\in B_\mu(v)\times(-\nu,\nu)$ we have $\partial_\tau F(w,\tau)<0$, so that
\begin{align}
  &\text{for $\forall w\in B_\mu(v)$ there is at most one value $\tau\in(-\nu,\nu)$} \nonumber\\*[-.35cm]
  & \label{at most one solution} \\*[-.35cm]
  &\text{such that $F(w,\tau)=\tilde a$.} \nonumber
\end{align}
We can now invoke the Implicit Function Theorem, and so there exists
a $\delta_v\in(0,\mu]$ and a function $\tau_v\in C^1\big(B_{\delta_v}(v),(-\nu,\nu)\big)$ such that for $\forall w\in B_{\delta_v}(v)$ we have $F(w,\tau_v(w))=\tilde a$, which by \eqref{at most one solution} and \eqref{F equiv} means that
\begin{align}
  &\text{for $\forall w\in B_{\delta_v}(v)$, $\tau_v(w)$ is the unique value in $(-\nu,\nu)$} \nonumber\\*[-.35cm]
  & \label{tau property} \\*[-.35cm]
  &\text{such that $\psi(w,\tau_v(w))\in\hMtas$.} \nonumber
\end{align}
Now since $v\in\Msan\subset\bigcup_{i\in I^+}\psi(M_i,\R)$ by \eqref{Msa plus union}, there $\exists i\in I^+$ such that $v\in\psi(M_i,\R)$, and \eqref{prime lemma eq 1} implies that for $t':=\min\{-t_i(v)-1,-\nu\}$ we have
\begin{equation} \label{delta v prep 2} 
  f_{M_i}(\psi(v,t'))<0.
\end{equation}
By Lemma \ref{distance function saddle}~(i) we have $f_s(\psi(v,t))>f_s(\psi(v,0))=f_s(v)=\tilde a$ for $\forall t\in[t',-\nu]$, so that $\psi\big(v,[t',-\nu]\big)\cap\Msan=\varnothing$, and since also $\psi\big(v,[t',-\nu]\big)\subset M_s$, \eqref{Msc cap Msc-hat} thus tells us that
\begin{equation} \label{delta v prep 3}
  \psi\big(v,[t',-\nu]\big)\cap\hMtas=\varnothing.
\end{equation}
Now considering \eqref{delta v prep 2} and \eqref{delta v prep 3}, and that $\hMtas$ is compact, we can further decrease $\delta_v>0$ so much that
\begin{align}
  \forall w\in B_{\delta_v}(v)\colon\ \  &f_{M_i}(\psi(w,t'))<0, \label{fM<0 2 neu} \\
  \forall w\in B_{\delta_v}(v)\colon\ \  &\psi\big(w,[t',-\nu]\big)\cap\hMtas=\varnothing. \label{disjoint 2 neu}
\end{align}
Now let $w\in B_{\delta_v}(v)$. Then since $t\mapsto\sgn\!\big(f_{M_i}(\psi(w,t))\big)$ is non-decreasing by  \eqref{prime lemma eq 1}, \eqref{fM<0 2 neu} implies that $f_{M_i}(\psi(w,t))<0$ for $\forall t\in(-\infty,t']$.
Since by \eqref{Msc in ball} and \eqref{fMi>0} we have $f_{M_i}(u)>0$ for $\forall u\in\hMtas$, this means that $\psi(w,t)\notin\hMtas$ for $\forall t\in(-\infty,t']$, and by \eqref{disjoint 2 neu} in fact for $\forall t\in(-\infty,-\nu]$. Thus \eqref{tau property} implies that $\tau_v(w)$ is the unique value in all of $(-\infty,\nu)$ fulfilling $\psi(w,\tau_v(w))\in\hMtas$.

This in turn has three consequences: (i) the infimum in \eqref{t tilde def} is achieved for $\forall w\in B_{\delta_v}(v)$, with
\begin{equation} \label{tilde t = tau}
  \tilde t(w)=\tau_v(w) \qquad\text{for $\forall w\in B_{\delta_v}(v)$},
\end{equation}
which in turn implies that (ii) $\tilde t$ is $C^1$ on $B_{\delta_v}(v)$ since $\tau_v$ is; and (iii) since for $\forall w\in B_{\delta_v}(v)\cap\hMtas$ we have $\psi(w,0)=w\in\hMtas$, we can conclude that $0=\tau_v(w)=\tilde t(w)$ for those $w$. These are the three properties that we had to prove.\\[.2cm]
\textit{Case 2: $v\notin\Msan$.}
Then since $v\in M_s$, \eqref{Msc cap Msc-hat} implies that $v\notin\hMtas$. Since $\hMtas$ is compact, there thus exists a $\delta_v>0$ such that $B_{\delta_v}(v)\cap\hMtas=\varnothing$, and claim~(iii) will be trivially true.
Furthermore, by \eqref{Msa flow = Ms} there exist $u\in\Msan$ and $\sigma\in\R$ such that $v=\psi(u,-\sigma)$, i.e.~$\psi(v,\sigma)=u\in B_{\delta_u}(u)$, where $\delta_u$ is given by \textit{Case~1}. Let us decrease $\delta_v>0$ so much that $\forall w\in B_{\delta_v}(v)\colon\!\!$\linebreak $\psi(w,\sigma)\in B_{\delta_u}(u)$. Then by \eqref{tilde t shift} and \eqref{tilde t = tau} (applied to $B_{\delta_u}(u)$) we have
\[
  \tilde t(w)=\tilde t(\psi(w,\sigma))+\sigma=\tau_u(\psi(w,\sigma))+\sigma
\]
for $\forall w\in B_{\delta_v}(v)$, which implies property (ii), and
\[
  \psi(w,\tilde t(w))
 =\psi\big(\psi(w,\sigma),\tilde t(w)-\sigma\big)
 =\psi\big(\psi(w,\sigma),\tau_u(\psi(w,\sigma))\big)
 \in\hMtas
\]
by \eqref{tau property}, which is property (i).
\end{proof}

\subsection{Proof of Remark \ref{zs K remark}} \label{zs K remark app}
\begin{proof}
We will only prove \eqref{Ki tilde a def 1}, i.e.\ the case $i\in I^+$.
Note that $z_s(K_i^{a_0})$ is well-defined since for $i\in I^+$ we have $K_i^{a_0}\subset M_s^{a_0}\subset M_s\setminus\{x\}\subset D_s$ by \eqref{Ki +- union}, \eqref{Msa flow = Ms} and the definition of $D_s$.

The proof of Remark \ref{zs K remark} must be led separately for dimensions $n=2$ and $n\geq3$:
In the case $n=2$ we must show that our explicit definition \eqref{2D Kia} of $K_i^a$ that we will use later on fulfills \eqref{Ki tilde a def 1}; in the case $n\geq3$ we only need to show that given the sets $K_i^{a_0}$ constructed in Lemma \ref{compact hitting partition}, the sets $\tilde K_i^{\tilde a}:=z_s(K_i^{a_0})$ are an alternative choice that fulfill \eqref{Ki +- union}-\eqref{Ki nbhd} for some constants $\eta_{\tilde a},T_{\tilde a}>0$. A look at the last paragraph of the proof of Lemma~\ref{compact hitting partition} reveals that for the latter it suffices to show that the sets $\tilde K_i^{\tilde a}$ are compact and fulfill $\tilde K_i^{\tilde a}\subset\psi(M_i,\R)$ for $\forall i\in I^+$, and that $\bigcup_{i\in I^+}\tilde K_i^{\tilde a}=\Msan$. \\[.2cm]
\indent Beginning with the case $n=2$, first let $w\in K_i^{a_0}=\psi(M_i,\R)\cap M_s^{a_0}$. The three representations $z_s(w)=\psi(w,-t_s(w))=\psi\big(z_i(w),t_i(w)-t_s(w)\big)$ then show that $z_s(w)\in \hMtas\cap M_s\cap\psi(M_i,\R)=\Msan\cap\psi(M_i,\R)=K_i^{\tilde a}$ by \eqref{Msc cap Msc-hat} and \eqref{2D Kia}, proving the inclusion $z_s(K_i^{a_0})\subset K_i^{\tilde a}$.

For the reverse inclusion $K_i^{\tilde a}\subset z_s(K_i^{a_0})$ let $w\in K_i^{\tilde a}=\psi(M_i,\R)\cap\Msan$. Then we have $\psi(w,-t_i(w))=z_i(w)\in M_i\subset\bar B_{a_0}(x)^c$ by \eqref{fMi neq0} and thus $f_s\big(\psi(w,-t_i(w))\big)\geq|\psi(w,-t_i(w))-x|>a_0$. Since $f_s(\psi(w,0))=f_s(w)=\tilde a<a_0$, this shows that there $\exists t\in\R$ such that $f_s(\psi(w,t))=a_0$ and thus $v:=\psi(w,t)=\psi\big(z_i(w),t_i(w)+t\big)\in\psi(M_i,\R)\cap M_s^{a_0}=K_i^{a_0}$. Since $w\in M_s^{\tilde a}\subset\hMtas$ and $w,v\in M_s\setminus\{x\}\subset D_s$,
\eqref{zsx=x} and \eqref{zs cst on flowlines} now show that $w=z_s(w)=z_s(\psi(v,-t))=z_s(v)\in z_s(K_i^{a_0})$.\\[.2cm]
\indent
Moving on to the case $n\geq3$, first note that the sets $\tilde K_i^{\tilde a}$ are compact as the continuous images of compact sets. To see that $\tilde K_i^{\tilde a}\subset\psi(M_i,\R)$, note that if $w\in\tilde K_i^{\tilde a}=z_s(K_i^{a_0})$ then there $\exists v\in K_i^{a_0}$ such that
\[
   w=z_s(v)=\psi(v,-t_s(v))\in\psi(K_i^{a_0},\R)
    \subset\psi(\psi(M_i,\R),\R)=\psi(M_i,\R)
\]
by \eqref{Kia in Mi-flow}. Finally, to show $\bigcup_{i\in I^+}\tilde K_i^{\tilde a}=\Msan$, observe that since
\[
   \bigcup_{i\in I^+}\tilde K_i^{\tilde a}
   = \bigcup_{i\in I^+}z_s(K_i^{a_0})
   = z_s\bigg(\bigcup_{i\in I^+}K_i^{a_0}\bigg)
   = z_s(M_s^{a_0})
\]
by \eqref{Ki +- union}, we only need to prove that $z_s(M_s^{a_0})=\Msan$.

To do so, first observe that by \eqref{ts zs property 1} and \eqref{Msa flow = Ms} we have $z_s(M_s^{a_0})\subset\psi(M_s^{a_0},\R)\subset M_s$, and thus by definition of $z_s$ and by \eqref{Msc cap Msc-hat} we have $z_s(M_s^{a_0})\subset\hMtas\cap M_s=\Msan$.
To show the reverse inclusion, let $w\in\Msan$. Then by \eqref{Msa flow = Ms} we have $w\in M_s\setminus\{x\}=\psi(M_s^{a_0},\R)$, and so $\exists v\in M_s^{a_0}\ \exists t\in\R\colon\!\!\!$\linebreak $w=\psi(v,t)$ and thus $f_s(\psi(v,t))=f_s(w)=\tilde a$, i.e.\ $\psi(v,t)\in\Msan$. Since $f_s(\psi(v,\cdot\,))$ is decreasing by Lemma~\ref{distance function saddle}~(i), $t$ is in fact the unique value with this property. Since $v\in M_s$, by \eqref{Msc cap Msc-hat} this means that $t$ is the unique value such that $\psi(v,t)\in\hMtas$, which in the notation of Appendix~\ref{zs ts lemma sec} implies that $\tilde t(v)=t$ and thus $z_s(v)=\psi(v,\tilde t(v))=\psi(v,t)=w$. This shows that $w\in z_s(M_s^{a_0})$, completing our proof.
\end{proof}

\subsection{Proof of Lemma \ref{Ui supset lemma}}
\label{Ui supset lemma sec}

\begin{proof}
Again we will only consider the case $i\in I^+$. First we claim that 
\begin{equation} \label{small claim}
 \psi\big(K_i^{\tilde a},[-T_{\tilde a},T_{\tilde a}]\big)\cap f^{-1}_{M_i}\big([0,\infty)\big)\subset K.
\end{equation}
To see this, let $w\in\psi\big(K_i^{\tilde a},[-T_{\tilde a},T_{\tilde a}]\big)\cap f^{-1}_{M_i}\big([0,\infty)\big)$. If $w\in\bar B_{a_0}(x)$ then by \eqref{K def} we have $w\in K$. Therefore suppose now that $w\notin \bar B_{a_0}(x)$; we must show that $w\in K$ also in this case.

Let $v\in K_i^{\tilde a}$ and $t\in[-T_{\tilde a},T_{\tilde a}]$ such that $w=\psi(v,t)$. Since by Remark~\ref{zs K remark} we have $v\in K_i^{\tilde a}=z_s(K_i^{a_0})$, there $\exists u\in K_i^{a_0}\colon\ v=z_s(u)$, and we find that
\begin{equation} \label{y w t}
w=\psi(v,t)=\psi(z_s(u),t)=\psi\big(\psi(u,-t_s(u)),t\big)=\psi\big(u,t-t_s(u)\big).
\end{equation}
Since $u\in K_i^{a_0}\subset M_s^{a_0}$ by \eqref{Ki +- union}, and since $w\notin\bar B_{a_0}(x)\supset f_s^{-1}\big([0,a_0]\big)$ by \eqref{fs est 1 saddle lower}, we thus have
\[
  f_s(\psi(u,0))=f_s(u)=a_0<f_s(w)=f_s\big(\psi(u,t-t_s(u))\big),
\]
and so Lemma \ref{distance function saddle}~(i) implies that $0>t-t_s(u)$. Therefore by \eqref{y w t} and \eqref{Ki nbhd} we have
\[
w\in\psi\big(K_i^{a_0},(-\infty,0)\big)\subset\psi\big(\psi(M_i,[-T_{a_0},T_{a_0}]),(-\infty,0)\big)
=\psi\big(M_i,(-\infty,T_{a_0})\big)
\]
and thus $t_i(w)<T_{a_0}$. Furthermore, since $f_{M_i}(w)\geq0$ by our choice of $w$, by \eqref{prime lemma eq 2} we have $t_i(w)\geq0$. We can now conclude that $t_i(w)\in[0,T_{a_0})$ and thus $w\in\psi(M_i,[0,T_{a_0}))\subset K$ by \eqref{K def}, and \eqref{small claim} is proven.\\[.2cm]
Now we abbreviate $M_i^-:=f_{M_i}^{-1}\big((-\infty,0)\big)$, $M_i^+:=f_{M_i}^{-1}\big([0,\infty)\big)$, and $F:=\psi\big(K_i^{\tilde a},[-T_{\tilde a},T_{\tilde a}]\big)$, and finally we define the open set $G_i := M_i^-\cup N_{\tilde a}(F\cap M_i^+)$.
Then the relation \eqref{small claim} translates into
\begin{equation} \label{small claim translation}
   F\cap M_i^+\subset K,
\end{equation}
which by \eqref{a tilde def} implies that $N_{\tilde a}(F\cap M_i^+)\subset N_{\tilde a}(K)\subset D$ and thus $G_i\subset D$. Also, we have
\begin{equation*}
  G_i \supset [F\cap M_i^-]\cup[F\cap M_i^+]=F\cap[M_i^-\cup M_i^+]=F\cap D=F,
\end{equation*}
which is \eqref{Ui def}, and again using \eqref{small claim translation} we find that
\begin{align*}
  G_i\cap M_i^+ &= \big[M_i^-\cup N_{\tilde a}(F\cap M_i^+)\big]\cap M_i^+ \\
                &= \big[M_i^-\cap M_i^+\big]\cup\big[N_{\tilde a}(F\cap M_i^+)\cap M_i^+\big] \\
                &\subset\varnothing\cup N_{\tilde a}(F\cap M_i^+)\subset N_{\tilde a}(K),
\end{align*}
which is \eqref{Ui prop 1}.
\end{proof}

\end{appendices}

\newpage\phantomsection
\addcontentsline{toc}{part}{References}

\end{document}